\def    \bs     {\boldsymbol}
\theoremstyle{plain}
\newtheorem{thm}{Theorem}[section]
\newtheorem{prop}[thm]{Proposition}
\newtheorem{defn}[thm]{Definition}
\newtheorem{cor}[thm]{Corollary}
\newtheorem{lemma}[thm]{Lemma}
\theoremstyle{definition}
\newtheorem{Assumption}{Assumption}[chapter]
\newtheorem{exm}[thm]{Example}
\theoremstyle{remark}
\newtheorem{rmk}[thm]{Remark}
\newcommand{\X}{\mathbf{X}}
\newcommand{\Y}{\mathbf{Y}}
\newcommand{\TX}{\mathbf{TX}}
\newcommand{\NN}{\mathbf{N}}
\newcommand{\E}{\mathbb{E}}
\newcommand{\SSw}{\mathbf{S}}
\newcommand{\ph}{\varphi}
\newcommand{\Z}{\mathbf{Z}}
\newcommand{\U}{\boldsymbol{U}}
\newcommand{\N}{\mathbb{N}}
\newcommand{\R}{\mathbb{R}}
\newcommand{\C}{\mathbb{C}}
\newcommand\numberthis{\addtocounter{equation}{1}\tag{\theequation}}
\def    \EE      {\boldsymbol{E}}
\def    \E      {\mathbb{E}}
\def    \sst        {\scriptscriptstyle}
\def\P{{\rm I\kern-0.16em P}}
\numberwithin{equation}{section}
\font\eka=cmex10
\def\ind{\mathrel{\hbox{\rlap{%
\hbox to 7.5pt{\hrulefill}}\raise6.6pt\hbox{\eka\char'167}}}}
\def\cleardoublepage{\clearpage\if@twoside
\ifodd \c@page
\else \hbox {}\thispagestyle{empty}\newpage
\if@twocolumn\hbox{}\newpage\fi\fi\fi}
\date{}
\begin{document}
\frontmatter

\thispagestyle{empty}
\frontmatter
\thispagestyle{empty}
\begin{figure}[t]
\centering
\includegraphics[scale = 1]{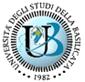}
\end{figure}

\begin{center}

	\textbf{\large{UNIVERSIT\`{A} DEGLI STUDI DELLA BASILICATA}}\\
	Dipartimento di Matematica, Informatica ed Economia\\
	Potenza - Italy
	\vskip 0.5cm

	\textbf{\large{International Doctoral Seminar in Mathematics ``J\'{a}nos Bolyai" }}\\ Cycle XXVII  \\
	Sector Codes: MAT/06, MAT/02\\
	\end{center}

\vskip 1cm
\begin{center}
\huge{Universality and Fourth Moment Theorem for homogeneous sums. \\ Orthogonal polynomials and apolarity}
\end{center}

	\vskip 2cm
	\begin{tabular}{lccccr}
	
	\textbf{\large{Coordinator:}} & 	& &	 &	&  \qquad \qquad \qquad \qquad \qquad \textbf{\large{Ph.D. Candidate:}}\\
	\large{Prof. Antonio Cossidente}& 	& & 	&& \qquad\qquad \qquad\qquad \qquad	\large{Rosaria Simone} \\
	 	& & 	& &	&  \\
		\textbf{\large{Advisors:}}& 	&& 	& 	& \\
		\large{Prof. Giovanni Peccati} 		&&  	& & &\\
		\large{Prof. Domenico Senato}&    & && & \\
		\end{tabular}

		\vskip 2.8cm
			\begin{picture}(0,10)
	\put(0,0){\line(1,0){410}}
	\end{picture}
\begin{center}		A.A. 2013/2014 \end{center}

%

\newpage \thispagestyle{empty}

$ $
\newpage


\thispagestyle{empty}

\begin{center}
\textbf{\large{Acknowledgements}}
\end{center}

First and foremost, my deepest thanks go to my advisors, Professor Domenico Senato and Professor Giovanni Peccati, for guiding me with constant enthusiasm towards such an important achievement. Their willingness in sharing with me ideas and their experience in the world of research, has been the greatest opportunity I could have ever asked for moving my first steps as a researcher.

In particular, I would like to express my sincere gratitude to Professor Giovanni Peccati, for allowing me to conduct part of my research under his supervision, visiting the University of Luxembourg. Being a member of his research group has meant a lot to me: thank you for providing me with an amazing working environment, and for introducing me to extremely compelling topics and intriguing problems I am pleased to have learnt about. Most importantly, I acknowledge his caring assistance and confidence in my work. 

My great appreciation goes to Professor Domenico Senato,  for having constantly encouraged me in pursuing my research activity, and for all the consideration he has always addressed to my work. I would  like also to acknowledge Pasquale Petrullo, whose perseverance has been a model for me since the very beginning, and Elvira Di Nardo, for all the good advices.

I wish also to thank Professor Ivan Nourdin, that I had the pleasure to meet and work with during my stay in Luxembourg, as well as all the members of the mathematical research group of Prof. Peccati: Yvik Swan, Ehsan Azmoodeeh and Guillaume Poly, who deserves special acknowledgements for his friendly support. 

I cannot free myself from thanking the ones that I will never thank enough: first of all, my mother, my father and my beloved sister. Then, my whole-hearted thanks go to my colleagues,  Sara, Emanuela and Pietro, for being such incomparable mates, as well as to my friend Rocco, that is always beyond compare.

And finally, to the best that is yet to come: this thesis is dedicated to us, Marco, and to our future together. Thank you.\\
 
The work and the studies that led to the present dissertation have been supported by a  M.I.U.R. grant.


\newpage

\thispagestyle{empty}

\tableofcontents

\thispagestyle{empty}

\printglossary

\pagestyle{plain}

\chapter*{Introduction}
\addcontentsline{toc}{chapter}{Introduction}

The aim of the present essay is threefold: the first two parts are devoted to fully explore the Fourth Moment Theorem and the universality phenomenon in the framework of homogeneous sums, both in the classical and in the free probability setting, while the last part approaches the classical theory of orthogonal polynomials via the invariant theory of binary forms, focusing on apolarity. \\

A \textit{universality} result (or \textit{invariance principle}) is a mathematical statement implying that the asymptotic behaviour of a given random system does not depend on the distribution of its components: the most celebrated instance of such a phenomenon is undoubtedly the Central Limit Theorem (CLT), or its functional version for random walks, known as Donsker's Theorem. If $S(\mu)$ denotes a random system depending on some probability distribution $\mu$, we shall say that $\mu$ is \textit{universal} if the fact that $S(\mu)$ verifies a limit theorem, implies that $S(\nu)$ (that is, the random system obtained by replacing $\mu$ with another probability law $\nu$) displays the same asymptotic behaviour, for any choice of $\nu$ in a large class of probability laws.

A whole line of research about invariance principles has recently emerged from a new version of the celebrated Lindberg method,  established in \cite{Mossel} and relying on \textit{low-influence functions}. This technique dates back to \cite{Rotar} and will be the trailhead of the first two parts of the dissertation. \\

Part \ref{Invariance} deals with the Lindberg method of influence functions in the setting of free probability spaces: the first and main step that will be accomplished in this direction is a general multidimensional invariance principle for vectors of homogeneous sums in freely independent random variables. The work expands the ideas first developed in \cite{NourdinDeya}, which constitute the free counterpart, in dimension 1, to the universality of the Gaussian Wiener Chaos proved in \cite{NourdinPeccatiReinert}. Secondly, a class of \textit{universal laws} for semicircular and free Poisson approximations of homogeneous sums in freely independent random variables will be derived by combining the Lindberg method of influence functions and the so-called \textit{Fourth Moment Theorems}.\\

\textit{Fourth Moment Theorems} are limit theorems for non-linear functionals of a random field, holding under the only assumption of the convergence of the corresponding sequences of the second and fourth moments. Both in the commutative and non-commutative setting, Fourth Moment Theorems (for central and non-central convergence), determine an elegant simplification of the method of moments and cumulants for several classes of random fields, starting from the landmark examples of the Gaussian Wiener Chaos \cite{NualartPeccati} and of the Wigner Chaos \cite{KempNourdinPeccatiSpeicher}. \\ 
The goal of  Part \ref{FMT} is the characterization of those random variables $X$ such that homogeneous sums based on i.i.d. copies of $X$ verify the Fourth Moment Theorem, both in the classical and in the free probability scenarios. So far, it is known that the fourth moment phenomenon for homogeneous sums applies, for instance, when $X$ is Gaussian, Poisson, semicircular or free Poisson distributed. Is there a characterizing property enabling this phenomenon, or does it occur accidentally? The main results will be the determination of a condition on the fourth cumulant of $X$ that is sufficient for the Fourth Moment Theorem to hold for homogeneous sums  (in both the probability settings): discussions on the existence of an optimal necessary and sufficient condition are also provided.\\

The choice of focusing on random variables having the form of multilinear homogeneous polynomials is motivated by the fact that these random objects have been gaining more and more interest in modern probability theory, both in the classical and in the free setting. As to classical probability theory, homogeneous sums in independent variables are instances of \textit{degenerate $U$-statistics} (which are the most appropriate non-linear extension of random sums), and they represent the seminal examples of those random variables determining the chaotic decomposition for square-integrable functionals of a Brownian motion (see \cite{NourdinPeccatilibro}), or other random fields. Analogously, in free probability theory, homogeneous polynomials in non-commutative semicircular random variables represent the base building blocks of multiple integrals with respect to a free Brownian motion, up to a density argument \cite{KempNourdinPeccatiSpeicher}. 

For algebraists and physicists, the ring of homogeneous polynomials in independent \linebreak non-commutative variates $x_1,\dots,x_n$, provides a representation for the tensor algebra of a given $\C$-vector space $V$. According to Hermann Weyl's philosophy, equations in the tensor algebra suffice for the description of \textit{geometric facts}, that is of the mathematical properties of the space $V$ that do not depend on the choice of the coordinate system. Quoting Gian-Carlo Rota \cite{Rota98}, ``[...]\textit{The program of invariant theory, from Boole to our day, is precisely the translation of geometric facts into invariant algebraic equations expressed in terms of tensors.}'' To accomplish this task, an effective choice of notation is paramount, not only for its own sake: for instance, the choice of dealing with the invariant theory (of binary forms) via a suitable symbolic method, allows one to revisit some aspects of the classical theory of orthogonal polynomials within the theory of apolarity: this will be the core of Part \ref{Orth}. \\

Every part will be further introduced by brief synopsis, where we present its contents and give more details about the results to be shown: at the same time, some bibliographic comments about related literature are given, to supplement all the discussion provided  throughout the chapters. 

Here is a short overview of the contents: 

\begin{enumerate}
\item the main result of Part \ref{Invariance} is Theorem \ref{Multiinvariance2}. The proximity in law between vectors of homogeneous sums in freely independent random variables is assessed in terms of the maximum of the influence functions. As a consequence, by 
combining the invariance principle with the Fourth Moment Theorem for Wigner integrals \cite{KempNourdinPeccatiSpeicher}, it can be shown that the law of the $n$-th Chebyshev polynomial $U_n(S)$ is universal for semicircular and free Poisson approximations of vectors of homogeneous sums for every $n\geq 1$, where $S$ denotes a standard semicircular random variable on a fixed free probability space. These results perfectly match the findings in \cite{NourdinDeya}, corresponding to the unidimensional setting as to the invariance principle, and to $n=1$ for the universality of $U_1(S) =S$ for semicircular approximations of homogeneous sums.  The same approach is then sketched to fit the commutative setting, yielding new universal laws for normal and Gamma approximations of homogeneous sums in independent random variables.\\

\item Part \ref{FMT} is devoted to the analysis of the Fourth Moment phenomenon for central and non-central convergence of random variables having the form of multilinear homogeneous polynomials (both in the commutative and non-commutative setting).
Theorems \ref{superTeo1} and \ref{superTeo2} extend the findings in \cite{NualartPeccati, NourdinPeccati2,KempNourdinPeccatiSpeicher, NourdinPeccati1} to homogeneous sums in independent copies of a random variable having non-negative kurtosis (also called \textit{leptokurtic}). Some minimal additional hypothesis will be needed in the classical setting. Moreover, the same condition is shown to be sufficient for the universality phenomenon to occur, in turn. In both settings, the starting point is a new combinatorial formula for the fourth moment of a homogeneous sum. Beyond their intrinsic interest, these results allow one to examine further properties of leptokurtic random variables: for instance, the equivalence between componentwise and joint central convergence  for vectors of homogeneous sums based on such laws, that extend the findings of \cite{PeccatiTudor} and \cite{NouSpeiPec}. This equivalence, in turn, can be applied to provide a general multidimensional \textit{transfer principle} for Fourth Moment Theorems, that supplements the correspondence between Wiener and Wigner Chaos established in \cite{NouSpeiPec}.\\

\item  Part \ref{Orth} has a purely algebraic flavour, and deals with orthogonality of polynomials and invariant theory. As a matter of fact, orthogonal polynomials constitute a recurrent theme in the whole theory of stochastic analysis \cite{Schoutens}, and, more generally, in probability theory. But what are orthogonal polynomials, really? Through the so called \textit{symbolic method of invariant theory} \cite{KungRota}, an algebraic and universal representation for (generalized) sequences of orthogonal polynomials (in any number of variables) is achieved, via a general Heine integral formula and a determinantal formula. Recasting the theory of orthogonality within the invariant theory of binary forms,  some applications to probability theory are derived, as explicit formulae for the moments of the so called \textit{random discriminants} \cite{Lu}. Finally, a brief focus on  the most prominent example of semi-invariants in probability and statistics, that is, the cumulants, will be presented via a combinatorial technique: more precisely, the analysis will be run through the tools deriving from the combinatorial approach to stochastic integration developed in \cite{Rota1}. The starting point is the representation of cumulants  as expected value of the so-called \textit{diagonal measures}. This setting turns out to be particularly suitable to manage cumulants of the process of variations of a L\'{e}vy process, as well as to describe  $\kappa$-statistics and polykays for positive random measures, in both the classical and the free setting.
\end{enumerate}

It is worth to stress that the first two parts share a common thread: the universality phenomenon, though they rely on different motivations and techniques. In particular, the discussion presented in Part \ref{Invariance} is primarily oriented at establishing  the free counterpart to the multidimensional invariance principle provided in \cite{NourdinPeccatiReinert}. The derivation of other universal laws for semicircular and free Poisson approximation of homogeneous sums follows as a consequence of the strategy proposed in \cite{NourdinDeya}. On the other hand, the core of Part \ref{FMT} is the search of a possible characterization of the laws verifying a Fourth Moment Theorem. It is the strategy of proof that highlights the interactions with the universality phenomenon. Beyond all that, the results of Part \ref{Invariance} will be referred to in Part \ref{FMT} to derive the general \textit{transfer principle} for Fourth Moment Theorems between the two probability settings.
\newpage

\pagestyle{fancy}

\chapter{Preliminaries}

Before dwelling on the contents developed in the present dissertation, some preliminary facts need to be recalled: to this aim, Section \ref{App1} is devoted to the combinatorics underlying the probabilistic aspects that will be addressed, specially focusing on the moment-cumulant formulae and on the properties of cumulants. Parts \ref{Invariance} and \ref{FMT} are strongly connected to the theory of multiple stochastic integration under several aspects. For the reader's convenience, some basic definitions and standard notations are preliminarily introduced in Sections \ref{Pre_Classic} and \ref{AppendixfreeProb}, for the commutative and non-commutative setting respectively.  Finally, Section \ref{Pre_Orth} contains some background material concerning classical orthogonal polynomials, and it will be particularly useful for the discussion faced in Part \ref{Orth}. None of these sections is  meant to be exhaustive: more references are quoted therein. In particular, the main references concerning free probability theory are \cite{Speicher,Voiculescu} (see also \cite{Tao} for a survey on random matrix theory), while for analysis on Gaussian spaces, the main references are \cite{NualartBook, Janson_GaussianSpaces, NourdinPeccatilibro}.
%

\section{The lattice of partitions: moment-cumulant formulae}\label{App1}

Let $n \in \mathbb{N}$\glossary{name={$\N$},description={Set of positive  integers}}. A \textit{partition} $\pi$ of the set $[n]:=\{1,\dots,n\}$  \glossary{name={$[n]$}, description={Set of the first $n$ positive integers}}is a collection of non-empty and pairwise disjoint subsets of $[n]$ (called \textit{blocks}) whose union is $[n]$. The set $\mathcal{P}([n])$ of all partitions of $[n]$ \index{Partition of a set}is a lattice with respect to the \textit{refinement} order: $\pi \leq \sigma$ if each block of $\pi$ is contained in a block of $\sigma$, in which case $\pi$ is said to be \textit{finer} than $\sigma$, or $\sigma$ \textit{coarser} than $\pi$. The maximum and the minimum are usually denoted by $\hat{1} = \{\{1,2,\dots, n\}\}$ and $\hat{0} = \{\{1\},\{2\},\dots,\{n\}\}$,  respectively. The greatest lower bound is the meet partition, denoted by $\pi \wedge \sigma$, whose blocks are the non-empty intersections between a block of $\pi$ and a block of $\sigma$, while the least upper bound is the join partition denoted by $\pi \vee \sigma$, whose blocks are obtained by joining a block of $\pi$ and a block of $\sigma$ if they share at least one element, and repeating this operation with the block so obtained, until it is possible. For example, if $\pi = 12|345|6|78$, and $\sigma = 123|678|45$, then $\sigma \wedge \pi = 12|3|45|6|78$, and $\sigma \vee \pi = 12345|678$. For $i, j \in [n]$ and $\sigma \in \mathcal{P}([n])$, the notation $i \sim_{\sigma} j$ will denote that $i$ and $j$ belong to the same block of $\sigma$.

In the sequel, $\mathcal{P}_2([n])$ will denote the set of the \textit{pairing partitions} \index{Partition of a set!Pairing Partition}(also called \textit{perfect matchings}), that is, the partitions whose blocks all have  cardinality equal to $2$: trivially, $\mathcal{P}_2([n])$ is non-empty if and only if $n$ is even, in which case $| \mathcal{P}_2([n])| = (n-1)!! = (n-1)(n-3)\cdots 5\cdot 3 \cdot 1$.

For $n,m \geq 1$, a crucial role will be played  by the interval partition in $\mathcal{P}([mn])$: 
$$\pi^{\star} := m^{\otimes n} = \{\{1,\dots m\}, \{ m+1,\dots, 2m\}, \{2m+1,\dots, 3m\}, \dots, \{(n-1)m+1,\dots, nm\}\},$$
which is usually denoted by $\pi^{\star} = 1\cdots m | (m+1)\cdots 2m| (2m+1)\cdots 3m|\cdots | ((n-1)m+1)\cdots nm.$
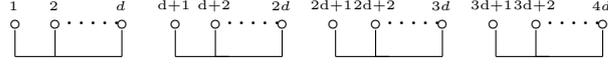
\begin{figure}[h]
\begin{picture}(250, 20)
\put(85,7){\makebox(-1,0){\tiny{$1$}}}
\put(85,0){\circle{3}}
\put(100,7){\makebox(-1,0){\tiny{$2$}}}
\put(100,0){\circle{3}}
\put(85,-12){\line(0,0){10}}
\put(85,-12){\line(1,0){15}}
\put(100,-12){\line(0,0){10}}
\put(100,-12){\line(1,0){25}}
\put(104,0){\dots .\,.}
\put(125,7){\makebox(-1,0){\tiny{$d$}}}
\put(125,0){\circle{3}}
\put(125,-12){\line(0,0){10}}

\put(145,7){\makebox(-1,0){\tiny{d+1}}}
\put(145,0){\circle{3}}
\put(160,7){\makebox(-1,0){\tiny{d+2}}}
\put(160,0){\circle{3}}
\put(145,-12){\line(0,0){10}}
\put(145,-12){\line(1,0){20}}
\put(160,-12){\line(0,0){10}}
\put(160,-12){\line(1,0){25}}
\put(164,0){\dots .\,.}
\put(185,7){\makebox(-1,0){\tiny{$2d$}}}
\put(185,0){\circle{3}}
\put(185,-12){\line(0,0){10}}

\put(204,7){\makebox(-1,0){\tiny{2d+1}}}
\put(204,0){\circle{3}}
\put(220,7){\makebox(-1,0){\tiny{2d+2}}}
\put(220,0){\circle{3}}
\put(205,-12){\line(0,0){10}}
\put(205,-12){\line(1,0){20}}
\put(220,-12){\line(0,0){10}}
\put(220,-12){\line(1,0){25}}
\put(224,0){\dots .\,.}
\put(245,7){\makebox(-1,0){\tiny{$3d$}}}
\put(245,0){\circle{3}}
\put(245,-12){\line(0,0){10}}

\put(264,7){\makebox(-1,0){\tiny{3d+1}}}
\put(264,0){\circle{3}}
\put(280,7){\makebox(-1,0){\tiny{3d+2}}}
\put(280,0){\circle{3}}
\put(265,-12){\line(0,0){10}}
\put(265,-12){\line(1,0){20}}
\put(280,-12){\line(0,0){10}}
\put(280,-12){\line(1,0){25}}
\put(284,0){\dots .\,.}
\put(305,7){\makebox(-1,0){\tiny{$4d$}}}
\put(305,0){\circle{3}}
\put(305,-12){\line(0,0){10}}
\end{picture}
\vspace{0.5cm}
\caption{Diagram of $\pi^{\star} = d^{\otimes 4} \in \mathcal{P}([4d])$.}
\end{figure}

Observe that, if $\sigma \in \mathcal{P}([nm])$ is such that $\sigma \wedge \pi^{\star}= \hat{0}$, then in each block of $\sigma$ there is at most one element from every block of $\pi^\star$: in this case, we say that $\sigma$ \textit{respects} $\pi^\star$\index{Partition of a set!Respectful}.

For $d\geq 1$, if $\mathbf{i} =(i_1,\dots,i_d) \in [n]^{d}$\glossary{name={$[n]^d$},description={$d$-fold cartesian product of $[n]$}}, the \textit{kernel} $\text{Ker}(\mathbf{i})$ \index{Partition of a set!Kernel}is the partition in $\mathcal{P}([d])$ determined by the rule $ h \sim j$ if and only if $i_h = i_j$. Moreover, for every $f:[n]^d \rightarrow \mathbb{R}$\glossary{name={$\mathbb{R}$},description={Real numbers}}, for any $\rho \in \mathcal{P}([d])$, $f_{\rho}(\mathbf{i})$ is obtained from $f(i_1,\dots,i_d) \in [n]^d$ by setting $i_l = i_s$ if and only if $l \sim_{\rho} s$. 
If $\sigma \in \mathcal{P}([n])$, its \textit{class} \index{Partition of a set!Class} is the partition of the integer $n$, say $\lambda = (1^{r_1} \; 2^{r_2}\, \dots \;n^{r_n})\vdash n$, where $r_j$ is the number of blocks of $\sigma$ having cardinality $j$ (for a synthetic account of the lattice of partitions, as well as some other examples, see \cite[Chapter 2]{PeccatiTaqqu}).\\

A partition $\pi$ is said to be \textit{non-crossing} \index{Non-crossing partition}if, whenever there exist integers $i < j < k < l$, with $i\sim_{\pi} k$, $j \sim_{\pi} l$, then $j \sim_{\pi} k$. The lattice of non-crossing partitions, denoted by $\mathcal{NC}([n])$, is the combinatorial structure underlying the free probability setting. In the sequel, $\mathcal{NC}_2([n])$ will denote the set of the \textit{non-crossing pairings} \index{Non-crossing partition!Non-crossing pairing} of $[n] = \{1,2,\dots,n\}$, that is, the set of all non-crossing partitions of the set $[n]$ where each block has exactly two elements. Of course,  $\mathcal{NC}_2([n])$ is empty if $n$ is odd, while it has $C_{\frac{n}{2}}$ elements if $n$ is even, where $C_k = \frac{1}{k+1}\binom{2k}{k}$ denotes the $k$-th \textit{Catalan number}\index{Catalan numbers} (see \cite[Lecture 2]{Speicher}, or \cite{Stanley}).

Finally, $\mathcal{P}^{\star}(m^{\otimes n})$, $\mathcal{P}_2^{\star}(m^{\otimes n})$, and $\mathcal{NC}^{\star}(m^{\otimes n})$, $\mathcal{NC}_2^{\star}(m^{\otimes n})$ will stand respectively for the set of  partitions in $\mathcal{P}([mn])$, $\mathcal{P}_2([mn])$, and $\mathcal{NC}([mn])$, $\mathcal{NC}_2([mn])$ that  respect $\pi^{\star} = 1\cdots m | (m+1)\cdots 2m| (2m+1)\cdots 3m|\cdots | ((n-1)m+1)\cdots nm.$

\section{Elements of Classical Probability Theory}\label{Pre_Classic}

All the random objects (in the classical sense) that will be encountered in the following are assumed to be defined on a suitable classical probability space $(\Omega, \mathcal{F},\mathbb{P})$,\glossary{name={$(\Omega,\mathcal{F},\mathbb{P})$},description={Classical probability space}} and $\mathbb{E}$ will denote the expectation on it. Standard references for classical probability theory include \cite{Billingsley,Billingsley2,Dudley,Chung}.\\

Given a sequence of (real) random variables $Z_n$,  and a random variable $Z$, it is said that $Z_n$ \textit{converges in law} \index{Convergence in law} (or \textit{in distribution}, or \textit{weakly}) to $Z$ (for short, $Z_n \xrightarrow{\text{\rm Law}} Z$) if and only if, for every continuous bounded function $g: \mathbb{R} \to \mathbb{R}$, it holds true that:
$$ \mathbb{E}[g(Z_n)]  \underset{n \rightarrow \infty}{\longrightarrow} \mathbb{E}[g(Z)].$$
From the definition, it follows  that convergence in law does not always imply the convergence of the corresponding moments: this implication holds true, for instance, when the random variables $Z_n$ have a density with compact support, by virtue of the Stone-Weierstrass approximation Theorem (see \cite{Dudley}). Another situation when the convergence of the moments is fulfilled under the assumption of convergence in law occurs when the random variables $g_k(Z_n)$, for $g_k(x)=x^k$, $k\geq 1$, are uniformly integrable. For instance, if $Z_n \xrightarrow{\text{\rm Law}} Z$, and there exists $\delta >0$ such that $\sup\limits_{n\geq 1}\mathbb{E}[|Z_n|^{k+\delta}] < \infty$, then the sequence $|Z_n|^k$ is uniformly integrable, and hence $\E[Z_n^r] \rightarrow \E[Z^r]$ for every $r=1,\dots,k$ (see \cite[Chapter 6]{DasGupta} for a concise overview about moment convergence and uniform integrability).\\

For a probability measure $\mu$, the notation $X \sim \mu$ will indicate that the random variable $X$, with finite moments of every order, is distributed according to $\mu$. Recall that  a probability measure $\mu$, all of whose moments are finite,  is said to be \textit{determined by its moments} $\E[X^n]$, with $X \sim \mu$, if and only if, given a random variable $Y$ such that $\E[X^k]=\E[Y^k]$ for all $k\geq 1$,  necessarily $Y \sim \mu$. For a sequence of random variables $Z_n$ and a random variable $Z$, whose moments are finite (or, more weakly, such that the random variables $|Z_n|^k$ are uniformly integrable for every $k\geq 1$),  $\E[Z_n^k] \rightarrow \E[Z^k]$ for every $k\geq 1$ implies that $Z_n \xrightarrow{\text{\rm Law}} Z$  whenever the law of $Z$ is determined by its moments (see, for instance, \cite[Theorem A.3.1]{NourdinPeccatilibro}). Such statement encodes the so-called \textit{method of moments and cumulants}.\\

Finally, recall that the \textit{Wasserstein distance} between two  random variables $T, F$  taking values in $\mathbb{R}^d$, is defined as:
$$ d_{\mathcal{W}}(T,F)  = \sup_{h \in H}\big\{ | \E[h(T)]  - \E[h(F)] |    \big\}  ,$$
where $H$ is the class of the Lipschitz  functions  $h:\mathbb{R}^d \rightarrow \mathbb{R}$, with Lipschitz constant less or equal than $1$. The Wasserstein distance is particularly useful within approximations of probability laws because the induced topology is strictly stronger than that of convergence in distribution: this means that, given a sequence of random variables $\{F_n\}_{n\geq 1}$, if $d_{\mathcal{W}}(T,F_n) \rightarrow 0$ as $n \rightarrow \infty$, then $F_n \xrightarrow{\text{\rm Law}} T$.

\subsection{Moment-cumulant formula for classical random variables}

Let $\chi_j(X)$, for $j \in \mathbb{N}$, denote the (classical) $j$-th \textit{cumulant} of a random variable $X$ having moments of all orders, namely 
$$ \chi_j(X) = (-\imath)^j \dfrac{\partial^{n}}{\partial{t}^n}\ln \E[\exp(\imath tX)],$$
with $\E[\exp(\imath tX)]$ denoting the moment generating function of $X$ (see, for instance, ~\cite[Chapter 3]{PeccatiTaqqu}).

If $X_1,\dots,X_n$ are random variables on a fixed classical probability space, moments and cumulants are related by the formula:
\begin{equation}
\label{MomCum}
\E[X_1 X_2 \cdots X_n ] = \sum_{\sigma \in \mathcal{P}([n])} \prod_{B \in \sigma } \chi(X_j: j \in B)
\end{equation}
with $\chi(X_j:j \in B)$ denoting the \textit{multidimensional cumulant}, given by \textit{M\"obius inversion}:
\begin{equation}
\label{MomCum2}
\chi(X_1,\dots,X_n) = \sum_{\sigma \in \mathcal{P}([n])} \mu(\sigma, \hat{1} ) \prod_{B \in \sigma} \E\big[\prod_{j \in B}X_j\big],
\end{equation}
and $\mu(\sigma,\hat{1})$ denoting the \textit{M\"{o}bius function} on the interval $[\sigma, \hat{1}]$ \cite{Rota2}. Note that, originally, cumulants are defined via:
$$ \chi(X_1,\dots,X_n) = (-\imath)^n \dfrac{\partial^{n}}{\partial{t_1}\cdots \partial{t_n}}\ln \E\bigg[\exp\bigg(\imath \sum_{j=1}^n t_j X_j    \bigg)\bigg].$$
In particular, setting $X_j=X$ for all $j$, the \textit{generalized cumulant} $\chi_{\pi}(X)$ of $X$ is the multiplicative function defined for $\pi \in \mathcal{P}([n])$, and satisfying the formula:
$$ \E[X^n] = \sum\limits_{\pi \in \mathcal{P}([n])} \chi_{\pi}(X),$$ 
with $\chi_\pi(X)= \prod\limits_{B\in \pi} \chi_{|B|}(X)$, or equivalently, \index{Cumulant}
$$\chi_n(X) = \sum_{\pi \in \mathcal{P}([n])}  \mu(\pi, \hat{1}) \prod_{B \in \pi} \E[X^{|B|}],$$
where $|B|$ denotes the cardinality of the block $B$. In particular, $\chi_1(X) = \E[X]$, $\chi_2(X) = \E[X^2]- \E[X]^2$ and, if $X$ is centered, the \textit{kurtosis}\index{Cumulant!Kurtosis} $\chi_4(X)$ is given by $\chi_4(X)= \E[X^4]-3\E[X^2]^2$.\\
The most important feature of cumulants is that they characterize independence better than moments, in the sense that $\chi(X_{i_1},\dots,X_{i_n}) =0$ whenever there exists $b \subset [n]$ such that $\{X_{i_j}:j \in b\}$ is independent of $\{X_{i_j}:j \in [n]\setminus b\}$. This property entails in turn the \textit{additivity} of cumulants, that is, if $X$ and $Y$ are independent, then $\chi_n(X+Y) = \chi_n(X) + \chi_n(Y)$ for every $n\geq 1$.\\

\paragraph*{Main distributions}
The main distributions that will be encountered in the sequel are listed below, together with some of their principal properties. \\

In the sequel, $\mathcal{N}(0,1)$ will denote the standard Gaussian distribution \index{Gaussian distribution} with density $x\mapsto \frac{1}{\sqrt{2\pi}}e^{-x^2/2}$ on $\mathbb{R}$, so that the notation $N\sim \mathcal{N}(0,1)$ will indicate that the random variable $N$ is distributed according to the standard Gaussian law; similarly, $\mathcal{N}(m,\sigma^2)$ denotes the Gaussian distribution with mean $m$ and variance $\sigma^2$. The Gaussian law $\mathcal{N}(m, \sigma^2)$ is determined by its moments: if $N\sim \mathcal{N}(m, \sigma^2)$, then $\E[N]=m$ and, for every $k\geq 2$,
$$
\E[N^{k}]=
\begin{cases} 
0 & \text{ if } k \text{ is odd;}\\
\sigma^{k}(k-1)!! & \text{ if } k \text{ is even,}
\end{cases}
$$
where $(k-1)!! = (k-1)(k-3)\cdots 5 \cdot 3 \cdot 1$. Equivalently, $\chi_1(N)=m$, $\chi_2(N)= \sigma^2$, $\chi_k(N)=0$ for all $k\geq 3$.\\

For $\alpha, \beta >0$, the Gamma distribution $\Gamma(\alpha,\beta)$ of shape parameter $\alpha$ and rate $\beta$, is the probability measure on the real interval $(0,\infty)$, with density $\dfrac{1}{\Gamma(\alpha)}\beta^{\alpha}x^{\alpha-1}e^{-\beta x}dx$, where \linebreak$\Gamma(\alpha)= \int_{0}^{\infty}x^{\alpha-1}e^{-x}dx$ denotes the Gamma function. The Gamma distribution is also determined by its moments: if $X \sim \Gamma(\alpha,\beta)$, its moments are given by the formula:
$$ \E[X^{k}]= \dfrac{1}{\beta^k}\prod\limits_{j=1}^k (\alpha + j-1).$$
Therefore, if $X \sim \Gamma(\alpha,\beta)$,  for any real number $c>0$, $cX \sim \Gamma(\alpha, \frac{\beta}{c})$. In particular, for $k \in \mathbb{N}$ the Gamma distribution $\Gamma(\frac{k}{2},\frac{1}{2})$ concides with the $\chi^2(k)$ distribution with $k$ degree of freedom; moreover, $X \sim \chi^2(k)$ if $X \stackrel{\text{Law}}{=} N_1^2 + \cdots + N_k^2$, where $N_1,\dots, N_k$ are i.i.d. standard normal random variables. \\

For $\lambda >0$, a discrete random variable $X(\lambda)$ with  probability mass determined by \linebreak$\mathbb{P}(X(\lambda)=n) = \dfrac{e^{-\lambda}\lambda^n}{n!}$, is called a \textit{Poisson} random variable of parameter (or rate) $\lambda$. The Poisson law is characterized by having all the cumulants equal to the rate $\lambda$, namely $\chi_m(X(\lambda))= \lambda$ for every $m\geq 1$.\\

For Gaussian systems, the moment-cumulant formula \eqref{MomCum} simplifies due to the vanishing of the cumulants of order greater or equal than $3$. Indeed, every Gaussian system of centered random variables $\{N_i\}_{i\geq 1}$ is completely determined by its covariance structure, and \eqref{MomCum} reduces to the so-called \textit{Wick formula}\index{Wick Formula}: 
\begin{equation}
\label{Wick}
\mathbb{E}[N_{i_1}\cdots N_{i_k}]  = \sum_{\sigma \in \mathcal{P}_2([k])} \prod_{\{r,s\}\in \sigma}\E[N_{i}N_j] .
\end{equation}

\subsection{Gaussian Wiener Chaos}

The main references concerning Gaussian spaces and the theory of stochastic integration with respect to an isonormal Gaussian processes are \cite{NourdinPeccatilibro,NualartBook}, where the accent is put on the Malliavin Calculus and the Stein's method for normal approximations; for a survey on the Brownian motion, see \cite{RevuzYor}.\\

Given a (possible separable) real Hilbert space $\mathcal{H}$, an \textit{isonormal Gaussian process} \linebreak$G=\{G(h): h \in \mathcal{H}\}$ is a Gaussian field indexed on $\mathcal{H}$, with covariance given by $\E[G(h_1)G(h_2)] = \langle h_1,h_2\rangle_{\mathcal{H}}$. \\

When $\mathcal{H}= \mathrm{L}^2(\mathbb{R}_+)$\glossary{name={$\mathbb{R}_+$},description={Set of the non-negative real numbers}}, it can be easily shown that the stochastic process $t \mapsto W_t := G(1_{[0,t]})$ defines a \textit{Wiener process}\index{Wiener Process}  (also known as \textit{Brownian motion}\index{Brownian Motion}), up to a continuity property. Recall that a Wiener process is a stochastic process $\{W_t\}_{t\geq 0}$ such that:
\begin{enumerate}
\item $W_t \sim \mathcal{N}(0,t)$, with $W_0 = 0$ a.s.;
\item if $0 \leq t_1 < t_2 < \cdots < t_n$, the increments $W_{t_1}, W_{t_2}-W_{t_1},\dots, W_{t_n}-W_{t_{n-1}}$ are independent;
\item the increments have stationary distribution, that is $W_{t-s} \stackrel{\text{Law}}{=}W_{t}-W_{s}$ for $s < t$;
\item continuity: the path $t \mapsto W_t$ is continuous with probability $1$.
\end{enumerate}
In this case, stochastic integration with respect to $\{W_t\}_{t \geq 0}$ reduces to the classical theory of \textit{multiple Wiener-It\^{o} integrals}, that will be denoted by $I_{d}^{W}(g)$ and will be introduced in the next definition (for a general isonormal process $G$,  multiple stochastic integrals $I_d^{G}(f)$ with respect to $G$ are defined accordingly, and enjoy the same properties \cite{NourdinPeccatilibro}).

\begin{defn}
Let $g$ be a simple function in $\mathrm{L}^{2}(\mathbb{R}_{+}^{q})$,  \textit{vanishing on diagonals} (namely,\linebreak $g(i_1,\dots,i_q)=0$ whenever $i_j=i_k$ for $j\neq k$), say $g= \prod\limits_{j=1}^{q}\mathbf{1}_{(s_j,t_j)}$, with $(s_j,t_j)$ pairwise disjoint intervals of the positive real line. The \textbf{multiple Wiener integral} of order $q$ of $g$ is defined by:
$$I_q^{W}(g) = \big(W_{t_1}-W_{s_1}\big)\cdots (W_{t_q}-W_{s_q}).$$
\end{defn}
By linearity, the last definition can be extended to every function that is a finite linear combination of simple functions vanishing on diagonals, and then, by a density argument, to every symmetric function in $\mathrm{L}^{2}(\mathbb{R}_{+}^{q})$. Moreover, the density in $\mathrm{L}^2(\mathbb{R}_+^{q})$ of the set of  simple functions (also called elementary functions), vanishing on diagonals, implies that every multiple integral can be approximated (in $\mathrm{L}^{2}$-norm) by simple integrals having the form of multilinear homogeneous  polynomials in independent Gaussian random variables:
$$ Q_{\mathbf{N}}(f) = \sum_{i_1,\dots,i_q=1}^{n} f(i_1,\dots,i_q)N_{i_1} \cdots N_{i_q},$$
with $f(i_1,\dots,i_q)=0$ whenever $i_j=i_k$ for $j\neq k$. 

\begin{rmk}
Similarly, random variables living in the \textit{Poisson Wiener Chaos} (resp. \linebreak\textit{Rademacher Chaos}) of order $d\geq 1$ can be represented as homogeneous sums of degree $d$ in independent random variables having the (compensated) Poisson (resp. symmetric Bernoulli) distribution, and with symmetric, vanishing on diagonals coefficients. See \cite{PeccatiTaqqu,PeccatiSole,Peccati_Lachieze}  for stochastic analysis on the Poisson Chaos, and \cite{NourdinPeccatiReinert2} and the references therein for the Rademacher Chaos. \\
\end{rmk}

The product of multiple Gaussian Wiener-It\^{o} integrals of symmetric functions $f \in \mathrm{L}^2(\mathbb{R}_+^{d})$, $g \in \mathrm{L}^2(\mathbb{R}_+^{q})$ linearizes in the sum of integrals of \textit{contraction kernels}, defined as:
\begin{equation}\label{GenContraction}
f \otimes_r g (t_1,\dots,t_{d+q-2r}) = \int_{\mathbb{R}^r}f(t_1,\dots,t_{d-r},s_1,\dots,s_r)g(s_r,\dots,s_1, t_{d-r+1},\dots,t_{d+q-2r})ds_1\cdots ds_r \,
\end{equation}
in the sense that:
$$ I_d^{W}(f) I_q^{W}(g) = \sum_{r=0}^{min\{d,q\}} \binom{d}{r}\binom{q}{r}r! I_{d+q-2r}^{W}(f \widetilde{ \otimes_r} g),$$
where $\widetilde{g}$ denotes the standard symmetrization of the function $g$\glossary{name={$\mathfrak{S}_d$}, description={The symmetric group on $d$ elements}}: 
$$ \widetilde{g}(t_1, \dots,t_q) = \dfrac{1}{q!} \sum_{\sigma \in \mathfrak{S}_q} g(t_{\sigma(1)}, \dots,t_{\sigma(q)}). $$

\begin{rmk}
Note that the notation $f \otimes_r g$ will be used also in the non-commutative probability setting, where it will replace the standard notation $\stackrel{r}{\smallfrown}$ for contractions, introduced in the seminal paper \cite{BianeSpeicher}. In order to facilitate the connection with the commutative setting, we will always use  the notation $\otimes_r$ for contractions of symmetric functions in $\mathrm{L}^2$-spaces, typical of the classical probability literature, whereas  the notation $\stackrel{r}{\smallfrown}$ will be reserved to discrete kernels, that will play a prominent role for the whole discussion.\\
\end{rmk}

The isometry property:
$$\E[I_{d}^{W}(f)I_{q}^{W}(g)] = \sqrt{d!}\sqrt{q!}\, \langle f,g \rangle_{\mathrm{L}^2(\mathbb{R}_+^{d})} \, \delta_{d,q},$$ yields the following useful formula for the fourth moment of $I_d^{W}(f)$:
\begin{align*}
\E[I_d^{W}(f)^4] &= \sum_{r=0}^d \binom{d}{r}^{4}r!^2 \, \|f \widetilde{ \otimes_r} f  \|^2,
\end{align*}
from which it can be deduced that, for $d \geq 2$ and $f \neq 0$:
\begin{equation}\label{Pos4cum}
\chi_4(I_d^{W}(f)) = \E[I_d^{W}(f)^4] - 3\E[I_d^{W}(f)^2]^2 = \sum_{r=1}^{d-1} \binom{d}{r}^{4}r!^2 \, \|f \widetilde{ \otimes_r} f  \|^2 \,>0 
\end{equation}
(see, for instance, \cite[Lemma 5.2.4]{NourdinPeccatilibro}).\\

\textit{Hermite polynomials} $H_n(x)$ are determined by the recurrence relation $H_0=1, H_1(x)=1$, $H_{n+1}(x) = xH_n(x) -nH_{n-1}(x)$ for every $n\geq1$, and correspond to the orthogonal polynomial sequence of the (standard) Gaussian law. 
Sums of finite products of Hermite polynomials in independent Gaussian random variables form the so-called \textit{Wiener homogeneous chaos}, by virtue of the formula:
\begin{equation}
\label{IntegralClassic}
H_{m_1}(N_{i_1})H_{m_{2}}(N_{i_2})\cdots H_{m_k}(N_{i_k}) = I_{m}^{W}\big(e_{i_1}^{\otimes m_1}\otimes e_{i_2}^{\otimes m_2}\otimes \cdots \otimes e_{i_k}^{\otimes m_k}\big),
\end{equation}
provided that $i_1 \neq i_2 \neq \cdots \neq i_k$, $m=m_1+\cdots +m_k$, with $\{e_j\}_{j\geq 1}$ orthonormal basis of $\mathrm{L}^2(\mathbb{R}_+)$ and $\{N_j\}_{j\geq 1}$ the associated Gaussian isonormal process (namely, $\{N_j\}_{j\geq 1}$ is the sequence of  independent standard normal random variables determined by $N_j = I_1^{W}(e_j)$). \\
Moreover, if $N \sim \mathcal{N}(0,1)$, then:
$$ 
\E[H_d(N)^{m}] = \E[\big(I_d^{W}(e^{\otimes d})\big)^{m}] =
\begin{cases}
0 &\text{ if } md \text{ is odd;}\\
|\mathcal{P}_2^{\star}(d^{\otimes m})|& \text{ if } md \text{ is even,}
\end{cases}
$$
where $\mathcal{P}_2^{\star}(d^{\otimes m})$ denotes the set of partitions in $\mathcal{P}_2([dm])$ that respect $\pi^{\star} = d^{\otimes m}. $

\section{Elements of Free Probability Theory}\label{AppendixfreeProb}

This section aims at giving a brief overview of free probability theory, summarizing the tools and the results that will be used throughout the whole discussion. The reader is referred to the fundamental references \cite{Speicher,Voiculescu} for a more detailed presentation.\\

A \textit{free probability space} (also called \textit{non-commutative probability space})  is a pair $(\mathcal{A},\varphi)$\index{Free Probability space}, where $\mathcal{A}$ is a unital algebra over $\mathbb{C}$\glossary{name={$\C$}, description={Field of complex numbers}}, and $\varphi: \mathcal{A} \rightarrow \mathbb{C}$ is a unital linear functional, that is $\varphi(1_{\mathcal{A}})=1$ if $1_{\mathcal{A}}$ denotes the unity of $\mathcal{A}$. When it is not otherwise specified, it will be always assumed that $\mathcal{A}$ is a \textit{$\ast$-algebra}, and that the state $\varphi$  satisfies the following properties:
\begin{enumerate}
\item $\varphi$ is a \textit{trace}: $\varphi(a b)= \varphi(b a)$ for every $a, b \in \mathcal{A}$;
\item $\varphi$ is \textit{positive}: if $a^{\ast}$ denotes the adjoint of an element $a \in \mathcal{A}$, then $\varphi(a a^{\ast}) \geq 0$;
\item $\varphi$ is \textit{faithful}: $\varphi(a a^{\ast}) = 0$ implies that $a=0$.
\end{enumerate}
A \textit{$W^{\ast}$-probability space} is a free probability space $(\mathcal{A},\varphi)$, where $\mathcal{A}$ is a \textit{von Neumann algebra} of operators (that is, an algebra of bounded operators on a Hilbert space that is closed under adjoint and convergence in weak operator topology) and $\varphi$ is a positive, faithful trace.\\

An element $a \in \mathcal{A}$ is \textit{self-adjoint} if $a=a^{\ast}$. If $a$ is self-adjoint, its \textit{spectral radius} is defined as $\rho(a)=\lim\limits_{k \rightarrow \infty}|\varphi(a^{2k}) |^{\frac{1}{2k}}$;  if $\rho(a)$ is finite, then $a$ is called a (bounded) \textit{random variable}. If $a$ is a random variable, the elements of the sequence $\{\varphi(a^{m}): m \in \mathbb{N}\}$ are called the \textit{moments} of $a$: in particular, a random variable $a$ with zero mean ($\varphi(a) = 0$) will be called centered (if a random variable $b$ is not centered, the element $b- \varphi(b)1$ is called the centering of $b$). It is always preferable to work with self-adjoint elements since, for every bounded random variable $a$, there exists a unique real measure $\mu_{a}$, with compact support contained in $[-\rho(a), \rho(a)]$ (called the \textit{law}, or the \textit{distribution} of $a$), that establishes the following integral representation for the moments of $a$:
$$ \varphi(a^{k}) = \int_{\mathbb{R}}x^{k}\mu_{a}(dx).$$
The proofs of the existence and the uniqueness of such measure can be found in \cite[Proposition 3.13]{Speicher} or also \cite[Theorem 2.5.8]{Tao}.  
Thanks to the positivity of the state $\varphi$, the following Cauchy-Schwarz \index{Cauchy-Schwarz inequality} inequality applies: for every $a, b \in \mathcal{A}$, 
$$ |\varphi(a b^{\ast}) |^{2} \leq \varphi(a a^{\ast}) \varphi(b b^{\ast}).$$

The unital subalgebras $\mathcal{A}_1, \dots, \mathcal{A}_n$ of $\mathcal{A}$ are said to be \textit{freely independent} \index{Free Probability space!Free independence} if, for every $k \geq 1$, for every choice of positive integers $i_1,\dots, i_k$ with $i_j \neq i_{j+1}$, and centered random variables $a_{i_j} \in \mathcal{A}_j$,  $\varphi(a_{i_1}a_{i_2}\cdots a_{i_k}) = 0$. Centered random variables $a_1,\dots, a_n$ are said to be \textit{freely independent} if the (unital) subalgebras they generate are freely independent.\\

\subsection{Moment-cumulant formula for non-commutative random variables}

For a non-commutative random variable $Y$, write $\kappa_j(Y)$, $j \in \mathbb{N}$, to indicate the $j$-th {\it free cumulant} of $Y$ (see \cite[Lecture 11]{Speicher}).\\

Given a vector of random variables of the type $(Y_{i_1},...,Y_{i_{n}})$ (with possible repetitions), for every partition $\sigma = \{b_1,...,b_k\} \in \mathcal{NC}([n])$,  the \textit{generalized free joint cumulant} is defined by:
$$
\kappa_\sigma(Y_{i_1},...,Y_{i_{n}}) = \prod_{j=1}^k \kappa(Y_{i_a} : a\in b_j),
$$
where $\kappa(Y_{i_a}: a\in b_j)$ is the \textit{free joint cumulant} of the random variables composing the vector $(Y_{i_a} : a\in b_j)$, ordered according to the order of the elements in $b_j$. Generalized free cumulants satisfy the moment-cumulant formula: 
\begin{equation}\label{MomCumFree}
 \varphi(Y_{i_1}\cdots Y_{i_n}) = \sum_{\sigma \in \mathcal{NC}([n])}\kappa_\sigma(Y_{i_1},...,Y_{i_{n}}),
 \end{equation}
or its inversion on the lattice $\mathcal{NC}([n])$:
$$\kappa(Y_{i_1},\dots, Y_{i_n}) = \sum_{\pi \in \mathcal{NC}([n])}  \mu(\pi, \hat{1}) \prod_{b \in \pi} \varphi(\prod_{j \in b} Y_{i_j})$$
with $\mu(\pi,\hat{1})$ denoting the \textit{M\"{o}bius function} on the interval $[\pi, \hat{1}]$ (see \cite[Lecture 11]{Speicher} for more details). 
In particular, when $Y_{i}=Y$ for all $i$, the generalized \textit{free cumulant} \index{Free cumulant} $\kappa_{\pi}(Y)$ is the mapping $Y\mapsto \kappa_\pi(Y)= \prod\limits_{b\in \pi} \kappa_{|b|}(Y)$ defined for $\pi \in \mathcal{NC}([n])$, verifying the formula:
$$ \varphi(Y^n) = \sum_{\pi \in \mathcal{NC}([n])} \kappa_{\pi}(Y),$$ 
or, equivalently, its inversion:
$$\kappa_n(Y) = \sum_{\pi \in \mathcal{NC}([n])}  \mu(\pi, \hat{1}) \prod_{b \in \pi} \varphi(Y^{|b|}).$$

For instance, the first four cumulants of a random variable $a$ are given by:
\begin{enumerate}
\item  $\kappa_1(a) = \varphi(a)$, called the mean;
\item  $\kappa_2(a) = \varphi(a^2) - \varphi(a)^2$, called the variance;
\item  $\kappa_3(a) = 2\varphi(a)^3 + \varphi(a^3) - 3 \varphi(a^2) \varphi(a)$;
\item $\kappa_4(a) = \varphi(a^4) -2 \varphi(a^2)^2 +10 \varphi(a^2) \varphi(a)^2  - 4 \varphi(a) \varphi(a^3) - 5\varphi(a)^4$. The fourth cumulant $\kappa_4(a)$ is called \textit{free kurtosis}\index{Free cumulant!Free kurtosis}. When $a$ is centered, $\kappa_4(a) = \varphi(a^4)-2\varphi(a^2)^2$.
\end{enumerate}

The most important feature about free cumulants is their behaviour on freely independent arguments:  the \textit{additivity property}, that is, $\kappa_n(X+Y) = \kappa_n(X) + \kappa_n(Y)$ for every $n\geq 1$, whenever $X$ and $Y$ are freely independent. This is a consequence of the vanishing property of mixed cumulants: if $\mathcal{A}_1,\dots,\mathcal{A}_n$ are freely independent unital subalgebras, and $a_i \in \mathcal{A}_i$ is a centered random variable, then $\kappa(a_{i_1},\dots, a_{i_k})=0$ whenever there exist $l\neq j$ with $i_l \neq i_j$ (see \cite[Theorem 11.16]{Speicher} for the precise statement). \\

For a probability measure $\mu$ with compact support and a random variable $X$, the notation $X \sim \mu$ will stand for $X$ having $\mu$ as distribution. Given a sequence $\{Y_n\}_{n\geq 1}$ of random variables on $(\mathcal{A},\varphi)$, it will be said that $Y_n$ \textit{converges in law} (or \textit{in distribution}) to  a random variable $Y$ defined on $(\mathcal{A},\varphi)$ if, for every $m\geq 1$:
$$\lim_{n\rightarrow \infty}\varphi(Y_n^{m}) = \varphi(Y^m). $$

\paragraph*{Main distributions}$ $

Some of the most important distributions that will be encountered in the sequel are listed below.

\begin{itemize}
\item[-] A centered random variable $s \in \mathcal{A}$ is called a \textit{semicircular element} \index{Semicircular random variable} of parameter $\sigma^2 > 0$ (for short, $s \sim \mathcal{S}(0,\sigma^2)$) if its distribution is the \textit{Wigner semicircle law}\index{Semicircular random variable!Wigner Semicircle law} on the interval $[-2\sigma,2\sigma]$ given by:
$$ \mathcal{S}(0,\sigma^2)(dx) = \dfrac{1}{2\pi \sigma^2}\sqrt{4\sigma^2 - x^{2}} dx. $$
If $\sigma=1$, $s$ is called a \textit{standard semicircular random variable}. 

The even moments of a semicircular element $s$ of parameter $\sigma^2$ are given by:
$$ \int_{-2\sigma}^{2\sigma}x^{2m}\mathcal{S}(0,\sigma^2)(dx) = C_m \sigma^{2m},$$
with $\{C_m\}_{m\geq 1}$ denoting the sequence of the \textit{Catalan numbers}\index{Semicircular random variable!Catalan numbers}, namely $C_m = \frac{1}{m+1}\binom{2m}{m}$, while all its  odd moments are equal to zero. Equivalently, $\kappa_1(s)=0$, $\kappa_2(s) = \sigma^{2}$ and $\kappa_n(s) = 0$ for all $n\geq 3$.
\item[-]
A random variable $X(\lambda) \in \mathcal{A}$ is called a \textit{free Poisson element} of parameter $\lambda > 0$ if its distribution has the form:
$$p(\lambda)(dx) = (1-\lambda)\delta_0 + \lambda \tilde{\nu} \qquad  \text{ for } \lambda \leq 1, $$
$$ p(\lambda)(dx) = \dfrac{1}{2\pi x}\sqrt{4\lambda -(x-\lambda-1)^{2}}\; 1_{((1-\sqrt{\lambda})^{2},(1+\sqrt{\lambda})^{2})}(dx), \qquad \text{ for } \lambda > 1$$
(where $\delta_0$ is the Dirac's mass at $0$). 
Denote by $Z(\lambda)$ a centered \textit{free Poisson random variable} \index{Free Poisson distribution}of parameter $\lambda >0$, namely $Z(\lambda) = X(\lambda) - \lambda 1$. As shown in \cite[Proposition 2.4]{NourdinPeccati1}, the moments of $Z(\lambda)$ are given by:
$$ \varphi\big(Z(\lambda)^{m}\big) = \sum_{j=1}^{m}\lambda^{j}R_{m,j},$$
with $R_{m,j}$ counting the number of non-crossing partitions in $\mathcal{NC}([m])$ having no singleton and having exactly $j$ blocks. In particular, if $\lambda =1$, $ \varphi\big(Z(1)^{m}\big) =  R_m$, the $m$-th \textit{Riordan number}\index{Free Poisson distribution!Riordan numbers}, counting the number of non-crossing partitions in $\mathcal{NC}([m])$ having no singletons. Equivalently, $\kappa_1(Z(\lambda))=0$ and $\kappa_n(Z(\lambda)) = \lambda$ for all $n\geq 2$.

\item[-] The \textit{free Poisson distribution} with integer parameter $p$ and the standard semicircle law correspond each other via the second Chebyshev polynomial. \index{Chebyshev polynomials}Indeed, if $S \sim \mathcal{S}(0,1)$, then $U_2(S) \stackrel{\text{Law}}{=} Z(1)$, and more generally, $Z(p) \stackrel{\text{Law}}{=} \sum\limits_{j=1}^{p}(S_j^{2}-1)$, with $S_1,\dots,S_p$ freely independent standard semicircular elements (see \cite{NourdinPeccati1}). 

\item[-] The \textit{free symmetric Bernoulli law} (or \textit{free Rademacher law}) \index{Rademacher law, free} is the probability measure $ \mu = \dfrac{1}{2}\delta_1 + \dfrac{1}{2}\delta_{-1}$, with $\delta_{x}$ denoting the Dirac's delta at the point $x$. Then, if $X \sim \mu$, $\varphi(X^{m}) = 1$ for every even integer $m$, and $\varphi(X^{n}) = 0$ for every odd integer $n$.

\item[-] A random variable $\mathcal{T}$ is said to be \textit{Tetilla distributed} \index{Tetilla law}if $\mathcal{T} \stackrel{\text{Law}}{=} \frac{1}{\sqrt{2}}(s_1 s_2 + s_2 s_1)$, the standardized commutator between two freely independent standard semicircular random variables $s_1, s_2$. It can be shown that, for every $m \geq 1$:
$$ \kappa_m(\mathcal{T})=
\begin{cases}
2^{1-\frac{m}{2}} &\text{ if } m \text{ is even}, \\
0 & \text{ otherwise},
\end{cases}$$
or, equivalently,
$$ \varphi(\mathcal{T}^m)=
\begin{cases}
\dfrac{1}{n\;2^n}\sum\limits_{k=1}^n 2^k\binom{2n}{k-1}\binom{n}{k} &\text{ if } m = 2n, \\
0 &\text{ otherwise}
\end{cases}
$$
(see \cite[Lemma 2.6 and Proposition 2.8]{NourdinDeya2}).
\end{itemize}

Moreover, if $s_1,\dots, s_n$ are standard semicircular elements, with covariance $\varphi(s_i s_j) = C_{i,j}$ such that the matrix $C = (C_{i,j})$ is symmetric and positive definite, the joint moments of $s_1,\dots,s_n$ are completely determined by $C$ according to the following Wick-type formula\index{Wick Formula}: for every $m\geq 1$ and every choice of positive integers $i_1,\dots,i_m \in [n]$,
$$ \varphi(s_{i_1}s_{i_2}\cdots s_{i_m}) = \sum_{\pi \in \mathcal{NC}_2([m])}\prod_{\{r,p\}\in \pi}\varphi(s_{i_r}s_{i_p}).$$

\subsection{Wigner Chaos}

\textit{Wigner Chaos} \index{Wigner Chaos} is the non-commutative counterpart to the Gaussian Wiener chaos, and corresponds to the theory of stochastic integration with respect to a free Brownian motion, that has been first developed in \cite{BianeSpeicher} and then further investigated in \cite{KempNourdinPeccatiSpeicher}.  Note that the notation used in the sequel is largely inspired from the set up of \cite{BianeSpeicher}.\\

For every $p: 1 \leq p < \infty$, denote by $\mathrm{L}^{p}(\mathcal{A},\varphi)$ the space obtained by completion of $\mathcal{A}$ with respect to the norm $\| a\|_p = \varphi(|a|^{p})^{\frac{1}{p}}$, with $|a|$ such that $|a|^{2} = a^{\ast}a$. 

If $\{\mathcal{A}_t\}_{t \geq 0}$ denotes a filtration of  unital subalgebras of $\mathcal{A}$ (namely, $\{\mathcal{A}_t\}_{t \geq 0}$ is an increasing sequence of subalgebras: $\mathcal{A}_s \subset \mathcal{A}_t$ for $s \leq t$), a \textit{free Brownian motion} \index{Free Brownian motion} is as a collection $S = \{S(t)\}_{t \geq 0}$ of self-adjoint operators in $(\mathcal{A},\varphi)$ such that:
\begin{enumerate}
\item for every $t \geq 0$, $S(t)\sim \mathcal{S}(0,t)$ and $S(t) \in \mathcal{A}_t$;
\item (\textit{stationary increments}) for every $0 \leq t_1 < t_2$, the increment $S(t_2) - S(t_1)$ has the same distribution as $S(t_2-t_1)$;
\item (\textit{freely independent increments}) for every $0 \leq t_1 < t_2$, the increment $S(t_2) - S(t_1)$ is freely independent of $\mathcal{A}_{t_1}$.
\end{enumerate}

Let $q \geq 2$ be an integer. A function $f \in \mathrm{L}^{2}(\mathbb{R}_{+}^{q})$ is said to be \textit{mirror symmetric} if 
$$f(t_1, t_{2},\dots,t_q) = f(t_q,\dots,t_2,t_1).$$
More generally, a complex valued kernel $f$ is mirror symmetric if  $f(t_1, t_{2},\dots,t_q) = \overline{f(t_q,\dots,t_2,t_1)}$, for every $t_1,\dots,t_q \in \mathbb{R}_+$, where $\overline{f(t_q,\dots,t_2,t_1)}$ denotes the complex conjugate of $f(t_q,\dots,t_2,t_1)$. \\

Given a free Brownian motion $S$ on $(\mathcal{A},\varphi)$, the construction of the Wigner stochastic integral of a function $f \in \mathrm{L}^{2}(\mathbb{R}_{+}^{q})$, denoted by $I_q^{S}(f)$ (that is, the stochastic integral with respect to a free Brownian motion) requires exactly the same steps as those included in the definition of the classic Wiener-It\^{o} integrals with respect to a (classical) Brownian motion. 
\begin{defn}
Let $g$ be a simple function in $\mathrm{L}^{2}(\mathbb{R}_{+}^{q})$, vanishing on diagonals, namely \linebreak $g= \prod\limits_{j=1}^{q}\mathbf{1}_{(a_j,b_j)}$, with $(a_j,b_j)$ pairwise disjoint intervals of the positive real line. The \textbf{multiple Wigner integral} of $g$, of order $q$, is defined as:
$$I_q^{S}(g) = \big(S(b_1)-S(a_1)\big)\cdots (S(b_q)-S(a_q)).$$
\end{defn}
By linearity, the last definition can be extended to every function that is a finite linear combination of simple functions vanishing on diagonals. As for the Wiener stochastic integration, for such functions the following isometric relation holds:
$$ \langle I_q^{S}(f), I_q^{S}(g)\rangle_{\mathrm{L}^{2}(\mathcal{A},\varphi)} = \langle f,g\rangle_{L^{2}(\mathbb{R}_+^{q})},$$
that leads to the definition of the Wigner integral of any $f \in \mathrm{L}^{2}(\mathbb{R}_{+}^{q})$ by a density argument. Therefore, a special role is played by Wigner integrals of simple functions, having the form of multilinear homogeneous polynomials in freely independent standard semicircular random variables:
$$ Q_{\mathbf{S}}(f)= \sum_{i_1,\dots,i_q=1}^n f(i_1,\dots,i_q)S_{i_1}\cdots S_{i_q}, $$
with mirror symmetric coefficients such that $f(i_1,\dots,i_q) = 0$ if $i_j = i_l$ for $i\neq l$. Moreover, $I_q^{S}(g)$ is self-adjoint if and only if $f$ is mirror symmetric.\\

\textit{Chebyshev polynomials}\index{Chebyshev polynomials} (of the second kind) are defined via the recurrence relation $U_{0}(x) = 1$, $U_1(x) = x$, and $U_{m+1}(x) = xU_m(x) - U_{m-1}(x) \; \text{ for every } m \geq 1$, and they constitute  the unique family of polynomials  that are orthogonal  with respect to the standard semicircle Wigner law $s(dx) = \dfrac{1}{2\pi}\sqrt{4-x^2}(dx)$ on the interval $[-2,2]$ (for more details, see \cite{Ans3, Chihara}). In the framework of the Wigner stochastic integration, this family of polynomials plays the same role as the Hermite polynomials for the multiple integrals of Wiener-It\^{o} type (see e.g. \cite[Chapter 2]{NourdinPeccatilibro}).

In particular,  since the free Brownian motion admits a representation in terms of operators on the Fock space associated with a Hilbert space $\mathcal{H}$ (for instance, $\mathcal{H} = L^{2}(\mathbb{R}_{+})$), for every $k\geq 1$ and for every choice of positive integers $m_1, \dots,m_k$, it can be shown that (see \cite{Ans3},\cite{BianeSpeicher}):
\begin{equation}
\label{Integral}
U_{m_1}(S_{i_1})U_{m_{2}}(S_{i_2})\cdots U_{m_k}(S_{i_k}) = I_{m}^{S}\big(e_{i_1}^{\otimes m_1}\otimes e_{i_2}^{\otimes m_2}\otimes \cdots \otimes e_{i_k}^{\otimes m_k}\big),
\end{equation}
provided that $i_1 \neq i_2 \neq \cdots \neq i_k$, $m=m_1+\cdots +m_k$. Here $\{e_j\}_{j\geq 1}$ is an orthonormal basis of $\mathcal{H}$ and $\{S_j\}_{j\geq 1}$ denotes the associated free Brownian motion, that is, $\{S_j\}_{j\geq 1}$ is the sequence of freely independent standard semicircular elements determined by $S_j = I_1^{S}(e_j)$. Moreover, for every $m \geq1$:
$$ 
\varphi\big(U_d(S)^{m}\big) = \varphi\big((I_d^{S}(e^{\otimes d}))^{m}\big) =
\begin{cases}
0 &\text{ if } md \text{ is odd;}\\
|\mathcal{NC}_2^{\star}(d^{\otimes m})|& \text{ if } md \text{ is even,}
\end{cases}
$$
where $S \sim \mathcal{S}(0,1)$ and $\mathcal{NC}_2^{\star}(d^{\otimes m})$ denotes the set of the partitions in $\mathcal{NC}_2([dm])$ that respect the partition $\pi^{\star} = d^{\otimes m}$.\\

If $f \in \mathrm{L}^{2}(\mathbb{R}^{d}), g \in \mathrm{L}^{2}(\mathbb{R}^{q})$, the product of the corresponding multiple Wigner integrals can be computed via the \textit{multiplication formula}:
\begin{equation}\label{MultFormulaFree}
I_d^{S}(f) I_q^{S}(g) = \sum_{r=0}^{\min(d,q)} I_{d+q-2r}^{S}(f \otimes_r g),
\end{equation}
where the contraction $f\otimes_r g \in \mathrm{L}^2(\mathbb{R}^{d+q-2r})$ of mirror symmetric kernels $f \in \mathrm{L}^2(\mathbb{R}^{d}), g \in \mathrm{L}^2(\mathbb{R}^{q})$ is defined as in \eqref{GenContraction}. 
Note that the notation $f\otimes_r g$ here adopted  for contractions,  corresponds to the notation $f \stackrel{r}{\smallfrown} g$ used in \cite{BianeSpeicher}. Here the notation $\stackrel{r}{\smallfrown}$ will be used exclusively for contractions of discrete kernels $f:[n]^d \rightarrow \mathbb{R}$, as introduced in Definition \ref{starcontraction} in Part \ref{Invariance}, via \eqref{contraction}.\\


As for Gaussian Wiener integrals, if $f \neq 0$, the isometry property entails the following formula for the fourth moment of $I_d^{S}(f)$:
$$ \varphi(I_d^{S}(f)^4) = \sum_{r=0}^d \| f\otimes_r f \|^{r},$$
from which the positiveness of the fourth cumulant can be deduced (see \cite[Corollary 1.7]{KempNourdinPeccatiSpeicher}):
\begin{equation}\label{Pos4cumFree}
\kappa_4(I_d^{S}(f)) = \varphi(I_d^{S}(f)^4) - 2\varphi(I_d^{S}(f)^2)^2 = \sum_{r=1}^{d-1}\| f \otimes_r f \|^2 >0\,. 
\end{equation}

$ $\\
\textit{Free Charlier polynomials} \index{Free Charlier polynomials} of parameter $t >0$ have been introduced in \cite{Ans3} as those polynomials satisfying the recurrence relation: $C_{0,0}(x,t) \in \mathbb{R}$,
$$ C_{0,1}(x,t) = x C_{0,0}(x,t), \qquad C_{0,m+1}(x,t) = (x-1)C_{0,m}(x,t) - t C_{0,m-1}(x,t) \quad \forall \, m \geq 1,$$
which make the sequence $\{C_{0,m}(x,t)\}_{m\geq 1}$ orthogonal with respect to the probability distribution of the free Poisson law $Z(t)$ of parameter $t \in \mathbb{R}_{+}$.

Setting $C_{0,0}=1$, the following formula relating Free Charlier and Chebyshev polynomials  can be proved by a simple induction argument:
\begin{equation}
\label{CharlierToCheby1}
C_{0,k}\big(U_{2}(x),1\big) = U_{2k}(x,1) \qquad \forall k \geq 1.
\end{equation}

Identity \eqref{CharlierToCheby1} provides the free analogue of the following correspondence between (generalized) Laguerre polynomials $L_n^{(\alpha)}(x)$ and Hermite polynomials $H_n(x)$:
$$ H_{2n}(x) = (-1)^{n}2^{2n}n! L_{n}^{(-\frac{1}{2})}(x^2)$$
(see, for instance, \cite{Abramovitz_Stegun}). Moreover, identity (\ref{CharlierToCheby1}) entails that $U_{2k}(S) \stackrel{\text{Law}}{=} C_{0,k}\big(Z(1),1\big)$,  where $S$ denotes a random variable with the standard semicircular distribution and $Z(1)$ a random variable with the free Poisson distribution of parameter $1$.

%

\section{Orthogonal polynomials}\label{Pre_Orth}

It is out of the scope of this section to provide a self-contained short preface about orthogonal polynomials, as the involved mathematics is very rich and constantly updated. This is why here only some basic facts will be recorded: any other result hereafter quoted can be traced to the fundamental references  \cite{Chihara,Szego}. \\

Consider a linear functional $\EE:\C[x]\rightarrow \C$, with $\EE[1] \neq 0$. A sequence of polynomials $\{p_n(x)\}_{n\geq 0}$ in $\C[x]$, with $p_n(x)$ of degree $n$, is called a sequence of orthogonal polynomials (for short, OPs) for $\EE$ if $ \EE[p_{k}(x)p_n(x)] = 0$ for all $k\neq n$, and $\EE[p_n(x)^2]\neq 0$. Given the sequence of its moments $\{\EE[x^n]\}_{n\geq 0}$, there are plenty of results concerning the so-called \textit{Hamburger moment problem} for $\EE$, consisting in determining the existence (and the uniqueness) of a real measure $\mu$ such that the following integral representation holds:
$$ \EE[x^n] = \int_{\mathbb{R}}t^n \mu(t)dt. $$
When seeking for a measure with support included in the positive half-line $(0,\infty)$, the problem of moments is referred to as \textit{Stieltjes' moment problem}, while for compactly supported measures one speaks about the \textit{Hausdorff's moment problem}. 

If a sequence of orthogonal polynomials $p_n(x)$ exists for $\EE$, then it is uniquely determined up to a non-zero multiplicative factor, in the sense that if $\{q_n(x)\}_{n\geq 0}$ is another orthogonal sequence for $\EE$, then there exists non-zero constants $c_n \in \C$ such that $q_n(x) =c_n p_n(x)$ for every $n\geq 0$. 

A necessary and sufficient condition for the existence of an OPs for $\EE$ is given by the non-vanishing of all the Hankel determinants $\det( \EE[x^{i+j}])_{i,j=0,\dots,n}$ (see, for instance, \cite[Theorem 3.1]{Chihara}). Orthogonal polynomials $p_n(x)$'s are a basis of the vector space $\C[x]$, in the sense that every polynomial $\pi(x) \in \C[x]$ of degree $n$ can be written as
$$ \pi(x)= \sum_{k=0}^n d_k p_{k}(x), \text{ with } d_k = \dfrac{\EE[\pi(x)p_k(x)]}{\EE[p_k(x)^2]}.$$
This property entails that every sequence of orthogonal polynomials can be characterized by a $3$-terms recurrence relation: given $p_0(x) \in \C$ and $p_1(x)= a x +b$,  there exist two sequences $\{\alpha_n\}_{n\geq 1}, \{\beta_n\}_{n\geq 1}$ (usually called Jacobi-Szego parameters), such that
$$p_{n+1}(x) = (x-\alpha_{n+1})p_n(x) - \beta_{n+1}p_{n-1}(x)$$
for every $n\geq 1$: this result  usually goes under the name of Favard's Theorem. Another nice feature of orthogonal polynomials is that $p_n(x)$ has simple real roots for every $n\geq 1$.

\mainmatter
\thispagestyle{empty}
\part{A multidimensional invariance principle in the free probability setting}\label{Invariance}

\chapter*{Synopsis}
\addcontentsline{toc}{chapter}{Synopsis}

The findings exposed in the present part are taken from \cite{Simone}. \\

Let $d\geq 2$ be an integer, and let $$Q_{\bs{x}}(f_n) = \sum\limits_{i_1,\dots,i_d=1}^{n}f_n(i_1,\dots,i_d)x_{i_1}\cdots x_{i_d}$$ be a homogeneous polynomial of degree $d$ in non-commuting variables $\bs{x} = \{x_i\}_{i\geq 1}$;  suitable assumptions on the coefficient $f_n:[n]^{d}\rightarrow \mathbb{R}$ will be required (see Definition \ref{admissible_Univ}). \\

The goal of this part is to develop a new collection of techniques allowing one to compare the distribution of vectors of homogeneous sums in freely independent random variables on a fixed non-commutative probability space $(\mathcal{A},\varphi)$. Moreover, new universality results for $U$-statistics in free probability spaces will be derived, with explicit comparisons with analogous phenomena in the classical setting. In order to accomplish this task, we shall focus on a family of $U$-statistics based on Chebyshev polynomials, that we shall name \textit{Chebyshev sums} (see Definition \ref{SumCheby2}). In its simplest form, a Chebyshev sum is a polynomial of the type: $$Q_{\bs{x}}^{(h)}(f) = \sum_{i_1,\dots,i_d=1}^{n} f(i_1,\dots,i_d) U_h(x_{i_1})\cdots U_h(x_{i_d}),$$ where $U_h(x)$ denotes the $h$-th Chebyshev polynomial (of the second kind) on the interval $[-2,2]$. \\

The strategy of the proof  expands ideas introduced in \cite{NourdinPeccatiReinert}. Let $\mathbf{W}_1, \mathbf{W}_2$ be freely independent sequences of freely independent random variables. For a given $m\geq 1$, in order to estimate the discrepancy between the laws of $(Q_{\mathbf{W}_1}(f_n^{(1)}),\dots,Q_{\mathbf{W}_1}(f_n^{(m)}))$ and $(Q_{\mathbf{W}_2}(f_n^{(1)}),\dots,Q_{\mathbf{W}_2}(f_n^{(m)}))$, first assess the difference between the joint moments of  $(Q_{\mathbf{W}_i}(f_n^{(1)}),\dots,Q_{\mathbf{W}_i}(f_n^{(m)}))$ and those of a vector of Chebyshev sums $(Q_{\bs{S}}^{(\bs{h})}(f_n^{(1)}),\dots,Q_{\bs{S}}^{(\bs{h})}(f_n^{(m)}))$ for $i=1,2$ (where $\bs{S}=\{S_i\}_{i\geq 1}$ is a sequence of freely independent standard semicircular random variables). Finally,  apply the triangle inequality. In this way, it is sufficient to focus on the proximity in law between a vector of Chebyshev sums and a vector of homogeneous sums $(Q_{\mathbf{W}_1}(f_n^{(1)}),\dots,Q_{\mathbf{W}_1}(f_n^{(m)}))$.\\
The technique here adopted is a generalized Lindberg method relying on \textit{influence functions}, which has been developed in \cite{Mossel} and then successfully applied in \cite{NourdinPeccatiReinert} to derive the (multidimensional) universality of the homogeneous Gaussian Wiener chaos. Afterwards, this version of the Lindberg method has been adapted in \cite{NourdinDeya} to fit the non-commutative setting in the unidimensional case. As a consequence, it has been established that homogeneous sums in semicircular entries and with symmetric coefficients, enjoy the following property: for $d\geq 2$, $Q_{\bs{S}}(f_n) \stackrel{\text{ Law }}{\longrightarrow} \mathcal{S}(0,1)$ implies that $Q_{\bs{X}}(f_n) \stackrel{\text{ Law }}{\longrightarrow} \mathcal{S}(0,1)$ for any other sequence $\bs{X}=\{X_i\}_{i\geq 1}$ of freely independent centered random variables having unit variance. For short, this property is customarily referred to by saying that the semicircle law is \textit{universal} for semicircular approximations of homogeneous sums.\\

The questions that will be tackled in the sequel can be summarized as follows:
\begin{enumerate}
\item are there other ``universal laws''  for semicircular approximations of homogeneous sums? In other words, is it possible to find another sequence of freely independent r.v.'s $\bs{Y}=\{Y_i\}_{i\geq 1}$ such that $Q_{\bs{Y}}(f_n)  \stackrel{\text{ Law }}{\longrightarrow} \mathcal{S}(0,1)$  implies that $Q_{\bs{X}}(f_n)$ has the same asymptotic behaviour for any other sequence $\bs{X}=\{X_i\}_{i\geq 1}$ of freely independent random variables?
\item Is it possible to prove a similar universality result if the target limit law is the free Poisson distribution (or other laws)?
\item If the answers to the previous questions are positive, is it possible to extend these results to a general multidimensional setting, as done in the classical case in \cite{NourdinPeccatiReinert}?
\end{enumerate}

The invariance principle achieved via Theorem \ref{Multiinvariance2} provides a positive answer to all the three questions in a unified way, supplied with some technical results. Therefore, Theorem \ref{Multiinvariance2} represents the first multidimensional universality principle for homogeneous sums proved in a free setting.\\

As to non-central convergence, it is worth to remark that so far, in the classical setting, the only law that is known to be universal for Gamma approximations of homogeneous sums is the Gaussian distribution \cite{NourdinPeccatiReinert}. For the non-commutative counterpart to the Gamma law, that is, the free Poisson law, the results presented in the present part show a new infinite collection of universal distributions with respect to  free Poisson approximations. The same technique can be transferred to the commutative setting, in order to provide other instances of universal laws for Gamma approximations of homogeneous sums. \\

The structure of the present part can be summarized as follows:
\begin{enumerate}
\item in Chapter \ref{AgenInvPrinc}, the general framework is described. In particular, the notation is fixed while the basic definitions and preliminaries are given. Then, in Section \ref{MainResult}, our main result, that is, a multidimensional version of the Lindberg principle in a free setting, is stated via Theorem \ref{Multiinvariance2};
\item Chapter \ref{Chapter_Universality} exploits the results presented in Chapter \ref{AgenInvPrinc} to directly answer the above questions; this presentation is supplied with several remarks and examples. Finally,  the commutative counterpart is discussed: in particular, Proposition \ref{HermiteSum} extends the universality of Gaussian homogeneous sums to the universality of \textit{Hermite sums}. 
\end{enumerate}

At the beginning of each chapter, an additional overview  about the contents therein discussed will be provided.

\section*{Bibliographic comments}
The Lindberg method for the Central Limit Theorem has been established in \cite{Lindberg}. 
Influence functions were first employed to describe universal asymptotic behaviour of multilinear polynomials in \cite{Rotar}, and they have gained renewed interest thanks to the paper \cite{Mossel}, where a general invariance principle for multilinear homogeneous polynomials (based on the Lindberg method) is provided with explicit bounds depending on the maximum of the influence functions. Thanks to this technique, several applications have been  developed  in terms of influence functions: in particular, the authors solved the so called ``Major is stablest'' conjecture, from theoretical computer science, and the ``It ain't over until it's over'' conjecture arising in the economic theory of social choice. A companion paper was later provided (\cite{Mossel2}), where the multidimensional version of the invariance principle can be found in the case one of the sequences under consideration lives in a discrete probability space: afterwards, it has been extended to the case where one of the sequences is a Gaussian system in \cite{NourdinPeccatiReinert}.\\

The invariance principle in \cite{Mossel} has then been fruitfully combined with the Fourth Moment Theorem from \cite{NualartPeccati} to prove that the Gaussian distribution satisfies a universality phenomenon for homogeneous sums with respect to  Gaussian and Gamma approximation (see \cite{NourdinPeccatiReinert} for both the unidimensional and the multidimensional frameworks). Similar results for central convergence have been established for the discrete Poisson Chaos in \cite{PeccatiZheng2} and \cite{PeccatiZheng1}. 

In \cite{Peccati2}, as an application of the universality of the Gaussian Wiener Chaos \cite{NourdinPeccatiReinert}, the authors provide a multidimensional CLT for spectral moments of non-Hermitian random matrices with real-valued i.i.d. entries. See also \cite[Chapter 11]{NourdinPeccatilibro} and the bibliographic comments therein for a survey of the existing literature on the topic. \\

In \cite{NourdinDeya}, the aforementioned invariance principle based on influence functions was adapted to fit the framework of homogeneous polynomials in freely independent random variables living in a non-commutative probability space: as a consequence, the authors established the free counterpart, in dimension 1, to \cite{NourdinPeccatiReinert}, namely, the universality of the Wigner chaos for semicircular approximations.\\

Several other generalizations of the Lindberg method have been developed to provide, for instance, the universality of the circular law for i.i.d. random matrices and for their least singular value (\cite{TaoPaper}), or an invariance principle for smooth functionals of independent and weakly dependent random variables (\cite{ChatterjeeLindeberg}). See also \cite{Kargin} for a different proof of the Central Limit Theorem for non-commutative random variables, based on the Lindberg method and holding under a weaker assumption than the usual free independence of the summands.


\chapter{A general invariance principle}\label{AgenInvPrinc}

\section{Overview, notation and preliminaries}

In the sequel, $\bs{x}=\{x_i\}_{i \geq 1}$ will denote a sequence of non-commutative variables. As anticipated in the introduction, a crucial role will  be played by the Chebyshev polynomials.

\begin{defn}\label{ChebyPoly}
The polynomials $\{U_n(x)\}_{n\geq 0}$ defined via the recurrence relation $U_{0}(x) = 1$, $U_1(x) = x$, and $U_{m+1}(x) = xU_m(x) - U_{m-1}(x)  \text{ for every } m \geq 1$, are called \textbf{Chebyshev polynomials}\index{Chebyshev polynomials} (of the second kind): they constitute the unique  family of polynomials that is orthogonal with respect to the Wigner semicircle law \index{Wigner semicircle law} $$s(dx) = \dfrac{1}{2\pi}\sqrt{4-x^2}(dx)$$ on the interval $[-2,2]$, where uniqueness is meant up to multiplicative coefficients. 
\end{defn} 
For instance, $U_1(x)=x, U_2(x) = x^2 -1$, $U_3(x) = x^3 - 2x$ (for more details, see \cite{Ans3, Chihara}).

\begin{defn}\label{admissible_Univ}
Let $d \geq 1$ be an integer. For every $n \in \mathbb{N}$, a function $f:[n]^{d}\rightarrow \mathbb{R}$ is called an \textbf{admissible kernel} if it verifies the following properties: 
\begin{itemize}
\item[(i)] \textrm{mirror symmetry}\index{Mirror symmetry}: $f(i_1, i_2,\dots,i_d) = f(i_d, \dots,i_2, i_1)$ for every $i_1,\dots, i_d \in [n]$;
\item[(ii)] \textrm{vanishing on diagonals}: $f(i_1,\dots,i_d) = 0$ whenever $i_j = i_k$ for $j\neq k$;
\item[(iii)] $f$ has unit variance: \begin{equation}
\label{variance}
 \|f\|^{2}:= \sum\limits_{i_1,\dots,i_d=1}^{n}f(i_1,\dots,i_d)^2 = 1.
\end{equation}
\end{itemize}
\end{defn}

\begin{defn}
Let $\bs{h}=(h_1,\dots,h_d)$ be a vector of positive integers such that $h_i = h_{d-i+1}$ for every $i=1,\dots, \lfloor \frac{d}{2}\rfloor$ (if $d\geq 2$). If $f$ is an admissible kernel, the \textbf{Chebyshev sum} \index{Chebyshev polynomials!Chebyshev sum} of orders $\bs{h}= (h_1,\dots,h_d)$ and kernel $f$ is defined by the formula:
\begin{equation}
\label{SumCheby2}
Q_{\bs{x}}^{(\bs{h})}(f) = \sum_{i_1,\dots,i_d=1}^{n}f(i_1,\dots,i_d)U_{h_1}(x_{i_1})\cdots U_{h_{d}}(x_{i_d}).\\
\end{equation}
\end{defn}

The simplest example of Chebyshev sums are multilinear homogeneous  polynomials of degree $d$, occurring when $h_i=1$ for every $i=1,\dots,d$: 
\begin{equation}
\label{QN}
Q_{\bs{x}}(f) = \sum_{i_1,\dots, i_d=1}^{n}f(i_1,\dots,i_d)x_{i_1}\cdots x_{i_d}.
\end{equation}

Henceforth, $(\mathcal{A},\varphi)$ will denote a fixed  $W^{\star}$-probability space, that is, $\mathcal{A}$ is a von-Neumann algebra of operators, and $\varphi$ is a tracial positive faithful state on it. We shall say that a random variable  $Y$ satisfies Assumption {\bf (1)} if it is centered and has unit variance,  namely if $\varphi(Y)=0$ and $\varphi(Y^2)=1$. \\

Note that, if $\bs{X} = \{X_i\}_{i\geq 1}$ denotes a sequence of freely independent random variables, the conditions $h_i = h_{d-i+1}$ for $i=1,\dots, \lfloor \frac{d}{2}\rfloor$ if $d\geq 2$, ensure that $Q_{\bs{X}}^{(\bs{h})}(f)$ is a self-adjoint element in $\mathcal{A}$, and hence a properly defined random variable whose law is uniquely determined by the sequence of its moments. Indeed, compactly supported measures are uniquely determined by the sequence of their moments by Weierstrass's Theorem. \\

Contraction operators between kernels of multiple stochastic integrals play an important role in  \textit{fourth moment}-type statements and multiplication formulae (see \cite[Proposition 1.25]{KempNourdinPeccatiSpeicher}). In the next definition, we will introduce contractions of discrete kernels. As shown in the subsequent discussion, discrete contractions may be used to describe the contractions operators $\otimes_r$ introduced via formula \eqref{GenContraction}. It is worth to stress again that here the notation $\stackrel{r}{\smallfrown}$ is used only for contractions of discrete kernels, while in \cite{BianeSpeicher} it corresponds to the contractions here denoted with $\otimes_r$, and defined in \eqref{GenContraction}.

\begin{defn}
\label{starcontraction}
For $n, d, p \in \mathbb{N}$, consider the functions $f:[n]^{d} \rightarrow \mathbb{R}$ and $g:[n]^{p} \rightarrow \mathbb{R}$. For every $r=1,\dots, \min(d,p)$, the (discrete) \textbf{star contraction} \index{Contraction} $f \star_{r}^{r-1} g : [n]^{d + p - 2r +1} \rightarrow \mathbb{R}$ is given by:
\begin{align*}
f \star_{r}^{r-1} g (t_1&,\dots, t_{d-r},\gamma, s_1,\dots, s_{p-r}) = \\
&=\sum\limits_{i_1,\dots, i_{r-1}=1}^{n}f(t_1,\dots, t_{d-r},\gamma, i_1,\dots, i_{r-1})g(i_{r-1},\dots, i_{1}, \gamma,s_1,\dots,s_{p-r}).
\end{align*}
For every $q=0,\dots, \min(d,p)$,  the \textbf{contraction of order $q$} is the function $f\stackrel{q}{\smallfrown}g:[n]^{d+p-2q}\rightarrow \mathbb{R}$, defined as:
\begin{align*}
f \stackrel{q}{\smallfrown}&\, g \;(t_1,\dots, t_{d-q},s_1,\dots,s_{p-q}) = \numberthis \label{contraction} \\
&= \sum\limits_{i_{1},\dots,i_q=1}^{n}f(t_1,\dots,t_{d-q},i_1,\dots,i_q) g(i_q,\dots,i_1,s_1,\dots,s_{p-q}) \,.\notag 
\end{align*}
\end{defn}

Contractions can be defined over tensor powers $\mathcal{H}^{\otimes k}$ of any (possibly separable) real Hilbert space $\mathcal{H}$, extending by linearity the following definition: for every $r=1,\dots, \min(d,p)$,
\begin{equation}
\big(e_{i_1}\otimes \cdots \otimes e_{i_d} \big) \otimes_r \big(e_{j_1}\otimes \cdots \otimes e_{j_p}\big) = \prod_{l=0}^{r-1}\langle e_{i_{d-l}}, e_{j_{l+1}}\rangle_{\mathcal{H}} e_{i_1}\otimes \cdots \otimes e_{i_{d-r}}\otimes e_{j_{r+1}}\otimes \cdots \otimes e_{j_{p}},
\end{equation}
where $\langle \cdot, \cdot \rangle_{\mathcal{H}}$ denotes the inner product on $\mathcal{H}$ (see, for instance, \cite[Appendix B]{NourdinPeccatilibro}). In particular:
\begin{align*}
\big(e_{i_1}\otimes \cdots \otimes e_{i_d} \big) \otimes_d \big(e_{j_1}\otimes \cdots \otimes e_{j_d}\big) &= \langle e_{i_1}\otimes \cdots \otimes e_{i_d}, e_{j_d}\otimes \cdots \otimes e_{j_1}\rangle_{\mathcal{H}^{\otimes d}} \\
&= \prod_{l=1}^{d}\langle e_{i_l}, e_{j_{d-l+1}} \rangle_{\mathcal{H}} \;,
\end{align*}
where $\langle \cdot,\cdot\rangle_{\mathcal{H}^{\otimes d}}$ denotes the inner product on $\mathcal{H}^{\otimes d}$ induced by $\langle \cdot, \cdot\rangle_{\mathcal{H}}$. Therefore, if $f \in \mathcal{H}^{\otimes p}$ and $g \in \mathcal{H}^{\otimes d}$, then $f \otimes_r g \in \mathcal{H}^{\otimes p+d-2r}$. 

Discrete contractions as introduced in Definition \ref{starcontraction} are related to the contractions defined via formula \eqref{GenContraction} as follows.
Given two discrete kernels $f_1:[n]^d \rightarrow \mathbb{R}, f_2:[n]^p \rightarrow \mathbb{R}$, set:  
$$k_1:= \sum\limits_{i_1,\dots,i_d=1}^n f_1(i_1,\dots,i_d) e_{i_1}\otimes \cdots \otimes e_{i_d} \in \mathcal{H}^{\otimes d},$$
$$k_2:= \sum\limits_{j_1,\dots,j_p=1}^n f_2(j_1,\dots,j_p) e_{j_1}\otimes \cdots \otimes e_{j_p} \in \mathcal{H}^{\otimes p}. $$
Then, for every $r= 0, \dots, \min(d,p)$:
$$ k_1 \otimes_r \, k_2  = \sum_{\substack{i_1,\dots,i_{d-r}\in [n] \\ j_1,\dots,j_{p-r} \in [n]}} f_1 \stackrel{r}{\smallfrown} f_2(i_1,\dots,i_{d-r},j_1,\dots,j_{p-r}) e_{i_1}\otimes \cdots \otimes e_{i_{d-r}}\otimes e_{j_1}\otimes \cdots \otimes e_{j_{p-r}}.  $$ 

\begin{exm}
If $\{e_i\}_{i\geq 1}$ is an orthonormal sequence of $\mathcal{H}$, then:
\begin{enumerate}
\item $\big(e_1 \otimes e_2 \otimes e_{3} \big) \otimes_2 \big(e_3 \otimes e_2 \otimes e_1 \big) = \langle e_3,e_3\rangle_{\mathcal{H}} \langle  e_2,e_2 \rangle_{\mathcal{H}} e_1\otimes e_{1} = e_1\otimes e_1;$
\item $\big(e_1 \otimes e_2 \otimes e_3\big)  \otimes_1 \big(e_4 \otimes e_2 \otimes e_5\big) = \langle e_3,e_4\rangle_{\mathcal{H}} e_1 \otimes e_2 \otimes e_2 \otimes e_5 =0$.
\item For $n > 2$, consider $f:[n]^2 \rightarrow \mathbb{R}$ defined via $f(i,j)= \dfrac{1}{\sqrt{n-2}}$ for $i \neq j$, and $f(i,i) = 0$. Then
$$f \stackrel{1}{\smallfrown} f(h,k) = 
\begin{cases}
1 & \text{ if } h \neq k \\
\dfrac{n-1}{n-2} & \text{ if } h=k.
\end{cases}
$$
\end{enumerate}
\end{exm}

\begin{rmk}
The symbol of the norm $\|\cdot \|$ will be used for both the (square root) of the variance of a discrete kernel (as in (\ref{variance})) and for vectors in the fixed Hilbert space: the use of the symbol will be clear from the context. Moreover, in order to simplify the notation, the subscripts for the norms $\|f \|_{\mathcal{H}^{\otimes r}},  f \in \mathcal{H}^{\otimes r}$, will be omitted.\\
\end{rmk}

From now on, let $\bs{h}= (h_1,\dots, h_d)$ denote a fixed vector of orders for Chebyshev sums, with $h_i = h_{d-i+1}$ for all $i=1,\dots,\lfloor \frac{d}{2} \rfloor$, and consider fixed a real separable Hilbert space $\mathcal{H}$, with orthonormal basis $\{e_i\}_{i \geq 1}$ (in general, $\mathcal{H}=\mathrm{L}^2(\mathbb{R}^{q})$ for a certain $q\geq 2$). If $m=h_1 + \cdots + h_d$, every admissible kernel $f:[n]^d \rightarrow \mathbb{R}$ can be uniquely associated with the element $k(f)$ in $\mathcal{H}^{\otimes m}$ defined by:
\begin{equation}
\label{kN1}
 k(f) = \sum_{i_1,\dots,i_d=1}^{n}f(i_1,\dots,i_d)e_{i_1}^{\otimes h_1}\otimes e_{i_2}^{\otimes h_2}\otimes  \cdots \otimes e_{i_d}^{\otimes h_d}.
 \end{equation}

In view of the constraints on $h_1,\dots,h_d$, $k(f)$ is mirror symmetric (as a function of $m$ variables) if and only if $f$ is mirror symmetric (as a function of $d$ variables). \\

Contractions of the kernel $f$ and of the kernel $k=k(f)$ are related via:
\begin{align*}
k &\otimes_r k = \\
& \quad \sum_{\substack{ i_1,\dots, i_{d-q} \in [n] \\ j_{q+1},\dots, j_{d} \in [n] }} f \stackrel{q}{\smallfrown} f (i_1,\dots, i_{d-q},j_{q+1},\dots, j_{d}) e_{i_1}^{\otimes h_1} \otimes \cdots \otimes e_{i_{d-q}}^{\otimes h_{d-q}}\otimes  e_{j_{q+1}}^{\otimes h_{j_{q+1}}} \otimes \cdots \otimes e_{j_d}^{\otimes h_{d}}                
\end{align*}
if $r= h_1 + \cdots + h_q$, for $q=1,\dots, d-1$, while
\small{
\begin{align*}
&k \otimes_r k = \\
&\; \sum_{\substack{ i_1,\dots, i_{d-q} \in [n] \\ j_q, j_{q+1},\dots, j_{d} \in [n] }} f \star_q^{q-1} f (i_1,\dots, i_{d-q},j_q, \dots, j_{d}) e_{i_1}^{\otimes h_1}  \cdots \otimes e_{i_{d-q}}^{\otimes h_{d-q}}\otimes e_{j_q}^{\otimes 2(h_q - t)}\otimes e_{j_{q+1}}^{\otimes h_{j_{q+1}}} \cdots \otimes e_{j_d}^{\otimes h_{d}}    
   \end{align*}} \normalsize  if $r= \sum\limits_{j=1}^{q-1}h_j + t$, for some $t=1,\dots, h_q -1$ and $q=1,\dots, d$.

As a consequence, contractions of the kernel $f$ and of the kernel $k:=k(f)$ enjoy the following properties, whose proofs follow via straightforward computations (see also \cite[Lemma 3.4]{NourdinPeccatiReinert}).

\begin{prop}
\label{contraction5}
Let $f:[n]^d \rightarrow \mathbb{R}$ be an admissible kernel, and  For the fixed $\bs{h}=(h_1,\dots,h_d)$, consider the kernel $k:=k(f)$ as in \eqref{kN1}. For every $r=1,\dots, m - 1$,
\begin{itemize}
\item[(i)] if $r= h_1 + \cdots + h_q$, for $q=1,\dots, d-1$, then: $$ \|k \otimes_r k \| = \|f\stackrel{q}{\smallfrown} f \|;$$
\item[(ii)] if $r= \sum\limits_{j=1}^{q-1}h_j + t$, for some $t=1,\dots, h_q -1$ and $q=1,\dots, d$, then: $$ \|k \otimes_r k \|  = \|f \star_{q}^{q-1} f \|. $$
\end{itemize}
\end{prop}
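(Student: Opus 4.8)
The plan is to unwind the explicit formulas for $k \otimes_r k$ displayed just before the statement, compute the squared Hilbert-space norms directly, and recognize the resulting sums as the squared norms of the appropriate discrete contractions. Both cases reduce to the same mechanism: the tensor $k = k(f)$ is built from the orthonormal basis vectors in blocks of sizes $h_1, \dots, h_d$, so whenever two basis vectors are identified by an inner product the corresponding index labels must coincide, and the orthonormality of $\{e_i\}_{i\geq 1}$ turns the inner product of two simple tensors into a product of Kronecker deltas.

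For part (i), fix $q \in \{1,\dots,d-1\}$ and $r = h_1 + \cdots + h_q$. Using the displayed expression
\[
k \otimes_r k = \sum f \stackrel{q}{\smallfrown} f(i_1,\dots,i_{d-q},j_{q+1},\dots,j_d)\, e_{i_1}^{\otimes h_1}\otimes \cdots \otimes e_{i_{d-q}}^{\otimes h_{d-q}} \otimes e_{j_{q+1}}^{\otimes h_{q+1}} \otimes \cdots \otimes e_{j_d}^{\otimes h_d},
\]
I would take the squared norm. Since the exponents $h_1,\dots,h_{d-q},h_{q+1},\dots,h_d$ are strictly positive, two summands corresponding to distinct multi-indices $(i_1,\dots,i_{d-q},j_{q+1},\dots,j_d)$ are orthogonal in $\mathcal{H}^{\otimes(m-r)}$, and each simple tensor appearing has unit norm. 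Hence
\[
\|k \otimes_r k\|^2 = \sum_{\substack{i_1,\dots,i_{d-q}\in[n]\\ j_{q+1},\dots,j_d\in[n]}} \big(f \stackrel{q}{\smallfrown} f(i_1,\dots,i_{d-q},j_{q+1},\dots,j_d)\big)^2 = \|f \stackrel{q}{\smallfrown} f\|^2,
\]
which is exactly (i). For part (ii), fix $q$ and $t \in \{1,\dots,h_q-1\}$ with $r = \sum_{j=1}^{q-1}h_j + t$, and argue identically from the second displayed formula: the middle block $e_{j_q}^{\otimes 2(h_q - t)}$ has exponent $2(h_q - t) \geq 2 > 0$, so again all simple tensors are distinct unit vectors indexed bijectively by $(i_1,\dots,i_{d-q},j_q,\dots,j_d)$, and taking squared norms yields $\|k\otimes_r k\|^2 = \|f \star_q^{q-1} f\|^2$.

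The only genuine point requiring care — and the step I would treat as the main obstacle — is justifying the two displayed identities for $k \otimes_r k$ themselves, i.e.\ checking that the block structure of $k(f)$ interacts with the definition of $\otimes_r$ exactly as claimed. One must verify that contracting $r = h_1 + \cdots + h_q$ slots consumes precisely the first $q$ blocks of one copy and the last $q$ blocks of the other (here the mirror-symmetric placement $e_{j_q}^{\otimes h_q}\otimes\cdots$ in the second factor of $k\otimes_r k$, combined with $h_i = h_{d-i+1}$, is what makes the inner products pair up block-against-block and collapse to $f \stackrel{q}{\smallfrown} f$), while an $r$ falling strictly inside a block, $r = \sum_{j=1}^{q-1}h_j + t$, leaves $h_q - t$ unpaired copies of $e_{i_{d-q+1}}$ on each side, which merge into the factor $e_{j_q}^{\otimes 2(h_q-t)}$ and produce the $\star_q^{q-1}$ pattern. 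These are the "straightforward computations" alluded to in the text and parallel \cite[Lemma 3.4]{NourdinPeccatiReinert}; once they are in place, the norm identities follow from orthonormality as above with no further work.
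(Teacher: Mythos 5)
Your proposal is correct and follows essentially the same route as the paper, which states the two block-decomposition identities for $k\otimes_r k$ (relying on the mirror symmetry $h_i=h_{d-i+1}$ to align the contracted blocks) and then treats the norm identities as the straightforward consequence of orthonormality that you spell out. The only work the paper leaves implicit is exactly the verification you identify as the main step, so nothing is missing.
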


\begin{prop}
\label{contraction6}
Let $f:[n]^d \rightarrow \mathbb{R}$ be an admissible kernel, and consider $k:=k(f)$ as in \eqref{kN1}. For the fixed $\bs{h}=(h_1,\dots,h_d)$, assume that $m:= h_1+\cdots + h_d $ is even.  
\begin{itemize}
\item[(i)] If $d$ is even (and so $h_1 + \cdots +h_d = 2(h_1 + \cdots + h_{\frac{d}{2}})$), then:
$$ \|k \otimes_{\frac{m}{2} }k - k \| = \|f \stackrel{\frac{d}{2}}{\smallfrown}f - f \|;$$
\item[(ii)] if $d$ is odd (and therefore $h_1 + \cdots +h_d = 2(h_1 + \cdots + h_{\frac{d-1}{2}}) + h_{\frac{d+1}{2}}$ is even whenever $h_{\frac{d+1}{2}}$ is even), then:
$$ \|k \otimes_{\frac{m}{2}} k - k \| = \|f\star_{\frac{d+1}{2}}^{\frac{d-1}{2}}f - f \|.$$
\end{itemize}
\end{prop}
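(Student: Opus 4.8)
The plan is to reduce everything to the pattern already used for Proposition \ref{contraction5}, and then handle the single extra ``$-f$'' term by hand. Recall that by \eqref{kN1} the element $k=k(f)$ is built by replacing each index $i_j$ occurring in $f$ with the block $e_{i_j}^{\otimes h_j}$. The key observation is that when $m=h_1+\cdots+h_d$ is even and $r=m/2$, the contraction $k\otimes_{m/2}k$ pairs the last $m/2$ tensor slots of the first copy with the first $m/2$ slots of the second copy, \emph{in reversed order}. Because of the symmetry constraint $h_i=h_{d-i+1}$, the cut at position $m/2$ falls exactly at a block boundary when $d$ is even (it separates the blocks $e^{\otimes h_1},\dots,e^{\otimes h_{d/2}}$ from $e^{\otimes h_{d/2+1}},\dots,e^{\otimes h_d}$), whereas when $d$ is odd it falls in the \emph{middle} of the central block $e_{i_{(d+1)/2}}^{\otimes h_{(d+1)/2}}$, splitting it into two halves of size $h_{(d+1)/2}/2$ each (which is why $h_{(d+1)/2}$ must be even). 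This is precisely the dichotomy between case (i), governed by the ordinary contraction $\smallfrown$, and case (ii), governed by the star contraction $\star$, exactly as in Proposition \ref{contraction5}.

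First I would write down $k\otimes_{m/2}k$ explicitly using the displayed formulas preceding Proposition \ref{contraction5} (the ``$r=h_1+\cdots+h_q$'' formula with $q=d/2$ in the even case, and the ``$r=\sum_{j=1}^{q-1}h_j+t$'' formula with $q=(d+1)/2$, $t=h_{(d+1)/2}/2$ in the odd case). In the even case this gives
\[
k\otimes_{m/2}k=\sum_{i_1,\dots,i_{d/2},\,j_{d/2+1},\dots,j_d} f\stackrel{d/2}{\smallfrown}f(i_1,\dots,i_{d/2},j_{d/2+1},\dots,j_d)\, e_{i_1}^{\otimes h_1}\otimes\cdots\otimes e_{j_d}^{\otimes h_d},
\]
and since the exponents $h_1,\dots,h_{d/2},h_{d/2+1},\dots,h_d$ appearing here are exactly those used to build $k(g)$ out of any $g:[n]^d\to\mathbb{R}$, we recognize that $k\otimes_{m/2}k=k\bigl(f\stackrel{d/2}{\smallfrown}f\bigr)$. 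Likewise $k=k(f)$ trivially, so $k\otimes_{m/2}k-k=k\bigl(f\stackrel{d/2}{\smallfrown}f-f\bigr)$. The odd case is the same computation with the star contraction, noting that the residual ``middle'' factor $e_{j_q}^{\otimes 2(h_q-t)}$ with $t=h_q/2$ becomes $e_{j_q}^{\otimes h_q}$, so again the tensor skeleton is exactly the $k(\cdot)$-skeleton and $k\otimes_{m/2}k-k=k\bigl(f\star_{(d+1)/2}^{(d-1)/2}f-f\bigr)$.

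The proof then finishes by the isometry property: for any (not necessarily admissible) $g:[n]^d\to\mathbb{R}$ one has $\|k(g)\|_{\mathcal{H}^{\otimes m}}^2=\sum_{i_1,\dots,i_d}g(i_1,\dots,i_d)^2=\|g\|^2$, because the vectors $e_{i_1}^{\otimes h_1}\otimes\cdots\otimes e_{i_d}^{\otimes h_d}$, ranging over $(i_1,\dots,i_d)\in[n]^d$, are pairwise orthogonal of unit norm (here I use that $\{e_i\}$ is orthonormal; note distinct $d$-tuples give distinct tensors even when some $h_j$ coincide, since the tuple is recorded in the positions of the blocks). Applying this with $g=f\stackrel{d/2}{\smallfrown}f-f$ (resp. $g=f\star_{(d+1)/2}^{(d-1)/2}f-f$) yields the two claimed identities.

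The only genuine point requiring care — the ``main obstacle,'' though it is more bookkeeping than difficulty — is verifying that the cut at level $m/2$ lands where I claimed and that the reversal built into $\otimes_r$ matches the index reversals built into the definitions of $\smallfrown$ and $\star$. Concretely one must check that pairing $e_{i_{m-l}}$ with $e_{j_{l+1}}$ for $l=0,\dots,m/2-1$, after regrouping into $h$-blocks, reproduces exactly the summation $\sum_{i_1,\dots,i_{d/2}}f(\dots,i_1,\dots,i_{d/2})g(i_{d/2},\dots,i_1,\dots)$ of Definition \ref{starcontraction} (in the odd case one additionally tracks the half-block split of the central index and confirms the leftover exponent is $2(h_q-h_q/2)=h_q$). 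Once this alignment is pinned down, both statements follow from the two one-line facts above, exactly in the spirit of \cite[Lemma 3.4]{NourdinPeccatiReinert}.
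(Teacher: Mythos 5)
Your proposal is correct and follows essentially the same route the paper indicates: it uses the displayed identities expressing $k\otimes_r k$ in terms of $f\stackrel{q}{\smallfrown}f$ (with $q=d/2$) and $f\star_q^{q-1}f$ (with $q=\frac{d+1}{2}$, $t=h_q/2$), recognizes that in both cases the resulting tensor has exactly the block structure of $k(\cdot)$ so that $k\otimes_{m/2}k-k=k\bigl(f\stackrel{d/2}{\smallfrown}f-f\bigr)$ (resp. $k\bigl(f\star_{\frac{d+1}{2}}^{\frac{d-1}{2}}f-f\bigr)$), and concludes by the orthonormality/isometry of the map $g\mapsto k(g)$, which is precisely the ``straightforward computation'' the paper appeals to. The points you flag as bookkeeping (the cut at $m/2$ landing on a block boundary, or mid-central-block, thanks to $h_i=h_{d-i+1}$ and the parity of $h_{\frac{d+1}{2}}$) are exactly the checks needed, so nothing is missing.
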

\medskip

\subsection{The Lindberg method via influence functions}
The celebrated \textit{Lindberg replacement trick}\index{Lindberg method} is a technique for proving Central Limit Theorems  for random sums and, more generally, for bounding the distance of their probability measures, consisting in successive replacements of the involved summands. The origin of this method dates back to Lindberg's proof of the Central Limit Theorem for the normalized sum of centered and scaled i.i.d. random variables (see \cite{Lindberg} or \cite[Theorem 11.1.1 and Proposition 11.1.3]{NourdinPeccatilibro}, as well as the references therein).\\

The Lindberg-type method that will be proved in the sequel has been inspired by the strategy worked out in \cite{Mossel}, and relies on the concept of \textit{influence functions}.

\begin{defn}
If $f:[n]^{d}\rightarrow \mathbb{R}$ is an admissible kernel, for every $i=1,\dots,n$, the $i$-th \textbf{influence function} \index{Influence function} of $f$ is defined as:
\begin{equation}
\label{Influence}
\mathrm{Inf}_{i}(f) = \sum_{l=1}^{d}\sum_{j_1,\dots,j_{d-1}=1}^{n}f(j_{1},\dots,j_{l-1},i,j_{l},\dots,j_{d-1})^{2}  \;.
\end{equation}
\end{defn}

Note that, for an admissible kernel $f$, $\sum\limits_{i=1}^n \mathrm{Inf}_{i}(f) =  \|f \|^2 =1$. If one requires that  $f$ is fully symmetric, then the $i$-th influence function reduces to:
$$\mathrm{Inf}_{i}(f) = d\sum_{j_1,\dots,j_{d-1}=1}^{n}f(i,j_1,\dots,j_{d-1})^{2}.$$
In this case, since $\| f\|^2 =1$, then $\sum\limits_{i=1}^{n}\mathrm{Inf}_i(f) = d$ (more generally, $\sum\limits_{i=1}^{n}\mathrm{Inf}_i(f) = d \|f \|^2$ if $f$ has a different normalization).

Theorem \ref{invMossel} records the invariance principle stated in \cite[Theorem 3.18]{Mossel} for multilinear polynomials, in a simplified version that is sufficient for the present purposes.\\

\begin{thm}\label{invMossel}
Let $(\Omega, \mathcal{F}, \mathbb{P})$ be a classical probability space, and $\mathbf{Y}=\{Y_i\}_{i\geq 1}$ a sequence of independent centered random variables on $\Omega$, with unit variance. For $d\geq 1$, consider a sequence of ensembles $\bs{\mathcal{X}}^{(n)} = (\bs{\mathcal{X}}_1,\dots,\bs{\mathcal{X}}_n)$, with $\bs{\mathcal{X}}_i = \{X_{i,1},\dots,X_{i,d}\}$, and where $(X_{i,j})_{i \in \mathbb{N}, j=1,\dots,d}$ is a double-indexed sequence of independent random variables, and set:
$$ Q_{\bs{\mathcal{X}}^{(n)}}(f_n) := \sum_{i_1,\dots,i_d=1}^n f_n(i_1,\dots,i_d) X_{i_1,1}\cdots X_{i_d,d}. $$
Assume that there exists $r\geq 3$ such that the ensembles $\bs{\mathcal{X}}^{(n)}$ are $(2,r,\eta)$-hypercontractive, that is, that there exists a positive real number $\eta$ such that:
$$ \E[|Q_{\bs{\mathcal{X}}^{(n)}}(f_n)|^r ] \leq \eta^{-d}\E[Q_{\bs{\mathcal{X}}^{(n)}}(f_n)^2 ] .$$
Then, for every smooth function $\psi$ with uniformly bounded $r$-th derivative, and for every sequence of symmetric admissible kernels $f_n:[n]^d \rightarrow \mathbb{R}$, 
\begin{equation*}
\big|\mathbb{E}[\psi\big(Q_{\bs{\mathcal{X}}^{(n)}}(f_n)\big)] - \mathbb{E}[\psi\big(Q_{\bs{Y}}(f_n)\big)]\big| = \mathcal{O}\big( \sqrt{\tau_n}\big)\, ,
\end{equation*}
where $\tau_n := \tau(f_n)\;= \max\limits_{i=1,\dots,n}\mathrm{Inf}_{i}(f_n)$.
In particular, for $\bs{\mathcal{X}}_i = \{X_i\}$, that is, if  $ Q_{\bs{\mathcal{X}}^{(n)}}(f_n) = Q_{\bs{X}}(f_n)$ is a  homogeneous sum in a sequence $\bs{X}=\{X_i\}_{i\geq 1}$ of independent centered random variables on $\Omega$, with unit variance and $(2,3,\eta)$-hypercontractive, then:
\begin{equation*}
\big|\mathbb{E}[\big(\psi\big(Q_{\bs{X}}(f_n)\big)] - \mathbb{E}[\psi\big(Q_{\bs{Y}}(f_n)\big)] \big|= \mathcal{O}\big( \sqrt{\tau_n}\big)\, .\\
\end{equation*}
\end{thm}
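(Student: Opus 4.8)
\emph{Proof proposal.} The plan is to run a Lindeberg-type replacement argument: swap the underlying variables one at a time and control each swap by a Taylor expansion of $\psi$. Fix $f_n$ and $\psi$, and for $t=0,1,\dots,n$ let $Q^{(t)}$ be the hybrid sum obtained from $Q_{\bs{\mathcal X}^{(n)}}(f_n)$ by substituting $Y_i$ for the $i$-th variable whenever $i\le t$ and keeping the original variables for $i>t$, so that $Q^{(0)}=Q_{\bs{\mathcal X}^{(n)}}(f_n)$ and $Q^{(n)}=Q_{\bs Y}(f_n)$. (In the general ensemble case one replaces the blocks $\bs{\mathcal X}_i$ one at a time by the corresponding blocks built from $\bs Y$, with matching first and second moments; for definiteness I describe the homogeneous case $\bs{\mathcal X}_i=\{X_i\}$ below, the other being analogous.) By the triangle inequality,
\begin{equation*}
\big|\mathbb{E}[\psi(Q_{\bs{\mathcal X}^{(n)}}(f_n))]-\mathbb{E}[\psi(Q_{\bs Y}(f_n))]\big|\le\sum_{t=1}^{n}\big|\mathbb{E}[\psi(Q^{(t-1)})]-\mathbb{E}[\psi(Q^{(t)})]\big|,
\end{equation*}
so everything reduces to estimating a single replacement.

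For the $t$-th step one uses that $f_n$ vanishes on diagonals: each monomial contains the $t$-th variable in at most one slot, hence $Q^{(t-1)}=U_t+X_tL_t$ and $Q^{(t)}=U_t+Y_tL_t$, where $U_t$ (the part free of the $t$-th variable) and $L_t$ (its ``derivative'') depend only on the remaining variables and are therefore independent of $(X_t,Y_t)$; the symmetry of $f_n$ makes $L_t$ independent of the slot. Taylor-expanding $\psi$ around $U_t$ to order $r-1$ with Lagrange remainder, taking expectations and factoring $\mathbb{E}[\psi^{(k)}(U_t)X_t^kL_t^k]=\mathbb{E}[X_t^k]\,\mathbb{E}[\psi^{(k)}(U_t)L_t^k]$ by independence (likewise for $Y_t$), the terms of orders $k=0,1,2$ coincide in the two expansions — because $\mathbb{E}[X_t]=\mathbb{E}[Y_t]=0$ and $\mathbb{E}[X_t^2]=\mathbb{E}[Y_t^2]=1$ — and cancel. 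What remains for the $t$-th increment are the terms of orders $k=3,\dots,r-1$, of the form $\tfrac{1}{k!}(\mathbb{E}[X_t^k]-\mathbb{E}[Y_t^k])\,\mathbb{E}[\psi^{(k)}(U_t)L_t^k]$, together with two remainders bounded by constants times $\|\psi^{(r)}\|_\infty\,\mathbb{E}[|X_tL_t|^r]$ and $\|\psi^{(r)}\|_\infty\,\mathbb{E}[|Y_tL_t|^r]$.

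It remains to convert these leftovers into influence functions, which is the crux. Each is bounded by an $L^p$-norm ($p\le r$) of $L_t$ times an $L^p$-norm of $X_t$ or $Y_t$; since $L_t$ is a multilinear form of degree $d-1$ in hypercontractive variables, $(2,r,\eta)$-hypercontractivity gives $\|L_t\|_p\le C(d,\eta)\|L_t\|_2$, and $\mathbb{E}[|X_t|^p],\mathbb{E}[|Y_t|^p]$ are bounded. A direct computation (again using that $f_n$ vanishes on diagonals, so cross terms drop) yields $\|L_t\|_2^2=d!\,\mathrm{Inf}_t(f_n)$, so each $t$-th increment is at most $C'(d,\eta,\psi)\,\mathrm{Inf}_t(f_n)^{r/2}$. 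Summing over $t$ and using $\mathrm{Inf}_t(f_n)\le\tau_n$ and $\sum_{t=1}^n\mathrm{Inf}_t(f_n)=d\|f_n\|^2=d$,
\begin{equation*}
\big|\mathbb{E}[\psi(Q_{\bs{\mathcal X}^{(n)}}(f_n))]-\mathbb{E}[\psi(Q_{\bs Y}(f_n))]\big|\le C'\sum_{t=1}^{n}\mathrm{Inf}_t(f_n)^{r/2}\le C'd\,\tau_n^{(r-2)/2}=\mathcal{O}\big(\sqrt{\tau_n}\big),
\end{equation*}
since $r\ge3$. The ``in particular'' case is the instance $\bs{\mathcal X}_i=\{X_i\}$, $r=3$.

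I would expect the main difficulty to lie precisely in the hypercontractivity step and its bookkeeping: the $(2,r,\eta)$-estimate has to be applied not to $Q_{\bs{\mathcal X}^{(n)}}(f_n)$ itself but to its single-variable derivatives $L_t$ (degree $d-1$) and to the hybrid polynomials along the replacement chain, and one must check that the constants stay uniform in $n$ and $t$; one must also absorb the intermediate Taylor terms of orders $3\le k\le r-1$, which requires boundedness of the lower-order derivatives of $\psi$ as well (when $r=3$ these terms are absent and only the order-$r$ remainders need to be treated, which is the cleanest case). The cancellation of the orders $0,1,2$, though elementary, is the other place where the hypotheses — vanishing on diagonals, symmetry of $f_n$, and matching of the first two moments of $X_i$ and $Y_i$ — are genuinely used.
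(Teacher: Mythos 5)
First, a remark on what you are being compared to: the paper does not prove Theorem \ref{invMossel} at all — it is recalled, in a simplified form, from \cite[Theorem 3.18]{Mossel} (see also \cite{NourdinPeccatiReinert}), so the only benchmark is the original Lindeberg-replacement proof, and your sketch does reproduce that method. For the single-sequence case $\bs{\mathcal X}_i=\{X_i\}$ (the ``in particular'' part) your argument is essentially the right one: the decomposition $Q=U_t+X_tL_t$, the cancellation of the Taylor terms of order $0,1,2$, the identity $\|L_t\|_2^2=d!\,\mathrm{Inf}_t(f_n)$ for symmetric kernels vanishing on diagonals, and the summation $\sum_t\mathrm{Inf}_t(f_n)^{3/2}\le d\sqrt{\tau_n}$ are all correct, with exactly the caveats you flag yourself (hypercontractivity must be applied to the derivatives $L_t$ and to the hybrid sequences, one needs bounded intermediate derivatives of $\psi$ and moment control on $\bs Y$; in \cite{Mossel} both sequences of ensembles are assumed hypercontractive, a hypothesis the simplified restatement drops). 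A minor slip: for $r>3$ the dominant leftover at step $t$ is the third-order term, of size $\mathrm{Inf}_t(f_n)^{3/2}$, not $\mathrm{Inf}_t(f_n)^{r/2}$; this does not affect the final bound.

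The genuine gap is in the ensemble case, which is the content of the main display and which you dismiss as ``analogous\dots with matching first and second moments''. It is not analogous: at the $t$-th swap the block enters through $V_t=\sum_{l=1}^d X_{t,l}L_{t,l}$ before the replacement and $\tilde V_t=Y_t\sum_{l=1}^d L_{t,l}$ after it, so the quadratic Taylor terms differ by
\begin{equation*}
\tfrac12\sum_{l\neq l'}\big(\mathbb{E}[X_{t,l}X_{t,l'}]-\mathbb{E}[Y_t^2]\big)\,\mathbb{E}\big[\psi''(U_t)\,L_{t,l}L_{t,l'}\big],
\end{equation*}
and cancellation requires the \emph{within-block covariances} of $\bs{\mathcal X}_t$ to match those of the block $(Y_t,\dots,Y_t)$, i.e.\ $\mathbb{E}[X_{t,l}X_{t,l'}]=1$ for $l\neq l'$; matching the first two moments of each individual entry is not enough. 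Under the hypotheses as written (the $X_{i,j}$ mutually independent, hence within-block covariance $0$) these cross terms are generically of order $\mathrm{Inf}_t(f_n)$ and sum to $O(1)$, not $O(\sqrt{\tau_n})$; in fact the displayed estimate is then false: for $d=2$ and a symmetric admissible kernel one has $\mathbb{E}[Q_{\bs{\mathcal X}^{(n)}}(f_n)^2]=\|f_n\|^2=1$ while $\mathbb{E}[Q_{\bs Y}(f_n)^2]=2\|f_n\|^2=2$, so $\psi(x)=x^2$ (whose third derivative is bounded) produces a discrepancy equal to $1$ even when $\tau_n\to0$. The correct formulation — the one actually proved in \cite{Mossel}, and the one relevant to the later applications, where the blocks are built from a single underlying variable — compares two sequences of ensembles with \emph{identical} within-block covariance structure; your parenthetical ``matching first and second moments'' is therefore doing real work, must be promoted to a covariance hypothesis on the blocks, and the cancellation of the cross terms has to be verified at that level, since no replacement scheme can dispense with it.
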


Next theorem recalls \cite[Theorem 1.3]{NourdinDeya}, where the authors extended Theorem \ref{invMossel} in the free probability setting, for homogeneous polynomials in freely independent variables.\\

\begin{thm}
\label{teoNourdin}\label{InvarianceNourdin}
Let $(\mathcal{A},\varphi)$ be a $W^{\ast}$-probability space. Let $\mathbf{X}=\{X_i\}_{i\geq 1}$ and $\mathbf{Y}= \{Y_i\}_{i\geq 1}$ be two sequences of centered freely independent random variables with unit variance, such that $\mathbf{X}$ and $\mathbf{Y}$ are freely independent. Assume, moreover, that the elements of $\mathbf{X}$ (respectively $\mathbf{Y}$) have uniformly bounded moments, that is, for every $r \geq 1$:
$$\sup_{i\geq 1}\varphi(|X_i|^{r}) < \infty \quad (\text{resp. } \sup_{i\geq 1}\varphi(|Y_i|^{r}) < \infty).$$
Set $d\geq 1$ and let $Q_{\bs{x}}(f_n)$ denote a homogeneous sum of degree $d$ as in (\ref{QN}), with admissible coefficient $f_n:[n]^d \rightarrow \mathbb{R}$ as in Definition \ref{admissible_Univ}. Then, for any integer $m\geq 1$:
\begin{equation}
\varphi\big(Q_{\bs{X}}(f_n)^{m}\big) - \varphi\big(Q_{\bs{Y}}(f_n)^{m}\big) = \mathcal{O}\big(\sqrt{\tau_n}\big), 
\end{equation}
where $\tau_n := \tau(f_n)\;= \max\limits_{i=1,\dots,n}\mathrm{Inf}_{i}(f_n)$.\\
\end{thm}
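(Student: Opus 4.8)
The plan is to adapt the free Lindeberg replacement argument of \cite[Theorem 1.3]{NourdinDeya} and compare $\varphi(Q_{\bs{X}}(f_n)^m)$ and $\varphi(Q_{\bs{Y}}(f_n)^m)$ by swapping the generators $X_i$ for $Y_i$ one index at a time. Fix $m\geq 1$. Expanding $Q_{\bs{X}}(f_n)^m$ gives a sum over $m$ tuples $(i_1^{(t)},\dots,i_d^{(t)})_{t=1,\dots,m}$ of products $\varphi$ of $md$ free generators with coefficient $\prod_t f_n(\bs{i}^{(t)})$. For a fixed index $j\in[n]$, collect all terms in which $j$ appears. Because of the vanishing-on-diagonals property of $f_n$, each factor $f_n(\bs{i}^{(t)})$ contributes $j$ at most once per slot, and the total contribution of the variable $X_j$ to such a word is controlled by the number of slots in which $j$ occurs. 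The key point, exactly as in \cite{NourdinDeya}, is that since $X_j$ and $Y_j$ share their first two moments and all higher moments are uniformly bounded, the difference produced by replacing $X_j\mapsto Y_j$ only survives when $X_j$ (or $Y_j$) appears in \emph{at least two} of the $m$ copies of $Q$. Terms where $j$ appears in exactly one copy cancel by freeness together with the matching of the first two moments: such a word, after centering, factorizes and the $X_j$-block contributes $\varphi(X_j)=0$ or, if $j$ appears twice within the same copy of $Q$, this is excluded by vanishing on diagonals; what genuinely remains must have $j$ living in two distinct copies.

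Next I would estimate, for each fixed $j$, the sum of absolute values of the surviving coefficients. If $j$ appears in at least two of the $m$ copies of $Q$, then after using $|f_n|\le \|f_n\|=1$-type bounds and Cauchy--Schwarz on the free indices, the block of coefficients attached to $j$ is bounded by a constant (depending on $m$ and $d$) times $\mathrm{Inf}_j(f_n)$ — one factor of $\mathrm{Inf}_j(f_n)$ per copy in which $j$ appears, and at least two such copies, while the remaining free summations are bounded by $\|f_n\|^2=1$. Uniform boundedness of the moments of $X_i$ and $Y_i$ gives a uniform bound $C_{m,d}$ on the $\varphi$-value of each individual word. Summing the telescoping replacement over $j=1,\dots,n$ then yields
\begin{equation*}
\big|\varphi(Q_{\bs{X}}(f_n)^m) - \varphi(Q_{\bs{Y}}(f_n)^m)\big| \;\le\; C_{m,d}\sum_{j=1}^n \mathrm{Inf}_j(f_n)^{?} \;\le\; C_{m,d}\,\Big(\max_j \mathrm{Inf}_j(f_n)\Big)^{1/2}\sum_{j=1}^n \mathrm{Inf}_j(f_n)^{1/2}\cdot(\dots),
\end{equation*}
and, using $\sum_j \mathrm{Inf}_j(f_n)=\|f_n\|^2=1$ together with the fact that each surviving term carries at least two influence factors (so one may extract $\sqrt{\tau_n}=\sqrt{\max_j\mathrm{Inf}_j(f_n)}$ and bound the rest by $\sum_j\mathrm{Inf}_j(f_n)=1$), one arrives at the claimed $\mathcal{O}(\sqrt{\tau_n})$.

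The main obstacle is the bookkeeping in the replacement step: one must carefully argue, using only freeness and the equality of first two moments, that every word in which the swapped index $j$ appears in exactly one copy of $Q$ contributes identically before and after the swap, and then produce the combinatorial bound showing the residual terms carry two powers of $\mathrm{Inf}_j(f_n)$. This is where the mirror symmetry and the vanishing-on-diagonals hypotheses of Definition~\ref{admissible_Univ} are used in an essential way (mirror symmetry to ensure self-adjointness so the moments are well defined, vanishing on diagonals to forbid repeated indices inside a single slot), and it is the step that mirrors the technical heart of \cite{NourdinDeya}. Once that estimate is in place, the triangle inequality over the $n$ single-index swaps and the normalization $\|f_n\|=1$ finish the argument; the uniform moment bounds enter only to keep the constant $C_{m,d}$ independent of $n$.
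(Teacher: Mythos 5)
Your overall scheme (one-index-at-a-time Lindeberg swap, decompose each copy of $Q$ into the part not containing the swapped index and the part containing it, cancel low-order terms, control the rest by influences) is indeed the scheme behind the quoted result of \cite{NourdinDeya} and behind the paper's own generalization (Theorem \ref{Multiinvariance2}). But there is a genuine gap in your counting of which terms survive and how much each surviving term is worth. You claim the difference survives as soon as the swapped index $j$ appears in at least \emph{two} of the $m$ copies, and that each such copy contributes a full factor $\mathrm{Inf}_j(f_n)$. Neither half is right. First, terms in which $j$ appears exactly twice also cancel exactly in the difference: by freeness and traciality they reduce to the second moment of the swapped variable, and $X_j$, $Y_j$ both have unit variance — this is precisely the content of items (ii)–(iii) of Lemma \ref{3.1bis} (and it is where mirror symmetry $h_i=h_{d-i+1}$, here trivial since all orders equal $1$, and the unit-variance hypothesis are used). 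The genuinely surviving terms are those with at least \emph{three} occurrences of the index $j$. Second, the correct gain per occurrence is $\mathrm{Inf}_j(f_n)^{1/2}$, not $\mathrm{Inf}_j(f_n)$: the block $V_j$ collecting the monomials containing $j$ satisfies $\varphi(V_j^2)=\mathrm{Inf}_j(f_n)$ (up to the symmetry normalization), and only the hypercontractive estimate $\varphi(V_j^{2r})\leq C\,\mathrm{Inf}_j(f_n)^{r}$ (Proposition \ref{prop02}), combined with the free H\"older inequality (Lemma \ref{lemma0}) or the iterated Cauchy--Schwarz of Lemma \ref{algo} to separate the non-commuting blocks $W^{a}V_j^{b}W^{a'}V_j^{b'}\cdots$, converts each occurrence into a half power. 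You cannot get a full power per occurrence because only the \emph{squares} of the coefficients $f_n(\dots,j,\dots)$ are summable to $\mathrm{Inf}_j(f_n)$, not their absolute values.

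With your counting the surviving two-occurrence terms would be bounded by $C\,\mathrm{Inf}_j(f_n)$, and summing over $j$ gives $C\sum_j \mathrm{Inf}_j(f_n)=C$, i.e.\ only $\mathcal{O}(1)$, not $\mathcal{O}(\sqrt{\tau_n})$ — so the claimed extraction of $\sqrt{\tau_n}$ in your final display does not go through. The argument is repaired exactly as in the paper's proof of Theorem \ref{Multiinvariance2}: after the free binomial expansion, all terms with total $V$-multiplicity $\gamma\leq 2$ either vanish (one occurrence, centering) or cancel between the $X$- and $Y$-versions (two occurrences, matching variances); each remaining term has $\gamma\geq 3$ and is bounded by $C\,\mathrm{Inf}_j(f_n)^{\gamma/2}\leq C\,\mathrm{Inf}_j(f_n)^{3/2}\leq C\sqrt{\tau_n}\,\mathrm{Inf}_j(f_n)$, and summing over $j$ with $\sum_j\mathrm{Inf}_j(f_n)=\|f_n\|^2=1$ yields the stated $\mathcal{O}(\sqrt{\tau_n})$. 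Also note that your word-by-word bound "$|f_n|\leq\|f_n\|$ plus a uniform bound on each $\varphi$-value" is too coarse for the same reason; the $L^2$/hypercontractive machinery is not optional here but is the step that produces the influence powers.
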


In particular, Theorem \ref{InvarianceNourdin} applies when $\bs{X}$ and $\bs{Y}$ are composed of identically distributed random variables: roughly speaking, Theorem \ref{teoNourdin} implies that whenever the kernels $f_n$ have low-influences as $n\rightarrow \infty$ (meaning that $\tau_n = o(1)$), the asymptotic behaviour of $Q_{\bs{X}}(f_n)$ is basically insensitive of the distribution of its entries $\bs{X}$.

\begin{exm}
For $d=1$, set $f_n(i) = \dfrac{1}{\sqrt{n}}$ for all $i=1,\dots,n$, so that: 
$$Q_{\bs{X}}(f_n) = \dfrac{1}{\sqrt{n}}\sum\limits_{i=1}^n X_i$$
and $\mathrm{Inf}_i(f_n)= f_n(i)^2$, giving $\tau_n = o(1)$. Then the free CLT (see \cite[Theorem 8.10]{Speicher}) follows  from $Q_{\bs{X}}(f_n) \sim \mathcal{S}(0,1)$ for every $n$, when $X \sim \mathcal{S}(0,1)$.\\
\end{exm}

Apart from providing an explicit nice bound for the proximity in law of homogeneous sums, the main consequence of Theorem \ref{InvarianceNourdin} has been stated  in \cite[Theorem 1.4]{NourdinDeya}, and consists in the universality of the semicircular law for semicircular approximations of homogeneous sums with symmetric coefficients, in the sense of Theorem \ref{invnoncom}.

\begin{thm}\label{invnoncom}
For $d\geq 2$, let $f_n:[n]^d\to\mathbb{R}$ be a sequence of symmetric admissible kernels. The following statements are equivalent as $n \rightarrow \infty$:
\begin{itemize}
\item[(i)] $Q_{\bs{S}}(f_n) \stackrel{\text{ Law }}{\longrightarrow} \mathcal{S}(0,1)$;
\item[(ii)] $Q_{\bs{X}}(f_n) \stackrel{\text{ Law }}{\longrightarrow} \mathcal{S}(0,1)$ for any other sequence $\bs{X}=\{X_i\}_{i\geq 1}$ of freely independent and identically distributed random variables, satisfying Assumption {\bf (1)}.
\end{itemize}
\end{thm}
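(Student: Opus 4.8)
The plan is to leverage the moment-determinacy of the semicircle law to turn weak convergence into convergence of all moments, and then to couple the invariance principle of Theorem~\ref{InvarianceNourdin} with the free fourth moment theorem. Since $\mathcal{S}(0,1)$ is supported on $[-2,2]$, it is determined by its moments, so for any family of self-adjoint elements of $\mathcal{A}$ (all of which automatically have finite moments) convergence in law to $\mathcal{S}(0,1)$ is equivalent to $\varphi\big((\,\cdot\,)^m\big)\to\varphi(S^m)$ for every $m\geq 1$, where $S\sim\mathcal{S}(0,1)$. The implication (ii)$\Rightarrow$(i) is then immediate, since a standard semicircular family $\bs{S}$ is itself a sequence of freely independent, identically distributed random variables satisfying Assumption~\textbf{(1)}, so (i) is nothing but the instance $\bs{X}=\bs{S}$ of (ii).

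For (i)$\Rightarrow$(ii), the crucial first step is to extract from (i) that the influences are asymptotically negligible, i.e.\ $\tau_n=\tau(f_n)\to 0$. To this end one recognises $Q_{\bs{S}}(f_n)$ as an element of the $d$-th Wigner chaos (a multiple Wigner integral of the kernel $f_n\in\mathcal{H}^{\otimes d}$, legitimate because $f_n$ vanishes on diagonals), observes that $\varphi\big(Q_{\bs{S}}(f_n)^2\big)=\|f_n\|^2=1$ by mirror symmetry, and invokes the free fourth moment theorem of \cite{KempNourdinPeccatiSpeicher}: convergence in law of $Q_{\bs{S}}(f_n)$ to $\mathcal{S}(0,1)$ is equivalent to $\varphi\big(Q_{\bs{S}}(f_n)^4\big)\to 2$, and forces all the contractions $f_n\stackrel{q}{\smallfrown}f_n$ ($q=1,\dots,d-1$) to vanish in norm (via Proposition~\ref{contraction5}, here with $\bs{h}=(1,\dots,1)$ so that $m=d$). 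A one-line computation — evaluating $f_n\stackrel{d-1}{\smallfrown}f_n$ on the diagonal and using full symmetry — gives $\tau_n\leq d\,\|f_n\stackrel{d-1}{\smallfrown}f_n\|$, whence $\tau_n\to 0$.

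The remaining step is a direct appeal to Theorem~\ref{InvarianceNourdin}. Given any sequence $\bs{X}$ as in (ii), first pass to the free product of $(\mathcal{A},\varphi)$ with a $W^{\ast}$-probability space carrying a standard semicircular family, so that $\bs{X}$ and $\bs{S}$ become freely independent while the law of $Q_{\bs{X}}(f_n)$ is unchanged. Since the $X_i$ (resp.\ $S_i$) are bounded self-adjoint operators with a common law, $\sup_{i}\varphi(|X_i|^r)=\varphi(|X_1|^r)<\infty$ for every $r$, so the hypotheses of Theorem~\ref{InvarianceNourdin} are met and, for each $m\geq 1$,
\begin{equation*}
\varphi\big(Q_{\bs{X}}(f_n)^m\big)-\varphi\big(Q_{\bs{S}}(f_n)^m\big)=\mathcal{O}\big(\sqrt{\tau_n}\big)\longrightarrow 0\qquad(n\to\infty).
\end{equation*}
Combined with (i), which yields $\varphi\big(Q_{\bs{S}}(f_n)^m\big)\to\varphi(S^m)$, this gives $\varphi\big(Q_{\bs{X}}(f_n)^m\big)\to\varphi(S^m)$ for all $m$, and moment-determinacy of the semicircle law upgrades this to $Q_{\bs{X}}(f_n)\to\mathcal{S}(0,1)$ in law.

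The main obstacle is precisely the passage, in the converse, from (i) to $\tau_n\to 0$: low influence is not a formal consequence of weak convergence, and the bridge ``convergence in law to semicircular $\Leftrightarrow$ fourth moment convergence $\Leftrightarrow$ vanishing contractions'' is the genuine analytic input (the free fourth moment theorem), whereas the subsequent estimate $\tau_n\leq d\,\|f_n\stackrel{d-1}{\smallfrown}f_n\|$ is only book-keeping. One should also note that the implicit constant in the $\mathcal{O}(\sqrt{\tau_n})$ of Theorem~\ref{InvarianceNourdin} depends only on $d$, $m$ and the (fixed) distributions of $\bs{X}$ and $\bs{S}$, so it survives the free-product construction unchanged.
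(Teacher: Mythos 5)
Your proposal is correct and follows essentially the route the paper itself uses (this theorem being the $m=1$, $\bs{h}=(1,\dots,1)$ instance of the machinery behind Theorem \ref{MultUniversality} and Corollary \ref{Univ1}): the trivial implication (ii)$\Rightarrow$(i), and for the converse the free Fourth Moment Theorem \ref{knps} to force vanishing contractions, the estimate $\tau_n\leq d\,\|f_n\stackrel{d-1}{\smallfrown}f_n\|$ of Lemma \ref{magg} (where full symmetry is indeed needed), and the invariance principle of Theorem \ref{InvarianceNourdin} together with moment-determinacy of the semicircle law. Your additional care about the free-product construction and the uniform boundedness of moments is consistent with the paper's hypotheses and does not change the argument.
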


Theorem \ref{teoNourdin} is the starting point of the analysis developed in Section \ref{MainResult}, where it is extended, together with its consequences, in a general multidimensional setting via Chebyshev sums.

\medskip
\subsection{Auxiliary statements}

For the sake of clarity, it is convenient to recall some technical statements that will be used in the proofs of our main results. These are, in order, the \textit{non-commutative binomial expansion}, the \textit{free H\"{o}lder inequality}\index{Free H\"{o}lder inequality} and the {hypercontractivity} \index{Hypercontractivity}of homogeneous sums in freely independent variables (which is the free counterpart of \cite[Lemma 4.2]{NourdinPeccatiReinert}). All these properties are meant to hold in the fixed $W^\star$-probability space $(\mathcal{A},\varphi)$.

\begin{lemma}[\cite{NourdinDeya}]
\label{freebin}
Let $X$ and $Y$ be random variables in $(\mathcal{A},\varphi)$. Then, for every positive integer $m$:
\begin{equation*}
(X + Y)^{m} = X^{m} + \sum_{n=1}^{m}\sum_{(r, \mathbf{i}_{r},\mathbf{j}_{r})\in D_{m,n}} X^{i_1}Y^{j_1}X^{i_2}Y^{j_2}\cdots X^{i_r}Y^{j_r},
\end{equation*}
where
$$ D_{m,n} = \{(r, \mathbf{i}_{r},\mathbf{j}_{r}) \in [m]\times \mathbb{N}^{r} \times \mathbb{N}^{r}: \sum_{l=1}^{r}i_l= m-n, \sum_{l=1}^{r}j_l = n\}.$$
\end{lemma}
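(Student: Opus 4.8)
The statement to prove is the non-commutative binomial expansion: for $X, Y$ in $(\mathcal{A},\varphi)$ and every positive integer $m$,
\begin{equation*}
(X + Y)^{m} = X^{m} + \sum_{n=1}^{m}\sum_{(r, \mathbf{i}_{r},\mathbf{j}_{r})\in D_{m,n}} X^{i_1}Y^{j_1}X^{i_2}Y^{j_2}\cdots X^{i_r}Y^{j_r}.
\end{equation*}

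The plan is to argue by induction on $m$, treating $X$ and $Y$ as formal non-commuting symbols (the identity is purely algebraic and takes place in the free unital algebra generated by $X$ and $Y$, so no analytic input from $(\mathcal{A},\varphi)$ is needed). First I would unwind what the right-hand side really counts: expanding $(X+Y)^m = (X+Y)(X+Y)\cdots(X+Y)$ by distributivity produces the sum of all $2^m$ words of length $m$ in the alphabet $\{X,Y\}$. Each such word, after collapsing maximal consecutive runs of equal letters into powers, is uniquely written either as $X^m$ (the all-$X$ word), or, if it contains at least one $Y$, in a canonical alternating block form $X^{i_1}Y^{j_1}X^{i_2}Y^{j_2}\cdots X^{i_r}Y^{j_r}$ with $r\geq 1$, all $j_l\geq 1$, all $i_l\geq 1$ for $l\geq 2$, and $i_1\geq 0$; here $n:=\sum_l j_l$ is the total number of $Y$'s and $m-n=\sum_l i_l$ the total number of $X$'s. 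So the claim is exactly the assertion that the map sending a word to its canonical block decomposition is a bijection onto $\{X^m\}\sqcup\bigsqcup_{n=1}^m\{(r,\mathbf{i}_r,\mathbf{j}_r)\in D_{m,n}\}$, which is clear once one checks $D_{m,n}$ indeed parametrizes these block patterns (note $i_1$ is allowed to be $0$ since $D_{m,n}$ only requires $\mathbf{i}_r\in\mathbb{N}^r$, and in this paper $\mathbb{N}$ presumably includes $0$; if not, the $i_1=0$ case must be split off separately as words beginning with $Y$).

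Concretely, for the induction: the base case $m=1$ is immediate, since $(X+Y)^1 = X + Y = X^1 + Y^1$ and $D_{1,1}=\{(1,(0),(1))\}$ contributes the single term $X^0Y^1 = Y$. For the inductive step I would write $(X+Y)^{m+1} = (X+Y)^m(X+Y) = (X+Y)^m X + (X+Y)^m Y$, substitute the inductive hypothesis into each summand, and then re-collate the resulting words into canonical form. Right-multiplying a canonical word $X^{i_1}Y^{j_1}\cdots X^{i_r}Y^{j_r}$ by $X$ appends an $X$, which either extends a trailing $X$-block or, since the word ends in $Y^{j_r}$, opens a new $X$-block $X^1$ after $Y^{j_r}$; right-multiplying by $Y$ extends the trailing $Y^{j_r}$ to $Y^{j_r+1}$. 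Tracking these two operations, together with the contribution $X^m\cdot X = X^{m+1}$ and $X^m\cdot Y = X^m Y^1$, one checks that every element of $D_{m+1,n}$ (and the term $X^{m+1}$) is produced exactly once, which closes the induction.

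The main obstacle, such as it is, is purely bookkeeping: one must verify that the re-collation is a genuine bijection and that no word is produced twice and none is missed — in particular being careful about the boundary cases $i_1 = 0$ (words starting with $Y$) and the interaction between "extend the last block" versus "open a new block" when right-multiplying. A cleaner alternative, which I would mention as a remark, avoids induction entirely: simply expand $(X+Y)^m$ into its $2^m$ letter-words directly and observe that grouping consecutive equal letters gives precisely the stated parametrization, with $D_{m,n}$ enumerating the compositions $(i_1,\dots,i_r)$ of $m-n$ and $(j_1,\dots,j_r)$ of $n$ that arise. Since the paper attributes this lemma to \cite{NourdinDeya}, a short proof sketch along these lines, or a direct citation, suffices.
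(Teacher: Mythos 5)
The paper itself gives no proof of this lemma: it is quoted verbatim from \cite{NourdinDeya} as an auxiliary fact, so there is no internal argument to compare yours against. Your overall strategy (expand $(X+Y)^m$ into the $2^m$ words in the letters $X,Y$ and reparametrize them by their maximal alternating blocks, or equivalently run an induction on $m$) is the standard and essentially correct way to prove it, and you are right that the identity is purely algebraic and that the only delicate point is the convention on which exponents may vanish.

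However, your description of the canonical block decomposition has a concrete slip. You claim every word containing at least one $Y$ can be written uniquely as $X^{i_1}Y^{j_1}\cdots X^{i_r}Y^{j_r}$ with \emph{all} $j_l\geq 1$, $i_l\geq 1$ for $l\geq 2$, and only $i_1$ possibly zero. That template forces the word to end in $Y$, so words ending in $X$ (for instance $XYX$, or $YX$) are not representable at all: the correct canonical form must also allow the trailing exponent $j_r=0$, with the interior exponents $i_2,\dots,i_r$ and $j_1,\dots,j_{r-1}$ at least $1$ to avoid double counting. The same omission propagates into your inductive step: right-multiplying a word ending in $Y^{j_r}$ by $X$ produces a word ending in $X$, which your template cannot accommodate, so the re-collation as described is not a bijection. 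This also sharpens the convention issue you only half-address: if every $i_l,j_l\geq 0$ were allowed in $D_{m,n}$ the right-hand side would overcount (a zero exponent merges adjacent blocks, e.g. $X^{i_1}Y^{0}X^{i_2}Y^{j_2}=X^{i_1+i_2}Y^{j_2}$ already arises with $r=1$), while if all exponents were required positive the words beginning with $Y$ or ending with $X$ would be missed; the identity holds verbatim exactly when $D_{m,n}$ is read as parametrizing the canonical decompositions in which only $i_1$ and $j_r$ may vanish. With that amendment your word-counting (or inductive) argument goes through and is a perfectly adequate substitute for the citation to \cite{NourdinDeya}.
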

\begin{lemma}[\textrm{\cite[Lemma 12]{Kargin}}] 
\label{lemma0}
Let $X$ and $Y$ be random variables in $(\mathcal{A},\varphi)$. For every $r\in \mathbb{N}$ and every choice of non-negative integers $m_1,n_1,\dots, m_r,n_r$, the following H\"older type inequality holds:
$$ |\varphi\big(X^{m_1}Y^{n_1}\cdots X^{m_r}Y^{n_r}\big)| \leq \big[\varphi\big(X^{2^{r}m_1}\big)\big]^{2^{-r}} \big[\varphi\big(Y^{2^{r}n_1}\big)\big]^{2^{-r}}\cdots \big[\varphi\big(X^{2^{r}m_r}\big)\big]^{2^{-r}}\big[\varphi\big(Y^{2^{r}n_r}\big)\big]^{2^{-r}}.
$$\end{lemma}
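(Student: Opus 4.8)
The statement to prove is Lemma~\ref{lemma0}, the free Hölder-type inequality
\[
\bigl|\varphi\bigl(X^{m_1}Y^{n_1}\cdots X^{m_r}Y^{n_r}\bigr)\bigr|
\;\le\;
\bigl[\varphi(X^{2^{r}m_1})\bigr]^{2^{-r}}\bigl[\varphi(Y^{2^{r}n_1})\bigr]^{2^{-r}}\cdots\bigl[\varphi(X^{2^{r}m_r})\bigr]^{2^{-r}}\bigl[\varphi(Y^{2^{r}n_r})\bigr]^{2^{-r}}.
\]

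The plan is to proceed by induction on $r$, using at each step only the Cauchy--Schwarz inequality for the tracial positive faithful state $\varphi$ together with its traciality. First I would set up the base case $r=1$: we must bound $|\varphi(X^{m_1}Y^{n_1})|$. By Cauchy--Schwarz (i.e.\ positivity of $\varphi$ applied to $(X^{m_1}-\lambda Y^{n_1})^\star(\cdots)$, or the standard inner-product argument on $\mathrm{L}^2(\mathcal{A},\varphi)$), $|\varphi(X^{m_1}Y^{n_1})|\le \varphi((X^{m_1})^\star X^{m_1})^{1/2}\varphi((Y^{n_1})^\star Y^{n_1})^{1/2}=\varphi(X^{2m_1})^{1/2}\varphi(Y^{2n_1})^{1/2}$, where self-adjointness of $X,Y$ is used (these are random variables in the $W^\star$-setting, so WLOG self-adjoint, or one replaces $X^{2m_1}$ by $(X^\star X)^{m_1}$ throughout). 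This is exactly the claim for $r=1$.

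For the inductive step, assume the inequality holds for all products of length $2(r-1)$ in such a pattern and consider a word of length $2r$. The key idea is to split the word $W=X^{m_1}Y^{n_1}\cdots X^{m_r}Y^{n_r}$ into two halves of equal length and apply Cauchy--Schwarz: $|\varphi(W)| = |\varphi(AB)| \le \varphi(A A^\star)^{1/2}\varphi(B^\star B)^{1/2}$ with $A,B$ each a word of length $r$ in the alternating pattern. Then, crucially, traciality lets us rewrite $\varphi(AA^\star)$ and $\varphi(B^\star B)$ as cyclic words of length $2(r-1)$ in the same alternating $X$-$Y$ pattern but with doubled exponents: e.g.\ $AA^\star$ cyclically rearranges to $X^{2m_{?}}Y^{2n_{?}}\cdots$ with $r-1$ blocks of each (two adjacent equal-letter blocks merge under the cyclic rotation, reducing the count from $r$ to $r-1$). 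Applying the inductive hypothesis to these two words — each contributing exponents multiplied by $2$ and raised to the power $2^{-(r-1)}$ — and then taking the square roots from the outer Cauchy--Schwarz step multiplies those exponents by a further $2$ and the powers by a further $1/2$, yielding precisely the exponents $2^r m_k$, $2^r n_k$ each raised to the $2^{-r}$. Collecting terms gives the stated bound.

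The main obstacle — and the step requiring care rather than cleverness — is the bookkeeping in the inductive step: verifying that splitting the length-$2r$ word in the middle and cyclically reducing each half really does produce two valid alternating $X$-$Y$ words of length $2(r-1)$ to which the hypothesis applies verbatim, and that the exponents line up so that each original $\varphi(X^{2^r m_k})^{2^{-r}}$ and $\varphi(Y^{2^r n_k})^{2^{-r}}$ appears exactly once after combining the contributions of both halves. One must also handle the edge effects: when a half-word begins or ends with the ``wrong'' letter the cyclic rotation in $\varphi(AA^\star)$ glues an $X$-block to an $X$-block (or $Y$ to $Y$), and one should check the resulting exponent still fits the pattern $2m_k$ (or is split/combined consistently) — using $\varphi(\mathbf 1)=1$ to absorb degenerate zero exponents if needed. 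A clean way to avoid case analysis is to allow zero exponents throughout (a word with some $m_k=0$ or $n_k=0$ is still covered, since $\varphi(\mathbf 1)^{2^{-r}}=1$), so the alternating pattern is preserved formally at every stage. Once this accounting is organized, the proof is a direct iteration of Cauchy--Schwarz and traciality with no analytic subtleties; reference \cite{Kargin} for the original argument.
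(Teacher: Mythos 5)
First, note that the paper does not prove this lemma at all: it is imported verbatim as \cite[Lemma 12]{Kargin}, so the only in-paper object to compare with is the proof of the companion Lemma \ref{algo}, which is the same kind of iterated Cauchy--Schwarz-plus-traciality induction you propose — and, tellingly, that proof has to carry along multisets $I_l(\bs{c})$ of non-uniform exponents precisely because the bookkeeping does not come out as cleanly as you claim. Your base case and your reduction (split the word into two halves $A,B$, use $|\varphi(AB)|\le \varphi(AA^{\ast})^{1/2}\varphi(B^{\ast}B)^{1/2}$, then cyclically merge so each of $\varphi(AA^{\ast})$, $\varphi(B^{\ast}B)$ is an alternating word with $r-1$ blocks of each letter) are fine, including the parity/zero-exponent remarks.

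The genuine gap is the assertion that after applying the inductive hypothesis and taking square roots ``the exponents line up so that each $\varphi(X^{2^r m_k})^{2^{-r}}$ and $\varphi(Y^{2^r n_k})^{2^{-r}}$ appears exactly once.'' That is false as stated: in the folded word coming from a half $A$, only the two \emph{boundary} blocks of $A$ get their exponents doubled by the fold; every interior block of $A$ appears twice with its original exponent, and after the level-$(r-1)$ hypothesis plus the outer square root it contributes $\varphi(Z^{2^{r-1}q})^{2^{-(r-1)}}$ rather than $\varphi(Z^{2^{r}q})^{2^{-r}}$. Concretely, for $r=3$ with $A=X^{m_1}Y^{n_1}X^{m_2}$ one gets $\varphi(AA^{\ast})^{1/2}\le \varphi(X^{8m_1})^{1/8}\,\varphi(Y^{4n_1})^{1/4}\,\varphi(X^{8m_2})^{1/8}$, and the factor $\varphi(Y^{4n_1})^{1/4}$ is not the claimed $\varphi(Y^{8n_1})^{1/8}$, so the induction as written does not close. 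The fix is short but must be stated: since $\varphi$ is a state with $\varphi(\mathbf 1)=1$, Cauchy--Schwarz gives $\varphi(a)\le\varphi(a^{2})^{1/2}$ for $a=Z^{2^{j}q}\ge 0$ ($Z$ self-adjoint, even power), hence $j\mapsto \varphi(Z^{2^{j}q})^{2^{-j}}$ is non-decreasing and each under-powered interior factor can be upgraded to $\varphi(Z^{2^{r}q})^{2^{-r}}$; alternatively, strengthen the induction hypothesis to allow non-uniform exponents and track them explicitly, exactly as the paper does in Lemma \ref{algo}. With one of these two ingredients added, your argument is correct.
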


Let $\bs{X}= \{X_1,\dots, X_n\}$ be a set of centered freely independent random variables in $(\mathcal{A},\varphi)$, having unit variance (not necessarily with the same distribution), and denote by $\{\mu_k^{\bs{X}}\}_{k\geq 1}$ the corresponding sequence of the \textit{largest even moments}, that is: 
$$\mu_k^{\bs{X}} = \sup\limits_{\substack{i=1,\dots,n\\l=1,\dots,k}}\varphi(X_i^{2l}). $$

\begin{prop}[\textrm{\cite[Proposition 3.3]{NourdinDeya}}]
\label{prop02}
For $d\geq 1$, let $g:[n]^{d}\rightarrow \mathbb{R}$ be a mirror symmetric kernel, vanishing on diagonals. For the homogeneous sum
$$Q_{\bs{X}}(g) = \sum_{i_1,\dots,i_d=1}^n g(i_1,\dots,i_d)X_{i_1}\cdots X_{i_d},$$ the following hypercontractivity estimate applies: for every integer $r \geq 1$, there exists a constant $C_{r,d}$ such that:
$$ \varphi\big(Q_{\bs{X}}(g)^{2r}\big) \leq C_{r,d}\;\mu_{2^{rd-1}}^{\bs{X}} \big(\varphi(Q_{\bs{X}}(g)^2)\big)^2\, $$
or, equivalently,
$$ \varphi\big(Q_{\bs{X}}(g)^{2r}\big) \leq C_{r,d}\;\mu_{2^{rd-1}}^{\bs{X}}\bigg(\sum_{j_1,\dots,j_d=1}^{n}g(j_1,\dots,j_d)^{2}\bigg)^{r}.$$
\end{prop}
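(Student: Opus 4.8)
The plan is to estimate $\varphi\big(Q_{\bs{X}}(g)^{2r}\big)$ directly, via the combinatorics of mixed moments of freely independent random variables, isolating the role played by the \emph{vanishing on diagonals} of $g$ and then reducing to the Euclidean counting argument underlying \cite[Lemma~4.2]{NourdinPeccatiReinert}. Expanding the power gives
\begin{equation*}
\varphi\big(Q_{\bs{X}}(g)^{2r}\big)=\sum_{\mathbf{i}^{(1)},\dots,\mathbf{i}^{(2r)}\in[n]^{d}}\Big(\prod_{k=1}^{2r}g(\mathbf{i}^{(k)})\Big)\,\varphi\big(X_{i^{(1)}_{1}}\cdots X_{i^{(1)}_{d}}\,X_{i^{(2)}_{1}}\cdots X_{i^{(2r)}_{d}}\big),
\end{equation*}
a weighted sum of $\varphi$-values of words of length $2rd$ in the $X_{i}$. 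Since the $X_{i}$ are centered and freely independent, each such moment equals the sum, over all non-crossing partitions $\pi$ of $\{1,\dots,2rd\}$ having no singleton and with every block \emph{monochromatic} (all letters in a block carrying the same index), of the associated products of free cumulants $\prod_{V\in\pi}\kappa_{|V|}(X_{i_{V}},\dots,X_{i_{V}})$; the absence of singletons is exactly the centering. The key structural observation is that, because $g$ vanishes on diagonals, the $d$ letters coming from any single copy of $g$ carry pairwise distinct indices, so no block of $\pi$ can contain two letters from the same copy. Mirror symmetry of $g$ may be used freely to read a copy backwards.

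First I would dispose of the cumulant factors. By moment--cumulant inversion each $|\kappa_{l}(X_{i},\dots,X_{i})|$ is dominated by a constant depending only on $l\le 2rd$ times a product of moments $\varphi(X_{i}^{m})$, $m\le l$; discarding odd exponents via $\varphi(X_{i}^{2m+1})\le\tfrac12(\varphi(X_{i}^{2m})+\varphi(X_{i}^{2m+2}))$ and using the free H\"older inequality of Lemma~\ref{lemma0} to concentrate all the moments into a single one, each such product is bounded by $\mu^{\bs{X}}_{2^{rd-1}}$ up to a combinatorial constant (the exponential order $2^{rd-1}$ being the, non-optimal, price of iterating Lemma~\ref{lemma0}, and also of an inductive variant of the whole argument on $d$ based on Lemmas~\ref{freebin} and~\ref{lemma0}). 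Since the number of admissible $\pi$ is bounded in terms of $r$ and $d$ alone, this pulls a single factor $C_{r,d}\,\mu^{\bs{X}}_{2^{rd-1}}$ out of the sum, and what remains is, for each $\pi$, the purely real sum $\big|\sum g(\mathbf{i}^{(1)})\cdots g(\mathbf{i}^{(2r)})\big|$ over all index assignments compatible with the block structure of $\pi$.

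For this last sum I would pass to a tensor-network (hypergraph) picture: the $2r$ copies of $g$ are the nodes, and each block $V\in\pi$ is a hyperedge that identifies one coordinate slot in each of the $|V|$ distinct copies it meets. Because every block has size $\ge 2$ and meets each copy at most once, a repeated Cauchy--Schwarz elimination --- at each step summing out an index shared by exactly two of the remaining factors, exactly as in the commutative proofs in \cite{Mossel} and \cite[Lemma~4.2]{NourdinPeccatiReinert} --- bounds the network by $\big(\sum_{\mathbf{j}\in[n]^{d}}g(\mathbf{j})^{2}\big)^{r}$. Summing over $\pi$ and absorbing the combinatorial constants into $C_{r,d}$ yields $\varphi\big(Q_{\bs{X}}(g)^{2r}\big)\le C_{r,d}\,\mu^{\bs{X}}_{2^{rd-1}}\big(\sum_{\mathbf{j}}g(\mathbf{j})^{2}\big)^{r}$, and the two displayed forms of the statement are equivalent because the same freeness-plus-diagonal reasoning applied to two copies of $g$ (whose only admissible $\pi$ is the nested pairing $i^{(1)}_{l}=i^{(2)}_{d+1-l}$, where unit variance of the $X_{i}$ enters) gives $\varphi\big(Q_{\bs{X}}(g)^{2}\big)=\sum_{\mathbf{j}}g(\mathbf{j})^{2}$.

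The main obstacle is the bookkeeping in this counting step. One has to verify with care that the vanishing-on-diagonals hypothesis genuinely forces every block of $\pi$ to spread over distinct copies of $g$, and then to arrange the Cauchy--Schwarz eliminations so that every index summed out occurs in precisely two surviving factors --- this is exactly what makes the final power equal to $r$ (the full ``$\|g\|^{2r}$'') rather than something larger, and it is where degree $d\ge 1$ rather than $d=1$ complicates matters. A secondary, more mechanical difficulty is to keep the collapse of the cumulants (or the iteration of Lemmas~\ref{freebin}--\ref{lemma0}) economical enough to land on $\mu^{\bs{X}}_{2^{rd-1}}$; conveniently, for the intended application to universality only the dependence of the exponent on $r$ and $d$ matters, so no optimization is required here.
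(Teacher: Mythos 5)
First, note that the paper does not prove Proposition \ref{prop02} at all: it is imported verbatim from \cite[Proposition 3.3]{NourdinDeya}, and the only ingredient of that proof the paper reproduces is Lemma \ref{lemma01}, which bounds the \emph{whole} word moment $|\varphi(X_{j_1}\cdots X_{j_{2rd}})|$ by $\mu^{\bs{X}}_{2^{rd-1}}$ in one stroke. Your route is genuinely different: you pass through the free moment--cumulant formula and non-crossing partitions. The structural observations you extract are correct and are indeed the heart of the matter: mixed free cumulants vanish, centering kills singletons, and the vanishing on diagonals forces every monochromatic block to meet each copy of $g$ at most once; the resulting splitting into ``moment factor $\times$ kernel network'' is the right skeleton, and your identification of $\varphi(Q_{\bs{X}}(g)^2)=\|g\|^2$ via the nested pairing and mirror symmetry is fine. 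A simpler variant of your own argument would keep the word moment intact (bounding it by Lemma \ref{lemma01}) and use freeness plus centering only to discard configurations in which some index appears once; this avoids the cumulant bookkeeping entirely and is closer to the cited proof.

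Two steps, however, are asserted rather than proved, and one of them is a real gap. (i) The ``concentration'' of a product of cumulants over several blocks into a single factor $\mu^{\bs{X}}_{2^{rd-1}}$ does not follow from Lemma \ref{lemma0}, which controls one expectation of an interleaved word, not a product of separate expectations such as $\varphi(X_a^4)\varphi(X_b^4)$. The correct (and easy) fix is monotonicity of the noncommutative $L^p$ norms, $\varphi(|X|^{2a})\le\varphi(|X|^{2A})^{a/A}$ for $a\le A$, together with $\mu^{\bs{X}}_k\ge\varphi(X_i^2)=1$: since the total moment degree over all blocks is $2rd$, the product is at most $\mu^{\bs{X}}_{rd}\le\mu^{\bs{X}}_{2^{rd-1}}$ up to a constant. (ii) The counting step is where your proof stops short: blocks of size $\ge 3$ are shared by three or more copies of $g$, so an elimination in which ``every index summed out occurs in precisely two surviving factors'' is simply not available in general, and the commutative references you point to (\cite{Mossel}, \cite[Lemma 4.2]{NourdinPeccatiReinert}) organize this step around hypercontractivity rather than a network elimination, so they do not supply it either. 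The inequality you need is true, but it is a generalized H\"older (Finner-type) inequality for counting measure with all exponents equal to $2$: it holds exactly because every block meets at least two distinct copies, which is the no-singleton plus off-diagonal structure you isolated; alternatively, blocks of size $s\ge 3$ can be handled by H\"older with exponent $s$ followed by $\|\cdot\|_{\ell^s}\le\|\cdot\|_{\ell^2}$. Until this step is carried out (or correctly referenced), the bound $\big(\sum_{\mathbf{j}}g(\mathbf{j})^2\big)^r$ is not established. Finally, a cosmetic point: the first displayed estimate in the statement should carry the exponent $r$, not $2$, on $\varphi(Q_{\bs{X}}(g)^2)$, as your own equivalence argument implicitly assumes.
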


\begin{lemma}[\textrm{\cite[Lemma 3.2]{NourdinDeya}}]
\label{lemma01}
For every integer $r \geq 1$, and every sequence $\bs{X} = \{X_i\}_{i \geq 1}$ of random variables in $(\mathcal{A},\varphi)$, the following estimate holds:
$$ |\varphi(X_{i_1}\cdots X_{i_{2r}})| \leq \mu_{2^{r-1}}^{\bs{X}},$$
for every choice of positive integers $i_1,\dots, i_{2r}$.
\end{lemma}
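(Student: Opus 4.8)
The idea is to bound the mixed moment $\varphi(X_{i_1}\cdots X_{i_{2r}})$ by a product of \emph{single-variable} even moments of the $X_{i_j}$'s, each of order at most $2^{r}$, and then to collapse everything into $\mu_{2^{r-1}}^{\bs{X}}$. Two elementary observations will be used throughout: first, since every $X_i$ has unit variance, $\mu_k^{\bs{X}}\geq\varphi(X_i^2)=1$ for all $k\geq 1$, so any power of $\mu_{2^{r-1}}^{\bs{X}}$ with exponent in $[0,1]$ is again $\leq\mu_{2^{r-1}}^{\bs{X}}$; second, $k\mapsto\mu_k^{\bs{X}}$ is non-decreasing and $\varphi(X_i^{2^{t}})\leq\mu_{2^{t-1}}^{\bs{X}}\leq\mu_{2^{r-1}}^{\bs{X}}$ whenever $t\leq r$.

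The core step is a dyadic iteration of the Cauchy--Schwarz inequality for the positive tracial state $\varphi$, exactly in the spirit of the proof of Lemma \ref{lemma0}. Writing $A=X_{i_1}\cdots X_{i_r}$ and $B=X_{i_{r+1}}\cdots X_{i_{2r}}$, Cauchy--Schwarz gives $|\varphi(X_{i_1}\cdots X_{i_{2r}})|=|\varphi(AB)|\leq\varphi(AA^{*})^{1/2}\,\varphi(B^{*}B)^{1/2}$; by traciality the palindromic word $AA^{*}=X_{i_1}\cdots X_{i_r}X_{i_r}\cdots X_{i_1}$ collapses to $X_{i_1}^{2}X_{i_2}^{2}\cdots X_{i_r}^{2}$ (cyclically slide each outermost factor next to its twin), and likewise for $B^{*}B$. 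One is thus reduced to estimating $\varphi(X_{j_1}^{2}\cdots X_{j_k}^{2})$ for words of length $k\leq r$ in the self-adjoint elements $X_j^{2}$. Iterating the same split, each round halves the length of the word and doubles the exponents, so that after finitely many rounds only single-variable moments $\varphi(X_{i_j}^{2^{t_j}})$ remain; counting shows that the number of rounds needed is $1+\lceil\log_2 r\rceil$, whence every exponent satisfies $2^{t_j}\leq 2^{\,1+\lceil\log_2 r\rceil}\leq 2^{r}$, where one uses the elementary inequality $\lceil\log_2 r\rceil\leq r-1$, valid for all $r\geq 1$. In particular each $t_j\leq r$.

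Assembling the estimate, we obtain $|\varphi(X_{i_1}\cdots X_{i_{2r}})|\leq\prod_{j}\varphi(X_{i_j}^{2^{t_j}})^{w_j}$, where the weights $w_j>0$ are the products of the factors $\tfrac12$ generated at each Cauchy--Schwarz step, i.e.\ $w_j=2^{-(\mathrm{depth})}$ along a binary recursion tree, and therefore $\sum_j w_j\leq 1$ by Kraft's inequality. Combining the two observations above, $\varphi(X_{i_j}^{2^{t_j}})\leq\mu_{2^{r-1}}^{\bs{X}}$ for each $j$, hence $|\varphi(X_{i_1}\cdots X_{i_{2r}})|\leq(\mu_{2^{r-1}}^{\bs{X}})^{\sum_j w_j}\leq\mu_{2^{r-1}}^{\bs{X}}$, as claimed. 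I do not expect a genuine obstacle: the only delicate point is the bookkeeping along the dyadic recursion --- in particular checking that the exponents never exceed $2^{r}$, which is precisely why the coarse quantity $\mu_{2^{r-1}}^{\bs{X}}$ (rather than the sharper $\mu_{r}^{\bs{X}}$ that the optimal non-commutative H\"older inequality would give) appears in the statement --- together with the use of $\mu_{2^{r-1}}^{\bs{X}}\geq 1$ to absorb the fact that the H\"older-type weights produced by this elementary argument sum to at most, not necessarily exactly, $1$.
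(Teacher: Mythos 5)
Your overall strategy --- iterated Cauchy--Schwarz for the tracial state, ending with a weighted product of single-variable even moments whose weights sum to (at most) one, and then absorbing everything into $\mu_{2^{r-1}}^{\bs{X}}\geq 1$ --- is the right one; note that the paper does not reprove this statement (it is quoted from \cite[Lemma 3.2]{NourdinDeya}), and the in-paper incarnation of this bookkeeping is Lemma \ref{algo}. However, your execution has a genuine gap at the very first reduction. Traciality only permits cyclic permutations, so in the palindrome $\varphi(AA^{\ast})=\varphi(X_{i_1}\cdots X_{i_r}X_{i_r}\cdots X_{i_1})$ you may merge the central pair and, after one cyclic shift, the outermost pair, obtaining $\varphi\big(X_{i_1}^{2}X_{i_2}\cdots X_{i_{r-1}}X_{i_r}^{2}X_{i_{r-1}}\cdots X_{i_2}\big)$; the interior letters stay separated, and this is \emph{not} $\varphi(X_{i_1}^{2}X_{i_2}^{2}\cdots X_{i_r}^{2})$ in general. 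Already for $r=3$, indices $(i_1,i_2,i_3)=(1,2,1)$ and $X_1,X_2$ free standard semicircular, one has $\varphi(AA^{\ast})=\varphi(X_1^{2}X_2X_1^{2}X_2)=1$ while your collapsed expression is $\varphi(X_1^{2}X_2^{2}X_1^{2})=\varphi(X_1^{4}X_2^{2})=2$; and for non-free variables there is no inequality in a fixed direction that could substitute for the claimed identity (a trace of a product of three or more positive elements such as $X_{i_1}^{2}\cdots X_{i_r}^{2}$ need not even be nonnegative, whereas $\varphi(AA^{\ast})\geq 0$). So the reduction to ``words of length $\leq r$ in the squares $X_j^{2}$'' after one round is unjustified.

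This is not a cosmetic slip, because your exponent count rests on it: once the collapse is removed, it is no longer true that each round halves the length, so the count of $1+\lceil\log_2 r\rceil$ rounds and the conclusion that all exponents are at most $2^{1+\lceil\log_2 r\rceil}$ both fail. In the correct recursion --- the inductive argument behind \cite[Lemma 3.2]{NourdinDeya}, formalized in this paper as Lemma \ref{algo} --- each Cauchy--Schwarz step squares only the two boundary letters of the current word, the length drops by $2$ rather than by half, and a given letter may get squared repeatedly, so the surviving single-variable moments have order up to $2^{r}$, i.e.\ they are of the form $\varphi(X_i^{2l})$ with $l\leq 2^{r-1}$. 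This is precisely why $\mu_{2^{r-1}}^{\bs{X}}$ appears in the statement, and it contradicts your closing remark that the elementary argument only needs moments of order roughly $4r$. Your final assembly (weights summing to one, each moment bounded by $\mu_{2^{r-1}}^{\bs{X}}\geq 1$, monotonicity of $k\mapsto\mu_k^{\bs{X}}$) is correct and coincides with the standard proof; the repair needed is to replace the collapse-and-halving step by the boundary-squaring recursion of Lemma \ref{algo}, after which the stated bound follows.
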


\medskip
\section{Main result: free Lindberg principle}\label{MainResult}


\begin{Assumption}\label{Assumption_Cheby}
Throughout this section, let $d\geq 2$ be a fixed integer and $\bs{h} =(h_1,\dots,h_d)$ be a fixed vector  of orders for a Chebyshev sum, with $h_j \geq 1$ and $h_j = h_{d-j+1}$ for $j=1,\dots,\lfloor \frac{d}{2}\rfloor$. For these orders, let $\bs{X}=\{X_i\}_{i\geq 1}$ be a sequence of freely independent random variables in $(\mathcal{A},\varphi)$ such that $U_{h_j}(X_i)$ is centered and has unit variance, for every $i$ and every $j=1,\dots,d$. \\
\end{Assumption}

The set of random variables for which Assumption \ref{Assumption_Cheby} holds obviously includes the standard semicircle law: indeed, if $S\sim \mathcal{S}(0,1)$, $\varphi(U_h(S))=0$ and $\varphi(U_h(S)^2)= 1$ for all $h\geq 1$, being $U_h(S) = I_h^{S}(e^{\otimes h})$ (see Section \ref{AppendixfreeProb}).  Other non trivial examples are the following:
\begin{itemize}
\item[(i)] let $d=2$ and choose $h_1 =h_2 =2$. For a random variable $X$, the constraints $\varphi(U_2(X))= 0$ and $\varphi(U_2(X)^2)=1$, give $\varphi(X^2)=1$ and $\varphi(X^4) = 2$, so $X$ can be any centered random variable with second moment equal to 1 and zero free fourth cumulant $\kappa_4(X)$. For instance, let $Z(1)$ be a centred random variable, with the free Poisson distribution of parameter one, and $Y$ be a symmetric free Bernoulli variable, say $Y \sim \frac{1}{2}(\delta_1 + \delta_{-1})$, freely independent of $Z(1)$. Since $\kappa_4(Z(1))= 1$ and $\kappa_4(Y)= -1$, the random variable $X:= \dfrac{1}{\sqrt{2}}(Z(1) + Y)$ is  centered and satisfies the desired hypotheses.
\item[(ii)] More generally, for the same choice of parameters, the scaled sum $X$ of two centered freely independent random variables $Y$ and $Z$, with unit variance and with $\kappa_4(Y) = 1$ and $\kappa_4(Z) = -1$, satisfies Assumption \ref{Assumption_Cheby}.
\item[(iii)] Let $d=3$ and choose $h_1=h_3=1$ and $h_2=3$. Since $U_3(x) = x^3-2x$, $\varphi(U_3(X)) =0$ is satisfied whenever $\varphi(X^3)=2\varphi(X)$, while $\varphi(U_3(X)^2) = 1$ is verified if $\varphi(X^6) - 4\varphi(X^4) + 4\varphi(X^2) = 1$. Without loss of generality, assume that $\varphi(X)=0$ and $\varphi(X^2)=1$, so that the desired $X$ should satisfy $\varphi(X^3)=0$ and $\varphi(X^6) = 4\varphi(X^4)-3$ (for instance, $X$ can have the free symmetric  Bernoulli distribution $X \sim \frac{1}{2}(\delta_1 + \delta_{-1})$). 
More generally, for the existence of a solution, the problem of moments requires that the Hankel matrix $\big(\varphi(X^{i+j})\big)_{i,j=0,\dots,3}$ should be positive definite (see \cite[Theorem 6.1]{Chihara}). By virtue of the so-called Sylvester's criterion, this condition is satisfied if all its upper-left minors are strictly positive.  For instance,  under the extra assumption $\varphi(X^5)=0$, few calculations yield that $X$ has to satisfy $\varphi(X^4)( \varphi(X^6) - \varphi(X^4)^2) > 0,$ which is always satisfied (indeed, by the Cauchy-Schwarz inequality, if $\varphi(X^2)=1$, then $\varphi(X^4)=\varphi(X^3 X) \leq \varphi(X^6)^{\frac{1}{2}}$). Under the constraint $\varphi(X^6)= 4 \varphi(X^4) - 3$, $X$ satisfies Assumption \ref{Assumption_Cheby} if its fourth moment satisfies $\varphi(X^4) \in [1,3]$.\\
\end{itemize}


Following the strategy proposed in \cite{Mossel}, we introduce some further notation for Chebyshev sums, which will simplify the discussion contained in the proofs and ease the connection with the findings in \cite{Mossel}, where the authors deal with homogeneous sums in sequences of ensembles. 

More precisely, if  $f:[n]^d \rightarrow \mathbb{R}$ is an admissible kernel, we will introduce objects of the type $Q_{\bs{\mathcal{Y}}^{(n)}}(f)$, where $\bs{\mathcal{Y}}^{(n)}$ is no longer a sequence of random variables, but an ensemble, that is:
$$\bs{\mathcal{Y}}^{(n)} = \big(\bs{\mathcal{Y}}_1,\dots, \bs{\mathcal{Y}}_n\big)\, \text{ with  } \bs{\mathcal{Y}}_i = (\mathcal{Y}_{i,1},\dots, \mathcal{Y}_{i,d}), $$
and each $\mathcal{Y}_{i,j}$ is a random variable on the fixed space. With this notation, we set:
\begin{equation}\label{HomInEnsemble}
Q_{\bs{\mathcal{Y}}^{(n)}}(f) := \sum_{i_1,\dots,i_d=1}^{n}f(i_1,\dots,i_d)\mathcal{Y}_{i_1,1}\mathcal{Y}_{i_2,2}\cdots \mathcal{Y}_{i_d,d} 
\end{equation}
(namely the $j$-th factor in each summand is the $j$-th element in $\bs{\mathcal{Y}}_{i_j}$).\\

For a fixed vector of orders $\mathbf{h}=(h_1,\dots,h_d)$, Chebyshev sums correspond to a particular choice of ensemble, that is:
$$\bs{\mathcal{X}}^{(n)} = \big(\bs{\mathcal{X}}_1,\dots, \bs{\mathcal{X}}_n\big)\, \text{ with  } \bs{\mathcal{X}}_i = (\mathcal{X}_{i,1},\dots, \mathcal{X}_{i,d}), \, \mathcal{X}_{i,j} = U_{h_j}(X_i) ,$$
namely:
\begin{equation}
\label{ensemble}
\bs{\mathcal{X}}_i = \big(U_{h_1}(X_i),\dots, U_{h_d}(X_i)\big).
\end{equation}

In this case, we have:
\begin{align*}
Q_{\bs{\mathcal{X}}^{(n)}}(f) &:= \sum_{i_1,\dots,i_d=1}^{n}f(i_1,\dots,i_d)\mathcal{X}_{i_1,1}\mathcal{X}_{i_2,2}\cdots \mathcal{X}_{i_d,d} \\
&=  \sum_{i_1,\dots,i_d=1}^{n}f(i_1,\dots,i_d)U_{h_1}(X_{i_1})\cdots U_{h_d}(X_{i_d})\\
&= Q_{\bs{X}}^{(\bs{h})}(f)
\end{align*}

Observe that the notation $Q_{\bs{X}}^{(\bs{h})}(f)$ emphasizes the dependence of the random variable $Q_{\bs{X}}^{(\bs{h})}(f)$ on the sequence $\bs{X}= \{X_i\}_{i \geq 1}$ and on the orders $\bf{h}$, while the notation $Q_{\bs{\mathcal{X}}^{(n)}}(f)$ is particularly useful to apply the Lindberg replacement trick. Indeed, 
as already remarked, the Lindberg method basically consists in progressive replacements of the summands of the random functional under consideration. To accomplish such a goal, further sequences of ensembles are needed. Let $\bs{Y}=\{Y_i\}_{i\geq 1}$ be a sequence of freely independent  random variables, satisfying Assumption {\bf (1)}, freely independent of $\bs{X}=\{X_i\}_{i\geq 1}$. Then, the notation introduced in \eqref{HomInEnsemble} will be extended in a canonical way to the auxiliary ensembles:
\begin{equation}
\label{auxiliary}
 \bs{\mathcal{Z}}^{(i)} :=  (\bs{\mathcal{Z}}_{1}^{(i)},\dots, \bs{\mathcal{Z}}_{n}^{(i)}) = (\bs{Y}_1,\dots,\bs{Y}_{i-1}, \bs{\mathcal{X}}_{i},\dots, \bs{\mathcal{X}}_{n}),
 \end{equation}
for $i=1,\dots,n$, where $\bs{Y}_j = \underbrace{(Y_j,\dots,Y_j)}_{d \text{ times}}$, so that the random variable $Q_{ \bs{\mathcal{Z}}^{(i)}}(f)$ is obtained from $Q_{\bs{\mathcal{X}}^{(n)}}(f)$ by replacing $\mathcal{X}_{l,j}$ with $Y_l$, for $l=1,\dots,i-1$ and for every $j=1,\dots,d$. In particular, $\bs{\mathcal{Z}}^{(1)} = \bs{\mathcal{X}}^{(n)}$ and $\bs{\mathcal{Z}}^{(n)} = (\bs{Y}_1,\dots,\bs{Y}_n)$.

\subsection{Main Statement}

For $p\geq 1$, for any integer $n$ and for every $j=1,\dots,p$, let $f_n^{(j)}:[n]^{d}\rightarrow \mathbb{R}$ be an admissible kernel (according to Definition \ref{admissible_Univ}), and consider the associated  homogeneous polynomial in the non-commuting variables $x_1,\dots,x_n$:
\begin{equation}
\label{QNj}
Q_{\bs{x}}^{(j)}:=Q_{\bs{x}}(f_n^{(j)}) = \sum_{i_1,\dots,i_{d}=1}^{n}f_n^{(j)}(i_1,\dots,i_{d})x_{i_1}\cdots x_{i_{d}}.\\
\end{equation}

The forthcoming Theorem \ref{Multiinvariance2} provides an estimate of the proximity in law (expressed in terms of joint moments) between $\big(Q_{\bs{\mathcal{X}}^{(n)}}(f_n^{(1)}),\dots, Q_{\bs{\mathcal{X}}^{(n)}}(f_n^{(p)})\big)$ and $(Q_{\Y}^{(1)},\dots,Q_{\Y}^{(p)})$, where $\bs{\mathcal{X}}^{(n)}$  is an ensemble defined as in (\ref{ensemble}) for a sequence $\bs{X}$ of freely independent random variables satisfying Assumption \ref{Assumption_Cheby}, yielding the  generalization of the invariance principle given in Theorem \ref{teoNourdin}.\\

The differences of the joint moments will be controlled by means of the quantities $\tau_n^{(j)} = \max\limits_{i=1,\dots,n}\mathrm{Inf}_{i}(f_n^{(j)})$, for $j=1,\dots,p$ in such a way that the resulting bound perfectly matches with the bound given in \cite[Theorem 7.1]{NourdinPeccatiReinert}. 

\begin{thm}
\label{Multiinvariance2} 

If $d\geq 1$, let $\bs{h}= (h_1,\dots,h_d)$ be a vector of  positive integers with $h_i = h_{d-i+1}$ for $i=1,\dots,\lfloor \frac{d}{2}\rfloor$ \glossary{name={$\lfloor r\rfloor$},description={Integer part of $r \in \mathbb{R}$}}(if $d \geq 2$).
Let $\bs{X} = \{X_i\}_{i \geq 1}$ be a sequence of freely independent random variables satisfying Assumption \ref{Assumption_Cheby}, and $\bs{Y}=\{Y_j\}_{j\geq 1}$ be a sequence of freely independent centered random variables with unit variance, freely independent of $\bs{X}$. Assume further that $\bs{X}$ and $\bs{Y}$ are composed of random variables with uniformly bounded moments\footnote{If $\bs{X}$ has uniformly bounded moments, so have the elements of the ensemble $\bs{\mathcal{X}}^{(n)}$ for every $n$.}, that is, for every integer $r\geq 1$,
$$ \sup_{i\geq 1} \varphi(X_i^{r}) < \infty \qquad (\text{resp. } \sup_{i\geq 1} \varphi(Y_i^{r}) < \infty).$$
Then, for every integer $k\geq 1$, and for every choice of non-negative integers $m_{1,s},\dots,m_{p,s}$, for $s=1,\dots,k$, if $f_n^{(j)}: [n]^{d}\rightarrow \mathbb{R}$ is an admissible kernel for every $j=1,\dots,p$, 
\begin{align*}
\varphi\bigg(\prod_{s=1}^{k}\big(Q^{(1)}_{\bs{\mathcal{X}}^{(n)}}\big)^{m_{1,s}}\cdots \big(Q^{(p)}_{\bs{\mathcal{X}}^{(n)}}\big)^{m_{p,s}}\bigg) &-\varphi\bigg(\prod_{s=1}^{k}\big(Q_{\bs{Y}}(f_n^{(1)})\big)^{m_{1,s}}\cdots \big(Q_{\bs{Y}}(f_n^{(p)}\big)^{m_{p,s}}\bigg)\\
&= \mathcal{O}\bigg(\max\limits_{j=1,\dots,p}\sqrt{\tau_{n}^{(j)}}\bigg) \numberthis \label{bound}
\end{align*}
\end{thm}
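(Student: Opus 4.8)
The plan is to implement the Lindeberg replacement trick along the auxiliary ensembles $\bs{\mathcal{Z}}^{(i)}$ defined in \eqref{auxiliary}, passing from $\bs{\mathcal{X}}^{(n)}=\bs{\mathcal{Z}}^{(1)}$ to $(\bs{Y}_1,\dots,\bs{Y}_n)=\bs{\mathcal{Z}}^{(n+1)}$ one index at a time. First I would fix the target joint-moment functional $\Phi(\bs{\mathcal{W}}):=\varphi\big(\prod_{s=1}^k (Q^{(1)}_{\bs{\mathcal{W}}})^{m_{1,s}}\cdots (Q^{(p)}_{\bs{\mathcal{W}}})^{m_{p,s}}\big)$ and write the telescoping sum $\Phi(\bs{\mathcal{X}}^{(n)})-\Phi((\bs{Y}_j)_j)=\sum_{i=1}^{n}\big(\Phi(\bs{\mathcal{Z}}^{(i)})-\Phi(\bs{\mathcal{Z}}^{(i+1)})\big)$. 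For a single step, $\bs{\mathcal{Z}}^{(i)}$ and $\bs{\mathcal{Z}}^{(i+1)}$ differ only in the $i$-th slot, where a Chebyshev block $\bs{\mathcal{X}}_i=(U_{h_1}(X_i),\dots,U_{h_d}(X_i))$ is swapped for $\bs{Y}_i=(Y_i,\dots,Y_i)$. I would then decompose each $Q^{(j)}_{\bs{\mathcal{Z}}^{(i)}}$ according to the number of times the variable $i$ appears: write $Q^{(j)}_{\bs{\mathcal{Z}}^{(i)}}=R^{(j)}_{0,i}+R^{(j)}_{1,i}+\dots$, where $R^{(j)}_{0,i}$ collects the terms not involving index $i$ at all (and hence is common to $\bs{\mathcal{Z}}^{(i)}$ and $\bs{\mathcal{Z}}^{(i+1)}$), while the higher-order pieces involve the $i$-th variable once, twice, etc., multiplied against kernels whose squared $\ell^2$-mass is controlled by $\mathrm{Inf}_i(f_n^{(j)})$.

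The second step is the core algebraic computation: expand $\Phi(\bs{\mathcal{Z}}^{(i)})$ and $\Phi(\bs{\mathcal{Z}}^{(i+1)})$ using the non-commutative binomial expansion (Lemma \ref{freebin}) to pull out the powers of the pieces containing index $i$, and use freeness of $X_i$ (resp. $Y_i$) from everything else. By freeness and traciality, every monomial in the expansion factorizes into a ``deterministic-in-$i$'' part (a trace involving only the $R^{(j)}_{0,i}$-type blocks and the other variables) times products of moments $\varphi(U_{h_1}(X_i)^{a_1}\cdots)$ or $\varphi(Y_i^{b})$ of the swapped variable. The terms where index $i$ appears zero or exactly once cancel between the two ensembles: the zero-appearance terms are literally identical, and the single-appearance terms vanish because $\varphi(U_{h_j}(X_i))=0=\varphi(Y_i)$ by Assumption \ref{Assumption_Cheby}, so they contribute nothing on either side after centering. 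Hence $\Phi(\bs{\mathcal{Z}}^{(i)})-\Phi(\bs{\mathcal{Z}}^{(i+1)})$ is a finite sum (bounded in number by $d,p,k$ and the $m_{j,s}$, all fixed) of monomials in each of which index $i$ appears at least twice. Each such monomial carries at least two ``influence factors'', i.e. after applying the free Hölder inequality (Lemma \ref{lemma0}) and Lemma \ref{lemma01} to bound the moments of $X_i,Y_i$ and of the remaining factors (here is where uniform boundedness of moments is used), one is left with a quantity bounded by a constant (independent of $n$) times $\big(\mathrm{Inf}_i(f_n^{(j_1)})\big)^{1/2}\big(\mathrm{Inf}_i(f_n^{(j_2)})\big)^{1/2}\le \max_j \mathrm{Inf}_i(f_n^{(j)})$ for suitable indices $j_1,j_2$.

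The third step is summation over $i$. Using $\sum_{i=1}^n \mathrm{Inf}_i(f_n^{(j)})=\|f_n^{(j)}\|^2=1$ and the crude bound $\mathrm{Inf}_i(f_n^{(j)})\le \big(\max_l \mathrm{Inf}_l(f_n^{(j)})\big)^{1/2}\cdot \big(\mathrm{Inf}_i(f_n^{(j)})\big)^{1/2}$, one gets
\[
\sum_{i=1}^n \max_j \mathrm{Inf}_i(f_n^{(j)}) \;\le\; \Big(\max_{j}\max_{i}\mathrm{Inf}_i(f_n^{(j)})\Big)^{1/2}\sum_{i=1}^n \sum_j \big(\mathrm{Inf}_i(f_n^{(j)})\big)^{1/2}\cdot C,
\]
which is not quite what is wanted; more carefully, one splits off one power of $\mathrm{Inf}_i$ as $\le \tau_n^{(j)}$ and keeps the other to sum to $\|f_n^{(j)}\|^2=1$, giving $\sum_i \mathrm{Inf}_i(f_n^{(j)})^{?}\le \sqrt{\tau_n^{(j)}}$ when the monomial contributes exactly the ``extra'' factor $\mathrm{Inf}_i$ beyond one summable copy; monomials with index $i$ appearing $\ge 3$ times are even smaller. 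Assembling, $|\Phi(\bs{\mathcal{Z}}^{(i)})-\Phi(\bs{\mathcal{Z}}^{(i+1)})|$ summed over $i$ is $\mathcal{O}\big(\max_j\sqrt{\tau_n^{(j)}}\big)$, as claimed. I expect the main obstacle to be bookkeeping in the second step: correctly organizing the binomial expansion of a product of $k$ factors each itself a power $m_{j,s}$ of a homogeneous sum, tracking which monomials survive the cancellation, and verifying uniformly (in $n$) that every surviving monomial genuinely carries at least two influence factors — in particular handling the Chebyshev structure, where $U_{h_j}(X_i)$ contributes moments $\varphi(U_{h_j}(X_i)^2)=1$ and higher moments that must be controlled by uniform boundedness of the moments of $\bs{X}$; this is exactly the place where Proposition \ref{contraction5}, the admissibility (mirror symmetry, vanishing on diagonals, unit variance) of the $f_n^{(j)}$, and Lemmas \ref{lemma0}–\ref{lemma01} do the heavy lifting, in direct parallel with \cite[Theorem 1.3]{NourdinDeya} and \cite[Theorem 7.1]{NourdinPeccatiReinert}.
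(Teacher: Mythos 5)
There is a genuine gap at the heart of your second step: you cancel only the terms in which the swapped variable appears zero or one time, and then propose to \emph{bound} all remaining terms, which you note carry at least two influence factors, i.e.\ a factor of order $\max_j \mathrm{Inf}_i(f_n^{(j)})$. That is not enough. For a monomial in which $X_i$ (resp.\ $Y_i$) appears exactly twice, the best such bound is $\mathrm{Inf}_i(f_n^{(j)})$ to the first power, and summing over $i$ gives $\sum_i \mathrm{Inf}_i(f_n^{(j)})=\|f_n^{(j)}\|^2=1$, i.e.\ an $\mathcal{O}(1)$ contribution rather than $\mathcal{O}(\sqrt{\tau_n})$ — your third step's attempt to ``split off one power of $\mathrm{Inf}_i$'' cannot work, because a total exponent of $3/2$ is needed ($(\mathrm{Inf}_i)^{3/2}\le \sqrt{\tau_n}\,\mathrm{Inf}_i$ sums correctly; $(\mathrm{Inf}_i)^{1}$ does not, and $\sqrt{\tau_n}\sum_i(\mathrm{Inf}_i)^{1/2}$ can be of order $\sqrt{\tau_n}\sqrt{n}=\mathcal{O}(1)$). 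The paper's proof handles exactly this point by an \emph{exact cancellation} of the degree-two terms, not an estimate: Lemma \ref{3.1bis}(ii)--(iii) shows that when the replaced variable occurs exactly twice, the joint moment computed with $U_{h_l}(X_i)$ coincides with the one computed with $Y_i$, because both are centered with unit variance and the occurring Chebyshev orders match thanks to the mirror-symmetry constraint $h_i=h_{d-i+1}$ (this is precisely where that hypothesis and Assumption \ref{Assumption_Cheby} enter). Only after this cancellation do the surviving terms have total degree $\gamma\ge 3$ in the swapped variable, yielding the per-term bound $(\max_j \mathrm{Inf}_i(f_n^{(j)}))^{3/2}$ and, after summation, the stated rate.

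A secondary but nontrivial issue is your claim that ``by freeness and traciality, every monomial factorizes into a deterministic-in-$i$ part times moments of the swapped variable'': freeness gives no such factorization for words in which $X_i$ is interleaved with other free elements, and the free H\"older inequality (Lemma \ref{lemma0}) applies to alternating words in \emph{two} variables only. This is why the paper first proves the iterated Cauchy--Schwarz inequality (Lemma \ref{algo}) — with its parity bookkeeping over $kp$ even or odd — to decouple the product $\varphi\big((a_{1,1}^{(i)}\cdots a_{1,p}^{(i)})\cdots(a_{k,1}^{(i)}\cdots a_{k,p}^{(i)})\big)$ into even moments of the individual blocks $a_{s,l}^{(i)}$, to which hypercontractivity (Proposition \ref{prop02}), Lemma \ref{lemma01} and then Lemma \ref{lemma0} can be applied to extract the powers of $\mathrm{Inf}_i(f_n^{(l)})$. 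Your sketch skips this decoupling step, which is the main technical novelty needed to make the multidimensional free argument go through.
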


\begin{rmk}
In the classical case, the invariance principle provided in \cite{NourdinPeccatiReinert} is somewhat stronger, since it is possible to consider vectors of homogeneous sums with possibly different degrees. As outlined from the proofs, here the choice of taking homogeneous sums of different degrees, is admissible only when considering vectors of Chebyshev sums of  order $\bs{h}=(h,h,\dots,h)$, namely, vectors of the type $\big(Q_{\bs{X}}^{(h)}(f_n^{1}), \dots,Q_{\bs{X}}^{(h)}(f_n^{m})\big)$, with $f_n^{(j)}:[n]^{d_j}\rightarrow \mathbb{R}$ and
$$ Q_{\bs{X}}^{(h)}(f_n^{j}) = \sum_{i_1,\dots,i_{d_j}=1}^n  f_n^{(j)}(i_1,\dots,i_{d_j})U_h(X_{i_1}) U_h(X_{i_2})\cdots U_{h}(X_{i_{d_j}}). $$
\end{rmk}

\begin{exm}
Here we are going to shortly discuss two explicit cases  where Theorem \ref{Multiinvariance2} entails or not  the universality phenomenon. 
 For  $p=d=2$, consider the kernels:
\begin{enumerate}
\item 
$$ f_n^{(1)}(i,j) = \begin{cases}
\dfrac{1}{\sqrt{2n-2}} & \text{ if } i \neq j, i = 1 \text{ or }j=1,\\
0 & \text{otherwise};
\end{cases}
$$
\item
$$ 
f_n^{(2)}(i,j)=
\begin{cases}
0 & \text{ if } i =j \\
\dfrac{1}{\sqrt{n(n-1)}} & \text{ if } i \neq j;
\end{cases}
$$
\item 
$$ 
f_n^{(3)}(i,j) =
\begin{cases}
\dfrac{1}{\sqrt{(n-1)(n-2)}} & \text{ if } i\neq j \text{ and } i, j \neq 1, \\
0 & \text{ otherwise}.
\end{cases}
$$
\end{enumerate}
Note that $\|f_n^{(j)}\|^2 = 1$ for all $j=1,2,3$. Simple computations yield that:
\begin{enumerate}
\item $\mathrm{Inf}_1(f_n^{(1)}) = 1$ and $\mathrm{Inf}_j(f_n^{(1)}) = \dfrac{1}{n-1} $ for $j=2,\dots,n$;
\item $\mathrm{Inf}_i(f_n^{(2)}) = \dfrac{2}{n}$ for every $i=1,\dots,n$;
\item $\mathrm{Inf}_1(f_n^{(3)}) = 0$, and $\mathrm{Inf}_j(f_n^{(3)}) = \dfrac{2}{n-1}$ for all $j=2,\dots,n$,
\end{enumerate}
which in turn imply that $\tau_n^{(1)} = 1$, $\tau_n^{(2)} = \dfrac{2}{n}$ and $\tau_n^{(3)} = \dfrac{2}{n-1}$. Therefore, for Chebyshev sums  with kernels $f_n^{(1)},f_n^{(2)}, f_n^{(3)}$ respectively, 
\begin{align*}
\varphi\bigg(\prod_{s=1}^{k}\big(Q_{\bs{\mathcal{X}}^{(n)}}(f_n^{(2)})\big)^{m_{1,s}} \big(Q_{\bs{\mathcal{X}}^{(n)}}(f_n^{(3)})\big)^{m_{3,s}}\bigg) &-\varphi\bigg(\prod_{s=1}^{k}\big(Q_{\bs{Y}}(f_n^{(2)})\big)^{m_{1,s}}\big(Q_{\bs{Y}}(f_n^{(3)})\big)^{m_{3,s}}\bigg) \\
&= \mathcal{O}\bigg(\dfrac{1}{\sqrt{n-1}}\bigg),
\end{align*}
while
\begin{equation*}
\varphi\bigg(\prod_{s=1}^{k}\big(Q_{\bs{\mathcal{X}}^{(n)}}(f_n^{(1)})\big)^{m_{1,s}} \big(Q_{\bs{\mathcal{X}}^{(n)}}(f_n^{(2)})\big)^{m_{2,s}}\bigg) -\varphi\bigg(\prod_{s=1}^{k}\big(Q_{\bs{Y}}(f_n^{(1)})\big)^{m_{1,s}}\big(Q_{\bs{Y}}(f_n^{(2)})\big)^{m_{2,s}}\bigg) 
= \mathcal{O}\big(1\big),
\end{equation*}
and thus  no universal behaviour can be detected from Theorem \ref{Multiinvariance2}.
\end{exm}

\subsection{Sketch of the proof}

\smallskip

Before detailing the complete proof of Theorem \ref{Multiinvariance2} (to which the next section is entirely dedicated), here is a brief sketch of the general strategy. To simplify the notation, the dependence on $n$ will be dropped when there is no risk of confusion. \\

Consider the auxiliary ensembles introduced in equation \eqref{auxiliary}.  For every $j=1,\dots,p$, set:
 $$Q_{\bs{\mathcal{Z}}^{(i)}}(f_n^j) = W_j^{(i)} + V_j^{(i)}(\bs{\mathcal{X}}_i)\, ,$$ with $W_j^{(i)}$, $V_j^{(i)}(\bs{\mathcal{X}}_i)$ self-adjoint sums defined by:
\begin{equation}
\label{WN}
W_j^{(i)} := \sum_{i_1,\dots,i_d \in [n]\setminus \{i\}}f_n^{(j)}(i_1,\dots,i_d) \mathcal{Z}_{i_1,1}^{(i)}\cdots \mathcal{Z}_{i_d,d}^{(i)}
\end{equation}
(that is, $W_j^{(i)}$ is obtained by gathering together the summands where no $U_{h_l}(X_i)$'s appear), and
\small{
\begin{equation}
\label{VN}
V_j^{(i)}(\bs{\mathcal{X}}_i) = \sum_{l=1}^{d}\sum_{\substack{i_1,\dots,i_{d-1}\in\\ [n]\setminus \{i\}}}f_n^{(j)}(i_1,\dots,i_{l-1},i,i_{l},\dots,i_{d-1})\mathcal{Z}_{i_1,1}^{(i)}\cdots \mathcal{Z}_{i_{l-1},l-1}^{(i)}\; U_{h_l}(X_i) \;\mathcal{Z}_{i_{l}, l+1}^{(i)}\cdots \mathcal{Z}_{i_{d-1},d}^{(i)}\;,
\end{equation}
}
\normalsize
with $$\mathcal{Z}_{i_j,j}^{(i)}= 
\begin{cases}
Y_{i_j} & \text{ if } i_j \leq i-1 ,\\
U_{h_j}(X_{i_j}) & \text{ if } i_j > i.
\end{cases}
$$
Similarly, set:
\begin{equation}
V_j^{(i)}(\bs{Y}_i) = \sum_{l=1}^{d}\sum_{\substack{i_1,\dots,i_{d-1}\\ \in [n]\setminus \{i\}}}f_n^{(j)}(i_1,\dots,i_{l-1},i,i_{l},\dots,i_{d-1})\mathcal{Z}_{i_1,1}^{(i)}\cdots \mathcal{Z}_{i_{l-1},l-1}^{(i)}\; Y_i\; \mathcal{Z}_{i_{l}, l+1}^{(i)}\cdots \mathcal{Z}_{i_{d-1},d}^{(i)}.
\end{equation}

Therefore,
\small{
\begin{align*}
\bigg|&\varphi\bigg(\prod_{s=1}^{k}\big(Q_{\bs{\mathcal{X}}^{(n)}}^{(1)}\big)^{m_{1,s}}\cdots \big(Q_{\bs{\mathcal{X}}^{(n)}}^{(p)})\big)^{m_{p,s}}\bigg) -\varphi\bigg(\prod_{s=1}^{k}\big(Q_{\bs{Y}}^{(1)}\big)^{m_{1,s}}\cdots \big(Q_{\bs{Y}}^{(p)}\big)^{m_{p,s}}\bigg) \bigg|\\ 
&= \bigg|\sum_{i=1}^{n}
\varphi\bigg(\prod_{s=1}^{k}\big(Q^{(1)}_{\bs{\mathcal{Z}}^{(i)}}\big)^{m_{1,s}}\cdots \big(Q^{(p)}_{\bs{\mathcal{Z}}^{(i)}}\big)^{m_{p,s}}\bigg) \\
& \qquad \qquad \qquad-\varphi\bigg(\prod_{s=1}^{k}\big(Q^{(1)}_{\bs{\mathcal{Z}}^{(i+1)}}\big)^{m_{1,s}}\cdots \big(Q^{(p)}_{\bs{\mathcal{Z}}^{(i+1)}}\big)^{m_{p,s}}\bigg)\bigg|  \\ 
&= \bigg|\sum_{i=1}^{n}
\varphi\bigg(\prod_{s=1}^{k}\big(W_1^{(i)}+ V_1^{(i)}(\bs{\mathcal{X}}_i)\big)^{m_{1,s}}\cdots \big(W_p^{(i)}+ V_p^{(i)}(\bs{\mathcal{X}}_i)\big)^{m_{p,s}}\bigg) \\ 
&\qquad \qquad \qquad-\varphi\bigg(\prod_{s=1}^{k}\big(W_1^{(i)}+ V_1^{(i)}(\bs{Y}_i))\big)^{m_{1,s}}\cdots \big(W_p^{(i)}+ V_p^{(i)}(\bs{Y}_i)\big)^{m_{p,s}}\bigg)\bigg|  \numberthis\label{differenceBIS}. 
\end{align*}
}
\normalsize

The conclusion is then obtained by showing that the non-zero summands in (\ref{differenceBIS}) either cancel out between each other, or are of the order of $\max\limits_{j=1,\dots,p}\sqrt{\tau_n^{(j)}}$, as outlined in the examples below. 

\begin{exm}
This example illustrates the sketch of the proof for a particular choice of parameters.  Consider $d=3, k=1, p=2, m_{1,1}=2, m_{2,1}=1$. Then, for every fixed $i=1,\dots,n$, the expansion of
$$ \varphi\big( (W_1^{(i)} + V_{1}^{(i)}(\bs{\mathcal{X}}^{(n)}))^2  (W_2^{(i)} + V_{2}^{(i)}(\bs{\mathcal{X}}^{(n)}))\big) $$ gives the following 8 summands:
\begin{enumerate}
\item $\varphi\big((W_1^{(i)})^2 W_2^{(i)}\big)$, that will be cancelled out in the difference (\ref{differenceBIS}) with the same expectation coming from $\varphi\big( (W_1^{(i)} + V_{1}^{(i)}(\bs{Y}))^2  (W_2^{(i)} + V_{2}^{(i)}(\bs{Y}))\big)$;
\item $\varphi \big((W_1^{(i)})^2 V_2^{(i)}(\bs{\mathcal{X}}^{(n)})\big)$;
\item $\varphi \big(W_1^{(i)} V_1^{(i)}(\bs{\mathcal{X}}^{(n)}) W_2^{(i)}\big) $;
\item $\varphi \big(W_1^{(i)} V_1^{(i)}(\bs{\mathcal{X}}^{(n)}) V_2^{(i)}(\bs{\mathcal{X}}^{(n)})\big) $;
\item $\varphi \big( V_1^{(i)}(\bs{\mathcal{X}}^{(n)}) W_1^{(i)} W_2^{(i)}\big) $;
\item $\varphi \big( V_1^{(i)}(\bs{\mathcal{X}}^{(n)}) W_1^{(i)}  V_2^{(i)}(\bs{\mathcal{X}}^{(n)})\big) $;
\item $\varphi \big( V_1^{(i)}(\bs{\mathcal{X}}^{(n)})^2 W_2^{(i)}  \big) $;
\item $ \varphi \big( V_1^{(i)}(\bs{\mathcal{X}}^{(n)})^2 V_2^{(i)}(\bs{\mathcal{X}}^{(n)}) \big).$
\end{enumerate}
It is easily seen by direct calculations that the items $2,3$, and $5$ are always zero, because the first item of Lemma \ref{3.1bis} applies. The items 4,6, and 7, are sums of terms that are either zero or cancel with the corresponding terms in 
$\varphi\big( (W_1^{(i)} + V_{1}^{(i)}(\bs{Y}))^2  (W_2^{(i)} + V_{2}^{(i)}(\bs{Y}))\big)$. In order to ease the notation, for the fixed $i$  set $Z_{j_l}:= \mathcal{Z}_{j_l,l}$ for every $l$. Then, for the fourth item in the above list, among other summands that equal zero, there is a sum of terms of the type:
$$ \sum_{\substack{i_1,i_2,i_3 \neq i \\ k_1,k_2 \neq i, l_1,l_2 \neq i} }f_n^{(1)}(i_1,i_2,i_3)f_n^{(1)}(k_1,k_2,i)f_n^{(2)}(i,l_1,l_2)\varphi\big( Z_{i_1}Z_{i_2}Z_{i_3} Z_{k_1}Z_{k_2}U_{h_3}(X_i)U_{h_1}(X_i)Z_{l_1}Z_{l_2}   \big),$$
which becomes (since $h_1=h_3$):
\begin{align*}
 \sum_{\substack{i_1,i_2,i_3 \neq i \\ k_1,k_2 \neq i, l_1,l_2 \neq i}}&f_n ^{(1)}(i_1,i_2,i_3)f_n^{(1)}(k_1,k_2,i)f_n^{(2)}(i,l_1,l_2)\varphi\big( Z_{i_1}Z_{i_2}Z_{i_3} Z_{k_1}Z_{k_2}U_{h_1}(X_i)^2 Z_{l_1}Z_{l_2}   \big) \\
&= \sum_{i_1,i_2,i_3 \neq i}f_n^{(1)}(i_1,i_2,i_3)f_n^{(1)}(i_3,i_2,i)f_n^{(2)}(i,i_2,i_1)\varphi\big( Z_{i_2}^3 \big). \numberthis \label{es1}
\end{align*}
On the other hand, the same computations for the corresponding terms in  $$\varphi\big( (W_1^{(i)} + V_{1}^{(i)}(\bs{\mathcal{X}}^{(n)}))^2  (W_2^{(i)} + V_{2}^{(i)}(\bs{\mathcal{X}}^{(n)}))\big)$$ yield:
\begin{align*}
 \sum_{\substack{i_1,i_2,i_3 \neq i \\ k_1,k_2 \neq i, l_1,l_2 \neq i}}& f_n^{(1)}(i_1,i_2,i_3)f_n^{(1)}(k_1,k_2,i)f_n^{(2)}(i,l_1,l_2)\varphi\big( Z_{i_1}Z_{i_2}Z_{i_3} Z_{k_1}Z_{k_2}Y_i^2 Z_{l_1}Z_{l_2}   \big)\\
 &= \sum_{i_1,i_2,i_3 \neq i}f_n^{(1)}(i_1,i_2,i_3)f_n^{(1)}(i_3,i_2,i)f_n^{(2)}(i,i_2,i_1)\varphi\big( Z_{i_2}^3 \big), \numberthis \label{es2}
\end{align*}
so that (\ref{es1}) and (\ref{es2}) cancel each other in (\ref{differenceBIS}). 
Note that, since the state $\varphi$ is a trace, the computations required for the items 4,6, and 7, proceed similarly, the only difference being in the occurring kernels.

The case to pay more attention to is that in item 8 of the above list. In this case, a priori, nothing can be said about its value, because it might depend on the distribution of $U_{h_j}(X_i)$. 
Indeed, by linearity, being $\varphi$ a trace and the rule of free independence, the only non trivial case to be considered is:
$$ \sum_{\substack{i_1,i_2 \neq i \\ l_1,l_2 \neq i \\ k_1,k_2 \neq i} } f_n^{(1)}(i_1,i,i_2)f_n^{(1)}(l_1,i,l_2)f_n^{(2)}(k_1,i,k_2)\varphi\big( Z_{i_1}U_{h_2}(X_i)Z_{i_2}Z_{l_1}U_{h_2}(X_i)Z_{l_2} Z_{k_1}U_{h_2}(X_i) Z_{k_2}\big)$$
when $i_2=l_1, l_2=k_1, k_2=i_1$. Indeed, in this case,
$$\varphi\big( Z_{i_1}U_{h_2}(X_i)Z_{i_2}Z_{l_1}U_{h_2}(X_i)Z_{l_2} Z_{k_1}U_{h_2}(X_i) Z_{k_2}\big) = \varphi\big(U_{h_2}(X_i)^3\big). $$
Similarly, replacing $\bs{\mathcal{X}}^{(n)}$ with $\bs{Y}$, one would obtain:
 $$\varphi\big( Z_{i_1}\,Y_i\,Z_{i_2}Z_{l_1}Y_i Z_{l_2} Z_{k_1} Y_i Z_{k_2}\big) = \varphi\big(Y_i^3\big).$$

In order to give another instance of this ``cancelling'' phenomenon, consider the simpler case $d=p=2, h_1=h_2=h, k=1,m_{1,1}=m_{2,1}=1$.
By linearity, in order to compute the expectation \linebreak$\varphi\big( (W_1^{(i)} + V_{1}^{(i)}(\bs{\mathcal{X}}^{(n)}))(W_2^{(i)} + V_{2}^{(i)}(\bs{\mathcal{X}}^{(n)})) \big)$, one has to compute:
\begin{enumerate}
\item $\varphi\big( W_1^{(i)}V_{2}^{(i)}(\bs{\mathcal{X}}^{(n)})\big), \varphi\big( W_2^{(i)}V_{1}^{(i)}(\bs{\mathcal{X}}^{(n)})\big)$, both of which are zero, since the first item in Lemma \ref{3.1bis} applies for each of the summands of its expansion;
\item $\varphi\big((W_1^{(i)} W_{2}^{(i)}) \big)$, which simplifies with the same expectation appearing in the expansion of $\varphi\big( (W_1^{(i)} + V_{1}^{(i)}
(\bs{Y}))(W_2^{(i)} + V_{2}^{(i)}(\bs{Y})) \big)$;
\item $\varphi\big(V_{1}^{(i)}(\bs{\mathcal{X}}^{(n)})V_{2}^{(i)}(\bs{\mathcal{X}}^{(n)})\big)$.
\end{enumerate}
As to the last item, in its expansion, there will appear non-zero summands of the type:
$$\sum_{i_1,j_1 \neq i} f_n^{(1)}(i_1,i)f_n^{(2)}(i,j_1)\varphi\big( Z_{i_1}U_h(X_i)^2 Z_{j_1}\big).$$
When summing over $j_1=i_1$, the corresponding terms in the difference (\ref{differenceBIS}), will be cancelled by the corresponding ones in the expansion of:
$$\sum_{i_1,j_1 \neq i} f_n^{(1)}(i_1,i)f_n^{(2)}(i,j_1)\varphi\big( Z_{i_1}Y_i^2 Z_{j_1}\big),$$
for $i_1=j_1$.\\
\end{exm}

\section{Proof of Theorem \ref{Multiinvariance2}}
The proof of Theorem \ref{Multiinvariance2} is meant to generalize the proof of \cite[Theorem 1.3]{NourdinDeya} in the multidimensional setting. Albeit it follows the same strategy,   some additional difficulties arise here: indeed, the non-commutativity of the variables makes the computations of the joint moments more difficult, and therefore, in order to apply the hypercontractivity argument (which is a fundamental step), one needs to appeal to an iterated Cauchy-Schwarz inequality to bound  an expectation of a product with a certain product of expectations. This leads to deal with some technicalities and parity arguments. Moreover, since we are dealing with Chebyshev sums, an extended version of some auxiliary statements involved in the proof of \cite[Theorem 1.3]{NourdinDeya} is also required.\\

Since $\sum\limits_{i=1}^n \mathrm{Inf}_i(f_n^{(j)}) =1$, we shall assume that $\mathrm{Inf}_i(f_n^{(h)}) \leq 1$ for every $i=1,\dots,n$ and every $h=1,\dots,p$. For the reader's convenience, the proofs of the technical results hereafter quoted are presented in the following separate subsection.

\subsection{Auxiliary statements}

The next lemma (whose proof follows straightforwardly) is meant to generalize \cite[Lemma 3.1]{NourdinDeya}.
\begin{lemma}
\label{3.1bis}
Let $(\mathcal{A},\varphi)$ be a fixed $W^\star$-probability space. Let $\{\mathcal{A}_i\}_{i \geq 1}$ be a sequence of freely independent unital subalgebras of $\mathcal{A}$, and let $\mathcal{B}$ be a unital subalgebra of $\mathcal{A}$, freely independent of $\{\mathcal{A}_i\}_{i \geq 1}$. For random variables $B_1,B_2  \in \mathcal{B}$, and $C_p \in \mathcal{A}_p$, centered and with unit variance, it is:
\begin{itemize}
\item[(i)] $\varphi(C_{p_1}\cdots C_{p_r} B_i C_{p_{r+1}}\cdots C_{p_s}) = 0$,  for every $r, s \geq 0$, and every $p_1,\dots,p_s \in \mathbb{N}$;
\item[(ii)]  if $\mathcal{D}$ is  another unital subalgebra freely independent of $\{\mathcal{A}_i\}_{i \geq 1}$, 
for every $0\leq r < s \leq k$, and $m_1,\dots,m_k \in \mathbb{N}$, such that there exists at least one $j=r+1,\dots,s$ with $m_{j} = 1$, and any centered random variable $Z $ in $\mathcal{D}$ with unit variance, then:
\begin{align*}
\varphi(C_{p_1}^{m_1}\cdots C_{p_r}^{m_r}B_1 C_{p_{r+1}}^{m_{r+1}}\cdots &C_{p_s}^{m_s} B_2 C_{p_{s+1}}^{m_{s+1}}\cdots C_{p_k}^{m_k}) = \\ &\qquad =  \varphi(C_{p_1}^{m_1}\cdots C_{p_r}^{m_r} Z C_{p_{r+1}}^{m_{r+1}}\cdots C_{p_s}^{m_s} Z C_{p_{s+1}}^{m_{s+1}}\cdots C_{p_k}^{m_k}), 
\end{align*}
for every choice of integers $p_1 \neq p_2 \neq \cdots \neq p_{r}$, $p_{r+1}\neq p_{r+2}\neq \cdots \neq p_s$, $p_{s+1}\neq p_{s+2}\neq \cdots \neq p_{k}$;
\item[(iii)] if $B :=B_1 = B_2$, for every $0 \leq r \leq s\leq k, m_j = 0 $ or $m_j \geq 2$ for all $j=r+1,\dots,s$, then:
\small{
\begin{align*}
\varphi(C_{p_1}^{m_1}\cdots C_{p_r}^{m_r}B C_{p_{r+1}}^{m_{r+1}}\cdots & C_{p_s}^{m_s} B C_{p_{s+1}}^{m_{s+1}}\cdots C_{p_k}^{m_k}) = \\
&\qquad =  \varphi(C_{p_1}^{m_1}\cdots C_{p_r}^{m_r} Z C_{p_{r+1}}^{m_{r+1}}\cdots C_{p_s}^{m_s} Z C_{p_{s+1}}^{m_{s+1}}\cdots C_{p_k}^{m_k}). 
\end{align*}}
\normalsize
\end{itemize}
\end{lemma}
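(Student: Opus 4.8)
\textbf{Proof plan for Lemma \ref{3.1bis}.}

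The plan is to exploit the defining property of free independence repeatedly: if $\mathcal{A}_1, \mathcal{A}_2, \dots$ together with $\mathcal{B}$ (and, for part (ii), $\mathcal{D}$) are freely independent, then $\varphi$ of an alternating product of \emph{centered} elements drawn from distinct algebras vanishes. The first step is to reduce each of the three displayed identities to this centered-alternating situation by writing every factor coming from $\{\mathcal{A}_i\}$ as (its centered version) plus (a scalar), i.e. $C_{p}^{m} = \mathring{C_p^m} + \varphi(C_p^m)\mathbf{1}$, and by noting that consecutive factors sharing the same index $p$ can be fused into a single element of the corresponding $\mathcal{A}_p$; this is why the hypotheses impose $p_1 \neq p_2 \neq \cdots$ inside each block. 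After this reduction, a word in the $C$'s interspersed with one or two elements of $\mathcal{B}$ expands into a sum of words that are alternating between the blocks of $\mathcal{A}_p$'s and the $\mathcal{B}$- (or $\mathcal{D}$-)factors.

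For part (i), once the single factor $B_i \in \mathcal{B}$ is replaced by $\mathring{B_i} + \varphi(B_i)\mathbf{1}$: the term with $\varphi(B_i)$ is a scalar multiple of $\varphi$ of a word purely in the $C$'s, which by the same centering trick reduces to an alternating word in centered $\mathcal{A}_p$-elements and hence vanishes (each $C_p$ is centered, and distinct indices make the word genuinely alternating); the term with $\mathring{B_i}$ gives an alternating word in the $\mathcal{A}$'s and $\mathcal{B}$ with all factors centered, so $\varphi$ of it is $0$ by freeness. For part (ii), the key observation is that the hypothesis ``there exists $j \in \{r+1,\dots,s\}$ with $m_j = 1$'' guarantees that the middle block $C_{p_{r+1}}^{m_{r+1}}\cdots C_{p_s}^{m_s}$ contains at least one factor $C_{p_j}$ that is \emph{already centered} (since $C_{p_j}$ has mean $0$), so that after expanding $B_1, B_2$ and $Z$ into centered-plus-scalar parts, the cross terms in which $B_1$ or $B_2$ (resp. $Z$) is replaced by its scalar part all vanish for the same reason as in (i) — the surviving $C_{p_j}$ keeps the word alternating and centered — and hence $\varphi$ only sees the fully centered term $\varphi(\cdots \mathring{B_1}\cdots \mathring{B_2}\cdots)$. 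But the value of this fully centered term depends on $B_1, B_2$ only through their joint moments with the $C$'s, and by freeness it factors into a universal expression in $\varphi(\mathring{B_1}\cdots)$-type quantities that, since $B_1, B_2, Z$ are all centered with unit variance and $\mathcal{B}, \mathcal{D}$ are both free from $\{\mathcal{A}_i\}$, coincides with the corresponding expression for $Z$; therefore both sides equal the same number. Part (iii) is the analogue where $B_1 = B_2 = B$ and the role previously played by ``some $m_j = 1$'' is played instead by the condition $m_j = 0$ or $m_j \ge 2$ throughout the middle block, which forces any surviving centered $\mathcal{A}$-factors adjacent to the two copies of $B$ to still leave an alternating centered word; one checks that the only term contributing is again the fully centered one, whose value is controlled by the moments of $B$ (equivalently of $Z$) alone.

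The main obstacle I expect is bookkeeping in part (iii): without the ``$m_j = 1$'' hook, one must argue carefully that after the centering expansion the scalar-part terms still vanish — this uses that an $\mathcal{A}_p$-factor with exponent $\ge 2$, once centered, is still a nonzero centered element sitting between $\mathcal{B}$-factors, so the alternating structure is preserved — and that the nonvanishing fully-centered term genuinely has the same value whether one uses $B$ or $Z$. The cleanest way to handle both (ii) and (iii) uniformly is to invoke the fact that the mixed moments of a family of free subalgebras are a universal polynomial in the moments of the individual subalgebras (the moment–cumulant machinery, or directly the recursive definition of freeness), so that replacing $\mathcal{B}$ by $\mathcal{D}$ changes nothing as long as the relevant moments of $B_1, B_2$ match those of $Z$ — and here only the first and second moments enter, which agree by the centering/unit-variance hypotheses. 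The remaining work is purely the parity/index bookkeeping already signposted by the hypotheses $p_1\neq p_2\neq\cdots$, which is routine.
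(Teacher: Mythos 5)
Your overall strategy (write factors as centered part plus scalar, fuse neighbours from the same algebra, and use that $\varphi$ kills alternating products of centered elements from free subalgebras) is exactly the ``straightforward'' computation the paper has in mind, and it does settle (i) and (iii). Indeed, putting $u=C_{p_1}^{m_1}\cdots C_{p_r}^{m_r}$, $v=C_{p_{r+1}}^{m_{r+1}}\cdots C_{p_s}^{m_s}$, $w=C_{p_{s+1}}^{m_{s+1}}\cdots C_{p_k}^{m_k}$, all lying in $\mathcal{C}:=\mathrm{Alg}(1,\mathcal{A}_i : i\geq 1)$, which is free from $\mathcal{B}$ (and from $\mathcal{D}$), the centering computation you propose gives
$\varphi(uB_iw)=\varphi(B_i)\varphi(uw)=0$ for (i), and
$\varphi(uB_1vB_2w)=\varphi(B_1B_2)\,\varphi(v)\,\varphi(uw)$, hence for (iii) $\varphi(uBvBw)=\varphi(B^2)\varphi(v)\varphi(uw)=\varphi(Z^2)\varphi(v)\varphi(uw)=\varphi(uZvZw)$. (Two small remarks on your write-up of (i): the $\varphi(B_i)$-term dies because $B_i$ is centered, not because the pure-$C$ word vanishes --- it need not, and (i) assumes no distinctness of the $p_j$'s at all; and the fully centered word need not be alternating factor by factor, which is why one should group the $C$'s into single elements of $\mathcal{C}$ as above.)

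The genuine gap is in (ii), and it sits precisely in your closing ``uniform'' principle. From the identity above, the two sides of (ii) are $\varphi(B_1B_2)\,\varphi(v)\,\varphi(uw)$ and $\varphi(Z^2)\,\varphi(v)\,\varphi(uw)$, so the second moment that enters is the \emph{mixed} moment $\varphi(B_1B_2)$, and ``centered with unit variance'' does not make it equal to $\varphi(Z^2)=1$ when $B_1\neq B_2$ (take $B_1,B_2$ free standard semicircular elements of $\mathcal{B}$, so $\varphi(B_1B_2)=0$). Your argument never confronts this: you spend the hypothesis ``some $m_j=1$ in the middle block'' on killing cross terms where $B_1,B_2$ or $Z$ is replaced by its scalar part, but those terms are identically zero anyway since $B_1,B_2,Z$ are centered by assumption, and then you conclude by moment matching --- a principle which, as stated, would prove (ii) with no hypothesis on the middle block, and that is false: e.g.\ $\varphi(C_1B_1C_2^2B_2C_1)=\varphi(B_1B_2)$ while $\varphi(C_1ZC_2^2ZC_1)=1$. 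The actual role of the hypothesis is different: with the indices $p_{r+1},\dots,p_s$ pairwise distinct, the factors of $v$ lie in pairwise free algebras and each occurs once, so $\varphi(v)=\prod_{j=r+1}^{s}\varphi(C_{p_j}^{m_j})$, and the factor with $m_j=1$ forces $\varphi(v)=0$; hence \emph{both} sides of (ii) vanish, which is why no matching of $\varphi(B_1B_2)$ with $\varphi(Z^2)$ is needed. This step --- identifying that the only potentially mismatching contribution carries the coefficient $\varphi(v)\varphi(uw)$ and showing the middle-block hypothesis annihilates it --- is missing from your proof, and without it (ii) does not go through. (Note also that the distinctness must be used as pairwise distinctness within the middle block: with only consecutive distinctness, $v=C_1C_2^2C_1$, $u=w=1$ gives left side $\varphi(B_1B_2)$ and right side $1$, so the identity itself would fail; this pins down exactly which hypothesis your argument has to consume and where.)
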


For the proof of the Theorem \ref{Multiinvariance2}, the following iterated Cauchy-Schwarz inequality \index{Cauchy-Schwarz inequality}will play a fundamental role.

\begin{lemma}
\label{algo}
Let $c_1,\dots,c_n$ be non-trivial elements in $\mathcal{A}$. Then, setting $\bs{c}=c_1\cdots c_n$:
\begin{enumerate}
\item if $n$ is even:
$$ |\varphi\big(c_1\cdots c_n\big)| \leq \prod_{l=1}^{n}\prod_{s_{j} \in I_l(\bs{c})}\varphi\big((c_l c_l^{\ast})^{2^{s_{j}}}\big)^{2^{-\frac{n}{2}}}\; ,$$
where, for every $l=1,\dots,n$, $I_l(\bs{c})$ is a multiset of integers\footnote{Multisets arise because repetitions may occur.} $s_{j}$ such that $\sum\limits_{j}2^{s_{j}} = 2^{\frac{n}{2}-1}$;
\item if $n \geq 3$ is odd:
$$ |\varphi\big(c_1\cdots c_n\big)| \leq \prod_{l=1}^{\frac{n-1}{2}}\prod_{s_j \in I_l(\bs{c})}\varphi\big((c_l c_l^{\ast})^{2^{s_{j}}}\big)^{2^{-\frac{n-1}{2}}} \cdot \prod_{l= \frac{n+1}{2}}^{n}\prod_{s_j \in I_l(\bs{c})}\varphi\big((c_l c_l^{\ast})^{2^{s_{j}}}\big)^{2^{-\frac{n+1}{2}}} , $$
where, for every $l=1,\dots,n$, $I_l(\bs{c})$ is a multiset of integers $s_{j}\geq 0$ such that $\sum\limits_{j}2^{s_{j}} = 2^{\frac{n-3}{2}}$ for $l=1,\dots,\frac{n-1}{2}$, and  $\sum\limits_{j}2^{s_{j}} = 2^{\frac{n-1}{2}}$ for $l=\frac{n+1}{2},\dots, n$.
\end{enumerate}
\end{lemma}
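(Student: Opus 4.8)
The plan is to prove the iterated Cauchy--Schwarz inequality of Lemma \ref{algo} by induction on $n$, using at each stage the single Cauchy--Schwarz step $|\varphi(ab)| \le \varphi(aa^\ast)^{1/2}\varphi(b^\ast b)^{1/2}$ together with the fact that $\varphi$ is a tracial state (so $\varphi(b^\ast b) = \varphi(bb^\ast)$). The key idea is that a product of $n$ factors can be split roughly in half, and iterating the splitting $\log_2 n$ times reduces everything to expressions of the form $\varphi\big((c_l c_l^\ast)^{2^{s}}\big)$; the bookkeeping of which powers arise is exactly what the multisets $I_l(\bs c)$ encode, and the constraint $\sum_j 2^{s_j} = 2^{n/2-1}$ (resp.\ the two constraints in the odd case) records the ``total mass'' of the exponents produced by the halving procedure.

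First I would treat the even case. Write $\bs c = c_1\cdots c_n$ with $n = 2q$, set $a = c_1\cdots c_q$ and $b = c_{q+1}\cdots c_n$, and apply Cauchy--Schwarz to get $|\varphi(\bs c)| \le \varphi(aa^\ast)^{1/2}\varphi(b^\ast b)^{1/2}$. Now $aa^\ast = c_1\cdots c_q c_q^\ast\cdots c_1^\ast$ and, by traciality, $\varphi(aa^\ast) = \varphi(c_1^\ast c_1 c_2\cdots c_q c_q^\ast \cdots c_2^\ast)$, which is again $\varphi$ of a product of an \emph{even} number of elements (namely $2q$), each of which is still one of the $c_l$'s or its adjoint, cyclically arranged so that consecutive pairs $c_l^\ast, c_l$ or $c_l, c_l^\ast$ sit together; an analogous statement holds for $\varphi(b^\ast b)$. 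Applying the inductive step of the lemma (in the form for length $q$, or directly re-expanding Cauchy--Schwarz if one prefers a self-contained recursion) to each of $\varphi(aa^\ast)$ and $\varphi(b^\ast b)$ and keeping track of the exponents, one sees that each original factor $c_l$ contributes terms $\varphi\big((c_l c_l^\ast)^{2^{s_j}}\big)$ with exponents whose $2^{s_j}$'s sum to the prescribed value; the accumulated outer exponent after $\tfrac n2$ halvings of $\tfrac12$ is $2^{-n/2}$, which matches the statement. For the odd case $n = 2q+1$, the asymmetry is handled by splitting $\bs c$ into a block of length $\frac{n-1}{2}$ and a block of length $\frac{n+1}{2}$ (or by one Cauchy--Schwarz step producing $\varphi$ of a product of $2q$ terms and then invoking the even case), which explains the two different exponents $2^{-(n-1)/2}$ and $2^{-(n+1)/2}$ and the two different mass constraints $2^{(n-3)/2}$ and $2^{(n-1)/2}$ on the multisets.

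The main obstacle I anticipate is purely organizational rather than conceptual: one must be careful that after forming $aa^\ast$ (or $b^\ast b$) and cyclically rotating under the trace, the resulting product genuinely has the structure required to re-apply the lemma --- in particular that the ``$c_l$'' appearing there can be taken to be $c_l c_l^\ast$ (a positive element, so that all subsequent adjoints are harmless) and that the pairing of indices is consistent through every level of the recursion. A clean way to manage this is to prove, as the actual induction hypothesis, the slightly more flexible statement that for \emph{any} elements $a_1,\dots,a_n$ one has $|\varphi(a_1\cdots a_n)| \le \prod_l \prod_{s_j\in I_l}\varphi\big((a_l a_l^\ast)^{2^{s_j}}\big)^{\epsilon}$ with the stated exponents and mass constraints, so that the recursive calls automatically fall within its scope. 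One should also check the base cases $n=2$ (which is just Cauchy--Schwarz, $|\varphi(c_1 c_2)| \le \varphi(c_1 c_1^\ast)^{1/2}\varphi(c_2 c_2^\ast)^{1/2}$, consistent with $\sum 2^{s_j} = 2^0 = 1$, i.e.\ $I_l = \{0\}$) and $n=3$ (one Cauchy--Schwarz step plus the $n=2$ case), and verify the arithmetic of the exponent sums matches $2^{n/2-1}$ and $2^{(n-3)/2}, 2^{(n-1)/2}$ respectively at each level; this is routine once the induction is set up correctly.
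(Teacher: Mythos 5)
Your proposal is correct and follows essentially the same route as the paper's proof: an induction on $n$ in which one splits the product in half (asymmetrically when $n$ is odd), applies a single Cauchy--Schwarz step, uses traciality to rotate $\varphi(aa^\ast)$ and $\varphi(b^\ast b)$ into shorter even-length products whose factors are the original $c_l$'s, their adjoints, and the grouped positive blocks $c_l^\ast c_l$, $c_l c_l^\ast$, and then tracks the exponents through the multisets $I_l(\bs{c})$. The bookkeeping you flag as the main obstacle (doubling of exponents for the endpoint blocks and merging of multisets for the repeated middle factors) is exactly what the paper carries out explicitly, so no new idea is needed beyond what you describe.
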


\begin{rmk}
\label{Osserv}
As made clear in the proof, the multiset $I_l(\bs{c})$ is determined by the rule of association chosen in order to iteratively apply the Cauchy-Schwarz inequality. For the purposes of the present discussion (i.e. the proof of Theorem \ref{Multiinvariance2}), there is no need to further specify the structure of $I_l(\bs{c})$.
\end{rmk}

\begin{exm}
For the sake of clarity, in this example it is shown how the technique of Lemma \ref{algo} applies in the simplest cases $n=2,3,4,5$.

\begin{enumerate}
\item[$(n=2)$] The claim reduces to the standard Cauchy-Schwarz inequality:
$$ |\varphi(c_1 c_2)| \leq \varphi(c_1 c_1^{\ast})^{\frac{1}{2}}\varphi(c_2 c_2^{\ast})^{\frac{1}{2}}.$$
\item[$(n=3)$] The Cauchy-Schwarz inequality, together with the trace property of the state $\varphi$, yields that:
\begin{align*}
|\varphi(c_1 (c_2 c_3))| &\leq \varphi(c_1 c_1^{\ast})^{\frac{1}{2}} \varphi( c_2 c_3 c_3^{\ast} c_2^{\ast})^{\frac{1}{2}} = \varphi(c_1 c_1^{\ast})^{\frac{1}{2}} \varphi((c_2^{\ast} c_2) (c_3 c_3^{\ast}))^{\frac{1}{2}} \\
&\leq \varphi(c_1 c_1^{\ast})^{\frac{1}{2}} \varphi((c_2 c_2^{\ast})^2)^{\frac{1}{4}} \varphi((c_3 c_3^{\ast})^2)^{\frac{1}{4}},
\end{align*}
so that the conclusion of the lemma is achieved by setting $I_1(\bs{c}) = \{0\}$, $I_2(\bs{c})=I_3(\bs{c})=\{1\}$, in such a way that $2^{0}= 2^{\frac{n-3}{2}}$, and $2 = 2^{\frac{n-1}{2}}$. Moreover, $\frac{1}{4} = 2^{-\frac{n+1}{2}}$, and $\frac{1}{2} = 2^{-\frac{n-1}{2}}$.

Note that,  associating the argument of $\varphi$ as $\varphi( (c_1 c_2)c_3)$ would yield:
$$ 
|\varphi(c_1 c_2 c_3)| \leq \varphi((c_1 c_1^{\ast})^2)^{\frac{1}{4}} \varphi((c_2 c_2^{\ast})^2)^{\frac{1}{4}} \varphi(c_3 c_3^{\ast})^{\frac{1}{2}},$$
yielding as multiset $I_1(\bs{c}) = \{1\} = I_2(\bs{c}), I_3(\bs{c}) = \{0\}$ (see Remark \ref{Osserv}).
\item[$(n=4)$]
\begin{align*}
|\varphi((c_1 c_2) (c_3 c_4))| & \leq \varphi\big( (c_1^{\ast}c_1)(c_2 c_2^{\ast})\big)^{\frac{1}{2}}\varphi\big( (c_3^{\ast}c_3)(c_4 c_4^{\ast})\big)^{\frac{1}{2}} \\
&\leq \varphi\big( (c_1^{\ast}c_1)^{2}\big)^{\frac{1}{4}}\varphi\big( (c_2^{\ast}c_2)^{2}\big)^{\frac{1}{4}}\varphi\big( (c_3^{\ast}c_3)^{2}\big)^{\frac{1}{4}}\varphi\big( (c_4^{\ast}c_4)^{2}\big)^{\frac{1}{4}}
\end{align*}
so that the conclusion of the lemma is achieved by setting $I_l(\bs{c}) = \{1\}$ for $l=1,\dots,4$, with $2= 2^{\frac{n}{2}-1}$, and $\frac{1}{4} = 2^{-\frac{n}{2}}$.

\item[$(n=5)$]
\begin{align*}
|\varphi((c_1 c_2) (c_3 c_4 &c_5))| \leq \varphi\big( (c_1^{\ast}c_1)(c_2 c_2^{\ast})\big)^{\frac{1}{2}} \varphi\big(((c_3^{\ast}c_3)c_4)((c_5 c_5^{\ast})c_4^{\ast})\big)^{\frac{1}{2}} \\
&\leq \varphi\big( (c_1^{\ast}c_1)^{2}\big)^{\frac{1}{4}}\varphi\big( (c_2^{\ast}c_2)^{2}\big)^{\frac{1}{4}} \varphi\big( (c_3^{\ast}c_3)^{2}(c_4 c_4^{\ast})\big) ^{\frac{1}{4}} \varphi\big( (c_5^{\ast}c_5)^{2}(c_4 c_4^{\ast})\big) ^{\frac{1}{4}} \\
&\leq \varphi\big( (c_1^{\ast}c_1)^{2}\big)^{\frac{1}{4}}\varphi\big( (c_2^{\ast}c_2)^{2}\big)^{\frac{1}{4}}  \varphi\big( (c_3 c_3^{\ast})^4 \big)^{\frac{1}{8}}\varphi\big( (c_4 c_4^{\ast})^2 \big)^{\frac{1}{8}}\varphi\big( (c_5 c_5^{\ast})^4 \big)^{\frac{1}{8}}\varphi\big( (c_4 c_4^{\ast})^2 \big)^{\frac{1}{8}},
\end{align*}
so that the conclusion of Lemma \ref{algo} is achieved by setting $I_1(\bs{c})=I_{2}(\bs{c}) =\{1\}$, giving $2 = 2^{\frac{n-3}{2}}$ and $\frac{1}{4} = 2^{-\frac{n-1}{2}}$, and $I_{3}(\bs{c}) =I_{5}(\bs{c})=\{ 2\}$ so that $2^{2} = 2^{\frac{n-1}{2}}$, $I_4(\bs{c})=\{1,1\}$, so that $2+2=2^{\frac{n-1}{2}}$, and $\frac{1}{8} = 2^{-\frac{n+1}{2}}$.
\end{enumerate}

\end{exm}

\begin{proof}
Suppose first that $n$ is even, say $n=2k$: the proof will proceed by induction on $k$. If $n=2$, it is the standard Cauchy-Schwarz inequality. 

For $k > 1$, assume that the statement is true for $n=2h$, for all $h \leq k$. If $n=2(k+1)$, apply the Cauchy-Schwarz inequality and the trace property of $\varphi$ in the following way:
\begin{align*}
|\varphi\big((c_1 \cdots c_{k+1})& (c_{k+2}\cdots c_{n})\big)| \leq \varphi\big(c_1 c_2 \cdots c_{k+1}c_{k+1}^{\ast}\cdots c_{2}^{\ast}c_1^{\ast}\big)^{\frac{1}{2}} \varphi\big(c_{k+2}\cdots c_n c_n^{\ast}\cdots c_{k+2}^{\ast}\big)^{\frac{1}{2}}\\
&=\varphi\big((c_1^{\ast}c_1) c_2 \cdots c_k (c_{k+1}c_{k+1}^{\ast})\cdots c_3^{\ast} c_{2}^{\ast}\big)^{\frac{1}{2}} \varphi\big((c_{k+2}^{\ast}c_{k+2})c_{k+3}\cdots (c_n c_n^{\ast})\cdots c_{k+3}^{\ast}\big)^{\frac{1}{2}}.
\end{align*}

Set $A^{2} = \varphi\big((c_1^{\ast}c_1) c_2 \cdots c_k (c_{k+1}c_{k+1}^{\ast})\cdots c_3^{\ast} c_{2}^{\ast}\big)$  and $B^{2}=\varphi\big((c_{k+2}^{\ast}c_{k+2})c_{k+3}\cdots (c_n c_n^{\ast})\cdots c_{k+3}^{\ast}\big)$.

For $A^{2}$, set $\tilde{\bs{c}} = \tilde{c}_1\cdots \tilde{c}_{2k}$, with
\begin{itemize}
\item[-] $\tilde{c}_1 = c_1^{\ast}c_1$,
\item[-] for $j=2,\dots,k$, $\tilde{c}_j = c_j$, 
\item[-] $\tilde{c}_{k+1}= c_{k+1}c_{k+1}^{\ast}$,
\item[-] for $j=0,\dots,k-2$, $\tilde{c}_{k+2+j} = c_{k-j}^{\ast}$,
\end{itemize}
in such a way that $A^{2} = \varphi\big(\tilde{c}_1\tilde{c}_2\cdots \tilde{c}_{k}\cdots \tilde{c}_{2k}\big)$. Since $A^{2} = \varphi(a a^{\ast}) \geq 0$, with $a= c_1 \cdots c_{k+1}$, by the induction hypothesis it follows that:
\begin{align*}
\big(\varphi &\big(\tilde{c}_1\tilde{c}_2\cdots \tilde{c}_{k}\cdots \tilde{c}_{2k}\big)\big)^{\frac{1}{2}}  \leq \prod_{l=1}^{2k}\prod_{s_{j} \in I_l(\bs{\tilde{c}})} \varphi\big((\tilde{c}_l \tilde{c}_l^{\ast})^{2^{s_{j}}}\big)^{2^{-(k+1)}} \\
&= \prod_{s_{j}\in I_1(\bs{\tilde{c}})}\varphi\big((\tilde{c}_{1} \tilde{c}_1^{\ast})^{2^{s_{j}}}\big)^{2^{-(k+1)}}\;  \prod_{l=2}^{k}  \prod_{s_{j}\in I_l(\bs{\tilde{c}})} \varphi\big((\tilde{c}_l \tilde{c}_l^{\ast})^{2^{s_{j}}}\big)^{2^{-(k+1)}} \\
&\qquad \prod_{s_{j}\in I_{k+1}(\bs{\tilde{c}})}\varphi\big((\tilde{c}_{k+1} \tilde{c}_{k+1}^{\ast})^{2^{s_{j}}}\big)^{2^{-(k+1)}} \;\prod_{l=k+2}^{2k}\prod_{s_{j}\in I_l(\bs{\tilde{c}})} \varphi\big((\tilde{c}_l \tilde{c}_l^{\ast})^{2^{s_{j}}}\big)^{2^{-(k+1)}} \;.
\end{align*}
Keeping in mind the definition of the $\tilde{c}_l$'s, one has:
\begin{itemize}
\item[-] $\varphi\big((\tilde{c}_{1} \tilde{c}_1^{\ast})^{2^{s_{j}}}\big) = \varphi\big((c_{1} c_1^{\ast})^{2^{s_{j}+1}}\big)$ for every $s_j \in I_1(\bs{\tilde{c}})$,
\item[-] for $l=2,\dots,k$, $\varphi\big((\tilde{c}_{l} \tilde{c}_l^{\ast})^{2^{s_{j}}}\big) = \varphi\big((c_{l} c_l^{\ast})^{2^{s_{j}}}\big)$ for every $s_j \in I_{l}(\bs{\tilde{c}})$;
\item[-] $\varphi\big((\tilde{c}_{k+1} \tilde{c}_{k+1}^{\ast})^{2^{s_{j}}}\big) = \varphi\big((c_{k+1} c_{k+1}^{\ast})^{2^{s_{j}+1}}\big)$ for every $s_j \in I_{k+1}(\bs{\tilde{c}})$,
\item[-] for $l= k+2,\dots,2k$, $\varphi\big((\tilde{c}_{l} \tilde{c}_l^{\ast})^{2^{s_{j}}}\big) = \varphi\big((c_{2k-l+2} c_{2k-l+2}^{\ast})^{2^{s_{j}}}\big)$ for every $s_j \in I_l(\bs{\tilde{c}})$, so that:
$$ \prod_{l=k+2}^{2k}\prod_{s_{j}\in I_l(\bs{\tilde{c}})} \varphi\big((\tilde{c}_l \tilde{c}_l^{\ast})^{2^{s_{j}}}\big)^{2^{-(k+1)}} = \prod_{h=2}^{k}\prod_{s_{j}\in I_{2k-h+2}(\bs{\tilde{c}})} \varphi\big((c_h c_h^{\ast})^{2^{s_{j}}}\big)^{2^{-(k+1)}}.$$
\end{itemize}
Finally, writing $\bs{c} = c_1\cdots c_n$, and setting:
\begin{itemize}
\item[-] $I_1(\bs{c}) = I_1(\bs{\tilde{c}}) + 1 := \{ s_{j} +1: s_{j} \in I_1(\bs{\tilde{c}})\}$;
\item[-] $I_{k+1}(\bs{c}) = I_{k+1}(\bs{\tilde{c}}) + 1 :=\{ s_{j} +1: s_{j} \in I_{k+1}(\bs{\tilde{c}})\}$;
\item[-] for $l=2,\dots,k$, $I_{l}(\bs{c}) = I_{l}(\bs{\tilde{c}}) \cup I_{2k-l+2}(\bs{\tilde{c}})$,
\end{itemize}
in such a way that $\sum\limits_{s_{j} \in I_{l}(\bs{c})}2^{s_{j}} = 2^{k}$ for every $l=1,\dots,k+1$, it follows that:
$$ \big(\varphi\big(\tilde{c}_1\tilde{c}_2\cdots \tilde{c}_{k}\cdots \tilde{c}_{2k}\big)\big)^{\frac{1}{2}}
\leq \prod_{l=1}^{k+1}\prod_{s_{j} \in I_{l}(\bs{c})} \varphi\big( (c_l c_l^{\ast})^{2^{s_{j}}}\big)^{2^{-(k+1)}}.$$
 


In the same way, setting:
\begin{itemize}
\item[-] $d_1= c_{k+2}^{\ast}c_{k+2}$;
\item[-] $d_{j+1} = c_{k+j+2}$ for $j=1,\dots,k-1$;
\item[-] $d_{k+1} = c_n c_{n}^{\ast}$;
\item[-] $d_{k+j+1} = c_{n-j}^{\ast}$ for $j=1,\dots,k-1$.
\end{itemize}
a similar estimate for $B = \varphi\big(d_1 d_2\cdots d_{2k}\big)^{\frac{1}{2}}$ can be obtained. Indeed, if $\bs{d}= d_1\cdots d_{2k}$, from the induction hypothesis it follows that:
\begin{align*}
B &\leq \prod_{l=1}^{2k}\prod_{t_j \in I_l(\bs{d})} \varphi\big((d_l d_l^{\ast})2^{t_{j}}\big)^{2^{-(k+1)}} \\
&= \prod_{t_{j}\in I_1(\bs{d})}\varphi\big((d_1 d_1^{\ast})^{2^{t_{j}}}\big)^{2^{-(k+1)}}\;  \prod_{l=2}^{k}  \prod_{t_{j}\in I_l(\bs{d})} \varphi\big((d_l d_l^{\ast})^{2^{t_{j}}}\big)^{2^{-(k+1)}} \\
& \prod_{t_{j}\in I_{k+1}(\bs{d})}\varphi\big((d_{k+1} d_{k+1}^{\ast})^{2^{t_{j}}}\big)^{2^{-(k+1)}} \;\prod_{l=k+2}^{2k}\prod_{t_{j}\in I_l(\bs{d})} \varphi\big((d_l d_l^{\ast})^{2^{t_{j}}}\big)^{2^{-(k+1)}} 
\end{align*}

As for $A^2$,  by considering the definition of the $d_l$'s, one has that:
\begin{itemize}
\item[-] $\varphi\big((d_{1} d_1^{\ast})^{2^{t_{j}}}\big) = \varphi\big((c_{k+2} c_{k+2}^{\ast})^{2^{t_{j}+1}}\big)$ for every $t_j \in I_1(\bs{d})$,
\item[-] for $l=2,\dots,k$, $ \varphi\big((d_{l} d_l^{\ast})^{2^{t_{j}}}\big) = \varphi\big((c_{k+l+1} c_{k+l+1}^{\ast})^{2^{t_{j}}}\big)$ for every $t_j \in I_{l}(\bs{d})$, so that:
$$ \prod_{l=2}^{k}\prod_{t_{j}\in I_l(\bs{d})} \varphi\big((d_l d_l^{\ast})^{2^{t_{j}}}\big)^{2^{-(k+1)}} = \prod_{h=k+3}^{n-1}\prod_{t_{j}\in I_{h-k-1}} \varphi\big((c_h c_h^{\ast})^{2^{t_{j}}}\big)^{2^{-(k+1)}}.$$
\item[-] $\varphi\big((d_{k+1} d_{k+1}^{\ast})^{2^{t_{j}}}\big) = \varphi\big((c_{n} c_{n}^{\ast})^{2^{t_{j}+1}}\big)$ for every $t_j \in I_{k+1}(\bs{d})$,
\item[-] for $l= k+2,\dots,2k$, $\varphi\big((d_{l} d_l^{\ast})^{2^{t_{j}}}\big) = \varphi\big((c_{n-l+k+1} c_{n-l+k+1}^{\ast})^{2^{t_{j}+1}}\big)$ for every $t_j \in I_l(\bs{d})$, so that:
$$ \prod_{l=k+2}^{2k}\prod_{t_{j}\in I_l(\bs{d})} \varphi\big((d_l d_l^{\ast})^{2^{t_{j}}}\big)^{2^{-(k+1)}} = \prod_{h=k+3}^{n-1}\prod_{t_{j}\in I_{n-h+k+1}(\bs{d})} \varphi\big((c_h c_h^{\ast})^{2^{t_{j}}}\big)^{2^{-(k+1)}}.$$
\end{itemize}

Finally, writing $\bs{c} = c_1\cdots c_n$, and setting:
\begin{itemize}
\item[-] $I_{k+2}(\bs{c}) = I_1(\bs{d}) + 1 := \{ s_{j} +1: s_{j} \in I_1(\bs{\tilde{c}})\}$;
\item[-] $I_{n}(\bs{c}) = I_{k+1}(\bs{d}) + 1$;
\item[-] for $h=k+3,\dots,n-1$, $I_{h}(\bs{c}) = I_{h-k-1}(\bs{d}) \cup I_{n-h+k+1}(\bs{d})$,
\end{itemize}
in such a way that $\sum\limits_{t_{j} \in I_{l}(\bs{c})}2^{t_{j}} = 2^{k}$ for every $l=k+2,\dots,n$, it follows that:
$$ \big(\varphi\big(d_1 d_2\cdots d_{k}\cdots d_{2k}\big)\big)^{\frac{1}{2}}
\leq \prod_{l=k+2}^{n}\prod_{t_{j} \in I_{l}(\bs{c})} \varphi\big( (c_l c_l^{\ast})^{2^{t_{j}}}\big)^{2^{-(k+1)}}.$$

Hence, at the end:
\begin{align*}
|\varphi\big((c_1 \cdots c_{k+1}) (c_{k+2}\cdots c_{n})\big)| &\leq \prod_{l=1}^{n}\prod_{s_j \in I_l(\bs{c})} \varphi\big((c_l c_l^{\ast})^{2^{s_{j}}}\big)^{2^{-(k+1)}},
\end{align*}
with $\sum\limits_{s_j \in I_{l}(\bs{c})}2^{s_{j}} = 2^{k}$ for every $l=1,\dots,n$. Hence, the claim is true for all strings $c_1\cdots c_n$ of even length.\\

Assume now that $n$ is odd; the conclusion will follow again by induction. If $n=3$, apply the Cauchy-Schwarz inequality in the following way:
\begin{equation*} 
|\varphi\big(c_1 (c_2 c_3)\big)| \leq \varphi\big((c_1^{\ast}c_1)\big)^{\frac{1}{2}} \big(\varphi\big((c_2 c_2^{\ast})(c_3 c_3^{\ast})\big)\big)^{\frac{1}{2}} \leq \big(\varphi\big((c_1^{\ast}c_1)\big)\big)^{\frac{1}{2}} \big(\varphi\big((c_2^{\ast}c_2)^{2}\big)\big)^{\frac{1}{4}}\big(\varphi\big((c_3^{\ast}c_3)^{2}\big)\big)^{\frac{1}{4}}.
\end{equation*} 
For $k >1$, assume that the result holds true for every odd integer $n=2l+1$, with $l \leq k$. Let $n=2(k+1)+1 = 2k +3$ and apply the Cauchy-Schwarz inequality as follows:
\small{
\begin{align*}
|\varphi &\big((c_1 \cdots c_{\frac{n-1}{2}}) (c_{\frac{n+1}{2}}\cdots c_n)\big)| \leq \\
&\leq \bigg(\varphi\big((c_1^{\ast}c_1)c_2\cdots c_{\frac{n-3}{2}} (c_{\frac{n-1}{2}}c_{\frac{n-1}{2}}^{\ast})c_{\frac{n-3}{2}}^{\ast}\cdots c_{2}^{\ast}\big)\bigg)^{\frac{1}{2}}\; 
\bigg(\varphi\big( (c_{\frac{n+1}{2}}^{\ast}c_{\frac{n+1}{2}})c_{\frac{n+3}{2}}\cdots (c_n c_{n}^{\ast})c_{n-1}^{\ast}\cdots c_{\frac{n+3}{2}}^{\ast} \big)\bigg)^{\frac{1}{2}}.
\end{align*}
}\normalsize
If $A^{2}=\varphi\big((c_1^{\ast}c_1)c_2\cdots c_{\frac{n-3}{2}} (c_{\frac{n-1}{2}}c_{\frac{n-1}{2}}^{\ast})c_{\frac{n-3}{2}}^{\ast}\cdots c_{2}^{\ast}\big)$, set:
\begin{itemize}
\item[-] $\tilde{c}_1 := c_1^{\ast}c_1$,
\item[-] $\tilde{c}_j := c_j$, for $j=2,\dots,\frac{n-3}{2}$,
\item[-] $\tilde{c}_{\frac{n-1}{2}} := c_{\frac{n-1}{2}}c_{\frac{n-1}{2}}^{\ast}$, 
\item[-] $\tilde{c}_{\frac{n-1}{2}+j} := c_{\frac{n-1}{2}-j}^{\ast}$, for $j=1,\dots,\frac{n-5}{2}$,
\end{itemize}
in such a way that $ A^{2} = \varphi(\tilde{c}_1\cdots \tilde{c}_{2k}) = \varphi(a a^{\ast}) \geq 0$, and so, the statement for string of even length for $2k= n-3$ implies that:
\begin{align*}
\varphi (\tilde{c}_1\cdots \tilde{c}_{2k})^{\frac{1}{2}} &\leq \prod_{l=1}^{2k} \prod_{t_j \in I_l(\bs{\tilde{c}})} \varphi\big((\tilde{c}_l \tilde{c}_l^{\ast})^{2^{t_{j}}}\big)^{2^{-(k+1)}} \\
&= \prod_{t_j \in I_1(\bs{\tilde{c}})} \varphi\big((\tilde{c}_1 \tilde{c}_1^{\ast})2^{t_{j}}\big)^{2^{-(k+1)}}  \prod_{l=2}^{k} \prod_{t_j \in I_l(\bs{\tilde{c}})} \varphi\big((\tilde{c}_l \tilde{c}_l^{\ast})^{2^{t_{j}}}\big)^{2^{-(k+1)}} \\
& \prod_{t_j \in I_{k+1}(\bs{\tilde{c}})} \varphi\big((\tilde{c}_{k+1} \tilde{c}_{k+1}^{\ast})^{2^{t_{j}+1}}\big)^{2^{-(k+1)}} \cdot \prod_{l=k+2}^{2k}  \prod_{t_j \in I_l(\bs{\tilde{c}})} \varphi\big((\tilde{c}_l \tilde{c}_l^{\ast})^{2^{t_j}}\big)^{2^{-(k+1)}}
\end{align*}
where $\tilde{\bs{c}} = \tilde{c}_1\cdots \tilde{c}_{2k}$, with $\sum\limits_{t_j \in I_l(\bs{\tilde{c}})} 2^{t_{j}} = 2^{k-1}$ for every $l=1,\dots,2k$.
 
Again, by keeping in mind the definition of the $\tilde{c}_l$'s:
\begin{itemize}
\item[-] $\varphi\big((\tilde{c}_{1} \tilde{c}_1^{\ast})^{2^{t_{j}}}\big) = \varphi\big((c_{1} c_1^{\ast})^{2^{t_{j}+1}}\big)$ for every $t_j \in I_1(\bs{\tilde{c}})$,
\item[-] for $l=2,\dots, k=\frac{n-3}{2}$, $ \varphi\big((\tilde{c}_{l} \tilde{c}_l^{\ast})^{2^{t_{j}}}\big) = \varphi\big((c_{l} c_l^{\ast})^{2^{t_{j}}}\big)$ for every $t_j \in I_{l}(\bs{\tilde{c}})$;
\item[-] $\varphi\big((\tilde{c}_{k+1} \tilde{c}_{k+1}^{\ast})^{2^{t_{j}}}\big) = \varphi\big((c_{\frac{n-1}{2}} c_{\frac{n-1}{2}}^{\ast})^{2^{t_{j}+1}}\big)$ for every $t_j \in I_{k+1}(\bs{\tilde{c}})$ \;(note that $k+ 1 = \frac{n-1}{2}$),
\item[-] for $l= k+2,\dots,2k$, $\varphi\big((\tilde{c}_{l} \tilde{c}_l^{\ast})^{2^{s_{j}}}\big) = \varphi\big((c_{2k-l+2} c_{2k-l+2}^{\ast})^{2^{s_{j}+1}}\big)$ for every $s_j \in I_l(\bs{\tilde{c}})$, so that:
$$ \prod_{l=k+2}^{2k}\prod_{s_{j}\in I_l(\bs{\tilde{c}})} \varphi\big((\tilde{c}_l \tilde{c}_l^{\ast})^{2^{s_{j}}}\big)^{2^{-(k+1)}} = \prod_{h=2}^{\frac{n-3}{2}}\prod_{s_{j}\in I_{n-1-h}(\bs{\tilde{c}})} \varphi\big((c_h c_h^{\ast})^{2^{s_{j}}}\big)^{2^{-(k+1)}}.$$
\end{itemize}
Finally, writing $\bs{c} = c_1\cdots c_n$, and setting:
\begin{itemize}
\item[-] $I_1(\bs{c}) = I_1(\bs{\tilde{c}}) + 1 := \{ s_{j} +1: s_{j} \in I_1(\bs{\tilde{c}})\}$;
\item[-] $I_{k+1}(\bs{c}) = I_{k+1}(\bs{\tilde{c}}) + 1$;
\item[-] for $l=2,\dots,k = \frac{n-3}{2}$, $I_{l}(\bs{c}) = I_{l}(\bs{\tilde{c}}) \cup I_{n-1-l}(\bs{\tilde{c}})$,
\end{itemize}
so that $\sum\limits_{s_{j} \in I_{l}(\bs{c})}2^{s_{j}} = 2^{k}$ for every $l=1,\dots,k+1$, it follows that:
$$ \big(\varphi\big(\tilde{c}_1\tilde{c}_2\cdots \tilde{c}_{k}\cdots \tilde{c}_{2k}\big)\big)^{\frac{1}{2}}
\leq \prod_{l=1}^{\frac{n-1}{2}}\prod_{s_{j} \in I_{l}(\bs{c})} \varphi\big( (c_l c_l^{\ast})^{2^{s_{j}}}\big)^{2^{-(k+1)}},$$
with $\sum\limits_{t_j \in I_{l}(\bs{c})}2^{t_j} = 2^{k} = 2^{\frac{n-3}{2}}$ for every $l=1,\dots,\frac{n-1}{2}$.\\

Similarly, for  $B^{2}=\varphi\big( (c_{\frac{n+1}{2}}^{\ast}c_{\frac{n+1}{2}})c_{\frac{n+3}{2}}\cdots (c_n c_{n}^{\ast})c_{n-1}^{\ast}\cdots c_{\frac{n+3}{2}}^{\ast}\big)$, set:
\begin{itemize}
\item[-] $d_1:= c_{\frac{n+1}{2}}^{\ast}c_{\frac{n+1}{2}}$,
\item[-] for $j=2,\dots,\frac{n-1}{2}$, $\tilde{d}_{j} = c_{\frac{n+1}{2}+j-1}$ (so $\tilde{d}_{\frac{n-1}{2}} = c_{n-1}$),
\item[-] $d_{\frac{n+1}{2}} = c_n c_n^{\ast}$,
\item[-] for all $j=1,\dots,\frac{n-3}{2}, d_{\frac{n+1}{2} +j} = c_{n-j}^{\ast}$,
\end{itemize}
so that $B^{2} = \varphi\big(d_1\cdots d_{n-1}\big)$, and the claim for the string of even length $n-1= 2(k+1)$ applies to get:
\begin{align*}
&\varphi\big(d_1\cdots d_{n-1}\big)^{\frac{1}{2}} \leq \prod_{l=1}^{n-1}\prod_{t_j \in I_l(\bs{d})} \varphi\big((d_j d_j^{\ast})^{2^{t_{j}}}\big)^{2^{-(k+2)}} \\
&= \prod_{t_{j}\in I_1(\bs{d})}\varphi\big((d_1 d_1^{\ast})^{2^{t_{j}}}\big)^{2^{-(k+2)}}\;  \prod_{l=2}^{k+1}  \prod_{t_{j}\in I_l(\bs{d})} \varphi\big((d_l d_l^{\ast})^{2^{t_{j}}}\big)^{2^{-(k+2)}} \\
& \prod_{t_{j}\in I_{\frac{n+1}{2}}(\bs{d})}\varphi\big((d_{\frac{n+1}{2}} d_{\frac{n+1}{2}}^{\ast})^{2^{t_{j}}}\big)^{2^{-(k+2)}} \;\prod_{l=\frac{n+3}{2}}^{n-1}\prod_{t_{j}\in I_l(\bs{d})} \varphi\big((d_l d_l^{\ast})^{2^{t_{j}}}\big)^{2^{-(k+2)}} \;.
\end{align*}

As for $A^2$, by considering the definition of the $d_l$'s:
\begin{itemize}
\item[-] $\varphi\big((d_{1} d_1^{\ast})^{2^{t_{j}}}\big) = \varphi\big((c_{k+2} c_{k+2}^{\ast})^{2^{t_{j}+1}}\big)$ for every $t_j \in I_1(\bs{d})$, being $\dfrac{n+1}{2} = k+2$;
\item[-]for $l=2,\dots,k+1$, $\varphi\big((d_{l} d_l^{\ast})^{2^{t_{j}}}\big) = \varphi\big((c_{k+l+1} c_{k+l+1}^{\ast})^{2^{t_{j}}}\big)$ for every $t_j \in I_{l}(\bs{d})$, so that:
$$ \prod_{l=2}^{k+1}\prod_{t_{j}\in I_l(\bs{d})} \varphi\big((d_l d_l^{\ast})^{2^{t_{j}}}\big)^{2^{-(k+2)}} = \prod_{h=k+3}^{n-1}\prod_{t_{j}\in I_{h-k-1}(\bs{d})} \varphi\big((c_h c_h^{\ast})^{2^{t_{j}}}\big)^{2^{-(k+2)}}.$$
\item[-] $\varphi\big((d_{k+2} d_{k+2}^{\ast})^{2^{t_{j}}}\big) = \varphi\big((c_{n} c_{n}^{\ast})^{2^{t_{j}+1}}\big)$ for every $t_j \in I_{k+2}(\bs{d})$ (being $k+2= \frac{n+1}{2}$);
\item[-] for $l= k+3,\dots,n-1$, $\varphi\big((d_{l} d_l^{\ast})^{2^{t_{j}}}\big) = \varphi\big((c_{n-l+k+2} c_{n-l+k+2}^{\ast})^{2^{t_{j}+1}}\big)$ for every $t_j \in I_l(\bs{d})$, so that:
$$ \prod_{l=k+3}^{n-1}\prod_{t_{j}\in I_l(\bs{d})} \varphi\big((d_l d_l^{\ast})^{2^{t_{j}}}\big)^{2^{-(k+2)}} = \prod_{h=k+3}^{n-1}\prod_{t_{j}\in I_{n+k+2-h}(\bs{d})} \varphi\big((c_h c_h^{\ast})^{2^{t_{j}}}\big)^{2^{-(k+2)}}.$$
\end{itemize}

Finally, writing $\bs{c} = c_1\cdots c_n$, and setting:
\begin{itemize}
\item[-] $I_{k+2}(\bs{c}) = I_1(\bs{d}) + 1 := \{ s_{j} +1: s_{j} \in I_1(\bs{d})\}$;
\item[-] $I_{n}(\bs{c}) = I_{k+1}(\bs{d}) + 1$;
\item[-] for $h=k+3,\dots,n-1$, $I_{h}(\bs{c}) = I_{h-k-1}(\bs{d}) \cup I_{n+k+2-h}(\bs{d})$,
\end{itemize}
in such a way that $\sum\limits_{t_{j} \in I_{l}(\bs{c})}2^{t_{j}} = 2^{k+1} = 2^{\frac{n-1}{2}}$ for every $l=k+2,\dots,n$, it follows that:
$$ \big(\varphi\big(d_1 d_2\cdots d_{k}\cdots d_{2k}\big)\big)^{\frac{1}{2}}
\leq \prod_{l=k+2}^{n}\prod_{t_{j} \in I_{l}(\bs{c})} \varphi\big( (c_l c_l^{\ast})^{2^{t_{j}}}\big)^{2^{-(k+2)}},$$
yielding the desired conclusion.
\end{proof}

\subsection{The proof of Theorem \ref{Multiinvariance2}}

Let us start by assuming that both $\bs{X}$ and $\bs{Y}$ are composed of identically distributed random variables.\\

Consider the auxiliary ensembles $\bs{\mathcal{Z}}^{(i)} = (\bs{Y}_1,\dots,\bs{Y}_{i-1},\bs{\mathcal{X}}_i,\dots,\bs{\mathcal{X}}_n)$, with $\bs{Y}_i = \underbrace{(Y_i,\dots,Y_i)}_{d \text{ times }}$ and $\bs{\mathcal{X}}_i= (U_{h_1}(X_i),\dots,U_{h_d}(X_i))$, and the identities in \eqref{differenceBIS}.
By applying simultaneously the free binomial expansion (see Lemma \ref{freebin}) to each $W_j^{(i)} + V_j^{(i)}(\bs{\mathcal{X}}_i)$, for every $i=1,\dots,n$:
\begin{align*}
\varphi\bigg(\prod_{s=1}^{k}\big(W_1^{(i)}+ V_1^{(i)}&(\bs{\mathcal{X}}_i)\big)^{m_{1,s}}\cdots \big(W_p^{(i)}+ V_p^{(i)}(\bs{\mathcal{X}}_i)\big)^{m_{p,s}}\bigg) = \varphi\bigg(\prod_{s=1}^{k}(W_1^{(i)})^{m_{1,s}}\cdots (W_n^{(i)})^{m_{p,s}}\bigg)  \\
&+ \sum_{\bs{v} \in \mathcal{D}}\varphi\bigg(\prod_{s=1}^{k}\prod_{l=1}^{p}(W_{l}^{(i)})^{\alpha_{l,1}^{(s)}}V_l^{(i)}(\bs{\mathcal{X}}_i)^{\beta_{l,1}^{(s)}}\cdots (W_{l}^{(i)})^{\alpha_{l,r_l}^{(s)}}V_l^{(i)}(\bs{\mathcal{X}}_i)^{\beta_{l,r_l}^{(s)}}\bigg),
\end{align*}
where, in each summand, at least one $\beta_{l,j}^{(s)} \geq 1$ and with 
$$ \mathcal{D} = \{\bs{v} = (r_l^{(s)}, \bs{\alpha}_{l}^{(s)}, \bs{\beta}_{l}^{(s)}) \in \mathcal{D}_{n_{l,s},m_{l,s}}: s=1,\dots,k, \;l=1,\dots,p,\; n_{l,s}=1,\dots,m_{l,s} \},$$
\small
$$ \mathcal{D}_{n_{l,s},m_{l,s}} = \big\{(r_l^{(s)}, \bs{\alpha}_{l}^{(s)}, \bs{\beta}_{l}^{(s)}): r_{l}^{(s)} \in [n_{l,s}], \bs{\alpha}_{l}^{(s)},  \bs{\beta}_{l}^{(s)} \in \mathbb{N}^{r_{l}^{(s)}},\sum_{h=1}^{r_l^{(s)}}\alpha_{l,h}^{(s)} = m_{l,s}- n_{l,s}, \sum_{h=1}^{r_l^{(s)}}\beta_{l,h}^{(s)} = n_{l,s}\big\}\, .$$
\normalsize
Similarly,
\begin{align*}
\varphi\bigg(\prod_{s=1}^{k}\big(W_1^{(i)}+ V_1^{(i)}&(\bs{Y}_i)\big)^{m_{1,s}}\cdots \big(W_p^{(i)}+ V_p^{(i)}(\bs{Y}_i)\big)^{m_{p,s}}\bigg) = \varphi\bigg(\prod_{s=1}^{k}(W_1^{(i)})^{m_1,s}\cdots (W_p^{(i)})^{m_{p,s}}\bigg)  \\
&+ \sum_{\bs{v} \in \mathcal{D}}\varphi\bigg(\prod_{s=1}^{k}\prod_{l=1}^{p}(W_{l}^{(i)})^{\alpha_{l,1}^{(s)}}V_l^{(i)}(\bs{Y}_i)^{\beta_{l,1}^{(s)}}\cdots (W_{l}^{(i)})^{\alpha_{l,r_l}^{(s)}}V_l^{(i)}(\bs{Y}_i)^{\beta_{l,r_l}^{(s)}}\bigg),
\end{align*}
where at least on $\beta_{l,j}^{(s)} \geq 1$. Hence, the term $\varphi\Big(\prod\limits_{s=1}^{k}(W_1^{(i)})^{m_1,s}\cdots (W_p^{(i)})^{m_{p,s}}\Big)$ cancels out in the difference (\ref{differenceBIS}).
Set:
$$ a_{s,l}^{(i)} := (W_{l}^{(i)})^{\alpha_{l,1}^{(s)}}V_l^{(i)}(A)^{\beta_{l,1}^{(s)}}\cdots (W_{l}^{(i)})^{\alpha_{l,r_l}^{(s)}}V_l^{(i)}(A)^{\beta_{l,r_l}^{(s)}},$$
for $s=1,\dots,k, \text{and } l=1,\dots,p$ and $A = \mathcal{X}_{i,r}$ or $A = Y_i$.\\

If $\text{Alg}(R_1,\dots,R_k)$ denotes the algebra generated by the random variables $R_1,\dots,R_k$, for a fixed $i=1,\dots,n$, by virtue of Lemma \ref{3.1bis} applied with
\begin{enumerate}
\item  $\mathcal{A}_j = \text{Alg}(1,U_{h_1}(X_j),\dots,U_{h_d}(X_j))$ for every $j > i$;
\item $\mathcal{A}_j = \text{Alg}(1,Y_j)$ for every $j < i$;
\item  $\mathcal{B} = \text{Alg}(1,U_{h_1}(X_i),\dots,U_{h_d}(X_i))$;
\item $\mathcal{D} = \text{Alg}(1,Y_i)$,
\end{enumerate}
if $\gamma:=\sum\limits_{s=1}^{k}\sum\limits_{l=1}^{p}\sum\limits_{h=1}^{r_l}\beta_{l,h}^{(s)} \leq 2$, the terms $\varphi\Big(\prod\limits_{s=1}^{k}\prod\limits_{l=1}^{p}a_{s,l}^{(i)}\Big)$ relative to $A = \mathcal{X}_{i,r}$  either are zero or cancel with the corresponding ones associated with $A = Y_{i}$.

Indeed, if $\gamma = 1$, in the argument of $x:= \varphi\Big(\prod\limits_{s=1}^k \prod\limits_{l=1}^p a_{s,l}^{(i)}\Big)$, there will only be a factor of the type $U_{h_l}(X_i)$, so that $x=0$ by virtue of the first item in Lemma \ref{3.1bis}. If $\gamma = 2$, either there is only one exponent $\beta_{l,j}^{(s)} = 2$ or two different ones equal to $1$: in both cases,  either the second or the third item in Lemma \ref{3.1bis} applies, thanks to the hypothesis $h_i=h_{d-i+1}$ for $i=1,\dots, \lfloor \frac{d}{2}\rfloor$.\\

Therefore, the remaining terms to bound in \eqref{differenceBIS} are of the type
$$ \varphi\big((a_{1,1}^{(i)}\cdots a_{1,p}^{(i)}) \cdots (a_{k,1}^{(i)}\cdots a_{k,p}^{(i)})\big),$$
whose corresponding parameter $\gamma=\sum\limits_{s=1}^{k}\sum\limits_{l=1}^{p}\sum\limits_{h=1}^{r_l}\beta_{l,h}^{(s)}$ verifies $\gamma \geq 3;$ from here apply the triangle inequality for the absolute value.\\

The first step of the proof consists in applying the iterated Cauchy-Schwarz inequality, described in Lemma \ref{algo}, in the following way:
\begin{itemize}
\item[(i)] when $k$ is even, and therefore $kp$ is even, associate the first $\frac{k}{2}$ $p$-string $a_{j,1}^{(i)}\cdots a_{j,p}^{(i)}$ and the last ones, as follows:
$$ \big|\varphi\big[ \big( (a_{1,1}^{(i)}\cdots a_{1,p}^{(1)}) \cdots (a_{\frac{k}{2},1}^{(i)} \cdots a_{\frac{k}{2},p}^{(i)}) \big) \big( (a_{\frac{k}{2}+1,1}^{(i)}\cdots a_{\frac{k}{2}+1,p}^{(i)}) \cdots (a_{k,1}^{(i)} \cdots a_{k,p}^{(i)}) \big)  \big]\big|, $$
and apply the technique explained in the proof of Lemma \ref{algo}; 

\item[(ii)] if $p$ is even and $k$ is odd, first split the central $p$-string in the $\frac{k+1}{2}$-th position:
$$ (a_{\frac{k+1}{2},1}^{(i)}\cdots a_{\frac{k+1}{2}, \frac{p}{2}}^{(i)})  (a_{\frac{k+1}{2},\frac{p}{2}+1}^{(i)}\cdots a_{\frac{k+1}{2},p}^{(i)}) $$
so that the argument of $\varphi$ will be divided into two parts, each with $\frac{kp}{2}$ factors, and apply Lemma \ref{algo} to:
$$ \big|\varphi\big[ \big( (a_{1,1}^{(i)}\cdots a_{1,p}^{(i)}) \cdots  (a_{\frac{k+1}{2},1}^{(i)}\cdots a_{\frac{k+1}{2}, \frac{p}{2}}^{(i)}) \big) \big( (a_{\frac{k+1}{2},\frac{p}{2}+1}^{(i)}\cdots a_{\frac{k+1}{2},p}^{(i)}) \cdots (a_{k,1}^{(i)} \cdots a_{k,p}^{(i)}) \big)  \big]\big| \; ; $$

\item[(iii)] if both $k$ and $p$ are odd,  associate the argument of $\varphi$ by splitting between $a_{\frac{k+1}{2},\frac{p-1}{2}}^{(i)}$ and $a_{\frac{k+1}{2},\frac{p+1}{2}}^{(i)}$ (dividing the product into two parts, the first with $\frac{kp-1}{2}$ factors, the second with $\frac{kp+1}{2}$):
$$\big| \varphi\big[ \big( (a_{1,1}^{(i)}\cdots a_{1,p}^{(i)}) \cdots (a_{\frac{k+1}{2},1}^{(i)}\cdots a_{\frac{k+1}{2},\frac{p-1}{2}}^{(i)})\big)\big((a_{\frac{k+1}{2},\frac{p+1}{2}}^{(i)}\cdots a_{\frac{k+1}{2},p}^{(i)}) \cdots (a_{k,1}^{(i)} \cdots a_{k,p}^{(i)}) \big)  \big]\big|.$$
\end{itemize}
If $kp$ is even (both if $k$ is even or $k$ is odd), it follows straightforwardly from Lemma \ref{algo} that:
\begin{equation}
\label{pari}
\big|\varphi\big((a_{1,1}^{(i)}\cdots a_{1,p}^{(i)}) \cdots (a_{k,1}^{(i)}\cdots a_{k,p}^{(i)})\big)\big|  \leq \prod_{s=1}^{k}\prod_{l=1}^{p}\prod_{t_j \in I_{l,s}(\bs{a})} \Big( \varphi\big( (a_{s,l}^{(i)} (a_{s,l}^{(i)})^{\ast})^{2^{t_{j}}} \big)\Big)^{2^{-\frac{kp}{2}}},
\end{equation}
with $\sum\limits_{t_j \in I_{l,s}(\bs{a})} 2^{t_j} = 2^{\frac{kp}{2}-1}$ for every $l=1,\dots,p$, $s=1,\dots,k$.\\
If $p$ and $k$ are odd, then Lemma \ref{algo} gives:
\small{
\begin{align*}
&\big|\varphi\big((a_{1,1}^{(i)}a_{1,2}^{(i)} \cdots a_{1,p}^{(i)}) \cdots (a_{k,1}^{(i)}\cdots a_{k,p}^{(i)})\big)\big|  \\ 
&\qquad\leq \prod_{s=1}^{\frac{k-1}{2}} \prod_{l=1}^{p}\prod_{t_j \in I_{l,s}(\bs{a})} \Big(\varphi\big( (a_{s,l}^{(i)}  (a_{s,l}^{(i)})^{\ast})^{2^{t_{j}}}\big)\Big)^{2^{-\frac{kp-1}{2}}}  \prod_{b=1}^{\frac{p-1}{2}}  \prod_{t_j \in I_{b,\frac{k+1}{2}}(\bs{a})}  \Big(\varphi\big( (a_{\frac{k+1}{2},b}^{(i)} (a_{\frac{k+1}{2},b}^{(i)})^{\ast})^{2^{t_{j}}}\big)\Big)^{2^{-\frac{kp-1}{2}}}    \\ 
&\qquad\prod_{b= \frac{p+1}{2}}^{p}\prod_{t_j \in I_{b,\frac{k+1}{2}}(\bs{a})} \Big(\varphi\big( (a_{\frac{k+1}{2},b}^{(i)} (a_{\frac{k+1}{2},b}^{(i)})^{\ast})^{2^{t_{j}}}\big)\Big)^{2^{-\frac{kp+1}{2}}} \prod_{s=\frac{k+3}{2}}^{k}\prod_{l=1}^p   \prod_{t_j \in I_{l,s}(\bs{a})} \Big(\varphi\big( (a_{s,l}^{(i)}  (a_{s,l}^{(i)})^{\ast})^{2^{t_{j}}}\big)\Big)^{2^{-\frac{kp+1}{2}}}                 \numberthis \label{dispari}
\end{align*}
}
\normalsize
with
\begin{enumerate}
\item  $\sum\limits_{t_j \in I_{l,s}(\bs{a})}2^{t_{j}} = 2^{\frac{kp-3}{2}}$ for $s=1,\dots, \frac{k-1}{2}$ and $l=1,\dots,p$ and $s=\frac{k+1}{2}$, $l=1,\dots,\frac{p-1}{2}$;
\item $\sum\limits_{t_j \in I_{l,s}(\bs{a})}2^{t_{j}} = 2^{\frac{kp-1}{2}}$, for $l=1,\dots,p$ when $s=\frac{k+3}{2},\dots,k$, and when $s=\frac{k+1}{2}$, for $l=\frac{p+1}{2},\dots,p$.
\end{enumerate}

In every product of the type $a_{s,l}^{(i)}(a_{s,l}^{(i)})^{\ast}$, the factor $V_{l}^{(i)}(A)^{2\beta_{l,r_l}^{(s)}}$ appears exactly once, while for every $h=1,\dots,r_{l}-1$, $V_{l}^{(i)}(A)^{\beta_{l,h}^{(s)}}$ appears exactly twice. Therefore, for every fixed $s=1,\dots,k$, $l=1,\dots,p$ and $t_j \in I_{l,s}(\bs{a})$, in the argument of $\varphi\big( \big(a_{s,l}^{(i)}(a_{s,l}^{(i)})^{\ast}\big)^{2^{t_{j}}}\big)$, the property of trace of  $\varphi$ implies that there are exactly $2^{t_{j}}(2r_l - 1)$ paired products of the type $(W_l^{(i)})^{T_1}(V_l^{(i)}(A))^{T_2}$, for certain integers $T_1, T_2$.\\

Moreover, as follows by a direct application of Proposition \ref{prop02} and Lemma \ref{lemma01}  to the random variables $W_l^{(i)}$ and $V_l^{(i)}(A)$, with $A=Y_i \in \bs{Y}_i$  or $A= \mathcal{X}_{i,l} \in \bs{\mathcal{X}}_i$ and if $\hat{\bs{\mathcal{Z}}}^{(i)} = (\bs{Y}_1,\dots,\bs{Y}_{i-1},\bs{\mathcal{X}}_{i+1},\dots,\bs{\mathcal{X}}_n)$, for every $r \geq 1$ there exist constants $C_{r,d}$ and $D_{r,d}$ such that:
\begin{align*}
\varphi\big((W_j^{(i)})^{2r}\big) &\leq C_{r,d} \;\mu_{2^{rd-1}}^{\hat{\bs{\mathcal{Z}}}^{(i)}} \bigg( \sum_{j_1,\dots,j_d \in [n]\setminus \{i\}}f_n^{(j)}(j_1,\dots,j_d)^2 \bigg)^r \\
&\leq  C_{r,d} \;\mu_{2^{rd-1}}^{\hat{\bs{\mathcal{Z}}}^{(i)}}.
\end{align*} Similarly,
$$\varphi\big( V_j^{(i)}(A)^{2r}\big) \leq D_{r,d}\;\mu_{2^{rd-1}}^{\bs{\mathcal{Z}}^{(i)}} \big(\mathrm{Inf}_i(f_n^{(j)})\big)^{r}. $$
Indeed, by Proposition \ref{prop02},
$$ 
\varphi\big( V_j^{(i)}(A)^{2r}\big) \leq D_{r,d}\;\mu_{2^{rd-1}}^{\bs{\mathcal{Z}}^{(i)}} \varphi\big( V_j^{(i)}(A)^{2}\big),
$$
where
\begin{align*}
\varphi\big( V_j^{(i)}(A)^{2}\big) &= \sum_{l_1,l_2=1}^d \sum_{j_1,\dots,j_{d-1}\in [n]\setminus\{i\}}\sum_{s_1,\dots,s_{d-1}\in [n]\setminus\{i\}} \\
 & \qquad f_n^{(j)}(j_1,\dots,j_{l_1 -1},i,j_{l_1},\dots,j_{d-1}) f_n^{(j)}(s_1,\dots,s_{l_2 -1},i,s_{l_2},\dots,s_{d-1}) \\
 & \qquad \qquad \varphi\big((Z_{j_1}\cdots Z_{j_{l_1-1}}A Z_{j_{l_1}}\cdots Z_{j_{d-1}})(Z_{s_1}\cdots Z_{s_{l_2-1}}A Z_{s_{l_2}}\cdots Z_{s_{d-1}})\big).
\end{align*}
Every summand $\varphi\big((Z_{j_1}\cdots Z_{j_{l_1-1}}A Z_{j_{l_1}}\cdots Z_{j_{d-1}})(Z_{s_1}\cdots Z_{s_{l_2-1}}A Z_{s_{l_2}}\cdots Z_{s_{d-1}})\big) $ is non-zero if and only if $l_1= d-l_2 +1$ and $j_t = s_{d-t}$ for $t=1,\dots,d-1$ (see \cite[Lemma 5.8]{Speicher}), in which case equals $1$, giving:
$$ \varphi\big( V_j^{(i)}(A)^{2}\big)= \sum_{l=1}^d \sum_{j_1,\dots,j_{d-1}\in [n]\setminus \{i\}} f_n^{(j)}(j_1,\dots,j_{l-1},i,j_l,\dots,j_{d-1})f_n^{(j)}(j_{d-1},\dots,j_l,i,j_{l-1},\dots,j_1),$$
and the conclusion is achieved thanks to the mirror symmetry of $f_n^{(j)}$.  \\

The application of the generalized free H\"older inequality (Lemma \ref{lemma0}) yields:
\begin{align*}
\varphi\big(&(a_{s,l}^{(i)} (a_{s,l}^{(i)})^{\ast})^{2^{t_{j}}}\big) \leq\\
& \mathrm{C}\;\bigg\{ \bigg[ \varphi\bigg(V_l^{2\beta_{l,r_l}^{(s)}2^{2^{t_{j}}(2r_l-1)}}\bigg) \bigg]^{2^{-2^{t_{j}}(2r_{l}-1)}} \bigg\}^{2^{t_{j}}} \cdot \prod_{h=1}^{r_l - 1} \bigg\{\bigg[ \varphi\bigg( V_l^{\beta_{l,h}^{(s)}2^{2^{t_{j}}(2r_l-1)}}\bigg)\bigg]^{2^{-2^{t_{j}}(2r_l-1)}} \bigg\}^{2^{t_{j}+1}} \\
&\leq \mathrm{C}\;\bigg[\bigg( \big(\mathrm{Inf}_i(f_n^{(l)}\big)^{\beta_{l,r_l}^{(s)}2^{2^{t_{j}}(2r_l-1)}}\bigg)^{2^{-2^{t_{j}}(2r_l-1)}} \bigg]^{2^{t_{j}}} \cdot \prod_{h=1}^{r_l-1} \bigg[ \bigg(\big(\mathrm{Inf}_i(f_n^{(l)})\big)^{\beta_{l,h}^{(s)}2^{2^{t_{j}}(2r_l-1)}}\bigg)^{2^{-2^{t_{j}}(2r_l-1)}} \bigg]^{2^{t_{j}}}\\
& \leq \mathrm{C}\;\prod_{h=1}^{r_l}\big(\mathrm{Inf}_i(f_n^{(l)})\big)^{2^{t_{j}}\beta_{l,h}^{(s)}} \;= \;C \big(\mathrm{Inf}_i(f_n^{(l)})\big)^{2^{t_{j}}\sum_{h=1}^{r_l}\beta_{l,h}^{(s)}}  \numberthis \label{stima}
\end{align*}
(where the constant $C$ gathers all the estimates given by the application of Proposition \ref{prop02} to the $W_j^{\alpha}$'s, since they do not depend neither on the influence function, nor on $i$, due to the identically distributed assumption on the sequence $\bs{X}$ and on the sequence $\bs{Y}$).

Then, if $kp$ is even, back to \eqref{pari}, the product over all the integers $t_j$'s in $I_{l,s}(\bs{a})$, such that $\sum\limits_{j \in I_{l,s}(\bs{a})}2^{t_{j}} = 2^{\frac{kp}{2}-1}$, finally provides, up to a multiplicative coefficient:
\begin{align*}
\prod_{t_j \in I_{l,s}(\bs{a})}\Big(\varphi\big((a_{s,l}^{(i)} (a_{s,l}^{(i)})^{\ast})^{2^{t_{j}}}\big) \Big)^{2^{-\frac{kp}{2}}} \;&\leq \;  \big(\mathrm{Inf}_i(f_n^{(l)})\big)^{2^{-1}\sum_{h=1}^{r_l}\beta_{l,h}^{(s)}} \\
&\leq \; \big(\max_{t=1,\dots,p}\mathrm{Inf}_i(f_n^{(t)})\big)^{2^{-1}\sum_{h=1}^{r_l}\beta_{l,h}^{(s)}} 
\end{align*}
implying that:
\begin{align*}
\prod_{s=1}^{k}\prod_{l=1}^{p}\prod_{t_j \in I_{l,s}(\bs{a})}\Big(\varphi\big((a_{s,l}^{(i)} (a_{s,l}^{(i)})^{\ast})^{2^{t_{j}}}\big)\Big)^{2^{-\frac{kp}{2}}} &\leq \big(\max_{h=1,\dots,p}\mathrm{Inf}_i(f_n^{(h)})\big)^{2^{-1}\gamma} \\
&\leq \big(\max_{h=1,\dots,p}\mathrm{Inf}_i(f_n^{(h)})\big)^{\frac{3}{2}} \;=\; \max_{h=1,\dots,p}\big(\mathrm{Inf}_i(f_n^{(h)})\big)^{\frac{3}{2}},
\end{align*}
(recall that $\mathrm{Inf}_i (f_n^{(h)}) \leq 1 $ for all $h$ by hypothesis). 

Finally, up to a combinatorial coefficient one has:
\begin{align*}
\sum_{i=1}^{n} |\varphi\big((a_{1,1}^{(i)}&\cdots a_{1,p}^{(i)}) \cdots (a_{k,1}^{(i)}\cdots a_{k,p}^{(i)})\big)| \leq \sum_{i=1}^{n} \max_{h=1,\dots,p}\big(\mathrm{Inf}_i(f_n^{(h)})\big)^{\frac{3}{2}}\\
&\leq \sum_{i=1}^{n} \sum_{h=1}^{p}\big(\mathrm{Inf}_i(f_n^{(h)})\big)^{\frac{1}{2}}\big(\mathrm{Inf}_i(f_n^{(h)})\big) \, \leq \, \sum_{i=1}^{n}\sum_{h=1}^{p} \sqrt{\tau_n^{(h)}}\mathrm{Inf}_i(f_n^{(h)}) \\
&=\sum_{h=1}^p \sqrt{\tau_n^{(h)}} \sum_{i=1}^n \mathrm{Inf}_i(f_n^{(h)}) \, \leq \, p\, \max_{h=1,\dots,p}\sqrt{\tau_n^{(h)}} ,\numberthis \label{chain}
\end{align*}
due to $\sum\limits_{i=1}^{n}\mathrm{Inf}_i(f_n^{(h)}) = 1$, and the conclusion follows.\\

If $kp$ is odd, for every $s=1,\dots,\frac{k-1}{2}$ and every $l=1,\dots,p$,  and for $l=1,\dots,\frac{p-1}{2}$ when $s=\frac{k+1}{2}$, from \eqref{dispari} the estimate in (\ref{stima}) gives:
\begin{align*}
\prod_{t_{j} \in I_{l,s}(\bs{a})} \varphi\big( (a_{s,l}^{(i)} (a_{s,l}^{(i)})^{\ast})^{2^{t_{j}}} \big)^{2^{-\frac{kp-1}{2}}} &\leq  \bigg(\big(\mathrm{Inf}_i (f_n^{(l)})\big)^{2^{\frac{kp-3}{2}}\sum_{h=1}^{r_l}\beta_{l,h}^{(s)}}\bigg)^{2^{-\frac{kp-1}{2}}}\\
&= \big(\mathrm{Inf}_i (f_n^{(l)})\big)^{2^{-1}\sum_{h=1}^{r_l}\beta_{l,h}^{(s)}} \\
&\leq \Big( \max_{t=1,\dots,n} \mathrm{Inf}_i(f_n^{(t)}) \Big)^{2^{-1}\sum_{h=1}^{r_l}\beta_{l,h}^{(s)}},
\end{align*}
so that, for every $s=1,\dots,\frac{k-1}{2}$,
$$ \prod_{l=1}^{p}\prod_{t_{j} \in I_{l,s}(\bs{a})} \varphi\big( (a_{s,l}^{(i)} (a_{s,l}^{(i)})^{\ast})^{2^{t_{j}}} \big)^{2^{-\frac{kp-1}{2}}} \leq \Big( \max_{t=1,\dots,n} \mathrm{Inf}_i(f_n^{(t)}) \Big)^{2^{-1}\sum_{l=1}^{p}\sum_{h=1}^{r_l}\beta_{l,h}^{(s)}},$$
while for $s=\frac{k+1}{2}$, 
$$ \prod_{l=1}^{\frac{p-1}{2}}\prod_{t_{j} \in I_{l,s}(\bs{a})} \varphi\big( (a_{s,l}^{(i)} (a_{s,l}^{(i)})^{\ast})^{2^{t_{j}}} \big)^{2^{-\frac{kp-1}{2}}} \leq \Big( \max_{t=1,\dots,n} \mathrm{Inf}_i(f_n^{(t)}) \Big)^{2^{-1}\sum_{l=1}^{\frac{p-1}{2}}\sum_{h=1}^{r_l}\beta_{l,h}^{(s)}}.$$

Similarly, for every $s=\frac{k+3}{2},\dots,k$ and every $l=1,\dots,p$, and for $l=\frac{p+1}{2},\dots,p$ when $s=\frac{k+1}{2}$, the estimate in (\ref{stima}) gives:
\begin{align*}
\prod_{t_{j} \in I_{l,s}(\bs{a})} \varphi\big( (a_{s,l}^{(i)} (a_{s,l}^{(i)})^{\ast})^{2^{t_{j}}} \big)^{2^{-\frac{kp+1}{2}}} &\leq  \bigg(\big(\mathrm{Inf}_i (f_n^{(l)})\big)^{2^{\frac{kp-1}{2}}\sum_{h=1}^{r_l}\beta_{l,h}^{(s)}}\bigg)^{2^{-\frac{kp+1}{2}}}\\
&= \big(\mathrm{Inf}_i (f_n^{(l)})\big)^{2^{-1}\sum_{h=1}^{r_l}\beta_{l,h}^{(s)}} \\
&\leq \Big( \max_{t=1,\dots,n} \mathrm{Inf}_i(f_n^{(t)}) \Big)^{2^{-1}\sum_{h=1}^{r_l}\beta_{l,h}^{(s)}},
\end{align*}
so that for every $s=\frac{k+3}{2},\dots,k$
$$ \prod_{l=1}^{p}\prod_{t_{j} \in I_{l,s}(\bs{a})} \varphi\big( (a_{s,l}^{(i)} (a_{s,l}^{(i)})^{\ast})^{2^{t_{j}}} \big)^{2^{-\frac{kp-1}{2}}} \leq \Big( \max_{t=1,\dots,n} \mathrm{Inf}_i(f_n^{(t)}) \Big)^{2^{-1}\sum_{l=1}^{p}\sum_{h=1}^{r_l}\beta_{l,h}^{(s)}},$$
while for $s=\frac{k+1}{2}$, 
$$ \prod_{l=\frac{p+1}{2}}^{p}\prod_{t_{j} \in I_{l,s}(\bs{a})} \varphi\big( (a_{s,l}^{(i)} (a_{s,l}^{(i)})^{\ast})^{2^{t_{j}}} \big)^{2^{-\frac{kp-1}{2}}} \leq \Big( \max_{t=1,\dots,n} \mathrm{Inf}_i(f_n^{(t)}) \Big)^{2^{-1}\sum_{l=\frac{p+1}{2}}^{p}\sum_{h=1}^{r_l}\beta_{l,h}^{(s)}}.$$

In the end, the inequality in (\ref{dispari}) can be rewritten as $ |\varphi\big((a_{1,1}^{(i)}\cdots a_{1,p}^{(i)}) \cdots (a_{k,1}^{(i)}\cdots a_{k,p}^{(i)})\big)| \leq A_i\; B_i \;C_i\; D_i$, 
where
\begin{enumerate}
\item $ A_i = \prod\limits_{s=1}^{\frac{k-1}{2}}\prod\limits_{l=1}^p \prod\limits_{t_j \in I_{l,s}(\bs{a})} \varphi\big((a_{s,l}^{(i)} (a_{s,l}^{(i)})^{\ast})^{2^{t_{j}}}\big)^{2^{-\frac{kp-1}{2}}} ;$
\item $ B_i = \prod\limits_{l=1}^{\frac{p-1}{2}} \prod\limits_{t_j \in I_{l,\frac{k+1}{2}}(\bs{a})} \varphi\big((a_{s,\frac{k+1}{2}}^{(i)} (a_{s,\frac{k+1}{2}}^{(i)})^{\ast})^{2^{t_{j}}}\big)^{2^{-\frac{kp-1}{2}}} ;$
\item $ C_i = \prod\limits_{l=\frac{p+1}{2}}^{p} \prod\limits_{t_j \in I_{l,\frac{k+1}{2}}(\bs{a})} \varphi\big((a_{s,\frac{k+1}{2}}^{(i)} (a_{s,\frac{k+1}{2}}^{(i)})^{\ast})^{2^{t_{j}}}\big)^{2^{-\frac{kp+1}{2}}} ; $
\item $ D_i = \prod\limits_{s=\frac{k+3}{2}}^{k}\prod\limits_{l=1}^p \prod\limits_{t_j \in I_{l,s}(\bs{a})} \varphi\big((a_{s,l}^{(i)} (a_{s,l}^{(i)})^{\ast})^{2^{t_{j}}}\big)^{2^{-\frac{kp+1}{2}}} .$
\end{enumerate}
As for the estimates given in \eqref{stima},
\begin{enumerate}
\item $ A_i \leq \Big(\max\limits_{t=1,\dots,p} \mathrm{Inf}_i(f_n^{(t)})\Big)^{\gamma(A_i)},  \text{ with  }\; \gamma(A_i) = 2^{-1}\sum\limits_{s=1}^{\frac{k-1}{2}}\sum\limits_{l=1}^p \sum\limits_{h=1}^{r_l}\beta_{l,h}^{(s)}; $
\item $ B_i \leq \Big(\max\limits_{t=1,\dots,p} \mathrm{Inf}_i(f_n^{(t)})\Big)^{\gamma(B_i)} , \text{ with }\; \gamma(B_i) = 2^{-1}\sum\limits_{l=1}^{\frac{p-1}{2}} \sum\limits_{h=1}^{r_l}\beta_{l,h}^{(\frac{k+1}{2})} ;$
\item $ C_i \leq \Big(\max\limits_{t=1,\dots,p} \mathrm{Inf}_i(f_n^{(t)})\Big)^{\gamma(C_i)}, \text{ with } \;\gamma(C_i) = 2^{-1}\sum\limits_{l=\frac{p+1}{2}}^{p} \sum\limits_{h=1}^{r_l}\beta_{l,h}^{(\frac{k+1}{2})} ; $
\item $ D_i \leq \Big(\max\limits_{t=1,\dots,p} \mathrm{Inf}_i(f_n^{(t)})\Big)^{\gamma(D_i)}, \text{ with } \;\gamma(D_i) = 2^{-1}\sum\limits_{s=\frac{k+3}{2}}^{k}\sum\limits_{l=1}^p \sum\limits_{h=1}^{r_l}\beta_{l,h}^{(s)} $,
\end{enumerate}
yielding:
\begin{align*}
|\varphi\big((a_{1,1}^{(i)}\cdots a_{1,p}^{(i)}) \cdots (a_{k,1}^{(i)}\cdots a_{k,p}^{(i)})\big)|  &\leq \big(\max_{t=1,\dots,p} \mathrm{Inf}_i(f_n^{(t)})\big)^{2^{-1}\gamma} \\
& \leq\big(\max_{t=1,\dots,p} \mathrm{Inf}_i(f_n^{(t)})\big)^{\frac{3}{2}} 
\end{align*}
since $\gamma = \sum\limits_{s=1}^k \sum\limits_{l=1}^p \sum\limits_{h=1}^{r_l}\beta_{r,l}^{(s)} = \gamma(A_{i}) +\gamma(B_i) + \gamma(C_i) + \gamma(D_i)$.\\

To conclude in the case of sequences of identically distributed variables, it is sufficient to repeat the reasoning carried out in the chain of inequalities (\ref{chain}). \\

If the sequences $\bs{X}$ and $\bs{Y}$ were composed of independent random variables with uniformly bounded moments (not necessarily identically distributed), the proof would follow the same steps. The only modification to take into account would be relative to the hypercontractivity arguments in \eqref{stima}, and would require to replace $\mu_{2^{rd-1}}^{\hat{\bs{\mathcal{Z}}}^{(i)}}$ with $\mu_{2^{rd-1}}^{\bs{Y},\bs{\mathcal{X}}^{(n)}}$:
$$\mu_{2^{rd-1}}^{\hat{\bs{\mathcal{Z}}}^{(i)}} \; \leq \; \mu_{2^{rd-1}}^{\bs{Y},\bs{\mathcal{X}}^{(n)}} \, < \infty .$$
Indeed, since the moments of $\bs{X}$ and $\bs{Y}$ are uniformly bounded, so are the moments of the random variables composing the ensembles $\bs{\mathcal{X}}^{(n)}$.

\chapter{Universality of Chebyshev sums in every dimension}\label{Chapter_Universality}

As anticipated in the synopsis, the goal of the present chapter is to apply the  invariance principle stated via Theorem \ref{Multiinvariance2}, to derive other universal laws for semicircular and free Poisson approximations of (vectors) of homogeneous sums in freely independent random variables, in the sense of Definition \ref{UniversalLaw} below. So far, indeed, only the semicircle law is known to enjoy the feature under investigation: universality, and its interactions with the Fourth Moment Theorems, will allow us to establish that the semicircular asymptotic behaviour of any vector of Chebyshev sums, in semicircular entries, ensures that the same approximation holds for any vector of homogeneous sums. \\ 
Beyond the universality statements, it will be briefly outlined how the same technique leads to new universal laws for normal approximation of homogeneous sums in the classical probability setting.\\

The connection between Fourth Moment Theorems and universality statements will  be  further investigated in the subsequent Part \ref{FMT}. In this regard, note that the unidimensional results that can be drawn from the forthcoming discussion might be seen as a corollary of Theorem \ref{superTeo2}, even though they arise here from a different technique.  On the other hand,  the multidimensional case here presented will not be reached in Part \ref{FMT}.\\

If not otherwise specified, all random variables $Y$ in $(\mathcal{A},\varphi)$ are assumed to be centered and with unit variance (Assumption {\bf (1)} for short).

\begin{defn}\label{UniversalLaw}
Let $Y$ be a random variable  in $(\mathcal{A},\varphi)$, satisfying Assumption {\bf (1)}. $Y$ is said to be \textbf{ universal } (at the order $d\geq 2$) \index{Universal law, (free)} for semicircular approximations of homogeneous sums if, for any sequence $f_n:[n]^d\to \mathbb{R}$ of admissible kernels, the following conditions are equivalent as $n\to\infty$:
\begin{itemize}
\item[(i)] $Q_{\Y}(f_n) \xrightarrow{\text{\rm Law}}\mathcal{S}(0,1)$;
\item[(ii)] $Q_{\mathbf{W}}(f_n) \xrightarrow{\text{\rm Law}}\mathcal{S}(0,1)$ for any other sequence $\mathbf{W} = \{W_i\}_{i \geq 1}$ of identically distributed freely independent random variables, satisfying Assumption {\bf (1)}.
\end{itemize}
\end{defn}

\begin{rmk}
The choice $d\geq 2$ is motivated by the fact that it is well-known that there is no universality for linear polynomials. For instance, for any sequence of real numbers $f_n(i)$ such that $\sum\limits_{i=1}^n f_n(i)^2 = 1$, if $\bs{S}=\{S_i\}_{i\geq 1}$ is a sequence of freely independent random variables with the standard semicircle distribution, $Q_{\bs{S}}(f_n) = \sum\limits_{i=1}^n f_n(i)S_i \sim \mathcal{S}(0,1)$, while for general coefficients $f_n(i)$, and a sequence $\bs{X}$ of freely independent random variables, $Q_{\bs{X}}(f_n)$ does not converge in law to the semicircle law. \\
\end{rmk}

The \textit{universality} feature is not simple to detect and describe: for instance, it is well-known that the free Walsh chaos is not universal for $d=2$ (see \cite{NourdinDeya}). So far, the only example of universal law for 1-dimensional semicircular approximations of homogeneous sums (with symmetric coefficients) has been provided with Theorem \ref{invnoncom}, recalled in the previous Chapter, giving a partial  free counterpart to \cite[Theorem 1.2]{NourdinPeccatiReinert} (see Theorem \ref{inv} in Part \ref{FMT}).  Dually, other limit laws for which a universality phenomenon can be satisfied have not been explored, nor even in the simpler case of homogeneous sums in semicircular entries. \\

The goal pursued in this chapter is to provide further examples of laws that are universal for semicircular and free Poisson approximations  of homogeneous sums of degree $d\geq 2$, combining Theorem \ref{Multiinvariance2} and the Fourth Moment Theorems \ref{knps} and \ref{FPoissAppr}. \\


\section{Fourth Moment Theorem for Chebyshev sums}

The free counterpart to the \textit{Nualart-Peccati Criterion} (see Theorem \ref{FMTClassic}) has been established in full generality in \cite[Theorems 1.3, 1.6]{KempNourdinPeccatiSpeicher} for Wigner integrals of mirror symmetric functions: for the purposes of the present discussion, Theorem \ref{knps} recalls the free version of the \textit{Nualart-Peccati Criterion}, in a simplified version and only for homogeneous sums in a sequence  of freely independent standard semicircular random variables $\SSw = \{S_i\}_{i\geq 1}$.

\begin{thm}\label{knps}
For any $d\geq 2$,  and for every sequence of admissible kernels $f_n:[n]^d \rightarrow \mathbb{R}$, the following statements are equivalent as $n\rightarrow \infty$:
\begin{itemize}
\item[(i)] $Q_{\SSw}(f_n) \xrightarrow{\text{\rm Law}}\mathcal{S}(0,1)$;
\item[(ii)] $\varphi(Q_{\SSw}(f_n)^4) \longrightarrow \varphi(S^4)=2, S \sim \mathcal{S}(0,1)$;
\item[(iii)] if $k_n= \sum\limits_{i_1,\dots,i_d=1}^{n}f_n(i_1,\dots,i_d)e_{i_1}\otimes\cdots \otimes e_{i_d}$, with $\{e_j\}_{j\geq 1}$ an orthonormal basis of $\mathrm{L}^2(\mathbb{R}_+)$,  for every $r=1,\dots, d-1$,
$$\|k_n  \otimes_r k_n \|_{\mathrm{L}^{2}(\mathbb{R}_{+}^{2d-2r})} \longrightarrow 0 \,,$$
where $\otimes_r$ denotes the contraction  introduced in \eqref{GenContraction}.
\end{itemize}
\end{thm}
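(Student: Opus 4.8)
The plan is to prove Theorem \ref{knps} by reducing it to the already-known free Fourth Moment Theorem for Wigner integrals from \cite{KempNourdinPeccatiSpeicher}, via the dictionary between homogeneous sums in semicircular variables and multiple Wigner integrals. First I would recall the key identification: if $\bs{S} = \{S_i\}_{i\geq 1}$ is a sequence of freely independent standard semicircular variables realized on the full Fock space over $\mathcal{H} = \mathrm{L}^2(\mathbb{R}_+)$ with $S_i = \ell(e_i) + \ell(e_i)^\ast$ for an orthonormal basis $\{e_i\}$, then for an admissible kernel $f_n$ one has the chaos expansion
\[
Q_{\bs{S}}(f_n) = \sum_{i_1,\dots,i_d=1}^n f_n(i_1,\dots,i_d)\, S_{i_1}\cdots S_{i_d} = I_d^{\bs{S}}(k_n),
\]
where $k_n = \sum f_n(i_1,\dots,i_d) e_{i_1}\otimes\cdots\otimes e_{i_d} \in \mathcal{H}^{\otimes d}$ is the associated kernel and $I_d^{\bs{S}}$ denotes the multiple Wigner integral. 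This identity is a consequence of the product formula for Wigner integrals together with the vanishing-on-diagonals property of $f_n$ (which kills all lower-order chaoses in the expansion of the product $S_{i_1}\cdots S_{i_d}$); I would cite the relevant statement in the appendix (Section \ref{AppendixfreeProb}) or \cite{KempNourdinPeccatiSpeicher,BianeSpeicher}. Moreover, mirror symmetry of $f_n$ makes $k_n$ a mirror-symmetric kernel, so $I_d^{\bs{S}}(k_n)$ is self-adjoint, and the unit-variance normalization $\|f_n\|^2 = 1$ translates into $\|k_n\|_{\mathcal{H}^{\otimes d}}^2 = 1$, hence $\varphi(Q_{\bs{S}}(f_n)^2) = \|k_n\|^2 = 1 = \varphi(S^2)$.

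Next I would invoke the free Fourth Moment Theorem in the form of \cite[Theorems 1.3, 1.6]{KempNourdinPeccatiSpeicher}: for a sequence of mirror-symmetric kernels $k_n \in \mathcal{H}^{\otimes d}$ with $\|k_n\|^2 \to 1$, the convergence $I_d^{\bs{S}}(k_n) \xrightarrow{\text{Law}} \mathcal{S}(0,1)$ is equivalent to $\varphi(I_d^{\bs{S}}(k_n)^4) \to 2$ and also equivalent to $\|k_n \otimes_r k_n\| \to 0$ for every $r = 1,\dots,d-1$. Translating each of the three conditions through the identification $Q_{\bs{S}}(f_n) = I_d^{\bs{S}}(k_n)$ yields exactly the three equivalent statements (i), (ii), (iii) of the theorem: (i) is the law convergence of the Wigner integral; (ii) is the fourth-moment condition, using $\varphi(S^4) = 2$ for a standard semicircular $S$; and (iii) is the contraction condition, with the norms $\|k_n \otimes_r k_n\|_{\mathrm{L}^2(\mathbb{R}_+^{2d-2r})}$ being precisely those appearing in the criterion (and, by Proposition \ref{contraction5} applied with $\bs{h} = (1,\dots,1)$, these coincide with discrete contraction norms $\|f_n \stackrel{q}{\smallfrown} f_n\|$ and $\|f_n \star_q^{q-1} f_n\|$, which could be recorded as a remark though it is not strictly needed here). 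Since $\|k_n\|^2 = 1$ for every $n$ by admissibility, the normalization hypothesis of the cited theorem is automatically met.

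The main obstacle — really the only non-routine point — is establishing the chaos identity $Q_{\bs{S}}(f_n) = I_d^{\bs{S}}(k_n)$ cleanly, i.e.\ verifying that the vanishing-on-diagonals hypothesis on $f_n$ exactly cancels all the contracted (lower-chaos) terms produced by iterating the Wigner product formula $I_p(g)\,I_q(h) = \sum_{r=0}^{\min(p,q)} I_{p+q-2r}(g \otimes_r h)$. I would handle this by an induction on $d$: writing $S_{i_1}\cdots S_{i_d} = I_1(e_{i_1}) \cdots I_1(e_{i_d})$ and expanding, each contraction $\otimes_r$ with $r \geq 1$ forces an equality among some of the indices $i_1,\dots,i_d$, and summing such a term against $f_n$ gives a sum over a ``diagonal'' set on which $f_n \equiv 0$; hence only the fully tensorized term $I_d(e_{i_1}\otimes\cdots\otimes e_{i_d})$ survives, and summing against $f_n$ gives $I_d^{\bs{S}}(k_n)$. (This is the free analogue of the classical fact that homogeneous sums with kernels vanishing on diagonals are multiple Wiener–Itô integrals; the argument is standard in the free setting — see \cite[Proposition 1.25]{KempNourdinPeccatiSpeicher} and \cite{NourdinDeya} — so in the write-up I would state it as a lemma with a brief proof or a precise citation rather than reproduce the full combinatorics.) Once this identity is in place, the theorem is an immediate transcription of the cited free Fourth Moment Theorem, and no further estimates are required.
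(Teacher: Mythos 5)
Your proposal is correct and matches the paper's intent exactly: the paper states Theorem \ref{knps} as a recalled, simplified form of \cite[Theorems 1.3, 1.6]{KempNourdinPeccatiSpeicher}, with the implicit justification being precisely your identification $Q_{\bs{S}}(f_n)=I_d^{S}(k_n)$ (which the paper itself invokes later, e.g.\ in the proof of Theorem \ref{ConvChebySemicircular}), made possible by the vanishing-on-diagonals and mirror-symmetry properties of admissible kernels. Your explicit verification that the diagonal-vanishing hypothesis kills all contracted terms in the Wigner product formula is the only substantive step, and it is sound.
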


A similar simplified version of the method of moments and cumulants has been also provided for free Poisson approximations of Wigner integrals in \cite[Theorem 1.4, Lemma 5.1]{NourdinPeccati1}. As above, for the sake of simplicity, Theorem \ref{FPoissAppr} records this statement only for homogeneous sums in semicircular entries (Propositions \ref{contraction5} and \ref{contraction6} will be applied).

\begin{thm}\label{FPoissAppr}
For $\lambda >0$, let $Z(\lambda)$ denote a centred free Poisson random variable of parameter $\lambda$. If $d\geq 2 $ is even, consider a sequence of admissible kernels $f_n: [n]^d \rightarrow \mathbb{R}$ such that $ \varphi\big(Q_{\SSw}(f_n)^2\big) \rightarrow \lambda$ as $n\rightarrow \infty$. Then, in the limit, the following statements are equivalent:
\begin{itemize}
\item[(i)] $Q_{\SSw}(f_n) \stackrel{\text{Law}}{\longrightarrow} Z(\lambda)$;
\item[(ii)] $\varphi\big(Q_{\SSw}(f_n)^4\big)-2\varphi\big(Q_{\SSw}(f_n)^3\big) \longrightarrow \varphi\big(Z(\lambda)^4\big)-2\varphi\big(Z(\lambda)^3\big) = 2\lambda^2 - \lambda$;
\item[(iii)] $\|f_n \stackrel{\frac{d}{2}}{\smallfrown} f_n - f_n \| \rightarrow 0$, and for every $r=1,\dots,d-1, r \neq \frac{d}{2}, \| f_n \stackrel{r}{\smallfrown} f_n\| \rightarrow 0$,\\
where the contraction $\stackrel{r}{\smallfrown}$ has been introduced in \eqref{contraction}.
\end{itemize}
\end{thm}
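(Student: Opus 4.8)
The plan is to deduce Theorem~\ref{FPoissAppr} from the general free Poisson fourth moment theorem for Wigner integrals of \cite[Theorem~1.4, Lemma~5.1]{NourdinPeccati1}, transporting all three conditions along the identification of a homogeneous sum in semicircular entries with a $d$-th order Wigner integral, and then invoking Propositions~\ref{contraction5} and \ref{contraction6}, specialized to $\bs{h}=(1,\dots,1)$, to rewrite the Hilbert-space contraction conditions in discrete form.

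First I would realize $Q_{\SSw}(f_n)$ as a Wigner integral. Writing $S_i=I_1^S(e_i)$ for an orthonormal basis $\{e_i\}_{i\ge1}$ of $\mathrm{L}^2(\mathbb{R}_+)$ and applying the multiplication formula for Wigner integrals (see \cite[Proposition~1.25]{KempNourdinPeccatiSpeicher}), the hypothesis that $f_n$ vanishes on diagonals forces every lower-order term produced when expanding $S_{i_1}\cdots S_{i_d}$ to be multiplied by a value of $f_n$ at a tuple with a repeated entry, hence to vanish; therefore
\begin{equation*}
Q_{\SSw}(f_n)=\sum_{i_1,\dots,i_d=1}^{n} f_n(i_1,\dots,i_d)\,S_{i_1}\cdots S_{i_d}=I_d^S(k_n),\qquad k_n:=\sum_{i_1,\dots,i_d=1}^{n} f_n(i_1,\dots,i_d)\,e_{i_1}\otimes\cdots\otimes e_{i_d}.
\end{equation*}
Because $f_n$ is mirror symmetric, $k_n$ is mirror symmetric, and $\varphi(Q_{\SSw}(f_n)^2)=\|k_n\|^2=\|f_n\|^2$, so the standing assumption $\varphi(Q_{\SSw}(f_n)^2)\to\lambda$ reads $\|k_n\|^2\to\lambda$ (in this statement the unit-variance clause of Definition~\ref{admissible_Univ} is accordingly relaxed to $\|f_n\|^2\to\lambda$), which is precisely the normalization needed to apply the cited theorem.

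Next I would invoke \cite[Theorem~1.4, Lemma~5.1]{NourdinPeccati1}: for a sequence of mirror symmetric kernels $k_n\in \mathrm{L}^2(\mathbb{R}_+)^{\otimes d}$ with $d$ even and $\|k_n\|^2\to\lambda$, the law convergence $I_d^S(k_n)\to Z(\lambda)$ is equivalent to $\varphi(I_d^S(k_n)^4)-2\varphi(I_d^S(k_n)^3)\to 2\lambda^2-\lambda$, and is also equivalent to $\|k_n\otimes_{d/2}k_n-k_n\|\to0$ together with $\|k_n\otimes_r k_n\|\to0$ for every $r\in\{1,\dots,d-1\}$ with $r\ne d/2$. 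Since $I_d^S(k_n)=Q_{\SSw}(f_n)$, the first of these equivalences is exactly (i)$\Leftrightarrow$(ii). For the contraction conditions I specialize to $\bs{h}=(1,\dots,1)$, so that $m=h_1+\cdots+h_d=d$: the regime $r=\sum_{j=1}^{q-1}h_j+t$ with $1\le t\le h_q-1$ in Propositions~\ref{contraction5}--\ref{contraction6} is vacuous, and Proposition~\ref{contraction5}(i) yields $\|k_n\otimes_r k_n\|=\|f_n\stackrel{r}{\smallfrown}f_n\|$ for all $r=1,\dots,d-1$, while Proposition~\ref{contraction6}(i) (available because $m=d$ is even) yields $\|k_n\otimes_{d/2}k_n-k_n\|=\|f_n\stackrel{d/2}{\smallfrown}f_n-f_n\|$. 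Substituting these identities turns the Hilbert-space conditions into (iii), which closes the three-way equivalence.

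Once the two ingredients are in place the argument is essentially bookkeeping; the only genuinely delicate point is the first step, namely checking carefully that the multiplication formula collapses to the clean identity $Q_{\SSw}(f_n)=I_d^S(k_n)$ (this is where vanishing on diagonals is essential) and that mirror symmetry places $k_n$ in the class of kernels for which the Wigner-integral free Poisson theorem is formulated. A minor additional point, flagged above, is the reconciliation of the normalization: in the present statement admissibility must be read with $\|f_n\|^2\to\lambda$ rather than $\|f_n\|^2=1$, since otherwise $\varphi(Q_{\SSw}(f_n)^2)\equiv1$ would force $\lambda=1$.
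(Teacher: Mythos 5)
Your proposal is correct and takes essentially the same route as the paper, which presents this theorem precisely as the specialization of the Wigner-chaos free Poisson fourth moment theorem of \cite[Theorem 1.4, Lemma 5.1]{NourdinPeccati1} to $Q_{\bs{S}}(f_n)=I_d^{S}(k_n)$, with the discrete contraction conditions obtained exactly as you do through Propositions \ref{contraction5} and \ref{contraction6} in the case $\bs{h}=(1,\dots,1)$. Your caveat about the normalization (reading the variance condition as $\|f_n\|^2\to\lambda$ rather than $\|f_n\|^2=1$) is also the reading the paper itself adopts in the free Poisson setting, e.g.\ in Corollary \ref{coroll2}.
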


\begin{rmk}
Free Poisson approximations can be established only in Wigner chaos of even order since, if $d$ is odd, $\varphi(Q_{\SSw}(f_n)^3) = 0$ while $\varphi(Z(\lambda)^3) = \lambda >0$.\\
\end{rmk}

In the following, the focus will be on Chebyshev sums based on freely independent semicircular random variables: the forthcoming Theorems \ref{ConvChebySemicircular} and \ref{ConvFreePoissonPARI} aim to state the Fourth Moment Theorem for such Chebyshev sums  in terms of the contraction operators, for semicircular and free Poisson limit respectively (Theorem \ref{knps}  and Theorem \ref{FPoissAppr}). Further, the following auxiliary lemma (whose proof requires only simple computations), is inspired by the proof of  \cite[Proposition 4.1]{PeccatiZheng1} and will be useful in the sequel.

\begin{lemma}
\label{stimacontr} 
Let $d\geq 2$ and $f:[n]^{d}\rightarrow \mathbb{R}$ be an admissible kernel as in Definition \ref{admissible_Univ}. Then, for every $q=1,\dots,d-1$, the norms of the contraction operators introduced in Definition \ref{starcontraction} satisfy the following inequalities:
\begin{eqnarray}
\|f \stackrel{q}{\smallfrown} f \| &\geq &  \|f \star_{q+1}^{q} f \| \; ,\nonumber \\
\|f \stackrel{d-1}{\smallfrown} f\| &\geq & \|f \star_{1}^{0}\;f\| \; .\nonumber
\end{eqnarray}
\end{lemma}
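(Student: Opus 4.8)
The plan is to expand both norms as sums of products of kernel values and compare them term by term, exploiting the ``vanishing on diagonals'' property of admissible kernels. Recall that for an admissible kernel $f:[n]^d\to\mathbb{R}$, the contraction $f\stackrel{q}{\smallfrown}f$ is a function of $2d-2q$ free indices, while $f\star_{q+1}^{q}f$ is a function of $2d-2(q+1)+1 = 2d-2q-1$ free indices; the star contraction has exactly one fewer free index because it forces an extra coincidence through the repeated variable $\gamma$. First I would write
\begin{align*}
\|f\stackrel{q}{\smallfrown}f\|^2 &= \sum_{\substack{t_1,\dots,t_{d-q}\\ s_1,\dots,s_{d-q}}}\Big(\sum_{i_1,\dots,i_q}f(t_1,\dots,t_{d-q},i_1,\dots,i_q)f(i_q,\dots,i_1,s_1,\dots,s_{d-q})\Big)^2,\\
\|f\star_{q+1}^{q}f\|^2 &= \sum_{\substack{t_1,\dots,t_{d-q-1},\gamma\\ s_1,\dots,s_{d-q-1}}}\Big(\sum_{i_1,\dots,i_q}f(t_1,\dots,t_{d-q-1},\gamma,i_1,\dots,i_q)f(i_q,\dots,i_1,\gamma,s_1,\dots,s_{d-q-1})\Big)^2.
\end{align*}
The key observation is that the summand of $\|f\star_{q+1}^{q}f\|^2$ indexed by $(t_1,\dots,t_{d-q-1},\gamma,s_1,\dots,s_{d-q-1})$ equals the summand of $\|f\stackrel{q}{\smallfrown}f\|^2$ indexed by $(t_1,\dots,t_{d-q-1},\gamma\,;\,\gamma,s_1,\dots,s_{d-q-1})$ — that is, it is the diagonal slice where the last $t$-variable and the first $s$-variable coincide with $\gamma$. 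Hence $\|f\star_{q+1}^{q}f\|^2$ is a sub-sum (over a restricted index set) of $\|f\stackrel{q}{\smallfrown}f\|^2$, and since all summands of the latter are non-negative (being squares), the inequality $\|f\stackrel{q}{\smallfrown}f\|\geq\|f\star_{q+1}^{q}f\|$ follows immediately. The second inequality, $\|f\stackrel{d-1}{\smallfrown}f\|\geq\|f\star_1^0 f\|$, is the special case $q=d-1$ of the same argument: then $f\stackrel{d-1}{\smallfrown}f$ has $2$ free indices $(t,s)$, while $f\star_1^0 f(\gamma)=\sum_{i_1,\dots,i_{d-1}}f(\gamma,i_1,\dots,i_{d-1})f(i_{d-1},\dots,i_1,\gamma)$ has the single free index $\gamma$, and its squared norm is exactly the diagonal slice $t=s=\gamma$ of $\|f\stackrel{d-1}{\smallfrown}f\|^2$.

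One subtlety worth checking is the precise index alignment between the two summation structures, since the definitions in Definition \ref{starcontraction} place $\gamma$ between the $t$-block and the contracted block in the first argument, and between the contracted block and the $s$-block in the second argument; I would verify by direct substitution that the relabelling $t_{d-q}\mapsto\gamma$, $s_1\mapsto\gamma$ in the $\smallfrown$-summand reproduces exactly the $\star$-summand (using that both contractions reverse the order of the $i$-indices in the second factor in the same way). I do not expect any genuine obstacle here — admissibility (mirror symmetry, unit variance) is not even needed for these two inequalities; only the non-negativity of squared summands is used, so the proof is essentially a bookkeeping exercise. The phrase ``whose proof requires only simple computations'' in the statement is accurate, and the whole argument is the observation that restricting a sum of squares to a sub-index-set can only decrease it.
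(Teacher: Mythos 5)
Your argument for the first inequality is correct: by Definition \ref{starcontraction}, for every choice of indices one has
$f \star_{q+1}^{q} f\,(t_1,\dots,t_{d-q-1},\gamma,s_1,\dots,s_{d-q-1}) = f \stackrel{q}{\smallfrown} f\,(t_1,\dots,t_{d-q-1},\gamma,\gamma,s_1,\dots,s_{d-q-1})$,
so $\|f\star_{q+1}^{q}f\|^2$ is the diagonal slice of the sum of squares defining $\|f\stackrel{q}{\smallfrown}f\|^2$, and the inequality follows (indeed, no admissibility is needed for this part). The paper itself gives no written proof of the lemma, so this is exactly the kind of ``simple computation'' intended.

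However, your treatment of the second inequality contains a genuine gap: you misread the star contraction. By Definition \ref{starcontraction}, $f\star_{r}^{r-1}f$ sums over $r-1$ indices, so $f\star_{1}^{0}f$ involves \emph{no} summation at all and is the function of $2d-1$ variables
$f\star_1^0 f\,(t_1,\dots,t_{d-1},\gamma,s_1,\dots,s_{d-1}) = f(t_1,\dots,t_{d-1},\gamma)\,f(\gamma,s_1,\dots,s_{d-1})$.
The one-variable expression you wrote, $\sum_{i_1,\dots,i_{d-1}}f(\gamma,i_1,\dots,i_{d-1})f(i_{d-1},\dots,i_1,\gamma)$, is $f\star_{d}^{d-1}f(\gamma)$, not $f\star_1^0 f$; hence your ``special case $q=d-1$'' only reproves the first inequality at $q=d-1$ and does not address the second one. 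The actual second inequality does follow from a closely related diagonal argument, but it \emph{uses mirror symmetry} (so your remark that admissibility is irrelevant is also off for this part): writing $a(\gamma)=\sum_{t_1,\dots,t_{d-1}}f(t_1,\dots,t_{d-1},\gamma)^2$ and $b(\gamma)=\sum_{s_1,\dots,s_{d-1}}f(\gamma,s_1,\dots,s_{d-1})^2$, one has $\|f\star_1^0 f\|^2=\sum_{\gamma}a(\gamma)b(\gamma)$, while mirror symmetry gives $a(\gamma)=b(\gamma)$ and
$f\stackrel{d-1}{\smallfrown}f(\gamma,\gamma)=\sum_{i_1,\dots,i_{d-1}}f(\gamma,i_1,\dots,i_{d-1})f(i_{d-1},\dots,i_1,\gamma)=b(\gamma)$,
so that $\|f\star_1^0 f\|^2=\sum_\gamma\big(f\stackrel{d-1}{\smallfrown}f(\gamma,\gamma)\big)^2\le\|f\stackrel{d-1}{\smallfrown}f\|^2$. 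With this correction the lemma is fully proved.
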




\begin{thm}
\label{ConvChebySemicircular}
Fix $d\geq 2$, as well as integers $h_1,\dots,h_d \geq 1$, with $h_i = h_{d-i+1}$ for all $i=1,\dots,\lfloor \frac{d}{2} \rfloor$, and let $Q_{\mathbf{S}}^{(\bs{h})}(f_n)$  denote a sequence of Chebyshev sums, as in (\ref{SumCheby2}), with $f_n$ admissible kernel as in Definition \ref{admissible_Univ}. Then, the following conditions are equivalent as $n\rightarrow \infty $:
\begin{itemize}
\item[(i)] $Q_{\mathbf{S}}^{(\bs{h})}(f_n)  \stackrel{\text{ Law }}{ \longrightarrow} \mathcal{S}(0,1)$;
\item[(ii)] for every $q=1,\dots, d-1$, $\|f_n \stackrel{q}{\smallfrown} f_n \| \longrightarrow 0$.
\end{itemize}
\end{thm}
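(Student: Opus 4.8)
The plan is to reduce the statement to the free Fourth Moment Theorem for multiple Wigner integrals. The first step is to recognise a Chebyshev sum in freely independent semicircular entries as a single Wigner integral of order $m:=h_1+\cdots+h_d$. Since for a standard semicircular $S$ one has $U_h(S)=I_h^S(e^{\otimes h})$, and since the Biane--Speicher product formula (see \cite{BianeSpeicher}) gives $I_p^S(e_i^{\otimes p})\,I_q^S(e_j^{\otimes q})=I_{p+q}^S(e_i^{\otimes p}\otimes e_j^{\otimes q})$ whenever $i\neq j$ (every contraction of order $\geq 1$ carries a factor $\langle e_i,e_j\rangle=0$), iterating over distinct indices $i_1,\dots,i_d$ yields $U_{h_1}(S_{i_1})\cdots U_{h_d}(S_{i_d})=I_m^S(e_{i_1}^{\otimes h_1}\otimes\cdots\otimes e_{i_d}^{\otimes h_d})$. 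As the admissible kernel $f_n$ vanishes on diagonals, summing against $f_n$ produces
$$Q_{\mathbf{S}}^{(\bs{h})}(f_n)=I_m^S\big(k(f_n)\big),$$
with $k(f_n)\in\mathcal{H}^{\otimes m}$ the element defined in \eqref{kN1}. Since $f_n$ is mirror symmetric, so is $k(f_n)$, hence $Q_{\mathbf{S}}^{(\bs{h})}(f_n)$ is self-adjoint; and since the map sending a tuple to its associated tensor word is injective with orthonormal image, $\|k(f_n)\|=\|f_n\|=1$, so that $\varphi\big(Q_{\mathbf{S}}^{(\bs{h})}(f_n)^2\big)=1$ for every $n$.

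Next I would invoke the free Fourth Moment Theorem for Wigner integrals of a fixed order (\cite[Theorems 1.3 and 1.6]{KempNourdinPeccatiSpeicher}, of which Theorem \ref{knps} is the homogeneous-sum specialisation): because the variance is already equal to $1$, the convergence $Q_{\mathbf{S}}^{(\bs{h})}(f_n)=I_m^S(k(f_n))\stackrel{\mathrm{Law}}{\longrightarrow}\mathcal{S}(0,1)$ holds if and only if $\|k(f_n)\otimes_r k(f_n)\|\to 0$ for every $r=1,\dots,m-1$.

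The last step is to translate these $m-1$ conditions into conditions on $f_n$, using the discussion following \eqref{kN1}. By Proposition \ref{contraction5}, the index range $\{1,\dots,m-1\}$ splits into the values $r=h_1+\cdots+h_q$ ($q=1,\dots,d-1$), for which $\|k(f_n)\otimes_r k(f_n)\|=\|f_n\stackrel{q}{\smallfrown}f_n\|$, and the values $r=\sum_{j=1}^{q-1}h_j+t$ ($t=1,\dots,h_q-1$, $q=1,\dots,d$), for which $\|k(f_n)\otimes_r k(f_n)\|=\|f_n\star_q^{q-1}f_n\|$. Hence (i) is equivalent to the simultaneous vanishing, as $n\to\infty$, of all the norms $\|f_n\stackrel{q}{\smallfrown}f_n\|$ ($q=1,\dots,d-1$) and all the norms $\|f_n\star_q^{q-1}f_n\|$ (over the indices $q$ with $h_q\geq 2$). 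The implication (i)$\Rightarrow$(ii) is immediate. For (ii)$\Rightarrow$(i) I would use Lemma \ref{stimacontr}, which yields $\|f_n\star_q^{q-1}f_n\|\leq\|f_n\stackrel{q-1}{\smallfrown}f_n\|$ for $q=2,\dots,d$ and $\|f_n\star_1^0 f_n\|\leq\|f_n\stackrel{d-1}{\smallfrown}f_n\|$, so that (ii) forces every star-contraction norm to vanish too, giving (i). When $h_1=\cdots=h_d=1$ (i.e. $m=d$) there are no star-contraction terms and the equivalence is exactly Theorem \ref{knps}.

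The step that I expect to need the most care is the first one: the precise bookkeeping identifying $Q_{\mathbf{S}}^{(\bs{h})}(f_n)$ with $I_m^S(k(f_n))$ and verifying the normalisation, together with checking that the partition of $\{1,\dots,m-1\}$ used in the last step is precisely the one described by Proposition \ref{contraction5}, including the degenerate case $m=d$. The analytic core of the argument — the fourth-moment criterion and the contraction estimates — is supplied by \cite{KempNourdinPeccatiSpeicher}, Proposition \ref{contraction5} and Lemma \ref{stimacontr}, so beyond this assembly no new estimate is required.
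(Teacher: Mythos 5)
Your proposal is correct and follows essentially the same route as the paper's proof: write $Q_{\mathbf{S}}^{(\bs{h})}(f_n)=I_m^S(k(f_n))$ with $m=h_1+\cdots+h_d$, apply the free Fourth Moment Theorem of \cite{KempNourdinPeccatiSpeicher} to get equivalence with the vanishing of all $\|k(f_n)\otimes_r k(f_n)\|$, translate these via Proposition \ref{contraction5} into the $\smallfrown$- and $\star$-contractions of $f_n$, and use Lemma \ref{stimacontr} to dominate the star-contractions in the direction (ii)$\Rightarrow$(i). Your write-up only adds some explicit bookkeeping (the product-formula identification and the variance normalisation) that the paper leaves implicit.
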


\begin{proof}
Assume that $(ii)$ holds. Then, it is sufficient to remark that $Q_{\mathbf{S}}^{(\bs{h})}(f_n) = I_{m}^{S}(k_n)$, with $m=h_1 + \cdots + h_d$, and $k_n$ the kernel given by (\ref{kN1}) (see also \eqref{Integral}). Theorem \ref{knps} then entails the vanishing of all the non trivial contractions $\|k_n \otimes_r k_n \|$, for $r=1,\dots,m-1$, which, by virtue of Proposition \ref{contraction5}, in turn implies that the norm $\|f_n \stackrel{q}{\smallfrown} f_n\|$ vanishes in the limit as well for every $q=1,\dots,d-1$.
To show the converse, it is sufficient to repeat the same reasoning but keeping in mind also  Lemma \ref{stimacontr}.
\end{proof}

\begin{thm}
\label{ConvFreePoissonPARI}
Assume that $d\geq 2$ and $h_1+\cdots +h_d$  are even integers, and let $Z(\lambda)$ denote a (centered) free Poisson distributed random variable of parameter $\lambda > 0$. For a sequence of admissible kernels $f_n:[n]^d \rightarrow \mathbb{R}$ such that
\begin{equation}
\lim_{n \rightarrow \infty}\varphi\Big(\big(Q_{\mathbf{S}}^{(\bs{h})}(f_n)\big)^{2}\Big) = \lambda \, ,
\end{equation}
the following conditions are equivalent as $n \rightarrow \infty$:
\begin{itemize}
\item[(i)] $Q_{\mathbf{S}}^{(\bs{h})}(f_n) \stackrel{ \text{Law} }{ \longrightarrow} Z(\lambda)$;
\item[(ii)]
\begin{enumerate}
\item for every $q=1,\dots, d-1$, $q \neq \dfrac{d}{2}$, $\|f_n \stackrel{q}{\smallfrown} f_n \| \longrightarrow 0$;
\item $\|f_n \star_{\frac{d}{2}+1}^{\frac{d}{2}}f_n \|\longrightarrow 0 $, and $\|f_n \stackrel{\frac{d}{2}}{\smallfrown}f_n - f_n\|\longrightarrow 0$.\\
\end{enumerate}
\end{itemize}
\end{thm}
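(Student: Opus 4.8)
The plan is to mimic the proof of Theorem \ref{ConvChebySemicircular}, replacing Theorem \ref{knps} with Theorem \ref{FPoissAppr} and carefully matching the relevant contractions of the discrete kernel $f_n$ with those of the Hilbert-space kernel $k_n := k(f_n)$ defined in \eqref{kN1}. First I would observe that, since $h_1+\cdots+h_d=:m$ is even and $h_i=h_{d-i+1}$, the kernel $k_n\in\mathcal{H}^{\otimes m}$ is mirror symmetric, so $Q_{\mathbf{S}}^{(\bs{h})}(f_n) = I_m^{S}(k_n)$ is a well-defined self-adjoint element of the $m$-th Wigner chaos, and $\varphi\big((Q_{\mathbf{S}}^{(\bs{h})}(f_n))^2\big) = \|k_n\|^2 \to \lambda$. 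Moreover, by Proposition \ref{contraction6}, since $m$ is even, $\|k_n\otimes_{m/2}k_n - k_n\|$ equals $\|f_n\stackrel{d/2}{\smallfrown}f_n - f_n\|$ when $d$ is even and $\|f_n\star_{(d+1)/2}^{(d-1)/2}f_n - f_n\|$ when $d$ is odd; note that the statement of the theorem as written implicitly assumes $d$ even (the index $d/2$ appears), so I would treat that case and remark the odd-$d$ variant is analogous with the star-contraction replacing $\stackrel{d/2}{\smallfrown}$.

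Next I would apply Theorem \ref{FPoissAppr} to $k_n\in\mathcal{H}^{\otimes m}$ (equivalently, recast as a kernel on $[N]^m$ after relabeling, as in the discussion preceding Proposition \ref{contraction5}): condition (i) of the present theorem, $Q_{\mathbf{S}}^{(\bs{h})}(f_n)\stackrel{\text{Law}}{\longrightarrow}Z(\lambda)$, is equivalent to $\|k_n\otimes_{m/2}k_n - k_n\|\to 0$ together with $\|k_n\otimes_r k_n\|\to 0$ for every $r=1,\dots,m-1$ with $r\neq m/2$. The heart of the argument is then translating these $m-1$ conditions on $k_n$ into the two families of conditions on $f_n$ in part (ii). For the ``diagonal'' index $r=m/2$: by Proposition \ref{contraction6}(i), $\|k_n\otimes_{m/2}k_n - k_n\| = \|f_n\stackrel{d/2}{\smallfrown}f_n - f_n\|$, which gives item 2, second half. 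For the remaining indices I would split according to Proposition \ref{contraction5}: if $r=h_1+\cdots+h_q$ for some $q\in\{1,\dots,d-1\}$ with $q\neq d/2$, then $\|k_n\otimes_r k_n\| = \|f_n\stackrel{q}{\smallfrown}f_n\|$, yielding item 1; if $r = \sum_{j=1}^{q-1}h_j + t$ for some $t\in\{1,\dots,h_q-1\}$, then $\|k_n\otimes_r k_n\| = \|f_n\star_q^{q-1}f_n\|$. So the condition ``$\|k_n\otimes_r k_n\|\to 0$ for all $r\ne m/2$'' is exactly ``$\|f_n\stackrel{q}{\smallfrown}f_n\|\to 0$ for all $q\ne d/2$'' plus ``$\|f_n\star_q^{q-1}f_n\|\to 0$ for all $q=1,\dots,d$ and all $t=1,\dots,h_q-1$''; and one must also handle the case where $r=m/2$ is itself of star type, i.e. $m/2 = \sum_{j=1}^{q-1}h_j + t$ with $1\le t\le h_q-1$, for which Proposition \ref{contraction6}(ii) applies.

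The main obstacle — and the reason Lemma \ref{stimacontr} is invoked — is the direction (ii)$\Rightarrow$(i): here one is only given control of the \emph{contractions} $\|f_n\stackrel{q}{\smallfrown}f_n\|$ (and the single star-contraction $\|f_n\star_{d/2+1}^{d/2}f_n\|$ and $\|f_n\stackrel{d/2}{\smallfrown}f_n-f_n\|$), but the Hilbert-space criterion of Theorem \ref{FPoissAppr} requires the vanishing of \emph{all} the $\|k_n\otimes_r k_n\|$, including those $r$ of star type $r=\sum_{j=1}^{q-1}h_j+t$. Lemma \ref{stimacontr} resolves this: it gives $\|f_n\stackrel{q}{\smallfrown}f_n\|\ge \|f_n\star_{q+1}^q f_n\|$ and $\|f_n\stackrel{d-1}{\smallfrown}f_n\|\ge\|f_n\star_1^0 f_n\|$, so that the vanishing of the pure contractions forces the vanishing of the star-contractions (for the indices that matter; for the diagonal index $m/2$ one argues separately, noting that $\|f_n\star_{d/2+1}^{d/2}f_n\|\to0$ is assumed in item 2 and $\|f_n\stackrel{d/2}{\smallfrown}f_n-f_n\|\to 0$ combined with $\|f_n\|^2\to\lambda$ controls $\|k_n\otimes_{m/2}k_n-k_n\|$). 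One subtlety to spell out is that Lemma \ref{stimacontr} as stated bounds $\|f\star_{q+1}^q f\|$ by $\|f\stackrel{q}{\smallfrown}f\|$ and it must be checked that iterating/combining these inequalities covers every star index $t=1,\dots,h_q-1$ for every $q$; I would make this bookkeeping explicit, then conclude by quoting Theorem \ref{FPoissAppr}. The converse (i)$\Rightarrow$(ii) is then immediate and requires no inequality: (i) forces all $\|k_n\otimes_r k_n\|\to 0$ ($r\ne m/2$) and $\|k_n\otimes_{m/2}k_n-k_n\|\to 0$, and Propositions \ref{contraction5}–\ref{contraction6} read off the stated conditions on $f_n$ directly.
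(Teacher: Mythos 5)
Your plan is exactly the paper's proof: write $Q_{\bs{S}}^{(\bs{h})}(f_n)=I_m^{S}(k_n)$, apply the free Poisson contraction criterion at level $m$ (Theorem \ref{FPoissAppr} in its Wigner-integral form), and translate through Propositions \ref{contraction5}, \ref{contraction6} and Lemma \ref{stimacontr}. However, the bookkeeping you defer is where the only non-trivial content sits, and at two points it does not close with the tools you cite. First, the direction (i)$\Rightarrow$(ii) is \emph{not} always ``immediate with no inequality'': the norm $\|f_n\star_{\frac{d}{2}+1}^{\frac{d}{2}}f_n\|$ equals some $\|k_n\otimes_r k_n\|$ only when $h_{\frac{d}{2}+1}\geq 2$; if $h_{\frac{d}{2}}=h_{\frac{d}{2}+1}=1$ (for instance $\bs{h}=(1,\dots,1)$, the case feeding the later corollaries) no such $r$ exists, and Lemma \ref{stimacontr} only bounds this norm by $\|f_n\stackrel{\frac{d}{2}}{\smallfrown}f_n\|$, which tends to $\|f_n\|=1$ under (i), not to $0$. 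The missing observation is the identity $f_n\star_{\frac{d}{2}+1}^{\frac{d}{2}}f_n(t,\gamma,s)=\big(f_n\stackrel{\frac{d}{2}}{\smallfrown}f_n\big)(t,\gamma,\gamma,s)$, which, since $f_n$ vanishes on diagonals, yields $\|f_n\star_{\frac{d}{2}+1}^{\frac{d}{2}}f_n\|\leq\|f_n\stackrel{\frac{d}{2}}{\smallfrown}f_n-f_n\|$; so the first half of item 2 is in fact a consequence of the second half.

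Second, in (ii)$\Rightarrow$(i) your covering of the star-type indices works for $d\geq 4$ (every $\star_{q}^{q-1}$ with $q\neq\frac{d}{2}+1$ is dominated via Lemma \ref{stimacontr} by a pure contraction appearing in item 1, and $q=\frac{d}{2}+1$ is assumed), but as outlined it fails for $d=2$ with $h_1=h_2\geq 2$: item 1 is then vacuous and $\|f_n\stackrel{1}{\smallfrown}f_n\|\to 1$, so nothing you invoke controls $\|f_n\star_{1}^{0}f_n\|$, which is the common value of $\|k_n\otimes_r k_n\|$ for $r=1,\dots,h_1-1$. Here one needs the mirror-symmetry duality $\|f_n\star_{q}^{q-1}f_n\|=\|f_n\star_{d-q+1}^{d-q}f_n\|$ (the two unfoldings of $f_n$ around the un-contracted variable are transposes of one another), which for $d=2$ identifies $\|f_n\star_{1}^{0}f_n\|$ with the assumed $\|f_n\star_{2}^{1}f_n\|$; this is what the parenthetical claim in the paper's proof amounts to. A last small point: the theorem assumes $d$ even explicitly (``$d\geq 2$ and $h_1+\cdots+h_d$ are even''), and there is no analogous odd-$d$ variant to remark on — the paper's subsequent remark shows free Poisson limits are impossible for Chebyshev sums of odd degree.
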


\begin{proof}
Again, $Q_{\mathbf{S}}^{(\bs{h})}(f_n) = I_m^{S}(k_n)$, with $m=h_1 + \cdots + h_d$, and $k_n$ as in (\ref{kN1}). The conclusion then  simply follows by applying Theorem \ref{FPoissAppr}, together with Proposition \ref{contraction5}, Proposition \ref{contraction6} and Lemma \ref{stimacontr} (in particular, the vanishing of the norm $\|f_n \star_{\frac{d}{2}+1}^{\frac{d}{2}}f_n \| \longrightarrow 0 $ entails the vanishing in the limit of all the norms $\|k_n \otimes_r k_n \|$, for $r= h_1+\cdots + h_{\frac{d}{2}-1}+1,\dots, h_1+\cdots + h_{\frac{d}{2}}-1$). 
\end{proof}

\begin{rmk}[\textbf{On the parity of} $d$]
Note that for the convergence of a Chebyshev sum towards the free Poisson law, it is not sufficient that only $m:=h_1 + \cdots+ h_d$ is even. Indeed, if $d$ is odd 
and $Q_{\mathbf{S}}^{(\bs{h})}(f_n) = I_m^{S}(k_n)$ converges to $Z(\lambda)$, then $\|k_n \stackrel{r}{\smallfrown} k_n \|$ would vanish in the limit for every $r=1,\dots,m-1$, $r\neq \frac{m}{2}$. In particular, if $r=h_1 + \cdots + h_{\frac{d-1}{2}}$, $\|k_n \otimes_r k_n\| = \| f_n \stackrel{\frac{d-1}{2}}{\smallfrown} f_n \|\rightarrow 0$. By virtue of Lemma \ref{stimacontr}, this would imply in turn $\|f_n \star_{\frac{d+1}{2}}^{\frac{d-1}{2}} f_n \| = \| k_n \otimes_{\frac{m}{2}} k_n\|\rightarrow 0 $, which contradicts the fact that $\| k_n \otimes_{\frac{m}{2}} k_n\|$ should not vanish in the limit. Hence, it is possible to establish conditions for the convergence of a Chebyshev sum towards the free Poisson law only if both $d$ and $h_1+\cdots +h_d$ are even integers.\\
\end{rmk}

\begin{rmk}
From Theorem \ref{ConvChebySemicircular} and Theorem \ref{ConvFreePoissonPARI} with $h_j = 2$ for every $j=1\dots,d$ and with $d$ even, since $U_2(S) \stackrel{\text{Law}}{=} Z(1)$, explicit conditions for the convergence of a homogeneous sum as in (\ref{QN}) in freely independent random variables with the centered free Poisson distribution of parameter $1$, towards the semicircular law, can be stated, generalizing to the free setting the findings of \cite{PeccatiZheng1}). Similarly, if $d$ is even and the target distribution  is the free Poisson law. 

Recently, a Fourth Moment Theorem has been proved for stochastic integrals with respect to a free Poisson measure in \cite{Solesne}.
\end{rmk}

\section{Main results}\label{SectionResults_Universality}

The combination of Propositions \ref{contraction5} and \ref{contraction6} with Theorems  \ref{ConvChebySemicircular} and \ref{ConvFreePoissonPARI}, allows one to exhibit  new universal limit laws for vectors of homogeneous sums as straightforward consequences of  Theorem \ref{Multiinvariance2} (see Theorems \ref{MultUniversality} and \ref{Poiss2} below). Nevertheless, even if the multidimensional invariance principle  holds for Chebyshev sums with mirror symmetric kernels, here it will be necessary to deal only with fully symmetric coefficients. Indeed, as shown in \cite{NourdinDeya} with a counterexample, the strategy of proof here proposed cannot be extended to deal with the weaker assumption of mirror symmetric coefficients, even though this would be the most natural framework. \\

Henceforth, the admissible kernels will be assumed to be symmetric functions: in this case, the following upper bounds for $\tau_n = \max\limits_{i=1,\dots,n}\mathrm{Inf}_i(f_n)$ holds. 

\begin{lemma}
\label{magg}
Let $d \geq 2$, and let $f_n:[n]^{d}\rightarrow \mathbb{R}$ be a symmetric kernel, vanishing on diagonals. Then, the following inequality holds:
\begin{equation}
\label{magg1}
\|f_n \stackrel{d-1}{\smallfrown}f_n\| \geq \dfrac{1}{d}\tau_n.
\end{equation}
Moreover, if $d=2$, then
\begin{equation}
\label{magg2}
\|f_n \stackrel{1}{\smallfrown}f_n - f_n \| \geq \dfrac{1}{2}\tau_n.
\end{equation}
\end{lemma}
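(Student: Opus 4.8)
The plan is to bound $\tau_n = \max_i \mathrm{Inf}_i(f_n)$ from below by the norm of a contraction, exploiting the combinatorial structure of $f_n\stackrel{d-1}{\smallfrown}f_n$ and the symmetry of $f_n$. First I would write out $f_n\stackrel{d-1}{\smallfrown}f_n$ explicitly: for $t_1,s_1\in[n]$,
\[
f_n\stackrel{d-1}{\smallfrown}f_n(t_1,s_1)=\sum_{i_1,\dots,i_{d-1}=1}^n f_n(t_1,i_1,\dots,i_{d-1})\,f_n(i_{d-1},\dots,i_1,s_1).
\]
Squaring and summing over $t_1,s_1$ gives $\|f_n\stackrel{d-1}{\smallfrown}f_n\|^2$. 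The key observation is that the diagonal terms $t_1=s_1$ already contribute a controllable amount: for fixed $t_1$,
\[
f_n\stackrel{d-1}{\smallfrown}f_n(t_1,t_1)=\sum_{i_1,\dots,i_{d-1}} f_n(t_1,i_1,\dots,i_{d-1})\,f_n(i_{d-1},\dots,i_1,t_1)=\sum_{i_1,\dots,i_{d-1}} f_n(t_1,i_1,\dots,i_{d-1})^2,
\]
using mirror symmetry in the last step. By the symmetry of $f_n$, this equals $\tfrac1d\,\mathrm{Inf}_{t_1}(f_n)$.

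Next I would restrict the sum defining $\|f_n\stackrel{d-1}{\smallfrown}f_n\|^2$ to the diagonal and drop all off-diagonal (nonnegative) contributions:
\[
\|f_n\stackrel{d-1}{\smallfrown}f_n\|^2\ \geq\ \sum_{t_1=1}^n\bigl(f_n\stackrel{d-1}{\smallfrown}f_n(t_1,t_1)\bigr)^2=\sum_{t_1=1}^n \frac{1}{d^2}\,\mathrm{Inf}_{t_1}(f_n)^2\ \geq\ \frac{1}{d^2}\,\max_{t_1}\mathrm{Inf}_{t_1}(f_n)^2=\frac{\tau_n^2}{d^2},
\]
and taking square roots yields \eqref{magg1}. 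For the $d=2$ case, I would instead analyze $f_n\stackrel{1}{\smallfrown}f_n-f_n$. Its diagonal entry at $(t_1,t_1)$ is $\sum_{i}f_n(t_1,i)^2-f_n(t_1,t_1)=\tfrac12\mathrm{Inf}_{t_1}(f_n)-0=\tfrac12\mathrm{Inf}_{t_1}(f_n)$, since $f_n$ vanishes on diagonals and, for $d=2$, $\mathrm{Inf}_{t_1}(f_n)=2\sum_i f_n(t_1,i)^2$. The same restriction-to-diagonal argument then gives $\|f_n\stackrel{1}{\smallfrown}f_n-f_n\|^2\geq \tfrac14\tau_n^2$, hence \eqref{magg2}.

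The argument is essentially a two-line computation once the diagonal identity is in place, so there is no serious obstacle; the only point requiring a little care is correctly tracking the symmetrization factor. For a fully symmetric $f_n$ the identity $\mathrm{Inf}_i(f_n)=d\sum_{j_1,\dots,j_{d-1}}f_n(i,j_1,\dots,j_{d-1})^2$ was already recorded after the definition of the influence function, so the factor $\tfrac1d$ (resp. $\tfrac12$) in \eqref{magg1}--\eqref{magg2} is exactly this normalization constant; one must make sure to use symmetry (not just mirror symmetry) when identifying $f_n\stackrel{d-1}{\smallfrown}f_n(t_1,t_1)$ with $\sum f_n(t_1,\cdot)^2$, and to note that for $d=2$ mirror symmetry already suffices. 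I would also remark that positivity of the discarded off-diagonal terms is immediate since they are squares, so no sign issues arise.
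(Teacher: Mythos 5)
Your proof is correct and follows essentially the same route as the paper: both restrict $\|f_n \stackrel{d-1}{\smallfrown} f_n\|^2$ (resp.\ $\|f_n \stackrel{1}{\smallfrown} f_n - f_n\|^2$) to the diagonal entries, identify $f_n \stackrel{d-1}{\smallfrown} f_n(i,i)$ with $\frac{1}{d}\mathrm{Inf}_i(f_n)$ via mirror symmetry plus full symmetry (and vanishing on diagonals in the $d=2$ case), and then pass to the maximum over $i$. The only difference is cosmetic: you keep the full sum $\sum_i \mathrm{Inf}_i(f_n)^2$ before bounding by the maximum, whereas the paper discards all but a single fixed $i$ and then takes the supremum, which is the same estimate.
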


\begin{proof}
If $d \geq 2$, by carrying out the same estimates as in the proof of in \cite[Theorem 1.4]{NourdinDeya}, the following lower bound for $\|f_n \stackrel{d-1}{\smallfrown}f_n\|^{2}$ holds:
\begin{align*}
\|f_n \stackrel{d-1}{\smallfrown}f_n\|^{2} &= \sum\limits_{i_1,i_2=1}^{n}\Big(f_n \stackrel{d-1}{\smallfrown} f_n (i_1,i_2)\Big)^{2} \;\geq \;\sum\limits_{i=1}^{n}\Big(f_n \stackrel{d-1}{\smallfrown} f_n(i,i)\Big)^{2} \\
&= \sum\limits_{i=1}^{n}\bigg(\sum_{j_2,\dots,j_d=1}^{n}f_n(i,j_2,\dots,j_d)^{2}\bigg)^{2} \; \geq\; \bigg(\sum_{j_2,\dots,j_d=1}^{n}f_n(i,j_2,\dots,j_d)^{2}\bigg)^{2} = \big(\dfrac{1}{d}\;\mathrm{Inf}_{i}(f_n)\big)^2
\end{align*}
for every $i=1,\dots, n$. By taking the square root on both sides, in particular one has: 
$$ \| f_n \stackrel{d-1}{\smallfrown}f_n\|\geq \dfrac{1}{d}\;\mathrm{Inf}_{i}(f_n) \geq \dfrac{1}{d}\max\limits_{i=1,\dots,n}\mathrm{Inf}_{i}(f_n) = \dfrac{1}{d}\tau_{n}.$$

When $d=2$, the desired upper bound for $\tau_n$ is obtained as follows:
\begin{align*}
\|f_n \stackrel{1}{\smallfrown}f_n - f_n \|^{2} &= \sum_{i,j=1}^{n}\Big(f_n \stackrel{1}{\smallfrown} f_n (i,j) - f_n(i,j)\Big)^{2} \\
&= \sum_{\substack{i,j=1\\ i \neq j}}^{n}\Big(f_n\stackrel{1}{\smallfrown} f_n (i,j) - f_n(i,j)\Big)^{2} + \sum_{i=1}^{n}\big(f_n \stackrel{1}{\smallfrown} f_n (i,i)\big)^{2}   \\
&\geq \sum_{i=1}^{n}\bigg(\sum_{k=1}^{n}f_n(i,k)^{2}\bigg)^{2} \;\geq \bigg(\sum_{k=1}^{n}f_n(i,k)^{2}\bigg)^{2}
\end{align*}
for ever $i=1,\dots, n$, from which $\|f_n \stackrel{1}{\smallfrown}f_n - f_n \| \geq  \dfrac{1}{2}\mathrm{Inf}_i(f_n)$, and finally
 $$\|f_n \stackrel{1}{\smallfrown}f_n - f_n \| \geq \dfrac{1}{2}\tau_n.\qedhere $$
\end{proof}

The proof of Theorem \ref{MultUniversality} will exploit the following statement, recalling that componentwise convergence of multiple Wigner integrals towards the semicircle law implies the joint convergence (see \cite[Theorem 1.3]{NouSpeiPec} for the original statement).

\begin{thm}\label{Equi_Free}
For $d \geq 2$ and $m\geq 1$, let $k_n^{(j)}$ be a mirror symmetric function in $\mathrm{L}^2(\mathbb{R}_+^{d})$ for every $j=1,\dots,m$. Let $C = (C_{i,j})_{i,j=1,\dots,m}$ be a real-valued, positive definite symmetric matrix, such that, for $i,j=1,\dots,m$,
$$ \lim_{n\rightarrow \infty} \varphi\big(I_{d}^{S}(k_n^{(i)}) I_d^{S}(k_n^{(j)}) \big) = C_{i,j}.$$
If $(s_1,\dots,s_m)$ denotes a semicircular system, with covariance determined by $C$, the following statements are equivalent as $n\rightarrow \infty$:
\begin{itemize}
\item[(i)] $I_d^{S}(k_n^{(j)}) \stackrel{ \text{ Law }}{\longrightarrow } s_{j}$;
\item[(ii)] $(I_d^{S}(k_n^{(1)}),\dots,I_d^{S}(k_n^{(m)})) \stackrel{ \text{ Law }}{\longrightarrow } (s_1,\dots,s_m).$\\
\end{itemize}
\end{thm}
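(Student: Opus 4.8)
The strategy is the standard one for transferring a Fourth Moment-type equivalence from the one-dimensional to the multidimensional setting: the implication $(ii)\Rightarrow(i)$ is immediate (projecting a jointly convergent vector onto each coordinate gives componentwise convergence, and the marginal of a semicircular system is a semicircular variable with the right variance), so the whole content is in $(i)\Rightarrow(ii)$. First I would recall the two facts that drive the argument: (a) by the free analogue of the fourth moment theorem for Wigner integrals (Theorem \ref{knps}, or more precisely \cite[Theorems 1.3, 1.6]{KempNourdinPeccatiSpeicher}), for a mirror symmetric kernel $k$ in $\mathrm{L}^2(\mathbb{R}_+^{d})$ the convergence $I_d^{S}(k_n)\to s$ (with $\varphi(I_d^{S}(k_n)^2)\to\sigma^2$) is equivalent to $\|k_n\otimes_r k_n\|\to 0$ for every $r=1,\dots,d-1$; and (b) convergence in law to a semicircular family is, via the moment method, equivalent to convergence of all joint $\ast$-moments, which because each $I_d^{S}(k_n^{(j)})$ is self-adjoint reduces to convergence of all mixed moments $\varphi\big(I_d^{S}(k_n^{(j_1)})\cdots I_d^{S}(k_n^{(j_p)})\big)$.

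The core computation is a diagram/non-crossing-pair-partition expansion of such a mixed moment. Using the product formula for multiple Wigner integrals (e.g. \cite[Proposition 1.25]{KempNourdinPeccatiSpeicher}), one expands $\varphi\big(I_d^{S}(k_n^{(j_1)})\cdots I_d^{S}(k_n^{(j_p)})\big)$ as a sum indexed by non-crossing pairings of the $p$ ``blocks'' of $d$ legs each; the contribution of each pairing is, up to permutations, an iterated inner product built from contractions of the kernels. I would argue that the terms corresponding to non-crossing pair partitions that pair \emph{whole} blocks $j_a$ with $j_b$ (so every contraction used is the full contraction $\otimes_d$) converge to the corresponding moment of the semicircular system with covariance $C$, precisely because $\varphi\big(I_d^{S}(k_n^{(i)})I_d^{S}(k_n^{(j)})\big)=d!\,\langle k_n^{(i)},\widetilde{k_n^{(j)}}\rangle\to C_{i,j}$ by hypothesis (this part is purely the assumed covariance convergence and needs no fourth-moment input). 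Every remaining term involves at least one partial contraction $k_n^{(i)}\otimes_r k_n^{(j)}$ with $1\le r\le d-1$; I would bound its modulus, via repeated Cauchy--Schwarz on the Hilbert space $\mathrm{L}^2(\mathbb{R}_+^\bullet)$ together with the a priori boundedness of the norms $\|k_n^{(j)}\|$ (which follows from the assumed convergence of $\varphi(I_d^{S}(k_n^{(j)})^2)$), by a constant times $\max_{j}\max_{r=1,\dots,d-1}\|k_n^{(j)}\otimes_r k_n^{(j)}\|^{c}$ for some positive power $c$ — here one uses the elementary inequality $\|k^{(i)}\otimes_r k^{(j)}\|\le \|k^{(i)}\otimes_{d-r}k^{(i)}\|^{1/2}\|k^{(j)}\otimes_{d-r}k^{(j)}\|^{1/2}$ (valid after suitable symmetrization, cf.\ \cite[Lemma 3.4]{NourdinPeccatiReinert} or the computations in \cite{NouSpeiPec}). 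Under hypothesis $(i)$, fact (a) gives $\|k_n^{(j)}\otimes_r k_n^{(j)}\|\to 0$ for each $j$ and each $r$, so all these remainder terms vanish, and the mixed moment converges to the semicircular target moment.

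Assembling these pieces: for every $p\ge1$ and every choice of indices $j_1,\dots,j_p\in\{1,\dots,m\}$, $\varphi\big(\prod_{a=1}^p I_d^{S}(k_n^{(j_a)})\big)\to \varphi\big(\prod_{a=1}^p s_{j_a}\big)$, which is exactly joint convergence in law to $(s_1,\dots,s_m)$, i.e.\ $(ii)$. The main obstacle — and the only step requiring genuine care — is the diagrammatic bookkeeping in the product-formula expansion: one must verify that \emph{every} non-crossing pairing whose contribution does not already match a semicircular moment genuinely carries a factor of some partial contraction $\otimes_r$ with $r\in\{1,\dots,d-1\}$, and then control the resulting nested inner products uniformly in $n$. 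This is essentially the argument of \cite[Theorem 1.3]{NouSpeiPec}, to which the proof can ultimately be reduced; the statement here is a mild repackaging of that theorem in the language of admissible kernels, and I would present it as such, citing \cite{NouSpeiPec} for the diagrammatic estimate and supplying only the reductions (self-adjointness, moment method, a priori norm bounds) explicitly.
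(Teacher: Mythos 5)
Your proposal is correct and takes essentially the same route as the paper, which does not reprove this statement but quotes it as \cite[Theorem 1.3]{NouSpeiPec}; the proof there is exactly the product-formula/non-crossing-pairing argument you sketch ($(ii)\Rightarrow(i)$ trivial, $(i)\Rightarrow(ii)$ by the moment method plus vanishing of partial contractions via the fourth moment theorem), so supplying the reductions and citing \cite{NouSpeiPec} for the diagrammatic estimate is precisely what is done. (Only a cosmetic slip: in the free setting the isometry is $\varphi\big(I_d^{S}(k_n^{(i)})I_d^{S}(k_n^{(j)})\big)=\langle k_n^{(i)},(k_n^{(j)})^{\ast}\rangle$, with no factor $d!$ and no symmetrization, but your argument only uses the assumed convergence of these covariances, so nothing breaks.)
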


\begin{thm}
\label{MultUniversality}
Let $d\geq 2$, and $m\geq 1$. Consider a semicircular system $(s_1,\dots, s_m)$ with covariance $\varphi(s_i s_j) = C_{i,j}$ for every $i,j=1,\dots,m$, and assume that the matrix $C = (C_{i,j})_{i,j=1,\dots,m}$ is real-valued, positive definite and symmetric. For fixed order $\bs{h}=(h_1,\dots,h_d)$ with $h_i = h_{d-i+1}$ for $i=1,\dots,\lfloor \frac{d}{2}\rfloor$, suppose that, for every $i,j=1,\dots,m$:
$$ \lim_{n \rightarrow \infty} \varphi\big(Q_{\mathbf{S}}^{(\bs{h})}(f_n^{(i)}) Q_{\mathbf{S}}^{(\bs{h})}(f_n^{(j)})\big) = C_{i,j}.$$ Then the following assertions are equivalent as $n\rightarrow \infty$:
\begin{itemize}
\item[(i)]  $Q_{\mathbf{S}}^{(\bs{h})}(f_n^{(j)}) \stackrel{\text{Law}}{\longrightarrow} s_j$, for every $j=1,\dots,m$;
\item[(ii)] $(Q_{\bs{X}}(f_n^{(1)}),\dots, Q_{\mathbf{X}}(f_n^{(m)})) \stackrel{\text{ Law }}{\longrightarrow }(s_1,\dots,s_m)$ for every sequence $\mathbf{X} = \{X_i\}_{i\geq 1}$ of freely independent and identically distributed random variables satisfying Assumption {\bf (1)}.
\end{itemize}

\end{thm}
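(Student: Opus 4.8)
The plan is to prove the equivalence $(i)\Leftrightarrow(ii)$ by combining the multidimensional invariance principle (Theorem \ref{Multiinvariance2}), the Fourth Moment Theorem for Chebyshev sums (Theorem \ref{ConvChebySemicircular}), the equivalence between componentwise and joint convergence for Wigner integrals (Theorem \ref{Equi_Free}), and the lower bounds on $\tau_n$ in terms of contraction norms (Lemma \ref{magg}).

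\emph{Proof of $(i)\Rightarrow(ii)$.} Assume $Q_{\mathbf{S}}^{(\bs{h})}(f_n^{(j)})\xrightarrow{\text{Law}} s_j$ for each $j$. First I would upgrade this to \emph{joint} convergence: writing $Q_{\mathbf{S}}^{(\bs{h})}(f_n^{(j)}) = I_m^S(k(f_n^{(j)}))$ with $m=h_1+\dots+h_d$ (recall \eqref{kN1} and the discussion around it), the hypothesis on the covariances together with Theorem \ref{Equi_Free} gives
$$ \big(Q_{\mathbf{S}}^{(\bs{h})}(f_n^{(1)}),\dots,Q_{\mathbf{S}}^{(\bs{h})}(f_n^{(m)})\big) \xrightarrow{\text{Law}} (s_1,\dots,s_m). $$
Since convergence in law of a vector of self-adjoint bounded operators to a bounded limit is equivalent to convergence of all joint moments, it suffices to show that for every sequence $\mathbf{X}$ as in $(ii)$ and every joint moment,
$$ \varphi\Big(\prod_{s=1}^k Q_{\bs{X}}(f_n^{(1)})^{m_{1,s}}\cdots Q_{\bs{X}}(f_n^{(m)})^{m_{m,s}}\Big) - \varphi\Big(\prod_{s=1}^k Q_{\mathbf{S}}^{(\bs{h})}(f_n^{(1)})^{m_{1,s}}\cdots Q_{\mathbf{S}}^{(\bs{h})}(f_n^{(m)})^{m_{m,s}}\Big) \longrightarrow 0. $$
Here is the key point: by $(i)$ and Theorem \ref{ConvChebySemicircular}, each $\|f_n^{(j)}\stackrel{q}{\smallfrown}f_n^{(j)}\|\to 0$ for $q=1,\dots,d-1$; in particular $\|f_n^{(j)}\stackrel{d-1}{\smallfrown}f_n^{(j)}\|\to 0$, and Lemma \ref{magg} (inequality \eqref{magg1}) then forces $\tau_n^{(j)}\to 0$ for each $j=1,\dots,m$. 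Now apply Theorem \ref{Multiinvariance2} \emph{twice}: once comparing $\bs{X}$ with a reference semicircular sequence $\bs{S}$ (taking the vector of orders $\bs{h}=(1,\dots,1)$, so that $Q^{(\bs h)}_{\bs{\mathcal X}^{(n)}}=Q_{\bs X}$, and noting $\bs S$ satisfies Assumption \ref{Assumption_Cheby}), giving
$$\varphi\Big(\prod_s Q_{\bs{X}}(f_n^{(j)})^{m_{j,s}}\Big) - \varphi\Big(\prod_s Q_{\bs{S}}(f_n^{(j)})^{m_{j,s}}\Big) = \mathcal{O}\big(\max_j\sqrt{\tau_n^{(j)}}\big)\to 0;$$
and once with the chosen orders $\bs h$ and $\bs{X}=\bs{S}$ in the role of the $\bs X$ of that theorem, $\bs{Y}=\bs{S}$ as well, which identifies $\varphi(\prod_s Q^{(\bs h)}_{\bs S}(f_n^{(j)})^{m_{j,s}})$ with $\varphi(\prod_s Q_{\bs S}(f_n^{(j)})^{m_{j,s}})$ up to $\mathcal O(\max_j\sqrt{\tau_n^{(j)}})$. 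Chaining the two (triangle inequality for $|\varphi(\cdot)|$) yields the displayed difference $\to 0$, and combined with the joint convergence of the Chebyshev vector to $(s_1,\dots,s_m)$ this gives $(ii)$.

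\emph{Proof of $(ii)\Rightarrow(i)$.} This direction is immediate: $\bs S$ (a sequence of freely independent standard semicirculars) is itself a sequence of freely independent identically distributed random variables satisfying Assumption {\bf (1)}, so $(ii)$ applied with $\bs{X}=\bs{S}$ yields $(Q_{\bs{S}}(f_n^{(1)}),\dots,Q_{\bs{S}}(f_n^{(m)}))\xrightarrow{\text{Law}}(s_1,\dots,s_m)$, and in particular componentwise $Q_{\bs{S}}(f_n^{(j)})\xrightarrow{\text{Law}} s_j$. To conclude $(i)$ one must pass from ordinary homogeneous sums $Q_{\bs S}$ back to Chebyshev sums $Q^{(\bs h)}_{\bs S}$: but $Q_{\bs S}(f_n^{(j)})$ and $Q^{(\bs h)}_{\bs S}(f_n^{(j)})$ have the same limiting covariance structure by assumption, and one invokes the Fourth Moment Theorem in both directions --- the convergence $Q_{\bs S}(f_n^{(j)})\to s_j$ forces $\|f_n^{(j)}\stackrel q\smallfrown f_n^{(j)}\|\to 0$ for all $q$ via Theorem \ref{knps}, which via Proposition \ref{contraction5} and Lemma \ref{stimacontr} implies all contractions $\|k(f_n^{(j)})\otimes_r k(f_n^{(j)})\|\to 0$, hence by Theorem \ref{knps} again $Q^{(\bs h)}_{\bs S}(f_n^{(j)})=I_m^S(k(f_n^{(j)}))\to s_j$. (Equivalently, one cites Theorem \ref{ConvChebySemicircular} directly for this last implication.) An application of Theorem \ref{Equi_Free} upgrades this to the joint statement if needed, though only the componentwise form is asserted in $(i)$.

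\emph{Main obstacle.} The delicate step is the reduction in $(i)\Rightarrow(ii)$ requiring $\tau_n^{(j)}\to 0$: the invariance principle only has content when the influences vanish, and it is precisely the Fourth Moment characterization (Theorem \ref{ConvChebySemicircular}) combined with the ``contraction dominates influence'' bound of Lemma \ref{magg} that converts the \emph{analytic} hypothesis $(i)$ (convergence in law) into the \emph{combinatorial} smallness $\max_j\tau_n^{(j)}\to 0$ needed to run Theorem \ref{Multiinvariance2}. One should also be careful that the covariance hypotheses are consistent across the two normalizations (ordinary vs. Chebyshev homogeneous sums) --- this is what legitimizes the double application of the invariance principle through the intermediate semicircular Chebyshev vector, and it is the reason the symmetry (not merely mirror symmetry) of the $f_n^{(j)}$ is imposed, since Lemma \ref{magg} and the clean form of the influence functions rely on it.
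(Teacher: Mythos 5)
Your proposal is correct and rests on exactly the same skeleton as the paper's proof: Theorem \ref{Equi_Free} to pass between componentwise and joint convergence, the Fourth Moment characterization combined with the bound \eqref{magg1} of Lemma \ref{magg} to convert hypothesis $(i)$ into $\max_j\tau_n^{(j)}\to 0$, and then Theorem \ref{Multiinvariance2} to transfer the joint moments. The only deviations are matters of routing. In $(i)\Rightarrow(ii)$ you apply the invariance principle twice, passing through the intermediate vector $(Q_{\mathbf{S}}(f_n^{(1)}),\dots,Q_{\mathbf{S}}(f_n^{(m)}))$; this detour is harmless but unnecessary, since Theorem \ref{Multiinvariance2} already compares the Chebyshev sums $Q^{(\mathbf{h})}_{\mathbf{S}}(f_n^{(j)})$ directly with the plain homogeneous sums $Q_{\mathbf{Y}}(f_n^{(j)})$ for an arbitrary admissible $\mathbf{Y}=\mathbf{X}$, which is the single application the paper makes. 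In $(ii)\Rightarrow(i)$ you transfer the convergence from $Q_{\mathbf{S}}(f_n^{(j)})$ back to $Q^{(\mathbf{h})}_{\mathbf{S}}(f_n^{(j)})$ through the contraction criteria (Theorem \ref{knps}, Proposition \ref{contraction5}, Lemma \ref{stimacontr}, i.e.\ Theorem \ref{ConvChebySemicircular} read in both directions), whereas the paper again deduces $\max_j\tau_n^{(j)}\to 0$ and invokes Theorem \ref{Multiinvariance2} a second time; both mechanisms are valid, and yours has the small advantage of making explicit that the kernel conditions do not depend on the orders $\mathbf{h}$, a point the paper only records in a later remark.
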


\begin{proof}
$ $
\begin{itemize} 
\item[$(i) \Rightarrow (ii)$]
In view of Theorem \ref{Equi_Free}, and since $Q_{\mathbf{S}}^{(\bs{h})}(f_n^{(j)}) = I_{h_1+\cdots + h_d}^{S}(k_n^{(j)})$ (with $k_n^{(j)}$ as in \eqref{kN1}), the convergence $Q_{\mathbf{S}}^{(\bs{h})}(f_n^{(j)})\stackrel{ \text{Law}}{\longrightarrow} s_j$ for all $j=1,\dots,m$ is equivalent to the joint convergence:
$$(Q_{\mathbf{S}}^{(\bs{h})}(f_n^{(1)}),\dots, Q_{\mathbf{S}}^{(\bs{h})}(f_n^{(m)})) \stackrel{\text{ Law }}{\longrightarrow }(s_1,\dots,s_m) .$$ 
In particular, Lemma \ref{magg} and Theorem \ref{knps} together imply, for every $j$, the limit relation $\|f_n^{(j)} \stackrel{d -1}{\smallfrown} f_n^{(j)} \| \rightarrow 0$, yielding that $\tau_n^{(j)} \rightarrow 0$ for $j=1,\dots,m$, and, in turn, the vanishing in the limit of $\max\limits_{j=1,\dots,m}\tau_n^{(j)}$. The conclusion then follows by Theorem \ref{Multiinvariance2}.

\item[$(ii)\Rightarrow (i)$] In particular, $(Q_{\bs{S}}(f_n^{(1)}),\dots, Q_{\bs{S}}(f_n^{(m)})) \stackrel{\text{ Law }}{\longrightarrow }(s_1,\dots,s_m)$. Then, the Fourth Moment Theorem \ref{knps}, along with Lemma \ref{magg}, implies in particular that $\max\limits_{j=1,\dots,m}\tau_n^{(j)} \rightarrow 0$, yielding first $(Q_{\mathbf{S}}^{(\bs{h})}(f_n^{(1)}),\dots, Q_{\mathbf{S}}^{(\bs{h})}(f_n^{(m)})) \stackrel{\text{Law}}{\longrightarrow} (s_1,\dots,s_m)$ by virtue of Theorem \ref{Multiinvariance2}, and then the desired componentwise convergence.\qedhere
\end{itemize}
\end{proof}

By very similar arguments, and with a suitable parity assumption on the orders of the Chebyshev sums, relation (\ref{magg2}) provides immediate proof of Theorem \ref{Poiss2}, concerning free Poisson approximations of vectors of Chebyshev sums with symmetric coefficients. Note that 
 the stronger assumption on the joint convergence of the vector $(Q_{\mathbf{S}}^{(\bs{h})}(f_n^{(1)}),\dots, Q_{\mathbf{S}}^{(\bs{h})}(f_n^{(m)}))$ will be  necessary, since there is no counterpart to Theorem \ref{Equi_Free} for free Poisson approximations. \\

\begin{thm}
\label{Poiss2}
Let $d\geq 2$ be even and  $(z_1,\dots, z_m)$ be a system of random variables, with $z_j \stackrel{\text{Law}}{=} Z(1)$, and with covariance $\varphi(z_i z_j) = C_{i,j}$ for every $i,j=1,\dots,m$, such that $C = (C_{i,j})_{i,j=1,\dots,m}$ is a real-valued, positive definite, symmetric matrix. Assume further that $h_1+\cdots +h_d$ is even as well, and that
 $$ \lim_{n \rightarrow \infty} \varphi\big(Q_{\mathbf{S}}^{(\bs{h})}(f_n^{(i)}) Q_{\mathbf{S}}^{(\bs{h})}(f_n^{(j)})\big) = C_{i,j}, \quad \forall i,j=1,\dots,m.$$
Then, the following assertions are equivalent as $n \rightarrow \infty$:
\begin{itemize}
\item[(i)] $(Q_{\mathbf{S}}^{(\bs{h})}(f_n^{(1)}),\dots, Q_{\mathbf{S}}^{(\bs{h})}(f_n^{(m)})) \stackrel{\text{ Law }}{\longrightarrow }(z_1,\dots,z_m)$;
\item[(ii)] $(Q_{\bs{X}}(f_n^{(1)}),\dots, Q_{\bs{X}}(f_n^{(m)})) \stackrel{\text{ Law }}{\longrightarrow }(z_1,\dots,z_m)$ for every sequence $\mathbf{X} = \{X_i\}_{i\geq 1}$ of freely independent and identically distributed random variables, verifying Assumption {\bf (1)}.
\end{itemize}
\end{thm}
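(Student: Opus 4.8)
The plan is to mirror the proof of Theorem \ref{MultUniversality}, substituting the free Poisson Fourth Moment Theorem \ref{FPoissAppr} (equivalently its Chebyshev-sum incarnation, Theorem \ref{ConvFreePoissonPARI}) for the semicircular one, and exploiting the lower bound \eqref{magg2} from Lemma \ref{magg} in place of \eqref{magg1}. The only structural difference with the semicircular case is that there is no analogue of Theorem \ref{Equi_Free} for free Poisson limits, which is precisely why the hypothesis in (i) is stated directly as \emph{joint} convergence of the vector rather than componentwise convergence; this makes one direction of the equivalence slightly more delicate to set up, but it does not obstruct the argument.

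First I would prove $(i)\Rightarrow(ii)$. Assuming the joint convergence $(Q_{\mathbf{S}}^{(\bs{h})}(f_n^{(1)}),\dots,Q_{\mathbf{S}}^{(\bs{h})}(f_n^{(m)}))\stackrel{\text{Law}}{\longrightarrow}(z_1,\dots,z_m)$, in particular each component converges: $Q_{\mathbf{S}}^{(\bs{h})}(f_n^{(j)})\stackrel{\text{Law}}{\longrightarrow} z_j$ with $z_j\stackrel{\text{Law}}{=}Z(1)$. By Theorem \ref{ConvFreePoissonPARI} this forces, for each $j$, the vanishing of $\|f_n^{(j)}\stackrel{q}{\smallfrown}f_n^{(j)}\|$ for all $q=1,\dots,d-1$ with $q\neq d/2$, and of $\|f_n^{(j)}\stackrel{d/2}{\smallfrown}f_n^{(j)}-f_n^{(j)}\|$. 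In particular, either via the $q=d-1$ contraction (when $d>2$, using $\|f_n^{(j)}\stackrel{d-1}{\smallfrown}f_n^{(j)}\|\geq \frac1d\tau_n^{(j)}$ from \eqref{magg1}) or, when $d=2$, directly via $\|f_n^{(j)}\stackrel{1}{\smallfrown}f_n^{(j)}-f_n^{(j)}\|\geq\frac12\tau_n^{(j)}$ from \eqref{magg2}, we deduce $\tau_n^{(j)}\to 0$ for every $j$, hence $\max_{j=1,\dots,m}\tau_n^{(j)}\to 0$. Now apply Theorem \ref{Multiinvariance2} with $\bs{X}$ and $\bs{Y}=\bs{S}$: the joint moments of $(Q_{\bs{X}}(f_n^{(1)}),\dots,Q_{\bs{X}}(f_n^{(m)}))$ and of $(Q_{\bs{S}}^{(\bs{h})}(f_n^{(1)}),\dots,Q_{\bs{S}}^{(\bs{h})}(f_n^{(m)}))$ differ by $\mathcal{O}(\max_j\sqrt{\tau_n^{(j)}})=o(1)$. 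Since the limiting vector $(z_1,\dots,z_m)$ is compactly supported (the free Poisson law and any finite free sum of such variables are bounded operators), its law is determined by its moments, so the asymptotic agreement of joint moments upgrades to convergence in law, giving $(Q_{\bs{X}}(f_n^{(1)}),\dots,Q_{\bs{X}}(f_n^{(m)}))\stackrel{\text{Law}}{\longrightarrow}(z_1,\dots,z_m)$.

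For $(ii)\Rightarrow(i)$ I would specialize (ii) to the semicircular sequence $\bs{X}=\bs{S}$, which is an admissible choice since it satisfies Assumption {\bf (1)}; this yields $(Q_{\bs{S}}(f_n^{(1)}),\dots,Q_{\bs{S}}(f_n^{(m)}))\stackrel{\text{Law}}{\longrightarrow}(z_1,\dots,z_m)$, where $Q_{\bs{S}}(f_n^{(j)})$ are ordinary homogeneous sums (the case $\bs{h}=(1,\dots,1)$). Componentwise this gives $Q_{\bs{S}}(f_n^{(j)})\stackrel{\text{Law}}{\longrightarrow} z_j\stackrel{\text{Law}}{=}Z(1)$, and since $d\geq 2$ is even Theorem \ref{FPoissAppr} (or Theorem \ref{ConvFreePoissonPARI} in the case $\bs h=(1,\dots,1)$) applies and gives the vanishing of the relevant contractions of $f_n^{(j)}$, hence again, through \eqref{magg1} or \eqref{magg2}, $\tau_n^{(j)}\to 0$ for each $j$ and thus $\max_j\tau_n^{(j)}\to 0$. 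A second application of Theorem \ref{Multiinvariance2}, this time comparing $(Q_{\bs{S}}(f_n^{(1)}),\dots,Q_{\bs{S}}(f_n^{(m)}))$ with the Chebyshev vector $(Q_{\bs{S}}^{(\bs{h})}(f_n^{(1)}),\dots,Q_{\bs{S}}^{(\bs{h})}(f_n^{(m)}))$ — note both live in semicircular entries, so Theorem \ref{Multiinvariance2} controls the difference of their joint moments by $\mathcal O(\max_j\sqrt{\tau_n^{(j)}})=o(1)$ — transfers the joint convergence to $(z_1,\dots,z_m)$ over to the Chebyshev sums, establishing (i).

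The main obstacle, modest as it is, is the absence of a ``componentwise implies joint'' principle (Theorem \ref{Equi_Free}) for free Poisson targets: this is why (i) must be phrased as joint convergence and why the implication $(ii)\Rightarrow(i)$ cannot simply be read off from a single-component statement without the extra Theorem \ref{Multiinvariance2} step. Beyond that, the only points requiring care are the parity hypotheses — one needs $d$ even (so that $\varphi(Q_{\bs{S}}(f_n)^3)\ne 0$ matches $\varphi(Z(\lambda)^3)>0$) and $h_1+\cdots+h_d$ even (so that $Q_{\bs S}^{(\bs h)}(f_n)=I_m^S(k_n)$ sits in a Wigner chaos of even order $m$, which is exactly the regime where Theorem \ref{FPoissAppr} is available) — and the standard remark that all limiting laws involved are compactly supported, so that convergence of all joint moments is equivalent to convergence in distribution.
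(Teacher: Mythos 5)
Your proposal is correct and follows essentially the same route as the paper, which proves Theorem \ref{Poiss2} by repeating the argument of Theorem \ref{MultUniversality} with Theorem \ref{ConvFreePoissonPARI} and Lemma \ref{magg} in place of the semicircular ingredients, and phrases (i) as joint convergence precisely because no free Poisson analogue of Theorem \ref{Equi_Free} is available. If anything, you are slightly more explicit than the paper's one-line proof in distinguishing the case $d=2$ (where \eqref{magg2} is needed) from $d>2$ (where the $q=d-1$ contraction and \eqref{magg1} suffice).
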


\begin{rmk}
If $h_j =h\geq 1$ for all $j=1,\dots,d$, the previous universality results state that sequences of the type $\{U_h(S_i)\}_{i\geq 1}$ (belonging to the $h$-th Wigner Chaos) behave universally (for vectors of homogeneous sums of degree $d\geq 2$) with respect to both semicircular and free Poisson approximations (if $d$ is even), in the sense that, for any $h\geq 1$, if $X=U_h(S)$, $S \sim \mathcal{S}(0,1)$, then $Q_{\bs{X}}(f_n) \stackrel{\text{Law}}{\rightarrow} \mathcal{S}(0,1)$ (or $Z(\lambda)$) implies $Q_{\bs{Y}}(f_n) \stackrel{\text{Law}}{\rightarrow} \mathcal{S}(0,1)$ (or $Z(\lambda)$) for every sequence $\bs{Y}$ of freely independent random variables verifying Assumption {\bf (1)}, generalizing the universality results established in \cite[Theorem 1.4]{NourdinDeya}, corresponding to the case $h=1$.
In particular, if $h =2$, the corresponding statements concern vectors of homogeneous sums in centered free Poisson random variables of parameter $1$, with respect to both semicircular and free Poisson approximations (when $d$ is an even integer).\\
\end{rmk}

For the subsequent remarks, it is convenient to explicitly reformulate the above universality phenomena in the case $m=1$.

\begin{cor}
\label{Univ1}
If $d\geq 2$, let $f_n:[n]^d \rightarrow \mathbb{R}$ be a sequence of symmetric admissible kernels. If $\bs{h}=(h_1,\dots,h_d)$ is a vector of positive integers, such that $h_i = h_{d-i+1}$ for $i=1,\dots,\lfloor \frac{d}{2}\rfloor$,  the following statements are equivalent as $n \rightarrow \infty$:
\begin{itemize}
\item[(i)] $Q_{\bs{S}}^{(\bs{h})}(f_n) \stackrel{\text{Law}}{\longrightarrow} \mathcal{S}(0,1)$;
\item[(ii)] for every sequence $\mathbf{Y}=\{Y_i\}_{i\geq 1}$ of freely independent and identically distributed random variables, verifying Assumption {\bf (1)},  $Q_{\bs{Y}}(f_n) \stackrel{\text{Law}}{\longrightarrow} \mathcal{S}(0,1)$.
\end{itemize}
\end{cor}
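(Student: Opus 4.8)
The statement in Corollary~\ref{Univ1} is precisely the special case $m=1$ of Theorem~\ref{MultUniversality}, so the plan is to reduce it to that theorem and to the two key ingredients feeding its proof: the Fourth Moment Theorem (Theorem~\ref{knps}) combined with Proposition~\ref{contraction5} on one side, and the influence-function bound of Lemma~\ref{magg} together with the multidimensional invariance principle (Theorem~\ref{Multiinvariance2}) on the other. Concretely, for $m=1$ the covariance matrix $C$ is the scalar $C_{1,1}$, and the normalization $\|f_n\|^2=1$ forces $\varphi\big(Q_{\mathbf{S}}^{(\bs h)}(f_n)^2\big)=1$ for every $n$ (since $U_{h_j}(S)$ is centered of unit variance and the kernel vanishes on diagonals, the second moment of a Chebyshev sum equals $\|f_n\|^2$); hence $C_{1,1}=1$ and the semicircular ``system'' $s_1$ is just $\mathcal S(0,1)$. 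Theorem~\ref{MultUniversality} then directly yields the equivalence of (i) and (ii), with $\mathbf X$ playing the role of $\mathbf Y$.

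Since the reader may want a self-contained argument, I would also spell out the two implications directly. For $(i)\Rightarrow(ii)$: writing $Q_{\bs S}^{(\bs h)}(f_n)=I_m^S(k_n)$ with $m=h_1+\cdots+h_d$ and $k_n=k(f_n)$ as in \eqref{kN1}, the assumed semicircular convergence gives, via Theorem~\ref{knps}, that $\|k_n\otimes_r k_n\|\to0$ for all $r=1,\dots,m-1$; by Proposition~\ref{contraction5} this is equivalent to $\|f_n\stackrel{q}{\smallfrown}f_n\|\to0$ for all $q=1,\dots,d-1$. In particular $\|f_n\stackrel{d-1}{\smallfrown}f_n\|\to0$, and Lemma~\ref{magg} (inequality \eqref{magg1}) then forces $\tau_n=\tau(f_n)\to0$. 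Applying Theorem~\ref{Multiinvariance2} (with $p=1$, $\bs Y=\bs X$, and the Chebyshev orders $\bs h$) to a single pair of sequences shows that all joint moments of $Q_{\bs S}^{(\bs h)}(f_n)$ and $Q_{\bs X}(f_n)$ differ by $\mathcal O(\sqrt{\tau_n})\to0$; since $Q_{\bs S}^{(\bs h)}(f_n)\to\mathcal S(0,1)$ in law and the semicircle law is moment-determinate (compactly supported), it follows that $Q_{\bs X}(f_n)\to\mathcal S(0,1)$.

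For $(ii)\Rightarrow(i)$: specializing (ii) to $\bs X=\bs S$ (which satisfies Assumption~{\bf (1)} and has uniformly bounded moments) gives $Q_{\bs S}(f_n)\to\mathcal S(0,1)$, i.e.\ the \emph{order-$d$} homogeneous sum in semicircular variables converges. By Theorem~\ref{knps} and Lemma~\ref{magg} this again yields $\tau_n\to0$; Theorem~\ref{Multiinvariance2} then transfers the convergence back to $Q_{\bs S}^{(\bs h)}(f_n)$, giving (i). Note that for $m=1$ one does \emph{not} need the joint-convergence input (Theorem~\ref{Equi_Free}), which is why the free Poisson analogue requires a stronger hypothesis but the semicircular case $m=1$ goes through cleanly.

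The only genuinely delicate point — and the one I would flag as the main obstacle were Theorem~\ref{Multiinvariance2} not already available — is establishing that low influence ($\tau_n\to0$) really does force closeness of \emph{all} joint moments in the non-commutative setting, which is the content of Theorem~\ref{Multiinvariance2} and rests on the iterated Cauchy–Schwarz inequality of Lemma~\ref{algo} together with the hypercontractivity of Proposition~\ref{prop02}. Given those results, the proof of the corollary is a short bookkeeping exercise: check that $C_{1,1}=1$, invoke Theorem~\ref{knps}+Proposition~\ref{contraction5}+Lemma~\ref{magg} to get $\tau_n\to0$ in both directions, and conclude by Theorem~\ref{Multiinvariance2} plus moment-determinacy of $\mathcal S(0,1)$.
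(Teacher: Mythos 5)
Your proposal is correct and follows essentially the same route as the paper: Corollary \ref{Univ1} is stated there precisely as the $m=1$ specialization of Theorem \ref{MultUniversality}, and your spelled-out argument (Theorem \ref{knps} plus Proposition \ref{contraction5} and Lemma \ref{magg} to get $\tau_n\to 0$ in both directions, then Theorem \ref{Multiinvariance2} and moment-determinacy of the compactly supported semicircle law) is exactly the chain used in the paper's proof of that theorem. Your observation that Theorem \ref{Equi_Free} becomes superfluous when $m=1$ is accurate and consistent with the paper's treatment.
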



\begin{cor}
\label{Univ2}
Let the hypotheses of Corollary \ref{Univ1} prevail, and assume that both $d$ and $h_1+\cdots +h_d$  are even integers. For a sequence of symmetric admissible kernels $f_n:[n]^d \rightarrow \mathbb{R}$,  such that $ \lim\limits_{n \rightarrow \infty}\varphi\big((Q_{\bs{S}}^{(\bs{h})}(f_n))^{2}\big) = \lambda > 0,$ the following statements are equivalent as $n\rightarrow \infty$:
\begin{itemize}
\item[(i)] $Q_{\bs{S}}^{(\bs{h})}(f_n) \stackrel{\text{Law}}{\longrightarrow} Z(\lambda) $;
\item[(ii)] for every sequence $\bs{Y}=\{Y_i\}_{i\geq 1}$ of freely independent and identically distributed random variables, verifying Assumption {\bf (1)}, $Q_{\bs{Y}}(f_n) \stackrel{\text{Law}}{\longrightarrow} Z(\lambda)$.\\
\end{itemize}
\end{cor}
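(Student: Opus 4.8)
The plan is to derive Corollary~\ref{Univ2} as the one-dimensional ($m=1$) instance of Theorem~\ref{Poiss2}, but it is worth recording the self-contained argument, which runs in perfect analogy with the proof of Theorem~\ref{MultUniversality}, the semicircular Fourth Moment Theorem being replaced throughout by its free Poisson counterpart. The engine has four ingredients: (a) Theorem~\ref{ConvFreePoissonPARI} (resp.\ Theorem~\ref{FPoissAppr}), which converts free Poisson convergence of $Q_{\bs S}^{(\bs h)}(f_n)$ (resp.\ of the homogeneous sum $Q_{\bs S}(f_n)$) into the vanishing of prescribed contraction norms of $f_n$; (b) Lemma~\ref{magg}, which upgrades one such vanishing contraction into the low-influence condition $\tau_n\to 0$; (c) Theorem~\ref{Multiinvariance2} with $p=1$, $k=1$, which makes the single-variable moments of a Chebyshev sum in $\bs S$ and of a homogeneous sum in another sequence agree up to $\mathcal O(\sqrt{\tau_n})$; and (d) the fact that, by the hypercontractivity estimate of Proposition~\ref{prop02} together with the standard operator-norm bounds on the Chebyshev/Wigner chaos, all the spectral measures that occur are supported in one fixed compact interval, so that convergence in law is equivalent to convergence of every moment (Weierstrass). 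Since the joint moments of homogeneous (and Chebyshev) sums depend only on the joint law of the underlying sequence, in each application of Theorem~\ref{Multiinvariance2} I may assume the two sequences it involves to be freely independent, passing to a free product if necessary.

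For $(i)\Rightarrow(ii)$: assume $Q_{\bs S}^{(\bs h)}(f_n)\stackrel{\text{Law}}{\longrightarrow}Z(\lambda)$ and write $Q_{\bs S}^{(\bs h)}(f_n)=I_m^S(k_n)$ with $m=h_1+\cdots+h_d$ and $k_n$ as in~\eqref{kN1}. Theorem~\ref{ConvFreePoissonPARI} gives $\|f_n\stackrel{q}{\smallfrown}f_n\|\to 0$ for all $q\neq d/2$, together with $\|f_n\stackrel{d/2}{\smallfrown}f_n-f_n\|\to 0$. If $d\ge 4$, then $d-1\neq d/2$, so $\|f_n\stackrel{d-1}{\smallfrown}f_n\|\to 0$ and~\eqref{magg1} yields $\tau_n\le d\,\|f_n\stackrel{d-1}{\smallfrown}f_n\|\to 0$; if $d=2$, then $d-1=d/2=1$, and one uses $\|f_n\stackrel{1}{\smallfrown}f_n-f_n\|\to 0$ with~\eqref{magg2} to get $\tau_n\le 2\,\|f_n\stackrel{1}{\smallfrown}f_n-f_n\|\to 0$; in either case $\tau_n=o(1)$. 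Let $\bs Y=\{Y_i\}_{i\ge1}$ be a sequence of freely independent identically distributed random variables obeying Assumption~{\bf (1)}; being bounded operators, the $Y_i$ (and the $S_i$) have uniformly bounded moments, so Theorem~\ref{Multiinvariance2} (with $\bs X=\bs S$) gives $\varphi\big((Q_{\bs S}^{(\bs h)}(f_n))^{r}\big)-\varphi\big((Q_{\bs Y}(f_n))^{r}\big)=\mathcal O(\sqrt{\tau_n})\to 0$ for every integer $r\ge1$. As $Q_{\bs S}^{(\bs h)}(f_n)\stackrel{\text{Law}}{\longrightarrow}Z(\lambda)$ forces $\varphi\big((Q_{\bs S}^{(\bs h)}(f_n))^{r}\big)\to\varphi\big(Z(\lambda)^r\big)$, the same limit holds for $Q_{\bs Y}(f_n)$, and the uniform boundedness of its moments upgrades this to $Q_{\bs Y}(f_n)\stackrel{\text{Law}}{\longrightarrow}Z(\lambda)$.

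For $(ii)\Rightarrow(i)$: taking $\bs Y=\bs S$ in (ii) yields $Q_{\bs S}(f_n)\stackrel{\text{Law}}{\longrightarrow}Z(\lambda)$, a homogeneous sum of even degree $d$ in semicircular entries, and the variance hypothesis of Theorem~\ref{FPoissAppr} is met because $\varphi\big((Q_{\bs S}(f_n))^2\big)=\|f_n\|^2=\varphi\big((Q_{\bs S}^{(\bs h)}(f_n))^2\big)$. Theorem~\ref{FPoissAppr} then gives $\|f_n\stackrel{r}{\smallfrown}f_n\|\to 0$ for $r\neq d/2$ and $\|f_n\stackrel{d/2}{\smallfrown}f_n-f_n\|\to 0$; distinguishing $d\ge4$ from $d=2$ exactly as above, Lemma~\ref{magg} gives $\tau_n=o(1)$. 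Now apply Theorem~\ref{Multiinvariance2} with $\bs X=\bs S$ and $\bs Y=\bs S'$ a freely independent semicircular copy: $\varphi\big((Q_{\bs S}^{(\bs h)}(f_n))^{r}\big)-\varphi\big((Q_{\bs S'}(f_n))^{r}\big)=\mathcal O(\sqrt{\tau_n})\to0$, and since $Q_{\bs S'}(f_n)\stackrel{\text{Law}}{=}Q_{\bs S}(f_n)$ their moments coincide, so $\varphi\big((Q_{\bs S}^{(\bs h)}(f_n))^{r}\big)\to\varphi\big(Z(\lambda)^r\big)$ for all $r$, i.e.\ $Q_{\bs S}^{(\bs h)}(f_n)\stackrel{\text{Law}}{\longrightarrow}Z(\lambda)$, using again that the laws involved are uniformly compactly supported.

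The main obstacle is the borderline degree $d=2$, where the contraction $\stackrel{d-1}{\smallfrown}$ coincides with the resonant contraction $\stackrel{d/2}{\smallfrown}$, which does not vanish at a free Poisson limit; there $\tau_n$ cannot be bounded by $\|f_n\stackrel{d-1}{\smallfrown}f_n\|$ and the estimate must be routed through $\|f_n\stackrel{1}{\smallfrown}f_n-f_n\|$ via~\eqref{magg2}. A secondary point is the repeated passage between convergence in distribution and convergence of moments, legitimate precisely because Proposition~\ref{prop02} (together with standard chaos norm estimates) confines every relevant spectral measure to one fixed compact set.
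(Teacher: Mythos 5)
Your argument is correct and follows essentially the same route as the paper, which obtains Corollary~\ref{Univ2} as the $m=1$ case of Theorem~\ref{Poiss2}: free Poisson convergence of the semicircular Chebyshev sum is converted into vanishing contractions via Theorem~\ref{ConvFreePoissonPARI} (resp.\ Theorem~\ref{FPoissAppr} for the converse), Lemma~\ref{magg} then yields $\tau_n\to 0$, and Theorem~\ref{Multiinvariance2} transfers the moments. Your explicit case split between $d=2$ (where one must route through $\|f_n\stackrel{1}{\smallfrown}f_n-f_n\|$ via \eqref{magg2}) and $d\geq 4$ (where \eqref{magg1} applies) merely spells out what the paper leaves implicit, so the approaches coincide.
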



\begin{rmk}
By virtue of Theorems \ref{ConvChebySemicircular}, \ref{ConvFreePoissonPARI}, and Propositions \ref{contraction5}, \ref{contraction6}, the conditions required for the kernels $f_n$  for the convergence of $Q_{\bs{S}}^{(\bs{h})}(f_n)$ towards the semicircular and the free Poisson laws, do not depend on the choice of the orders $\bs{h}=(h_1,\dots,h_d)$. Therefore, the convergence of a vector of Chebyshev sums of given orders $\bs{h}=(h_1,\dots,h_d)$, based on a semicircular system, towards both the semicircular and the free Poisson law, is equivalent to the convergence towards that laws for any other vector of Chebyshev sums with the same kernels. In particular, this equivalence holds true for homogeneous sums based on the $h$-th Chebyshev polynomial, for different $h$'s. For notational convenience, these remarks are stated explicitly only in the one dimensional case. 
\end{rmk}

\begin{cor}
Let $d\geq 2$ and $f_n:[n]^d \rightarrow \mathbb{R}$ be a symmetric admissible kernel, for every $n\geq 1$. The following assertions are equivalent as $n \rightarrow \infty$:
\begin{itemize}
\item[(i)] there exist integers $\bs{h}=(h_1,\dots,h_d)$, with $h_i = h_{d-i+1}$ for $i= 1,\dots, \lfloor \frac{d}{2}\rfloor$, such that:
$$ Q_{\bs{S}}^{(\bs{h})}(f_n) \stackrel{\text{ Law }}{\longrightarrow} \mathcal{S}(0,1) \;;$$
\item[(ii)] for every $\bs{k}=(k_1,\dots,k_d)$ such that $k_i = k_{d-i+1}$ for $i= 1,\dots, \lfloor \frac{d}{2}\rfloor$, 
$$Q_{\bs{S}}^{(\bs{k})}(f_n) \stackrel{\text{ Law }}{\longrightarrow} \mathcal{S}(0,1).$$
\end{itemize}
\end{cor}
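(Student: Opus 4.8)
The plan is to observe that both assertions reduce to one and the same intrinsic property of the kernel $f_n$, one that makes no reference whatsoever to the vector of orders. Indeed, the crucial point is that in Theorem~\ref{ConvChebySemicircular} the condition~(ii), i.e. $\|f_n \stackrel{q}{\smallfrown} f_n\| \to 0$ for every $q=1,\dots,d-1$, involves only the kernel $f_n$ and the degree $d$: it is completely insensitive to the choice of $\bs{h}$ (this order-independence is exactly what Proposition~\ref{contraction5}, Proposition~\ref{contraction6} and Lemma~\ref{stimacontr} encode). Once this is noticed, the equivalence follows by applying Theorem~\ref{ConvChebySemicircular} twice, once in each direction, with two possibly different order vectors.

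\textbf{Proof of $(i)\Rightarrow(ii)$.} Suppose $\bs{h}=(h_1,\dots,h_d)$ with $h_i=h_{d-i+1}$ for $i=1,\dots,\lfloor d/2\rfloor$ is such that $Q_{\bs{S}}^{(\bs{h})}(f_n)\xrightarrow{\text{Law}}\mathcal{S}(0,1)$. Applying the implication $(i)\Rightarrow(ii)$ of Theorem~\ref{ConvChebySemicircular} to this $\bs{h}$ yields $\|f_n\stackrel{q}{\smallfrown}f_n\|\to 0$ for every $q=1,\dots,d-1$. Now fix an arbitrary $\bs{k}=(k_1,\dots,k_d)$ with $k_i=k_{d-i+1}$ for $i=1,\dots,\lfloor d/2\rfloor$. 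Since the vanishing of the contraction norms $\|f_n\stackrel{q}{\smallfrown}f_n\|$ is a property of $f_n$ alone, the implication $(ii)\Rightarrow(i)$ of Theorem~\ref{ConvChebySemicircular}, applied this time with $\bs{k}$ in place of $\bs{h}$, gives $Q_{\bs{S}}^{(\bs{k})}(f_n)\xrightarrow{\text{Law}}\mathcal{S}(0,1)$, as claimed.

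\textbf{Proof of $(ii)\Rightarrow(i)$.} It suffices to exhibit one admissible vector of orders: take $\bs{h}=(1,\dots,1)$, which trivially satisfies $h_i=h_{d-i+1}$. Then $Q_{\bs{S}}^{(\bs{h})}(f_n)=Q_{\bs{S}}(f_n)$, and assertion~(ii) in particular provides $Q_{\bs{S}}(f_n)\xrightarrow{\text{Law}}\mathcal{S}(0,1)$, which is~(i).

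\textbf{Main obstacle.} There is essentially no obstacle: the entire content of the corollary is carried by the order-independence of condition~(ii) in Theorem~\ref{ConvChebySemicircular}. The only minor point to keep in mind is that the family of admissible order vectors is non-empty (which is clear, $(1,\dots,1)$ always works), so that assertion~(i) is not vacuous.
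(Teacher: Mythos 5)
Your proposal is correct and follows essentially the same route as the paper: the corollary is obtained there precisely by noting that condition (ii) of Theorem \ref{ConvChebySemicircular} (the vanishing of the contraction norms $\|f_n\stackrel{q}{\smallfrown}f_n\|$, via Propositions \ref{contraction5}, \ref{contraction6} and Lemma \ref{stimacontr}) depends only on $f_n$ and not on the orders $\bs{h}$, so the theorem is applied once in each direction with two different order vectors, exactly as you do.
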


\begin{cor}
\label{coroll1}
Let $d\geq 2$ and $f_n:[n]^d \rightarrow \mathbb{R}$ be a symmetric kernel for every $n\geq 1$. The following assertions are equivalent as $n\rightarrow \infty$:
\begin{itemize}
\item[(i)] $Q_{\bs{S}}(f_n) \stackrel{\text{ Law }}{\longrightarrow} \mathcal{S}(0,1)\;;$
\item[(ii)] if $\bs{Z}=\{Z_i\}_{i \geq 1}$ is a sequence of freely independent, centered random variables with the free Poisson distribution of parameter $1$, then $$Q_{\bs{Z}}(f_n) \stackrel{\text{ Law }}{\longrightarrow} \mathcal{S}(0,1).$$
\end{itemize}
\end{cor}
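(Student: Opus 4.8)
The plan is to recognise statement (ii) as a disguised statement about Chebyshev sums, and then close the argument with the Fourth Moment Theorem already proved for such sums. Recall (cf.\ the Remark following Theorem \ref{ConvFreePoissonPARI}, and Example (i) after Assumption \ref{Assumption_Cheby}) that if $S\sim\mathcal{S}(0,1)$ then $U_2(S)=S^2-1$ is centered, has unit variance, and is distributed as the centered free Poisson law $Z(1)$ of parameter $1$. Fix a sequence $\bs{S}=\{S_i\}_{i\geq1}$ of freely independent standard semicircular random variables. Since functions of freely independent elements remain freely independent, $\{U_2(S_i)\}_{i\geq1}$ is a sequence of freely independent copies of $Z(1)$, hence has the same joint distribution as $\bs{Z}$. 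Writing $\bs{h}=(2,\dots,2)$ (which trivially satisfies $h_i=h_{d-i+1}$), this gives, for every $n$, the distributional identity
$$Q_{\bs{Z}}(f_n)\;\stackrel{\text{Law}}{=}\;\sum_{i_1,\dots,i_d=1}^{n} f_n(i_1,\dots,i_d)\,U_2(S_{i_1})\cdots U_2(S_{i_d})\;=\;Q_{\bs{S}}^{(\bs{h})}(f_n),$$
so that (ii) holds if and only if $Q_{\bs{S}}^{(\bs{h})}(f_n)\stackrel{\text{Law}}{\longrightarrow}\mathcal{S}(0,1)$.

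Next I would invoke the order-independence of the Fourth Moment condition. By Theorem \ref{ConvChebySemicircular} applied with the order $\bs{h}=(2,\dots,2)$, the convergence $Q_{\bs{S}}^{(\bs{h})}(f_n)\stackrel{\text{Law}}{\longrightarrow}\mathcal{S}(0,1)$ is equivalent to $\|f_n\stackrel{q}{\smallfrown}f_n\|\to0$ for every $q=1,\dots,d-1$. On the other hand $Q_{\bs{S}}(f_n)=Q_{\bs{S}}^{(1,\dots,1)}(f_n)$, and by Theorem \ref{knps} together with the identity $\|k_n\otimes_r k_n\|=\|f_n\stackrel{r}{\smallfrown}f_n\|$ (Proposition \ref{contraction5}(i) in the case $\bs{h}=(1,\dots,1)$, or simply orthonormality of $\{e_i\}$), statement (i) is equivalent to the \emph{same} condition $\|f_n\stackrel{q}{\smallfrown}f_n\|\to0$, $q=1,\dots,d-1$. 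Chaining the two equivalences yields (i)$\iff$(ii). Alternatively, one may phrase the second half directly via Corollary \ref{Univ1} with $\bs{h}=(2,\dots,2)$, noting that $\bs{Z}$ is an admissible comparison sequence (freely independent, i.i.d., verifying Assumption {\bf (1)}); but the reduction still passes through Theorem \ref{ConvChebySemicircular} to connect $Q_{\bs{S}}^{(2,\dots,2)}(f_n)$ with $Q_{\bs{S}}(f_n)$.

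I do not anticipate a genuine obstacle: the content is the elementary observation that the centered free Poisson law is the law of the second-order Chebyshev image $U_2(S)$ of a standard semicircular, which turns $Q_{\bs{Z}}(f_n)$ into a Chebyshev sum, after which everything is bookkeeping with already-established results. The only points deserving a line of care are the verification of the joint-distributional identity above (routine: freeness is preserved under applying functions, and $\varphi(U_2(S))=0$, $\varphi(U_2(S)^2)=1$) and the tacit reading of ``symmetric kernel'' in the statement as ``symmetric admissible kernel'' in the sense of Definition \ref{admissible_Univ}, which is what makes the homogeneous/Chebyshev sums have the required structure (vanishing on diagonals) and $\mathcal{S}(0,1)$ the correct normalisation (unit variance).
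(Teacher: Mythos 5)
Your argument is correct and is essentially the paper's own route: you identify $Q_{\bs{Z}}(f_n)$ in law with the Chebyshev sum $Q_{\bs{S}}^{(2,\dots,2)}(f_n)$ via $U_2(S)\stackrel{\text{Law}}{=}Z(1)$ together with preservation of freeness under functional calculus, and then chain the equivalences through the order-independent contraction conditions of Theorem \ref{ConvChebySemicircular} (equivalently Theorem \ref{knps} with Proposition \ref{contraction5}), which is exactly how the paper derives this corollary from the preceding transfer-between-orders statement. Your closing caveats (reading ``symmetric kernel'' as symmetric admissible kernel, and checking Assumption {\bf (1)} for $U_2(S)$) are the right ones and match the paper's implicit assumptions.
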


Under a suitable parity assumption on the degree of the homogeneous sums, similar results can be formulated for free Poisson approximations. 

\begin{cor}
Let $d\geq 2$ be even, and consider a sequence $f_n:[n]^d \rightarrow \mathbb{R}$ of  symmetric kernels, vanishing on diagonal, such that $\vert\vert f_n\vert\vert^2 \rightarrow \lambda > 0$. Then, the following assertions are equivalent as $n \rightarrow \infty$:
\begin{itemize}
\item[(i)] there exist integers $\bs{h}=(h_1,\dots,h_d)$, with $h_i = h_{d-i+1}$ for $i= 1,\dots, \lfloor \frac{d}{2}\rfloor$, such that $$ Q_{\bs{S}}^{(\bs{h})}(f_n) \stackrel{\text{ Law }}{\longrightarrow} Z(\lambda) ;$$
\item[(ii)] for every $\bs{k}=(k_1,\dots,k_d)$ such that $k_i = k_{d-i+1}$ for $i= 1,\dots, \lfloor \frac{d}{2}\rfloor$, 
$$Q_{\bs{S}}^{(\bs{k})}(f_n) \stackrel{\text{ Law }}{\longrightarrow} Z(\lambda).$$
\end{itemize}
\end{cor}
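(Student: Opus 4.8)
The plan is to read the corollary off Theorem \ref{ConvFreePoissonPARI}, by exploiting the fact that the kernel criterion in part (ii) of that theorem is completely insensitive to the chosen vector of orders.

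First I would record two preliminary observations. For an arbitrary vector $\bs{h}=(h_1,\dots,h_d)$ with $h_i=h_{d-i+1}$ for $i=1,\dots,\lfloor\frac{d}{2}\rfloor$, setting $m=h_1+\cdots+h_d$ and $k_n=k(f_n)$ as in \eqref{kN1}, one has $Q_{\bs{S}}^{(\bs{h})}(f_n)=I_m^{S}(k_n)$; since $f_n$ is symmetric, $k_n$ is mirror symmetric, and the isometry property of multiple Wigner integrals together with the orthonormality of $\{e_i\}$ gives $\varphi\big((Q_{\bs{S}}^{(\bs{h})}(f_n))^2\big)=\|k_n\|^2=\|f_n\|^2\to\lambda$, so the variance hypothesis of Theorem \ref{ConvFreePoissonPARI} holds for \emph{every} such $\bs{h}$. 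Moreover, convergence of such a Chebyshev sum towards $Z(\lambda)$ with $\lambda>0$ forces $m$ to be even: by the Wigner product formula $I_m^{S}(k_n)^3$ is a sum of multiple integrals whose orders all have the parity of $m$, so $\varphi\big(I_m^{S}(k_n)^3\big)=0$ whenever $m$ is odd, while $\varphi(Z(\lambda)^3)=\lambda>0$ --- this is exactly the obstruction isolated in the remark \emph{On the parity of $d$}. Accordingly, in statement (ii) the vectors $\bs{k}$ are to be read as those with $k_1+\cdots+k_d$ even; for the others the claimed convergence cannot occur.

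For the implication $(i)\Rightarrow(ii)$: fix $\bs{h}$ as in (i). By the previous paragraph $m=h_1+\cdots+h_d$ is even, and $d$ is even by hypothesis, so Theorem \ref{ConvFreePoissonPARI} applies and the convergence $Q_{\bs{S}}^{(\bs{h})}(f_n)\to Z(\lambda)$ is equivalent to
\[
\|f_n\stackrel{q}{\smallfrown}f_n\|\to 0\ \ (q=1,\dots,d-1,\ q\neq\tfrac{d}{2}),\qquad \|f_n\star_{\frac{d}{2}+1}^{\frac{d}{2}}f_n\|\to 0,\qquad \|f_n\stackrel{\frac{d}{2}}{\smallfrown}f_n-f_n\|\to 0 .
\]
These limit relations depend only on $f_n$ and on the (fixed, even) degree $d$, not on $\bs{h}$. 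Hence, for any other vector $\bs{k}=(k_1,\dots,k_d)$ with $k_i=k_{d-i+1}$ and $k_1+\cdots+k_d$ even, Theorem \ref{ConvFreePoissonPARI} applied to $\bs{k}$ --- its variance hypothesis again being met by the first observation --- gives at once $Q_{\bs{S}}^{(\bs{k})}(f_n)\to Z(\lambda)$, which is (ii). The converse $(ii)\Rightarrow(i)$ is trivial: since $d$ is even, $\bs{h}=(1,\dots,1)$ satisfies the mirror constraint and $h_1+\cdots+h_d=d$ is even, so (ii) applied to it yields $Q_{\bs{S}}^{(\bs{h})}(f_n)=Q_{\bs{S}}(f_n)\to Z(\lambda)$, which establishes (i).

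There is no genuinely difficult step: the statement is an immediate corollary of Theorem \ref{ConvFreePoissonPARI} once one notices that its kernel condition does not see $\bs{h}$. The only point requiring mild care is the parity bookkeeping in (ii) --- making sure it is interpreted over vectors $\bs{k}$ with $k_1+\cdots+k_d$ even, which, as recalled above, is not a real restriction but a prerequisite for convergence to a free Poisson limit with positive parameter.
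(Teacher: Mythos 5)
Your proposal is correct and follows essentially the same route as the paper: the corollary is read off Theorem \ref{ConvFreePoissonPARI} (together with the variance identity $\varphi\big((Q_{\bs{S}}^{(\bs{h})}(f_n))^2\big)=\|f_n\|^2\to\lambda$, valid for every admissible choice of orders), by observing that the contraction conditions in that theorem involve only $f_n$ and $d$, not the orders themselves. The one superfluous point is your parity caveat: since $d$ is even, the mirror constraint $k_i=k_{d-i+1}$ for $i=1,\dots,\lfloor\frac{d}{2}\rfloor$ automatically forces $k_1+\cdots+k_d=2\big(k_1+\cdots+k_{\frac{d}{2}}\big)$ to be even (as noted in Proposition \ref{contraction6}), so statement (ii) needs no reinterpretation and your argument as written already covers every admissible vector of orders.
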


\begin{cor}
\label{coroll2}
Let $d\geq 2$ be even, and consider a sequence $f_n:[n]^d \rightarrow \mathbb{R}$ of  symmetric kernels, vanishing on diagonal, such that $\vert\vert f_n\vert\vert^2 \rightarrow \lambda > 0$. The following assertions are equivalent as $n\rightarrow \infty$:
\begin{itemize}
\item[(i)] $Q_{\bs{S}}(f_n) \stackrel{\text{ Law }}{\longrightarrow} Z(\lambda);$
\item[(ii)] if $\bs{Z}=\{Z_i\}_{i \geq 1}$ is a sequence of freely independent centered random variables with the free Poisson distribution of parameter $1$, then $$Q_{\bs{Z}}(f_n) \stackrel{\text{ Law }}{\longrightarrow} Z(\lambda).$$
\end{itemize}
\end{cor}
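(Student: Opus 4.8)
The plan is to deduce this from two applications of Corollary \ref{Univ2}, one with the order vector $\bs{h}_1=(1,\dots,1)$ and one with $\bs{h}_2=(2,\dots,2)$, the bridge being the standard identification of a centered free Poisson variable of parameter $1$ as the image of the semicircle law under $U_2$.

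Concretely, I would first fix a sequence $\bs{S}=\{S_i\}_{i\geq 1}$ of freely independent standard semicircular variables. Since $U_2(S_i)=S_i^2-1\stackrel{\text{Law}}{=}Z(1)$ and the $U_2(S_i)$ are functions of freely independent variables, the family $\{U_2(S_i)\}_i$ is freely independent, identically distributed, and satisfies Assumption \textbf{(1)} (recall $\varphi(U_2(S))=0$ and $\varphi(U_2(S)^2)=\varphi(S^4)-2\varphi(S^2)+1=1$). Hence $\{U_2(S_i)\}_i$ has the same joint law as a sequence $\bs{Z}=\{Z_i\}_i$ as in (ii), so that $Q_{\bs{Z}}(f_n)\stackrel{\text{Law}}{=}Q_{\bs{S}}^{(\bs{h}_2)}(f_n)$, while trivially $Q_{\bs{S}}(f_n)=Q_{\bs{S}}^{(\bs{h}_1)}(f_n)$. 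Both $\bs{h}_1$ and $\bs{h}_2$ are constant, hence satisfy $h_i=h_{d-i+1}$, and, $d$ being even, their weights $d$ and $2d$ are both even; moreover $\varphi\big((Q_{\bs{S}}^{(\bs{h}_1)}(f_n))^2\big)=\varphi\big((Q_{\bs{S}}^{(\bs{h}_2)}(f_n))^2\big)=\|f_n\|^2\to\lambda>0$ by the usual second-moment computation for Wigner integrals (using that $f_n$ is mirror symmetric and vanishes on diagonals). Thus the hypotheses of Corollary \ref{Univ2} are met for both choices of $\bs{h}$; the unit-variance clause implicit in the word ``admissible'' is never used, only $\|f_n\|^2\to\lambda$.

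With this in place, Corollary \ref{Univ2} applied with $\bs{h}=\bs{h}_1$ shows that (i), i.e. $Q_{\bs{S}}(f_n)\stackrel{\text{Law}}{\longrightarrow}Z(\lambda)$, is equivalent to the statement ``$Q_{\bs{Y}}(f_n)\stackrel{\text{Law}}{\longrightarrow}Z(\lambda)$ for every freely independent identically distributed sequence $\bs{Y}$ satisfying Assumption \textbf{(1)}''; specialising $\bs{Y}=\bs{Z}$ gives (ii), so (i)$\Rightarrow$(ii). Conversely, by the distributional identity above, (ii) reads $Q_{\bs{S}}^{(\bs{h}_2)}(f_n)\stackrel{\text{Law}}{\longrightarrow}Z(\lambda)$, so Corollary \ref{Univ2} applied with $\bs{h}=\bs{h}_2$ yields again ``$Q_{\bs{Y}}(f_n)\stackrel{\text{Law}}{\longrightarrow}Z(\lambda)$ for all such $\bs{Y}$'', and specialising $\bs{Y}=\bs{S}$ gives (i). Since both corollary applications produce literally the same intermediate statement, (i) and (ii) are simultaneously equivalent to it, which concludes the argument.

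I do not expect a genuine obstacle inside this proof: all the analytic substance is already packaged in Corollary \ref{Univ2}, hence in Theorem \ref{Poiss2} and the invariance principle of Theorem \ref{Multiinvariance2}. The only points requiring care are bookkeeping ones — checking that $\bs{h}=(1,\dots,1)$ (for which $U_1$ is the identity) is a legitimate, if degenerate, choice of Chebyshev orders, so that Corollary \ref{Univ2} applies with full force and its conclusion ranges over \emph{all} admissible entry sequences, and the mild normalisation point noted above. Conceptually, the consistency of the two applications reflects the fact that the Fourth-Moment conditions on $f_n$ characterising $Q_{\bs{S}}^{(\bs{h})}(f_n)\to Z(\lambda)$ in Theorem \ref{ConvFreePoissonPARI} do not depend on $\bs{h}$; in particular the vanishing of $\|f_n\star_{\frac{d}{2}+1}^{\frac{d}{2}}f_n\|$ is forced by the remaining contraction conditions via Lemma \ref{stimacontr} together with the implication $Q_{\bs{S}}(f_n)\to Z(\lambda)\Rightarrow\tau_n\to 0$ supplied by Lemma \ref{magg}. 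None of this has to be redone here; alternatively, one could bypass the universality machinery entirely and simply invoke Theorem \ref{ConvFreePoissonPARI} for $\bs{h}_1$ and for $\bs{h}_2$ and observe that the two lists of conditions coincide.
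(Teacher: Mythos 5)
Your argument is correct: the identification of a freely independent centered free Poisson$(1)$ sequence with $\{U_2(S_i)\}_{i\geq 1}$, so that $Q_{\bs{Z}}(f_n)\stackrel{\text{Law}}{=}Q_{\bs{S}}^{(2,\dots,2)}(f_n)$, together with the parity checks ($d$ and $2d$ even) and the second-moment computation $\varphi\big((Q_{\bs{S}}^{(\bs{h})}(f_n))^2\big)=\|f_n\|^2\to\lambda$, does make both applications of Corollary \ref{Univ2} legitimate, and the two equivalences combine exactly as you say. The difference with the paper is one of routing: you take the universality statement (Corollary \ref{Univ2}, hence Theorem \ref{Poiss2} and the invariance principle of Theorem \ref{Multiinvariance2}) as the main engine and pass twice through the quantifier ``for every admissible entry sequence $\bs{Y}$'', whereas the paper obtains Corollary \ref{coroll2} from the lighter observation you only mention as an alternative at the end: by Theorem \ref{ConvFreePoissonPARI} together with Propositions \ref{contraction5}--\ref{contraction6} and Lemma \ref{stimacontr}, the contraction conditions on $f_n$ characterizing $Q_{\bs{S}}^{(\bs{h})}(f_n)\stackrel{\text{Law}}{\longrightarrow}Z(\lambda)$ do not depend on $\bs{h}$, so the case $\bs{h}=(1,\dots,1)$ and the case $\bs{h}=(2,\dots,2)$ are directly equivalent (this is precisely the unnamed corollary on orders preceding Corollary \ref{coroll2}), and the free Poisson identification does the rest. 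Your route buys nothing extra here but is not wasteful either, since all statements invoked are already in place; its only cost is the mild normalization point you correctly flag, namely that Corollary \ref{Univ2} is phrased for admissible (unit-variance) kernels while Corollary \ref{coroll2} only assumes $\|f_n\|^2\to\lambda$ --- a harmless rescaling that the paper's FMT-based route glosses over in exactly the same way, since Theorem \ref{ConvFreePoissonPARI} reduces to the Wigner-chaos fourth-moment theorem, which requires no exact normalization.
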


\begin{rmk}
If $\bs{Z}=\{Z_i\}_{i \geq 1}$ is a sequence of freely independent centered random variables with the free Poisson distribution of parameter $1$, the random variable $Q_{\bs{Z}}(f_n)$ belongs to the so called \textit{Free Poisson algebra}. 
In regard to free Poisson approximations, the reference \cite[Theorem 1.5]{Solesne2} extends the  transfer principle between Free Poisson and Wigner homogeneous sums (of even degree $d$), stated with Corollary \ref{coroll1}, to integrals of tamed mirror symmetric functions in $\mathrm{L}^2(\mathbb{R}_+^{d})$. Moreover,  \cite[Theorem 1.5]{Solesne2} provides a general counterexample showing that the transfer principle fails for free Poisson approximations of integrals of odd order.\\
\end{rmk}

\begin{exm}
As an application of Corollary \ref{coroll1}, consider the homogeneous sum:
$$ Q_{\bs{x}}(f_n) = \dfrac{1}{\sqrt{2n-2}}\sum_{i=2}^{n}(x_1 x_i + x_i x_1).$$
As shown in \cite{NourdinDeya} in the first counterexample, if $\bs{S}=\{S_i\}_{i \geq 1}$ denotes a sequence of freely independent, standard semicircular random variables, 
$Q_{\bs{S}}(f_n)$ converges in law to $\frac{1}{\sqrt{2}}(s_1 s_2 + s_2 s_1)$, where $s_1,s_2$ are freely independent standard semicircular random variables, and therefore its limit is  Tetilla distributed\index{Tetilla law}. Corollary \ref{coroll1} gives the additional information that $Q_{\bs{Z}}(f_n)$, $Z \sim Z(1)$, cannot converge towards the semicircular law or the free Poisson law, as well as any other sequence $\{Q_{\bs{Y}}(f_n)\}_{n\geq 1}$, for $Y \stackrel{\text{Law}}{=} U_h(S)$, for any $h\geq 3$. 

Moreover, remark that with the same counterexample, the authors were meant to show that the free symmetric Rademacher law $\frac{1}{2}(\delta_1 + \delta_{-1})$ is not universal for semicircular approximations of homogeneous sums. Indeed, the authors proved that if $\bs{X}=\{X_i\}_{i\geq 1}$ is a sequence of freely independent Rademacher random variables, then $Q_{\bs{X}}(f_n)$ has asymptotically semicircular distribution. This is consistent with the fact that the free Rademacher law is not admissible for any chaotic random variable of the type $U_n(S)$, and it implies in turn that the Tetilla law cannot be a universal limit law for homogeneous sums in any sequence of freely independent random variables: some restrictions might be necessary (in this regard, for instance, Remark \ref{Extension_Tetilla}).\\
\end{exm}

\begin{rmk}
By considering the estimate (\ref{magg1}), it follows that if $d\geq 2$, and $f_n:[n]^d \rightarrow \mathbb{R}$ is a sequence of symmetric admissible kernels, satisfying $ \|f_n \stackrel{d-1}{\smallfrown} f_n \| \rightarrow 0$ as $n \rightarrow \infty$, then the asymptotic distribution of $Q_{\bs{X}}^{(\bs{h})}(f_n)$  for any  vector of orders $(\bs{h})$ (and, in particular, that of $Q_{\bs{X}}(f_n)$), never depends on the distribution of the sequence $\bs{X}=\{X_i\}_{i \geq 1}$. 
In order to provide an instance where the universality behaviour does not occur, consider the homogeneous sum of the previous counterexample:
$$ Q_{\bs{X}}(f_n) = \dfrac{1}{\sqrt{n-2}}\sum_{i=2}^{n}(X_1 X_i + X_i X_1) = \sum_{i,j=1}^{n}f_n(i,j)X_i X_j,$$
with
$$ f_n(i,j) = 
\begin{cases}
0 & \text{ if } i,j \neq 1, \\
0 & \text{ if } i = j,\\
\dfrac{1}{\sqrt{n-2}} & \text{ if } i \neq j, \text{and } 1 \in \{i,j\}.
\end{cases}
$$
Simple computations give:
\begin{align*}
\|f_n \stackrel{1}{\smallfrown} f_n\|^{2} &= \sum_{i,j=1}^{n}\big(f_n  \stackrel{1}{\smallfrown} f_n(i,j)\big)^{2} = \sum_{i,j=1}^n \bigg(  \sum_{k=1}^n f_n(i,k)f_n(k,j)\bigg)^2 \\
&= \sum_{i,j=2}^{n}\Big( f_n(i,1)f_n(1,j)\Big)^2 + \bigg(\sum_{k=2}^{n}f_n(1,k)f_n(k,1)\bigg)^2 \\
&= 2\dfrac{(n-1)^2}{(n-2)^2} \, ,
\end{align*}
so that $\lim\limits_{n\rightarrow \infty} \|f_n \stackrel{1}{\smallfrown} f_n\| = 2 \neq 0$, and in turn we can conclude that $Q_{\bs{S}}(f_n)$ does not have asymptotic semicircular law, if $\bs{S}$ denotes a sequence of freely independent standard semicircular random variables. As to influence functions,
\begin{itemize}
\item[-] $\mathrm{Inf}_1(f_n) = 2 \sum\limits_{j=2}^{n}f_n(1,j)^{2} = 2 \sum\limits_{j=2}^{n}\dfrac{1}{n-2} = 2\dfrac{n-1}{n-2}$;
\item[-] for every $i=2,\dots,n$, $\mathrm{Inf}_i(f_n) = 2\sum\limits_{j=1}^{n}f_n(i,j)^{2} = 2\;f_n(i,1)^{2} = \dfrac{2}{n-2},$
\end{itemize}
so that $\tau_n = \max\limits_{i=1,\dots,n}\mathrm{Inf}_i(f_n) = \mathrm{Inf}_1(f_n)$ does not vanish in the limit.\\
\end{rmk}

\subsection{The commutative counterpart: Hermite sums}

Consider the (monic) Hermite polynomials\index{Hermite polynomials}: 
$$H_0(x)=1, H_1(x)=x, \quad H_{n+1}(x) = xH_n(x) -nH_{n-1}(x) \quad \forall n\geq 1,$$
and recall that it forms the (unique) family of  polynomials that is orthogonal with respect to the Gaussian distribution. Let $\bs{X}= \{X_i\}_{i \geq 1}$ be a sequence of independent random variables on a fixed classical probability space $(\Omega, \mathcal{F}, \mathbf{P})$. As we shall see in Part \ref{FMT}, in the commutative setting, \textit{admissible kernels} for homogeneous sums are symmetric functions vanishing on diagonals (see Definition \ref{Admissible_Classic}). 

\begin{defn}
The \textit{Hermite sum} \index{Hermite polynomials!Hermite sum} of orders $\bs{m} = (m_1,\dots,m_d)$, and symmetric admissible coefficient $f:[n]^d \rightarrow \mathbb{R}$ is a random variable based on $\bs{X}$ of the type:
\begin{equation*}
Q_{\bs{X}}^{(\bs{m})}(f) =\sum_{i_1,\dots,i_d=1}^n f(i_1,\dots,i_d) H_{m_1}(X_{i_1})\cdots H_{m_d}(X_{i_d}). 
\end{equation*}
\end{defn}

Note that, with the language of ensembles, and adapting the notation introduced in \eqref{ensemble} and in \eqref{HomInEnsemble}, one can also write:
$$Q_{\bs{X}}^{(\bs{m})}(f)= Q_{\bs{\mathcal{X}}^{(n)}}(f),$$
where $\bs{\mathcal{X}}^{(n)} = (\bs{\mathcal{X}}_{1},\dots, \bs{\mathcal{X}}_{n}), \quad \bs{\mathcal{X}}_{i} = \{H_{m_1}(X_i),\dots, H_{m_d}(X_i)\}$.

For $d\geq 2$, assume that for the given integers $m_1,\dots,m_d$, the sequence $\bs{X}=\{X_i\}_{i\geq 1}$ of independent random variables is such that, for every $j$, $H_{m_j}(X_i)$ is centered, has unit variance and uniformly bounded third moments (namely, that there exists a constant $B>0$ such that $\sup\limits_{i\geq 1}\mathbb{E}[|H_{m_j}(X_i)|^{3}] < B$ for all $j=1,\dots, d$). Under these assumptions, the ensembles $\bs{\mathcal{X}}_{i} = \{H_{m_1}(X_i),\dots, H_{m_d}(X_i)\}$ are $(2,3,\eta)$-hypercontractive (in the sense of \cite{Mossel}), and therefore Theorem \ref{invMossel} can be exploited to produce the commutative counterpart to Theorem \ref{MultUniversality}, in the sense of the forthcoming Proposition \ref{HermiteSum}. \\

Let  $\mathcal{H}$ be a (separable) real Hilbert space, and let $\{e_j\}_{j\geq 1}$ be one orthonormal basis. Consider an isonormal Gaussian process  $G = \{G(h): h \in \mathcal{H}\}$ (note that, for a given covariance function, there exists an isonormal Gaussian process determined by the given covariance, see \cite{NourdinPeccatilibro}). Then, if $G_j = G(e_j)$, $G_j \sim \mathcal{N}(0,1)$. It is a standard result that $\frac{1}{n!}H_{n}(G_i) = I_{n}^{G}(e_i^{\otimes n})$ is centered, has unit variance, and satisfies a hypercontractivity property (see, for instance, \cite{NourdinPeccatilibro}).  Therefore, the sequence $\bs{N}=\{N_i\}_{i\geq 1}$ is a sequence of independent standard normal variables. Moreover, the Hermite sum  $Q_{\bs{\mathcal{N}}^{(n)}}(f)$ of orders $\bs{m}=(m_1,\dots,m_d)$, based on the sequence $\bs{N}$, satisfies  $Q_{\bs{\mathcal{N}}^{(n)}}(f) = I_M^{G}(k(f))$, with $k(f)$ as in (\ref{kN1}), $M=m_1 + \cdots + m_d$. Note that, in general, the symmetry of the kernel $f$ does not imply the symmetry of  $k:=k(f)$, but if $\widetilde{k}$ denotes its standard symmetrization, then $I_M^{G}(k) = I_M^{G}(\widetilde{k})$ (see \cite[Chapter 5.5]{PeccatiTaqqu}).\\

In conclusion, all the \textit{fourth moment}-type statements for Normal and Gamma  approximations of chaotic random variables  (\cite{NualartPeccati}, \cite[Theorem 1.2]{NourdinPeccati2}), imply the corresponding universality results for Hermite sums, in the following sense.

\begin{prop}\label{HermiteSum}
Let $f_n:[n]^d \rightarrow \mathbb{R}$ be a sequence of symmetric admissible kernels. If the above notation prevails, the following statements are equivalent as $n \rightarrow \infty$:
\begin{itemize}
\item[(i)] $Q_{\bs{N}}^{(\bs{m})}(f_n) \stackrel{\text{Law}}{\longrightarrow} \mathcal{N}(0,1) $;
\item[(ii)] for every sequence $\bs{Y}= \{Y_i\}_{i\geq 1}$ of i.i.d. centered random variables, with unit variance, $Q_{\bs{Y}}(f_n) \stackrel{\text{Law}}{\longrightarrow} \mathcal{N}(0,1)$.
\end{itemize}
\end{prop}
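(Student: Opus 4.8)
The plan is to transcribe, in the classical setting and for $m=1$, the proof of Theorem \ref{MultUniversality}, replacing the free invariance principle of Theorem \ref{Multiinvariance2} by Mossel's invariance principle (Theorem \ref{invMossel}) and the free Fourth Moment Theorem \ref{knps} by the Nualart--Peccati criterion \cite{NualartPeccati}. Write $M=m_1+\cdots+m_d$. As recalled in the text, $Q_{\bs{N}}^{(\bs{m})}(f_n)=Q_{\bs{\mathcal{N}}^{(n)}}(f_n)=I_M^{G}(k(f_n))$ with $k(f_n)$ as in \eqref{kN1} (replacing $k(f_n)$ by its symmetrization does not alter the integral). Moreover, choosing $\bs{Y}=\bs{N}$ in statement (ii) is admissible, and since $f_n$ vanishes on diagonals the product $N_{i_1}\cdots N_{i_d}$ in $Q_{\bs{N}}(f_n)$ carries no contraction terms, so $Q_{\bs{N}}(f_n)=I_d^{G}(f_n)$ with $f_n$ viewed as a (fully symmetric) element of $\mathcal{H}^{\otimes d}$. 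Throughout, $\tau_n=\max_{i}\mathrm{Inf}_i(f_n)$.

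\textbf{A Fourth Moment Theorem for Hermite sums.} The first step is the Gaussian analogue of Theorem \ref{ConvChebySemicircular}: combining the Nualart--Peccati criterion \cite{NualartPeccati} (in its contraction form, for the Wiener chaos of order $M$) with Proposition \ref{contraction5}, Proposition \ref{contraction6} and Lemma \ref{stimacontr}, one obtains, as $n\to\infty$,
\[
Q_{\bs{N}}^{(\bs{m})}(f_n)\stackrel{\mathrm{Law}}{\longrightarrow}\mathcal{N}(0,1)\quad\Longleftrightarrow\quad \|f_n\stackrel{q}{\smallfrown}f_n\|\longrightarrow 0\ \ \text{for all }q=1,\dots,d-1 ,
\]
and likewise, specializing to $\bs{m}=(1,\dots,1)$, $Q_{\bs{N}}(f_n)\stackrel{\mathrm{Law}}{\longrightarrow}\mathcal{N}(0,1)$ is equivalent to the same set of contraction conditions. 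In particular, either of these convergences forces $\|f_n\stackrel{d-1}{\smallfrown}f_n\|\to 0$, whence, by \eqref{magg1} in Lemma \ref{magg}, $\tau_n\le d\,\|f_n\stackrel{d-1}{\smallfrown}f_n\|\to 0$.

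\textbf{Transfer and conclusion.} The Hermite ensembles $\bs{\mathcal{N}}_i=\{H_{m_1}(N_i),\dots,H_{m_d}(N_i)\}$ (and a fortiori the degenerate ensemble $\{N_i\}$) are $(2,3,\eta)$-hypercontractive by Nelson's Gaussian hypercontractivity, so Theorem \ref{invMossel} applies with $\bs{\mathcal{X}}^{(n)}=\bs{\mathcal{N}}^{(n)}$: whenever $\tau_n\to 0$, for every i.i.d.\ centered unit-variance sequence $\bs{Y}$ and every $\psi\in C^3$ with bounded third derivative, $\mathbb{E}[\psi(Q_{\bs{N}}^{(\bs{m})}(f_n))]-\mathbb{E}[\psi(Q_{\bs{Y}}(f_n))]=\mathcal{O}(\sqrt{\tau_n})\to 0$. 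For $(i)\Rightarrow(ii)$: (i) gives $\tau_n\to 0$ by the previous paragraph, and $\mathbb{E}[\psi(Q_{\bs{N}}^{(\bs{m})}(f_n))]\to\mathbb{E}[\psi(Z)]$ with $Z\sim\mathcal{N}(0,1)$; since $\mathbb{E}[Q_{\bs{Y}}(f_n)^2]=\|f_n\|^2=1$ provides tightness, a standard density argument yields $Q_{\bs{Y}}(f_n)\stackrel{\mathrm{Law}}{\longrightarrow}\mathcal{N}(0,1)$. For $(ii)\Rightarrow(i)$: taking $\bs{Y}=\bs{N}$ in (ii) gives $Q_{\bs{N}}(f_n)\stackrel{\mathrm{Law}}{\longrightarrow}\mathcal{N}(0,1)$, hence $\|f_n\stackrel{q}{\smallfrown}f_n\|\to 0$ for all $q$ by the Fourth Moment Theorem above, hence (again by that theorem, now in the $\bs{m}$-version) $Q_{\bs{N}}^{(\bs{m})}(f_n)\stackrel{\mathrm{Law}}{\longrightarrow}\mathcal{N}(0,1)$; alternatively, one reapplies Theorem \ref{invMossel} to transfer the limit of $Q_{\bs{N}}(f_n)$ to $Q_{\bs{\mathcal{N}}^{(n)}}(f_n)=Q_{\bs{N}}^{(\bs{m})}(f_n)$ since $\tau_n\to 0$.

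\textbf{Main obstacle.} The only genuinely nontrivial ingredient is the Fourth Moment step, i.e.\ carrying the Nualart--Peccati contraction conditions from the kernel $k(f_n)\in\mathcal{H}^{\otimes M}$ of the multiple Wiener integral down to the discrete kernel $f_n$ on $[n]^d$; this is exactly what Propositions \ref{contraction5}--\ref{contraction6} provide, and Lemma \ref{stimacontr} is needed because the star-contractions of $f_n$ also occur among the norms $\|k(f_n)\otimes_r k(f_n)\|$ and must be dominated by the ordinary contractions, closing the equivalence. Everything else is a faithful copy of the proof of Theorem \ref{MultUniversality}. A minor point to keep track of is that Theorem \ref{invMossel} has to be invoked with the hypercontractive (Hermite/Gaussian) ensemble in the role of $\bs{\mathcal{X}}^{(n)}$, so that no moment assumption beyond centering and unit variance is placed on the arbitrary i.i.d.\ sequence $\bs{Y}$.
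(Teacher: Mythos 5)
Your proposal is correct and follows essentially the same route as the paper, which justifies Proposition \ref{HermiteSum} exactly by combining Gaussian hypercontractivity of the Hermite ensembles with Theorem \ref{invMossel}, the identification $Q_{\bs{N}}^{(\bs{m})}(f_n)=I_M^G(k(f_n))$ (up to symmetrization), and the Nualart--Peccati fourth moment criterion transferred to the discrete kernels via Propositions \ref{contraction5}--\ref{contraction6}, Lemma \ref{stimacontr} and Lemma \ref{magg}, in perfect parallel with the proof of Theorem \ref{MultUniversality}. Your write-up is in fact somewhat more explicit than the paper's own argument (which leaves the contraction bookkeeping for the symmetrized kernel implicit), but the strategy and the ingredients coincide.
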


A similar statement holds whenever $d\geq 2$ is even,  and for the given sequence of symmetric admissible kernels, $\lim\limits_{n\rightarrow \infty}\E[Q_{\bs{X}}^{(\bs{m})}(f_n)^2] = 2\nu, \;\lim\limits_{n\rightarrow \infty}\E[Q_{\bs{X}}^{(\bs{m})}(f_n)^3] = 8\nu$, for a given  $\nu >0$, and whenever the target distribution is replaced by $F(\nu)=2\Gamma(\frac{\nu}{2}) - \nu$, with $\Gamma(\nu)$ denoting the Gamma distribution $\Gamma(\nu,1)$ (see, for comparison, Theorem \ref{GenGammaAppr}).

In particular, the choice $m_j= m \geq 1$ for all $j=1,\dots,d$, establishes  that homogeneous sums based on chaotic random variables of the form $H_m(N_i)$, $N_i\sim \mathcal{N}(0,1)$,  behave universally for both Gaussian and Gamma approximations of homogeneous sums, extending \cite[Theorem 1.10 and Theorem 1.12]{NourdinPeccatiReinert}, that correspond to $m=1$.

\begin{rmk}The same consequences will be partially recovered through a different approach in Part \ref{FMT}, Chapter \ref{Classic}, where a whole class of universal laws will be provided.
\end{rmk}

%

\thispagestyle{empty}
\part{A general \textit{Fourth Moment} criterion}\label{FMT}

\chapter*{Synopsis}
\addcontentsline{toc}{chapter}{Synopsis}

The topics covered in the present part are taken from \cite{NPPS}.\\

In the following, the focus will be on statistics having the form of homogeneous sums:
$$ Q_{\X}(f) = \sum_{i_1,\dots,i_d=1}^n f(i_1,\dots,i_d) X_{i_1}\cdots X_{i_d},$$
where $d\geq 2$, $f:[n]^d \rightarrow \mathbb{R}$ is a symmetric function, such that $f(i_1,\dots,i_d)=0$ if $i_j=i_k$ for $j \neq k$,  and $\X=\{X_i\}_{i\geq 1}$ denotes a sequence of independent copies of a random variable $X$, defined on  a classical probability space $(\Omega,\mathcal{F},\mathbb{P})$.\\

In \cite{deJong2}, it is shown that the Central Limit Theorem (CLT, for short) holds for a sequence  of the type $Q_{\X}(f_n)$, if $X$ has finite fourth moment and under the assumptions:
\begin{enumerate}
\item $\tau_n:=\max\limits_{i=1,\dots,n}\sum\limits_{i_2,\dots,i_d=1}^n f_n(i,i_2,\dots,i_d)^2 \rightarrow 0$ as $n\rightarrow \infty$;
\item $\E[Q_{\X}(f_n)^4]\rightarrow 3 = \E[N^4], N \sim \mathcal{N}(0,1)$,  as $n\rightarrow \infty$,
\end{enumerate}
generalizing the Lindberg condition for linear random sums (see, for instance, \cite[Theorem 11.1.1]{NourdinPeccatilibro}); this statement is customarily referred to as \textit{de Jong's Criterion} for central convergence. \\

Gaussian homogeneous sums, that is, random variables of the type $Q_{\mathbf{N}}(f)$, where $\mathbf{N}=\{N_i\}_{i \geq 1}$ is a sequence of independent Gaussian random variables, are, in this regard, special. Indeed, in 2005, Nualart and Peccati proved that, when dealing with the Gaussian Wiener Chaos, the vanishing condition on $\tau_n$ can be dropped, and hence the convergence of the fourth moments  is sufficient for the normal approximation of multiple Wiener integrals (see \cite[Theorem 1]{NualartPeccati} for the original statement). This result is usually referred to as the \textit{Fourth Moment Theorem}, or \textit{Nualart-Peccati Criterion}, and represents a useful simplification of the method of moments and cumulants, which is in general employed to prove convergence in law when the target law is determined by its moments. A similar result holds for the Poisson homogeneous Chaos, that is, for random variables of the type $Q_{\mathbf{P}}(f_n)$, with $\mathbf{P}$ being a sequence of independent random variables with the Poisson distribution (see \cite[Theorem 3.2]{PeccatiZheng1} for the original statement).\\
When the convergences of the second and of the fourth moments are sufficient for the CLT to hold for a sequence $\{Q_{\bs{X}}(f_n)\}_{n\geq 1}$, it will be customarily said that the \textit{fourth moment phenomenon} occurs for $X$ (see the forthcoming Definition \ref{defcom}). The question under consideration in the sequel is the following: are there other examples of \textit{fourth moment phenomenon}, other than the Gaussian and the Poisson Chaos? \\

The Gaussian and the Poisson Wiener Chaos share another peculiar feature: the \textit{universality}, in the sense that, if central convergence is established for a sequence of Gaussian (resp. Poisson) homogeneous sums, then one can obtain the same asymptotic behaviour by replacing the Gaussian (resp. Poisson) sequence with another sequence of independent and identically distributed variables (satisfying some minimal moment assumptions). The analysis of the universality properties within Gaussian Wiener Chaos is addressed in \cite{NourdinPeccatiReinert}, where the authors examine normal and $\chi^2$-approximations, both in the unidimensional and the multidimensional setting. The Poisson counterpart for normal approximations has been established  in \cite[Theorems 3.4, 3.8]{PeccatiZheng1}. As a consequence of Proposition \ref{HermiteSum} of Part \ref{Invariance}, analogous statements hold for homogeneous sums in independent copies of $H_m(N)$, for every $m\geq 1$, $N\sim \mathcal{N}(0,1)$, and $H_m(x)$ denoting the $m$-th Hermite polynomial. Beyond the analysis of the fourth moment phenomenon, in the following we will seek for
 the properties of the distribution of $X$ that entails the universality phenomenon for a sequence $Q_{\X}(f_n)$, trying to determine  if there is any dependence with the Fourth Moment Theorem.\\

The discussion will also cover the non-commutative setting. If $Y$ is a random variable in a fixed free probability space $(\mathcal{A},\varphi)$,  consider the random variable:
$$ Q_{\Y}(f) = \sum_{i_1,\dots,i_d=1}^n f(i_1,\dots,i_d) Y_{i_1}\cdots Y_{i_d},$$
where $\Y=\{Y_i\}_{i\geq 1}$ is a sequence of freely independent copies of $Y$, and $f:[n]^d \rightarrow \mathbb{R}$ is a suitable coefficient. By virtue of Theorems \ref{knps} and \ref{invnoncom} of Part \ref{Invariance}, it is known that when $Y$ has the standard semicircle law ($Y\sim \mathcal{S}(0,1)$ for short), both the fourth moment and the universality phenomena  occur for semicircular approximations of homogeneous sums. As a consequence of Corollaries \ref{Univ1}, \ref{Univ2} of Part \ref{Invariance}, the universality property holds, in general, for homogeneous sums in freely independent random variables distributed according to the law of $Y= U_k(S)$, for every $k\geq 1$, with $U_k(x)$ denoting the $k$-th Chebyshev polynomial (of the second kind), and $S \sim \mathcal{S}(0,1)$. \\
Similarly to the commutative setting, we will focus on the properties of the distribution of $Y$, that determine the Fourth Moment  and the universality phenomena  for $Q_{\Y}(f_n)$. \\

The main achievements of the present Part are stated in Theorems \ref{superTeo1} and \ref{superTeo2}, where a general \textit{fourth moment criterion} for homogeneous sums is provided and, in the meantime, new universal laws for central convergence are exhibited. Extensions homogeneous sums in independent, non necessarily identically distributed random variables, and to non-central convergence, are also provided. The whole discussion covers both the classical (Chapter \ref{Classic}) and the free setting (Chapter \ref{Free}). \\

Remark that, while in Part \ref{Invariance} the universality phenomenon was described starting from an invariance principle for non-commutative spaces, here the proofs rely on new combinatorial formulae for the kurtosis of $Q_{\X}(f)$ and $Q_{\Y}(f)$ (see Propositions \ref{formulaGauss} and \ref{formula} respectively, for the classical and the free setting), which are of independent interest. More precisely, the answers to our questions are found by showing that  a sufficient condition for both the fourth moment and the universality phenomena to occur is the non-negativity of the kurtosis of $X$ (resp. $Y$). Trivially this condition is fulfilled for the Gaussian and the semicircular law.\\ 

As a consequence, the multidimensional transfer principle between Wiener and Wigner Chaos, stated in \cite[Theorem 1.6]{NouSpeiPec}, can be extended to a transfer principle between homogeneous sums in independent random variables, having non-negative kurtosis in classical probability spaces, and homogeneous sums in freely independent random variables, with non-negative free kurtosis, in free probability spaces (see Theorem \ref{Transfer} in the sequel). The main step to be accomplished in this direction is showing that componentwise and joint central convergence are equivalent not only for Gaussian/semicircular homogeneous sums \cite{PeccatiTudor,NouSpeiPec}, but for all homogeneous sums in independent random variables with non-negative kurtosis, in both the commutative and non-commutative framework (see Theorems \ref{ComponentJoint} and \ref{ComponentJointFree} respectively). \\ 

Finally, in the last chapter, the optimality of the conditions provided with Theorems \ref{superTeo1} and \ref{superTeo2} is discussed, and the \textit{problem of thresholds} is introduced. More precisely,  Theorem \ref{ThresholdClassic} (respectively, Theorem \ref{ThresholdFree} in the free setting), proves the existence of a lower bound $r_d$ for the fourth moment of $X$ (resp. $Y$), such that the fourth moment of $X$ being greater that $r_d$ is also a necessary condition for the Fourth Moment Theorem to hold, for homogeneous sums of degree $d$, in independent copies of $X$ (resp. freely independent copies of $Y$).\\

\section*{Bibliographic comments}

Since the pioneering works of \cite{Hoeffding}, normal approximations of $U$-statistics is a crucial area of research. \\

In the classical probability setting, the first CLTs subjected to fourth moment conditions were provided in \cite{deJong1}, for quadratic sums, and in \cite{deJong2} for higher order  homogeneous sums $Q_{\bs{X}}(f_n)$: if the coefficients $f_n$  verify a Lindberg-type condition,  and if $X$ has finite fourth moment, the convergence of the fourth moments $\E[Q_{\bs{X}}(f_n)^4] \rightarrow 3$ is sufficient for the CLT to hold for $Q_{\bs{X}}(f_n)$. In view of Theorem \ref{superTeo1},  the Lindberg-type condition can be dropped whenever $X$ has a fourth moment that is equal or greater than 3, and zero third moment. Further recent developments around de Jong's theorems have appeared in \cite{the, peth}.\\

As already remarked in the Introduction, the first improvement of de Jong's Theorem concerns Gaussian random fields: in \cite{NualartPeccati}, the authors established the Fourth Moment Theorem for random variables living in the Wiener Chaos,  through the combination of the Malliavin Calculus and the Stein's method (see \cite{NP_ptrf}), a technique that has allowed ever since a better understanding of the fourth moment phenomenon, and of related topics: see, for instance, \cite{web} for an update collection of results on the subject.  The reader can consult \cite{Guillaume1} for an alternative simple proof of the results of \cite{NualartPeccati}, and also \cite{Nourdin} for another proof of the Nualart-Peccati Criterion based on multiplication formulae for multiple Wiener integrals. Extension of the Fourth Moment Theorem can be found in \cite{Guillaume2}, to higher moments, in \cite{NourdinPeccatiReveillac, PeccatiTudor} for the multidimensional case, and in \cite{NourdinPeccatiSwan} for an information-theoretical setting, where entropic bounds are provided for the multidimensional Fourth Moment Theorem on Wiener Chaos. A self-contained introduction to the subject is contained in the monograph \cite{NourdinPeccatilibro}. \\ 
Other than for Gaussian fields, the fourth moment phenomenon has been studied for the Poisson Wiener Chaos: see \cite{PeccatiZheng1, PeccatiZheng2} for normal approximations of Poisson homogeneous sums and \cite{the, Peccati_Lachieze} for an analogous analysis for more general functionals of Poisson measures having the form of finite sums of multiple integrals with constant-sign kernels, with applications to geometric random graphs. See also \cite{PeccatiSole} for bounds in CLTs for Poisson functionals, involving Stein's method and Malliavin Calculus. Further generalizations include the Fourth Moment Theorem for Markov diffusion generators \cite{Guillaume2}, and for infinitely divisible laws \cite{Arizmendi1}:  see, also, \cite{Arizmendi1bis} for quantitative estimates for the Kolmogorov distance between infinitely divisible laws and the normal (resp. semicircular) law, assessing the distance between the corresponding fourth moments.  \\
  
In the free probability setting, the analysis of the \textit{fourth moment phenomenon} for non-linear functionals of a free Brownian motion started in \cite{KempNourdinPeccatiSpeicher}, where the authors provided the non-commutative counterpart to the findings in \cite{NualartPeccati} and deal with stochastic analysis via the free version of the Malliavin Calculus, introduced in \cite{BianeSpeicher}.
Extensions of the Fourth Moment Theorem are provided in \cite[Theorem 1.7]{qbrownian} for multiple integrals with respect to a  $q$-Brownian motion, and in \cite[Theorem 4.1]{Solesne} for the free Poisson Chaos.  See, moreover, \cite[Theorem 1.3]{NouSpeiPec} for a multidimensional version of the Fourth Moment Theorem for semicircular approximations, as well as \cite{Arizmendi1} for the Fourth Moment Theorem for freely infinitely divisible laws. New universality results for homogeneous sums  have been discussed in Part \ref{Invariance}. Recently, Poisson limits on the free Poisson algebra, in terms of fourth moment conditions,  have been studied in \cite{Solesne2}. \\


\chapter{The classical probability setting: Fourth Moment Theorem and universality}\label{Classic}

Throughout the present chapter, $(\Omega, \mathcal{F},\mathbb{P})$ will denote a fixed probability space, and $\mathbb{E}$ the corresponding expectation. 
\section{Preliminaries}
For every $n\in \mathbb{N}$, set $[n] := \{1,\dots,n\}$.  Here, the definition of admissible kernels has to be slightly revisited to ensure that the corresponding homogeneous sums are suitably scaled. Moreover, a stronger symmetry assumption will be required.

\begin{defn}\label{Admissible_Classic}
Let $d\geq 2$. For $n \in \mathbb{N}$, an \textbf{admissible kernel} is a function $f:[n]^d \to \mathbb{R}$  satisfying the following properties:
\begin{enumerate}
\item[(i)] vanishing on diagonals: $f(i_1,\dots,i_d)=0$ whenever  $i_j=i_k$ for some $k\neq j$;
\item[(ii)] symmetry: $f(i_1,\dots,i_d)=f(i_{\sigma(1)},\dots,i_{\sigma(d)})$ for any permutation $\sigma \in \mathfrak{S}_d$, and any \linebreak$(i_1,\dots,i_d)\in [n]^d$;
\item[(iii)] $f$ has unit variance: $d!\sum\limits_{i_1,\dots,i_d =1}^n f(i_1,\dots,i_d)^2=1\, .$\\
\end{enumerate}
\end{defn}

For instance, for $d=2$, the kernel  $f(i,j) =  {\bf 1}_{\{i\neq j\}}/\sqrt{2n(n-1)}$ is admissible.

\begin{rmk} Note that the assumptions $(ii)$ and $(iii)$ are matter of convenience: indeed, given a function $f:[n]^d\to\mathbb{R}$ verifying $(i)$, it is always possible to generate an admissible kernel $\tilde{f}$ by first symmetrizing $f$, and then by properly renormalizing it. \\
\end{rmk}

Let $X$ be a random variable defined on  $(\Omega,\mathcal{F},\mathbb{P})$, such that: 
\begin{itemize}
\item[-] $X$ is centered and has unit variance;
\item[-] $\E[X^3]=0$;
\item[-] there exists $\epsilon > 0$ such that $\E[X^{4+\epsilon}] < \infty$.
\end{itemize}
When $X$ satisfies these conditions, it will be said, for short, that $X$ satisfies Assumption {\bf (2)} (unless other specified, it will always be assumed that $X$ satisfies Assumption {\bf (2)}).

Let $\X=\{X_i\}_{i\geq 1}$ be a sequence of independent copies of $X$ (i.i.d. for short)\footnote{As usual, it will be assumed that the $X_i$'s are defined over the same probability space.}. For any admissible kernel $f:[n]^d\to\mathbb{R}$, consider the statistics $Q_{\X}(f)$ defined by:
\begin{eqnarray}\label{e:alv}
Q_{\X}(f) &=&\sum_{i_1,\dots,i_d =1}^n f(i_1,\dots,i_d) X_{i_1}\cdots X_{i_d}.\label{F}
\end{eqnarray}
Note that, since $f$ is admissible and $X$ satisfies Assumption {\bf (2)},  then $\E[Q_{\X}(f) ]=0$ and $\E[Q_{\X}(f)^2]=1$.

\begin{defn}\label{defcom}
Let $X$ be a random variable verifying Assumption {\bf (2)}, and let $\X=\{X_i\}_{i\geq 1}$ be a sequence of independent copies of $X$. 
\begin{itemize}
\item[(a)] We say that $X$ \textbf{satisfies the  Fourth Moment Theorem at the order $d\geq 2$} if\index{Fourth Moment Theorem (classic)}, for every sequence $f_n:[n]^d\to\mathbb{R}$ of admissible kernels, the following statements are equivalent for $n\to\infty$:
\begin{enumerate}
\item[(i)] $Q_{\X}(f_n) \xrightarrow{\text{\rm Law}}\mathscr{N}(0,1)$;
\item[(ii)]  $\E[Q_{\X}(f_n)^4]\to \E[N^4]=3$, where $N \sim \mathcal{N}(0,1)$.
\end{enumerate}
\item[(b)] $X$ is said to be \textbf{universal at the order $d$} \index{Universal law, (classic)}(for normal approximations of homogeneous sums) if, for any sequence $f_n:[n]^d\to\mathbb{R}$ of admissible kernels, $Q_{\X}(f_n) \xrightarrow{\text{\rm Law}}\mathscr{N}(0,1)$ implies: 
$$ \tau_n(f_n) := \max_{ i=1,\dots, n} \mathrm{Inf}_i(f_n) \longrightarrow 0, \text{ as } n\to\infty,$$
where $\mathrm{Inf}_i(f_n):=\sum\limits_{i_2,\ldots,i_d=1}^n f_n(i ,i_2,\ldots,i_d)^2$ is the $i$-th \textbf{influence function} of $f_n$\index{Influence function}.\\
\end{itemize}
\end{defn}

Note that, if $X$ is universal at the order $d$, Theorem \ref{invMossel}, Part \ref{Invariance}, yields that the convergence $Q_{\X}(f_n) \xrightarrow{\text{\rm Law}}\mathscr{N}(0,1)$ implies $Q_{\mathbf{Z}}(f_n) \xrightarrow{\text{\rm Law}}\mathscr{N}(0,1)$  for every sequence $\mathbf{Z}$ of independent centered random variables with unit variance, and with uniformly bounded moments. \\

The goal pursued in the present chapter is inspired by the  groundbreaking works \linebreak\cite[Theorem 1]{NualartPeccati} and  \cite[Theorem 1.10]{NourdinPeccatiReinert}, where it is shown that the Gaussian distribution meets Definition \ref{defcom}, as summarized in the subsequent Theorems \ref{FMTClassic} and Theorem \ref{inv}, respectively (Lemma \ref{magg} has to be taken into account).

\begin{thm}\label{FMTClassic}
For a fixed $d \geq 2$, let $\{k_n\}_{n\geq 1}$ be a sequence of symmetric function in $\mathrm{L}^2({\mathbb{R}_+^{d}})$. As $n\rightarrow \infty$, if $\E[I_d^{W}(k_n)^2] \rightarrow 1$, the following conditions are equivalent:
\begin{itemize}
\item[(i)] $I_d^{W}(k_n) \stackrel{\text{Law}}{\longrightarrow} \mathcal{N}(0,1)$;
\item[(ii)] $\E[I_d^W(k_n)^4]\rightarrow \E[N^4] = 3, N \sim \mathcal{N}(0,1)$;
\item[(ii)] for every $r=1,\dots,d-1$, $\|k_n \otimes_r k_n \| \rightarrow 0$ (where the contraction $\otimes_r$ has been introduced in \eqref{GenContraction}).
\end{itemize}
\end{thm}

\begin{thm}\label{inv}
For a fixed $d\geq 2$, let $f_n:[n]^d\to\mathbb{R}$ be a sequence of admissible kernels as in Definition \ref{Admissible_Classic}. If $\bs{N} = \{N_i\}_{i \geq 1}$ is a sequence of independent standard Gaussian random variables, the following statements are equivalent as $n\to\infty$:
\begin{itemize}
\item[(i)]  $Q_{\bf N}(f_n) \xrightarrow{\text{\rm Law}}\mathscr{N}(0,1)$, 
\item[(ii)] $Q_{\X}(f_n) \xrightarrow{\text{\rm Law}}\mathscr{N}(0,1)$ for every sequence $\bs{X}$ of independent, centered random variables, having unit variance.\\
\end{itemize} 
\end{thm}

Fourth Moment Theorem and universality should be combined as follows: assume that one wishes to check if the central convergence holds for a sequence of homogeneous sums $Q_{\Y}(f_n)$, with $\bs{Y}$ being a sequence of i.i.d. random variables. By virtue of Definition \ref{defcom}, it is sufficient to check for the convergence of the fourth moments of $Q_{\X}(f_n)$ to $3$, where $\X$ is a sequence of independent copies of a random variable $X$, satisfying Assumption {\bf (2)},  the Fourth Moment Theorem, and being universal at the fixed order $d$. \\

Roughly speaking, the limit condition $\tau_n(f_n) \rightarrow 0$ implies a weak dependence structure between the arguments of $Q_{\X}(f_n)$ for large $n$: indeed, $\mathrm{Inf}_i(f_n)$ can be interpreted  as the measure of the influence that the variable $X_i$ has on the overall fluctuations of the statistic $Q_{\X}(f_n)$, as suggested by the formula:
\begin{equation*}
d \mathrm{Inf}_i(f_n) =  \dfrac{1}{d!}\E[\big(Q_{\X}(f_n)- \E[Q_{\X}(f_n)| X_k,k \neq i]\big)^2]\,\\
\end{equation*}
(here, $\E[W|Z]$ denotes the conditional expectation of $W$ with respect to $Z$).\\

\begin{rmk}[\textit{Hypercontractivity}]\label{hypercontractivity}
Let $\X=\{X_i\}_{i\geq 1}$ be a sequence of independent, centered random variables, with unit variance, \index{Hypercontractivity} and let $q >2$ be such that $\gamma:= \sup\limits_{i\geq 1}\mathbb{E}[|X_i|^q] < \infty$. 
Then, for every $d\geq 1$, and any admissible kernel $f:[n]^d \rightarrow \mathbb{R}$, the following inequality holds:
$$ \mathbb{E}[|Q_{\X}(f)|^q] \leq \gamma^d (2\sqrt{q-1})^{dq} \mathbb{E}[Q_{\X}(f)^2]^{\frac{q}{2}}$$
 (see \cite[Lemma 4.2]{NourdinPeccatiReinert} or \cite[Propositions 3.11, 3.12, 3.16]{Mossel}), ensuring the uniform integrability of the random variables $|Q_{\X}(f)|^r$, for every $r < q$. This, in turn, ensures that the convergence in law implies the convergence of the moments up to the order $q-1$. In particular, for a sequence $\bs{X}$ of random variables with uniformly bounded moments of every order, convergence in law implies the convergence of all the moments \cite[Chapter 6]{DasGupta}.\\\
\end{rmk}

The problem of finding other universal laws is addressed only for homogeneous sums of order $d\geq 2$, since it is well-known that there is no universality for linear polynomials. Indeed, for every integer $n$ and any collection of real numbers $f_n(i)$ such that $\sum\limits_{i=1}^n f_n(i)^2 =1$, the statistics $Q_{\NN}(f_n)=\sum\limits_{i=1}^n f_n(i) N_i $ is always normally distributed. On the other hand, and as already recalled in the previous chapters, for a CLT to hold for $Q_{\X}(f_n)$ it is necessary to require an additional \textit{Lindberg type condition}, such as, for instance, $\max\limits_{i=1,\dots,n}|f_n(i)| \to 0$ as $n\to \infty$ (see \cite[Theorem 11.1.1]{NourdinPeccatilibro}).

\section{Main results}

The principal result to be proven is stated in the next theorem.
\begin{thm}
\label{superTeo1}
Fix $d\geq 2$, and let $X$ be a random variable satisfying Assumption {\bf (2)}. If $\E[X^4]\geq 3$ (or, equivalently, $\chi_4(X)\geq 0$), then $X$ satisfies the Fourth Moment Theorem, and its law is universal at the order $d$ for normal approximations of homogeneous sums, in the sense of Definition \ref{defcom}.
\end{thm}

In order to prove Theorem \ref{superTeo1}, we need some preliminary considerations. 

If $f:[n]^d \rightarrow \mathbb{R}$ is an admissible kernel, for $m=1,\dots, d$ and every $j_1,\dots,j_m \in [n]$, consider the kernel 
$f(j_1,\dots,j_m,\cdot): [n]^{d-m} \rightarrow \mathbb{R}$ defined by 
$$(i_1,\dots,i_{d-m}) \mapsto f(j_1,\dots,j_m,i_1,\dots,i_{d-m}),$$ and the corresponding Gaussian homogeneous sum of order $d-m$:
$$Q_{\NN}(f(j_1,\dots,j_m,\cdot)) = \sum\limits_{i_1,\dots,i_{d-m}=1}^n f(j_1,\dots,j_m, i_1,\dots,i_{d-m})N_{i_{1}}\cdots N_{i_{d-m}} ,$$ 
where $\NN=\{N_i\}_{i\geq 1}$ denotes a sequence of i.i.d. standard Gaussian random variables on the fixed probability space. \\

In addition to the notation introduced in Section \ref{App1}, for $m = 0,\dots,d$,  denote by $\mathcal{P}_{\lambda_m}^{\star}(d^{\otimes 4})$ the set of the partitions in $\mathcal{P}([4d])$ that respect 
$$\pi^{\star}= d^{\otimes 4}= 1\cdots d | (d+1)\cdots 2d| (2d+1)\cdots 3d| ((3d+1)\cdots 4d ,$$ whose class is the partition $(2^{2(d-m)}, 4^{m})$ (namely, the respectful partitions composed by $2(d-m)$ blocks of cardinality $2$ and $m$ blocks of cardinality $4$). In particular, $\mathcal{P}_2^{\star}(d^{\otimes 4}) = \mathcal{P}_{\lambda_0}^{\star}(d^{\otimes 4})$ denotes the set of  pairing partitions respecting $\pi^{\star}$.\\

The strategy here proposed involves the following new combinatorial formula for the computation of the fourth moment of $Q_{\X}(f)$.

\begin{prop}
Let $X$ be a random variable satisfying Assumption {\bf (2)}. For any admissible kernel $f:[n]^d \rightarrow \mathbb{R}$, if $\NN$ denotes a sequence of independent standard Gaussian random variables, then:
\begin{equation}
\label{formulaGauss}
\mathbb{E}[Q_{\X}(f)^4]   = \mathbb{E}[Q_\NN(f)^4]  + \sum_{m=1}^{d} \binom{d}{m}^4 m!^{4} \chi_4(X)^m \sum_{j_1,\dots,j_m=1}^n\mathbb{E}[Q_\NN(f(j_1,\dots,j_m,\cdot))^4],
\end{equation}
where $\chi_4(X)$ denotes the fourth cumulant of $X$.
\end{prop}

\begin{proof}
By virtue of the vanishing on diagonals of the kernel $f$:
\begin{equation}
\mathbb{E}[Q_\X(f)^4] = \sum_{\substack{\mathbf{i} = (i_1,\dots,i_{4d})\in [n]^{4d}\\ \mathrm{Ker}(\mathbf{i}) \wedge \pi^{\star}= \hat{0}}}f^{\otimes 4}(\mathbf{i})\; \mathbb{E}[X_{i_1}\cdots X_{i_d}\cdots X_{i_4d}],
\end{equation}
where $f^{\otimes 4}(\mathbf{i})=\prod\limits_{l=1}^4 f(i_{(l-1)d+1},\dots,i_{ld})$. 
Then, from the moment-cumulant formula \eqref{MomCum}, for every $i_1,\dots,i_{4d} \in [n]$, since $\E[X] = \E[X^3] =0$, the only non-zero values $\chi_{\sigma}(X_{i_1},\dots,X_{i_{4d}})$ correspond either to pairings or to partitions in $\mathcal{P}_{\lambda_m}^{\star}(d^{\otimes 4})$, for $m=1,\dots,d$, yielding:
\begin{align*}
\mathbb{E}[X_{i_1}\cdots X_{i_d}\cdots X_{i_{4d}}]&= \sum_{\substack{\sigma \in \mathcal{P}([4d])\\ \sigma \wedge \pi^{\star}= \hat{0}}} \chi_{\sigma}(X_{i_1},\dots,X_{i_{4d}}) \\
&= \sum_{\sigma \in \mathcal{P}_2^{\star}(d^{\otimes 4})}\prod_{\{r,s\}\in \sigma}\chi_2(X_{i_r},X_{i_s}) + \sum_{m=1}^{d} \sum_{\sigma \in \mathcal{P}_{\lambda_m}^{\star}(d^{\otimes 4})}\chi_{\sigma}(X_{i_1},\dots,X_{i_{4d}}).
\end{align*}

In particular, for every partition $\sigma$ in $\mathcal{P}_{\lambda_m}^{\star}(d^{\otimes 4})$, the cumulant $\chi_{\sigma}(X_{i_1},\dots,X_{i_{4d}})$ will be the product of a fourth-order cumulant $\chi_4(X_{i_{r_1}},X_{i_{r_2}},X_{i_{r_3}},X_{i_{r_4}})$ for every 4-block $\{r_1,r_2,r_3,r_4\}$, and a second order cumulant $\E[X_{i_l}X_{i_p}]$ for every pairing $\{l,p\}$. Then, the characterization of independence in terms of cumulants implies that $\chi_4(X_{i_{r_1}},X_{i_{r_2}},X_{i_{r_3}},X_{i_{r_4}}) \neq 0$ if and only if $i_{r_l} = i_{r_p}$ for $l,p=1,\dots,4$, and similarly $\E[X_{i_r} X_{i_s}] = \delta_{r,s}$, namely $\chi_{\sigma}(X_{i_1},\dots,X_{i_{4d}}) \neq 0 $ if and only if there exists $m=0,\dots,d$ such that $\mathrm{Ker}(\mathbf{i}) \in \mathcal{P}_{\lambda_m}^{\star}(d^{\otimes 4})$, in which case:
$$\chi_{\sigma}(X_{i_1},\dots,X_{i_{4d}}) = \chi_4(X)^m, \quad  \text{for } m=1,\dots,d, \qquad \chi_{\sigma}(X_{i_1},\dots,X_{i_{4d}}) =1 \text{ if } m=0 .$$
Then, it follows that:
\begin{align*}
\E[Q_{\X}(f)^4] &= \sum_{\substack{\mathbf{i} = (i_1,\dots,i_{4d})\in [n]^{4d}\\ \mathrm{Ker}(\mathbf{i}) \wedge \pi^{\star}= \hat{0}}} f^{\otimes 4}(\mathbf{i})\; \E[X_{i_1}\cdots X_{i_d}\cdots X_{i_{4d}}]  \\
&=   \sum_{m=0}^d  \sum_{\sigma \in \mathcal{P}_{\lambda_m}^{\star}(d^{\otimes 4})} \sum_{\substack{\mathbf{i} \in [n]^{4d}\\ \mathrm{Ker}(\mathbf{i}) = \sigma} }  f^{\otimes 4}(\mathbf{i}) \;\chi_{\sigma}(X_{i_1},\dots,X_{i_{4d}})\\ 
  &= \sum_{\sigma \in \mathcal{P}_{2}^{\star}(d^{\otimes 4})} \sum_{\substack{\mathbf{i} \in [n]^{4d}\\ \mathrm{Ker}(\mathbf{i}) = \sigma}} f^{\otimes 4}(\mathbf{i}) \;+ \sum_{m=1}^d  \chi_4(X)^m\sum_{\sigma \in \mathcal{P}_{\lambda_m}^{\star}(d^{\otimes 4}) } \sum_{\substack{\mathbf{i} \in [n]^{4d} \\ \mathrm{Ker}(\mathbf{i}) = \sigma}}  f^{\otimes 4}(\mathbf{i}).\\
\end{align*}
 
Finally, observe that, for every $m=1,\dots,d$, every partition $\sigma \in \mathcal{P}_{\lambda_m}^{\star}(d^{\otimes 4})$ is uniquely determined by a choice of $m$ 4-blocks and a choice of a pairing of the remaining $4(d-m)$ elements. Simple combinatorial arguments yield that the $m$ 4-blocks can be formed in  $\binom{d}{m}^{4}m!^4$ ways: indeed, for $m=1,\dots,d$, if $B_l = \{(l-1)d+1,\dots,ld\}$ is the $l$-th block of $\pi^{\star}, l=1,\dots,4$, there are $\binom{d}{m}$ ways of choosing  $m$ elements $h_{l,1},\dots,h_{l,m}$ in $B_l$, each of which to put in a $4$-block.  Now, to form the first $4$-block, there are $m^4$ choices (choose one element out of the $m$ selected ones in each $B_l$). For the second $4$-block, the second element in each $B_l$ can be chosen in $m-1$ ways, giving a contribution of $(m-1)^4$, and so on, yielding $m!^4$. 
Moreover, for every $\sigma \in \mathcal{P}_{\lambda_m}^{\star}(d^{\otimes 4}),$  and every $\mathbf{i} \in [n]^{4d}$ such that $\mathrm{Ker}(\mathbf{i}) = \sigma$, we have determined a partition $\tau(\sigma) \in  \mathcal{P}_2^{\star}((d-m)^{\otimes 4})$ (namely, the restriction of $\sigma$ to its pairings); then, if $C_1,\dots,C_m \in \sigma$ are the blocks of cardinality $4$, $(j_1,\dots,j_m) \in [n]^m$ is uniquely determined by setting $j_l = i_{p}$ for every $p \in C_l$, so that, for the sub vector $\bs{h} \in [n]^{4(d-m)}$ of $\bs{i}$ indexed by the elements of the pairings of $\sigma$, one has  $\mathrm{Ker}(\bs{h}) = \tau(\sigma)$. Then, the conclusion follows from the Wick formula \eqref{Wick}:
\begin{align*}
\E[Q_{\X}(f)^4] &=  \sum_{\sigma \in \mathcal{P}_{2}^{\star}(d^{\otimes 4})} \sum_{\substack{\mathbf{i} \in [n]^{4d}\\ \mathrm{Ker}(\mathbf{i}) = \sigma}} f^{\otimes 4}(\mathbf{i}) \;+ \sum_{m=1}^d  \chi_4(X)^m\sum_{\sigma \in \mathcal{P}_{\lambda_m}^{\star}(d^{\otimes 4}) } \sum_{\substack{\mathbf{i} \in [n]^{4d} \\ \mathrm{Ker}(\mathbf{i}) = \sigma}}  f^{\otimes 4}(\mathbf{i}) \\
&=\E[Q_{\NN}(f)^4] \;+ \\
& \qquad \quad \sum_{m=1}^d  \binom{d}{m}^4 m!^4  \;\chi_4(X)^m \sum_{j_1,\dots,j_m=1}^n\sum_{\tau \in \mathcal{P}_{2}^{\star}((d-m)^{\otimes 4})} \sum_{\substack{\mathbf{i} \in [n]^{4(d-m)} \\ \mathrm{Ker}(\mathbf{i}) = \tau}}  f(j_1,\dots,j_m,\cdot)^{\otimes 4}(\mathbf{i}) \\
&= \E[Q_{\NN}(f)^4] \; + \\
&\qquad \quad \sum_{m=1}^d \binom{d}{m}^4 m!^4 \;\chi_4(X)^m \sum_{j_1,\dots,j_m=1}^n \E[ Q_{\NN}(f(j_1,\dots,j_m,\cdot))^4].
\end{align*}

%
\end{proof}

\subsection{The proof of Theorem \ref{superTeo1}}
Thanks to formula \eqref{formulaGauss}, it is possible to exhibit a whole class of random variables $X$ (that includes the normal and the Poisson distributions), for which the fourth moment phenomenon occurs, thus proving Theorem \ref{superTeo1}.

\begin{proof}
Given a sequence of admissible kernels $f_n :[n]^d \rightarrow \mathbb{R}$, write formula \eqref{formulaGauss} as:
\begin{equation*}
\mathbb{E}[Q_{\X}(f_n)^4] -3 = \mathbb{E}[Q_\NN(f_n)^4] -3 + \sum_{m=1}^{d} \binom{d}{m}^4 m!^{4} \chi_4(X)^m \sum_{j_1,\dots,j_m=1}^n\mathbb{E}[Q_\NN(f_n(j_1,\dots,j_m,\cdot))^4]\, ,
\end{equation*}
and assume that $\E[Q_{\X}(f_n)^4] \rightarrow 3$ as $n\rightarrow \infty$.  Recalling that  $\chi_4(Q_{\NN}(f_n))=\E[Q_{\NN}(f_n)^4] -3$ is strictly positive (see \eqref{Pos4cum}), the assumption $\chi_4(X) \geq 0$ entails, in turn,  that  $\E[Q_{\NN}(f_n)^4]  \rightarrow 3$. Then, by virtue of Theorem \ref{FMTClassic}, it follows hat $Q_{\NN}(f_n) \stackrel{\text{Law}}{\longrightarrow}\mathcal{N}(0,1)$, and the conclusion follows by Theorem \ref{inv}, Part \ref{Invariance}.

Conversely, if $Q_{\X}(f_n) \xrightarrow{\text{\rm Law}}\mathscr{N}(0,1)$, the convergence of the sequence of the fourth moments $\E[Q_{\X}(f_n)^4] \rightarrow 3$ follows  by a classical hypercontractivity argument (see Remark \ref{hypercontractivity}). 
In conclusion, the universality of $X$ follows by keeping in mind that $Q_{\NN}(f_n) \xrightarrow{\text{\rm Law}}\mathscr{N}(0,1)$ implies
 $$\tau_n = \max_{i=1,\dots,n}\mathrm{Inf}_i(f_n) \rightarrow 0 .$$ 
 Indeed, $Q_{\X}(f_n) \xrightarrow{\text{\rm Law}}\mathscr{N}(0,1)$ implies, via  $\E[Q_{\X}(f_n)^4] \rightarrow 3$, that $Q_{\NN}(f_n) \xrightarrow{\text{\rm Law}}\mathscr{N}(0,1)$. The claim  is achieved by virtue of Theorem \ref{invMossel} in Part \ref{Invariance}.
\end{proof}

\begin{exm} $ $
\begin{enumerate}
\item Let $X_1, X_2$ be independent random variables satisfying Assumption {\bf (2)}, and such that $\chi_4(X_1), \chi_4(X_2) \geq 0$. Then $Z = X_1 + X_2$ satisfies in turn Assumption {\bf (2)} and $\chi_4(Z) \geq 0$ (due to the additivity property of cumulants), and hence satisfies the Fourth Moment Theorem. As to multiplicative convolution, $W:= X_1 X_2$ satisfies Assumption {\bf (2)} as well. By virtue of the moment-cumulant formula \eqref{MomCum}, $\chi_4(W) =  \E[X_1^4]\E[X_2^4] - 3$, and hence, according to Theorem \ref{superTeo1}, for $W$ to satisfy the Fourth Moment Theorem, it is sufficient that at least one of the $X_i$'s satisfies $\chi_4(X_i)\geq 0$. This remark explains the reason why the technique of the mixtures discussed in Section \ref{proof_mixtures} gives an alternative proof to Theorem \ref{superTeo1}.\\
\item Every random variable $X$, centered and with unit variance, whose law is infinitely divisible with respect to additive convolution, satisfies $\chi_4(X) = \E[X^4] - 3\geq 0$. Indeed, by definition, for every integer $n\in \mathbb{N}$, there exist i.i.d. random variables $X_{1,n},\dots,X_{n,n}$ such that $X \stackrel{\text{Law}}{=} X_{1,n} + X_{2,n} + \cdots + X_{n,n}$, which yields $\chi_4(X) = n \chi_4(X_{1,n})$. Moreover, for any center random variable $Y$ with unit variance, $\chi_4(Y) \geq -2$. Then, if $\chi_4(X) < 0$, for $n$ large enough one would find $\chi_4(X) < -2$, which is impossible. Hence, for $X$ infinitely divisible, satisfying Assumption {\bf (2)}, the Fourth Moment Theorem for homogeneous sums $Q_{\X}(f_n)$ holds at any order $d\geq 2$: for instance, for $\lambda >0$, let $P(\lambda)$ denote a Poisson distributed random variable of parameter $\lambda$, and consider a Compound Poisson-distributed random variable $Y= X_1 + \cdots + X_{P(\lambda)}$, with $X_1, X_2,\dots,$ independent copies of a random variable $X$ satisfying Assumption {\bf (2)}. Then, $Y$ is infinitely divisible (indeed, $Y \stackrel{\text{Law}}{=} Y_1 + \cdots + Y_n$, with $Y_j = X_{j,1}+ \cdots +X_{j, P(\lambda/n)}$, and $X_{j,i}$ independent copy of $X$), and $\E[Y^3] = \lambda\E[X^3] = 0$. \\

\item For $k\geq 1$, let $H_k(x)$ denote the $k$-th Hermite polynomial and let $N \sim \mathcal{N}(0,1)$. Then,
$$ \E[H_k(N)^4] = |\mathcal{P}_2^{\star}([k^{\otimes 4}]) | \geq 3 \; $$
where $ \mathcal{P}_2^{\star}([4k])$ denotes the set of pairing partitions of $[4k]$ respecting 
$$\pi^{\star}= k^{\otimes 4} = \{\{1,\dots,k\}, \{k+1,\dots,2k\},\dots,\{2k+1,\dots,3k\},\{3k+1,\dots,4k\}\}.$$
Since $\E[H_k(N)^3]=0$ if $k$ is odd,  for $X=H_k(N)$, Theorem \ref{superTeo1} applies when $k$ is odd: to relax the assumption on the third moment, it is not possible to proceed from formula \eqref{formulaGauss}, but it would be necessary to adopt a different strategy. Remark that the universality of the law of $H_k(N)$ for normal approximations of homogeneous sums can alternatively be deduced  from Part \ref{Invariance}.\\
\end{enumerate}
\end{exm}

\begin{rmk}
For the formula \eqref{formulaGauss} itself, the assumption $\E[X^3] = 0$ is a matter of pure convenience, made to ease the computations. For instance, if $d=2$, relaxing this hypothesis would yield some extra summands on the right-hand side of formula \eqref{formulaGauss} depending on the third cumulant of $X$, arising from partitions with two blocks of cardinality $3$ and no singletons (note that no block of cardinality $5$ can be considered because the partition should respect $\pi^{\star}=2^{\otimes 4}$). More precisely, consider the set $\mathcal{P}_{(2,3^2)}^{\star}([8])$ of the partitions in $\mathcal{P}([8])$, respecting $\pi{\star}$, and whose class is the partition $(2,3^2)$. Then, if $\E[X^3] = \chi_3(X) \neq 0$, formula \eqref{formulaGauss} would have the extra summand
$$ \chi_3(X)^2\sum_{\sigma \in \mathcal{P}_{(2,3^2)}^{\star}([8])} \sum_{\substack{ \mathbf{i} \in [n]^{8}\\ \mathrm{ker}(\mathbf{i})=\sigma}} f^{\otimes 4}(\mathbf{i}).$$
However, since  for $\sigma \in \mathcal{P}_{(2,3^2)}^{\star}([8])$, $\sum_{\substack{ \mathbf{i} \in [n]^{8}\\ \mathrm{ker}(\mathbf{i})=\sigma}} f^{\otimes 4}(\mathbf{i})$ needs not to be positive, in general, no conclusion can be drawn to achieve the (quadratic) Fourth Moment Theorem for $X$ from this version of the formula for $\E[Q_{\bs{X}}(f)^4]$.
For $d >2$, there might be dependence on higher-order cumulants of $X$. \\

It is worth noticing that one could attempt at generalizing the technique here presented to compare higher moments of $Q_{\X}(f)$ with the corresponding moments of $Q_{\NN}(f)$ and then exploit the findings in \cite{Guillaume1}, where the authors showed that the  normal approximations of multiple Wiener integrals is equivalent to the convergence of pairs of even moments, that are not necessarily equal to the second and the fourth. However, the resulting formula would depend on higher-order cumulants of $X$, with coefficients that might be non-constant in sign, and not easily describable. Therefore, the fourth moment has to be considered as a very special case: as Theorem \ref{superTeo1} showed, the convenience of formula \eqref{formulaGauss} is the fact that its right-hand side is a polynomial in $\chi_4(X)$, of degree at most $d$, with non-negative coefficients.\\
\end{rmk}

\paragraph{Poisson homogeneous Chaos}

As mentioned in the Synopsis, random variables living the discrete Poisson homogeneous Chaos satisfy both the Fourth Moment Theorem and the universality phenomenon for central convergence \cite{PeccatiZheng2,PeccatiZheng1}, as summarized in the forthcoming statement.

\begin{thm}\label{FMTPoisson}
Let $d\geq 2$, and let $\mathbf{P}=\{P_i\}_{i\geq 1}$ be a sequence of i.i.d. centred Poisson distributed random variables, with parameter $1$, and let $f_n:[n]^d \rightarrow \mathbb{R}$ be symmetric and vanishing on diagonals for all $n \in \mathbb{N}$. If $\E[Q_{\mathbf{P}}(f_n)^2] \rightarrow 1$, as $n\rightarrow \infty$, the following conditions are equivalent:
\begin{itemize}
\item[(i)] $Q_{\mathbf{P}}(f_n) \stackrel{\text{Law}}{\longrightarrow} \mathcal{N}(0,1)$;
\item[(ii)] $\E[Q_{\mathbf{P}}(f_n)^4] \rightarrow \E[N^4] = 3, N \sim \mathcal{N}(0,1)$;
\item[(iii)] if $\{e_i\}_{i\geq 1}$ is an orthonormal basis in  $\mathrm{L}^2(\mathbb{R}_+^d)$,  then $\|g_n \otimes_r g_n \| \rightarrow 0$ for every $r=1,\dots,d-1$, with  $g_n = \sum\limits_{i_1,\dots,i_d=1}^n f_n(i_1,\dots,i_d) e_{i_1}\otimes \cdots \otimes e_{i_d}$, and where $\otimes_r$ is defined as in \eqref{GenContraction}.
\end{itemize}
Moreover, $Q_{\mathbf{P}}(f_n) \stackrel{\text{Law}}{\longrightarrow} \mathcal{N}(0,1)$ implies $Q_{\mathbf{X}}(f_n) \stackrel{\text{Law}}{\longrightarrow} \mathcal{N}(0,1)$ for every sequence $\mathbf{X}$ of i.i.d. centered random variables, with unit variance.
\end{thm}

Unfortunately, Theorem \ref{superTeo1}  does not cover Theorem \ref{FMTPoisson}: to achieve this goal, it would be necessary to drop the assumption $\E[X^3]=0$. Further remarks to this concern will be addressed in Section \ref{proof_mixtures}.\\

\subsection{The non identically distributed case}

Formula \eqref{formulaGauss}, and subsequently Theorem \ref{superTeo1}, deals with homogeneous sums in i.i.d. random variables, just to ease the notation and the discussion, but the same conclusion could be drawn for a sequence $\X=\{X_i\}_{i\geq 1}$ of independent centered random variables, satisfying Assumption {\bf (2)}, possibly non identically distributed, but starting from an inequality other than an equality. Indeed, for every $m=1,\dots,d$ and every $n$, set:
$$\gamma_{n}^{(m)} := \min\limits_{i_1,\dots,i_m \in [n]} \prod\limits_{l=1}^m \chi_4(X_{i_l}), \qquad A_n := \min\limits_{m=1,\dots,d}\gamma_n^{(m)}. $$
Assume further that there exists $A > 0$ such that $ \inf\limits_{n\geq 1}A_n > A$. Then, for every admissible kernel $f_n:[n]^d \rightarrow \mathbb{R}$, 
\begin{align*}
\mathbb{E}[Q_{\X}(f_n)^4] &- 3  \geq \mathbb{E}[Q_{\mathbf{N}}(f_n)^4] - 3  + \sum_{m=1}^d \gamma_n^{(m)} \sum_{\sigma \in \mathcal{P}_{\lambda_m}^{\star}(d^{\otimes 4})} \sum_{\substack{ \mathbf{i} \in [n]^{4d}  \\ \mathrm{Ker}(\mathbf{i})= \sigma }} f_n^{\otimes 4}(\mathbf{i}) \\
&\geq \mathbb{E}[Q_{\mathbf{N}}(f_n)^4] - 3 + A_n \sum_{m=1}^{d} \sum_{\sigma \in \mathcal{P}_{\lambda_m}^{\star}(d^{\otimes 4})} \sum_{\substack{ \mathbf{i} \in [n]^{4d}  \\ \mathrm{Ker}(\mathbf{i})= \sigma }} f_n^{\otimes 4}(\mathbf{i}) \\
&\geq \mathbb{E}[Q_{\mathbf{N}}(f_n)^4] - 3 +  \\
& \quad + A_n \sum_{m=1}^d \binom{d}{m}^4 (m!)^4 \sum_{j_1,\dots,j_m=1}^n \sum_{ \tau \in \mathcal{P}_2^{\star}((d-m)^{\otimes 4})} \sum_{\substack{ \mathbf{i} \in [n]^{4(d-m)} \\ \mathrm{Ker}(\mathbf{i}) = \tau}} f_n(j_1,\dots,j_m,\cdot)^{\otimes 4}(\mathbf{i}) \\
&\geq \mathbb{E}[Q_{\mathbf{N}}(f_n)^4] - 3 + A \sum_{m=1}^d \binom{d}{m}^4 (m!)^4 \sum_{j_1,\dots,j_m=1}^n \mathbb{E}[Q_{\mathbf{N}}(f_n(j_1,\dots,j_m,\cdot))^4] \numberthis \label{ineq_non_id}.
\end{align*}

Keeping in mind Remark \ref{hypercontractivity}, Theorem \ref{superTeo1} admits the following extension.
\begin{thm}\label{Niid_classic}
Let the above notation prevail. Then, for every sequence of admissible kernels $f_n:[n]^d \rightarrow \mathbb{R}$, the condition $\mathbb{E}[Q_{\X}(f_n)^4] - 3 \longrightarrow 0$ is  necessary and sufficient  for the convergence $$Q_{\X}(f_n)\stackrel{\text{Law}}{\longrightarrow} \mathcal{N}(0,1).$$
Moreover,  $Q_{\X}(f_n)\stackrel{\text{Law}}{\longrightarrow} \mathcal{N}(0,1)$ implies that $Q_{\Z}(f_n)\stackrel{\text{Law}}{\longrightarrow} \mathcal{N}(0,1)$ for every sequence $\Z=\{Z_i\}_{i\geq 1}$ of independent random variables, non necessarily identically distributed, satisfying Assumption {\bf (2)}.
\end{thm}

\begin{proof}
If $\mathbb{E}[Q_{\X}(f_n)^4] - 3 \longrightarrow 0$ as $n \rightarrow \infty$, and since $A >0$ by hypothesis, from inequality \eqref{ineq_non_id} it follows that $\mathbb{E}[Q_{\mathbf{N}}(f_n)^4] - 3 \longrightarrow 0$ in turn. The conclusion is then achieved referring  to Theorem \ref{FMTClassic} and Theorem \ref{inv}.
\end{proof}

In particular, Theorem \ref{Niid_classic} applies for Hermite sums $Q_{\mathbf{N}}^{(\bs{h})}(f_n)$, with $\bs{h}=(h_1,\dots,h_d)$, and $h_j$ odd integer for every $j=1,\dots,d$ (compare with  Proposition \ref{HermiteSum}, Part \ref{Invariance}).

\subsection{Gamma approximations of homogeneous sums}

If $\nu \in \mathbb{R}_+ \setminus\{0\}$, let $G(\frac{\nu}{2})$ denote a random variable with Gamma distribution $\Gamma(\frac{\nu}{2},1)$, and set $F(\nu) \stackrel{\text{Law}}{=} 2G(\frac{\nu}{2}) - \nu$, so that 
$$\E[F(\nu)]=0, \, \E[F(\nu)^2] = 2\nu , \, \E[F(\nu)^3] = 8 \nu , \,  \E[F(\nu)^4] = 12\nu^4 + 48 \nu.$$
Note that if $\nu \in \mathbb{N}$, then $F(\nu)$ has the centered $\chi^2$ distribution with $\nu$ degrees of freedom, namely, $ F(\nu) \stackrel{\text{ Law}}{=} \sum_{i=1}^{\nu}(N_i -1)$, where $N_1,\dots,N_{\nu}$ are i.i.d. random variables with the standard Gaussian distribution. In this case, Gamma approximations of Gaussian Wiener integrals corresponds to approximations in the second Wiener Chaos.\\

A Fourth Moment-type statement for Gamma approximations of sequences of multiple Wiener integrals has been provided in \cite[Theorem 1.2]{NourdinPeccati2}, and it is summarized in the next theorem for Gaussian homogeneous sums. Note that Gamma approximations can be established only in Wiener chaoses of even order since, if $d$ is odd, $\E[I_d^W(f)^3] = 0$, while $\E[F(\nu)^3] = 8\nu >0$.\\

\begin{thm}\label{GammaAppr}
Let $d\geq 2$ be an even integer and $f_n:[n]^d \rightarrow \mathbb{R}$ a sequence of admissible kernels such that $\lim\limits_{n \to \infty} \E[Q_{\mathbf{N}}(f_n)^2] = 2\nu$, $\lim\limits_{n\to \infty} \E[Q_{\mathbf{N}}(f_n)^3] = 8 \nu$ for a fixed $\nu \in \mathbb{R}_+ \setminus\{0\}$. The following statement are equivalent as $n\to \infty$:
\begin{itemize}
\item[(i)] $ \E[Q_{\mathbf{N}}(f_n)^4] - 12\E[Q_{\mathbf{N}}(f_n)^3] \longrightarrow \E[F(\nu)^4]-12\E[F(\nu)^3] = 12\nu^2 - 48\nu$;
\item[(ii)] $Q_{\mathbf{N}}(f_n) \stackrel{\text{Law}}{\longrightarrow} F(\nu)$;
\item[(iii)] $\|f_n \stackrel{r}{\smallfrown} f_n \| \rightarrow 0$ for every $r=1,\dots,d-1$, $r\neq \frac{d}{2}$, and $\| f_n \stackrel{d/2}{\frown}f_n - f_n \| \rightarrow 0$ (with the contraction $\frown$ defined as in \eqref{contraction}). \\
\end{itemize}
\end{thm}

\begin{rmk}
Theorem \ref{GammaAppr} follows from the original statement, by applying Propositions \ref{contraction5} and \ref{contraction6} to provide conditions in terms of the contractions of the coefficients of the involved homogeneous sums. Similarly, one has to take into account that, for every $r=1,\dots,d-1$, the symmetry of the admissible kernels $f_n$ yields:
$$ \| k(f_n) \widetilde{\otimes_r} k(f_n) \| \rightarrow 0  \, \Leftrightarrow \, \| k(f_n) \otimes_r k(f_n) \| \rightarrow 0,$$
with $k(f_n)$ defined as in \eqref{kN1} for $h_i=1$ for every $i$. In particular, this equivalence (see \cite{NualartPeccati}) gives that:
$$\| k(f_n) \widetilde{\otimes_{\frac{d}{2}}} k(f_n) - k(f_n)\| \rightarrow 0  \, \Leftrightarrow \, \| f_n \stackrel{\frac{d}{2}}{\frown} f_n -f_n \|  \rightarrow 0 .$$
\end{rmk}

Starting from formula \eqref{formulaGauss}, an extension of Theorem \ref{superTeo1} in the setting of Gamma approximations of homogeneous sums of even degree $d\geq 2$, can be achieved in the following way. Let $f_n:[n]^d \rightarrow \mathbb{R}$ be a sequence of admissible kernels satisfying the assumptions of Theorem \ref{GammaAppr}: then, from the proof of \cite[Theorem 1.2]{NourdinPeccati2}, it follows that:
$$ \E[Q_{\mathbf{N}}(f_n)^4] -12 \E[Q_{\mathbf{N}}(f_n)^3]  - (12 \nu^2 - 48\nu) > 0,$$
for sufficiently large $n$. Since, under Assumption {\bf (2)}, $\E[Q_{\X}(f_n)^3] = \E[Q_{\mathbf{N}}(f_n)^3]$, formula \eqref{formulaGauss} entails the identity:
\begin{align*}
\E[Q_{\mathbf{X}}(f_n)^4]  & -12\E[Q_{\X}(f_n)^3] - (12 \nu^2 - 48\nu) =  \E[Q_{\mathbf{N}}(f_n)^4]-12\E[Q_{\mathbf{N}}(f_n)^3] - (12 \nu^2 - 48\nu) \\
&\qquad \qquad+ \sum_{m=1}^d \binom{m}{d}^{4}m!^4 \chi_4(X)^m \sum_{j_1,\dots,j_m=1}^n \E[Q_{\mathbf{N}}(f_n(j_1,\dots,j_m,\cdot))^4].
\end{align*}

By exploiting Theorem \ref{GammaAppr}, and with the same strategy of the proof of Theorem \ref{superTeo1}, it is then possible to provide a Fourth Moment statement for $Q_{\mathbf{X}}(f_n)$, when the target law is the  Gamma distribution $F(\nu)$. 

\begin{thm}\label{GenGammaAppr}
Let $d\geq 2$ be an even integer. If $X$ satisfies Assumption {\bf (2)} and $\chi_4(X) \geq 0$, for every sequence of admissible kernels $f_n:[n]^d \rightarrow \mathbb{R}$ such that   $\E[Q_{\mathbf{X}}(f_n)^2] \rightarrow \nu >0$, the following statements are equivalent as $n \rightarrow \infty$:
\begin{itemize}
\item[(i)] $ \E[Q_{\X}(f_n)^4] - 12\E[Q_{\X}(f_n)^3] \longrightarrow  \E[F(\nu)^4] - 12\E[F(\nu)^3]  = 12\nu^2 - 48\nu$;
\item[(ii)] $Q_{\X}(f_n) \stackrel{\text{Law}}{\longrightarrow} F(\nu)$.
\end{itemize}
Besides, the law of $X$ is universal for $F(\nu)$-approximations of homogeneous sums at the fixed order $d$, that is, $Q_{\X}(f_n) \stackrel{\text{Law}}{\longrightarrow} F(\nu)$ implies $Q_{\mathbf{Z}}(f_n) \stackrel{\text{Law}}{\longrightarrow} F(\nu)$, for every sequence $\mathbf{Z} = \{Z_i\}_{i \geq 1}$ of independent copies of a centered random variable, having unit variance. \\
\end{thm}

\begin{rmk}
The universality of Gaussian homogeneous sums w.r.t. $\chi^2$ approximations has been established in \cite[Theorem 1.12]{NourdinPeccatiReinert}, both for homogeneous sums with i.i.d. entries and with only independent entries:  moreover, the vanishing of all the non-trivial contractions in Theorem \ref{GammaAppr}, along with Lemma \ref{magg}, yields that $Q_{\NN}(f_n) \stackrel{\text{Law}}{\longrightarrow} F(\nu)$ implies $\tau_n(f_n)\longrightarrow 0$.\\
\end{rmk}

\subsection{The quadratic case}\label{QuadraticCaseClassic}
The sufficient condition $\E[X^4]\geq 3$ for the Fourth Moment Theorem to hold, provided with Theorem \ref{superTeo1}, could be not optimal in every dimension $d\geq 2$. For instance, set $d=2$, and for a non-zero admissible kernel $f:[n]^2\rightarrow \mathbb{R}$, set $k(f):= \sum\limits_{i_1,i_2=1}^n f(i_1,i_2)e_{i_1}\otimes e_{i_2}$, with $\{e_i\}_{i \geq 1}$ orthonormal system in $\mathrm{L}^2(\mathbb{R}_+)$, 
so that $Q_{\bs{N}}(f) = I_{2}^{W}(k(f))$. From \cite[Lemma 5.2.4]{NourdinPeccatilibro}, it follows that:
$$\E[  Q_{\mathbf{N}}(f)^4 ]  = 3 + 16\big( \| k(f) \otimes_1 k(f) \|^2 + 2 \| k(f) \widetilde{\otimes}_1 k(f) \|^2\big)\, .$$
Due to the symmetry of $f$, one has that $k(f) \widetilde{\otimes}_1 k(f) = k(f) \otimes_1 k(f)$. Moreover,  by virtue of Proposition \ref{contraction5}, $\|k(f) \otimes_1 k(f) \| = \|f \stackrel{1}{\smallfrown} f\|$, and then:
\begin{align*}
\E[  Q_{\mathbf{N}}(f)^4 ]&=3 + 48 \| f \stackrel{1}{\smallfrown} f  \|^2  = 3 + 48 \big( \sum_{i,j=1}^n ( f \stackrel{1}{\smallfrown} f (i,j) )^2\big)\\
&\geq  3+ 48 \sum\limits_{i=1}^n \big(f \stackrel{1}{\smallfrown} f(i,i)\big)^2 
\end{align*}
(see also \cite[Proposition 2.7.13]{NourdinPeccatilibro}).  Then, from formula \eqref{formulaGauss}, and setting \linebreak$\alpha := \sum\limits_{i=1}^n \big(f \stackrel{1}{\smallfrown} f(i,i)\big)^2$, it follows that:
\begin{align*} 
\E[  Q_{\mathbf{X}}(f)^4] - 3 &=  48 \| f \stackrel{1}{\smallfrown} f  \|^2   +  16 \chi_4(X) \sum_{k=1}^n \E[  Q_{\mathbf{N}}(f(k,\cdot))^4 ]  + 16 \chi_4(X)^2 \sum_{k_1,k_2=1}^n f(k_1,k_2)^4 \\ 
&\geq 48 \| f \stackrel{1}{\smallfrown} f  \|^2   +  16 \chi_4(X) \sum_{k=1}^n \E[  Q_{\mathbf{N}}(f(k,\cdot))^4 ]   \\
&\geq 48 \alpha    +  16 \chi_4(X) \sum_{k=1}^n 3\E[  Q_{\mathbf{N}}(f(k,\cdot))^2 ]^2   \\
&= 48 \alpha\,(1 + \chi_4(X)) \numberthis \label{quadraticFMT},
\end{align*}
due to $\sum\limits_{k=1}^n \E[  Q_{\mathbf{N}}(f(k,\cdot))^2 ]^2 = \alpha$. Therefore, since $\alpha >0$ and $f \neq 0$, if $\chi_4(X) > -1$, \eqref{quadraticFMT} is strictly positive, yielding in turn $\E[  Q_{\mathbf{X}}(f)^4] - 3 > 0$.

\begin{prop}
Let $d=2$ and assume that $X$ satisfies Assumption {\bf (2)} and $\E[X^4] > 2$ (or, equivalently, $\chi_4(X) > -1$). Then, $X$ satisfies the quadratic Fourth Moment Theorem. Besides, the law of $X$ is universal for normal approximations of quadratic homogeneous sums.
\end{prop}

\begin{proof}
Formula  \eqref{formulaGauss} for  a sequence of admissible kernels $f_n:[n]^2 \rightarrow \mathbb{R}$ gives:
$$\E[  Q_{\mathbf{X}}(f)^4] - 3 = \E[  Q_{\mathbf{N}}(f)^4] - 3  +  48 \chi_4(X) \alpha_n  + 16 \chi_4(X)^2 \beta_n ,$$
where 
 $$\alpha_n =\sum\limits_{i=1}^n \big(f_n \stackrel{1}{\smallfrown} f_n (i,i)\big)^2 \,\geq\, \beta_n = \sum_{i,k=1}^n f_n(i,k)^4 =   \sum_{i,k=1}^n \E[Q_{\mathbf{N}}(f_n(i,k))^4].$$
If $\E[  Q_{\mathbf{X}}(f_n)^4] - 3 \longrightarrow 0$, as $n\rightarrow \infty$, then, from \ref{quadraticFMT}, it follows that:
 $$\alpha_n = \sum_{i=1}^n \big(f_n \stackrel{1}{\smallfrown} f_n(i,i)\big)^2   \rightarrow 0, $$ implying $\beta_n \rightarrow 0$, and, in turn, $\E[ Q_{\mathbf{N}}(f_n)^4] - 3 \longrightarrow 0$. The conclusion then follows as in the proof of Theorem \ref{superTeo1}.
\end{proof}

Despite the fact that for $d=2$ a stronger sufficient condition can be provided, no information about the optimality of such condition, neither of its necessity, can be easily achieved with the tools so far introduced. A more general discussion of this problem will be addressed in Chapter \ref{Threshold}.


\subsection{Multidimensional CLT}

The invariance principle for homogeneous sums in independent random variables, stated via Theorem \ref{invMossel} in Part \ref{Invariance}, has been extended to the multidimensional setting in \cite[Theorem 4.1]{Mossel2}, in the case one of the sequences is composed of discrete random variables, and  in \cite[Theorem 7.1]{NourdinPeccatiReinert}, where the authors provided an explicit bound for the distance in law between $(Q_{\bs{X}}(f_n^{(1)}),\dots,Q_{\bs{X}}(f_n^{(m)}))$ and its Wiener-Chaos counterpart $(Q_{\bs{N}}(f_n^{(1)}),\dots,Q_{\bs{N}}(f_n^{(m)}))$, as  summarized in the next statement in a simplified version, that is sufficient for the present framework.

\begin{thm}\label{MultiMossel}
Let $m\geq 1$ and $d\geq 1$. Let $\bs{X}=\{X_i\}_{i\geq 1}$ be a sequence of centered independent random variables, with unit variance, whose third moments are uniformly bounded (namely, such that there exists $\beta >0$ such that $\sup\limits_{i\geq 1}\E[|X_i|^{3}] < \beta$). For $j=1,\dots,m$, let $f_n^{(j)}:[n]^d \rightarrow \mathbb{R}$ be an admissible kernel according to Definition \ref{Admissible_Classic}. If $\bs{N}= \{N_i\}_{i\geq 1}$ denotes a sequence of i.i.d. standard Gaussian random variables,  for every thrice differentiable function $\psi:\mathbb{R}^m \rightarrow \mathbb{R}$, with $\|\psi^{\prime\prime \prime} \|_{\infty} < \infty$, there exists a constant $C=C(\beta, m, d, \psi)$ such that:
$$ \big| \E[\psi(Q_{\bs{X}}(f_n^{(1)}),\dots,Q_{\bs{X}}(f_n^{(m)}) )] -   \E[\psi(Q_{\bs{N}}(f_n^{(1)}),\dots,Q_{\bs{N}}(f_n^{(m)}) )]\big| \leq C \sqrt{\max_{j=1,\dots,m}\max_{i=1,\dots,n} \mathrm{Inf}_i(f_n^{(j)})}. \\$$ 
\end{thm}

The main result of the present subsection is a multidimensional version of Theorem \ref{superTeo1}, stated via Theorem \ref{ComponentJoint}: the proof we will provide 
exploits the findings of  \cite[Proposition 2]{PeccatiTudor}, where it is shown that, for vectors of the type $(Q_{\bs{N}}(f_n^{(1)}),\dots,Q_{\bs{N}}(f_n^{(m)}) )$, joint convergence towards the multidimensional normal distribution is equivalent to componentwise convergence, as summarized in the next statement (note that the original statement does not concern exclusively homogeneous sums, but deals, in full generality, with vectors of multiple Wiener integrals of symmetric functions). 

\begin{thm}\label{TeoPeccatiTudor}
For $d\geq 2$ and $m\geq 1$, assume that $C=(C_{i,j})_{i,j=1,\dots,m}$ is a real valued, positive definite, symmetric matrix. For every $j=1,\dots,m$, let $Q_{\bs{X}}(f_n^{(j)})$ be a sequence of homogeneous polynomials of degree $d$, with $f_n^{(j)}:[n]^d \rightarrow \mathbb{R}$ symmetric kernel, vanishing on diagonals, such that:
$$ \lim_{n \rightarrow \infty}\E[ Q_{\bs{X}}(f_n^{(j)}) Q_{\bs{X}}(f_n^{(i)})] = C_{i,j}\quad \forall \, i,j=1,\dots,m.$$
Then, the following statements are equivalent as $n\rightarrow \infty$:
\begin{itemize}
\item[(i)] $Q_{\bs{N}}(f_n^{(j)}) \stackrel{\text{ Law }}{ \longrightarrow} \mathcal{N}(0,C_{j,j})$ for every $j=1,\dots,m$;
\item[(ii)] $(Q_{\bs{N}}(f_n^{(1)}),\dots,Q_{\bs{N}}(f_n^{(m)}) ) \stackrel{\text{ Law }}{ \longrightarrow} \mathcal{N}(0, C)$, with $\mathcal{N}(0,C)$ denoting the $m$-dimensional Gaussian distribution with covariance matrix given by $C$.\\
\end{itemize}
\end{thm}

Combining Theorem \ref{TeoPeccatiTudor} and Theorem \ref{superTeo1}, it is possible to conclude that  the equivalence between joint and componentwise convergence for normal approximations of random vectors $(Q_{\bs{X}}(f_n^{(1)}),\dots,Q_{\bs{X}}(f_n^{(m)}) )$  always holds true under the assumption $\E[X^4] \geq 3$, as made precise in the following statement.

\begin{thm}\label{ComponentJoint}
Fix $m\geq 1$ and $d\geq 2$. Let $\bs{X} = \{X_{i}\}_{i\geq 1}$ be a sequence of independent copies of a random variable $X$ verifying Assumption {\bf (2)} and $\E[X^4]\geq 3$. For every $j=1,\dots,m$, let $Q_{\bs{X}}(f_n^{(j)})$ be a sequence of homogeneous polynomials of degree $d$, with $f_n^{(j)}:[n]^d \rightarrow \mathbb{R}$ symmetric admissible kernel, such that:
$$ \lim_{n \rightarrow \infty}\E[ Q_{\bs{X}}(f_n^{(j)}) Q_{\bs{X}}(f_n^{(i)})] = C_{i,j} \quad \forall\, i,j=1,\dots,m \,,$$
where $C= (C_{i,j})_{i,j=1,\dots,m}$ is a real valued, positive definite, symmetric matrix. Then, the following statements are equivalent as $n\rightarrow \infty$:
\begin{itemize}
\item[(i)] $Q_{\bs{X}}(f_n^{(j)}) \stackrel{\text{ Law }}{ \longrightarrow} \mathcal{N}(0,C_{j,j})$ for every $j=1,\dots,m$;
\item[(ii)] $(Q_{\bs{X}}(f_n^{(1)}),\dots,Q_{\bs{X}}(f_n^{(m)}) ) \stackrel{\text{ Law }}{ \longrightarrow} \mathcal{N}(0, C)$,
with $\mathcal{N}(0,C)$ denoting the $m$-dimensional Gaussian distribution with covariance matrix given by $C$.
\end{itemize}
\end{thm}

\begin{proof}
It is sufficient to prove  that $(i) \Rightarrow (ii)$, since the reverse implication always holds.\\
Assume that $(i)$ occurs. Under the assumption $E[X^4] \geq 3$, and by virtue of Theorem \ref{superTeo1}, $X$ satisfies the Fourth Moment Theorem, and its law is universal, at the order $d$, for normal approximations of homogeneous sums of degree $d$, implying, in particular, that:
$$Q_{\bs{N}}(f_n^{(j)}) \stackrel{\text{ Law }}{ \longrightarrow} \mathcal{N}(0,C_{j,j}) \quad \text{for every } j=1,\dots,m ,$$
for a sequence $\bs{N}$ of independent standard Gaussian random variables.  Besides, from Theorem \ref{inv}, $\tau_n^{(j)} = \max\limits_{i=1,\dots,n}\mathrm{Inf}_i(f_n^{(j)}) \longrightarrow 0$ as $n \rightarrow \infty$, for every $j=1,\dots,m$. Since 
$$\E[Q_{\bs{X}}(f_n^{(j)}) Q_{\bs{X}}(f_n^{(i)})] = \E[Q_{\bs{N}}(f_n^{(j)}) Q_{\bs{N}}(f_n^{(i)})] \quad \forall i,j=1,\dots,m,$$
by virtue of Theorem \ref{MultiMossel}, the random vectors $(Q_{\bs{N}}(f_n^{(1)}),\dots,Q_{\bs{N}}(f_n^{(m)}) )$ and \linebreak$(Q_{\bs{X}}(f_n^{(1)}),\dots,Q_{\bs{X}}(f_n^{(m)}) )$ are asymptotically close in distribution. The conclusion follows by Theorem \ref{TeoPeccatiTudor}.
\end{proof}

\begin{rmk}
By virtue of Theorem \ref{Niid_classic}, the identical distribution hypothesis on the sequence $\bs{X}$ can be dropped in the statement of Theorem \ref{ComponentJoint}.
\end{rmk}


\section{Alternative proofs}

The aim of this section is to provide two alternative proofs of Theorem \ref{superTeo1}: the first one uses mixtures of random variables, and it applies to every order $d\geq 2$, while the second appeals to the \textit{Stein's method of exchangeable pairs}, but only in the quadratic case $d=2$. Despite this deficiency, the Stein's method approach will allow us to derive a quantitative version of the quadratic Fourth Moment Theorem for homogeneous sums $Q_{\X}(f_n)$, with an explicit bound on the Wasserstein's distance between the law of $Q_{\X}(f_n)$ and the standard Gaussian law.\\

\subsection{The technique of mixtures}\label{proof_mixtures}

The aim of this section is to discuss a different approach to prove Theorem \ref{superTeo1}, involving mixtures of random variables. The strategy here adopted will turn out to be particularly helpful in  Chapter \ref{Threshold}, which is dedicated to the discussion of the optimality of the condition $\E[X^4]\geq 3$. 

\begin{lemma}\label{ExistenceT}
For every $\theta >0$, there exists a square-integrable random variable $T \in \mathrm{L}^{2}(\Omega)$\glossary{name={$\mathrm{L}^{2}(\Omega)$},description={Space of the square-integrable random variables defined on $(\Omega,\mathcal{F}, \mathbb{P})$}}, with values in $\mathbb{R}_+\setminus\{0\}$ and with compact support, such that $\mathbb{E}[T]= 1$ and $\mathbb{E}[T^2]= 1+\theta$.
\end{lemma}

\begin{proof}
For the fixed $\theta >0$,  set $\alpha_q:=\sqrt{(1+\theta)^{1/q}-1}$, for $q\in\mathbb{N}$. If $q \rightarrow \infty$, then
$(1+\theta)^{1/q}\to 1$ and hence, for $q$ large enough, $\alpha_q \in [0,1)$. If $V_1,\ldots,V_q$ are independent random variables with distribution $\dfrac{1}{2}(\delta_{1-\alpha_q} + \delta_{1+\alpha_q})$ (where $\delta_{y}$ denotes the Dirac's function in $y$), the random variable
$T:=V_1\cdots V_q$ takes its values in $[x,\infty[$ with $x=(1-\alpha_q)^{q}>0$, and satisfies $\E[T]=1$ and $\E[T^2]= (1+\alpha_q^2)^q= 1+\theta $.
%
\end{proof}

Observe that, for the chaotic random variable $Q_{\mathbf{N}}(f_n)$, the positiveness of the fourth cumulant (see \eqref{Pos4cum}): 
$$\E[Q_{\mathbf{N}}(f_n)^4]-3 \E[Q_{\mathbf{N}}(f_n)^2]^2 > 0$$ 
corresponds to the condition $ \E[Q_{\mathbf{N}}(f_n)^4] - 6 \E[Q_{\mathbf{N}}(f_n)^2] + 3 >0$. Indeed, one can write:
$$ \E[Q_{\mathbf{N}}(f_n)^4]-3 \E[Q_{\mathbf{N}}(f_n)^2]^2 + 3(\E[Q_{\mathbf{N}}(f_n)^2]-1)^2 = \E[Q_{\mathbf{N}}(f_n)^4] - 6 \E[Q_{\mathbf{N}}(f_n)^2] + 3 .$$

\begin{thm}
Let $X$ be a random variable satisfying Assumption {\bf (2)} and such that $\E[X^4] \geq 3$. Then, for every $d \geq 2$, $X$ satisfies the Fourth Moment Theorem, and is universal (at the order $d$) for normal approximations of homogeneous sums of degree $d$.
\end{thm}

\begin{proof}
For the sake of clarity, the proof is divided into two steps. Before starting, note that for sequences of homogeneous sums with kernels having non-constant normalizations,  Definition \ref{defcom} should be extended as follows: if $f_n:[n]^d \rightarrow \mathbb{R}$ is a sequence of symmetric and vanishing on diagonals kernels, such that $\E[Q_{\mathbf{X}}(f_n)^2] = \sigma_n^2 < \infty$ for all $n\geq 1$, and if $\lim\limits_{n \rightarrow \infty}\sigma_n^2 = \sigma^2  >0$, then we shall say that $X$ satisfies the $CFMT_d$ if the convergence
$$ \chi_4(Q_{\mathbf{X}}(f_n)) = \E[Q_{\mathbf{X}}(f_n)^4] - 3\E[Q_{\mathbf{X}}(f_n)^2]^2 \rightarrow 0 $$
implies, as $n \rightarrow \infty$,  $Q_{\mathbf{X}}(f_n) \stackrel{\text{Law}}{\rightarrow} \mathcal{N}(0,\sigma^2)$.

\begin{enumerate}
\item 
If $N \sim \mathcal{N}(0,1)$, then there exists a random variable $T $, with finite variance and independent of $N$, such that, for every $i=1,2,3,4$, $\E[X^i] = \E[(TN)^{i}] = \mathbb{E}[T^i]\E[N^{i}]$. Indeed, setting $\theta:=\E[X^4]/3 -1$,  by virtue of Lemma \ref{ExistenceT}, there exists a positive random variable $V$, with compact support non containing the zero, such that $\mathbb{E}[V^2] = 1+\theta$; then, it suffices to set $T := \sqrt{V}$. Therefore, $\mathbb{E}[T^4] = \E[X^4]/3 \geq 1$.  Consider, then, a sequence $\mathbf{T}=\{T_i\}_{i\geq 1}$ of independent copies of $T$, as well as a sequence $\mathbf{N} = \{N_i\}_{i\geq 1}$ of independent standard normally distributed random variables, such that $\mathbf{T}$ and $\mathbf{N}$ are independent. Then, for every sequence of admissible kernels $f_n:[n]^d \rightarrow \mathbb{R}$, the homogeneous sums
$$ Q_{\mathbf{TN}}(f_n) =\sum_{i_1,\dots,i_d=1}^{n}f_n(i_1,\dots,i_d) T_{i_1}N_{i_1}\cdots T_{i_d}N_{i_d},$$
satisfy $\mathbb{E}[Q_{\mathbf{TN}}(f_n)^i] = \E[Q_{\mathbf{X}}(f_n)^i]$, for $i=1,2,3,4$. 
\item Given a sequence of admissible kernels $f_n:[n]^d \rightarrow \mathbb{R}$, assume that $\E[Q_{\mathbf{X}}(f_n)^4] \rightarrow 3$ as $n\rightarrow \infty$, and  write:
\begin{align*}
\E[Q_{\mathbf{X}}(f_n)^4]- 3 &= \E[Q_{\mathbf{TN}}(f_n)^4]- 3  \\
&= \E\big[\E[Q_{\mathbf{TN}}(f_n)^4 | \mathbf{T}] -3\big]  \\
&=\E\big[ \E[Q_{\mathbf{TN}}(f_n)^4 |\mathbf{T}] - 6\E[Q_{\mathbf{TN}}(f_n)^2|\mathbf{T}] + 3 \big]\\ 
&= \E\big[ \E[Q_{\mathbf{TN}}(f_n)^4 |\mathbf{T}]-3\E[Q_{\mathbf{TN}}(f_n)^2 |\mathbf{T}]^2  +3 \left(\E[Q_{\mathbf{TN}}(f_n)^2|\mathbf{T}]-1\right)^2 \big].
\end{align*}

Since  $\mathbf{T}$ is independent of $\mathbf{N}$, $Q_{\mathbf{TN}}(f_n)$ is in the $d$-th Wiener chaos $\mathbf{T}$-a.s., and hence: 
$$\E[Q_{\mathbf{TN}}(f_n)^4 |\mathbf{T}]-3\E[Q_{\mathbf{TN}}(f_n)^2 |\mathbf{T}]^2 \,>\, 0 \quad \mathbf{T} \text{-}a.s. $$ 
Then, under the assumption $\E[Q_{\mathbf{X}}(f_n)^4] \rightarrow 3$, and up to extracting a subsequence, almost surely in $\{T_i\}_{i\ge1}$, and for $n\rightarrow \infty$, it follows that:
\begin{itemize}
\item[-]  $\mathbb{E}[Q_{\mathbf{TN}}(f_n)^4| \mathbf{T}] - 3\E[Q_{\mathbf{TN}}(f_n)^2 |\mathbf{T}]^2 \rightarrow 0 \; $;
\item[-] $\E[Q_{\mathbf{TN}}(f_n)^2|\mathbf{T}]-1 \, \rightarrow 0 \; ,$
\end{itemize}
yielding together that $Q_{\mathbf{TN}}(f_n) \stackrel{\text{ Law}}{\longrightarrow} \mathcal{N}(0,1)$ $\mathbf{T}$-a.s., 
which in turn implies 
$$   \max_{i_1 = 1,\dots,n} \sum_{i_2,\dots,i_d=1}^n f_n(i_1,\dots,i_d)^2 T_{i_1}^2 \cdots T_{i_d}^2  \longrightarrow 0   \; \mathbf{T} \text{- a.s.}$$
Since  $T \geq x>0$, this condition in turn implies that:
$$
\tau_n(f_n):= \max_{i_1=1,\dots,n}\sum_{i_2,\cdots,i_d=1}^n f_n(i_1,i_2,\cdots,i_d)^2  \xrightarrow[n\to\infty]{\text{}}0.
$$
The asymptotic normality of $Q_{\X}(f_n)$ then follows by de Jong's Criterion (see \cite[Theorem 1.9]{NourdinPeccatiReinert} for a modern proof). Moreover, the law of $X$ is universal: if $Q_{\X}(f_n) \stackrel{\text{Law}}{\longrightarrow} \mathcal{N}(0,1)$, then $\E[Q_{\X}(f_n)^4]-3 \rightarrow 0$ which, as just shown, implies $\tau_n(f_n) \rightarrow 0$.\qedhere
\end{enumerate}
\end{proof}

\begin{rmk}\label{relaxMom3}
In order to drop the assumption $\E[X^3]=0$, one should extend Lemma \ref{ExistenceT} to prove the existence of a positive random variable $V$ such that $\E[V]=1, \E[V^2] = 1+\theta$, and $\E[ \sqrt{V^3}] = \E[X^3]$, and consider mixtures between $T$ and a random variable $Z$ satisfying a Fourth Moment Theorem and all the necessary properties needed in the proof. For instance, one can choose $Z$ to be a centered Poisson random variable $P$ with parameter $1$. In this case, $\E[Q_{\X}(f_n)^i] =\E[Q_{\mathbf{TP}}(f_n)^i]$ for every $i=1,2,3,4$. Indeed, random variables in the Poisson Wiener Chaos satisfy both the Fourth moment Theorem and the universality phenomenon (see Theorem \ref{FMTPoisson} and \cite[Theorem 3.4]{PeccatiZheng1}), and enjoy as well the  feature of having strictly positive fourth cumulant: $\E[Q_{\mathbf{P}}(f_n)^4] -3 > 0$ (see identities (4.10) and (4.11) in \cite{PeccatiZheng1}), which has been  crucial property for the proof.
\end{rmk}

\subsection{$\lambda$-Stein pairs and normal approximation of quadratic homogeneous sums}

In this subsection, we will provide an alternative proof of the quadratic Fourth Moment Theorem for homogeneous sums in independent copies of a random variables $X$, satisfying Assumption {\bf (2)} and $\chi_4(X) \geq 0$, using the tools of the Stein's method of exchangeable pairs \cite{SteinBook, Chen_Goldstein_Shao, Ross}.\\

Let $X$ denote a random variable satisfying Assumption {\bf (2)}, such that $\chi_4(X) \geq 0$. As already seen in Subsection \ref{QuadraticCaseClassic}, for a quadratic homogeneous sum, with admissible coefficient \linebreak $f:[n]^2 \rightarrow \mathbb{R}$, formula \eqref{formulaGauss} can be explicitly written as:
\begin{equation}\label{QuadFormula}
\E[  Q_{\mathbf{X}}(f)^4] - 3 =  48 \| f \stackrel{1}{\smallfrown} f  \|^2   +  16 \chi_4(X) \sum_{k=1}^n \E[  Q_{\mathbf{N}}(f(k,\cdot))^4 ]  + 16 \chi_4(X)^2 \sum_{k_1,k_2=1}^n f(k_1,k_2)^4 \,.
\end{equation}
The aim of this subsection is to analyse normal approximations of quadratic homogeneous sums in independent copies of $X$, by combining the Lindberg method of influence functions with the Stein's method of exchangeable pairs \cite{Chen_Goldstein_Shao}, in the particular setting of $\lambda$-Stein pairs. Recall that an exchangeable pair $(T,T^{\prime})$ is called a $\lambda$-\textit{Stein pair}\index{$\lambda$-Stein Pair} if there exists $\lambda \in (0,1]$ such that, almost surely:
$$ \mathbb{E}[T^{\prime}|T] = (1-\lambda) T.$$
The analysis will be performed via the following bound for the Wasserstein distance $d_{\mathcal{W}}(\cdot, \cdot)$ between $Q_{\bs{X}}(f)$ and a random variable having the standard normal distribution (see, for instance, \cite[Theorem 3.7]{Ross}).
\begin{thm}
\label{WasserBound}
Let $T=T(X_1,\dots,X_n)$ be a symmetric statistics of the independent observations $X_1,\dots,X_n$, and let $T^{\prime}$ be a random variable such that $(T,T^{\prime})$ is a $\lambda$-Stein pair. Then, for $N\sim \mathcal{N}(0,1)$,
$$ d_\mathcal{W}(T, N) \leq \dfrac{\sqrt{\mathrm{Var}(  \mathbb{E}[  (T-T^{\prime})^2   | T  ]   )   }     }{ \sqrt{2\pi}\lambda} +         \dfrac{  \mathbb{E}| T - T^{\prime} |^3 }{3\lambda}  .$$
\end{thm}

More specifically, the next proposition provides a bound for the Wasserstein distance\linebreak $d_{\mathcal{W}}(Q_{\bs{X}}(f_n), \mathcal{N}(0,1))$, depending on the maximum of the influence functions $\tau_n$, and on the fourth moment of $Q_{\bs{X}}(f)$. As a consequence, and as an application of the formula \eqref{formulaGauss}, an alternative proof of the Fourth Moment Theorem for $X$ can be achieved, independently of the Fourth Moment Theorem and of the universality property of the Gaussian distribution.
\begin{prop}\label{SteinPairProof}
If $X$ satisfies Assumption {\bf (2)} and $\chi_4(X) \geq 0$, there exist constants $M_1, M_2$, only depending on $\E[X^4]$, and a constant $R_3$ (not depending on $X$) such that:
$$
d_{\mathcal{W}}(Q_{\bs{X}}(f), N)  \leq \dfrac{\sqrt{M_1 \big(\E[Q_{\bs{X}}(f)^4] - 3\big) + M_2 \tau(f) }}{2\sqrt{2\pi}} + \dfrac{4R_3 (\E[|X|^3])^2 \sqrt{\tau(f)}}{3},$$
where $\tau(f)= \max\limits_{i=1,\dots,n}\mathrm{Inf}_i(f)$.\\
\end{prop} 
Note that, since $T=T(X_1,\dots,X_n)$,  by virtue of the  inequality:
$$ \mathrm{Var}\big(\mathbb{E}[  (T-T^{\prime})^2   | T ] \big)  \leq     \mathrm{Var}\big(\mathbb{E}[  (T-T^{\prime})^2   | X_1,\dots, X_n  ]\big), $$
one can derive bounds directly for $\mathrm{Var}(\mathbb{E}[  (T-T^{\prime})^2   | X_1,\dots, X_n  ])$, instead that for \linebreak$\mathrm{Var}(   \mathbb{E}[  (T-T^{\prime})^2   | T  ])$.

The following result is known as \textit{Rosenthal inequality}\index{Rosenthal inequality}, and will be of use to derive the desired bounds (see \cite{Rosenthal} for generalizations of the Rosenthal's inequality for symmetric statistics of higher orders, in non identically distributed variables).

\begin{prop}
For every $t\geq 2$, let $X_1,\dots,X_n$ be centered independent random variables, such that  $\mathbb{E}[|X_i|^t] < \infty$ for every $i=1,\dots,n$. Then, there exists a positive constant $R_t$ such that:
$$ \mathbb{E}\bigg[\bigg| \sum_{i=1}^n X_i   \bigg|^t  \bigg] \leq R_t \max\bigg(  \sum_{i=1}^n \mathbb{E}[|X_i|^t], \bigg( \sum_{i=1}^n\mathbb{E}[X_i^2]\bigg)^{\frac{t}{2}}      
\bigg).$$
\end{prop}

The proof of Proposition \ref{SteinPairProof}, some preliminary arguments are needed. 
Assume  that $X$ satisfies Assumption {\bf (2)} and  $\chi_4(X) \geq 0$, and consider a homogeneous sum of degree $2$, based on independent copies of $X$, say
$$Q:=Q_{\bs{X}}(f)= \sum\limits_{1 \leq i < j \leq n}f(i,j)X_i X_j, $$
where $f:[n]^2 \rightarrow \mathbb{R}$ is an admissible kernel. Then, for every $k=1,\dots,n$:
$$ Q= \sum_{i < k}f(i,k)X_i X_k + \sum_{j > k}f(j,k)X_j X_k + \sum_{\substack{1 \leq l < r \leq n\\ l,r \neq k}}f(l,r)X_l X_r.$$
If $I$ is a random index, chosen uniformly from $\{1,\dots,n\}$, consider $Q^{\prime}$ obtained from $Q$ by replacing $X_{I}$ with an independent copy $X_{I}^{\prime}$. Then
$$ Q- Q^{\prime} = \sum_{i < I}f(i,I)X_i (X_I - X_{I}^{\prime}) + \sum_{j > I}f(j,I)X_j (X_I - X_{I}^{\prime}). $$

\begin{prop}\label{lambda}
$(Q, Q^{\prime})$ is a $\lambda$-Stein pair, with $\lambda = \dfrac{2}{n}$.
\end{prop}

\begin{proof} 
From:
\begin{equation*}
Q - Q^{\prime} = \sum_{i < I} f(i,I) X_i (X_I - X_I^{\prime}) + \sum_{j > I} f(j,I) X_j (X_I - X_I^{\prime}),
\end{equation*}
it follows that:
\begin{align*}
\mathbb{E}&[Q - Q^{\prime}|X_1,\dots, X_n] = \mathbb{E}[ \sum_{i < I} f(i,I) X_i (X_I - X_I^{\prime})+ \sum_{j > I} f(j,I) X_j (X_I - X_I^{\prime})|X_1,\dots, X_n] \\
&= \frac{1}{n}\sum_{k=1}^n \big(\sum_{i < k} f(i,k) X_i \mathbb{E}[X_k - X_k^{\prime}|X_1, \dots, X_n ]      +    \sum_{j >k} f(j,k) X_j \mathbb{E}[X_k- X_k^{\prime}|X_1, \dots, X_n ]          \big) \\
&= \frac{1}{n}\sum_{k=1}^n \big(\sum_{i < k} f(i,k) X_i X_k      +    \sum_{j >k} f(j,k) X_j X_k      \big) \\
&= \frac{2}{n} Q. \qedhere
\end{align*}
\end{proof}

\begin{rmk}
Proposition \ref{lambda} is a particular case of the following general picture. 
If $X_1,\dots,X_n$ are independent and identically distributed random variables, let $V=V(X_1,\dots,X_n)$ be a symmetric, degenerate $U$-statistics of order $d <n$, that is:
$$ V= \sum_{1 \leq i_1 < i_2 < \cdots < i_d \leq n} f(X_{i_1},\dots,X_{i_d}),$$
where the kernel $f$ satisfies $\E[f(X_1,\dots,X_d)| X_1,\dots,X_{d-1}] = 0$ a.s.. For a random index $I$, chosen uniformly from $\{1,\dots,n\}$, let $V^{\prime}$ be obtained from $V$ by replacing $X_I$ with an independent copy $X_I^{\prime}$. Then, thanks to independence,
$$ \E[V^{\prime}| X_1,\dots,X_n] = \dfrac{1}{n} \sum_{j=1}^n \bigg(\sum_{\substack{1 \leq i_1  < \cdots < i_{d} \leq n\\ i_l \neq j}} f(X_{i_1},\dots,X_{i_d}) \bigg).$$
Moreover, due to the exchangeability of $X_1,\dots,X_n$, $\E[f(X_{i_1},\dots,X_{i_d}) | V  ] = \dfrac{1}{\binom{n}{d}}V$, for every $i_1,\dots,i_d \in [n]$, and, in turn:
$$ \E[V^{\prime}| V] = \dfrac{ \binom{n-1}{d} }{\binom{n}{d}}V = \big(1 - \dfrac{d}{n}\big)V.$$
Hence, $(V, V^{\prime})$ is a $\lambda$-Stein pair for $\lambda = \dfrac{d}{n}$. 

In particular, if $f:[n]^d \rightarrow \mathbb{R}$ is an admissible kernel, homogeneous sums $Q_{\bs{X}}(f)$ of degree $d<n$, in i.i.d. random variables, are instances of symmetric, degenerate $U$-statistics, and hence we obtain a $\frac{d}{n}$-Stein pair by replacing $X_I$ with an independent copy $X_I^{\prime}$. 
Remark that, for homogeneous Rademacher sums, the above construction of an exchangeable pair has already been observed and exploited for the analysis on the Rademacher Chaos (see \cite[Section 3.3]{NourdinPeccatiReinert2}). \\
\end{rmk}

\begin{rmk}
Note that, throughout the previous chapters, the homogeneous sums have been defined as:
$$ Q_{\bs{X}}(f) = \sum_{i,j=1}^n f(i,j)X_i X_j = 2 \sum_{1\leq i < j \leq n}f(i,j)X_i X_j ,$$
where the last equality is due to the symmetry of $f$. Therefore, up to replace $\lambda$ with $\frac{4}{n}$, in the sequel the focus will be on
$$ Q_{\bs{X}}(f) = \sum_{1\leq i < j \leq n}f(i,j)X_i X_j .$$
\end{rmk}

Thanks to Proposition \ref{lambda}, it is possible to apply Theorem \ref{WasserBound} to prove Proposition \ref{SteinPairProof}. Remark that, even if Proposition \ref{lambda} can be generalised for homogeneous sums of every order $d \geq 2$, the forthcoming discussion only holds, with the tools available so far, in the quadratic case.

\begin{proof}
In order to bound the second summand in the right-hand side of the inequality stated with Theorem \ref{WasserBound}, write:
\begin{align*}
\mathbb{E}[|Q - Q^{\prime}|^3] &= \frac{1}{n}\sum_{k=1}^n \mathbb{E}\bigg[ \bigg|\sum_{i < k}f(i,k)X_i (X_k - X_{k}^{\prime}) + \sum_{j > k}f(j,k)X_j (X_k - X_{k}^{\prime})  \bigg|^3 \bigg] \\
&= \frac{1}{n}\sum_{k=1}^n \mathbb{E}\bigg[\bigg|(X_k - X_k^{\prime})\sum_{\substack{i=1\\i \neq k}}^n f(i,k)X_i     \bigg|^3\bigg]\\
&=\frac{1}{n}\sum_{k=1}^n \mathbb{E}\bigg[\big|X_k - X_k^{\prime}\big|^3 \bigg|\sum_{\substack{i=1\\i \neq k}}^nf(i,k)X_i \bigg|^3\bigg]\\
&=\frac{1}{n}\sum_{k=1}^n \mathbb{E}\big[|X_k - X_k^{\prime}|^3\big] \mathbb{E}\bigg[ \bigg|\sum_{\substack{i=1\\i \neq k}}^nf(i,k)X_i \bigg|^3\bigg],
\end{align*}
where the last equality follows by independence, and apply the Rosenthal inequality:
\begin{align*}
\mathbb{E}\bigg[ \bigg|\sum_{\substack{i=1,\dots,n\\i \neq k}}f(i,k)X_i \bigg|^3\bigg] &\leq R_3 \max\bigg( \mathbb{E}[|X|^3]\sum_{\substack{i=1,\dots,n\\i \neq k}}|f(i,k)|^3\; ,\; \bigg(\sum_{\substack{i=1,\dots,n\\ i \neq k}}f(k,i)^2 \mathbb{E}[X_i^2]\bigg)^{\frac{3}{2}} \bigg) \\
&= R_3 \max\bigg( \mathbb{E}[|X|^3]\sum_{\substack{i=1,\dots,n\\i \neq k}}|f(i,k)|^3\; ,\; \big(\mathrm{Inf}_k(f) \big)^{\frac{3}{2}} \bigg).
\end{align*}
By applying the H\"{o}lder inequality, one obtains the estimates:
\begin{align*}
\sum_{\substack{i=1,\dots,n\\i \neq k}}|f(i,k)|^3 &= \sum_{\substack{i=1,\dots,n\\i \neq k}}|f(i,k)| f(i,k)^2
 \\
 &\leq \bigg( \sum_{\substack{i=1,\dots,n\\i \neq k}} f(i,k)^4  \bigg)^{\frac{1}{2}} \bigg( \sum_{\substack{i=1,\dots,n\\i \neq k}} f(i,k)^2  \bigg)^{\frac{1}{2}} \\
 &= \bigg( \sum_{\substack{i=1,\dots,n\\i \neq k}} f(i,k)^4  \bigg)^{\frac{1}{2}} \big( \mathrm{Inf}_k(f)\big)^{\frac{1}{2}} \\
 &\leq \bigg( \big(\sum_{\substack{i=1,\dots,n\\i \neq k}} f(i,k)^2\big)^2 \bigg)^{\frac{1}{2}} \bigg( \mathrm{Inf}_k(f)\bigg)^{\frac{1}{2}} \\
&= \mathrm{Inf}_k(f)  \big( \mathrm{Inf}_k(f)\big)^{\frac{1}{2}} = \big(\mathrm{Inf}_k(f)\big)^{\frac{3}{2}} 
\end{align*}
yielding:
\begin{align*}
\mathbb{E}\bigg[ \bigg|\sum_{\substack{i=1,\dots,n\\i \neq k}}f(i,k)X_i \bigg|^3\bigg] &\leq R_3 \max\bigg( \mathbb{E}[|X|^3] \big(\mathrm{Inf}_k(f)\big)^{\frac{3}{2}}, \big(\mathrm{Inf}_k(f)\big)^{\frac{3}{2}} \bigg).
\end{align*}
Since $\max( \mathbb{E}[|X_1|^3],1) = \mathbb{E}[|X_1|^3]$ (indeed, by Jensen's inequality, $1 = \E[|X_1|^2 ]^{\frac{3}{2}} \leq \mathbb{E}[|X_1|^3]$), finally one has:
\begin{align*}
\mathbb{E}[|Q - Q^{\prime}|^3] &\leq \frac{1}{n}\sum_{k=1}^n \mathbb{E}\big[|X_k - X_k^{\prime}|^3\big]
R_3 \max\bigg( \mathbb{E}[|X_1|^3] \big(\mathrm{Inf}_k(f)\big)^{\frac{3}{2}}, \big(\mathrm{Inf}_k(f)\big)^{\frac{3}{2}} \bigg) \\
&\leq   \dfrac{R_3 \mathbb{E}[|X|^3]\,\mathbb{E}\big[|X_1 - X_1^{\prime}|^3\big]}{n} \sum_{k=1}^n  \big(\mathrm{Inf}_k(f)\big)^{\frac{3}{2}} \\
&\leq \dfrac{R_3 \mathbb{E}[|X|^3]\,\mathbb{E}\big[|X_1 - X_1^{\prime}|^3\big]}{n}   \bigg(\max_{k=1,\dots,n}\mathrm{Inf}_k(f)\bigg)^{\frac{1}{2}}  \sum_{k=1}^n  \mathrm{Inf}_k(f) \\
&=  \dfrac{R_3 \mathbb{E}[|X|^3] \,\mathbb{E}\big[|X_1 - X_1^{\prime}|^3\big]}{n}  \bigg(\max_{k=1,\dots,n}\mathrm{Inf}_k(f)\bigg)^{\frac{1}{2}} 
\end{align*}
(due to $\sum\limits_{k=1}^n \mathrm{Inf}_k(f) =1$). In conclusion, for $\lambda = \frac{2}{n}$,
$$\dfrac{ \mathbb{E}[|Q - Q^{\prime}|^3]}{3\lambda} \leq  \frac{R_3 \mathbb{E}[|X_1|^3]}{6}\mathbb{E}\big[|X_1- X_1^{\prime}|^3\big]   \big(\tau(f)\big)^{\frac{1}{2}},$$
with $\tau(f) = \max\limits_{k=1,\dots,n}\mathrm{Inf}_k(f)$. The conclusion then follows by virtue of the inequality $\mathbb{E}\big[|X_1- X_1^{\prime}|^3\big] \leq 8\mathbb{E}[|X_1|^3]$, that can be proved by first expanding the cube, and then by applying the H\"{o}lder inequality.\\

%
%
%

Finding a desirable bound for the first summand appearing in the bound stated via Theorem \ref{WasserBound} is a bit more demanding:
\begin{align*}
\mathbb{E}[  (Q-Q^{\prime})^2   | X_1,\dots, X_n  ] &= \dfrac{1}{n} \sum_{k=1}^n \mathbb{E}\bigg[  \bigg(  \sum_{ \substack{ j=1 \\ j \neq k}}^n  f(j,k)X_j (X_k - X_k^{\prime})  \bigg)^2                  | X_1,\dots, X_n \bigg]  \\
 &=  \dfrac{1}{n} \sum_{k=1}^n \bigg(   \sum_{ \substack{ j=1  \\ j \neq k}}^n f(j,k) X_j  \bigg)^2    \mathbb{E}[(X_k - X_k^{\prime})^2 | X_1,\dots, X_n] \\
 &=  \dfrac{1}{n} \sum_{ k=1}^n  (X_k^2 + 1)   \bigg(   \sum_{ \substack{ j=1\\ j \neq k}} ^nf(j,k) X_j  \bigg)^2    \,.
\end{align*}
Then,
$$\E\big[\mathbb{E}[  (Q-Q^{\prime})^2   | X_1,\dots, X_n  ] \big]  = \dfrac{2}{n}\sum_{k=1}^n \E\bigg[\bigg(\sum_{\substack{j=1\\j \neq k}}^n f(j,k)X_j \bigg)^2   \bigg]  = \dfrac{2}{n}\sum_{k=1}^n \mathrm{Inf}_k(f) =\dfrac{2}{n} $$
so that
$$ \big(\E\big[\mathbb{E}[  (Q-Q^{\prime})^2   | X_1,\dots, X_n  ] \big] \big)^2 = \dfrac{4}{n^2}$$
(recall that $ \sum\limits_{i=1}^n \mathrm{Inf}_i(f) =1$). 
On the other hand,
$$ \big( \mathbb{E}[  (Q-Q^{\prime})^2   | X_1,\dots, X_n  ] \big)^2  = A + B,     $$ 
where we have set:
\begin{equation}
\label{A}
A :=  \dfrac{1}{n^2}  \sum_{ k=1}^n  (X_k^2 + 1)^2   \bigg(   \sum_{ \substack{ j=1, \dots, n  \\ j \neq k}} f(j,k) X_j  \bigg)^4
\end{equation}
and 
\begin{equation}
\label{B}
B := \dfrac{1}{n^2} \sum_{ \substack{ k,l=1,\dots, n \\ k \neq l  }}  (X_k^2 + 1)(X_l^2 + 1)  \bigg(   \sum_{ \substack{ j=1, \dots, n  \\ j \neq k}} f(j,k) X_j  \bigg)^2 \bigg(   \sum_{ \substack{ i=1, \dots, n  \\ i\neq l}} f(i,l) X_i \bigg)^2.
\end{equation}

Since $X$ satisfies Assumption {\bf (2)}, and the $X_i$'s are independent, straightforward computations yield that:
\begin{equation}
\label{E(A)}
\E[A]= \dfrac{\E[(X^2+1)^2]}{n^2}\bigg(\E[X^4] \sum_{k=1}^n \sum_{\substack{j=1\\ j\neq k}}^n f(j,k)^4 \, + 3 \sum_{k=1}^n \sum_{\substack{j_1,j_2=1 \\j_1 \neq j_2}}^n f(j_1,k)^2\, f(j_2,k)^2 \bigg)\, ;
\end{equation}
\begin{align*}
\E[B]  &= \dfrac{4\E[X^4]}{n^2} \sum_{\substack{k,l=1\\k \neq l}}^n \sum_{\substack{j=1\\j\neq k,l}}^n f(j,l)^2 f(j,k)^2  + \dfrac{(\E[X^4]+1)^2}{n^2}  \sum_{\substack{k,l=1\\k \neq l}}^n f(k,l)^4  \\
& \quad + \dfrac{2(\E[X^4]+1)}{n^2}\sum_{\substack{k,l=1\\ k\neq l}}^n f(k,l)^2\,\sum_{\substack{j=1\\ j\neq k,l}}^n f(j,k)^2 + \dfrac{2(\E[X^4]+1)}{n^2}\sum_{\substack{k,l=1\\ k\neq l}}^n f(k,l)^2\,\sum_{\substack{i=1\\ i\neq k,l}}^n f(i,l)^2\\
& \qquad + \dfrac{4}{n^2} \sum_{\substack{k,l=1\\k \neq l}}^n \sum_{\substack{j,i=1\\ j\neq i\\ j,i \neq k,l}}^n f(i,l)^2 \,f(j,k)^2  + \dfrac{8}{n^2} \sum_{\substack{k,l=1\\k \neq l}}^n \sum_{\substack{j,i=1\\ j\neq i \\ j,i \neq k,l}}^n f(j,k)f(i,k)f(i,l)f(j,l) \, . \numberthis \label{E(B)}
\end{align*}
In the end, setting:
\begin{enumerate}
\item $P_1(\E[X^4]) = \E[X^4]\E[(X^2+1)^2] + (\E[X^4]+1)^2$,
\item $P_2(\E[X^4]) = 3 \E[(X^2+1)^2] + 4\E[X^4]$,
\end{enumerate}
one can write:
\begin{align*}
\E\big[\big( \mathbb{E}[ & (Q-Q^{\prime})^2  | X_1,\dots, X_n  ] \big)^2\big] = \dfrac{P_1(\E[X^4])}{n^2} \sum_{k=1}^n \sum_{\substack{j=1\\ j\neq k}}^n f(j,k)^4 \, \,\\
&\qquad +  \dfrac{P_2(\E[X^4])}{n^2}  \sum_{k=1}^n\sum_{\substack{j_1,j_2=1\\ j_1 \neq j_2}}^n f(j_1,k)^2 f(j_2,k)^2 \, +\, \dfrac{4}{n^2} \sum_{\substack{k,l=1 \\ k \neq l}}^n \sum_{\substack{i,j=1\\ i \neq j, i,j \neq k,l    }}^n f(i,l)^2 f(j,k)^2    \\
& \qquad +  \dfrac{8}{n^2}\sum_{\substack{k,l=1 \\ k \neq l}}^n \sum_{\substack{i,j=1\\ i \neq j, i,j \neq k,l    }}^n f(j,k)f(i,k)f(i,l)f(j,l)  \,+ \,\dfrac{2(\E[X^4]+1)}{n^2}\sum_{\substack{k,l=1\\ k\neq l}}^n f(k,l)^2\,\sum_{\substack{j=1\\ j\neq k,l}}^n f(j,k)^2\\
& \qquad \quad + \dfrac{2(\E[X^4]+1)}{n^2}\sum_{\substack{k,l=1\\ k\neq l}}^n f(k,l)^2\,\sum_{\substack{j=1\\ j\neq k,l}}^n f(j,k)^2 
\numberthis \label{Firstineq}.
\end{align*}
Note that $P_1(\E[X]^4), P_2(\E[X^4]) >0$, and  that, under the assumption $\chi_4(X) \geq 0$, \linebreak$\max\big(8,P_1(\E[X^4]),P_2(\E[X^4])) =P_1(\E[X^4])$ . Therefore, considering the inequalities:
\begin{align*}
\sum_{\substack{k,l=1\\ k\neq l}}^n f(k,l)^2\,\sum_{\substack{j=1\\ j\neq k,l}}^n f(j,k)^2  &\leq \sum_{k=1}^n \mathrm{Inf}_k(f)^2 \\
&\leq   \tau(f)\, ,
\end{align*}
and, similarly,
$$ \sum_{\substack{k,l=1\\ k\neq l}}^n f(k,l)^2\,\sum_{\substack{i=1\\ j\neq k,l}}^n f(i,l)^2 \leq  \tau(f),$$
as well as:
$$\sum_{\substack{k,l=1 \\ k \neq l}}^n \sum_{\substack{i,j=1\\ i \neq j, i,j \neq k,l    }}^n f(i,l)^2 f(j,k)^2 \,<\, \bigg( \sum_{k=1}^n \mathrm{Inf}_k(f)\bigg)^2 = 1, $$
it follows that:
\begin{align*}
\E\big[\big( \mathbb{E}[  (Q-Q^{\prime})^2  & | X_1,\dots, X_n  ] \big)^2\big] \leq  \dfrac{4}{n^2}   \,+\,  \dfrac{P_1(\E[X^4])}{n^2} \| f \stackrel{1}{\smallfrown} f \|^2  \\ 
&\quad +  \dfrac{4(\E[X^4]+1)}{n^2} \sum_{k=1}^n \mathrm{Inf}_k(f)^2 \\
&\leq \dfrac{4}{n^2}  + \dfrac{P_1(\E[X^4])}{n^2}  \bigg( 16\chi_4(X) ^2 \sum_{k=1}^n \sum_{\substack{j=1\\ j\neq k}}^n f(j,k)^4  \\
& \qquad \quad + 48 \chi_4(X) \sum_{k=1}^n\sum_{\substack{j_1,j_2=1\\ j_1 \neq j_2}}^n f(j_1,k)^2 f(j_2,k)^2 + 48 \|f \stackrel{1}{\smallfrown} f\|^2 \| \bigg) \\
&\qquad \qquad + \dfrac{4(\E[X^4]+1)}{n^2}\tau(f)
\end{align*}
and, in the end,
$$ \mathrm{Var}\big(\mathbb{E}[  (Q-Q^{\prime})^2   | X_1,\dots, X_n  ]\big) \leq \dfrac{P_1(\E[X^4])}{n^2} \big( \E[Q_{\bs{X}}(f)^4] - 3\big) + \dfrac{4(\E[X^4]+1)}{n^2} \tau(f).$$
Hence, for $\lambda = \dfrac{2}{n}$, and $N\sim \mathcal{N}(0,1)$,
\begin{align*}
d_{\mathcal{W}}(Q_{\bs{X}}(f), N) & \leq \dfrac{\sqrt{\mathrm{Var}\big(\mathbb{E}[  (Q-Q^{\prime})^2   | X_1,\dots, X_n  ]\big)}}{\sqrt{2\pi} \lambda} + \dfrac{\E[|Q-Q^{\prime}|^3]}{3\lambda} \\
& \leq \dfrac{\sqrt{P_1(\E[X^4]) \big(\E[Q_{\bs{X}}(f)^4] - 3\big) +  4(\E[X^4]+1) \tau(f) }}{2\sqrt{2\pi}} \\
& \qquad + \dfrac{4(\E[|X_1|^3])^2 R_3 \sqrt{\tau(f)}}{3}, \numberthis \label{IneqWass}
\end{align*}
and the claim follows.
\end{proof}
As a consequence, another proof of the quadratic Fourth Moment Theorem for $Q_{\bs{X}}(f_n)$ can be achieved.

\begin{cor}
Let $X$ be a random variable satisfying Assumption {\bf (2)} and $\chi_4(X) \geq 0$. Then, $X$ satisfies the Fourth Moment Theorem at the order $d=2$.
\end{cor}

\begin{proof}
Apply Proposition \ref{SteinPairProof} to a sequence $Q_{\bs{X}}(f_n)$,    with $f_n:[n]^2\rightarrow \mathbb{R}$ admissible kernel, and 
note that formula \eqref{QuadFormula}, together with the assumption $\chi_4(X) \geq 0$, implies $\E[Q_{\bs{X}}(f_n)^4 ] - 3 \geq 48 \| f_n\stackrel{1}{\smallfrown} f_n \|^2$. Then, if $\E[Q_{\bs{X}}(f_n)^4 ] - 3 \rightarrow 0$ as $n\rightarrow \infty$,   from  \eqref{magg1} it follows  that $\tau_n:=\tau(f_n) \rightarrow 0$ and,  in turn,
$$ d_{\mathcal{W}}(Q_{\bs{X}}(f_n), N) \rightarrow 0.$$
The conclusion follows considering that the topology induced by the Wasserstein distance is stronger than the topology of convergence in distribution.
\end{proof}


\chapter{The free probability setting: Fourth Moment Theorem and universality}\label{Free}

In this chapter, we will focus on free probability spaces, where the contents presented in the Chapter \ref{Classic} will be adapted to deal with homogeneous polynomials in freely independent random variables. The idea of the proofs developed in the following sections are similar to those exploited in the previous chapter: however, the peculiar structure of the lattice of non-crossing partitions will allow us to simplify some arguments (compare, for instance, the formulae \eqref{formula} and \eqref{formulaGauss}). In particular, one consequence of dealing with non-crossing partitions is that no assumption on the vanishing of the third moment will be required, so that Theorem \ref{superTeo2} covers a wider class of random variables than Theorem \ref{superTeo1} in the classical setting.

On the other hand, there will be no analogue of the alternative proof based on mixtures of random variables, since this technique trivializes when dealing with free independence. Finally, the equivalence between joint and componentwise convergence for the whole class of random variables $Y$ with $\varphi(Y^2) \geq 2$ will be established (see Theorem \ref{ComponentJointFree}), allowing one to have available a general multidimensional transfer principle for central convergence of symmetric homogeneous sums in independent copies of random variables having non-negative kurtosis (see Theorem \ref{Transfer}).

\section{Preliminaries}

For every $n\in \mathbb{N}$, set $[n] := \{1,\dots,n\}$. 

\begin{defn}\label{Admissible_Free}
Let $d\geq 2$. An  \textbf{admissible kernel} is a function $f:[n]^d \to \mathbb{R}$ satisfying the following properties:
\begin{enumerate}
\item[(i)] vanishing on diagonals: $f(i_1,\dots,i_d)=0$ whenever  $i_j=i_k$ for some $k\neq j$;
\item[(ii)] symmetry: $f(i_1,\dots,i_d)=f(i_{\sigma(1)},\dots,i_{\sigma(d)})$ for any permutation $\sigma\in \mathfrak{S}_d $ and any \linebreak$(i_1,\dots,i_d)\in [n]^d$;
\item[(iii)] $f$ has unit variance: $\sum\limits_{i_1,\dots,i_d =1}^n f(i_1,\dots,i_d)^2=1$.\\
\end{enumerate}
\end{defn}

Let $(\mathcal{A},\varphi)$ be a fixed $W^{\star}$-probability space. As in Part \ref{Invariance}, for a centered random variable $Y$ having unit variance,  namely $\varphi(Y)=0$ and $\varphi(Y^2)=1$, it will be said, for short, that $Y$ satisfies Assumption {\bf (1)}.\\ 

Let $\Y=\{Y_i\}_{i\geq 1}$ be a sequence of freely independent copies of $Y$, that are assumed to be defined on $(\mathcal{A},\varphi)$\footnote{Up to take the free product of the spaces $\mathcal{A}_i$, with $Y_i \in \mathcal{A}_i$.}. If $f:[n]^d\to \mathbb{R}$ is an admissible kernel, consider the homogeneous sum $Q_{\Y}(f)$ defined by:
\begin{eqnarray}
Q_{\Y}(f) &=&\sum_{i_1,\dots,i_d =1}^n f(i_1,\dots,i_d) Y_{i_1}\cdots Y_{i_d}.\label{Fnoncom}
\end{eqnarray}

Assumption {\bf (1)} and the properties of $f$ ensure that $\varphi(Q_{\Y}(f) )=0$ and $\varphi(Q_{\Y}(f)^2)=1$.\\

\begin{rmk}
A different normalization for the admissible kernels is chosen here to ensure that the homogeneous polynomial $Q_{\Y}(f)$ has unit variance.\\
\end{rmk}

In the free setting, the natural choice for the coefficients of a homogeneous sum would be a mirror symmetric function, namely a kernel $f:[n]^d \rightarrow \mathbb{C}$ such that $f(i_1,i_2, \dots, i_d) = \overline{f(i_d,\dots, i_2,i_1)}$ for every $i_1,\dots, i_d \in [n]$, with $\bar{z}$ denoting the complex conjugate of $z$. This assumption is the weakest possible to ensure that the element $Q_{\Y}(f)$ is self-adjoint. However, the forthcoming discussion will heavily rely on the universality property of the Wigner Semicircle law, that has been so far established only for homogeneous sums with symmetric real-valued coefficients: indeed, both in \cite{NourdinDeya} and in \cite{Solesne2}, counterexamples to the universality for mirror symmetric kernels have been provided.  On the other hand, the symmetry assumption on $f$ will allow us a better handling of $Q_{\Y}(f)$ for the computation of its fourth moment. \\

For several reasons, the semicircular distribution is considered  the non-commutative analogue of the Gaussian distribution: for instance, it is the limit law for the free version of the Central Limit Theorem, and joint moments of a semicircular system satisfy a Wick-type formula \cite{Speicher}. One further reason, most interesting for our purposes, is that the semicircular law satisfies both the Fourth Moment Theorem and the universality property (as recalled in Theorems \ref{knps} and \ref{invnoncom} of Part \ref{Invariance}), inspiring the following definition.

\begin{defn}\label{defnoncom}
Fix $d\geq 2$, let $Y$ satisfy Assumption {\bf (1)} and let $S\sim \mathcal{S}(0,1)$.  
\begin{itemize}
\item[(a)]  We shall say that $Y$ \textbf{satisfies the  Fourth Moment Theorem at the order $d$} \index{Fourth Moment Theorem (free)} if, for any sequence $f_n:[n]^d\to \mathbb{R}$ of admissible kernels, the following conditions are equivalent as $n\to\infty$:
\begin{itemize}
\item[(i)] $Q_{\Y}(f_n) \xrightarrow{\text{\rm Law}}\mathcal{S}(0,1)$;
\item[(ii)]  $\varphi(Q_{\Y}(f_n)^4)\to \varphi(S^4)= 2$.
\end{itemize}
\item[(b)] We shall say that $Y$ is   \textbf{universal at the order $d$} (for semicircular approximations of homogeneous sums) if, for any sequence $f_n:[n]^d\to\mathbb{R}$ of admissible kernels, $Q_{\Y}(f_n) \xrightarrow{\text{\rm Law}}\mathcal{S}(0,1)$ implies, as $n\to\infty$,
$$ \tau_n(f_n) := \max_{ i=1,\dots, n} \mathrm{Inf}_i(f_n) \longrightarrow 0, $$
where $\mathrm{Inf}_i(f_n):=\sum\limits_{i_2,\ldots,i_d=1}^n f_n(i ,i_2,\ldots,i_d)^2$ is the $i$-th \textbf{influence function} of $f_n$.\\
\end{itemize}
\end{defn}

\begin{rmk}
By virtue of Theorem \ref{teoNourdin}, $Y$ is {\it  universal} at the order $d$ if, equivalently, for any sequence $f_n:[n]^d\to \mathbb{R}$ of admissible kernels, the following conditions are equivalent as $n\to\infty$:
\begin{itemize}
\item[(i)] $Q_{\Y}(f_n) \xrightarrow{\text{\rm Law}}\mathcal{S}(0,1)$;
\item[(ii)] $Q_{\mathbf{W}}(f_n) \xrightarrow{\text{\rm Law}}\mathcal{S}(0,1)$ for any other sequence $\mathbf{W} = \{W_i\}_{i \geq 1}$ of freely independent random variables satisfying Assumption {\bf (1)}.\\
\end{itemize}
\end{rmk}

\begin{rmk}
Recall that, in the free probability setting, the convergence in law of a sequence of random variables is realized, by definition, with the convergence of the corresponding moments. This is why no hypercontractivity argument is required  to prove the convergence of the moments under the assumption of convergence in law.
\end{rmk}

\section{Main results}

The goal of this section is to  prove the free counterpart of Theorem \ref{superTeo1}, which is established with the next statement.

\begin{thm}
\label{superTeo2}
Fix $d\geq 2$ and consider a random variable $Y$ verifying Assumption {\bf (1)} and such that $\varphi(Y^4) \ge 2$. Then, $Y$ satisfies the Fourth Moment Theorem and it is universal at the order $d$ for semicircular approximations of homogeneous sums.
\end{thm}

As for the commutative case, some combinatorial arguments are needed for the proof. In the sequel, ${\bf S} = \{S_i\}_{i\geq 1}$  will denote a sequence of freely independent standard semicircular random variables. For every $k=1,\dots,n$ and a given admissible kernel $f:[n]^d \rightarrow \mathbb{R}$, consider the function $f(k,\cdot):[n]^{d-1} \rightarrow \mathbb{R}$, defined via:
$$ (i_1,\dots,i_{d-1}) \mapsto f(k,i_1,\dots,i_{d-1}),$$
and the corresponding semicircular homogeneous sums of order $d-1$:
$$Q_{\Y}(f(k,\cdot)) = \sum_{i_1,\dots,i_{d-1}=1}^n f(k,i_1,\dots,i_{d-1}) S_{i_1}\cdots S_{i_{d-1}}.$$

\begin{rmk}
In contrast to Assumption {\bf (2)} in the commutative case,  no extra assumption on the vanishing of the third moment $\varphi(Y^3)$ will be needed thanks to the simpler combinatorics of the non-crossing partitions that will emerge in the proofs.\\
\end{rmk}

The first step towards the free counterpart to Theorem \ref{superTeo1} is the following new formula for the fourth moment of $Q_{\Y}(f)$.

\begin{prop}
Let the above notation prevail. If $Y$ verifies Assumption {\bf (1)}, then, for every admissible kernel $f:[n]^d \rightarrow \mathbb{R}$:
\begin{equation}
\label{formula}
\varphi(Q_\Y(f)^4) = \varphi(Q_\SSw(f)^4) + \kappa_4(Y)\sum_{k=1}^{n}\varphi(Q_\SSw(f(k,\cdot))^4),
\end{equation}
where $\kappa_4(Y)$ denotes the free fourth cumulant of $Y$.
\end{prop} 

\begin{proof}
Write:
\begin{equation*}
\varphi \big(Q_\Y(f)^4\big)= \sum_{\mathbf{i}=(i_1,\dots,i_{4d}) \in [n]^{4d}}f^{\otimes 4}(\mathbf{i})\varphi(Y_{i_1} \cdots Y_{i_d} \cdots Y_{i_{4d}}),
\end{equation*}
where $f^{\otimes 4}(\mathbf{i}) = \prod\limits_{l=1}^4 f(i_{(l-1)d+1},\dots,i_{ld})$. Since $f$ vanishes on diagonals, the moment-cumulant  formula \eqref{MomCumFree} reduces to:
\begin{equation*}
\varphi\big(Y_{i_1}\cdots Y_{i_d}\cdots Y_{i_{2d}}\cdots Y_{i_{3d}}\cdots Y_{i_{4d}}\big) = \sum_{\substack{\sigma \in \mathcal{NC}([4d])\\ \sigma \wedge \pi^{\star}= \hat{0}}} \prod_{b \in \sigma} \kappa_{|b|}(Y_{i_j}:j \in b),
\end{equation*}
where $\pi^{\star}= d^{\otimes 4}$ denotes the interval partition with 4 consecutive blocks of cardinality $d$. In the right-hand side of the above equation, the only partitions that give a non-zero contribution are those whose blocks have at most cardinality $4$, since they have to intersect each block of $\pi^{\star}$  at most at one element. Therefore,  only cumulants up to the order $4$ will be involved. 
Recalling that $\mathcal{NC}^{\star}(d^{\otimes 4})$ denotes the set of the partitions $\sigma \in \mathcal{NC}([4d])$, such that $\sigma \wedge \pi^{\star}= \hat{0}$, every $\sigma \in\mathcal{NC}^{\star}(d^{\otimes 4})$   will give a non-zero contribution only if its blocks have cardinality $2$ or $4$. Indeed, since $Y$ is centered, whenever $B \in \sigma$ is a singleton, say $B=\{j\}$, then $\kappa_{1}(Y_j)=0$. Similarly, $\sigma$ cannot have any block of cardinality $3$, otherwise there would be at least one singleton, and the corresponding cumulant would vanish.

Therefore, the only non-vanishing terms are those relative either to full pairings that respect $\pi^{\star}$, or to partitions that respect $\pi^{\star}$ whose blocks have cardinality $2$ or $4$: denote this set by $\mathcal{NC}_{2,4}^{\star}(d^{\otimes 4})$. The crucial point in the following discussion is that such a partition can only have exactly one $4$-block and $2(d-1)$ pairings \footnote{This difference with the formula in the classical setting is due to the fact that for every choice of the 4-block, the remaining elements can be paired in exactly one non-crossing way.}.

To count the partitions in $\mathcal{NC}_{2,4}^{\star}(d^{\otimes 4})$, start by forming the 4-block. Choose $j_1 \in \{1,\dots,d\}$. Then, if $j_2 \sim j_1$ is selected in $\{d+1,\dots,2d\}$, then necessarily, to avoid crossings, $j_2 = 2d-j_1$ and every $l$, for $l= j_1+1,\dots,d$, has to be matched with $d+l$, $l=1,\dots, d-j-1$. Continuing in this way, the block of cardinality 4 has to be determined by $j_1+1 \sim 2d-j_1 \sim 2d + j_1+ 1 \sim 4d-j_1$. The same reasoning allows us to show that there cannot exist another block of cardinality 4. Indeed, assume that there exist two blocks of size 4, say $h+1 \sim 2d-h \sim 2d + h+ 1 \sim 4d-h$ and $j+1 \sim 2d-j \sim 2d + j+ 1 \sim 4d-j$. Without loss of generality, say $h< j$, but then  $2d+h +1 < 2d + j+1$ and there would be a crossing $h < j < 2d +h +1 < 2d + j + 1$ (if $j < h$, then $2d-j < 2d-h$ and there would be the crossing $j <h < 2d-j < 2d-h$).  
After having formed the 4-block (say, $h+1 \sim 2d-h \sim 2d + h+ 1 \sim 4d-h$, for a given $h=0,\dots,d-1$), the remaining $4(d-1)$ elements have to be paired in such a way that there are no pairings within a block of $\pi^{\star}$. The only possibility is then determined by the conditions: 
\begin{enumerate}
\item $j \sim 2d-j+1$, for $j=h+2,\dots,d$;
\item $j \sim 4d-j+1$, for $j=1,\dots,h$;
\item $2d + j \sim 2d-j+1$, for $j=1,\dots,h$;
\item $ 2d+j \sim 4d-j+1$, for $j=h+2,\dots,d$,
\end{enumerate}
or, equivalently, for $j=1,\dots,d$, the element $\rho_j \in \mathcal{NC}_{2,4}^{\star}(d^{\otimes 4})$ is determined by the following conditions:
\begin{itemize}
\item[-] the $4$-block is determined  by $j \sim 2d-j+1 \sim 2d+j \sim 4d-j+1$;
\item[-] the pairings are determined by:
\begin{enumerate}
\item[-] $h \sim 4d-h+1$ and $2d+h \sim 2d-h+1$, for $h=1,\dots,j-1$,
\item[-] $h \sim 2d-h+1$, for $h= j+1,\dots,d$;
\item[-] $3d+h \sim 3d-h+1$, for $h= 1,\dots,d-j$, 
\end{enumerate}
\end{itemize}
 yielding $|\mathcal{NC}_{2,4}^{\star}(d^{\otimes 4})| = d$. 
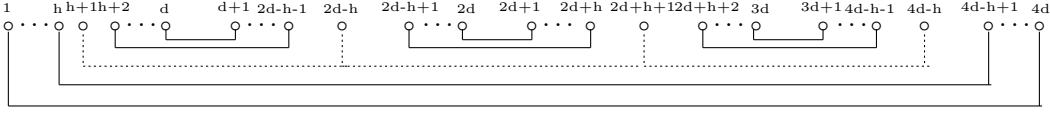
\begin{figure}[h]
\begin{picture}(600, 35)
\put(5,7){\makebox(-1,0){\tiny{$1$}}}
\put(5,0){\circle{3}}
\put(5,-30){\line(0,0){28}}
\put(5,-30){\line(1,0){387}}
\put(9,0){\dots}
\put(24,7){\makebox(-1,0){\tiny{h}}}
\put(24,0){\circle{3}}
\put(24,-22){\line(0,0){20}}
\put(24,-22){\line(1,0){349}}
\put(33,7){\makebox(-1,0){\tiny{h+1}}}
\put(33,0){\circle{3}}
\multiput(33,-3)(0,-2){6}{\line(0,-1){0.8}}
\multiput(33,-15)(2,0){50}{\line(1,0){0.8}}

\put(45,7){\makebox(-1,0){\tiny{h+2}}}
\put(45,0){\circle{3}}
\put(45,-8){\line(0,0){7}}
\put(45,-8){\line(1,0){65}}
\put(49,0){\dots}
\put(64,7){\makebox(-1,0){\tiny{d}}}
\put(64,0){\circle{3}}
\put(64,-5){\line(0,0){4}}
\put(64,-5){\line(1,0){26}}

\put(90,7){\makebox(-1,0){\tiny{d+1}}}
\put(90,0){\circle{3}}
\put(90,-5){\line(0,0){4}}
\put(94,0){\dots}
\put(108,7){\makebox(-1,0){\tiny{2d-h-1}}}
\put(110,-8){\line(0,0){7}}
\put(110,0){\circle{3}}
\put(130,7){\makebox(-1,0){\tiny{2d-h}}}
\put(130,0){\circle{3}}
\multiput(130,-3)(0,-2){6}{\line(0,-1){0.8}}
\multiput(130,-15)(2,0){56}{\line(1,0){0.8}}

\put(155,7){\makebox(1,0){\tiny{2d-h+1}}}
\put(155,0){\circle{3}}
\put(155,-8){\line(0,0){7}}
\put(155,-8){\line(1,0){68}}
\put(160,0){\dots}
\put(176,7){\makebox(1,0){\tiny{2d}}}
\put(175,0){\circle{3}}
\put(175,-5){\line(0,0){4}}
\put(175,-5){\line(1,0){26}}

\put(197,7){\makebox(-1,0){\tiny{2d+1}}}
\put(201,0){\circle{3}}
\put(201,-5){\line(0,0){4}}
\put(205,0){\dots}
\put(220,7){\makebox(-1,0){\tiny{2d+h}}}
\put(223,-8){\line(0,0){7}}
\put(223,0){\circle{3}}
\put(243,7){\makebox(-1,0){\tiny{2d+h+1}}}
\put(243,0){\circle{3}}
\multiput(243,-3)(0,-2){6}{\line(0,-1){0.8}}
\multiput(243,-15)(2,0){54}{\line(1,0){0.8}}
\put(266,7){\makebox(1,0){\tiny{2d+h+2}}}
\put(265,0){\circle{3}}
\put(265,-8){\line(0,0){7}}
\put(265,-8){\line(1,0){65}}
\put(269,0){\dots}
\put(286,7){\makebox(1,0){\tiny{3d}}}
\put(285,0){\circle{3}}
\put(284,-5){\line(0,0){4}}
\put(284,-5){\line(1,0){26}}

\put(310,7){\makebox(-1,0){\tiny{3d+1}}}
\put(310,0){\circle{3}}
\put(310,-5){\line(0,0){4}}
\put(315,0){\dots}
\put(328,7){\makebox(-1,0){\tiny{4d-h-1}}}
\put(330,-8){\line(0,0){7}}
\put(330,0){\circle{3}}
\put(348,7){\makebox(0,0){\tiny{4d-h}}}
\put(348,0){\circle{3}}
\multiput(348,-3)(0,-2){6}{\line(0,-1){0.8}}

\put(372,7){\makebox(1,0){\tiny{4d-h+1}}}
\put(372,0){\circle{3}}
\put(372,-22){\line(0,0){20}}
\put(376,0){\dots}
\put(391,7){\makebox(1,0){\tiny{4d}}}
\put(391,0){\circle{3}}
\put(391,-30){\line(0,0){28}}
\end{picture}
\vspace{0.5cm}
\caption{Diagram of $\rho_{h+1}$, for $h=0,\dots,d-1$}
\end{figure}

\vspace{1cm}

Therefore, the moment-cumulant formula applied  to $\varphi\big(Y_{i_1}\cdots Y_{i_d}\cdots Y_{i_{2d}}\cdots Y_{i_{3d}}\cdots Y_{i_{4d}}\big)$ can be rewritten as:
\begin{equation*}
\varphi\big(Y_{i_1}\cdots Y_{i_d}\cdots Y_{i_{2d}}\cdots Y_{i_{3d}}\cdots Y_{i_{4d}}\big) = \sum_{\substack{\sigma \in \mathcal{NC}_2([4d])\\ \sigma \wedge \pi^{\star}= \hat{0}}} \prod_{\{r,s\} \in \sigma}\varphi(Y_{i_r}Y_{i_s}) + 
\sum_{j=1}^d \kappa_{\rho_j},
\end{equation*}
where: 
\begin{align*}
\small
\kappa_{\rho_j}= \kappa_4&(Y_{i_j},Y_{i_{2d-j+1}},Y_{i_{2d+j}}, Y_{i_{4d-j+1}}) \\& \prod_{h=1}^{d-j} \varphi(Y_{i_{3d+h}} Y_{i_{3d-h+1}})  \prod_{h=j+1}^{d} \varphi(Y_{i_h}Y_{i_{2d-h+1}}) \prod_{h=1}^{j-1} \varphi(Y_{i_h}Y_{i_{4d-h+1}})\varphi(Y_{i_{2d+h}}Y_{i_{2d-h+1}}).
\end{align*}
The vanishing property of cumulants of freely independent random variables then entails that $\varphi\big(Y_{i_1}\cdots Y_{i_d}\cdots Y_{i_{2d}}\cdots Y_{i_{3d}}\cdots Y_{i_{4d}}\big)\neq 0 $ if and only if $\mathrm{Ker}(\mathbf{i}) \in \mathcal{NC}_2^{\star}(d^{\otimes 4})$, in which case equals $1$, or $\mathrm{Ker}(\mathbf{i})\in \mathcal{NC}_{2,4}^{\star}(d^{\otimes 4})$, in which case equals $\kappa_4(Y)$, where $\mathbf{i} = (i_1,\dots,i_{4d})$.  Moreover,  the Wick formula for semicircular elements establishes that:
$$ \varphi(S_{i_1}\cdots S_{i_d}\cdots S_{i_{4d}}) = \sum_{\sigma \in \mathcal{NC}_2^{\star}(d^{\otimes 4})} \prod_{\{r,t\} \in \sigma}\varphi(S_{i_r}S_{i_t})=\sum_{\sigma \in \mathcal{NC}_2^{\star}(d^{\otimes 4})} \prod_{\{r,t\} \in \sigma}\varphi(Y_{i_r}Y_{i_t}),$$
yielding
\begin{equation}
\label{relation}
\varphi\big(Q_\Y(f)^4\big) = \varphi\big(Q_\SSw(f)^4\big)  +  \kappa_4(Y)\sum_{j=1}^d \sum_{\substack{ \mathbf{i}  \in [n]^{4d}  \\ \mathrm{Ker}(\mathbf{i}) = \rho_j}} f^{\otimes 4}( \mathbf{i}),\numberthis 
\end{equation}
where, if $\mathbf{i}  \in [n]^{4d}$, with $ \mathrm{Ker}(\mathbf{i}) = \rho_j$,  
\begin{align*}
 f^{\otimes 4}(\mathbf{i}) &= f(i_1,\dots,i_{j-1},i_j,i_{j+1},\dots,i_d)f(i_d,\dots,i_{j+1},i_j,i_{2d-j+2},\dots,i_{2d})
 \\& f(i_{2d},\dots,i_{2d-j+2},i_j,i_{2d+j+1},\dots,i_{3d})f(i_{3d},\dots,i_{2d + j +1},i_j,i_{j-1},\dots,i_1). 
\end{align*}

Note that, for $h=0,\dots,d-1$, the restriction of $\rho_{h+1}$ to its pairings corresponds naturally to the partition $\sigma_h \in \mathcal{NC}_2^{\star}((d-1)^{\otimes 4})$, determined by the matchings:
\begin{enumerate}
\item $ j \sim 2(d-1) - j +1$, for $j=h+1,\dots,d-1$;
\item $j \sim 4(d-1) - j +1$, for $j=1,\dots,h$;
\item $2(d-1)+j \sim 2(d-1)-j+1$, for $j=1,\dots,h$;
\item $2(d-1) + j \sim 4(d-1) - j +1$, for $j=h+1,\dots,d-1$.
\end{enumerate}

\vspace{0.8cm}
\begin{figure}[h!]
\begin{picture}(600, 30)
\put(5,7){\makebox(-1,0){\tiny{1}}}
\put(5,0){\circle{3}}
\put(5,-30){\line(0,0){28}}
\put(5,-30){\line(1,0){376}}
\put(9,0){\dots}
\put(24,7){\makebox(-1,0){\tiny{h}}}
\put(24,0){\circle{3}}
\put(24,-22){\line(0,0){20}}
\put(24,-22){\line(1,0){339}}

\put(41,7){\makebox(-1,0){\tiny{h+1}}}
\put(41,0){\circle{3}}
\put(41,-10){\line(0,0){8}}
\put(41,-10){\line(1,0){71}}
\put(45,0){\dots}
\put(61,7){\makebox(-1,0){\tiny{d-1}}}
\put(61,0){\circle{3}}
\put(61,-5){\line(0,0){4}}
\put(61,-5){\line(1,0){30}}

\put(91,7){\makebox(-1,0){\tiny{d}}}
\put(91,0){\circle{3}}
\put(91,-5){\line(0,0){4}}
\put(95,0){\dots}
\put(113,7){\makebox(-1,0){\tiny{2d-h-2}}}
\put(112,-10){\line(0,0){8}}
\put(112,0){\circle{3}}

\put(140,7){\makebox(1,0){\tiny{2d-h-1}}}
\put(140,0){\circle{3}}
\put(140,-10){\line(0,0){8}}
\put(140,-10){\line(1,0){85}}
\put(143,0){\dots}
\put(164,7){\makebox(1,0){\tiny{2d-2}}}
\put(164,0){\circle{3}}
\put(164,-5){\line(0,0){4}}
\put(164,-5){\line(1,0){37}}

\put(201,7){\makebox(-1,0){\tiny{2d-1}}}
\put(201,0){\circle{3}}
\put(201,-5){\line(0,0){4}}
\put(205,0){\dots}
\put(225,7){\makebox(-1,0){\tiny{2d-2+h}}}
\put(225,-10){\line(0,0){8}}
\put(225,0){\circle{3}}
\put(255,7){\makebox(1,0){\tiny{2d-1+h}}}
\put(257,0){\circle{3}}
\put(257,-10){\line(0,0){8}}
\put(257,-10){\line(1,0){75}}
\put(264,0){\dots}
\put(283,7){\makebox(1,0){\tiny{3d-3}}}
\put(283,0){\circle{3}}
\put(283,-5){\line(0,0){4}}
\put(283,-5){\line(1,0){30}}

\put(313,7){\makebox(-1,0){\tiny{3d-2}}}
\put(313,0){\circle{3}}
\put(313,-5){\line(0,0){4}}
\put(318,0){\dots}
\put(333,7){\makebox(-1,0){\tiny{4d-4-h}}}
\put(333,-10){\line(0,0){8}}
\put(333,0){\circle{3}}
\put(360,7){\makebox(1,0){\tiny{4d-3-h}}}
\put(362,0){\circle{3}}
\put(362,-22){\line(0,0){20}}
\put(362,0){\dots}
\put(381,7){\makebox(1,0){\tiny{4d-4}}}
\put(381,0){\circle{3}}
\put(381,-30){\line(0,0){28}}
\end{picture}
\vspace*{0.4cm}
\caption{Diagram of $\sigma_h$, $h=0,\dots,d-1$}
\end{figure}
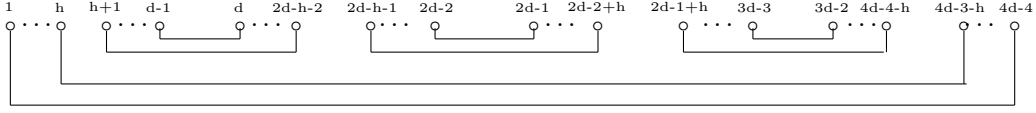

Therefore, a bijection $\sigma_h \longmapsto \rho_{h+1}$ is  determined by inserting in the diagram  of $\sigma_h$,  a block of cardinality $4$, with elements between $h$ and $h+1$, $2d-h-2$ and $2d-h-1$, $2d-2+h$ and $2d-1+h$, $4d-4-h$ and $4d-3+h$, in such a way that the diagram of $\rho_{h+1}$ is recovered (in particular, then, $ |\mathcal{NC}_2^{\star}((d-1)^{\otimes 4})| = |\mathcal{NC}_{2,4}^{\star}(d^{\otimes 4})| = d$). Therefore,
\begin{align*}
\varphi\big(Q_{\Y}(f_n)^4 \big) &= \varphi \big(Q_{\SSw}(f_n)^4 \big) + \kappa_4(Y) \sum_{j=1}^d \sum_{ \substack{\mathbf{i} \in [n]^{4d}\\ \mathrm{Ker}(\mathbf{i}) = \rho_j }} f_n^{\otimes 4}(\mathbf{i}) \\
&= \varphi \big(Q_{\SSw}(f_n)^4 \big) + \kappa_4(Y) \sum_{h=0}^{d-1} \sum_{ \substack{\mathbf{i} \in [n]^{4d}\\ \mathrm{Ker}(\mathbf{i}) = \rho_{h+1} }} f_n^{\otimes 4}(\mathbf{i}).
\end{align*}
In conclusion, observe that every $\mathbf{i} \in [n]^{4d}$, with $\mathrm{Ker}(\mathbf{i}) = \rho_{h+1}$, is uniquely determined by the value $k:=i_{h+1} = i_{2d+h+1} = i_{2d - h} = i_{4d-h}$, corresponding to the 4-block, and by the sub-vector $\mathbf{j} \in [n]^{4(d-1)}$ with $\mathrm{Ker}(\mathbf{j}) = \sigma_h$, from which it follows that:
\begin{align*}
\varphi\big(Q_{\Y}(f)^4 \big) &= 
\varphi \big(Q_{\SSw}(f)^4 \big) + \kappa_4(Y) \sum_{h=0}^{d-1} \sum_{ \substack{\mathbf{i} \in [n]^{4d}\\ \mathrm{Ker}(\mathbf{i}) = \rho_{h+1} }} f^{\otimes 4}(\mathbf{i}) \\
&= \varphi \big(Q_{\SSw}(f)^4 \big) + \kappa_4(Y) \sum_{k=1}^n \sum_{h=0}^{d-1} \sum_{\substack{\mathbf{j} \in [n]^{4(d-1)}\\ \mathrm{Ker}(\mathbf{j}) = \sigma_h} }f(k,\cdot)^{\otimes 4}(\mathbf{j})\\
&= \varphi \big(Q_{\SSw}(f)^4 \big) + \kappa_4(Y) \sum_{k=1}^n \varphi\big(Q_{\SSw}(f(k,\cdot))^4\big).\qedhere
\end{align*}
\end{proof}

Thanks to formula \eqref{formula}, it is now possible to prove Theorem \ref{superTeo2}.

\begin{proof}
Let ${\bf S}=\{S_i\}_{i\geq 1}$  be a sequence of freely independent standard semicircular random variables, and assume that $\kappa_4( Q_{\Y}(f_n))=\varphi(Q_{\Y}(f_n)^4) -2 \rightarrow 0$ as $n\to \infty$. Keeping in mind that $\kappa_4(Q_{\bf S}(f_n))=\varphi(Q_{\bf S}(f_n)^4) -2$ is positive (see \eqref{Pos4cumFree}), the assumption $\varphi(Y^4)\geq 2$ entails that:
$$ 
\kappa_4(Q_{\Y}(f_n))=  \varphi(Q_{\Y}(f_n)^4) - 2  \geq  \varphi(Q_{\bf S}(f_n)^4) - 2,$$
and, in turn,  $\varphi(Q_{\bf S}(f_n)^4)  \rightarrow 2$. Here,  Theorem \ref{knps} applies implying that $Q_{\bf S}(f_n) \stackrel{\text{Law}}{\longrightarrow} \mathcal{S}(0,1)$, and finally Theorem \ref{invnoncom} yields the desired conclusion $Q_{\bf Y}(f_n) \stackrel{\text{Law}}{\longrightarrow} \mathcal{S}(0,1)$. 
The reverse implication, that is, $Q_{\bf Y}(f_n) \stackrel{\text{Law}}{\longrightarrow} \mathcal{S}(0,1) \Rightarrow \kappa_4(Q_{\Y}(f_n)) \rightarrow 0$, holds trivially because the convergence in law, in the free case, is exactly the convergence of all the moments (equivalently, all the cumulants).
\end{proof}

\begin{rmk}\label{mixture doesn't work}
In order to generalize,  in the free probability setting, the technique of the mixtures used in Subsection \ref{proof_mixtures}, one should consider a sequence $\{Z_i\}_{i\geq 1}$ of freely independent random variables, freely independent of $\{S_i\}_{i\geq 1 }$ in such a way that the $Z_i$'s and the $S_j$'s commute (to suitably handle the conditional expectation). But this is possible only if $Z_i$ has vanishing variance (see \cite[Lecture 5]{Speicher}). 
\end{rmk}

\subsection{The non identically distributed case}

Even in the present non-commutative framework, the choice of dealing with homogeneous sums in identically distributed entries is made just to ease the notation: indeed, the findings proved with Theorem \ref{superTeo2} admit a generalization in the case the sequence $\Y=\{Y_i\}_{i\geq 1}$ is composed of freely independent centered random variables, with unit variance, possibly non identically distributed, but the starting point would be an inequality rather than an equality.

For every $n\geq 1$, set $\beta_n = \min\limits_{i=1,\dots,n}\kappa_4(Y_i)$, and assume that there exists $\beta >0$ such that $\inf\limits_{n \geq 1}\beta_n > \beta$. 
Repeating the reasoning that led to the proof of formula \eqref{formula}, and recalling that for $j=1,\dots,d$, $\rho_j$ denotes the partition whose only 4-block is determined by the condition $j \sim 2d-j+1 \sim 2d+j \sim 4d-j+1$, it follows that:
\begin{align*}
\varphi(Q_{\Y}(f_n)^4) -2 &= \varphi(Q_{\SSw}(f_n)^4) -2 + \sum_{j=1}^d\sum_{\substack{\mathbf{i} \in [n]^{4d} \\ \mathrm{Ker}(\mathbf{i}) = \rho_j}} \kappa_4(Y_{i_j}) f_n^{\otimes 4}(\mathbf{i})  \\
 &>  \varphi(Q_{\SSw}(f_n)^4) - 2 + \beta_n \sum_{k=1}^n \sum_{h=0}^{d-1} \sum_{\substack{ \mathbf{i} \in [n]^{4(d-1)} \\ \mathrm{Ker}(\mathbf{i}) = \sigma_h}} f_n(k,\cdot)^{\otimes 4}(\mathbf{i})  \\
 &>  \varphi(Q_{\SSw}(f_n)^4) - 2 + \beta \sum_{k=1}^n  \varphi(Q_{\SSw}(f_n(k,\cdot))^4), \numberthis \label{ineq}
\end{align*}
where $\sigma_h$ is the restriction of $\rho_{h+1}$ to its pairings. In particular, the estimate in \eqref{ineq} applies whenever $\kappa_4(Y_i) >0$ for all $i\geq 1$. 

\begin{thm}\label{FreeNiid}
Let the above notations and assumptions prevail. Then, $\varphi\big(Q_{\Y}(f_n)^4\big) \longrightarrow 2$ is a necessary and sufficient condition for the convergence $Q_{\Y}(f_n) \stackrel{ \text{ Law} }{\longrightarrow } \mathcal{S}(0,1)$. Moreover, $Q_{\Y}(f_n)\stackrel{ \text{ Law} }{\longrightarrow } \mathcal{S}(0,1)$ implies $Q_{\mathbf{Z}}(f_n)\stackrel{ \text{ Law} }{\longrightarrow } \mathcal{S}(0,1)$ for every sequence $\mathbf{Z}=\{Z_i\}_{i\geq 1}$ of freely independent random variables, non necessarily i.i.d., satisfying Assumption {\bf (1)}.
\end{thm} 

\begin{proof}
Assume that $\varphi(Q_{\Y}(f_n)^4) -2   \rightarrow 0$ as $n\rightarrow \infty$.  Since $\beta >0$ and 
$\varphi(Q_{\SSw}(f_n)^4) -2 > 0$, the inequality \eqref{ineq} implies $\varphi(Q_{\SSw}(f_n)^4) -2 \longrightarrow 0$.  The conclusion then follows by applying Theorems \ref{invnoncom} and \ref{knps} of Part \ref{Invariance}.
\end{proof}

In particular, since $\varphi(U_n(S)^4) \geq 2$ for every $n\geq 1$, Theorem \ref{FreeNiid} allows us to recover Corollaries \ref{Univ1}, \ref{Univ2} for Chebyshev sums in semicircular entries.\\

\subsection{Free Poisson approximations of homogeneous sums}

Assume that $d \geq  2$ is even. By virtue of Theorem \ref{FPoissAppr} and Lemma \ref{magg}, $Q_{\SSw}(f_n) \stackrel{\text{Law}}{\longrightarrow} Z(\lambda)$ implies the asymptotic vanishing of the influence functions $\tau_n(f_n)\rightarrow 0$. Moreover, homogeneous sums $Q_{\SSw}(f_n)$ are universal at the order $d$  also as to $Z(\lambda)$-approximations: see Corollary \ref{Univ2} in Part \ref{Invariance}. \\

If $f$ is an admissible kernel as in Definition \ref{Admissible_Free},  then $\varphi\big(Q_\Y(f)^3\big) = \varphi\big(Q_\SSw(f)^3\big)$: indeed, if $\mathcal{NC}_{>0}([3d])$ denotes the set of non-crossing partitions of $[3d]$ with no singleton, then 
$$|\mathcal{NC}_{>0}^{\star}(d^{\otimes 3})| = |\mathcal{NC}_2^{\star}(d^{\otimes 3})|. $$ Therefore, assuming that  $\varphi\big(Q_\Y(f)^2\big) = \varphi\big(Q_\SSw(f)^2\big) = \lambda> 0$, from (\ref{formula}) it follows that:
\begin{align*}
\varphi(Q_\Y(f)^4)-2\varphi(Q_\Y(f)^3) - (2\lambda^2- \lambda) &= \varphi(Q_\SSw(f)^4)-2\varphi(Q_\SSw(f)^3) -(2\lambda^2- \lambda) \\
&+ \kappa_4(Y)\sum_{k=1}^{n}\varphi(Q_\SSw(k,\cdot)^4) \numberthis \label{freePoissGen}
\end{align*}

From formula \eqref{freePoissGen}, Theorem \ref{FPoissAppr} can be generalized to a Fourth Moment Theorem for homogeneous sums in freely independent copies of any centered random variable $Y$, with unit variance, and such that $\kappa_4(Y) \geq 0$, providing the analogous of Theorem \ref{superTeo2} with respect to the Free Poisson limit.

\begin{thm}
Let $d\geq 2$ be even. If $Y$ satisfies Assumption {\bf (1)} and  $\kappa_4(Y) \geq 0$, for every sequence of admissible kernels $f_n:[n]^d \rightarrow \mathbb{R}$, with $\varphi\big(Q_{\Y}(f_n)^2\big) \rightarrow \lambda$, the following statements are equivalent in the limit as $n\rightarrow \infty$:
\begin{itemize}
\item[(i)] $Q_{\Y}(f_n) \stackrel{\text{Law}}{\longrightarrow} Z(\lambda)$;
\item[(ii)] $\varphi\big(Q_{\Y}(f_n)^4\big)-2\varphi\big(Q_{\Y}(f_n)^3\big) \longrightarrow \varphi\big(Z(\lambda)^4\big)-2\varphi\big(Z(\lambda)^3\big) = 2\lambda^2 - \lambda$.
\end{itemize}
Besides, the law of $Y$ is universal for free Poisson approximations of homogeneous sums at the order $d$, that is, $Q_{\Y}(f_n) \stackrel{\text{Law}}{\longrightarrow} Z(\lambda)$ implies $Q_{\mathbf{Z}}(f_n) \stackrel{\text{Law}}{\longrightarrow} Z(\lambda)$ for every other sequence $\mathbf{Z} = \{Z_i\}_{i \geq 1}$ of freely independent random variables, satisfying Assumption {\bf (1)}.
\end{thm}

\begin{rmk}\label{Extension_Tetilla}
Let $\mathcal{T}$ denote a Tetilla distributed free random variable on a fixed $W^{\star}$-probability space.  Once a combinatorial formula for the sixth moment of a homogeneous sum $Q_{\Y}(f)$ is provided, in the spirit of formula \eqref{formula}, a similar approach could lead to a Fourth Moment type statement for the Tetilla \index{Tetilla law}approximation of homogeneous sums, extending the results in \cite[Theorem 1.1]{NourdinDeya2}, where the authors proved that, for a sequence of (mirror) symmetric kernels $f_n:[n]^d\rightarrow \mathbb{R}$, the conditions $\varphi(Q_{\bs{S}}(f_n)^6) \rightarrow \varphi(\mathcal{T}^6)$ and $\varphi(Q_{\bs{S}}(f_n)^4) \rightarrow \varphi(\mathcal{T}^4)$, are sufficient for the Tetilla approximation of the sequence $Q_{\bs{S}}(f_n)$.\\
\end{rmk}

\begin{exm}$ $
\begin{enumerate}
\item
Every random variable $Y$, satisfying Assumption {\bf (1)}, and whose law is infinitely divisible with respect to the additive free convolution, satisfies $\kappa_4(Y) = \varphi(Y^4) - 2\geq 0$. Indeed, by definition, for every integer $n\in \mathbb{N}$, there exist freely independent and identically distributed random variables $Y_{1,n},\dots,Y_{n,n}$, such that $Y \stackrel{\text{Law}}{=} Y_{1,n} + Y_{2,n} + \cdots + Y_{n,n}$, which yields $\kappa_4(Y) = n\, \kappa_4(Y_{1,n})$ due to the additivity of cumulants. Moreover, for every random variable $Z$,  satisfying Assumption {\bf (1)}, $\kappa_4(Z) = \varphi(Z^4)-2 \geq -1$ (since $\varphi(Z^4) \geq 1$). Therefore, if $\kappa_4(Y) < 0$, for $n$ large enough one would find $\kappa_4(Y) < -1$, which is impossible. Hence, every  freely infinitely divisible law satisfies the Fourth Moment Theorem (and the universality) as to semicircular and free Poisson approximations, at any order $d\geq 2$. \\

\item For $k\geq 1$, if $U_k(x)$ denotes the $k$-th Chebyshev polynomial (of the second kind) and $S \sim \mathcal{S}(0,1)$, then:
$$ \varphi[U_k(S)^4] = |\mathcal{NC}_2^{\star}(k^{\otimes 4}) | \geq 2 \; .$$
Therefore, $U_k(S)$ satisfies the Fourth Moment Theorem and is universal at any order $d\geq 2$. Note that the universality of the law of $U_k(S)$ for semicircular (and free Poisson) approximations of homogeneous sums has been also obtained in Part \ref{Invariance}.\\

\item Let $\mathcal{T}$ be a Tetilla distributed random variable\index{Tetilla law}, namely $\mathcal{T} \stackrel{ \text { Law} }{=} \frac{1}{\sqrt{2}}(S_1 S_2 + S_2 S_1)$, where $S_1, S_2$ are freely independent standard semicircular random variables. Since $\kappa_4(\mathcal{T}) = \frac{1}{2}$,  $\mathcal{T}$ satisfies both the Fourth Moment Theorem and the universality property for semicircular approximations of homogeneous sums, at any order $d\geq 2$, and for free Poisson approximations when $d$ is even (see \cite{NourdinDeya2}).\\

\item Let $X \sim \mathcal{G}_q(0,1)$, with $\mathcal{G}_q(0,1)$ denoting the $q$-Gaussian distribution \cite{BozejkoSpeicher, qbrownian}. Then, $\kappa_4(X)  = \varphi_q(X^4) - 2 = q$, and hence, if $q \in [0,1]$, $X$ satisfies the Fourth Moment Theorem and the law $\mathcal{G}_q(0,1)$ is universal (at any order $d \geq 2$) (see \cite[Theorem 3.1 and Proposition 3.2]{qbrownian} for the general Fourth Moment Theorem for integrals with respect to a $q$-Brownian motion of symmetric kernels, for non-negative values of $q$). Equivalently, the fourth moment and the universality phenomena for $X$ can be alternatively deduced from the fact that, for positive values of $q$, the $q$-Gaussian distribution is also freely infinitely divisible \cite{Lehner}.\\
\end{enumerate}
\end{exm}

\subsection{The quadratic case}

Similarly to the classical setting, the condition $\varphi(Y^4) \geq 2$ might not be the best in every dimension $d$. This is the case, for instance, when $d=2$. Indeed, the multiplication formula for Wigner stochastic integrals \eqref{MultFormulaFree} entails that formula (\ref{formula}) can be rewritten as:
$$ \varphi\big(Q_\Y(f_n)^4\big) = 2 +  \|f_n \stackrel{1}{\smallfrown} f_n \|^2 + \kappa_4(Y)\sum_{k=1}^{n} \varphi\big(Q_\SSw(f_n(k,\cdot))^4\big),$$
where:
\begin{itemize}
\item[-] $\|f_n \stackrel{1}{\smallfrown} f_n \|^2 = \sum\limits_{i,j=1}^n \bigg( \sum\limits_{k=1}^n f_n(i,k)f_n(k,j)\bigg)^2 \; ;$
\item[-] $ \varphi\big(Q_\SSw(f_n(k,\cdot))^4\big) = 2\sum\limits_{i,j=1}^n f_n(i,k)^2 f_n(k,j)^2.$
\end{itemize}
Then, the chain of inequalities:
\small
\begin{align*}
\varphi\big(Q_\Y(f_n)^4\big) &= 2 + \sum_{i,j=1}^n \bigg( \sum_{k=1}^n f_n(i,k)f_n(k,j)\bigg)^2 + 2\kappa_4(Y)\sum_{k=1}^n\sum\limits_{i,j=1}^n f_n(i,k)^2 f_n(k,j)^2 \\
&\geq 2 + \sum_{i=1}^n \bigg(\sum_{k=1}^n f_n(i,k)^2\bigg)^2 + 2 \kappa_4(Y)\sum_{k=1}^n\sum_{i,j=1}^n f_n(i,k)^2 f_n(k,j)^2 \\
&= 2 + \sum_{i=1}^n \bigg(\sum_{k=1}^n f_n(i,k)^2\bigg)^2  \big(1 + 2\kappa_4(Y)\big) \numberthis \label{ineq2},
\end{align*}
\normalsize
provides that, if $\kappa_4(Y) > -\frac{1}{2}$ (or equivalently $\varphi(Y^4) > \frac{3}{2}$), then $\varphi\big(Q_\Y(f_n)^4\big) - 2 >0$.

\begin{prop}
Let $Y$ be a random variable verifying Assumption {\bf (1)}. Then, if $\varphi(Y^4) > \frac{3}{2}$ (or, equivalently, $\kappa_4(Y) >-\frac{1}{2}$),  $Y$ satisfies the quadratic Fourth Moment Theorem. Besides, the law of $X$ is universal at the order $d=2$, for semicircular and free Poisson approximation of quadratic homogeneous sums.
\end{prop}

\begin{proof}
Given a sequence of admissible kernels $f_n$, assume that $ \varphi\big(Q_\Y(f_n)^4\big) \to 2$ as $n\rightarrow \infty$.  Then, from \eqref{ineq2}, it follows that: 
 $$\alpha_n = \sum_{i=1}^n \bigg(\sum\limits_{k=1}^n f_n(i,k)^2\bigg)^2 = \sum\limits_{k=1}^n\varphi\big(Q_\SSw(f_n(k,\cdot))^4\big)\to 0 .$$
Finally, considering the limit in equation (\ref{formula}),  it follows that $\varphi(Q_\SSw(f_n)^4)\to 2$, and then Theorems \ref{knps} and \ref{invnoncom} provide together that  $Q_\Y(f_n) \stackrel{\text{Law}}{\longrightarrow} \mathcal{S}(0,1)$.
\end{proof}

Despite the stronger sufficient condition $\varphi(Y^4) > \frac{3}{2}$ for the validity of the quadratic Fourth Moment Theorem, no inference can be fruitfully done to claim its optimality nor even its being necessary. This problem will absorb the bulk of the next chapter.

\section{Multidimensional CLT in the free setting}

In \cite[Theorem 1.3]{NouSpeiPec}, the free counterpart to the findings in \cite[Proposition 2]{PeccatiTudor} was achieved, showing that, for semicircular approximations on the Wigner Chaos, joint convergence is equivalent to componentwise convergence (see Theorem \ref{Equi_Free} in Part \ref{Invariance}).\\

Combining Theorem \ref{Multiinvariance2} (see Part \ref{Invariance}), applied for $h_i = 1$ for every $i=1,\dots,d$ and $d \geq 2$, and  Theorem \ref{ComponentJointFree}, it is possible to extend Theorem \ref{Equi_Free}  to all random variables with non-negative free kurtosis, providing the free counterpart to Theorem \ref{ComponentJoint}. 

\begin{thm}\label{ComponentJointFree}
Fix $m\geq 1$ and $d\geq 2$. Let $\bs{Y} = \{Y_{i}\}_{i\geq 1}$ be a sequence of freely independent copies of a random variable $Y$, verifying Assumption {\bf (1)} and $\varphi(Y^4)\geq 2$. For every $j=1,\dots,m$, let $Q_{\bs{Y}}(f_n^{(j)})$ be a sequence of homogeneous polynomials of degree $d$, with \linebreak$f_n^{(j)}:[n]^d \rightarrow \mathbb{R}$ symmetric admissible kernels, such that: 
$$ \lim_{n \rightarrow \infty}\varphi\big( Q_{\bs{Y}}(f_n^{(j)}) Q_{\bs{Y}}(f_n^{(i)})\big) = C_{i,j} \quad \forall i, j=1,\dots,m. $$
If $C=(C_{i,j})_{i,j=1,\dots,m}$ is a real-valued, positive definite, symmetric matrix, and $(s_1,\dots,s_m)$ denotes a semicircular system with covariance determined by $C$, the following statements are equivalent as $n\rightarrow \infty$:
\begin{itemize}
\item[(i)] $Q_{\bs{Y}}(f_n^{(j)}) \stackrel{\text{ Law }}{ \longrightarrow} s_j$ for every $j=1,\dots,m$;
\item[(ii)] $(Q_{\bs{Y}}(f_n^{(1)}),\dots,Q_{\bs{Y}}(f_n^{(m)}) ) \stackrel{\text{ Law }}{ \longrightarrow} (s_1,\dots,s_m)$.
\end{itemize}
\end{thm}

\begin{proof}
It is sufficient to prove  that $(i)$ $\Rightarrow$ $(ii)$, since the reverse implication always holds.\\
Assume that $(i)$ occurs. Under the assumption $\varphi(Y^4) \geq 2$, and by virtue of Theorem \ref{superTeo2}, $Y$ satisfies the Fourth Moment Theorem and its law is universal for semicircular approximations of homogeneous sums, at the given order $d$. In particular, $Q_{\SSw}(f_n^{(j)}) \stackrel{\text{ Law }}{ \longrightarrow} s_j$ for every $j=1,\dots,m$; besides, from Theorem \ref{invnoncom} in Part \ref{Invariance}, $\tau_n^{(j)} = \max\limits_{i=1,\dots,n}\mathrm{Inf}_i(f_n^{(j)}) \longrightarrow 0$ for every $j=1,\dots,m$. Since:
$$\varphi\big(Q_{\bs{Y}}(f_n^{(j)}) Q_{\bs{Y}}(f_n^{(i)})\big) = \varphi\big(Q_{\bs{S}}(f_n^{(j)}) Q_{\bs{S}}(f_n^{(i)})\big) \; \forall i, j =1,\dots,m$$
by virtue of Theorem \ref{Multiinvariance2} it follows that $(Q_{\bs{S}}(f_n^{(1)}),\dots,Q_{\bs{S}}(f_n^{(m)}) )$ and $(Q_{\bs{Y}}(f_n^{(1)}),\dots,Q_{\bs{Y}}(f_n^{(m)}) )$ are asymptotically close in distribution. Hence, the conclusion follows  by Theorem \ref{Equi_Free}.
\end{proof}

In \cite[Theorem 1.6]{NouSpeiPec}, the authors established the following transfer principle for the multidimensional CLT between Wiener and Wigner Chaos, here recalled only for homogeneous sums.

\begin{thm}\label{PrimoTransfer}
Let $d \geq 1$ and $m\geq 1$ be fixed integers, and let $C = (C_{i,j})_{i,j=1,\dots,m}$ be a real-valued, positive definite, symmetric matrix. For every $j=1,\dots, m$, let $f_n^{(j)}:[n]^d \rightarrow \mathbb{R}$ be an admissible kernel, and assume that, for every $i,j=1,\dots,m$:
$$ \varphi(  Q_{\bs{S}}(f_n^{(i)})  Q_{\bs{S}}(f_n^{(j)})  ) \rightarrow C_{i,j},$$
$$ \E[  Q_{\bs{N}}(f_n^{(i)})  Q_{\bs{N}}(f_n^{(j)})  ]\rightarrow d!C_{i,j},$$
where $\bs{S}$ denotes a sequence of freely independent, standard semicircular random variables, and $\bs{N}$ denotes a sequence of independent, standard Gaussian random variables. Then, if $(s_1,\dots,s_m)$ denotes a semicircular system, with covariance given by $C$, and $\mathcal{N}(0,C)$ denotes the multivariate normal distribution of covariance $C$, the following statements are equivalent as $n\rightarrow \infty$:
\begin{itemize}
\item[(i)] $(Q_{\bs{S}}(f_n^{(1)}),\dots,  Q_{\bs{S}}(f_n^{(m)}) ) \stackrel{\text{Law}}{\rightarrow} (s_1,\dots,s_m)$ 
\item[(ii)] $(Q_{\bs{N}}(f_n^{(1)}), \dots,  Q_{\bs{N}}(f_n^{(m)}) )  \stackrel{\text{Law}}{\rightarrow}  d! \mathcal{N}(0,C)$.
\end{itemize}
\end{thm}

Thanks to Theorems \ref{ComponentJoint} and \ref{ComponentJointFree}, Theorem \ref{PrimoTransfer} can be completely generalized to a transfer principle, for central convergence, between homogeneous sums $\frac{1}{\sqrt{d!}}Q_{\X}(f_n)$, with $X$ satisfying Assumption {\bf (2)} and with non-negative kurtosis, over a classical probability space, and free homogeneous sums $Q_{\Y}(f_n)$, with $Y$ satisfying Assumption {\bf (1)} and with non-negative free kurtosis, over a free probability space $(\mathcal{A},\varphi)$.

\begin{thm}\label{Transfer}
Let $X$ be a random variable (in the classical sense), satisfying Assumption {\bf (2)} and such that $\E[X^4]\geq 3$, and $Y$ be a free random variable  satisfying Assumption {\bf (1)} and $\varphi(Y^4)\geq 2$. Let $m\geq 1$, and $f_n^{(j)}:[n]^d \rightarrow \mathbb{R}$, with $d\geq 2$, be a symmetric admissible kernel as in Definition \ref{Admissible_Free} for every $j=1,\dots,m$, such that
$$ \lim_{n\rightarrow\infty}\varphi\big(Q_{\Y}(f_n^{(i)})Q_{\Y}(f_n^{(j)})\big) = \dfrac{1}{d!} \lim_{n\rightarrow\infty}\E[Q_{\X}(f_n^{(i)})Q_{\X}(f_n^{(j)})] = C_{i,j}, \; \forall i,j=1,\dots,m,$$
with $C=(C_{i,j})_{i,j=1,\dots,m}$  real-valued, positive definite, symmetric matrix. Then the following conditions are equivalent as $n \rightarrow \infty$:
\begin{itemize}
\item[(i)] $\big( \dfrac{1}{\sqrt{d!}}Q_{\X}(f_n^{(1)}),\dots,  \dfrac{1}{\sqrt{d!}}Q_{\X}(f_n^{(m)}) \big)  \stackrel{\text{Law}}{\longrightarrow} \mathcal{N}(0,C)$;
\item[(ii)] $\big( Q_{\Y}(f_n^{(1)}),\dots,  Q_{\Y}(f_n^{(m)})\big) \stackrel{\text{Law}}{\longrightarrow} (s_1,\dots,s_m)$,
\end{itemize}
with $(s_1,\dots,s_m)$ denoting a semicircular system with covariance determined by $C$.
\end{thm}

\begin{proof}
Assume first that $(i)$ holds: then, for every $j=1,\dots, m$, $\dfrac{1}{\sqrt{d!}}Q_{\X}(f_n^{(j)}) \stackrel{\text{Law}}{\longrightarrow} \mathcal{N}(0,C_{j,j})$, implying,  by virtue of Theorem \ref{superTeo1}, that $\frac{1}{\sqrt{d!}}Q_{\NN}(f_n^{(j)}) \stackrel{\text{Law}}{\longrightarrow} \mathcal{N}(0,C_{j,j})$. By virtue of Theorem \ref{TeoPeccatiTudor}, then, we have the joint convergence 
$(\frac{1}{\sqrt{d!}}Q_{\NN}(f_n^{(1)}),\dots,\frac{1}{\sqrt{d!}}Q_{\NN}(f_n^{(m)})) \stackrel{\text{Law}}{\longrightarrow} \mathcal{N}(0,C)$, which is, in turn, equivalent to the joint convergence $\big( Q_{\SSw}(f_n^{(1)}),\dots,  Q_{\SSw}(f_n^{(m)})\big) \stackrel{\text{Law}}{\longrightarrow} (s_1,\dots,s_m)$, By virtue of \cite[Theorem 1.6]{NouSpeiPec}. Finally, Theorem \ref{Equi_Free}  implies that $ Q_{\Y}(f_n^{(j)}) \stackrel{\text{Law}}{\longrightarrow} s_j$ and the conclusion follows by Theorem \ref{ComponentJointFree}.

To prove the reverse implication, start with Theorem \ref{superTeo2} and consider Theorem \ref{ComponentJoint} instead of Theorems \ref{superTeo1} and \ref{ComponentJointFree}, respectively.
\end{proof}

\begin{rmk}
As remarked in Chapter \ref{Classic}, Theorem \ref{superTeo1} does not fit the Poisson homogeneous Chaos, due to the necessity of working under the assumption $\E[X^3]=0$. In view of the Transfer principle provided with Theorem \ref{Transfer}, this failure accounts for the lack of a Transfer principle,  for central convergence, between classical and free Poisson Chaos, as highlighted with a counterexample in \cite{Solesne}.\\
\end{rmk}

\chapter{The threshold problem}\label{Threshold}

In view of Theorem \ref{superTeo1} (respectively, Theorem \ref{superTeo2} in non-commutative probability spaces), a random variable having non-negative kurtosis (customarily called \textit{leptokurtic}) satisfies the Fourth Moment Theorem and the universality principle for normal (resp. semicircular) approximations of homogeneous sums at every order $d\geq 2$. 

On the other hand, no further information about the optimality of such conditions for a fixed $d$ can be inferred from the tools so far developed, and no conclusion can be drawn about the condition being also necessary. 
More precisely, when speaking of an \textit{optimal threshold} at the order $d$, it is meant the smallest real number $r_d$ such that $\E[X^4] \geq r_d$ (resp. $\varphi(Y^4) \geq r_d$) is a necessary and sufficient condition for $Q_{\X}(f_n)$ (resp. $Q_{\Y}(f_n)$) to satisfy a CLT under the only condition that $\E[Q_{\X}(f_n)^4] \rightarrow 3$ (resp. $\varphi(Q_{\Y}(f_n)^4) \rightarrow 2$). 

The first logical step to accomplish in order to determine the threshold in every dimension $d \geq 2$ is the prove of its existence. Once this goal is achieved, several questions arise: for instance, are the thresholds increasing (namely, $r_d < r_{d+1}$ for every $d$)? If this is the case, which is their supremum? Might it be $3$? Unfortunately, so far it has been possible to establish only the existence of the optimal threshold $r_d$ in every dimension $d\geq 2$.\\

Despite the main results proved in Chapter \ref{Classic} and \ref{Free} have been reached following similar approaches, to discuss the threshold problem it will be necessary to adopt different strategies within the two settings. In particular, the result achieved in the classical probability setting is weaker than the one in the non commutative framework, in the sense that the existence of the threshold for the Fourth Moment Theorem is determined under the hypothesis of universality. On the other hand, in the classical setting it is possible to provide a \textit{dimension-free} lower bound for the thresholds $r_d$'s.

\section{Threshold in the classical setting}\label{ClassicThreshold}

Few auxiliary statements are needed for the proof of the main theorem of the section: Theorem \ref{ThresholdClassic}.

\begin{prop}\label{interpolation}
Assume that $X$  is universal and satisfies the Fourth Moment Theorem at a fixed order $d\geq 2$. Then, either $\chi_4(Q_\X(f)) < 0$ for every admissible kernel $f$, or $\chi_4(Q_\X(f)) > 0$ for every admissible kernel $f$.
\end{prop}

\begin{proof} 
The proof is divided into two steps.
\begin{itemize}
\item[Step 1:]
First, note that if $X$ satisfies both the Fourth Moment Theorem and the universality property, then $\chi_4(Q_\X(f)) \neq 0$ for every admissible kernel $f$. Indeed, if there exists $f$ such that $\chi_4(Q_{\X}(f)) = 0,$ then the constant sequence $Q_{\X}(f)$ will be normally distributed, and then, the universality of $X$  would yield $Q_{\mathbf{N}}(f) \stackrel{\text{Law}}{=} \mathcal{N}(0,1)$, which is absurd, because random variables living in Wiener Chaoses of order $d\geq 2$ cannot be normally distributed (see, \eqref{Pos4cum}, or \cite[Corollary 5.2.11]{NourdinPeccatilibro}).

\item[Step 2:]
Assume that there exist two admissible kernels $f_{0}$ and $f_{1}$ such that $\E[Q_{\mathbf{X}}(f_{0})^4]>3$ and $\E[Q_{\mathbf{X}}(f_{1})^4]<3$, and consider, for every $t \in [0,1]$, the admissible kernel
$$ 
f_t = \dfrac{ t f_1+ (1-t)f_0}{\sqrt{\E[(t Q_{\mathbf{X}}(f_{1}) + (1-t)Q_{\mathbf{X}}(f_{0}))^2] }}.
$$
Since $\chi_4\big(Q_{\mathbf{X}}(f_{1})\big) < 0$ and $\chi_4\big(Q_{\mathbf{X}}(f_{0})\big) > 0$, there exists $t^{\star} \in (0,1)$ such that \linebreak$\chi_4\big(Q_{\mathbf{X}}(f_{t^\star})\big) = 0$, which contradicts the conclusion of the first step.
To establish which case applies, it is enough to check for $Q_{\X}(f)= X_1 \cdots X_d$. \qedhere
\end{itemize}
\end{proof}

\begin{rmk}[The Rademacher Chaos]\label{Rademacher} 
In \cite[Proposition 4.6]{NourdinPeccatiReinert}\index{Rademacher law}, the authors provided the quadratic Fourth Moment Theorem when $\E[X^4]=1$, that is, for elements in the Rademacher chaos of order $2$ (the case $d\geq 3$ is still open). Nevertheless, the reader should keep in mind that Rademacher chaos is not universal (see, for instance, \cite{NourdinPeccatiReinert}), and hence such result is not in contrast with the forthcoming Theorem \ref{ThresholdClassic}. Therefore, for the present discussion, it is legitimate to exclude the case $\E[X^4]=1$, corresponding to Rademacher random variables. \\
\end{rmk}

\begin{rmk}
If $\mathbb{E}[X^4] > 1$, it is always possible to consider a homogeneous sum with positive fourth cumulant. Indeed, for $n \in \mathbb{N}$ large enough, and the fixed $d\geq 2$, set $N:=1 +(n-1)(d-1)$, and consider the homogeneous polynomial:
$$
Q_{\mathbf{X}}(g_N) = \dfrac{X_1}{\sqrt{n-1}}\big( \sum_{j=1}^{n-1}\prod_{l=2}^d X_{(j-1)(d-1)+l} \big)  = \sum_{i_1,\dots,i_d=1}^N g_N(i_1,\dots,i_d)X_{i_1}\cdots X_{i_d},$$
with 
$$ g_N(i_1,\dots,i_d) =
\dfrac{1}{d!\sqrt{n-1}} $$
 if $\{i_1,\dots,i_d\} = \{1, (j-1)(d-1)+2,\dots, (j-1)(d-1)+d\}$ for a certain $j=1,\dots,n-1,$ and $g_N(i_1,\dots,i_d)= 0$  otherwise. 
Note that $g_N$ is an admissible kernel, since  $g_N(i_1,\dots,i_d) = g_N(i_{\sigma(1)},\dots,i_{\sigma(d)})$, for every $\sigma \in \mathfrak{S}_d$, and every $\{i_1,\dots,i_d\} \subset [N]$, it is suitably normalized, and vanishes on diagonals, by definition. A direct computation, then, provides:
$$ 
\E[Q_{\mathbf{X}}(g_N)^4]=\E[X^4]\left(3+\dfrac{\E[X^4]^{d-1}-3}{n-1}\right) \underset{n \rightarrow \infty}{\longrightarrow} 3\E[X^4] > 3.
$$
\end{rmk}

\begin{prop}
Let $X$ satisfy the Fourth Moment Theorem and the universality property at the order $d\geq 2$. Then, necessarily, $\E[X^4] > \sqrt[d]{3}$.
\end{prop}

\begin{proof}
As a consequence of Proposition \ref{interpolation}, $\E[X^4] \neq \sqrt[d]{3}$ (otherwise, for $Q_\X(f) = X_1 \cdots X_d$, one would have $\chi_4(Q_{\X}(f))=0$). 
By contradiction, assume that $\E[X^4] \in (1,\sqrt[d]{3})$. Then, for $Q_\X(f) = X_1 \cdots X_d$, $\E[Q_{\X}(f)^4] < 3$, and hence  $\E[Q_{\X}(g)^4] < 3$ for any other admissible kernel $g$, which contradicts the previous remark.  In conclusion, if $\E[X^4] < \sqrt[d]{3}$, $X$ cannot satisfy the Fourth Moment Theorem.
\end{proof}

\begin{thm}
\label{ThresholdClassic}
For every $d\geq 2$,  there exists a real number $r_d\in(\sqrt[d]{3},3]$ such that, for any centered random variable $X$ satisfying Assumption {\bf (2)}, that is universal (for normal approximations of homogeneous sums, at the order $d$), the following are equivalent:
\begin{enumerate}
\item $X$ satisfies the Fourth Moment Theorem at the order $d$;
\item $\E[X^4] \geq r_d$.
\end{enumerate}
\end{thm}

\begin{proof}
Assume that $X$ satisfies the Fourth Moment Theorem. Then, as a consequence of the above discussion, $\E[Q_{\X}(f)^4] > 3$ for every admissible kernel $f$. Let $Z$ be a random variable satisfying Assumption {\bf (2)} as well as $\E[Z^4] \geq \E[X^4]$: to obtain the existence of the desired threshold $r_d$, it is enough to show that $Z$ satisfies the Fourth Moment Theorem as well. The proof involves several steps, considering mixtures between $X$ and a suitable random variable $T$.

Before starting, it is convenient to adapt Definition \ref{defcom} to sequences of homogeneous sums with kernels having non-constant normalizations. Let $f_n:[n]^d \rightarrow \mathbb{R}$ be a sequence of symmetric and vanishing on diagonals kernels, such that $\E[Q_{\mathbf{X}}(f_n)^2] = \sigma_n^2 < \infty$ for all $n\geq 1$. If $\lim\limits_{n \rightarrow \infty}\sigma_n^2 = \sigma^2  >0$, then we shall say that $X$ satisfies the $CFMT_d$ if the convergence
$$ \chi_4(Q_{\mathbf{X}}(f_n)) = \E[Q_{\mathbf{X}}(f_n)^4] - 3\E[Q_{\mathbf{X}}(f_n)^2]^2 \rightarrow 0 $$
implies, as $n \rightarrow \infty$,  $Q_{\mathbf{X}}(f_n) \stackrel{\text{Law}}{\rightarrow} \mathcal{N}(0,\sigma^2)$.
In particular, Proposition \ref{interpolation} still holds when dropping the unit normalization for admissible kernels. 

\begin{itemize}
\item[Step 1:] Consider a random variable $T$, independent of $X$, with values in $[x,\infty[$ for some $x>0$ and with $\E[T^2]=1$ and $\E[T^4]=\E[Z^4]/\E[X^4]$ (the existence of $T$ is ensured by Lemma \ref{ExistenceT}). Let $\bs{X}=\{X_i\}_{i\geq 1}$, $\bs{T}=\{T_i\}_{i\geq 1}$, $\bs{Z}=\{Z_i\}_{i\geq 1}$ be  sequences of independent copies of $X$, $T$ and $Z$, respectively, such that $\bs{X}, \bs{T}$ and $\bs{Z}$ are independent between each others.\\

\item[Step 2:] Set $Q_{\TX}(f_n)=\sum\limits_{i_1,\cdots,i_d=1}^nf_n(i_1,\cdots,i_d)(T_{i_1}X_{i_1})\cdots(T_{i_d}X_{i_d})$. From Step 1 it follows that $\E[Q_{\TX}(f_n)^2]=1$ and
$\E[Q_{\TX}(f_n)^4]=\E[Q_{\mathbf{Z}}(f_n)^4]$. Moreover, one can write:
\begin{align*}
\E[Q_{\mathbf{Z}}(f_n)^4]- 3 &= \E\big[\E[Q_{\TX}(f_n)^4| \bs{T}]-3\big ]  \\ 
&=\E\big[\E[Q_{\TX}(f_n)^4 | \bs{T}] - 6\E[Q_{\TX}(f_n)^2|\bs{T}] +3 \big] \\ 
&= \E\big[\big(\E[Q_{\TX}(f_n)^4|\bs{T}]-3\E[Q_{\TX}(f_n)^2|\bs{T}]^2\big )+3\big(\E[Q_{\TX}(f_n)^2| \bs{T}]-1 \big)^2\big] \numberthis \label{eq-T-N-2}
\end{align*}
As already underlined, since $X$ satisfies the Fourth Moment Theorem, $\chi_4(Q_{\TX}(g_n)) > 0$ for every sequence of admissible kernels: in particular,   almost surely in $\{T_i\}_{i\ge 1}$ and due to the independence between $\bs{T}$ and $\bs{X}$, it holds true that: 
$$\chi_4(\E[Q_{\TX}(f_n)|\bs{T}]) = \E[Q_{\TX}(f_n)^4|\bs{T}]-3\E[Q_{\TX}(f_n)^2|\bs{T}]^2 \, >0.$$  
Therefore, from \ref{eq-T-N-2}, if $\E[Q_{\mathbf{Z}}(f_n)^4]\to 3$ as $n \rightarrow \infty$ and up to extracting a subsequence,
almost surely in $\{T_i\}_{i\ge1}$ it holds true that:
\begin{eqnarray}
\E[Q_{\TX}(f_n)^2|\bs{T}]& \underset{n \rightarrow \infty}{\longrightarrow}   & 1,\notag\\
\E[Q_{\TX}(f_n)^4|\bs{T}]&\underset{n \rightarrow \infty}{\longrightarrow}& 3.\label{ymca}
\end{eqnarray}
\item[Step 3:] Since $X$  satisfies the Fourth Moment Theorem, from (\ref{ymca}) it follows that, $\bs{T}$-a.s. 
\begin{eqnarray*}
Q_{\TX}(f_n)&\xrightarrow[n\to\infty]{\text{Law}}&\mathcal{N}(0,1).
\end{eqnarray*}
Since $X$ is assumed to be universal, by Theorem \ref{inv}, it follows that, almost surely in $\{T_i\}_{i\ge1}$,
\begin{eqnarray*}
\max_{i_1=1,\dots,n}\sum_{i_2,\cdots,i_d=1}^n f_n(i_1,i_2,\cdots,i_d)^2 (T_{i_1}\cdots T_{i_d})^2 &\xrightarrow[n\to\infty]{\text{}}&0.
\end{eqnarray*}
Finally, being $T_i\geq x>0$ for all $i$, one has:
$$
\tau_n:= \max_{i_1=1,\dots,n}\sum_{i_2,\dots,i_d=1}^n f_n(i_1,\dots,i_d)^2  \xrightarrow[n\to\infty]{\text{}}0.
$$
Then, as in the proof given in Subsection \ref{proof_mixtures},  de Jong's Criterion, ensures that $Z$ satisfies the Fourth Moment Theorem as well. Besides, the law of $Z$ is universal.\\

\item[Step 4:] In conclusion, the desired threshold $r_d$ is given as 
the smallest real number $t\in (\sqrt[d]{3},3]$ such that there exists $X$ satisfying Assumption {\bf (2)}, the Fourth Moment Theorem and the universality property at the order $d$, and such that $\E[X^4]=t$.\qedhere
\end{itemize}
\end{proof}

\begin{rmk}
To relax Assumption {\bf (2)}, it would be necessary to prove the existence of a random variable $T$ such $\E[Z^{i}] = \E[T^i]\E[X^{i}]$ for $i=1,2,3,4$, whatever are the third moments of $Z$ and of $X$ (in this regard, see Remark \ref{relaxMom3}).
\end{rmk}

\section{Threshold in the free setting}\label{FreeThreshold}

The problem under consideration in the present section is the free counterpart to the questions analysed in Section \ref{ClassicThreshold}.

Compared to the commutative setting, here the linearity in the fourth cumulant of formula \eqref{formula} allows a simpler argument to derive the existence of the threshold for the fourth moment, with the advantage that there will be no need in putting the extra assumption on the universality of $Y$. On the other hand, since the technique of the mixtures would be trivial in this setting (see Remark \ref{mixture doesn't work}), a different approach would be nevertheless required to reach the free counterpart of Theorem \ref{ThresholdClassic}. 
Conversely, since formula \eqref{formulaGauss} is not linear in $\chi_4(X)$, the following strategy cannot be adapted for the determination of the threshold in the classical case.

\begin{rmk}
For every sequence $\Y = \{Y_i\}_{i\geq 1}$ of identically distributed, freely independent random variables satisfying Assumption {\bf (1)}, if $d\geq 2$, it is always possible to exhibit a sequence $Q_{\Y}(g_n)$, with $g_n$ admissible kernel according to Definition \ref{Admissible_Free}, and with strictly positive fourth cumulant. Indeed, for every $n \in \mathbb{N}$ and every $i=1,\dots,d$, set:
$$ Z_{n}^{(i)} = \dfrac{1}{\sqrt{n}}\sum_{j=1}^n Y_{(i-1)d + j}, $$
and consider the homogeneous sum (with admissible kernel $g_n$, say):
$$ Q_{\Y}(g_n)= \dfrac{1}{d!} \sum_{ \sigma \in \Sigma_d } Z_n^{(\sigma(1))}\cdots Z_n^{(\sigma(d))} .$$
Then, according to the free CLT (see, for instance, \cite[Theorem 8.10]{Speicher})  $Z_n^{(i)} \underset{n \rightarrow \infty}{\stackrel{\text{Law}}{\longrightarrow}} S^{(i)} \sim \mathcal{S}(0,1)$ for every $i=1,\dots,d$, with the $S^{(i)}$'s freely independent: finally, the multidimensional CLT (see \cite[Theorem 8.17]{Speicher}) assures that $ Q_{\Y}(g_n)$ converges in law to an element in the $d$-th Wigner Chaos.
\end{rmk}

\begin{thm}
\label{ThresholdFree}
For any $d\ge2$, there exists $s_d\in ]1,2]$ such that, for every  random variable $Y \in \mathcal{A}$, with $\varphi(Y)=0, \varphi(Y^2)=1,$ the following statements are equivalent:
\begin{itemize}
\item[(i)] $Y$ satisfies the Fourth Moment Theorem at the order $d$;
\item[(ii)]$\varphi(Y^4) \geq s_d$.
\end{itemize}
\end{thm}

\begin{proof}$ $
The proof is divided into 3 steps.
\begin{itemize}
\item[Step 1:] Let $t\in ]1,2]$ be such that for all $Y\in\mathcal{A}$ satisfying Assumption {\bf (1)} the following implication holds
$$\Big(\varphi(Y^4)=t\Big) ~~~\Rightarrow~~~~ \text{$Y$ satisfies the Fourth Moment Theorem}$$
(Theorem \ref{superTeo2} hints the existence of such $t$: indeed, at the worst, $t=2$, but, for instance, when $d=2$, $t$ can be chosen to be any real number in $(\frac{3}{2},2]$).

Then, $\kappa_4(Q_{\Y}(f))$ is constant in sign, no matter the admissible kernel $f$: indeed, assume that there exists an admissible kernel $f$ such that  $\kappa_4(Q_{\Y}(f))<0$, and consider an admissible kernel $g$ such that $\kappa_4(Q_{\Y}(g))>0$. For every $r \in [0,1]$, consider the homogeneous sum:
$$ Q_{\Y}(f_r) =   \dfrac{r Q_{\Y}(f) + (1-r)Q_{\Y}(g)}{\sqrt{\varphi\big( (\; r Q_{\Y}(f) + (1-r)Q_{\Y}(g) \;)^2\big)}  },  $$
such that $f_0= g$ and $f_1 =f$. Then, as for the proof of Proposition \ref{interpolation}, there exists $r^{\star} \in (0,1)$ such that the admissible kernel $h:=f_{r^{\star}}$ verifies $\kappa_4(Q_{\Y}(h))=0$. It follows that, for every free random variable $Z$, centered and with unit variance, such that $\ph(Z^4)=\ph(Y^4)=t$,  $\kappa_4(Q_{\Z}(h))=0$ as well. Since $Z$ satisfies the Fourth Moment Theorem, $\kappa_4(Q_{\Z}(h))=0$ entails that $\kappa_{p}(Q_{\Z}(h))=0$ for any $p\ge 3$.

Consider, then,  the set $\mathcal{L}_t$ of random variables $Z \in \mathcal{A}$, with $\varphi(Z)=0$, $\varphi(Z^2)=1$ and $\varphi(Z^4)=t$, and set:
\begin{eqnarray*}
E_t&=&\Big\{(a,b,c)\in \mathbb{R}^3\,\Big|\, \exists Z\in\mathcal{L}_t\,:\,\ph(Z^3)=a,\ph(Z^5)=b,\ph(Z^6)=c\Big\}\\
&=&\Big\{(\ph(Z^3),\ph(Z^5),\ph(Z^6))\,\Big|\,Z\in \mathcal{L}_t\}.
\end{eqnarray*}

Since $\kappa_6(Q_{\Z}(h))=0$ for every $Z \in \mathcal{L}_t$, $E_t$ has zero Lebesgue measure: indeed, by expanding $\kappa_6(Q_{\Z}(h))=0$ as a multivariate polynomial $P_t$ in $\ph(Z^3),\ph(Z^5),\ph(Z^6)$, it turns out that $E_t\subset \{(a,b,c)|P_t(a,b,c)=0\}$. 
On the other hand, since the criterion of solvability of the Hamburger's moment problem is a necessary condition for the solvability of the Hausdorff's moment problem\index{Moment problem}, if  $(a,b,c)\in E_t$, then in particular the Hankel matrix $M_t=(\ph(Z^{i+j}))_{0\le i,j \le 3}$ is positive definite (see \cite[Theorem 6.1]{Chihara}). However, the set of triplets $(\ph(Z^3),\ph(Z^5),\ph(Z^6))$ such that $M_t$ is positive definite is a non-empty open subset of $\mathbb{R}^3$ and has then positive Lebesgue measure (indeed, since $M_t$ is positive definite, all the upper-left minors are strictly positive\footnote{This characterization is sometimes referred to as Sylvester's criterion for the positive definiteness of matrices.}, that is:
$$
det(M_t) =
\begin{array}{|cccc|}
1 & 0 & 1 &a \\
0 & 1 & a & t\\
1 & a & t & b \\
a & t & b & c
\end{array}
>0
$$
Trivially, $F_t:=det(M_t)$ is a continuous function of $a, b, c$, and hence $F_t^{-1}((0,\infty))$ is an open set of matrices with respect to the metric induced by a given matrix norm (or precisely, the topology of the balls of such metrics). For instance, for the norm $\|\cdot \|_1$ for $N$-dimensional matrices, one has
$$ d(A,B) = \|A-B \|_1 := \max_{j=1,\dots,N}\sum_{i=1}^N |a_{i,j} - b_{i,j} |,$$
and hence if $b_{i,j}$ is in a neighbourhood of $a_{i,j}$ for all $i,j=1,\dots,N$, then $B$ is in a neighbourhood of $A$. Therefore, the open subset $F_t^{-1}((0,\infty))$ of the positive definite matrices can be obtained by considering matrices whose entries are in open neighbourhoods of the entries of $M_t$. Since such set of entries is  an open subset of $\mathbb{R}^3$, it should  have positive Lebesgue measure. By contradiction, it follows that the existence of the admissible kernel $f$ is impossible. So, for any admissible kernel $g$, $\kappa_4(Q_{\Y}(g)) > 0$.

\item[Step 2:] From the first step of the proof, $\kappa_4(Q_{\Y}(f_n)) >0$ for every sequence of admissible kernels $f_n$. Then, every free random variable $Z$ satisfying Assumption {\bf (1)} and $\ph(Z^4)>t$, satisfies the Fourth Moment Theorem as well: indeed, by applying the formula (\ref{formula}) to $Y$ and $Z$ and by taking the difference, one can write:
\begin{equation}
\label{equazione}
\kappa_4(Q_{\Z}(f_n))=\kappa_4(Q_{\Y}(f_n))+(\ph(Z^4)-t)\sum_{k=1}^{n} \varphi\big(Q_\SSw(f_n(k,\cdot))^4\big).
\end{equation}
Then, if $\kappa_4(Q_{\Z}(f_n)) \longrightarrow 0$ as $n\rightarrow \infty$, from equation \eqref{equazione} it follows that  $$\sum_{k=1}^n \varphi\big(Q_{\SSw}(f_n(k,\cdot))^{4}\big) \rightarrow 0 $$
and, in turn, from the formula (\ref{formula}) written for $Z$,  $Q_{\SSw}(f_n) \stackrel{\text{Law}}{\longrightarrow} \mathcal{S}(0,1)$ as $n\rightarrow \infty$; finally, Theorem \ref{invnoncom} completes the proof. Besides, the law of $Z$ is universal at the order $d$.

\item[Step 3:] In conclusion, the desired threshold $s_d$ is given as the smallest real number $t\in ]1,2]$ such that there exists $Y$ satisfying Assumption {\bf (1)}, $\varphi(Y^4)=t$, and the Fourth Moment Theorem at the order $d$.\qedhere
\end{itemize}
\end{proof}

\begin{rmk}
In principle, the first steps of the proof of Theorem \ref{ThresholdFree} could be adapted in the classical setting: the problem would arise in drawing the final conclusion, since the non-linearity in the cumulant of formula \eqref{formulaGauss} could not afford to proceed as from equation \eqref{equazione}.
\end{rmk}

%
%

\thispagestyle{empty}
\part{Invariants and semi-invariants: from orthogonal polynomials to cumulants}\label{Orth}

\chapter*{Synopsis}
\addcontentsline{toc}{chapter}{Synopsis}

In the modern probability scenario concerning stochastic integration, a prominent role is played by the so called \textit{multiplication formulae} for the products of multiple integrals. As a consequence of the orthogonality (isometry) property enjoyed by these random objects (see \cite[Proposition 5.5.3]{PeccatiTaqqu}, or \cite[Proposition 2.7.5]{NourdinPeccatilibro} for the Gaussian setting), diagram formulae for the moments of multiple integrals can be derived \cite[Theorem 7.1.3]{PeccatiTaqqu}.\\ 

Orthogonal polynomials are the gist of several other pages of stochastic analysis \cite{Schoutens} and random matrix theory \cite{EdeRao,Kuijlaars,Mehta}, as well as of the combinatorial theory of symmetric functions \cite{Macdonald2}. In other words, orthogonal polynomials are far from being an outdated mathematical subject. \\

The work that lie behind the contents here presented aimed at recasting the theory of orthogonal polynomials in a unified algebraic framework: so far, the most suitable to accomplish this goal appears to be the \textit{symbolic method of invariant theory} (for binary forms), through apolarity, as developed in \cite{KungRota}. Even if it is doubtless  not surprising that orthogonality can be settled in terms of apolarity, here the details of such intuition are defined, providing, among other results, explicit formulae for \textit{generalized orthogonal polynomials}  (equivalently, the apolar form of a given binary form) and for the moments of the so called \textit{random discriminants} \cite{Lu}. \\

The main contributions can be summarized as follows:
\begin{enumerate}
\item generalized orthogonal polynomials are triangular arrays of polynomials satisfying partial orthogonality properties: indeed, they have already been considered in the literature under the name of \textit{partial orthogonal polynomials} \cite{PartOrth}.  In Chapter \ref{Chapter_OPS}, an  algebraic representation and a determinantal formula for generalized orthogonal polynomials associated with a probability distribution are given, both in a univariate (Theorem \ref{GOPS1}) and in a multivariate setting (Theorems \ref{detformulti} and \ref{morthpol}). These formulae are consistent with the corresponding representations for orthogonal and biorthogonal polynomials \cite{IserNor}, both of which are encoded in a sequence of generalized orthogonal polynomials.

\item The choice of dealing with orthogonality in a separate chapter is made to introduce in a perhaps more reader-friendly way the topic under consideration: apolarity, which is at the core of Chapter \ref{Chapter_WhatIsOrtho}. The starting point is the definition of a family of covariants (see identity \eqref{eq:main1}) that allows to show that generalized orthogonal systems can be naturally embedded in the invariant theory of binary forms via apolarity (in the sense specified in Theorem \ref{Th:main1}). As a matter of fact, explicit determinantal formulae for these covariants are provided (Theorem \ref{Th:det}), corresponding to the representations for generalized orthogonal polynomials given in Chapter \ref{Chapter_OPS}: the transition between covariants  and orthogonal polynomials is then explicitly described in Theorem \ref{ApolarityOrth}. The framework so set allows to derive an immediate application to probability theory, since two explicit formulae for the moments of the statistics usually called \textit{random discriminants} are given, for simple random samples drawn from any distribution (see \cite{Lu}). These formulae are achieved via a suitable multivariate extension of Sylvester's Theorem and involve the so called Christoffel's numbers (see Corollary \ref{MomDiscriminant2} and Theorem \ref{MomDiscriminant}).  
Finally, in the last section, apolarity and invariant theory are discussed in a general multivariable setting. Even if the proofs and the whole presentation will proceed analogously to the first sections, the choice of dealing separately with the two settings is mainly due to the necessity of highlighting some important differences, arising from the fact that there is no standard way of defining orthogonality nor apolarity in several variables. However, with Theorem \ref{MultiOrth}, what appears to be the most suitable and natural definition is discussed. 


\item Last, Chapter \ref{Chapter_Diagonal} deals with the most prominent example of semi-invariants in probability and statistics: the cumulants, analysed with the tools deriving from the combinatorial approach to stochastic integration initiated in \cite{Rota1}. The starting point is the representation of cumulants as the expectation of the so-called diagonal measures. This approach turns out to be particularly suitable to manage cumulants of L\'{e}vy processes and of their process of variations. Moreover, this setting for diagonal measures allows to provide a measure-theoretical description of $\kappa$-statistics and polykays for positive random measures, in both the classical and the free setting.\\
\end{enumerate}

The findings exposed in the present part are essentially based from the references \cite{PasqualeSenatoSimone} and \cite{SimonePreprint}.

\section*{Bibliographic comments}

For a survey on the classical invariant theory, as set up by to Gordan, Clebsch, Capelli, Hodge, Igusa, and many others, see \cite{KraftProcesi} or \cite{Janson_Invariant}.\\

The symbolic method of invariant theory was actually born with the pioneering work of  Grace and Young, back to the early 900's \cite{GraceYoung}, but it was resettled and organized in the eighties, in the main reference \cite{KungRota}, through the language and the techniques of the \textit{umbral calculus} as developed in \cite{RotTay}. More recent outcomes of the umbral methods can be found in \cite{DiNSen}. For a survey on the use of umbral calculus in invariant theory, see \cite{Brini1}; a general outline of the contributions of Gian-Carlo Rota in invariant theory can be found in \cite{Grosshans} or \cite{Rota_TurningPoints}.\\

Other methods other than the symbolic one have been exploited to study and develop the classical invariant theory: for instance the combinatorics of Young tableaux \cite{DRS} and superalgebras \cite{Brini2}.  Back to the seventies, a characteristic free approach to the invariants of classical groups has been provided \cite{DeCProc}. For a focus on apolarity and its applications, see \cite{EreRota,SB}.\\

As to classical orthogonal polynomials in one variable, standard references include \cite{Chihara, Szego, Ism, Totik}. Orthogonal polynomials (OPs, for short) have a long history: nevertheless they are still one of the mainstream subjects in modern research areas, as they contribute to several applications and different topics, including  moments problems \cite{Chihara2}, random matrix theory \cite{Konig}, and stochastic integration: for instance, see \cite{Schoutens} for a comprehensive introduction to the subject, as well as \cite{Sole}, and \cite{Ans3} for a combinatorial interpretation of the \textit{linearization coefficients} of some classical OPs via stochastic processes. However, when speaking about orthogonality, it is important to specify if one refers to orthogonal polynomials in the classical sense, as several generalizations have been brought to life to answer to specific needs: see, for instance, \cite{IserNor, Chihara3, Kuijlaars}. See moreover \cite{AnshelevichNonCom} for a non-commutative counterpart, as well as \cite{EdeRao} and \cite{Kuijlaars} for connections between random matrix theory and the theory of multiple orthogonal polynomials.\\

As underlined throughout the whole part, there is no standard agree in the definition of multivariate orthogonal polynomials: some references on the topic  are \cite{Edelman,Xu}, and \cite{Withers} for Hermite polynomials. \\

The classical problem of decomposing a binary form of degree $n$ into a sum of $n$-th powers of linear forms usually goes under the name of \textit{Waring's problem}, originating in number theory. See \cite{WaringsPro1, WaringsPro2} for different techniques of solving the Waring's problem. In the present essay, for binary forms of odd degree, the solution is achieved via Sylvester's Theorem and to a suitable multivariable version (see Theorem \ref{GenQuadFormula}). See also \cite{Lyubich} for a  $n$-reducibility criterion for complex-valued Borel measures, related to the solvability criterion for the complex moment problem.\\
 
As to random discriminants, the reference \cite{Lu} provides also a short overview of the different ways of determining its distribution, other than surveying its applications: among others, the squared Vandermonde often occurs together with Jack symmetric polynomials within  random matrix theory \cite{EdeRao,Mehta}.\\

Since \cite{Rota1}, the combinatorial theory of stochastic integration has been deeply investigated and developed: a very comprehensive survey on the subject is the book by G. Peccati and M.S. Taqqu \cite{PeccatiTaqqu}, while some interesting works on related topics are those by J.L. Solé et al. (see \cite{Sole2} for instance). See \cite{Ans1,Ans2,Ans3} for the non-commutative probability setting.\\

Standard references about $\kappa$-statistics and polykays include \cite{Speed2,Speed3,Tukey}; recently, a very fast algorithm for the computation of such estimators and their generalizations has been given in \cite{Senato1} using umbral methods.


\chapter{Algebraic representation for orthogonal polynomials}\label{Chapter_OPS}

Throughout the present chapter, $(\Omega, \mathcal{F},\mathbb{P})$ will denote a fixed probability space (in the classical sense), and $\E$ the associated expectation. As usual, $\C$, $\N$ and $\mathbb{N}_0$ \glossary{name={$\mathbb{N}_0$}, description={Set of non-negative integers}} will denote the field of complex numbers, the set of  positive integers, and the set of non-negative integers, respectively.\\

\section{Generalized OPs}
In the sequel, let $X_0$ be a real random variable, with finite moments of every order, whose law is determined by its moments. Consider the linear functional $\EE:\C[x_0]\rightarrow \C$ such that $\EE[x_0^k] = \E[X_0^k]$ for every $k\in \mathbb{N}_0$, and a \textit{triangular array} of polynomials $\{p_{nm}(x_0)\}_{n,m\geq 1}=\{p_{nm}(x_0)\,|\,m,n\in \mathbb{N},m=1,\dots,n\}$,  satisfying $\deg\,p_{nm}(x_0)=n$ for every $m=1,\dots, n$. 


\begin{defn}
The triangular array $\{p_{nm}(x_0)\}_{n,m\geq 1}$ is called a \textbf{generalized orthogonal polynomial system} \index{GOPs}(GOPs, for short) for $X_0$ (or equivalently, for $\EE$) if and only if, for every $n \in \mathbb{N}$, and every $ m\leq n$,
\begin{equation}\label{genOPS}
\E[X_0^k \,p_{nm}(X_0)]=0 \qquad \forall \, k= 0,\dots,n-m,
\end{equation}
and $\E[X_0^{n-m+1} \,p_{nm}(X_0)] \neq 0$.
\end{defn}

\begin{rmk}
Note that the assumption that the law of $X$ is determined by its moments ensures that, if a GOPs $\{p_{nm}(x_0)\}_{n,m\geq 1}$ exists fo $X$, one could refer to $\{p_{nm}(x_0)\}_{n,m\geq 1}$ as a GOPs for the law of $X$, so that \eqref{genOPS} holds true whenever $X_0$ is replaced by any other random variable with the same moments as $X$.\\
\end{rmk}

If $\C[x_0]_{\leq d}$ \glossary{name={$\C[x_0]_{\leq d}$},description={Polynomials in $\C[x_o]$ of degree at most $d$}}denotes the subspace of $\C[x_0]$ consisting of the polynomials of degree at most $d$, then \eqref{genOPS} is equivalent to:
\begin{equation}
\E[q(X_0)p_{nm}(X_0)]=0 \quad \forall \, q(x_0)\in\C[x_0]_{\leq n-m}. 
\end{equation}
Observe that, for a fixed $p_0(x_0) \in \C$, if a generalized orthogonal polynomial system exists for $X_0$, the sequence $\{p_n(x_0)\}_{n \geq 0}$, with $p_{n}(x_0):=p_{n1}(x_0)$ for every $n \geq 1$, is an orthogonal polynomial system for $\EE$ in the classical sense \cite{Chihara} (OPs, for short)\index{GOPs!Orthogonal polynomials}, that is:
$$ \EE[p_k(x_0)p_n(x_0)] = 0  \quad \forall\, k\neq n, \, \text{and } \EE[p_n(x_0)^2]\neq 0.$$
Similarly, the sequence $q_n(x_0) := p_{n2}(x_0)$ is a \textit{quasi-orthogonal} polynomial sequence in the sense of \cite{Chihara3}, while $r_{n}(x_0):=p_{nn}(x_0)$ reduces to the biorthogonal polynomials \index{GOPs!Biorthogonal polynomials} introduced in \cite{IserNor} (in the sequel, BOPs for short). Generalized orthogonal polynomials are the topic of investigation in \cite{PartOrth}, where, for a fixed $m\geq 1$, the polynomials $p_{nm}(x_0)$, for $n\geq 1$, are called \textit{partially orthogonal polynomials of deficiency} $m$: in particular, the focus is on recursion relations and on examples of GOPs.\\

Assume that a sequence $\{X_i\}_{i \geq 0}$  of independent (not necessarily identically distributed) random variables, on the fixed probability space is given, whose elements have finite moments of every order: then, in particular, for every $k \in \mathbb{N}$, for every integers $i_1,\dots,i_k$ with $i_s \neq i_j$, $j\neq s$, and non-negative integers $l_1,\dots,l_k$, $\E[X_{i_1}^{l_1}\cdots X_{i_k}^{l_k}] = \E[X_i^{l_1}]\cdots \E[X_{i_k}^{l_k}]$.   If $\E_{0}[\cdot] = \E[\cdot | X_0]$ denotes the conditional expectation with respect to $X:=X_0$, the independence assumption yields  that $\E_{0}[X_0^{l_0} X_1^{l_1}\cdots X_n^{l_n}] = X_0^{l_0} \E[X_1^{l_1}\cdots X_n^{l_n}]$. 
Next statement aims to provide a determinantal formula for a GOPs for $\EE$ (equivalently, for $X_0$), that corresponds to the well-known determinantal expression for the OPs associated with $X_0$ when $m=1$ (see \cite[Exercise 3.1]{Chihara}), and to the one for BOPs when $m=n$ (see \cite{IserNor}).

\begin{thm}
\label{GOPS1}$ $
\begin{itemize}
\item[(i)] Let $X_0$ be a centered random variable, with finite moments of every order, say $a_j = \E[X_0^j]$ for $j\geq 1$, and set $a_0=1$. For every $n\geq 1$, assume that $X_1,\dots,X_{n}$ are independent random variables, not identically distributed, that are, in turn, independent of $X_0$. If $a_{jk} = \E[X_j^k]$ for $j= 1,\dots,n$,  for every $n\geq 1$ and every $m=1,\dots, n$, the polynomial sequence defined via
\begin{equation}
\label{detp}
p_{nm}(x_0)=\begin{vmatrix}
1&x_0&x_0^2&\ldots&x_0^n\\
a_{0}&a_{1}&a_{2}&\ldots&a_{n}\\
a_{1}&a_{2}&a_{3}&\ldots&a_{n+1}\\
\vdots&\vdots&\vdots&&\vdots\\
a_{n-m}&a_{n-m+1}&a_{n-m+2}&\ldots&a_{2n-m}\\
a_{2\,0}&a_{2\,1}&a_{2\,2}&\ldots&a_{2\,n}\\
\vdots&\vdots&\vdots&&\vdots\\
a_{m\,0}&a_{m\,1}&a_{m\,2}&\ldots&a_{m\,n}\\
\end{vmatrix},
\end{equation}
is a generalized orthogonal polynomial system for $X_0$, provided that $\deg\,p_{nm}=n$ for every $m=1,\dots, n$.
\item[(ii)] Let $\Delta(x_0,x_1,\dots,x_m)=\prod\limits_{0\leq i < j \leq m}(x_j-x_i)$ denote the Vandermonde polynomial, and assume that $X_1,\dots,X_n$ are independent random variables, independent of $X_0$, such that at least $X_1,\dots,X_{n-m+1}$ are identically distributed with $X_0$. Then, the random variable defined via:
\begin{equation}
\label{symbp}
p_{nm}(X_0)=\E_{0}[\Delta(X_{1},X_{2},\ldots,X_{n-m+1})\Delta(X_0,X_{1},\ldots,X_{n})],
\end{equation}
satisfies \eqref{genOPS}, provided that $\deg\,p_{nm}=n$ for all $m=1,\dots, n$.
\end{itemize}
\end{thm}

\begin{proof}
First, assume that the moments $a_{ij}$'s are such that $\deg\,p_{nm}=n$, namely, that for every $n$ and every $m\leq n$,
$$
\begin{array}{|c c c c c|}
a_0 & a_1 & \dots & \dots & a_{n-1} \\
a_1 & a_2 & \dots & \dots & a_{n} \\
\vdots &  & & & \vdots \\
a_{n-m} & a_{n-m+1} & \dots & \dots &a_{2n-m-1} \\
a_{2\,0} & a_{2\,1} & \dots & \dots &  a_{2\,n-1} \\
\vdots &  & & & \vdots \\
a_{m\,0} & a_{2n-m\,1} & \dots & \dots &  a_{m\,n-1} \\
\end{array}
 \neq 0\; .
 $$
In this case, for every $k=0,\dots,n-m$,
\small{
\begin{align*}
&\E[X_0^k p_{nm}(X_0)] = \\
&\,= \E X_0^k \begin{vmatrix}
1& X_0 & X_0^2 & \ldots & X_0^n\\
a_{0}&a_{1}&a_{2}&\ldots&a_{n}\\
a_{1}&a_{2}&a_{3}&\ldots&a_{n+1}\\
\vdots&\vdots&\vdots&&\vdots\\
a_{n-m}&a_{n-m+1}&a_{n-m+2}&\ldots&a_{2n-m}\\
a_{2\,0}&a_{2\,1}&a_{2\,2}&\ldots&a_{2\,n}\\
\vdots&\vdots&\vdots&&\vdots\\
a_{m\,0}&a_{2n-m\,1}&a_{2n-m\,2}&\ldots&a_{m\,n}\\
\end{vmatrix} \quad\,= \E  \begin{vmatrix}
X_0^k & X_0^{k+1 }& X_0^{k+2}&\ldots& X_0^{n+k}\\
a_{0}&a_{1}&a_{2}&\ldots&a_{n}\\
a_{1}&a_{2}&a_{3}&\ldots&a_{n+1}\\
\vdots&\vdots&\vdots&&\vdots\\
a_{n-m}&a_{n-m+1}&a_{n-m+2}&\ldots&a_{2n-m}\\
a_{2\,0}&a_{2\,1}&a_{2\,2}&\ldots&a_{2\,n}\\
\vdots&\vdots&\vdots&&\vdots\\
a_{m\,0}&a_{2n-m\,1}&a_{2n-m\,2}&\ldots&a_{m\,n}\\
\end{vmatrix}
 \\
&\quad \qquad= \begin{vmatrix}
a_k & a_{k+1}& a_{k+2}&\ldots& a_{n+k}\\
a_{0}&a_{1}&a_{2}&\ldots&a_{n}\\
\vdots&\vdots&\vdots&&\vdots\\
a_{k}&a_{k+1}&a_{k+2}&\ldots&a_{n+k}\\
\vdots&\vdots&\vdots&&\vdots\\
a_{n-m}&a_{n-m+1}&a_{n-m+2}&\ldots&a_{2n-m}\\
a_{2\,0}&a_{2\,1}&a_{2\,2}&\ldots&a_{2\,n}\\
\vdots&\vdots&\vdots&&\vdots\\
a_{m\,0}&a_{2n-m\,1}&a_{2n-m\,2}&\ldots&a_{m\,n}\\
\end{vmatrix} \, = 0\,
\end{align*}
}
\normalsize
(since two rows are equal in the determinant). Moreover, $\E[X_0^{n-m+1}p_{n,m}(X^0)] \neq 0$. Indeed: 
\small{
\begin{align*}
&\E[X_0^{n-m+1} p_{nm}(X_0)] = \\
&\,= \E  \begin{vmatrix}
X_0^{n-m+1}& X_0^{n-m+2} & \ldots & \ldots & X_0^{2n-m+1}\\
a_{0}&a_{1}&a_{2}&\ldots&a_{n}\\
a_{1}&a_{2}&a_{3}&\ldots&a_{n+1}\\
\vdots&\vdots&\vdots&&\vdots\\
a_{n-m}&a_{n-m+1}&a_{n-m+2}&\ldots&a_{2n-m}\\
a_{2\,0}&a_{2\,1}&a_{2\,2}&\ldots&a_{2\,n}\\
\vdots&\vdots&\vdots&&\vdots\\
a_{m\,0}&a_{2n-m\,1}&a_{2n-m\,2}&\ldots&a_{m\,n}\\
\end{vmatrix} \quad\,=  \begin{vmatrix}
a_{n-m+1} & a_{n-m+2}& \ldots&\ldots& a_{2n-m+1}\\
a_{0}&a_{1}&a_{2}&\ldots&a_{n}\\
a_{1}&a_{2}&a_{3}&\ldots&a_{n+1}\\
\vdots&\vdots&\vdots&&\vdots\\
a_{n-m}&a_{n-m+1}&a_{n-m+2}&\ldots&a_{2n-m}\\
a_{2\,0}&a_{2\,1}&a_{2\,2}&\ldots&a_{2\,n}\\
\vdots&\vdots&\vdots&&\vdots\\
a_{m\,0}&a_{2n-m\,1}&a_{2n-m\,2}&\ldots&a_{m\,n}\\
\end{vmatrix}
 \\
&\quad \qquad= (-1)^{n-m+1} \begin{vmatrix}
a_{0}&a_{1}&a_{2}&\ldots&a_{n}\\
\vdots&\vdots&\vdots&&\vdots\\
a_{k}&a_{k+1}&a_{k+2}&\ldots&a_{n+k}\\
\vdots&\vdots&\vdots&&\vdots\\
a_{n-m}&a_{n-m+1}&a_{n-m+2}&\ldots&a_{2n-m}\\
a_{n-m+1} & a_{n-m+2}& \ldots&\ldots& a_{2n-m+1}\\
a_{2\,0}&a_{2\,1}&a_{2\,2}&\ldots&a_{2\,n}\\
\vdots&\vdots&\vdots&&\vdots\\
a_{m\,0}&a_{2n-m\,1}&a_{2n-m\,2}&\ldots&a_{m\,n}\\
\end{vmatrix} \,,
\end{align*}
}\normalsize
that equals (at most up to a sign) the leading coefficient of $p_{n+1,m}(x_0)$.

To prove \eqref{symbp}, set:
$$ q_{n,m}(X_0,X_1,\dots,X_n) = X_2 X_3^2 \cdots X_l^{l-1}\cdots X_{n-m+1}^{n-m} \Delta(X_0, X_1,\dots,X_n),$$
and, for $i \notin \{1,\dots, n\}$, assume that $X_i$ is an independent copy of $X_1$. For a fixed $k=0,\dots,n-m$, if $\tau=(i, k+1)$ denotes the transposition exchanging $i$ and $k+1$, 
\begin{align*}
\E[\big(X_i^k q_{n,m}(X_i,X_1,X_2,\dots,X_n)\big)^{\tau} ] &= \E[\big(X_i^k X_2 \cdots  X_{n-m+1}^{n-m} \Delta(X_i,X_1,\dots,X_{k+1}, \dots,X_n)\big)^{\tau}] \\
&= - \E[X_i^k q_{n,m}(X_i,X_1,X_2,\dots,X_n) ],
\end{align*}
entailing $\E[X_0^k q_{n,m}(X_0,X_1,X_2,\dots,X_n)] = 0$ (here, $(a(x_1,\dots,x_{m}))^{\tau}$ denotes the polynomial obtained from $a(x_1,\dots,x_m)$ by letting $\tau$ acting on the variables $x_1,\dots,x_{m}$). The canonical symmetrization of $q_{n,m}(X_0,X_1,\dots,X_n)$ over $\mathfrak{S}_{n-m+1}$ yields:
\begin{align*}
\frac{1}{(n-m+1)!}&\sum_{\sigma \in \mathfrak{S}_{n-m+1}} \big( q_{n,m}(X_0,X_1,\dots,X_n)\big)^{\sigma} = \\
&= \frac{1}{(n-m+1)!}\sum_{\sigma \in \mathfrak{S}_{n-m+1}} \big(\Delta(X_0,X_1,\dots,X_n)X_2 X_3^2 \cdots X_{n-m+1}^{n-m}\big)^{\sigma} \\
&= \frac{1}{(n-m+1)!}\Delta(X_0,X_1,\dots,X_n)\sum_{\sigma \in \mathfrak{S}_{n-m+1}} (-1)^{\sigma} X_{\sigma(1)}^0 X_{\sigma(2)} X_{\sigma(3)}^2 \cdots X_{\sigma(n-m+1)}^{n-m} \\
&= \frac{1}{(n-m+1)!}\Delta(X_0,X_1,\dots,X_n)\Delta(X_1,\dots,X_{n-m+1}).
\end{align*}
Remarking  that  every $\sigma  \in \mathfrak{S}_{n-m+1}$ acts on $X_0$ as the identity, and thanks to the identical distribution assumption on $X_1,\dots,X_{n-m+1}$, the random variables
$$ p_{nm}(X_0) = \mathbb{E}_0[\Delta(X_0,X_1,\dots,X_n)\Delta(X_1,\dots,X_{n-m+1})]$$
satisfy:
\begin{align*}
\E[X_0^k p_{nm}(X_0)] &= \E[X_0^k \Delta(X_0,X_1,\dots,X_n)\Delta(X_1,\dots,X_{n-m+1})] \\
&= \sum_{\sigma \in \mathfrak{S}_{n-m+1}}\E[ X_0^k \big( q_{n,m}(X_0,X_1,\dots,X_n)\big)^{\sigma} ] \\
&= \sum_{\sigma \in \mathfrak{S}_{n-m+1}}\E[ \big(X_0^k  q_{n,m}(X_0,X_1,\dots,X_n)\big)^{\sigma} ] \\
&= (n-m+1)! \E[X_0^k q_{n,m}(X_0,X_1,\dots,X_n)] = 0,
\end{align*}
for every $k=0,\dots,n-m$. The conclusion, then, follows, by remarking that $\E[X_0^{n-m+1} p_{nm}(X_0)]$ equals the coefficient of $X_0^{n+1}$ in $p_{n+1,m}(X_0)$. Indeed, since $X_{n-m+2}$ is an independent copy of $X_0$, we can write:
\begin{align*}
\E[\Delta&(X_1,\dots,X_{n-m+2})\Delta(X_1,\dots,X_{n-m+2},\dots, X_{n+1})] = \\
&=\E[\Delta(X_1,\dots,X_{n-m+1},X_0)\Delta(X_1,\dots,X_{n-m+1},X_0,X_{n-m+3},\dots, X_{n+1})] \\
&= (n-m+2)! \E[X_2 X_3^2 \cdots X_{n-m+1}^{n-m} X_0^{n-m+1} \Delta(X_1,\dots,X_{n-m+1},X_0,X_{n-m+3},\dots, X_{n+1})]\\
&= (-1)^{\sigma} (n-m+2)! \E[X_2 X_3^2 \cdots X_{n-m+1}^{n-m} X_0^{n-m+1} \Delta(X_0,X_1,\dots,X_{n-m+1},X_{n-m+3},\dots, X_{n+1})]\\
&= (-1)^{\sigma} (n-m+2)! \E[X_0^{n-m+1} Y_2 Y_3^2 \cdots Y_{n-m+1}^{n-m} \Delta(X_0,Y_1,\dots,Y_{n-m+1},Y_{n-m+2},\dots, Y_{n})]\\
&= (-1)^{\sigma} (n-m+2)! \E[X_0^{n-m+1} p_{n,m}(X_0)],
\end{align*}
where $\sigma$ denotes the permutation of $\{0,\dots,n-m+1\}$ shifting $(X_1,\dots,X_{n-m+1},X_0)$ to $(X_0, X_1,\dots,X_{n-m+1})$, and where we have set $Y_j:=X_j$ for $j=1,\dots,n-m+1$, and $Y_j:=X_{j+1}$ for $j=n-m+2,\dots,n$.
\end{proof}

If the distribution of $X_0$ is absolutely continuous with respect to the Lebesgue measure, say with density (or weight) $\omega(t)$, then its moments admit the integral representation:
$$
a_{k}=\E[X_0^k]\,=\, \int_{I}\,t^k\,\omega(t)\,dt, \quad \forall k\in\N .$$
Similarly, assume that for every $j$, the random variable $X_j$ in \eqref{symbp}  has a density $\omega_j$ over $I_j$, so that:
\begin{equation}
\label{intL}a_{jk}=\E[X_{j}^k]\,=\,\int_{I_j}t^k\,\omega_j(t)\,dt \quad \forall \, k \in \N.
\end{equation}

In this case, the representation provided with \eqref{symbp} can be seen as a generalized \textit{Heine integral formula} (see, for instance \cite{Konig,Ism,Szego}).

\begin{thm}[Heine integral formula]\label{intr}
For every $n \geq 1$ and every $m=1,\dots,n$, the triangular array of polynomials $\{p_{nm}(x_0)\}_{n,m\geq 1}$ defined by
\small{
$$ p_{nm}(x_0):=\int\limits_{I_1\times I_2\times \cdots\times I_n}\,\Delta(x_1,x_2,\ldots,x_{n-m+1})\Delta(x_0,x_1,\ldots,x_n)\prod_{j=1}^{n-m+1}\omega_0(x_j)dx_j\prod_{i=n-m+2}^{n}\omega_i(x_i)dx_{i} $$}
\normalsize 
is a generalized orthogonal polynomial system for $X_0$, provided that $\deg\,p_{nm}=n$ for every $m=1,\dots,n$.
\end{thm}

\begin{exm} $ $
\begin{enumerate}
\item Assume that  $X_0, X_1,\dots,X_{n-m+1}$ are i.i.d.. Then, the leading coefficient of $p_{nm}(x_0)$ is given by:
$$\E[\Delta(X_1,X_2,\ldots,X_{n-m+1})\Delta(X_1,X_2,\ldots,X_{n})],$$
implying that $\deg\,p_{nm}=n$ if and only if 
$$\E[\Delta(X_1,X_2,\ldots,X_{n-m+1})\Delta(X_1,X_2,\ldots,X_{n})]\neq 0 .$$ Therefore, when $m=1$, Theorem \ref{intr} reduces  to the Heine integral formula for orthogonal polynomial sequences: in this case,  $\deg\,p_n=n$ is satisfied if and only if 
\begin{equation}
\label{HankelVand}
 \E[\Delta(X_1,X_2,\ldots,X_{n})^2] = n!\, det(a_{i+j})_{i,j=0,\dots,n-1} \neq 0, 
 \end{equation}
 while by setting $n=m$ and $\Delta(x_1)=1$,  Theorem \ref{intr} reduces to the integral formula for biorthogonal polynomials proved in \cite{IserNor}. In particular, $\deg\,p_{nn}=n$ if and only if 
$$ \E[\Delta(X_1,X_2,\ldots,X_{n})]\neq 0 .$$
\item
The orthogonal polynomials $\{p_{n1}(x_0)\}_{n\geq 1}$ are instances of \textit{multiple orthogonal polynomials} of the second kind (for short, II OPs) \cite{Kuijlaars}: given $p\in \mathbb{N}$, and real weight functions $\omega_1,\dots,\omega_p$, consider $\bs{n}=(n_1,\dots,n_p) \in \mathbb{N}_0^p$. A (monic) polynomial $P_{\bs{n}}(x_0)$ of degree $|\bs{n}| =n_1 + \cdots + n_p$ is of the type II OPS if
$$\int_{\mathbb{R}} P_{\bs{n}}(x_0) x_0^k \omega_j(x_0)dx_0 \, = 0 \quad \forall k=0,\dots,n_j-1, \forall j=1,\dots,p \,. $$
The determinantal and the integral formulae provided with Theorem \ref{GOPS1}, for $m=1$, should be then compared with the corresponding ones for multiple orthogonal polynomials,  provided in \cite{Kuijlaars}.\\
\end{enumerate}
\end{exm}


\begin{rmk}
Assume that, for a fixed $X:=X_0$, a GOPs $\{p_{nm}(x_0)\}_{n,m\geq 1}$ exists, and consider the associated OPs $p_n(x_0):=p_{n1}(x_0)$: in this case, \eqref{symbp} can be rewritten as:
\begin{equation}
\label{intorth}
p_n(X) = \E_0[(X_1-X)\cdots (X_n-X) \Delta(X_1,\dots,X_n)^2]. 
\end{equation}
Apart from occurring in the representation for the OPs, the statistics \linebreak$(X_1-X)\cdots(X_n-X)$ and
$\Delta(X_1,\ldots,X_n)^2$ play their own significant role
in probability: if $a=\E[X]$, then $\E\,[(X_1-X)\cdots(X_n-X)]=\E\,[(a-X)^n]$, the $n$-th central
moment of $X$ (up to a sign), while $\Delta(X_1,\ldots,X_n)^2$ is called
\textit{random discriminant}, and arises in spectral theory of random matrices (see \cite{Lu}).
\end{rmk}

\begin{exm}
Let $H_n(x)$ denote the $n$-th (monic) Hermite polynomial. If $N_1,\dots,N_k$ are independent $\mathcal{N}(0,1)$ distributed random variables, and $n_1+\cdots +n_k=n$, random variables of the form $\prod\limits_{j=1}^k \; H_{n_j}(N_j)$ are the generators of the so called $n$-th \textit{Gaussian Wiener homogeneous Chaos} (see \cite[Theorem 3.2.1]{Janson_GaussianSpaces},\cite[Theorem 1.1]{Engel}, \cite[Corollary 2.3]{Major_Integrals}, \cite{NourdinPeccatilibro}).  More generally, consider the polynomials 
$$p_{\bs{n}}(\bs{x})=p_{n_0}^{(0)}(x_{0})p_{n_1}^{(1)}(x_{1})\cdots p_{n_d}^{(d)}(x_{d}),$$ with $\bs{n}=(n_0,n_1,\dots,n_d) \in \mathbb{N}_0^{d+1}$,  $\bs{x}=(x_0,x_1,\dots,x_d)$ and $\{p_{n}^{(j)}(x_j)\}_{n\geq 0}$ is an OPs for a random variable $X_j$, admitting finite moments up to every order. Then, via \eqref{symbp} for $m=1$, $p_{\bs{n}}(\bs{x})$ admits the \textit{symbolic representation}:
$$ p_{\bs{n}}(X_0,X_1,\dots,X_d) = \prod_{j=0}^d \E_{0_j}[\Delta(X_{1,j},\dots,X_{n_j,j})\Delta(X_j,X_{1,j},\dots,X_{n_j,j} )],$$
with $\E_{0_j}$ denoting the conditional expectation with respect to $X_j$, and $X_{1,j},\dots,X_{n_j,j}$ independent copies of $X_j$. \\
\end{exm}

\section{Generalized OPs in several variables}

In the literature, there are several possible ways of defining orthogonality for polynomials in several indeterminates (see, for instance \cite{Xu}, \cite[Chapter 2]{Macdonald2}). In order to  enhance some differences with the univariate setting, we have chosen to discuss generalized orthogonal polynomials in $d+1$ indeterminates, for $d\geq 0$, in a separate section. \\

Consider a random vector $\bs{X}=(X_0,X_1,\dots,X_d)$. For the joint moments of $\bs{X}$ (all of which are assumed to exist finite), the following multi-index notation will be of use: if $\bs{k}=(k_0,k_1,\dots,k_d) \in \mathbb{N}_0^{d+1}$, then $\bs{X}^{\bs{k}} = \prod\limits_{j=0}^d X_j^{k_j}$.

Whenever a sequence $\{\bs{X}_i\}_{i\geq 0}$  of independent (non necessarily identically distributed) random vectors (defined on the same probability space) is given, say $\bs{X}_i=(X_{i0},X_{i1},\dots,X_{id})$, the independence assumption implies that, for every $\bs{k}_1,\dots, \bs{k}_l \in\N_0^{d+1}$, and every $i_1,\dots,i_l\geq 0$, $i_j \neq i_p$ for every $j \neq p$,
\begin{equation}\label{mE2}
\E[ \bs{X}_{i_1}^{\bs{k}_1} \bs{X}_{i_2}^{\bs{k}_2}\cdots \bs{X}_{i_l}^{\bs{k}_l}]= \E[\bs{X}_{i_1}^{\bs{k}_1}]\E[\bs{X}_{i_2}^{\bs{k}_2}]\cdots \E[\bs{X}_{i_l}^{\bs{k}_l}].
\end{equation}
In analogy with the first section, $\E_{0}$ will denote the conditional expectation with respect to $\bs{X}_0$, namely:
$$ \E_{0}[\bs{X}_0^{\bs{n}_0}\bs{X}_1^{\bs{n}_1} \cdots \bs{X}_r^{\bs{n}_r}] = \bs{X}_0^{\bs{n}_0}\E_0[\bs{X}_1^{\bs{n}_1} \cdots \bs{X}_r^{\bs{n}_r}],$$
and $\E_0[\bs{X}_1^{\bs{n}_1} \cdots \bs{X}_r^{\bs{n}_r}]= \E[\bs{X}_1^{\bs{n}_1} \cdots \bs{X}_r^{\bs{n}_r}]$.

Let $\leq$ denote the \textit{componentwise order} on $\mathbb{N}_0^{d+1}$, defined via: $(a_0,\dots,a_d) \leq (b_0,\dots,b_d)$ if and only if $a_i \leq b_i$ for every $i=0,\dots,d$. 

\begin{rmk}
The choice of the componentwise order is made to ensure that $(\mathbb{N}_0^{d+1},\leq)$ is a graded poset (see \cite{Stanley}), namely a partially ordered set with a rank function $\rho: \mathbb{N}_0^{d+1}\rightarrow \mathbb{N}_0$ such that:
\begin{itemize}
\item[-] if $\bs{n} \leq \bs{m}$, then $\rho(\bs{n}) \leq \rho(\bs{m})$ (where, with abuse of notation, $\leq$ denotes both the componentwise order on $\mathbb{N}_0^{d+1}$ and the usual order on $\mathbb{N}_0$);
\item[-] if $\bs{m}=(m_0,\dots,m_d)$ covers $\bs{n}=(n_0,\dots,n_d)$, namely if $m_j= n_j+1$ for the only $j$ such that $m_j > n_j$, then $\rho(\bs{m}) = \rho(\bs{n})+1$.
\end{itemize}
Moreover, the choice of such ordering implies that, if $\bs{m}\leq \bs{n}$, the element $\bs{n}-\bs{m}$ is always well defined (unlike in the case $\leq$ is, for instance, the \textit{lexicographical order}: $\bs{m}\leq \bs{n}$ if $m_i < n_i$ for $i=\min\{j: m_j \neq n_j\}$). In other words, the choice of the componentwise order leads to the most appropriate extension of $(\mathbb{N}_0,\leq)$. This setting will be crucial to encode orthogonality in a general unified framework for apolarity in higher dimensions, even if, in principle, $\leq$ might be replaced by any order $\preceq$ such that $(\mathbb{N}_0^{d+1},\preceq)$ is a graded poset verifying, at any stage, the required properties.\\
\end{rmk}

\begin{exm}
Consider  the set of positive integers $\mathbb{N}$ equipped with the divisibility relation: $a \leq b$ if and only if $a|b$, with minimal element $1$. $(\mathbb{N},|)$ is a graded poset, with rank function $\rho(a)$ equal to the number of prime factors of $a$, counted with multiplicity. Remark that $b$ covers $a$ if and only if $b/a$ is prime. Consider, then, the $k$-fold direct product of $(\mathbb{N},|)$, with partial order defined by $\bs{a}=(a_1,\dots,a_k) |\, \bs{b}=(b_1,\dots,b_k)$ if and only if $a_i | b_i$ for all $i=1,\dots,k$. Then, $(\mathbb{N}^k, |)$ is a graded poset with rank function $\rho_k(\bs{a}) = \sum_{i=1}^k \rho(a_i)$. In particular, if $\bs{a}| \bs{b}$, then $\bs{b} - \bs{a} \in \mathbb{N}^k$: indeed, $\bs{a} | \bs{b}$ implies $\bs{a} \leq \bs{b}$, if $\leq$ denotes the componentwise order. 
\end{exm}

\begin{defn}
For the fixed $\leq $, consider a triangular array of polynomials in $\C[\bs{x}_0]$ (where $\bs{x}_0=(x_0,x_1,\dots,x_d)$), say
\begin{equation}\label{polm}
p_{\bs{n}\bs{m}}(\bs{x}_0)=\sum_{\bs{0}\leq\bs{k}\leq\bs{n}}\binom{\bs{n}}{\bs{k}}\,p_{\bs{n}\bs{m}}^{(\bs{k})}\,\bs{x}_0^{\bs{k}}\, \,\text{ with }p_{\bs{n}\bs{m}}^{(\bs{n})}\neq 0,
\end{equation}
for every $\bs{n},\bs{m} \in \mathbb{N}_0^{d+1}$, with $\bs{m} \leq \bs{n}$, $\bs{m}\neq \bs{0}$. Then $\{p_{\bs{n}\bs{m}}(\bs{x}_0):\bs{m}\leq \bs{n}\}_{\bs{n} \in \mathbb{N}_0^{d+1}}$ is a \textbf{generalized orthogonal polynomial system} for $\bs{X}_0$ if it satisfies
\begin{equation}
\label{GMOPS}\E[\bs{X}_0^{\bs{k}}\,p_{\bs{n}\bs{m}}(\bs{X}_0)]=0 \quad \forall \, \bs{k}: \bs{0}\leq\bs{k}\leq\bs{n}-\bs{m},
\end{equation}
and $\E[\bs{X}_0^{\bs{a}}\,p_{\bs{n}\bs{m}}(\bs{X}_0)] \neq 0$ for every multi-index $\bs{a} \leq \bs{n}$, covering $\bs{n}-\bs{m}$.\\
\end{defn}

A polynomial of the type \eqref{polm} will be said of degree $\bs{n}$. If $\C[\bs{x}_0]_{\leq \bs{d}}$ denote the space of all polynomials having degree at most $\bs{d}$, then \eqref{GMOPS} means that $p_{\bs{n}\bs{m}}(\bs{x}_0)$ is orthogonal to all elements in $\C[\bs{x}_0]_{\leq \bs{n}-\bs{m}}$.


The next theorems are meant to generalize the formulae \eqref{detp} and \eqref{symbp} in the present multivariate setting: to this aim, for every $\bs{k} \in \mathbb{N}_0^{d+1}$, set $s(\bs{k}) = |\{\bs{h} \in \mathbb{N}_0^{d+1}: \bs{h} \leq \bs{k}\}|$.

\begin{thm}\label{detformulti}
For every $\bs{n},\bs{m} \in \mathbb{N}_0^{d+1}$, with $\bs{m} \leq \bs{n}$, and with the above notation, set:
\begin{enumerate}
\item $s:=s(\bs{n})-1$ and $\{\bs{k}\,|\,\bs{0}\leq\bs{k}\leq\bs{n}\}=\{\bs{0}=\bs{k}_0,\bs{k}_1,\ldots,\bs{k}_s:=\bs{n}\}$;
\item $r:=s(\bs{n}-\bs{m})-1$ and $\{\bs{h}\,|\,\bs{0}\leq\bs{h}\leq\bs{n}-\bs{m}\}=\{\bs{0}=\bs{h}_0,\bs{h}_1,\ldots,\bs{h}_r:=\bs{n}-\bs{m}\}$.
\end{enumerate}
Set $\bs{X}:=\bs{X}_0$, and, for every $\bs{n} \in \mathbb{N}_0$, let $\bs{X}_1,\dots, \bs{X}_{s}$ be independent, but non identically distributed random vectors, independent of $\bs{X}$, and set  $a_{\bs{k}} = \E[\bs{X}_0^{\bs{k}}]$, $a_{j {\bs{k}} } = \E[\bs{X}_j^{\bs{k}}]$ for every $j\geq 1$. Then, the polynomials $p_{\bs{n}\bs{m}}(\bs{x}_0)$  defined by:
\[p_{\bs{n}\bs{m}}(\bs{x}_0)=
\begin{vmatrix}
1&\bs{x}_0^{\bs{k}_1}&\bs{x}_0^{\bs{k}_2}&\ldots&\bs{x}_0^{\bs{n}}\\
a_{\bs{0}}&a_{\bs{k}_1}&a_{\bs{k}_2}&\ldots&a_{\bs{k}_s}\\
a_{\bs{h}_1}&a_{\bs{k}_1+\bs{h}_1}&a_{\bs{k}_2+\bs{h}_1}&\ldots&a_{\bs{n}+\bs{h}_1}\\
a_{\bs{h}_2}&a_{\bs{k}_1+\bs{h}_2}&a_{\bs{k}_2+\bs{h}_2}&\ldots&a_{\bs{n}+\bs{h}_2}\\
\vdots&\vdots&\vdots&&\vdots\\
a_{\bs{h}_r}&a_{\bs{k}_1+\bs{h}_r}&a_{\bs{k}_2+\bs{h}_r}&\ldots&a_{2\bs{n}-\bs{m}}\\
a_{2 \bs{k}_0}& a_{2 \bs{k}_1}& a_{2\bs{k}_2}&\ldots& a_{2 \bs{n}}\\
\vdots&\vdots&\vdots&&\vdots\\
a_{s-r\,\bs{k}_0}& a_{s-r\,\bs{k}_1}& a_{s-r\,\bs{k}_2}&\ldots& a_{s-r\,\bs{n}},
\end{vmatrix}\]
form a generalized orthogonal polynomial system for $\bs{X}_0$, provided that $p_{\bs{n}\bs{m}}(\bs{x}_0)$ is of degree $\bs{n}$ for every $\bs{m} \leq \bs{n}$, $\bs{m} \neq \bs{0}$.
\end{thm}

\begin{proof}
Assume that $p_{\bs{n}\bs{m}}(\bs{x}_0)$ is of degree $\bs{n}$ for every $\bs{m} \leq \bs{n}$ and consider $\bs{k}: \bs{0}\leq \bs{k}\leq\bs{n}-\bs{m}$, so that $\bs{k}=\bs{h}_i$ for some $i=0,\dots, r$. Then 
\[\bs{X}_0^{\bs{k}}\,p_{\bs{n}\bs{m}}(\bs{X}_0)=
\begin{vmatrix}
\bs{X}_0^{\bs{h}_i}&\bs{X}_0^{\bs{k}_1+\bs{h}_i}&\bs{X}_0^{\bs{k}_2+\bs{h}_i}&\ldots&\bs{X}_0^{\bs{k}_s+\bs{h}_i}\\
a_{\bs{0}}&a_{\bs{k}_1}&a_{\bs{k}_2}&\ldots&a_{\bs{k}_s}\\
a_{\bs{h}_1}&a_{\bs{k}_1+\bs{h}_1}&a_{\bs{k}_2+\bs{h}_1}&\ldots&a_{\bs{k}_s+\bs{h}_1}\\
a_{\bs{h}_2}&a_{\bs{k}_1+\bs{h}_2}&a_{\bs{k}_2+\bs{h}_2}&\ldots&a_{\bs{k}_s+\bs{h}_2}\\
\vdots&\vdots&\vdots&&\vdots\\
a_{\bs{h}_r}&a_{\bs{k}_1+\bs{h}_r}&a_{\bs{k}_2+\bs{h}_r}&\ldots&a_{2\bs{n}-\bs{m}}\\
a_{2 \bs{k}_0}& a_{2 \bs{k}_1}& a_{2\bs{k}_2}&\ldots& a_{2 \bs{n}}\\
\vdots&\vdots&\vdots&&\vdots\\
a_{s-r\,\bs{k}_0}&a_{s-r\,\bs{k}_1}&a_{s-r\,\bs{k}_2}&\ldots& a_{s-r\,\bs{n}}
\end{vmatrix}\]
so that two rows are equal in the determinant, and hence $\E[\bs{X}_0^{\bs{k}}\,p_{\bs{n}\bs{m}}(\bs{X}_0)]=0$. Similarly to the univariate setting, finally one has that $\E[\bs{X}_0^{\bs{a}}\,p_{\bs{n}\bs{m}}(\bs{X}_0)]\neq 0$ for every multi-index $\bs{a} \leq \bs{n}$, covering $\bs{n}-\bs{m}$. Indeed, set $\bs{a}:= \bs{n}-\bs{m} + \bs{\delta}_i$, with $\bs{\delta}_i$ denoting the multi-index with $1$ in the $i$-th position, and $0$ elsewhere, and assume that $\bs{a} \leq \bs{n}$: then, $\E[\bs{X}_0^{\bs{a}}\,p_{\bs{n}\bs{m}}(\bs{X}_0)]$ equals the leading coefficient of $p_{\bs{n}+\bs{\delta}_i\, \bs{m}}(\bs{x}_0)$.
\end{proof}

\begin{rmk}
Note that the choice of the labelling for the multi-indices $\bs{k}: \bs{k} \leq \bs{n}$ does not affect the result and the proof, since the value of the determinant would simply change in sign whenever some columns are switched.\\
\end{rmk}

In order to state the multivariable counterpart to the  symbolic representation \eqref{symbp}, consider the
following determinants: for all $\bs{n}\in\N_0^{d+1}$, if
$s=s(\bs{n})-1$ and $\{\bs{k}\,|\,\bs{0}\leq\bs{k}\leq\bs{n}\}=\{\bs{n}_0:=\bs{0},\bs{n}_1,\ldots,\bs{n}_s=\bs{n}\}$, set
\[\bs{\Delta}_{\bs{n}}(\bs{X}_0,\bs{X}_1,\ldots,\bs{X}_{s})=\begin{vmatrix}
1&\bs{X}_0^{\bs{n}_1} & \bs{X}_0^{\bs{n}_2}&\cdots & \bs{X}_0^{\bs{n}_s}\\
1&\bs{X}_1^{\bs{n}_1} & \bs{X}_1^{\bs{n}_2}&\cdots & \bs{X}_1^{\bs{n}_s}\\
1&\bs{X}_2^{\bs{n}_1} & \bs{X}_2^{\bs{n}_2}&\cdots & \bs{X}_2^{\bs{n}_s}\\
\vdots&\vdots&\vdots&&\vdots\\
1&\bs{X}_s^{\bs{n}_1} & \bs{X}_s^{\bs{n}_2}&\cdots & \bs{X}_s^{\bs{n}_s}
\end{vmatrix}  = \sum_{\sigma \in \mathfrak{S}_{\{0,\dots,s\}}}(-1)^{\sigma}\bs{X}_{0}^{\bs{n}_{\sigma(0)}} \bs{X}_{1}^{\bs{n}_{\sigma(1)}} \cdots \bs{X}_{s}^{\bs{n}_{\sigma(s)}}  ,\]
and
\[\bs{\Delta}^*_{\bs{n}}(\bs{X}_1,\bs{X}_2,\ldots,\bs{X}_{s})=\begin{vmatrix}
\bs{X}_1^{\bs{n}_1} & \bs{X}_1^{\bs{n}_2}&\cdots & \bs{X}_1^{\bs{n}_s}\\
\bs{X}_2^{\bs{n}_1} & \bs{X}_2^{\bs{n}_2}&\cdots & \bs{X}_2^{\bs{n}_s}\\
\vdots&\vdots&&\vdots\\
\bs{X}_s^{\bs{n}_1} & \bs{X}_s^{\bs{n}_2}&\cdots & \bs{X}_s^{\bs{n}_s}
\end{vmatrix} = \sum_{\sigma \in \mathfrak{S}_{\{1,\dots,s\}}}(-1)^{\sigma} \bs{X}_{1}^{\bs{n}_{\sigma(1)}} \cdots \bs{X}_{s}^{\bs{n}_{\sigma(s)}}.\]
\begin{thm}\label{morthpol}
Given $\bs{n}, \bs{m} \in \mathbb{N}_0^{d+1}$, with $\bs{m} \leq \bs{m}$, set $r:=s(\bs{n}-\bs{m})-1$ and $s:=s(\bs{n})-1$. If $\bs{X}_0,\bs{X}_1,\dots,\bs{X}_n$ are independent, and at least $\bs{X}_0,\dots,\bs{X}_r$ are identically distributed, then the random variable $p_{\bs{n}\bs{m}}(\bs{X}_0)$ defined by
\begin{equation*}
p_{\bs{n}\bs{m}}(\bs{X}_0)=\E_0[\bs{\Delta}^*_{\bs{n}-\bs{m}}(\bs{X}_1,\bs{X}_2,\ldots,\bs{X}_{r})\bs{\Delta}_{\bs{n}}(\bs{X}_0,\bs{X}_1,\ldots,\bs{X}_{s})]\, \text{ for every }\bs{m}: \bs{0}<\bs{m}\leq\bs{n},
\end{equation*}
with $p_{\bs{n}\bs{m}}(\bs{x}_0)$ of degree $\bs{n}$ for every $\bs{m}$: $\bs{0}<\bs{m}\leq\bs{n}$, satisfies $\E[\bs{X}_0^{\bs{k}}p_{\bs{n}\bs{m}}(\bs{X}_0)] = 0$ for every $\bs{k}\leq \bs{n}-\bs{m}$, and $\E[\bs{X}_0^{\bs{a}}p_{\bs{n}\bs{m}}(\bs{X}_0)]\neq 0$ for every multi-index $\bs{a} \leq \bs{n}$, covering $\bs{n}-\bs{m}$. Hence, $p_{\bs{n}\bs{m}}(\bs{x}_0)$ is a GOPs for $\bs{X}_0$.
\end{thm}

\begin{proof}
The proof is analogous to the proof of \eqref{symbp}, but one has to start from 
$$ q_{\bs{n},\bs{m}}(\bs{X}_0,\bs{X}_1,\ldots,\bs{X}_s):=\bs{X}_1^{\bs{h}_1}\bs{X}_2^{\bs{h}_2}\cdots\bs{X}_r^{\bs{h}_r}\bs{\Delta}_{\bs{n}}(\bs{X}_0,\bs{X}_1,\ldots,\bs{X}_{s}).$$
In particular, remark that the choice of the labelling for the elements in $\{\bs{k}: \bs{0}\leq \bs{k} < \bs{n}\}$ does not affect the result, thanks to the assumption of identical distribution on $\bs{X}_1,\dots,\bs{X}_r$.
\end{proof}


If the random vectors are real-valued, and for every $j$ there exists a density  $\omega_j(\bs{t})=\omega_j(t_0,t_1,\ldots,t_d)$ such that \begin{equation}\label{Eintmulti}
\E[\bs{X}_j^{\bs{n}}]=\int_{\R^{d+1}}\bs{x}^{\bs{n}}\omega_j(\bs{x})d\bs{x} \,,
\end{equation}
with $d\bs{x}=dx_0 dx_1\cdots dx_d$, then the following Heine integral formula can be deduced as a direct consequence of Theorem \ref{morthpol}.
\begin{cor}[Heine integral formula]
Under the hypotheses and notations of Theorem \ref{morthpol}, the polynomials
\begin{equation} \label{lastint}
p_{\bs{n}\bs{m}}(\bs{x}_0)=\int\limits_{\mathbb{R}^{s(d+1)}}\bs{\Delta}^*_{\bs{n}-\bs{m}}(\bs{x}_1,\bs{x}_2,\ldots,\bs{x}_{r})\bs{\Delta}_{\bs{n}}(\bs{x}_0,\bs{x}_1,\ldots,\bs{x}_{s})\prod_{i=1}^{s}\omega_i(\bs{x}_i)d\bs{x}_i
\end{equation}
form a GOPs for $\bs{X}_0$.
\end{cor}

\chapter{What are orthogonal polynomials, really?}\label{Chapter_WhatIsOrtho}

In his Fubini lecture ``\textit{What is invariant theory, really?}'' \cite{Rota98}, Gian-Carlo Rota disclosed new motivations to investigate the classical invariant theory (of binary forms) via the symbolic method introduced in \cite{KungRota}. As the title suggests, the aim of his lecture was to explore what really should be meant by \textit{invariant theory} in a simple and effective way, exploiting the features of the so-called \textit{umbral calculus} (see \cite{DiNSen,RotTay}). Indeed, as Rota himself said (see \cite{Rota_TurningPoints}),
\begin{center}
``\textit{The purpose
of invariant theory, from Boole to our day, is precisely the translation of geometric facts into invariant algebraic equations expressed in terms of tensors. This program of translation of geometry into algebra was to be carried out in two steps. The first step consisted in decomposing the tensor algebra into irreducible components under changes of coordinates. The second step consisted in devising an efficient notation for the invariants for each irreducible component. The first step was successfully carried out in this century; the second
was abandoned sometime in the twenties, and only recently  it has resurfaced}''. 
\end{center}

In this chapter, the symbolic method of invariant theory, as set in \cite{KungRota}, is slightly revisited in order to fit the theory of orthogonal polynomials (see Section \ref{Section_Symb}). In particular, the algebraic representation for generalized orthogonal polynomials  that has been achieved in Chapter \ref{Chapter_OPS}  will be encoded in a symbolic expression for a family of joint-covariants of binary forms: the \textit{apolar covariant}. In this direction, it will be shown that the classical orthogonality of a sequence of polynomials with respect to a linear functional, as in \cite{Chihara}, is nothing but apolarity of binary forms in disguise (see Section \ref{Section_Apol}). As a consequence, in Section \ref{Section_Quadrature}, new formulae for the computation of the moments of an important statistic in modern probability theory, known as the \textit{random discriminant}, are discussed: the most recent outcomes for the computation of the distribution of random discriminants can be found in \cite{Lu}.

\section{The symbolic method of invariant theory}\label{Section_Symb}

The symbolic method of invariant theory, in its modern setting, has been developed in the landmark paper by Kung and Rota~\cite{KungRota}, that is the starting point of the present discussion. The notation here is slightly revisited to better perform the algebraic approach to the theory of orthogonal polynomials. In particular, the roman variables $(u_1,u_2)$ and the Greek letters $(\alpha_1,\alpha_2), (\beta_1,\beta_2), \ldots$, denoting the \textit{umbrae}, used in \cite{KungRota}, are respectively replaced by pairs of indeterminates $(x_0,y_0)$, and $(x_1,y_1)$, $(x_2,y_2) \ldots$; the action of the \textit{umbral operator} $\U$ on a polynomial $p$ will be denoted by $\U\,p$, while the coefficients of a generic form of degree $n$, originally written $A_0, A_1,\ldots,A_n$, will occur
here as $(-1)^{n}a_0, (-1)^{n-1}a_1,\ldots,a_n$. Therefore,
covariants of binary forms of degree $n$ will be expressed as polynomials in
$a_0, a_1,\ldots, a_n, x_0, y_0$ instead of $A_0, A_1, \ldots, A_n, X, Y$.\\

Given two infinite sets of indeterminates $\bs{x}=\{x_i\,|\,i\in\N_0\}$ and $\bs{y}=\{y_i\,|\,i\in\N_0\}$, let $\C[\bs{x},\bs{y}]$  denote the ring of polynomials with coefficients in $\C$ \footnote{More generally, $\C$ could be replaced by any field of characteristic zero.}, and variables in $\bs{x}\cup\bs{y}$. The general linear group $GL_2(\C)$ \glossary{name={$GL_n(\C)$},description={General linear group of degree $n$}} acts on $\C[\bs{x},\bs{y}]$ via the standard matrix multiplication:
\begin{equation}\label{Glact}\left(\begin{matrix}g_{11}&g_{12}\\g_{21}&g_{22}\end{matrix}\right)\left(\begin{matrix}
x_i\\y_i\end{matrix}\right)=\left(\begin{matrix}
g_{11}x_i+g_{12}y_i\\g_{21}x_i+g_{22}y_i\end{matrix}\right) \;, \; \forall\, i\in\N_0.\end{equation}
Hence, if $p\in\C[\bs{x},\bs{y}]$ and $g\in GL_2(\C)$, $g\cdot
p$ denotes the polynomial obtained from $p$ by replacing all its indeterminates according to \eqref{Glact}. 

\begin{defn}
If $m \in \N_0$, a polynomial $p \in \C[\bs{x},\bs{y}]$  is said to be an \textbf{invariant of index} \index{Invariant} $m$ \index{Invariant!Index} if and only
if it satisfies:
\[g\cdot p=(\det\,g)^m\,p \;\; \quad \forall \,g \in GL_2(\C).\]
\end{defn}
An important class of invariants of the general linear group is the set of the \textit{brackets}\index{Brackets polynomials}, defined for $i\neq j$ as the polynomial:
$$[i\;j]=x_i\;y_j-x_j\;y_i \;.$$
Brackets polynomials are the generators of the subring $\C[\bs{x},\bs{y}]^{GL_2(\C)}$ of $\C[\bs{x},\bs{y}]$ consisting of all the  invariant polynomials in $\C[\bs{x},\bs{y}]$ (see \cite{Grosshans,KraftProcesi}).
\begin{defn}
For $n\in \N$, let $a_0,a_1,\ldots,a_n$ be independent indeterminates over $\C$. A \textbf{generic binary form of degree $n$} \index{Binary form!Generic} is a polynomial $f(a_0, a_1,\ldots,a_n ;x_0, y_0) \in \C[a_0, a_1,\ldots,a_n;x_0, y_0]$ of the type
\begin{equation}
\label{binf}f(a_0, a_1,\ldots, a_n; x_0, y_0)=\sum_{k=0}^n\binom{n}{k}(-1)^{n-k}a_k x_0^{n-k}y_0^{k}.
\end{equation}
\end{defn}

When $a_0,a_1,\ldots,a_n$ are replaced by elements in $\C$, with at least one $a_0 \neq 0$, then \eqref{binf} is a homogeneous polynomial $f(x_0,y_0) \in \C[x_0,y_0]$ of degree $n$, and it is usually referred to as \textit{binary form of degree $n$}\index{Binary form}. \\

For every $c_{11},c_{12},c_{21},c_{22} \in \C$ such that $c_{11}c_{22}-c_{12}c_{21} \neq 0$, a \textit{linear change of variables} is a mapping $\phi=\phi(c_{11},c_{12},c_{21},c_{22})$ defined on the pair of indeterminates $(x_0,y_0)$ via matrix multiplication:
\begin{equation}
\label{linearchange}\phi\,\bigg(\begin{array}{c}x_0\\y_0\end{array}\bigg)=\bigg(\begin{array}{cc}c_{11}&c_{12}\\c_{21}&c_{22}\end{array}\bigg)\bigg(\begin{array}{c}x_0\\y_0\end{array}\bigg).
\end{equation}

Under the action of a linear change of variables, a generic binary form of degree $n$, \linebreak$f(a_0, a_1,\ldots,a_n ;x_0, y_0) \in \C[a_0, a_1,\ldots,a_n;x_0, y_0]$ is mapped to the generic binary form of degree $n$ defined via:
\begin{equation}\label{Transform}
f(\bar{a}_0,\bar{a}_1,\ldots,\bar{a}_n;x_0,y_0):=f(a_0,a_1,\ldots,a_n;c_{11}x_0+c_{12}y_0,c_{21}x_0+c_{22}y_0).
\end{equation}

\begin{defn}\label{JointCovariant}
For $l\geq 1$, let $\left(f_i(a_{i0},a_{i1},\ldots,a_{in_i};x_0,y_0)\right)_{1\leq i\leq l}$ be an ordered sequence of generic binary forms, with $f_i$ of degree $n_i \in \N$, and set $\bs{n}=(n_1,n_2,\ldots,n_l)$. A \textbf{joint-covariant} \index{Joint covariant}of index $m \in \mathbb{N}_0$ \index{Joint covariant!index} of binary forms of degree $\bs{n}$ is a  non-constant polynomial \linebreak$\mathcal{I}(\ldots;a_{i0},a_{i1},\ldots,a_{in_i};\ldots;x_0,y_0) \in \C[\dots,a_{i0},a_{i1},\ldots,a_{in_i},\dots, x_0,y_0]$, homogeneous of \textrm{degree}\index{Joint covariant!degree} $\nu_i$ in $a_{i0},a_{i1},\ldots,a_{in_i}$, and of degree $\mu$ in $x_0,y_0$, called the \textrm{order}\index{Joint covariant!order}, satisfying
$$\mathcal{I}(\ldots;\bar{a}_{i0},\bar{a}_{i1},\ldots,\bar{a}_{in_i};\ldots;x_0,y_0)= (\det\,\phi)^m\,\mathcal{I}(\ldots;a_{i0},a_{i1},\ldots,a_{in_i};\ldots;x_0,y_0)$$
for every linear change of variables $\phi = \phi(c_{11},c_{12},c_{21},c_{22})$,
where 
$$\mathcal{I}(\ldots;\bar{a}_{i0},\bar{a}_{i1},\ldots,\bar{a}_{in_i};\ldots;x_0,y_0) := \mathcal{I}(\ldots;a_{i0},a_{i1},\ldots,a_{in_i};\ldots;c_{11}x_0 +c_{12}y_0,c_{21}x_0+c_{22}y_0).$$
In particular, $n_1 \nu_1 + \cdots + n_l \nu_l = 2m + \mu$.
\end{defn}

Covariants that do not depend on $x_0$ nor on $y_0$ are called \textit{invariants}\index{Invariant}. When $l=1$, joint-covariants are simply called covariants.

\begin{defn}
A \textbf{covariant} of index $m$ of generic binary forms of degree $n$ \index{Covariant}is a homogeneous non-constant polynomial $ \mathcal{I}(a_0,a_1,\ldots,a_n;x_0,y_0) \in \C[a_0, a_1,\ldots,a_n; x_0, y_0]$ satisfying
$$\mathcal{I}(\bar{a}_0,\bar{a}_1,\ldots,\bar{a}_n;x_0,y_0)=(\det\,\phi)^m \mathcal{I}(a_0,a_1,\ldots,a_n;x_0,y_0)$$
for every linear change of variables $\phi=\phi(c_{11},c_{12},c_{21},c_{22})$, where
$$\mathcal{I}(\bar{a}_0,\bar{a}_1,\ldots,\bar{a}_n;x_0,y_0):= \mathcal{I}(a_0,a_1,\ldots,a_n;c_{11}x_0+c_{12}y_0,c_{21}x_0+c_{22}y_0).$$
In particular, $n\,\nu = 2m+ \mu$, where  $\nu$ and $\mu$ are respectively the degrees of $\mathcal{I}(a_0,a_1,\ldots,a_n;x_0,y_0)$ as a polynomial in $a_0,\dots,a_n$ and in $x_0,y_0$.
\end{defn}

\begin{exm}
Some of the most important examples of joint covariants are listed below (see \cite{Janson_Invariant,KungRota}):
\begin{itemize}
\item[-] If $n=2q$, for the binary form $f=\sum\limits_{k=0}^n  A_k x^{n-k}y^k$, the Hankel determinant 
$$ \mathrm{Hank}(f) = \det(A_{i+j})_{i,j=0,\dots,q}$$
is the invariant of degree $\nu=q+1$ and index $m=q(q+1)$, usually called \textit{Catalecticant}\index{Covariant!Catalecticant}.
\item[-] If $f$ is a binary form of degree $n$, the \textit{Hessian} determinant \index{Covariant!Hessian}
$$ H(f,x_0,y_0) =  
\begin{array}{|c c |}
\dfrac{\partial^2 f}{\partial^2 x_0 } & \dfrac{\partial^2 f}{\partial x_0 \partial y_0 } \\
\dfrac{\partial^2 f}{\partial y_0 \partial x_0 } & \dfrac{\partial^2 f}{\partial^2 y_0 }
\end{array}
$$
is a covariant of degree $\nu=2$, order $\mu=2(n-2)$ and index $m=2$.
\item[-] If $f_i$ is a binary form of degree $n_i$ for $i=1,2$, the \textit{Jacobian} determinant \index{Joint covariant!Jacobian}
$$ J(f_1,f_2,x_0,y_0)=
\begin{array}{|c c|}
\dfrac{\partial f_1}{\partial x_0} & \dfrac{\partial f_1}{\partial y_0} \\
\dfrac{\partial f_2}{\partial x_0} & \dfrac{\partial f_2}{\partial y_0}
\end{array}
$$
is a joint covariant of degrees $\nu_1=\nu_2=1$, order $\mu=n_1+n_2 -2$, and index $m=1$.
\end{itemize}
\end{exm}

\begin{rmk}
Since every polynomial can be uniquely decomposed as a sum of homogeneous polynomials, there is no loss of generality in assuming that the joint covariants are homogeneous: indeed, the homogeneous components of an invariant polynomial are themselves invariant.\\
\end{rmk}

If the degree $n$ of a generic binary form is fixed, consider the linear operator
\[\U:=\U^{(n)}\colon\C[\bs{x},\bs{y}]\to\C[a_0,a_1,\ldots,a_n;x_0,y_0]\]
defined by the following conditions:
\begin{align}
\label{E(n)}\U\,x_i^{k_1}y_i^{k_2}&=\begin{cases}x_0^{k_1}y_0^{k_2}&\text{ if }i=0,\\a_{k_1}&\text{ if }k_1+k_2=n \text{ and }i\in \mathbb{N},\\0& \text{ otherwise},
\end{cases}\\
\label{E(n)bis}\U\,x_0^{k_1}y_0^{k_2}x_1^{l_1}y_1^{l_2}x_2^{m_1}y_2^{m_2}\cdots &=\U\,x_0^{k_1}y_0^{k_2}\,\U\,x_1^{l_1}y_1^{l_2}\,\U\,x_2^{m_1}y_2^{m_2}\cdots,
\end{align}
for every non-negative integers $k_1,k_2,l_1,l_2,m_1,m_2,\ldots$.

\begin{rmk}
As pointed out in the first place by Grace and Young \cite{GraceYoung}, in order to have a proper generic binary form, it is fundamental to have an infinite number of \textit{umbrae} representing the same coefficient (here, an infinite number of indeterminates $x_i$), over which the umbral functional $\U$ factorises.\\
\end{rmk}

The \textit{umbral operator} \index{Umbral operator} $\U$ allows one to associate a generic binary form with a bracket polynomial, in the sense that for every $i\in \mathbb{N}$:
\begin{equation}\label{fbracket}
f(a_0,a_1,\ldots,a_n;x_0,y_0) = \U\,[i\;0]^n=\U\,(x_i y_0-x_0 y_i)^n\,.
\end{equation}
As a consequence, the action of a linear change of variables $\phi = \phi(c_{11},c_{12},c_{21},c_{22})$ on $f$ can be expressed as an action of  $g^\phi\in GL_2(\C)$ on the pairs of indeterminates $(x_i,y_i)$'s with $i\in\mathbb{N}$, where:
\begin{equation}\label{map}
 g^\phi=\left(\begin{matrix}c_{22}&-c_{12}\\-c_{21}&c_{11}\end{matrix}\right).
\end{equation}
Then,
\begin{equation}\label{tranlsate1}
g^\phi\cdot[i\;0]^n=\big(x_i(c_{21}x_0+c_{22}y_0)-y_i(c_{11}x_0+c_{12}y_0)\big)^n,
\end{equation}
so that, by comparing \eqref{Transform}, \eqref{fbracket} and \eqref{tranlsate1}, it follows that:
\begin{equation}\label{tranlsate2}
\U\,g^\phi\cdot[i\;0]^n=f(\bar{a}_0,\bar{a}_1,\ldots,\bar{a}_n;x_0,y_0),
\end{equation}
which is, in turn, equivalent to:
\begin{equation}\label{tranlsate3}
\U\,(c_{22}x_i-c_{12}y_i)^{k_1}(-c_{21}x_i+c_{11}y_i)^{k_2}=\bar{a}_{k_1},
\end{equation}
for all $k_1,k_2\in\N_0$ such that $k_1+k_2=n$.\\

One of the major results in the invariant theory of binary forms is the so-called \textit{First Fundamental Theorem} (see \cite[Theorem 3.1]{KungRota}, as well as the reference therein), \index{First Fundamental Theorem} stating that   every covariant $\mathcal{I}(a_0,a_1,\ldots,a_n;x_0,y_0)$  of binary forms of degree $n$, of index $m$, is obtained as:
\[\mathcal{I}(a_0,a_1,\ldots,a_n;x_0,y_0)=\U\,p,\]
where $p\in\C[\bs{x},\bs{y}]$ is a product of a finite number of brackets involving exactly $m$ brackets of the type $[j\;i]$ (with $i,j\neq 0$), and $n$ being such that $n\nu=2m+\mu$, if $\nu, \mu$ are respectively the degree and the order of $\mathcal{I}(a_0,a_1,\ldots,a_n;x_0,y_0)$.


\begin{exm}
For $m\geq 1$, the polynomial in $\C[a_0,\dots,a_m,x_0,y_0]$ defined by:
$$ p_m(a_0, \dots, a_m,x_0,y_0) = \U\bigg( \prod_{j=1}^m [0 \; j] \; \prod_{1 \leq i < j \leq m} [i \; j]^2\bigg) $$ 
is a covariant of index $m(m-1)$ of binary forms of degree $n=2m-1$: indeed, both its order and its degree (as in Definition \ref{JointCovariant}) are equal to $m$. We shall see that the $p_m$'s correspond naturally to classical orthogonal polynomials.\\
\end{exm}

To represent joint covariants, the umbral operator defined in \eqref{E(n)} and
\eqref{E(n)bis} has to be generalized as follows: let $\mathbb{N}_1,\mathbb{N}_2,\ldots,\mathbb{N}_l$ be pairwise disjoint infinite sets, satisfying $\mathbb{N}_1\cup \mathbb{N}_2\cup\cdots
\cup \mathbb{N}_l=\mathbb{N}$, and consider the linear operator $\U:= \U(\mathbb{N}_1,\dots,\mathbb{N}_l)$
\[\U:= \U(\mathbb{N}_1,\dots,\mathbb{N}_l)\colon\C[\bs{x},\bs{y}]\to\C[\ldots,a_{i0},a_{i1},\ldots,a_{in_i},\ldots;x_0,y_0],\]
defined by the conditions:
\begin{align}
\label{E(nn)}
\U\,x_i^{k_1}y_i^{k_2}&=\begin{cases}x_0^{k_1}y_0^{k_2}&\text{ if }i=0,\\a_{jk_1}&\text{ if }k_1+k_2=n_j \text{ and }i\in \mathbb{N}_j,\\0& \text{ otherwise},\end{cases}\\
\label{E(nn)bis}\U\,x_0^{k_1}y_0^{k_2}x_1^{l_1}y_1^{l_2}x_2^{m_1}y_2^{m_2}\cdots
&=\U\,x_0^{k_1}y_0^{k_2}\,\U\,x_1^{l_1}y_1^{l_2}\,\U\,x_2^{m_1}y_2^{m_2}\cdots,
\end{align}
for all non-negative integers $k_1,k_2,l_1,l_2,m_1,m_2,\ldots$.

Then, a joint-covariant of index $m$ of binary forms of degree $\bs{n}=(n_1,\dots,n_l)$ can be represented as
\[\mathcal{I}(\ldots;a_{i0},a_{i1},\ldots,a_{in_i};\ldots;x_0,y_0)=\U\,p,\]
where $p\in\C[\bs{x},\bs{y}]$ is a product of a finite number of brackets, involving exactly $m$ brackets of the type $[j\;i]$, for $i,j\neq 0$.\\

For a fixed $l\geq 1$, and binary forms $f_i(x_0,y_0) \in \C[x_0,y_0]$ of degree $n_i \in \mathbb{N}$, for $i=1,\dots,l$, consider the linear operator
\[\U(f_1,f_2,\ldots,f_l)\colon\C[\bs{x},\bs{y}]\to\C[x_0,y_0], \]
defined as follows: if $p \in \C[\bs{x},\bs{y}]$, then $\U(f_1,f_2,\ldots,f_l)\,p$ is obtained from $\U \, p$  by evaluating the variable $a_{ij}$ with the corresponding coefficient of $f_i(x_0,y_0)$. Particularly relevant for the subsequent discussion is the polynomial 
$$\mathcal{I}(f_1,f_2,\ldots,f_l)(x_0,y_0): =\U(f_1,f_2,\ldots,f_l)\,p \, \in \C[x_0,y_0] $$ 
obtained by evaluating a joint-covariant $\mathcal{I}(\ldots;a_{i0},a_{i1},\ldots,a_{in_i};\ldots;x_0,y_0) = \U p$ at the coefficients of $f_1(x_0,y_0)$, $f_2(x_0,y_0)$, \ldots, $f_l(x_0,y_0)$.

\section{The apolar covariant and Sylvester's Theorem}\label{Section_Apol}

Binary forms that are the $n$-th power of a linear factor, say $f(x_0,y_0)= (r\,x_0 - s\,y_0)^n$, or sums of a finite number of such polynomials, are the simplest examples possible. Since the expression of a binary form $f(x_0,y_0)$ may be rather complicated, it is preferable to have available reducibility criteria for $f(x_0,y_0)$: the simplest form to which a binary form can be reduced is usually called \textit{canonical form}\index{Binary form!Canonical Form}. \\

With the notation introduced in the previous section, let $l=2$ and consider a partition of $\mathbb{N}$ into two infinite sets $\mathbb{N}_1 \cup \mathbb{N}_2=\mathbb{N}$.  Without loss of generality, assume that $1\in \mathbb{N}_1$ and $2\in \mathbb{N}_2$ (more generally, $1$ can be replaced by any $i \in \mathbb{N}_1$ and $2$ by any $j \in \mathbb{N}_2$). Finally, set $\bs{n}=(n,m)$ with $n\geq m$. 

\begin{defn} The polynomial in $\C[a_{10},a_{11},\ldots,a_{1n};a_{20},a_{21},\ldots,a_{2m};x_0,y_0]$ defined by:
\begin{equation}\label{apol}
\mathcal{A}(a_{10},a_{11},\ldots,a_{2n};a_{20},a_{21},\ldots,a_{2m};x_0,y_0)=\U\,[1\;0]^{n-m}[2\;1]^{m} \, ,
\end{equation}
is called the \textbf{apolar covariant} \index{Joint covariant!Apolar covariant} (here, uniqueness is meant up to a multiplicative constants). If binary forms $f_1(x_0,y_0)$ of degree $n$ and $f_2(x_0,y_0)$ of degree $m$ are given, $f_1$ and $f_2$ are said to be \textbf{apolar} if and only if $\mathcal{A}(f_1,f_2)(x_0,y_0)=0$ identically. The bilinear form $\{\cdot,\cdot\}$ induced by the apolar covariant via $\{f_1,f_2\}=\mathcal{A}(f_1,f_2)(x_0,y_0)$ is called the \textbf{apolar form}.
\end{defn}

For $n,m\in\mathbb{N}$ with $m\leq n$, assume that $l=2m-n\geq 1$ and let $f(x_0,y_0)$ and $g(x_0,y_0)$ be of degree $n$ and $m$,
respectively. From \eqref{apol}, it follows that
\begin{equation}\label{apolsum}\mathcal{A}(f,g)(x_0,y_0)\\=\U(f)\,\sum_{k=0}^{n-m}\binom{n-m}{k}(-1)^{n-m-k}x_{1}^ky_{1}^{n-m-k}\,g(x_{1},y_{1})\,x_0^{n-m-k}y_0^{k},
\end{equation}
where $\U$ is the umbral operator in \eqref{E(n)} and \eqref{E(n)bis}, so that $\U(f)\,p$ equals $\U\,p$ evaluated at the coefficients of $f(x_0,y_0)$. This means that $\{f,g\}=0$ if and only if
\begin{equation}\label{apol1}\U(f)\,x_{1}^ky_1^{n-m-k}\,g(x_{1},y_1)=0 \quad \forall\, k=0,\dots, n-m.
\end{equation}

The set of all the binary forms of degree $m$, which are apolar to a given form of degree $n$, is a $\C$-vector space, whose properties are summarized in the next statement.

\begin{prop}
For all $n\in \mathbb{N}$, let $V_n$ denote the $\C$-vector space of the binary forms of degree $n$. 
\begin{enumerate}
\item[(i)]  If $m\leq n$, every joint covariant map from $V_n \times V_m$ to $V_{n-m}$ is a constant multiple of the apolar form $\{\cdot, \cdot\}: V_n \times V_m \rightarrow V_{n-m}$ (see \cite[Lemma 5.1]{KungRota}).
\item[(ii)] If $f \in V_n, g \in V_m, h \in V_r$, with $m+r \leq n$, then $\{f, g\,h\} = \{\{f,g\},h\}$. In particular, $\{f,g\}=0$ implies $\{f,g\,h\}=0$ for every $h \in V_r$ (see \cite[Lemma 5.2, Corollary 5.1]{KungRota}).

\item[(iii)] Let $g \in V_m$ be a non-zero form. For every $n\geq m$, the dimension of the space $V_{m,n}(g)$ of the binary forms of degree $n$, apolar to $g$, equals $m$ (see \cite[Proposition 5.1]{KungRota}).
\item[(iv)] Let $n,m\in\mathbb{N}$, with $m\leq n$, and take $f \in V_n$, as in \eqref{binf}. The dimension of the $\C$-vector space $V_{n,m}(f)$ of the forms $g(x_0,y_0)= \sum_{h=0}^m \binom{m}{h}(-1)^h b_h x_0^h y_0^{m-k}$ of degree $m$ which are apolar to $f$ is at least $2m-n$. More precisely, it has dimension $m-r+1$, where $r$ is the rank of the system of linear equations given by the condition $\{f, g\}=0$, namely
$$ \sum_{h=0}^m \binom{m}{h}(-1)^h b_h a_{k+h} = 0 \quad \text{ for } k=0,\dots, n-m $$
(see \cite[Proposition 5.2 and Corollary 5.2]{KungRota}).\\
\end{enumerate}
\end{prop}

\begin{rmk}
The matrix $M_{n,m}$ of the linear system  arising from $\{f,g\}=0$  is the Hankel matrix $(a_{i+j})_{\substack{i=0,\dots,n-m \\ j=0,\dots,m}}$ in the coefficients $a_0,a_1,\dots,a_n$ of the binary form $f$. In this setting, these coefficients arise as the moments of  the linear functional $\U(f)$. For the purposes of the present discussion, it will be always assumed that all the Hankel determinants $\det(a_{i+j})_{i,j=0,\dots,k}$ are non-zero, for every $k\geq 1$. Under these assumptions, $M_{n,m}$ has maximum rank $r=n-m+1$, implying that the dimension of $V_{n,m}(f)$ equals $2m-n$. For instance, this is the case when the coefficients $a_k$'s are the moments of a probability measure admitting an OPs (see \cite[Theorem 3.1]{Chihara}.\\
\end{rmk}

The most celebrated theorem about apolarity is  Sylvester's Theorem \cite[Theorem 5.1]{KungRota}\index{Binary form!Sylvester's Theorem}, dealing with the case $2m-n=1$.

\begin{thm}[Sylvester's Theorem]
Let $f(x_0,y_0)$ be a binary form of odd degree $n=2m-1$. Then, there exists a unique non-zero form $g(x_0,y_0)$ of degree $m$, uniquely determined up a to multiplicative factor, such that $\mathcal{A}(f,g)=0$. Moreover, if $g(x_0,y_0)$ can be written as the product of $m$ distinct linear factors $r_i x_0 - s_i y_0$, for $i=1,\dots,m$, then there exist unique  $c_1,\dots,c_m \in \mathbb{C}$ such that
$$ f(x_0, y_0) = \sum_{i=1}^m c_i (r_i x_0 - s_i y_0)^n. \\$$
\end{thm}

Within the language of invariant theory and the notation introduced so far, Sylvester's Theorem says that there exists a covariant $\mathcal{J}(a_0,a_1,\ldots,a_{2m-1};x_0,y_0)$ of binary forms of degree \linebreak$n=2m-1$, of order $m$, such that if $f(x_0,y_0)$ is of degree $2m-1$ and $g(x_0,y_0)=\mathcal{J}(f)(x_0,y_0)$, then  $g(x,y)$ is a form of degree $m$, satisfying $\{f,g\}=0$. The covariant $\mathcal{J}(a_0,a_1,\ldots,a_{2m-1};x_0,y_0)$ so introduced is customarily referred to as the \textit{covariant} $J$ \cite{Rota98}\index{Binary form!Covariant $J$}.

In \cite[Lemma 5.3]{KungRota}, other than a symbolic expression, the authors provided an explicit
determinantal formula for the covariant $J$, which will be here generalized to a wider family of joint-covariants.\\

For $l\geq 2$, fix a partition $\mathbb{N}_1\cup\mathbb{N}_2\cup\cdots\cup\mathbb{N}_{l}=\mathbb{N}$, where $\mathbb{N}_1,\dots,\mathbb{N}_l$ are pairwise disjoint infinite sets. If $\bs{n}=(n,m,\ldots,m)\in\N^{l}$, consider the joint-covariant defined by:
\begin{equation}\label{eq:main1}
\mathcal{J}_{n,m}(\ldots;a_{i0},a_{i1},\ldots,a_{in_i};\ldots;x_0,y_0):=\U\,\prod_{1\leq i<j\leq n-m+1}[j\,i]\prod_{0\leq i<j\leq m}[j\,i]
\end{equation}
where, without loss of generality, it is assumed that $1,2,\ldots,n-m+1\in \mathbb{N}_1$, $n-m+2\in \mathbb{N}_2$, $\dots, n-m+s \in \mathbb{N}_s$, \dots,
$\ldots$, $m\in \mathbb{N}_{l}$, and $\U$ is the umbral operator defined in \eqref{E(nn)}, \eqref{E(nn)bis}. If a form $f(x_0,y_0)$ of degree $n$ is given, then a form $g(x_0,y_0)$ of degree $m$, and apolar to $f(x_0,y_0)$, can be obtained by suitably replacing each
$a_{ij}$ in $\mathcal{J}_{n,m}(\ldots;a_{i0},a_{i1},\ldots,a_{in_i};\ldots;x_0,y_0)$ with an element in $\C$, as made precise in Theorem \ref{Th:main1}, with the help of the following \textit{vanishing criterion}. 

\begin{lemma}[Vanishing criterion] Let $\U$ denote the operator defined by \eqref{E(nn)} and $\eqref{E(nn)bis}$, and assume that $p\in\C[\bs{x},\bs{y}]$ changes in sign by permuting two pairs of its indeterminates, say $(x_{i_1},y_{i_1})$ and $(x_{i_2},y_{i_2})$, with $i_1,i_2\in\mathbb{N}_i$, for some $i$. Then, $\U\,p=0$.
\end{lemma}
\begin{proof}
According to \eqref{E(nn)} and \eqref{E(nn)bis}, since $i_1,i_2\in\mathbb{N}_i$ for the same $i$, $\U\,p$ does not change if $(x_{i_1},y_{i_1})$ and $(x_{i_2},y_{i_2})$ are exchanged. On the other hand, under this swapping, $p$ changes in sign, implying $\U\,p=-\U\,p$, and therefore $\U\,p=0$.
\end{proof}
\begin{thm}\label{Th:main1}
Let $f(x_0,y_0)$ be a binary form of degree $n$, and let $m\leq n$ be such that $l=2m-n\geq 1$. For a sequence $(f_i(x_0,y_0))_{1\leq i\leq l}$ of forms of degrees $\bs{n}=(n,m,\ldots,m) \in \mathbb{N}^l$ such that $f_1(x_0,y_0)=f(x_0,y_0)$, set
\[g(x_0,y_0)=\mathcal{J}_{n,m}(f_1,f_2,\ldots,f_{l})(x_0,y_0).\]
Then, $g(x_0,y_0)=0$ or, if $f_1,\dots,f_l$ are linearly independent,  $g(x_0,y_0)$ is a form of degree $m$ such that $\{f,g\}=0$.
\end{thm}
\begin{proof}
Fix a partition $\mathbb{N}_1\cup \mathbb{N}_2\cup\cdots\cup \mathbb{N}_{l}=\mathbb{N}$ of $\mathbb{N}$ into infinite subsets, and assume that $1,2,\ldots,n-m+1\in \mathbb{N}_1$, $n-m+2\in \mathbb{N}_2$, $\ldots$, $m\in \mathbb{N}_{l}$. Consider the polynomial:
\begin{equation}
\label{q}
q(x_0,y_0,x_1,y_1,\ldots,x_m,y_m)\\
=y_1^{n-m}\,x_{2}y_2^{n-m-1}\,\cdots x_{k+1}^{k}y_{k+1}^{n-m-k}\,\cdots\,x_{n-m+1}^{n-m}\,\prod_{0\leq h<j\leq m}[j\;h],
\end{equation}
and choose $i\in\mathbb{N}_1\setminus\{1,2,\ldots,n-m+1\}$. Then, for every $ k=0,\dots, n-m$, since $n-m+1 \leq m$,
$$\left(x_{i}^ky_i^{n-m-k}\,q(x_i,y_i,x_1,y_1,\ldots,x_m,y_m)\right)^\tau=-x_{i}^ky_i^{n-k}\,q(x_i,y_i,x_1,y_1,\ldots,x_m,y_m),$$
where $\tau$ is the transposition of $\{1,2,\ldots,n-m+1,i\}$ such that $\tau(i)=k+1$. Since $k+1,i\in\mathbb{N}_1$,  the vanishing criterion applies, yielding:
\begin{equation}\label{eq1}
\U\,x_i^{k}y_i^{n-m-k}\,q(x_i,y_i,x_1,y_1,\ldots,x_m,y_m)=0 \quad \forall \, k=0,\dots, n-m.
\end{equation}
Moreover, if $h(x_0,y_0):=\U(f_1,$ $f_2,\dots,f_{l})\,q(x_0,y_0, $ $x_1,y_1, \dots, x_m,y_m)$, then $h(x_0,y_0)=0$ or $h(x_0,y_0)$ is a form of degree $m$ satisfying
\begin{equation}\label{eq2}
\U(f)\,x_i^{k}y_i^{n-m-k}\,h(x_i,y_i)=0 \quad \forall k=0,\dots, n-m.
\end{equation}
By virtue of \eqref{E(nn)} and \eqref{E(nn)bis}, the pair $(x_i,y_i)$ may be replaced in \eqref{eq2} by any pair
$(x_j,y_j)$, such that $j\in\mathbb{N}_1$. For $(x_j,y_j)=(x_1,y_1)$, it follows that $\{f,h\}=0$. Then, symmetrizing
$q(x_0,y_0,x_1,y_1,\ldots,x_m,y_m)$ with respect to $(x_1,y_1)$, $(x_2,y_2)$, $\ldots$, $(x_{n-m+1},y_{n-m+1})$, it follows that:
\[\sum_{\sigma\in\mathfrak{S}_{n-m+1}}\left(q(x_0,y_0,x_1,y_1,\ldots,x_m,y_m)\right)^\sigma=\prod_{1\leq i<j\leq n-m+1}[j\;i]\prod_{0\leq i<j\leq m}[j\;i] \, .\]
Indeed, consider the polynomial $Q(x_0,x_1,\dots,x_m)$ obtained from $q(x_0,y_0,\dots,x_m,y_m)$ by setting $y_i=1$ for all $i=0,\dots,m$, namely:
$$ Q(x_0,x_1,\dots,x_m) := x_2 x_3^2 \cdots x_{n-m+1}^{n-m} \Delta(x_0,\dots,x_m). $$ Since $\Delta(x_0,\dots,x_m)^{\tau} = (-1)^{\tau}\Delta(x_0,\dots,x_{m})$  for every $\tau \in \mathfrak{S}_{n-m+1}$,  the symmetrization of $Q(x_0,\dots,x_m)$ over $\mathfrak{S}_{n-m+1}$ can be written as:
\begin{align*}
\frac{1}{(n-m+1)!}&\sum\limits_{\tau \in \mathfrak{S}_{n-m+1}}\big(Q(x_0,x_1,\dots,x_m)\big)^{\tau} = \\
&= \frac{1}{(n-m+1)!}\Delta(x_0,\dots,x_m) \sum_{\tau \in \mathfrak{S}_{n-m+1}} (-1)^{\tau}x_{\tau(1)}^{0}x_{\tau(2)} x_{\tau(3)}^{2} \dots x_{\tau(n-m+1)}^{n-m} \\
&= \frac{1}{(n-m+1)!}\Delta(x_1,\dots,x_{n-m+1})\Delta(x_0,x_1,\dots,x_m).
\end{align*}
Then, the conclusion for $q(x_0,x_1,\dots,x_m)$ follows considering that:
$$ q(x_0,y_0,\dots,x_m,y_m) = y_0^{m} \prod_{j=1}^{n-m+1}y_j^{n} \prod_{r=n-m+2}^{m}y_r^{m} Q\bigg(\frac{x_0}{y_0},\dots,\frac{x_m}{y_m}\bigg). $$
Indeed,
\begin{align*}
\sum_{\sigma\in\mathfrak{S}_{n-m+1}}\big(q(x_0,y_0,&x_1,y_1,\ldots,x_m,y_m)\big)^{\sigma} = \\
&=\sum_{\sigma\in\mathfrak{S}_{n-m+1}}\bigg(y_0^{m} \prod_{j=1}^{n-m+1}y_j^{n} \prod_{r=n-m+2}^{m}y_r^{m} Q\bigg(\frac{x_0}{y_0},\dots,\frac{x_m}{y_m}\bigg)\bigg)^{\sigma} \\
&= y_0^{m} \prod_{j=1}^{n-m+1}y_j^{n} \prod_{r=n-m+2}^{m}y_r^{m} \sum_{\sigma\in\mathfrak{S}_{n-m+1}} \bigg( Q\bigg(\frac{x_0}{y_0},\dots,\frac{x_m}{y_m}\bigg)\bigg)^{\sigma}
\end{align*}
and
\begin{align*}
\sum_{\sigma\in\mathfrak{S}_{n-m+1}} & \bigg(Q\bigg(\frac{x_0}{y_0},\dots,\frac{x_m}{y_m}\bigg)\bigg)^{\sigma} = \Delta\bigg(\frac{x_0}{y_0},\dots,\frac{x_m}{y_m}\bigg)\Delta\bigg(\frac{x_1}{y_1},\dots,\frac{x_{n-m+1}}{y_{n-m+1}}\bigg)\\
&= \dfrac{1}{y_0^{m}}\prod_{j=1}^{n-m+1}\dfrac{1}{y_j^{n}}\prod_{r=n-m+2}^m \dfrac{1}{y_r^{m}} \prod_{1\leq i<j\leq n-m+1}[j\;i]\prod_{0\leq i<j\leq m}[j\;i].
\end{align*}

On the other hand, since $1,2,\ldots,n-m+1\in \mathbb{N}_1$, then 
$$\U\,\left(q(x_0,y_0,x_1,y_1,\ldots,x_m,y_m)\right)^\sigma=\U\,q(x_0,y_0,x_1,y_1,\ldots,x_m,y_m)$$
for all $\sigma\in\mathfrak{S}_{n-m+1}$, implying that:
\[g(x_0,y_0)=\mathcal{J}_{n,m}(f_1,f_2,\ldots,f_{2m-n})(x_0,y_0)=(n-m+1)!\,h(x_0,y_0),\]
and, finally, that $\{f,g\}=(n-m+1)!\,\{f,h\}=0$.
\end{proof}

Since the space of all the forms of degree $m$, which are apolar to a given form of degree $n$, has dimension $2m-n$,  $2m-n=1$ (i.e. $n=2m-1$) and $2m-n=m$ (i.e. $n=m$) are the minimum and the maximum values respectively for which a form $g(x_0,y_0)$ of degree $m$, and apolar to $f(x_0,y_0)$ of degree $n$ ,exists. So, when $2m-n=1$, $f(x_0,y_0)$ is of degree $2m-1$, $g(x_0,y_0)=\mathcal{J}_{n,m}(f)(x_0,y_0)\neq 0$ is of degree $m$, and $\{f,g\}=0$. In particular, since $n-m+1=m$, the covariant \eqref{eq:main1} reduces, up to a sign, to the covariant $J$ of Kung and Rota \cite{KungRota}:
\begin{equation*}
\mathcal{J}_{2m-1,m}(\ldots;a_{i0},a_{i1},\ldots,a_{in_i};\ldots;x_0,y_0)=\U\,\prod_{1\leq i<j\leq m}[j\;i]\prod_{0\leq i<j\leq m}[j\;i].
\end{equation*}

The determinantal formula for $\mathcal{J}_{n,m}(\ldots;a_{i0},a_{i1},\ldots,a_{in_i};\ldots;x_0,y_0)$ is provided by the next theorem.
\begin{thm}\label{Th:det}
Let $n,m\in\mathbb{N}$ with $l=2m-n\geq 1$ and let $\mathcal{J}_{n,m}(\ldots;a_{i0},a_{i1},\ldots,a_{in_i};\ldots;x_0,y_0)$ be the joint-covariant defined in \eqref{eq:main1}. Then:
\begin{multline}\label{det}
\mathcal{J}_{n,m}(\ldots;a_{i0},a_{i1},\ldots,a_{in_i};\ldots;x_0,y_0)\\=\frac{1}{(n-m+1)!}\,\begin{vmatrix}
y_0^m&x_0y_0^{m-1}&x_0^2y_0^{m-2}&\ldots&x_0^m\\
a_{1\,0}&a_{1\,1}&a_{1\,2}&\ldots&a_{1\,m}\\
a_{1\,1}&a_{1\,2}&a_{1\,3}&\ldots&a_{1\,m+1}\\
\vdots&\vdots&\vdots&&\vdots\\
a_{1\,n-m}&a_{1\,n-m+1}&a_{1\,n-m+2}&\ldots&a_{1\,n}\\
a_{2\,0}&a_{2\,1}&a_{2\,2}&\ldots&a_{2\,m}\\
\vdots&\vdots&\vdots&&\vdots\\
a_{l\,0}&a_{l\,1}&a_{l\,2}&\ldots&a_{l\,m}\\
\end{vmatrix}.
\end{multline}
\end{thm}
\begin{proof}
Fix a partition $\mathbb{N}_1, \mathbb{N}_2,\dots, \mathbb{N}_{l}$ of $\mathbb{N}$ into infinite subsets, and assume that $1,2,\ldots,n-m+1\in \mathbb{N}_1$, $n-m+2\in \mathbb{N}_2$, $\ldots$, $m\in \mathbb{N}_{l}$. By virtue of \eqref{E(nn)} and \eqref{E(nn)bis}, it follows that:
\small{
\begin{align*}
&\U\,\begin{vmatrix}
y_0^m&x_0y_0^{m-1}&x_0^2y_0^{m-2}&\ldots&x_0^m\\
y_1^n&x_1y_1^{n-1}&x_1^2y_1^{n-2}&\ldots&x_1^my_1^{n-m}\\
x_2y_2^{n-1}&x_2^2y_2^{n-2}&x_2^3y_2^{n-3}&\ldots&x_2^{m+1}y_2^{n-m-1}\\
\vdots&\vdots&&&\vdots\\
x_{n-m+1}^{n-m}y_{n-m+1}^{m}&x_{n-m+1}^{n-m+1}y_{n-m+1}^{m-1}&x_{n-m+1}^{n-m+2}y_{n-m+1}^{m-2}&\ldots&x_{n-m+1}^{n}\\
y_{n-m+2}^m&x_{n-m+2}y_{n-m+2}^{m-1}&x_{n-m+2}^2y_{n-m+2}^{m-2}&\ldots&x_{n-m+2}^m\\
\vdots&\vdots&&&\vdots\\
y_{m}^m&x_{m}y_{m}^{m-1}&x_{m}^2y_{m}^{m-2}&\ldots&x_{m}^m\\
\end{vmatrix}\\
&=\begin{vmatrix}
\U\,y_0^m&\U\,x_0y_0^{m-1}&\ldots&\U\,x_0^m\\
\U\,y_1^n&\U\,x_1y_1^{n-1}&\ldots&\U\,x_1^my_1^{n-m}\\
\U\,x_2y_2^{n-1}&\U\,x_2^2y_2^{n-2}&\ldots&\U\,x_2^{m+1}y_2^{n-m-1}\\
\vdots&\vdots&&\vdots\\
\U\,x_{n-m+1}^{n-m}y_{n-m+1}^{m}&\U\,x_{n-m+1}^{n-m+1}y_{n-m+1}^{m-1}&\ldots&\U\,x_{n-m+1}^{n}\\
\U\,y_{n-m+2}^m&\U\,x_{n-m+2}y_{n-m+2}^{m-1}&\ldots&\U\,x_{n-m+2}^m\\
\vdots&\vdots&&\vdots\\
\U\,y_{m}^m&\U\,x_{m}y_{m}^{m-1}&\ldots&\U\,x_{m}^m\\
\end{vmatrix} \\
&=\begin{vmatrix}
y_0^m&x_0y_0^{m-1}&x_0^2y_0^{m-2}&\ldots&x_0^m\\
a_{1\,0}&a_{1\,1}&a_{1\,2}&\ldots&a_{1\,m}\\
a_{1\,1}&a_{1\,2}&a_{1\,3}&\ldots&a_{1\,m+1}\\
\vdots&\vdots&\vdots&&\vdots\\
a_{1\,n-m}&a_{1\,n-m+1}&a_{1\,n-m+2}&\ldots&a_{1\,n}\\
a_{2\,0}&a_{2\,1}&a_{2\,2}&\ldots&a_{2\,m}\\
\vdots&\vdots&\vdots&&\vdots\\
a_{l\,0}&a_{l\,1}&a_{l\,2}&\ldots&a_{l\,m}\\
\end{vmatrix} \, \\
&= \U q(x_0,y_0,x_1,y_1,\ldots,x_m,y_m),
\end{align*}
}
\normalsize
where $q(x_0,y_0,x_1,y_1,\ldots,x_m,y_m)$ is the polynomial defined in \eqref{q}. Indeed,  
$$q(x_0,y_0,x_1,y_1,\ldots,x_m,y_m) = y_0^m y_1^n y_2^n\cdots y_{n-m+1}^n  y_{n-m+2}^m\cdots y_m^{m}Q\bigg(\frac{x_0}{y_0},\frac{x_1}{y_1},\dots,\frac{x_m}{y_m}\bigg) \, ,$$
with $Q(x_0,x_1,\dots,x_m) = x_2 x_3^2\cdots x_{n-m+1}^{n-m}\Delta(x_0,x_1,\dots,x_m)$, namely
\begin{align*}
q(x_0,y_0,x_1,y_1,&\ldots,x_m,y_m) = \\
&=\begin{vmatrix}
y_0^m&x_0y_0^{m-1}&x_0^2y_0^{m-2}&\ldots&x_0^m\\
y_1^n&x_1y_1^{n-1}&x_1^2y_1^{n-2}&\ldots&x_1^my_1^{n-m}\\
x_2y_2^{n-1}&x_2^2y_1^{n-2}&x_2^3y_2^{n-3}&\ldots&x_2^{m+1}y_2^{n-m-1}\\
\vdots&\vdots&\vdots&&\vdots\\
x_{n-m+1}^{n-m}y_{n-m+1}^{m}&x_{n-m+1}^{n-m+1}y_{n-m+1}^{m-1}&x_{n-m+1}^{n-m+2}y_{n-m+1}^{m-2}&\ldots&x_{n-m+1}^{n}\\
y_{n-m+2}^m&x_{n-m+2}y_{n-m+2}^{m-1}&x_{n-m+2}^2y_{n-m+2}^{m-2}&\ldots&x_{n-m+2}^m\\
\vdots&\vdots&\vdots&&\vdots\\
y_{m}^m&x_{m}y_{m}^{m-1}&x_{m}^2y_{m}^{m-2}&\ldots&x_{m}^m\\
\end{vmatrix} \, .
\end{align*}
Finally, \eqref{det} follows by symmetrizing $q(x_0,y_0,x_1,y_1,\ldots,x_m,y_m)$ with respect to $(x_1,y_1)$, $(x_2,y_2)$, \ldots, $(x_{n-m+1},y_{n-m+1})$ (since $1,\dots,n-m+1 \in \mathbb{N}_1$, as in the proof of Theorem \ref{Th:main1}).
\end{proof}

\subsection{Apolarity and orthogonality}

Formula \eqref{det} corresponds to \eqref{detp} whenever $y_i=1$ for all $i=0,\dots,m$. This is a consequence of the fact that orthogonality (in the general sense of \cite{Chihara}) is as a realization of apolarity, and vice versa, in the sense of  Theorem \ref{ApolarityOrth} below.\\

Assume that infinitely many, pairwise disjoint infinite sets $\mathbb{N}_0,\mathbb{N}_1,\mathbb{N}_2,\ldots$ are given, such that  $\mathbb{N}_0\cup\mathbb{N}_1\cup\mathbb{N}_2\cup\cdots=\mathbb{N}_0$, and that a subset $\{a_{ij}\,|\,i,j\in\N\}$ of $\C$ is given, with $a_k=a_{0k}=a_{1k}$ for all $k\in\N$. Consider the linear functional $\EE:\C[\bs{x}]\rightarrow \C$ defined by
\begin{align}
\label{E}\EE\,x_i^{k}&=a_{jk} \, \text{ if and only if }\, i\in\mathbb{N}_j,\\
\label{Ebis}\EE\,x_0^{k_0}x_1^{k_1}x_2^{k_2}\cdots
&=\EE\,x_0^{k_0}\,\EE\,x_1^{k_1}\,\EE\,x_2^{k_2}\cdots,
\end{align}
for every choice of non-negative integers $i,j,k,k_0,k_1,k_2,\ldots\in\N_0$ (in the sequel, $a_{jk}$ will be said the $k$-th moment of $\EE$ on $\mathbb{N}_j$). Any linear functional $\EE\colon\C[\bs{x}]\to\C$ of the type \eqref{E} and \eqref{Ebis} can be equivalently determined by the set $\{f_{jn}(x_0,y_0)\,|\,j,n\in\N\}$ of binary forms defined by:
\begin{equation}
\label{Lforms}f_{jn}(x_0,y_0)=\sum_{k=0}^n\binom{n}{k}(-1)^{n-k}a_{jk}x_0^{n-k}y_0^k.
\end{equation}
Therefore, for all $j,n\in\N$,
\begin{equation}\label{UvsE}
\U(f_{jn})\,x_i^{k}y_i^{n-k}=\EE\,x_i^k \quad \forall k=0,\dots,n \,,\, i\in\mathbb{N}_j.
\end{equation}
On the other hand, let $\EE_0\colon\C[\bs{x}]\to\C[x_0]$ denote the
linear operator defined by:
\begin{equation}
\label{E0}\EE_0\,x_0^{k_0} x_{1}^{k_1} x_{2}^{k_2}\cdots= x_0^{k_0}\EE\,x_{1}^{k_1} x_{2}^{k_2}\cdots
\end{equation}
for every non-negative integers $k_0,k_1,k_2,\ldots\in\N_0$, and then extended by linearity. Besides, let $\bs{e}_1\colon\C[\bs{x},\bs{y}]\to\C[x_0]$ denote the map evaluating each $p\in\C[\bs{x},\bs{y}]$ at $y_i=1$, for all $i$. Hence, by comparing
\eqref{E(nn)} and \eqref{E(nn)bis} with \eqref{E} and
\eqref{Ebis}, it follows that
\begin{equation}\label{UvsE0}\U(f_{j_1n_1},f_{j_2n_2},\ldots,f_{j_ln_l})\,p=\EE_0\,\bs{e}_1(p),
\end{equation}
for all $p\in\C[\bs{x},\bs{y}]$ whose indeterminates $(x_i,y_i)$'s satisfy $i\in\mathbb{N}_{j_1}\cup\mathbb{N}_{j_2}\cup\cdots\cup\mathbb{N}_{j_l}$. In particular, for every $N,M$ with $M\leq N$ and $l=2M-N \geq 1$, via \eqref{UvsE0}, the covariant defined in \eqref{eq:main1} can be written as:
\begin{equation}
\label{ApolOrth}
\mathcal{J}_{N,M}(f_{1\,N},f_{2\,M},\ldots,f_{l\,M})(x_0,1)\\=\EE_0\,\Delta(x_1,x_2,\ldots,x_{N-M+1})\Delta(x_0,x_1,\ldots,x_{M}),\\
\end{equation}
with $\EE_0$ as in \eqref{E0}. In particular, assume that $a_k = a_{jk}$ for all $j\geq 1$, namely assume that the restriction of $\EE$ to  $\C[x_0]$  is uniquely determined by the sequence $\{f_n(x_0,y_0)\}_{n\geq 1}$  of binary forms
\begin{equation}
\label{Lform}f_n(x_0,y_0)=\sum_{k=0}^n\binom{n}{k}(-1)^{n-k}a_k\,x_0^{n-k}y_0^k\, \quad a_n \neq 0 \, \forall n \in \mathbb{N}.
\end{equation}

Let $\{p_{nm}(x_0)\}_{n,m\geq 1}:=\{p_{nm}(x_0)\,|n \in \mathbb{N},m=1,\dots, n\}$ be a triangular array of polynomials in $\C[x_0]$, satisfying $\deg\,p_{nm}(x_0)=n$ for every $m=1,\dots, n$. 

\begin{defn}
The triangular array $\{p_{nm}(x_0)\}_{n,m\geq 1}$ is called a \textbf{generalized orthogonal polynomial system} (GOPs, for short) for $\EE: \C[x_0]\rightarrow \C$ if, and only if, for every $n \in \mathbb{N}$ and every $ m\leq n$,
\begin{equation}\label{genOPS2}
\EE[x_0^k \,p_{nm}(x_0)]=0 \quad \forall \, k= 0,\dots,n-m,
\end{equation}
and $\EE[x_0^{n-m+1} \,p_{nm}(x_0)] \neq 0$.\\
\end{defn}

Then, if $\{p_{nm}(x_0)\}_{n,m\geq 1}$ is a GOPs for $\EE$, for every $i\in\mathbb{N}$, by comparing \eqref{E(n)bis} and \eqref{Lform}, it follows that:
\[\U(f_n)\,x_i^{k}y_i^{n-k}=\EE[x_0^k] \quad \forall k=0,\dots,n \, ,\]
implying that the orthogonality condition \eqref{genOPS2} can be restated in the equivalent form:
$$\U(f_{2n-m})\,x_1^{k}y_1^{n-m-k}\,g_{nm}(x_1,y_1)=0 \qquad \forall k=0,\dots,n-m,$$
where $g_{nm}(x_0,y_0):=y_0^n\,p_{nm}\bigg(\dfrac{x_0}{y_0}\bigg)$, or also as:
\begin{equation}\label{apol2}
\{f_{2n-m},g_{nm}\}=0 \qquad \forall m=1,\dots,n.
\end{equation}

All the previous considerations are gathered in the next statement.
\begin{thm}
\label{ApolarityOrth}
Let $\EE$ be a linear functional satisfying \eqref{E} and \eqref{Ebis}. For every $n\in \mathbb{N}$ and every $m=1,\dots,n$, let $f_{2n-m}(x_0,y_0)$ be the binary form associated with $\EE$, as in  \eqref{Lform},  and assume that $g_2(x_0,y_0),\dots,g_m(x_0,y_0)$  are (linearly independent) binary forms of degree $n$. Then, every generalized orthogonal polynomial system
$\{p_{nm}(x_0)\}_{n,m\geq 1}$ for $\EE$ corresponds to a set 
$\{g_{nm}(x_0,y_0)\}_{n,m\geq 1}$ of binary forms such that $g_{nm}(x_0,y_0)$ is of
degree $n$, and is apolar to $f_{2n-m}(x_0,y_0)$, via $g_{nm}(x_0,y_0):=y_0^n\,p_{nm}\bigg(\dfrac{x_0}{y_0}\bigg)$. Dually, 
$$ p_{nm}(x_0) = \mathcal{J}_{2n-m,n}(f_{2n-m},g_2,\dots,g_m)(x_0,1),$$
with the covariant $\mathcal{J}$ as defined in \eqref{eq:main1}.
\end{thm}

\begin{proof}
Apply relation \eqref{ApolOrth} with $N=2n-m$ and $M=n$, and follow the same steps required in the proof of Theorem \ref{GOPS1}, replacing the random variable $X_j$ with the indeterminate $x_j$.
\end{proof}

\begin{rmk}
If $N=2M-1$ in \eqref{ApolOrth}, 
$$\mathcal{J}_{2M-1,M}(f_{1\,2M-1})(x_0,1) = \EE_0 \big[ \Delta(x_1,\dots,x_M)^2 \prod_{j=0}^M (x_j-x_0)\big]$$
corresponds to the  representation of the orthogonal polynomial $p_{M,1}(x_0)$ extracted from a GOPs for $\EE$. 
\end{rmk}

\begin{exm}
If $2m-n=m$, and $g(x_0,y_0)=\mathcal{J}_{n,m}(f,g_2,\ldots,g_m)(x_0,y_0)\neq 0$, both $g(x_0,y_0)$ and $f(x_0,y_0)$ are of degree $m$. In this case, since $n-m+1=1$, the apolarity condition $\{f,g\}=0$ can be equivalently referred to by saying that the covariants
\[\mathcal{J}_{m,m}(\ldots;a_{i0},a_{i1},\ldots,a_{im};\ldots;x_0,y_0)=\U\,\prod_{0\leq i<j\leq m}[j\;i]\]
correspond to the biorthogonal polynomials studied by Iserles and Norsett \cite{IserNor}: for every $m \geq 1$, assume that a set of distributions $\{F(x,\mu_l):l=1,\dots,m\}$ is given, where $\{\mu_1,\dots,\mu_n,\dots\}$ is a set of real parameters. A monic polynomial $p_m(x;\mu_1,\dots,\mu_m)$ of degree $m$ is said to satisfy the \textit{biorthogonality condition} if
$$ \int_{\mathbb{R}}p_m(x;\mu_1,\dots,\mu_m) dF(x,\mu_l) \,= 0    \quad \forall \, l=1,\dots, m. $$
For every $j=1,\dots,m$, set $h_j(x_0,y_0) = \sum\limits_{k=0}^m \binom{m}{k}(-1)^k b_{j,k}x_0^{m-k}y_0^k$, where
$b_{j,k} = \int_{\mathbb{R}} x^k d F(x,\mu_j)$. Then, by virtue of Theorem \ref{ApolarityOrth}, the polynomial 
$$p_m(x_0; h_1,\dots,h_m) := \mathcal{J}_{m,m}(h_1,h_2,\dots,h_m)(x_0,1) $$  satisfies the biorthogonality condition. In particular, the determinantal representation provided in \cite[Theorem 1]{IserNor} is recovered via \eqref{det}; likewise, for the integral representation \cite[Theorem 2]{IserNor}, which is a particular case of \eqref{ApolOrth}.
\end{exm}

\section{Moments of random discriminants}\label{Section_Quadrature}

In statistics, for a simple random sample $X_1,\dots,X_n$, for $n\in \mathbb{N}$, the random discriminant \index{Random discriminant}is defined as the square of the Vandermonde polynomial in the independent and identically distributed variables $X_1,\dots,X_n$, namely $\Delta(X_1,\dots,X_n)^2$. The interest in studying this statistics is motivated by the wide range of applications it is concerned: spectral theory of random matrices and hypothesis testing, just to cite a few. In \cite{Lu}, a direct investigation of the topic is provided: more precisely, the author applies Selberg's integral formula to obtain stochastic representations for  random discriminants, when the sample is drawn from a Normal, Gamma or Beta population. Due to the identity \eqref{HankelVand}, one could in principle apply the techniques available for moment matrices to study its distribution. However, as underlined in the same reference \cite{Lu}, the aforementioned stochastic representation hints that the distribution of $\Delta(X_1,X_2,\dots,X_n)^2$ might be rather complicated, reason why stochastic bounds, or other results that can supply some information in this regard, are found to be of major interest. \\

The main achievements of this section are  explicit formulae for the moments of the random discriminant. It is worth to note that no strict assumption on the distribution of the underlying sample is required: indeed, the algebraic technique here adopted applies for the computation of $\E[\Delta(X_1,\dots,X_n)^{2k}]$, $k\geq 1$, regardless of the distribution of the population the sample is drawn from.\\

The starting point is, once more, apolarity and, in particular, Sylvester's Theorem, but in terms of linear functionals $\EE\colon\C[\bs{x}]\to\C$ and $\EE_0\colon\C[\bs{x}]\to\C[x_0]$,  rather than the umbral functional $\U\colon\C[\bs{x},\bs{y}]\to\C[x_0,y_0]$. To ease the notation and highlight the connection with the theory of orthogonal polynomials, the binary form $f_n(x,y)$ and the corresponding polynomial $P_n(x):=f_n(x,1)$ will be used interchangeably, and apolarity will be referred to polynomials instead of binary forms. The bottom line is the generalization of the symbolic expression for the covariant $J$  in terms of its \textit{homogenized roots} \cite[Algorithm 4.1]{KungRota}, to a wider family of apolar covariants.\\

In general, for a binary form $f(x,y)$ of degree $2n-1$, its covariant $J$  is the unique apolar covariant of order $n$ (see \cite[Lemma 5.3]{KungRota}). Sylvester's Theorem, at this point, does not provide any extra information about when the covariant $J$ factorizes into $n$ distinct linear factors, giving rise to a decomposition of $f$ as sum of $(2n$-$1)$-th powers of $n$ linear factors. As a consequence of Theorem \ref{ApolarityOrth}, such a representation always occurs for binary forms of the type $f_n(x,y) = \mathbb{E}[(yX -x)^{2n-1}]$, where $X$ is a random variable whose OPs $p_{n}(x)$ provides its covariant $J$ via  $J(x,y) = y^n p_n\big(\frac{x}{y}\big)$ (indeed, real orthogonal polynomials admit real and simple roots \cite[Theorem 5.2]{Chihara}). \\

Before detailing these conclusions via Theorems \ref{AppelGen} and \ref{Appel}, it might be convenient to start with a brief example.   In the following, for a given random variable $X$, consider the sequence of polynomials $A_n(x) = \mathbb{E}[(X-x)^n]$, for all $n\geq 1$. If $N \sim \mathcal{N}(0,1)$ denotes a random variable distributed according to the standard Gaussian law, and  $H_n(x)$ denotes the sequence of the monic Hermite polynomials, then
\begin{equation}
A_3(x) = -(x^3+3x) = \dfrac{1}{2}(1-x)^3 - \dfrac{1}{2}(x+1)^3 \;,
\end{equation}
with $H_2(x)= x^2-1 = (x+1)(x-1)$. Similarly,
\begin{equation}
A_5(x) = -(x^5 +10x^3 + 15 x)  = -\dfrac{2}{3}x^5	+ \dfrac{1}{6}(\sqrt{3}-x)^5 - \dfrac{1}{6}(x+ \sqrt{3})^5 \, ,
\end{equation}
with $H_3(x) = x(x- \sqrt{3})(x+ \sqrt{3})$.\\

This phenomenon is the result of the following general picture. In particular, the exactness of the classical Gauss quadrature formula (see, for instance, \cite[Theorem 6.1]{Chihara}) can be embedded in the following formulation of Sylvester's Theorem.

\begin{thm}\label{AppelGen}
Let $f(x_0,y_0)$ be a binary form of degree $2n-1$. For any linear functional $\EE:\C[\bs{x}]\rightarrow \C$ satisfying \eqref{E} and \eqref{Ebis} with $a_k=a_{jk}$ for all
$j,k\in\N$,  assume that
$g(x_0,y_0)=\mathcal{J}_{2n-1,n}(f)(x_0,y_0)$ factorizes as:
\[g(x_0,y_0)=(x_0r_1-y_0s_1)(x_0r_2-y_0s_2)\cdots(x_0r_n-y_0s_n),\]
where the linear factors are pairwise distinct, and $s_i\neq 0$ for all $i=1,\dots, n$ \footnote{The coefficients $r_i, s_i$, for $i=1,\dots,n$, are usually called \textit{homogenized roots} of the covariant $\mathcal{J}$}. Then, for every $n \in \mathbb{N}$, there exist unique complex numbers $c_{1,n},\dots,c_{n,n}$, called \textit{Christoffel numbers}, such that:
\begin{equation}\label{QuadAppel}
\EE_0\,[(x_1-x_0)^{2n-1}]=\sum_{i=1}^n c_{i,n}(\zeta_i-x_0)^{2n-1},
\end{equation}
where $\zeta_i:=r_i/s_i$ for all $i=1,\dots,n$.
\end{thm}

By expanding both sides of equation \eqref{QuadAppel} as polynomials in $x_0$, and comparing the corresponding coefficients, one has:
\begin{equation}\label{qf0}a_k=\EE[x_0^{k}]=\sum_{i=1}^n c_{i,n}\,\zeta_i^{k}\quad \forall \,k=0,\dots, 2n-1,
\end{equation}
and, more generally, for every $p(x_0)\in\C[x_0]_{2n-1}$, since $\EE[ p(x_1)]= \EE[p(x_0)]$, it follows that:
\begin{equation}\label{qf}\EE[p(x_0)]=\sum_{i=1}^n c_{i,n}\,p(\zeta_i)\, .
\end{equation}

In this framework, this classical result arises as a corollary of Sylvester's Theorem. Indeed, Sylvester's Theorem guarantees that there exists a unique solution to the system
$$ a_k = \sum_{j=1}^n c_{j,n} \zeta_j^k \qquad k=0,\dots, 2n-1 \,,$$
obtained by extracting the equations corresponding to $k=0,\dots, n-1$. Then, Cramer's rule entails that, for $ i=1,\dots, n$,
\begin{align*}
c_{i,n} &=\dfrac{\EE[\Delta(\zeta_1,\ldots,\zeta_{i-1},x_0,\zeta_{i+1},\ldots,\zeta_n)]}{\Delta(\zeta_1,\zeta_{2},\ldots,\zeta_n)} \numberthis \label{ci} \\
&= \dfrac{\EE\big[\prod\limits_{ j\neq i} (x_0 - \zeta_j)\big]}{\prod\limits_{\substack{j=1\\ j \neq i}}^n (\zeta_j-\zeta_i)}
\end{align*}

\begin{rmk}
Equivalently, the Christoffel's numbers can be computed as:
$$ c_{i,n} = \dfrac{b_n}{b_{n-1}} \dfrac{1}{p_{n-1}(\zeta_i) \prod_{\substack{ j=1 \\ j \neq i }}^n (\zeta_i - \zeta_j)}, $$
where $b_n$ denotes the leading coefficient of $p_n(x)$ (see \cite{Szego}).\\
\end{rmk}

As a matter of fact, \eqref{ci} allows to detect a nice property enjoyed by the weights $c_{i,n}$'s: the invariance under translation  (see Proposition \ref{InvTrans} below). In order to enhance this property, it might be convenient to restate \eqref{QuadAppel} for $\EE = \E$ on a fixed probability space, yielding as a consequence an explicit expression for the translated moments $\E[(X-x)^n]$ of a (real) random variable $X$.

\begin{thm}\label{Appel}
Let $X$ be a random variable, admitting moments up to all orders, and let $\{p_n(x)\}_{n\geq 1}$ denote the associated OPs. If $A_n(x) = \mathbb{E}[(X-x)^{n}]$, $n \in \N$,  then
$$ \{A_{2n-1}(x), p_n(x)\} = 0.$$
Moreover, if $r_{1,n}(X), r_{2,n}(X), \dots, r_{n,n}(X)$ denote the $n$ (real) roots of $p_n(x)$, then there exist unique complex numbers $c_{1,n}(X),\dots,c_{n,n}(X)$ such that:
$$ A_{2n-1}(x) = \sum_{j=1}^{n} c_{j,n}(X)(r_{j,n}(X)-x)^{2n-1}.$$
\end{thm}

As in the general setting, Sylvester's Theorem guarantees that there exists a unique solution to the system
$$ a_k = \sum_{j=1}^n c_{j,n}(X) r_{j,n}(X)^k \qquad k=0,\dots, 2n-1,$$
that will be given by:
\begin{equation}
\label{ciBIS}
c_{k,n}(X) =\dfrac{\E[\Delta(r_{1,n}(X),\dots,r_{k-1,n}(X),X,r_{k+1,n}(X),\dots,r_{n,n}(X))]}{\Delta(r_{1,1}(X),\dots,r_{n,n}(X))},
\end{equation}
where $a_{j}(X)= \E[X^j]$.

\begin{prop}\label{InvTrans}
If the above notation prevails, then, for every  $t \in \mathbb{R}$ and for every $k=1,\dots,n$, $c_{k,n}(X+t) = c_{k,n}(X)$ .
\end{prop}

\begin{proof}
First of all, remark that the existence of an OPs for $X+t$, for all $t \in \mathbb{R}$, follows from the invariance under translation of the statistics $\Delta(X_1,\dots,X_n)^2$. For every $t\in \mathbb{R}$, let $r_{j,n}(X+t)$ denote the $j$-th root of the orthogonal polynomial $p_n(x,X+t)=p_n(x-t,X)$ for $X+t$. Then, $r_{j,n}(X+t)= r_{j,n}(X)+t$, and hence:
\begin{align*}
c_{k,n}(X+ t) &=  \dfrac{\E\big[ \Delta(r_1(X)+t,\dots,r_{k-1}(X)+t,X+t,r_{k+1}(X)+t,\dots,r_{n}(X)+t)\big]}{\Delta(r_{1}(X)+t,\dots,r_{n-1}(X)+t)}\\
&= \dfrac{\E\big[ \Delta(r_1(X),\dots,r_{k-1}(X),X,r_{k+1}(X),\dots,r_{n}(X))\big]}{\Delta(r_{1}(X),\dots,r_{n}(X))}\\
&= c_{k,n}(X).\qedhere
\end{align*}
\end{proof}

\begin{rmk}
Cumulants are an important class of invariants under translation associated with the law of a random variable $X$ (classically, they are called \textit{semi-invariants}): it might be interesting, then, to determine if there is an explicit relation between the weights $c_{k,n}(X)$ and the cumulants $\chi_n(X)$ of the random variable $X$. Note that a representation of cumulants in terms of (generalized) Vandermonde polynomials has been provided in \cite[Theorem 4.1]{Rota3},  using umbral methods.\\
\end{rmk}

\begin{rmk}
It is worth to remark that \eqref{qf0} says that the decomposition \eqref{qf} applies, in particular, if the moment problem for the $a_k$'s as in \eqref{qf0} has a solution. Indeed, in this case, all the Hankel determinants $\det(a_{i+j})_{i,j=0,\dots,n-1}$ are positive, and the corresponding OPs for $\EE: \EE[x^k]=a_k$ exists, with $p_n(x)$ having $n$ simple roots $\zeta_1,\dots,\zeta_n$ for every $n$. \\
\end{rmk}

Via Theorem \ref{AppelGen},  \eqref{qf} can be easily generalized in the following way.
\begin{thm}\label{GenQuadFormula}
Let $\EE\colon\C[\bs{x}]\to\C$ be a linear functional satisfying \eqref{E} and \eqref{Ebis}, with $a_k=a_{jk}$ for all
$j,k\in\N$. Assume that $\{p_n(x_0)\}_{n\geq 1}$ is an orthogonal polynomial system associated with $\EE\colon\C[x_0]\to\C$ \footnote{With abuse of notation, $\EE$ is used to denote both $\EE:\C[x_0]\rightarrow \C$ and its linear extension to $\EE:\C[\bs{x}]\rightarrow\C$.}, with $p_n(x_0)$ having pairwise distinct roots $\zeta_1,\zeta_2,\ldots,\zeta_n$ for every $n$. If $P(x_1,x_2,\ldots,x_N)\in\C[\bs{x}]$  is of degree at most $2n-1$ in each $x_i$, then 
\begin{equation}\label{mqf}\EE[P(x_1,x_2,\ldots,x_N)]=\sum_{(i_1,i_2,\ldots,i_N)\atop 1\leq i_k\leq n}c_{i_1}c_{i_2}\cdots c_{i_N}\,P(\zeta_{i_1},\zeta_{i_2},\cdots,\zeta_{i_N}),\end{equation}
where $c_1,c_2,\ldots,c_n$ are given by \eqref{ci}.
\end{thm}

\begin{proof}
Write $P(x_1,\dots,x_N) = \sum\limits_{\substack{1 \leq i_j \leq 2n-1\\ j=1,\dots,N}}A_{i_1,\dots,i_N}x_1^{i_1}\cdots x_{N}^{i_N}$, with $A_{i_1,\dots,i_N} \in \C$. Then,
$$ \EE[P(x_1,\dots,x_N)] = \sum_{\substack{1 \leq i_j \leq 2n-1\\j=1,\dots,N}} A_{i_1,\dots,i_N} \EE[x_1^{i_1}]\cdots \EE[x_N^{i_N}].$$
Since $i_j\leq 2n-1$, one has $\EE[x_j^{i_j}]= a_{i_j} = \sum\limits_{l_j=1}^n c_{l_j,n}\zeta_{l_j}^{i_j}$, yielding:
\begin{align*}
\EE[P(x_1,\dots,x_N)] &= \sum_{\substack{1 \leq i_j \leq 2n-1\\j=1,\dots,N}} A_{i_1,\dots,i_N} \prod_{j=1}^N \bigg( \sum_{l_j=1}^n c_{l_j,n}\zeta_{l_j}^{i_j}\bigg) \\
&=  \sum_{\substack{1 \leq i_j \leq 2n-1\\j=1,\dots,N}} A_{i_1,\dots,i_N} \sum_{\substack{l_1,\dots,l_N\\ 1 \leq l_j \leq n}} c_{l_1,n}\cdots c_{l_N,n} \zeta_{l_1}^{i_1}\cdots \zeta_{l_N}^{i_N} \\
&=\sum_{\substack{l_1,\dots,l_N\\ 1 \leq l_j \leq n}} c_{l_1,n}\cdots c_{l_N,n}  \sum_{\substack{1 \leq i_j \leq 2n-1\\j=1,\dots,N}} A_{i_1,\dots,i_N}\zeta_{l_1}^{i_1}\cdots \zeta_{l_N}^{i_N}\\
&= \sum_{\substack{l_1,\dots,l_N\\ 1 \leq l_j \leq n}} c_{l_1,n}\cdots c_{l_N,n} P(\zeta_{l_1},\dots,\zeta_{l_N}).\qedhere
\end{align*}
\end{proof}

As a consequence, the following formula for the \textit{moments} of a Vandermonde polynomial can be stated. Since the
vanishing criterion ensures that $\EE[\Delta(x_1,x_2,\ldots,x_n)^{2k-1}]=0$, for all $k\in\mathbb{N}$ (recall that $a_k=a_{jk}$ for all $k\in\N$), it is sufficient to  focus only on even \textit{moments}.
\begin{cor}
\label{MomDiscriminant2}
For given $n,k,N\in\mathbb{N}$, assume that  $2k(N-1)\leq 2n-1$. Then, 
\begin{equation}\label{mqf2}\EE[\Delta(x_1,x_2,\ldots,x_N)^{2k}]=\sum_{(i_1,i_2,\ldots,i_N)\atop 1\leq i_j\leq n}\,c_{i_1}c_{i_2}\cdots c_{i_N}\,\Delta(\zeta_{i_1},\zeta_{i_2},\cdots,\zeta_{i_N})^{2k}, 
\end{equation}
where $c_1,c_2,\ldots,c_n,\zeta_1,\zeta_2,\ldots,\zeta_n\in\C$ are determined via Sylvester's Theorem as in \eqref{ci}.
\end{cor}
\begin{proof}
$\Delta(x_1,x_2,\ldots,x_N)^{2k}$ has maximum degree $2k(N-1)$ in $x_i$, for every $i$. Then, Theorem \ref{GenQuadFormula} applies  whenever $2k(N-1)\leq 2n-1$. 
\end{proof}

If $X_1,X_2,\ldots,X_N$ are independent and identically distributed random variables on a given probability space, the most natural choice for the functional $\EE$ is the the expectation $\E$. In this case, the statistics $\Delta(X_1,X_2,\ldots,X_N)^2$ is the \textit{random discriminant} \cite{Lu}. Then, identity \eqref{mqf2} can be seen an explicit formula for the moments  of a random discriminant in terms of the roots the orthogonal polynomials associated with the law of $X_1$. 
In particular, when $k=1$ and $N=n$, Corollary \ref{MomDiscriminant2} yields that:
\[\E[\Delta(X_1,X_2,\ldots,X_n)^{2}]=n!\,c_{1}c_{2}\cdots c_{n}\,\Delta(\zeta_{1},\zeta_{2},\cdots,\zeta_{n})^{2},\]
which shows that the expected value of the random discriminant reduces to the discriminant of the $n$-th orthogonal polynomial (up to multiplicative coefficients). \\

\begin{exm}
In \cite{Lu}, Selberg's integral is used to compute the exact distribution of the random discriminants $\Delta(X_1,\dots,X_n)^2$, where $X_1,\dots,X_n$ are i.i.d. random variables, Gaussian, Gamma or Beta distributed: in these cases, an explicit formula for $\E[\Delta(X_1,\dots,X_n)^{2k}]$ is provided.  For instance,  if $X_1,\dots,X_n$ are $\mathcal{N}(\mu,\sigma)$-distributed random variables, then for every $k\geq 1$,
$$ \E[\Delta(X_1,\dots,X_n)^{2k}] = \sigma^{n(n-1)k} \prod_{j=1}^n j^{jk} \prod_{1 \leq i <j \leq n }\dfrac{\Gamma(k + \frac{i}{j})}{\Gamma(\frac{i}{j})},$$
where $\Gamma(r)$ denotes the Gamma function (see \cite[Lemma 3.1]{Lu}). For instance, set $N=3, n=5$, $k=2$, $\mu=0$ and $\sigma=1$. Then $ \E[\Delta(X_1,X_2,X_3)^4] = 4320.$ 
Since the roots of $H_5(x) = x^5-10x^3 + 15x$ are $r_1= 0, r_{2}= \sqrt{5-\sqrt{10}}, r_3= - \sqrt{5-\sqrt{10}}, r_4 = \sqrt{5+\sqrt{10}}$ and $r_5 = - \sqrt{5+\sqrt{10}}$, the computation of the $c_i$'s as in \eqref{ci} gives\footnote{The computations have been run with Maple 13.}
\begin{enumerate}
\item $c_1= 0.5333333333$;
\item $c_2 = c_3=0.2220759228$;
\item $c_4=c_5 = 0.01125741133$,
\end{enumerate}
and therefore, via formula \eqref{mqf2}, $\E[\Delta(X_1,X_2,X_3)^4]=4320$.\\
\end{exm}

Another approach to compute the moments of the random discriminant for any random variable $X$ can be outlined by carrying the dual reasoning to the one that led to formula \eqref{mqf}. For the sake of clarity, the strategy  is first sketched with an example.
\begin{exm}
For the first Hermite polynomials $H_n(x)$ arising from \eqref{intorth}, for a standard Gaussian random variable $N$, 
\begin{equation}
H_2(x) = 2(x^2-1) = (x-\imath)^2 + (x+\imath)^2 \;,
\end{equation}
with $\mathbb{E}[(N-x)^2] = -(x^2+1) = -(x+\imath)(x-\imath)$. Similarly,
\begin{equation}
H_3(x) = -12(x^3 -3x)  = -8x^3 + 2(x-\imath \sqrt{3})^3 + -2(x+\imath \sqrt{3})^3
\end{equation}
and $\mathbb{E}[(N-x)^3] = -(x^3+3x) = x(\imath \sqrt{3}-x)(x+\imath \sqrt{3})$.
\end{exm}

Let $X$ be a centered random variable, and assume that $p_n(x)= \sum_{h=0}^n \binom{n}{h}(-1)^h b_{n-h} x^{h}$ is its OPs, as in \eqref{intorth}. As a consequence of the orthogonality, if  $A_n(x)=\E[(X-x)^n]$, then $\{p_n(x),A_n(x)\}=0$:  indeed,  if $a_k=\mathbb{E}[X^k]$, $\E[p_n(X)] = \sum_{h=0}^n \binom{n}{h}(-1)^h a_{h} b_{n-h} = 0$. Assume, further, that $A_{n}(x)$ has simple roots for every $n$, say $r_1,\dots,r_n$. Then, for every $j=1,\dots,n$, the polynomial $q_j(x)=(r_j-x)^n$ is apolar to $A_n(x)$ (see, for instance, \cite[Theorem 1]{Rota98} for the proof via umbral methods), and since $q_1(x),q_2(x),\dots,q_n(x)$ are linearly independent, Sylvester's Theorem implies the existence of unique coefficients $c_1,c_2,\dots,c_n$ such that:
$$ p_n(x) = \sum_{j=1}^n c_j (r_j-x)^n ,$$
and hence, by comparing the leading terms, $$ \E[\Delta(X_1,\dots,X_n)^2] = \sum_{j=1}^n c_j \,.$$
More generally, for any $k\geq 1$, consider the polynomials $p_{n,k}(x):= p_{n,k}(x,X)$ defined via
\begin{equation}
\label{p_nk}
p_{n,k}(x)=n!\E[q_{nk}(x,X_1,\ldots,X_n)],
\end{equation}
with 
$$q_{nk}(x,x_1,\ldots,x_n)=x_1^{n-1}x_2^{n-2}\cdots x_{n-1} \Delta(x,x_1,x_2,\dots,x_n)^{2k-1}\, , $$ 
and $X_1,\dots,X_n$ independent copies of $X$. By symmetrizing $q_{nk}(x,x_1,\dots,x_n)$ with respect to $x_1,x_2,\ldots,x_n$, one has
$$ p_{n,k}(x)= \E[(X_1-x)^{2k-1}(X_2-x)^{2k-1}\cdots(X_n-x)^{2k-1}\Delta(X_1,X_2,\ldots,X_n)^{2k}].$$
In the sequel, if $m=n(2k-1)$, write:
$$ p_{n,k}(x)= \sum_{h=0}^m \binom{m}{h}(-1)^{h}b_{m-h}x^h.$$
\begin{thm}\label{MomDiscriminant}
Let the previous notation prevail, and, for fixed $n,k\in \N$, if $m=n(2k-1)$, assume that $A_m(x)=\E[(X-x)^m]$ has $m$ simple roots, say $r_1(X),\dots,r_m(X)$. Assume that the linear system
$$ b_{m-k} = \sum_{j=1}^{m} c_j r_{j}^{m-k} \;\, \quad \forall k=0,\dots,m $$
admits a unique solution $(c_{m,1}(X),\dots,c_{m,m}(X))$. Then,
$$ (-1)^m \E[\Delta(X_1,\dots,X_n)^{2k}]= \sum_{j=1}^m c_{m,j}(X) .$$
\end{thm}

\begin{proof}
Let $\mathcal{I}(x,y)$ denote the covariant obtained by replacing $X_i-X_j$ with the bracket $[i\;j]$ in $p_{n,k}(x)$, so that $p_{n,k}(x)=\mathcal{I}(x,1)$. Then, the vanishing criterion with respect to the functional $\E$, applied to $x_{n+1}^h\;q_{nk}(x_{n+1},x_1,x_2,\ldots,x_n)$, for $h=1,\dots,n-1$, implies that:
$$\E[ q(X)\,p_{n,k}(X)]=0 \quad \forall  q(x)\in\C[x]_{\leq n-1},$$
implying in turn that the apolar covariant  $\mathcal{A}(f,\mathcal{I})$ vanishes whenever $f(x,y)=\E[(y X-x)^{2kn-1}]$. In terms of polynomials, the orthogonality conditions satisfied by $p_{n,k}(x)$ can be rewritten as:
$$ \mathcal{A}(A_{m}(x), p_{n,k}(x)) = 0. $$
Then, by virtue of Sylvester's Theorem, $p_{n,k}(x)$ can be decomposed as:
\begin{align*}
p_{n,k}(x) &= \sum_{j=1}^m c_{m,j}(X) (r_j(X)-x)^m \\
&= \sum_{j=1}^m c_{m,j}(X) \sum_{h=0}^m \binom{m}{h} (-1)^h x^h r_{j}(X)^{m-h} \\
&= \sum_{h=0}^m x^h \binom{m}{h}(-1)^h \sum_{j=1}^m r_j(X)^{m-h}c_{m,j}(X),
\end{align*}
and the conclusion follows by identifying the leading terms.
\end{proof}

\begin{exm}
Let $n=k=2$, so that $m=n(2k-1)=6$. If $N\sim \mathcal{N}(0,1)$, $A_6(x)=x^6+15x^4+45x^2+15$, whose zeros are given by $r=[r_1,\dots,r_6]$,
\begin{enumerate}
\item $r_1 = -1.349630430\cdot 10^{-9} -0.6167065905\,\imath$,
\item $r_2= 1.349630430\cdot 10^{-9} +0.6167065905\,\imath$,
\item $r_3= 2.340691285\cdot 10^{-10}-3.324257434\,\imath,$
\item $r_4= -2.340691285\cdot 10^{-10}+3.324257434\, \imath,$
\item $r_5= 6.504148675\cdot 10^{-11}-1.889175878\,\imath,$
\item $r_6= -6.504148675\cdot 10^{-11}+1.889175878\,\imath.$
\end{enumerate}
Moreover, 
$$ p_{2,2}(x) = \sum_{h=0}^6 \binom{6}{h}(-1)^h b_{m-h}x^{h} = 12x^6 +  90 x^2 -360$$
and, if $N_1,N_2$ are independent $\mathcal{N}(0,1)$-distributed, $\E[\Delta(N_1,N_2)^4]=12$. Then, the solution of the system $M\cdot c=b$, where $M= (r_{j}^{6-i})_{i,j}$, for $i,j=1,\dots,6$,  and $b=[b_5,b_4,\dots,b_0]^T= [0, 0, 0, 6, 0, 12]^T$, is given by:
\begin{enumerate}
\item $c_1 = 8.244073700-0.5810482414e^{-8} \, \imath$,
\item $c_2= 8.244073680-0.5810482378e^{-8}\, \imath$,
\item $ c_3 = 0.2504273076-0.6122705598e^{-9}\,\imath$,
\item $c_4 = 0.2504273069-0.6122705576e^{-9}\, \imath$,
\item $c_5= -2.494501001+0.6422752968e^{-8}\,\imath  $,
\item $c_6= -2.494500994+0.6422752942e^{-8}\,\imath$,
\end{enumerate}
yielding $c_1+c_2+\cdots +c_6 = \E[\Delta(N_1,N_2)^4]$.
\end{exm}

\section{Invariant theory in several variables}

The present section aims at extending the algebraic framework for orthogonality, based on invariant theory, for polynomials in $\C[x_0,x_1,\dots,x_d]$. The choice of dealing separately with the multivariable setting is made to highlight some non-trivial  aspects that arise here and that were missing in the univariate framework. \\

Recall that a $q$-ary form of degree $(k_0,k_1,\dots,k_q)\in \mathbb{N}_0^{q+1}$ is a polynomial in \linebreak$\C[x_0,x_1,\dots,x_q,y_0,y_1,\dots,y_q]$, homogeneous of degree $k_j$ in $(x_j,y_j)$. One of the major advantages in working with the symbolic method of invariant theory  is that the proofs  using umbral notation, in the setting of binary forms, are suitable to be easily generalized for $q$-ary forms. However, quoting Rota and Kung \cite{KungRota}, ``[...]\textit{The notion of a covariant ramifies in several variables into several kinds of concomitants\footnote{Concomitants are a class of invariants.}, and the various kinds of apolarity never seem to have been fully explored} [..]''. Hence, the framework set in the sequel is not meant to be exhaustive; on the other hand, it corresponds to what is, in the author's belief, the most natural extension of the univariate setting discussed in Chapters \ref{Chapter_OPS} and \ref{Chapter_WhatIsOrtho}.\\

Consider two sets of independent indeterminates $\{x_i:i \in \N_0\}$ and $\{y_i:i \in \N_0\}$. For a fixed $d\in\N$,  set $\bs{x}_{i}=\{x_{i(d+1)},x_{i(d+1)+1},\ldots,x_{i(d+1)+d}\}$, and $\bs{x}=\{\bs{x}_i\,|\,i\in\N_0\}$. Similarly, set
$\bs{y}_{i}=\{y_{i(d+1)},y_{i(d+1)+1},\ldots,y_{i(d+1)+d}\}$ and
$\bs{y}=\{\bs{y}_i\,|\,i\in\N_0\}$. To further shorten the notation, set
$x_{ij}=x_{i(d+1)+j}$ and $y_{ij}=y_{i(d+1)+j}$ and write
\begin{equation}\label{xyi}
\left(\begin{matrix}\bs{x}_i\\\bs{y}_i\end{matrix}\right)=\left(\begin{matrix}x_{i0}&x_{i1}&\ldots&x_{id}\\ y_{i0}&y_{i1}&\ldots&y_{id}\end{matrix}\right) \text{ for all }i\in\N_0.
\end{equation}
Let $GL_2(\C)^{d+1}$ denote the direct product of $d+1$ copies of
$GL_2(\C)$. Every element $\bs{g}=(g_0,g_1,\ldots,g_d)\in GL_2(\C)^{d+1}$ acts on  $\left(\begin{matrix}\bs{x}_i\\\bs{y}_i\end{matrix}\right)$ according to the rule:
\[g_k\cdot\left(\begin{matrix}x_{ik}\\ y_{ik}\end{matrix}\right)=
\left(\begin{matrix}g_{11}^{\sst(k)}&g_{12}^{\sst(k)}\\g_{21}^{\sst(k)}&g_{22}^{\sst(k)}\end{matrix}\right)
\left(\begin{matrix}x_{ik}\\y_{ik}\end{matrix}\right) \forall i\in\N_0, \, \forall  k = 0,\dots, d,
\]
if $g_k= (g_{ij}^{(k)})_{i,j=1,2}$. Then, for every polynomial $p \in \C[\bs{x},\bs{y}]$, $\bs{g}\cdot p$ is obtained from $p$ by letting $g_k$ act on  $(x_{i_k},y_{i_k})$.

\begin{defn}
A polynomial $p \in \C[\bs{x},\bs{y}]$ is said to be a $GL_2(\C)^{d+1}$-invariant of index
$\bs{n}=(n_0,n_1,\ldots,n_d) \in \N_0^{d+1}$ if, and only if,
\[\bs{g}\cdot p=(\det\,g_0)^{n_0}(\det\,g_1)^{n_1}\cdots(\det\,g_d)^{n_d}\,p \quad \forall \,\bs{g}\in GL_2(\C)^{d+1}.\]
\end{defn}

If $\bs{n}=(n_0,n_1,\ldots,n_d)\in\N_0^{d+1}$, and following the standard multi-index notation:
\[\bs{x}_i^{\bs{n}}=\prod_{j=0}^d\,x_{i,j}^{n_j} \text{ and }\bs{y}_i^{\bs{n}}=\prod_{j=0}^d\,y_{i,j}^{n_j} \text{ for all }i\in\N,\]
one example of $GL_2(\C)^{d+1}$-invariant of index
$\bs{n}$ is  the \textit{bracket polynomial}, defined, for $i,j\in\N$, as the $2(d+1)$-ary form given by:
\[\bs{[}i\,j\bs{]}^{\bs{n}}=(x_{i0}y_{j0}-y_{i0}x_{j0})^{n_0}\,(x_{i1}y_{j1}-y_{i1}x_{j1})^{n_1}\,\cdots\,(x_{id}y_{jd}-y_{id}x_{jd})^{n_d}.\]
Indeed,  $g_k\cdot (x_{ik}y_{jk}-y_{ik}x_{jk})^{n_k}=(\det\,g_k)^{n_k}\,(x_{ik}y_{jk}-y_{ik}x_{jk})^{n_k}$ for all $k\in\N$. More generally, any product of the type
\[\bs{[}i_1\,j_1\bs{]}^{\bs{n}_1}\bs{[}i_2\,j_2\bs{]}^{\bs{n}_2}\cdots \bs{[}i_l\,j_l\bs{]}^{\bs{n}_l}\]
is a $GL_2(\C)^{d+1}$-invariant of index $\bs{n}$ if and only if $\bs{n}_1+\bs{n}_2+\cdots+\bs{n}_l=\bs{n}$, where $\bs{n}_1+\bs{n}_2+\cdots+\bs{n}_l$ denotes the componentwise sum (by virtue of the First Fundamental Theorem, any invariant $p$ of index $\bs{n}$ is a linear combination of products of this type).\\

For all $\bs{k},\bs{n}\in\N_0^{d+1}$, consider the componentwise order: $\bs{k}\leq \bs{n}$ if and only if $\bs{k}=(k_0,k_1,\ldots,k_d)$, $\bs{n}=(n_0,n_1,\ldots,n_d)$ and $k_i\leq n_i$ for all $0\leq i\leq d$. As already underlined in Chapter \ref{Chapter_OPS}, this choice  guarantees that $(\mathbb{N}_0^{d+1},\leq)$ is a graded poset, with rank function $\rho\colon\N_0^{d+1}\to\N_0$ given by $\rho(\bs{n})=n_0+n_1+\cdots+n_d$, and corresponds to the most natural extension of the univariate setting on $(\mathbb{N}_0,\leq)$ (see \cite{Stanley}). 

\begin{defn}
For a fixed $\bs{n}\in\N_0^{d+1}$, a  \textit{generic $2(d+1)$-ary form of degree $\bs{n}$} is a polynomial in $\C[a_{\bs{0}},\dots,a_{\bs{n}};\bs{x}_0,\bs{y}_0]$ of the type
\[f(a_{\bs{0}},\ldots,a_{\bs{n}};\bs{x}_0,\bs{y}_0)=\sum_{\bs{0}\leq\bs{k}\leq \bs{n}}\binom{\bs{n}}{\bs{k}}\,(-1)^{\rho(\bs{n}-\bs{k})}\,a_{\bs{k}}\,\bs{x}_0^{\bs{n}-\bs{k}}\bs{y}_0^{\bs{k}},\]
where $\bs{0}=(0,0,\ldots,0)$, and we have set:
\[\binom{\bs{n}}{\bs{k}}=\binom{n_0}{k_0}\binom{n_1}{k_1}\cdots\binom{n_d}{k_d}.\]
\end{defn}

Hereafter, set 
\begin{equation}
\label{lenght}
s(\bs{n}):=\big|\,\{\bs{k}\,|\,\bs{0}\leq\bs{k}\leq\bs{n}\}\big|
\end{equation}
so that $s(\bs{n})$ equals the number of monomials in the generic form of degree $\bs{n}$. A \textit{$2(d+1)$-ary form of degree $\bs{n}$} is a polynomial $f(\bs{x}_0,\bs{y}_0)$ arising from the
generic form of degree $\bs{n}$ when $a_{\bs{0}},\ldots,a_{\bs{n}}$ specialize at given coefficients in $\C$. \\

Given an ordered sequence $\bs{\phi}=(\phi_0,\phi_1,\ldots,\phi_d)$ of linear changes of variables, acting on \linebreak $f(a_{\bs{0}},\ldots,a_{\bs{n}};\bs{x}_0,\bs{y}_0)$,  the generic form $f(\bar{a}_{\bs{0}},\ldots,\bar{a}_{\bs{n}};\bs{x}_0,\bs{y}_0)$ is defined by letting $\phi_j$ act on the pair $(x_{0j},y_{0j})$.

\begin{defn}
For $\bs{n},\bs{m} \in \N_0^{d+1}$, a \textit{$GL_2(\C)^{d+1}$-covariant of index $\bs{m}$ for $2(d+1)$-ary forms of degree $\bs{n}$} is a
polynomial $\mathcal{I}(a_{\bs{0}},\ldots,a_{\bs{n}};\bs{x}_0,\bs{y}_0)$ satisfying, for every ordered sequence $\bs{\phi}=(\phi_0,\dots,\phi_d)$ of linear changes of variables,  with $\phi_j=\phi_j(c_{11}^{(j)},c_{12}^{(j)},c_{21}^{(j)},c_{22}^{(j)})$,
$$\mathcal{I}(\bar{a}_{\bs{0}},\ldots,\bar{a}_{\bs{n}};\bs{x}_0,\bs{y}_0)=(det\,\phi_0)^{m_0}(det\,\phi_1)^{m_1}\cdots (det\,\phi_d)^{m_d}\mathcal{I}(a_{\bs{0}},\ldots,a_{\bs{n}};\bs{x}_0,\bs{y}_0),$$
where
$$\mathcal{I}(\bar{a}_{\bs{0}},\ldots,\bar{a}_{\bs{n}};\bs{x}_0,\bs{y}_0) := \mathcal{I}(a_{\bs{0}},\ldots,a_{\bs{n}};\bs{x}_0^{\prime},\bs{y}_0^{\prime}),$$ and
$$\left(\begin{matrix}x'_{0i}\\ y'_{0i}\end{matrix}\right)=\phi_i\left(\begin{matrix}x_{0i}\\ y_{0i}\end{matrix}\right)
=\left(\begin{matrix}c_{11}^{\sst(i)}x_{0i}+c_{12}^{\sst(i)}y_{0i}\\c_{21}^{\sst(i)}x_{0i}+c_{22}^{\sst(i)}y_{0i}\end{matrix}\right),\quad \forall \, i=0,\dots, d.$$ 

Similarly, if $g(\bs{x}_0,\bs{y}_0) \in \C[b_{\bs{0}},\dots,b_{\bs{m}},\bs{x}_0,\bs{y}_0]$ is a generic $2(d+1)$-ary form of degree $\bs{m}$, a joint-covariant of index $\bs{k} \in \N^{d+1}$ of $2(d+1)$-ary forms of degree $(\bs{n},\bs{m})$ is a polynomial\linebreak $\mathcal{I}(a_{\bs{0}},\dots,a_{\bs{n}},b_{\bs{0}},\dots,b_{\bs{m}};\bs{x}_0,\bs{y}_0)$ such that
\begin{align*}
\mathcal{I}(\bar{a}_{\bs{0}},\ldots,\bar{a}_{\bs{n}}, \bar{b}_{\bs{0}}&,\ldots,\bar{b}_{\bs{m}} ;\bs{x}_0,\bs{y}_0) \\ 
&=(det\,\phi_0)^{k_0}(det\,\phi_1)^{k_1}\cdots (det\,\phi_d)^{k_d}\mathcal{I}(a_{\bs{0}},\ldots,a_{\bs{n}}, b_{\bs{0}},\dots,b_{\bs{m}};\bs{x}_0,\bs{y}_0),
\end{align*}
for every ordered sequence of linear changes $\bs{\phi}=(\phi_0,\dots,\phi_d)$.
\end{defn}

Let $\mathbb{N}_1, \mathbb{N}_2$ be a partition of $\mathbb{N}$ into disjoint infinite subsets. For fixed $\bs{m}, \bs{n}$, with $\bs{m}\leq \bs{n}$, consider generic forms $f(a_{1\bs{0}},\ldots,a_{1\bs{n}};\bs{x}_0,\bs{y}_0)$ and $g(a_{2\bs{0}},\ldots,a_{2\bs{m}};\bs{x}_0,\bs{y}_0)$ of degree $\bs{n}$ and $\bs{m}$, respectively, and consider the linear operator
\[\U\colon\C[\bs{x},\bs{y}]\to\C[a_{1\bs{0}},\ldots,a_{1\bs{n}};a_{2\bs{0}},\ldots,a_{2\bs{m}};\bs{x}_0,\bs{y}_0]\]
defined by 
\[\U\,\bs{x}_i^{\bs{k}_1}\bs{y}_i^{\bs{k}_2}=\begin{cases}\bs{x}_0^{\bs{k}_1}\bs{y}_0^{\bs{k}_2}&\text{ if }i=0;\\ a_{1\bs{k}_1} & \text{ if }\bs{k}_1+\bs{k}_2=\bs{n}\text{ and }i\in\mathbb{N}_1;\\a_{2\bs{k}_1} & \text{ if }\bs{k}_1+\bs{k}_2=\bs{m}\text{ and }i\in\mathbb{N}_2;\\0& \text{ otherwise. }\end{cases}\]
If $f(\bs{x}_0,\bs{y}_0)$ and $g(\bs{x}_0,\bs{y}_0)$ are $2(d+1)$-ary forms of
degree $\bs{n}$ and $\bs{m}$, respectively, then $\U(f)\,p$ (respectively $\U(f,g)$) will denote the operator whose value
$\U(f)\,p$ (respectively $\U(f,g)\,p$) is obtained from $\U\,p$ by
replacing the variates $a_{i\bs{k}}$'s with the corresponding coefficients of $f(\bs{x}_0,\bs{y}_0)$ (respectively
of $f(\bs{x}_0,\bs{y}_0)$ and $g(\bs{x}_0,\bs{y}_0)$). For instance,
\[\U(f,g)\bs{[}1\,0\bs{]}^{\bs{n}}=f(\bs{x}_0,\bs{y}_0) \text{ and }\U(f,g)\bs{[}2\,0\bs{]}^{\bs{m}}=g(\bs{x}_0,\bs{y}_0),\]
provided that $1\in\mathbb{N}_1$ and $2\in\mathbb{N}_2$. Furthermore, 
\begin{multline*}\U(f,g)\,\bs{[}1\,0\bs{]}^{\bs{n}-\bs{m}}\bs{[}2\,1\bs{]}^{\bs{m}} \\=\U(f)\,\sum_{\bs{0}\leq \bs{k}\leq \bs{n}-\bs{m}}\binom{\bs{n}-\bs{m}}{\bs{k}}(-1)^{\rho(\bs{n}-\bs{m}-\bs{k})}\,\bs{x}_1^{\bs{k}}\bs{y}_1^{\bs{n}-\bs{m}-\bs{k}}\,g(\bs{x}_1,\bs{y}_1)\bs{x}_0^{\bs{n}-\bs{m}-\bs{k}}\bs{y}_0^{\bs{k}} \numberthis \label{Apolmulti}.\end{multline*}

\begin{defn}

If $\bs{m}\leq \bs{n}$, the \textit{apolar $GL_2(\C)^{d+1}$-covariant} is the joint-covariant of index $\bs{m}$ of $2(d+1)$-ary forms of degree $(\bs{n},\bs{m})$ defined by
\begin{equation}
\label{multiapolcov}\mathcal{A}(a_{1\bs{0}},\ldots,a_{1\bs{n}};a_{2\bs{0}},\ldots,a_{2\bs{m}};\bs{x}_0,\bs{y}_0)=\U\,\bs{[}1\,0\bs{]}^{\bs{n}-\bs{m}}\bs{[}2\,1\bs{]}^{\bs{m}}.
\end{equation}
\end{defn}

%
For $2(d+1)$-ary forms $f$ and $g$ of degrees $\bs{n}$ and $\bs{m}$ respectively, the associated \textit{apolar form} is obtained by setting
\begin{equation}\label{mutliapol}
\{f,g\}=\U(f,g)\,\bs{[}1\,0\bs{]}^{\bs{n}-\bs{m}}\bs{[}2\,1\bs{]}^{\bs{m}},
\end{equation}
so that, $f(\bs{x}_0,\bs{y}_0)$ and $g(\bs{x}_0,\bs{y}_0)$ are said to be \textit{apolar} if and only if $\{f,g\}=0$ or, equivalently, from \eqref{Apolmulti}, if and only if:
\begin{equation}\label{mutliapolbis}
\U(f)\,\bs{x}_1^{\bs{k}}\bs{y}_1^{\bs{n}-\bs{m}-\bs{k}}\,g(\bs{x}_1,\bs{y}_1)=0 \quad \forall \, \bs{k}: \bs{0}\leq\bs{k}\leq \bs{n}-\bs{m}.
\end{equation}
Note that, by virtue of \eqref{mutliapolbis} the $\C$-vector space of the forms of degree $\bs{m}$ that are apolar to a given form of degree $\bs{n}$ has, in general, dimension $s(\bs{m})-s(\bs{n}-\bs{m})$, with $s(\bs{m}), s(\bs{n}-\bs{m})$ as defined in \eqref{lenght}. 
Indeed, \eqref{Apolmulti} asks for the solution of the system in $s(\bs{n}-\bs{m})$ equations and $s(\bs{m})$ unknowns $a_{2\bs{h}_j}$'s
$$ \sum_{i=0}^{s} \binom{\bs{m}}{\bs{h}_i}(-1)^{\rho(\bs{m}-\bs{h}_i)}a_{2\bs{h}_i} a_{1\,\bs{h}_i + \bs{k}_j} = 0 \qquad \text{ for } j=0,\dots,r \,,$$
where $s=s(\bs{m})-1, \{\bs{k}:\bs{k}\leq \bs{m}\}=\{\bs{h}_0=\bs{0},\bs{h}_1,\dots,\bs{h}_s=\bs{m}\}$ and $r=s(\bs{n}-\bs{m})-1$, with 
$\{\bs{k}:\bs{k}\leq \bs{n}-\bs{m}\}=\{\bs{k}_0=\bs{0},\bs{k}_1,\dots,\bs{k}_r=\bs{n}-\bs{m}\}$.\\

In analogy with the case $d=0$, the apolar form leads to generalized orthogonal polynomial systems in $d+1$ indeterminates  with respect to a linear functional $\EE\colon\C[\bs{x}]\to\C$ satisfying \eqref{E} and \eqref{Ebis}, and with  $a_{\bs{k}}=a_{0\bs{k}}=a_{1\bs{k}}$. Trivially, $\EE$ can be determined either by the sequence of its moments or by the family $\{f_{\bs{n}}(\bs{x}_0,\bs{y}_0)\,|\,\bs{n}\in\N^{d+1}\}$ of $2(d+1)$-forms defined by
\begin{equation}\label{Lformmulti}
f_{\bs{n}}(\bs{x}_0,\bs{y}_0)=\sum_{\bs{0}\leq \bs{k}\leq \bs{n}}\binom{\bs{n}}{\bs{k}}\,a_{\bs{k}}(-1)^{\rho(\bs{n}-\bs{k})}\,\bs{x}_0^{\bs{n}-\bs{k}}\bs{y}_0^{\bs{k}}, \quad \forall \bs{n}\in\N^{d+1}.
\end{equation}

\begin{defn}
For the fixed $\leq $, consider a triangular array of polynomials in $\C[\bs{x}_0]$  as in \eqref{polm}.
Then $\{p_{\bs{n}\bs{m}}(\bs{x}_0): \bs{0}<\bs{m}\leq \bs{n}\}_{\bs{n} \in \mathbb{N}_0^{d+1}}$ is a \textit{generalized orthogonal polynomial system} for $\EE$ if it satisfies
\begin{equation}
\label{GMOPS2}\EE[\bs{x}_0^{\bs{k}}\,p_{\bs{n}\bs{m}}(\bs{x}_0)]=0 \quad \forall \, \bs{k}: \bs{0}\leq\bs{k}\leq\bs{n}-\bs{m},
\end{equation}
and $\EE[\bs{x}_0^{\bs{k}}\,p_{\bs{n}\bs{m}}(\bs{x}_0)] \neq 0$ for every multi-index $\bs{k} \leq \bs{n}$, covering $\bs{n}-\bs{m}$.
\end{defn}

With these tools, Theorem \ref{ApolarityOrth} can be extended to the multivariable setting.

\begin{thm}
A set of $2(d+1)$-ary forms  $\{g_{\bs{n}\bs{m}}(\bs{x}_0,\bs{y}_0)$:  $\bs{0}<
\bs{m}\leq \bs{n}\}$ of degree $\bs{n}$, satisfies 
\[\{f_{2\bs{n}-\bs{m}},g_{\bs{n}\bs{m}}\}=0,\, \text{ for }\, \bs{0}<\bs{m}\leq \bs{n}, \]
or, equivalently,
\[\U(f_{2\bs{n}-\bs{m}})\,\bs{x}_1^{\bs{k}}\bs{y}_1^{\bs{n}-\bs{m}-\bs{k}}\,g_{\bs{n}\bs{m}}(\bs{x}_1,\bs{y}_1)=0 \quad \forall \bs{k}: \bs{0}\leq\bs{k}\leq \bs{n}-\bs{m},  \, \bs{0} < \bs{m}\leq \bs{n}\]
 if, and only if, $g_{\bs{n}\bs{m}}(\bs{x_0},\bs{y}_0) = \bs{y}_0^{\bs{n}}p_{\bs{n}\bs{m}}(\bs{x}_0^{\bs{n}}\bs{y}_0^{-\bs{n}})$, with $p_{\bs{n}\bs{m}}(\bs{x_0})$  generalized orthogonal system for $\EE$ as in \eqref{GMOPS2}.\\
\end{thm}

Classical orthogonal polynomial systems are uniquely determined up to a multiplicative factor (see Theorem 2.2. and its corollary in \cite{Chihara}). This is due to the fact that the space of all binary forms of degree $n$, that are apolar to a given form of degree $2n-1$, has, in general, dimension $1$. Moreover, as shown in Chapter \ref{Chapter_OPS}, classical orthogonal polynomials arise by selecting those polynomials corresponding to $m=1$ from a generalized orthogonal polynomial system $\{p_{nm}(x_0)\}_{n,m\geq 1}$.  \\
This phenomenon is no longer true in the multivariable setting. In fact, if $\bs{\delta}$ has rank $1$ (i.e. $\rho(\bs{\delta})=1$), the space of the forms of degree $\bs{n}$ that are apolar to a given form of degree $2\bs{n}-\bs{\delta}$ does not have, in general, dimension $1$. For instance, consider $\bs{\delta}_k =(0,0,\dots,1,\dots,0)$, then $2\bs{n}-\bs{\delta}_k= ( 2n_0,  2n_1,\dots, 2n_k -1,   \dots, 2n_d )$,  $2\bs{n}-\bs{\delta}_k- \bs{n} = (n_0,  n_1,\dots, n_k -1,   \dots, n_d )$, hence for every $h_j  \leq n_j$ for $j=k+1, \dots, d$, the element $(n_0,  n_1,\dots, n_k , h_{k+1}  \dots, h_d ) \leq  \bs{n} $ but  $\nleq \bs{n}-\bs{\delta}_k$. As such, the polynomial sequence obtained by extracting all the polynomials $p_{\bs{n}\bs{m}}(\bs{x}_0)$, for $\bs{m}=\bs{\delta}_k$, is not uniquely determined, up to multiplicative factors.
 
To achieve the multivariable counterpart to this phenomenon, it is necessary to proceed as follows: let $\bs{\delta}_0=(1,0,\ldots,0)$, $\bs{\delta}_1=(0,1,\ldots,0)$, \ldots, $\bs{\delta}_d=(0,0,\ldots,1)$ be the only $d+1$ elements in $\N^{d+1}$ having rank equal to $1$, and consider the binary forms $\{f_i(\bs{x}_0,\bs{y}_0)\,|\,0\leq i\leq d\}$ defined by \eqref{Lformmulti} with $f_i(\bs{x}_0,\bs{y}_0):=f_{2\bs{n}-\bs{\delta}_i}(\bs{x}_0,\bs{y}_0)$. Then, consider the space of the $2(d+1)$-ary forms $g(\bs{x}_0,\bs{y}_0)$ of degree $\bs{n}$ such that:
\[\{f_0,g\}=\{f_1,g\}=\cdots=\{f_d,g\}=0.\]
Then, mindful of the notation introduced in the previous sections, a form $g(\bs{x}_0,\bs{y}_0)$ of degree $\bs{n}$ is apolar to each $f_i(\bs{x}_0,\bs{y}_0)$ if, and only if, the polynomial $p_{\bs{n}}(\bs{x}_0)$ obtained from $g(\bs{x}_0,\bs{y}_0)$ by setting $y_{i0}=1$ for all $i=0,\dots,d+1$, satisfies:
$$ \EE[\bs{x}_0^{\bs{k}} p_{\bs{n}}(\bs{x}_0)] = 0  \quad \forall \bs{k}:\bs{0}\leq \bs{k} \leq \bs{n}-\bs{\delta}_j, \, \forall j=0,\dots,d, \text{ and } \EE[\bs{x}_0^{\bs{n}} p_{\bs{n}}(\bs{x}_0)]\neq 0,$$
and, hence, if and only if
\begin{equation}\label{MultiOrtogonali}
\EE[\bs{x}_0^{\bs{k}}\,p_{\bs{n}}(\bs{x}_0)]=0 \quad \forall \bs{k}: \bs{0}\leq\bs{k}<\bs{n}, \text{ and } \EE[\bs{x}_0^{\bs{n}} p_{\bs{n}}(\bs{x}_0)]\neq 0.
\end{equation}
These relations imply that $p_{\bs{n}}(\bs{x}_0)$ belongs to a vector space whose dimension, in general, is $1$, and therefore the sequence $\{p_{\bs{n}}(\bs{x}_0)\}_{\bs{n} \in \mathbb{N}_0^{d+1}}$ is uniquely determined up to a multiplicative factor. In particular, the polynomials $p_{\bs{n}}(\bs{x}_0)$ satisfy:
\[\EE[p_{\bs{n}}(\bs{x}_0)p_{\bs{m}}(\bs{x}_0)] =0 \text{ for }\bs{m}<\bs{n} \text{ or }\bs{m}>\bs{n}\, , \text{ and } \EE[\bs{x}_0^{\bs{n}} p_{\bs{n}}(\bs{x}_0)]\neq 0.\]
In conclusion, explicit formulae for $p_{\bs{n}}(\bs{x}_0)$ can be obtained by applying a reasoning that closely parallels the proofs of Theorem \ref{detformulti} and Theorem \ref{morthpol}, as summarized in the next statement.

\begin{thm}
\label{MultiOrth}
For every fixed $\bs{n} \in \mathbb{N}_0^{d+1}$, let $\{\bs{k}\,|\,\bs{0}\leq\bs{k}\leq\bs{n}\}=\{\bs{k}_0:=\bs{0},\bs{k}_1,\ldots,\bs{k}_s=\bs{n}\}$. Then, the polynomials defined via:
\begin{equation}
\label{DetMutli}
p_{\bs{n}}(\bs{x}_0)=
\begin{vmatrix}
1&\bs{x}_0^{\bs{k}_1}&\bs{x}_0^{\bs{k}_2}&\ldots&\bs{x}_0^{\bs{k}_s}\\
a_{\bs{0}}&a_{\bs{k}_1}&a_{\bs{k}_2}&\ldots&a_{\bs{k}_s}\\
a_{\bs{k}_1}&a_{\bs{k}_1+\bs{k}_1}&a_{\bs{k}_2+\bs{k}_1}&\ldots&a_{\bs{k}_s+\bs{k}_1}\\
a_{\bs{k}_2}&a_{\bs{k}_1+\bs{k}_2}&a_{\bs{k}_2+\bs{k}_2}&\ldots&a_{\bs{k}_s+\bs{k}_2}\\
\vdots&\vdots&\vdots&&\vdots\\
a_{\bs{k}_{s-1}}&a_{\bs{k}_{s-1}+\bs{k}_{1}}&a_{\bs{k}_{s-1}+\bs{k}_2}&\ldots&a_{\bs{k}_{s-1}+\bs{k}_s}
\end{vmatrix},
\end{equation}
or, equivalently, via
\begin{equation}
\label{HeineMulti}
p_{\bs{n}}(\bs{x}_0)=\EE_0[\bs{\Delta}^*_{\bs{n}}(\bs{x}_1,\bs{x}_2,\ldots,\bs{x}_{s})\bs{\Delta}_{\bs{n}}(\bs{x}_0,\bs{x}_1,\ldots,\bs{x}_{s})],
\end{equation}
satisfy \eqref{MultiOrtogonali}, provided that $ \deg\,p_{\bs{n}}(\bs{x}_0) =\bs{n}$, where $\EE_0[\bs{x}_0^{\bs{s}_0}\bs{x}_1^{\bs{s}_1}\cdots \bs{x}_r^{\bs{s}_r}] := \bs{x}_0^{\bs{s}_0}\EE[\bs{x}_1^{\bs{s}_1}\cdots \bs{x}_r^{\bs{s}_r}]$.
\end{thm}

\chapter{Cumulants and diagonal measures}\label{Chapter_Diagonal}

In Chapter \ref{Chapter_WhatIsOrtho}, orthogonal polynomials are found to correspond to apolar covariants of binary forms, under the image of a suitable linear operator (see Theorem \ref{ApolarityOrth}). When considering only the group of affine transformations, one speaks about semi-invariants. A semi-invariant (of binary forms of degree $n$) is a polynomial $\mathcal{I}(a_0,a_1,\dots,a_n)$ in the coefficients of a generic binary form $f$ of degree $n$, such that there exist non-negative integers $\mu, w$ satisfying:
$$ \mathcal{I}(\bar{a}_0,\dots,\bar{a}_n) = \alpha^{\mu} \det(T)^{w} \mathcal{I}(a_0,\dots,a_n),$$
for every matrix $T \in GL_2(\C)$ of the type:
$$ T = \bigg(\,\begin{matrix}
\alpha & 0 \\
\gamma & \delta
\end{matrix}\,\bigg), 
$$
where the coefficients $\bar{a}_k$ are determined according to \eqref{tranlsate3}, for the umbral operator $U x_i^r y_i^{n-r} = a_r$ (remark that the subgroup of such matrices is isomorphic to the group of the affine transformations of the plane, determined by $x \mapsto \alpha x + \gamma/\delta$). When dealing with semi-invariants, it is sufficient to focus on matrices of the type:
$$ T = \bigg(\,\begin{matrix}
1 & 0 \\
-t & 1
\end{matrix}\,\bigg),
$$
for $t \in \mathbb{R}$, and in such cases, a semi-invariant $\mathcal{I}(a_0,\dots,a_n)$ satisfies:
$$ \mathcal{I}(\bar{a}_0,\bar{a}_1\dots,\bar{a}_n) = \mathcal{I}(a_0,a_1,\dots,a_n),$$
with $\bar{a}_k = \sum_{r=0}^k \binom{k}{r}t^{r} a_{k-r}$.

For probabilists and statisticians, the most important family of semi-invariants are the cumulants \cite{McCullagh,Speed1,Shiryaev}: setting $a_0=1$, the $n$-th cumulant of a random variable $X$, admitting finite moments $a_1,\dots,a_n$ up to the order $n$, is a polynomial $\chi_n(X)= \chi_n(a_1,\dots,a_n)$ in the first $n$ moments of $X$ (without constant term), which is semi-invariant under translation:
$$ \chi_n(X+t) = \chi_n(X) \quad \forall n \geq 2, \quad \chi_1(X+t)= \chi_1(X) + t,$$
for every $t\in \mathbb{R}$, or equivalently $\chi_n(\bar{a}_1,\dots,\bar{a}_n) = \chi_n(a_1,\dots,a_n)$.\\

In this chapter, using the language introduced by Rota and Wallstrom \cite{Rota1}, cumulants are presented as the stochastic counterpart to orthogonal polynomials, in the sense that cumulants are found to be the expectation of random variables that are invariant under translation: diagonal measures (see Theorem \ref{TEO}). In the wake, some combinatorial and statistical properties of diagonal measures will be highlighted. \\

The original setting in \cite{Rota1} gave the birth to the first systematic theory of stochastic integration in combinatorial terms. The basic idea is the representation of the product random measure as a sum of partition-depending measures, one of this being the exact random analogue of the classical one: the \textit{stochastic measure}. One speaks about exact random analogue because the product random measure does not vanish identically when integrating on the so called \textit{diagonal sets}, producing therefore an ``anomaly'' (see \cite{Engel,PeccatiTaqqu,Rota1}). As a consequence, several known identities concerning stochastic integrals were recovered in a more compact way as identities over the lattice of partitions, giving a unifying treatment of the subject. \\

In the sequel, the accent will be put, in particular, on the diagonal measures associated with L\'{e}vy processes: the process of variations. As pointed out in \cite{Sole}, the variations process of a semi-martingale is homogeneous, that is, for any real number $a$ and for all $n\geq 1$, $ (aX)^{(n)} = a^{n}X^{(n)}$. Moreover, for two semi-martingales $X$ and $Y$, with zero quadratic covariation $[X, Y]_{t} =0$ for all $t$, the additivity property turns into: $(X + Y)^{(n)} = X^{(n)} + Y^{(n)}$. When $Y$ is a constant process, say $Y_{t} =c$, for all $t\geq 0$, the additivity turns to be a semi-invariance property, namely $(X + c)^{(1)} = X^{(1)} + c$, while for all $n \geq 2$, $(X+c)^{(n)} = X^{(n)}$. Since these three properties characterize cumulants \cite{Rota3}, and  $\mathbb{E}[X_{t}^{(n)}] = \chi_{n}(X_{t})$, the process of variations (and, more generally, diagonal measures) can be referred to as \textit{functions of cumulant type}. \\

Here is a short outline of the chapter:
\begin{enumerate}
\item Section \ref{Preliminaries_RotaWallstrom} presents a brief summary on the combinatorial theory of stochastic integration, which is not meant to be exhaustive: any unspecified result can be traced to \cite{PeccatiTaqqu} and \cite{Rota1};
\item in Section \ref{CumDiag}, Theorem \ref{TEO} provides cumulants as (deterministic) measures, namely the expectation of the diagonal measures. As a consequence, an alternative simpler proof of the identity between cumulants of a L\'{e}vy processes and cumulants of its variation processes (see \eqref{Cum}) is achieved. The advantage of this approach is that the expression provided for the cumulants does not depend on the orders of the involved variations processes. 
\item In Section \ref{kstat}, Theorem \ref{TEO} is given a statistical interpretation: more specifically, it is  shown that diagonal measures naturally correspond to $\kappa$-statistics for positive random measures. At the end, the discussion is supplied with comparison with analogous results in the free probability setting \cite{Ans1,Ans2}. 
\end{enumerate}

\section{Preliminaries: Random measures}\label{Preliminaries_RotaWallstrom}

Let $(Z, \mathcal{Z})$ denote a Polish space (that is, a complete metrizable and separable topological space), where $\mathcal{Z}$ denotes the Borel $\sigma$-algebra of $Z$. For every $n\geq 1$, let $(Z^{\otimes n}, \mathcal{Z}^{\otimes n})$ denote the $n$-fold product space of $(Z, \mathcal{Z})$. Following Rota and Wallstrom \cite{Rota1}, a measurable set of the type $\mathcal{A} = A_{1}\times \cdots \times A_{n}$  in $\mathcal{Z}^{\otimes n}$  will be called a rectangle, and a rectangle with equal sides will be called a cube.\\
The combinatorial approach to the theory of stochastic integration is based on the concept of partitions of a set: for any partition $\pi \in \mathcal{P}([n])$, and any measurable set $\mathcal{A} \in \mathcal{Z}^{\otimes n}$, consider:
\begin{itemize}
\item[(i)] the \textit{diagonal} set associated with $\pi$:
$$ \mathcal{A}_{\pi} = \{(z_{1},\dots,z_{n})\in \mathcal{A}: z_{i} = z_{j} \text{ if and only if } i \sim_{\pi} j\};$$
\item[(ii)] the \textit{superdiagonal} set associated with $\pi$:
$$ \mathcal{A}_{\geq \pi} = \{(z_{1},\dots, z_{n}): i \sim_{\pi} j \Rightarrow z_{i} = z_{j}\},$$
where $i \sim_{\pi} j$ denotes the equivalence relation on $[n]$: $i \sim_{\pi} j$ if and only if $i$ and $j$ belong to the same block of $\pi$.
\end{itemize}
Since the product space is itself a Polish space (and therefore, it is second countable), every diagonal set can be written as the union of at most countably many rectangles whose kernel is $\pi$ (where a rectangle $A_{1} \times \cdots \times A_{n}$ is said to have kernel $\pi$ if $A_{i} \cap A_{j} = \emptyset$ for $i \nsim_{\pi} j$). Thanks to the $\sigma$-additivity property of the random measures defined later on, one can and will focus the attention only on such rectangles.\\
It is easy to check that the following properties hold:
\begin{enumerate}
\item $\mathcal{A}_{\geq \pi} = \bigcup\limits_{\sigma \geq \pi}\mathcal{A}_{\sigma}$ (in particular, $\mathcal{A}_{\geq \hat{0}} = \bigcup\limits_{\sigma \geq \hat{0}}\mathcal{A}_{\sigma} = \mathcal{A}$);
\item $ \mathcal{A}_{\pi} \cap \mathcal{A}_{\sigma} = \emptyset \; \text{ for } \sigma \neq \pi. $
\end{enumerate}

\begin{defn}
A \textbf{random measure} \index{Random measure}  $\Phi$ on $(Z, \mathcal{Z})$ is a finitely additive set function admitting a $\sigma$-additive extension, that maps each $A \in \mathcal{Z}$ to a random variable $\Phi(A)$ in some Banach space of random variables on a fixed probability space $(\Omega, \mathcal{F}, \mathbb{P})$, and such that $\Phi(\emptyset) = 0$ a.s.\,. $\Phi$ is said to be a \textbf{completely random measure}\index{Random measure!Completely random measure} (for short, CR-measure) if it has ``independent increments'', that is if it maps pairwise disjoint sets to a system of independent random variables.
\end{defn}

\begin{rmk}
Usually, random measures are defined over $\mathrm{L}^2(\Omega)$, or $\bigcap\limits_{p \geq 1}\mathrm{L}^p(\Omega)$ (see \cite{PeccatiTaqqu}). In the literature, completely random measures are also called \textit{independently scattered measures}. Unlike the deterministic product measure, the $\sigma$-additive extension of the product $\Phi^{\otimes n}$ need not be uniquely determined (see \cite{Engel}). When this is not the case, and accordingly with the definitions given in \cite{PeccatiTaqqu} and \cite{Rota1}, $\Phi$ will be referred to as a \textit{good random measure}.\\
\end{rmk}

On the space $(Z, \mathcal{Z})$, a $\sigma$-finite \textit{non-atomic measure} $\nu$ is given to control the random measure $\Phi$ as follows:
$$ \mathbb{E}[\Phi(B)^{2}] = \nu(B), \quad \mathbb{E}[\Phi(B)\Phi(C)] = \nu(B \cap C),$$
where, as usual, $\mathbb{E}$ denotes the expectation on the fixed probability space. The most important feature of a non-atomic measure $\nu$ is that, for every integer $N$, any measurable set $A$, with $0<\nu(A) < \infty$, can be partitioned into $N$ measurable subsets, pairwise disjoint and with the same measure:
\begin{equation}
\label{nonatomicProperty}
 A = \bigcup_{i = 1}^{N} A_{iN}, \quad \text{ with } \nu(A_{iN}) = \frac{\nu(A)}{N}.
 \end{equation}

\begin{rmk}
By definition, a measure $\nu$ is non-atomic if for every measurable set $\mathcal{A}$ with $0 <\nu(\mathcal{A}) < \infty$, for every $r \in (0, \nu(\mathcal{A}))$ there exists a measurable set $B \subset \mathcal{A}$ with $\nu(B) = r$.  The non-atomicity of the measure $\nu$ is needed to ensure that the random field $\Phi$ is composed of infinitely divisible distributions (see, for instance, \cite[Proposition 5.3.2]{PeccatiTaqqu}), and to ensure that the class of elementary (simple) functions of $n$ variables is dense in the space $\mathrm{L}^2(\nu^{\otimes n})$ of functions that are square-integrable with respect to $\nu^{\otimes n}$, for every $n$ (see, for instance, \cite[Lemma 5.5.2]{PeccatiTaqqu}). The most important example of non-atomic measure is the Lebesgue measure on the real line. \\
\end{rmk}

For good CR-measures, the main idea in \cite{Rota1} was to consider the restrictions of the product measure to diagonal and superdiagonal sets as measures themselves.  In the sequel, as  in \cite{Sole}, the setting will be that of the product of $n$ jointly good random measures, according to the following definition.

\begin{defn}
Let $\Phi_1,\dots,\Phi_n$ be completely random measures given on the same space $(Z, \mathcal{Z})$. $\Phi_1,\dots,\Phi_n$ are \textbf{jointly good} if the (finitely additive) product vector measure $\Phi_1 \otimes \cdots \otimes \Phi_n$ can be extended to a unique $\sigma$-additive random measure on the product space $(Z^{\otimes n}, \mathcal{Z}^{\otimes n})$.
\end{defn} 
 
If $\Phi_1,\dots,\Phi_n$ are assumed to be jointly good on a Polish space $(Z,\mathcal{Z})$, the following definitions are well-posed.
\begin{defn}
For every $\pi \in \mathcal{P}([n])$ and $\mathcal{A} \in \mathcal{Z}^{\otimes n}$, define:
\begin{equation}
 St_{\pi}^{(\Phi_1,\dots,\Phi_n)}(\mathcal{A}) := \Phi_1\otimes \cdots \otimes \Phi_n(\mathcal{A}_{\pi})
 \end{equation}
 and
 \begin{equation}
  St_{\geq \pi}^{(\Phi_1,\dots,\Phi_n)}(\mathcal{A}) := \Phi_1 \otimes \cdots \otimes\Phi_n (\mathcal{A}_{\geq \pi}).
  \end{equation}
\end{defn}

It is easy to verify, by using the additivity property of $\Phi_1 \otimes \cdots \otimes \Phi_n$, and the M\"obius inversion theory on the lattice $\mathcal{P}([n])$ \cite{Rota2,Stanley}, that the measures $ St_{\pi}^{(\Phi_1,\dots,\Phi_n)}$ and  $St_{\geq \pi}^{(\Phi_1,\dots,\Phi_n)}$ satisfy the combinatorial identities summarized in the next proposition (see \cite{Sole}).
\begin{prop}
\label{prop1.1} Let $\Phi_1,\dots,\Phi_n$ be jointly good CR-measures. Then:
\begin{itemize}
\item[(i)] $ \Phi_1 \otimes \cdots \otimes \Phi_n = \sum\limits_{\sigma \in \mathcal{P}([n])}St_{\sigma}^{(\Phi_1,\dots,\Phi_n)};$
\item[(ii)] $ St_{\geq \pi}^{(\Phi_1,\dots,\Phi_n)} = \sum\limits_{\substack{\sigma \in \mathcal{P}([n]) \\ \sigma \geq \pi}}St_{\sigma}^{(\Phi_1,\dots,\Phi_n)};$
\item[(iii)] $ St_{\pi}^{(\Phi_1,\dots,\Phi_n)}= \sum\limits_{\substack{\sigma \in \mathcal{P}([n]) \\ \sigma \geq \pi}}\mu(\pi,\sigma)St_{\geq \sigma}^{(\Phi_1,\dots,\Phi_n)};$
\item[(iv)] $ St_{\hat{0}}^{(\Phi_1,\dots,\Phi_n)} = \sum\limits_{\sigma \in \mathcal{P}([n])}\mu(\hat{0},\sigma)St_{\geq \sigma}^{(\Phi_1,\dots,\Phi_n)} ,$
\end{itemize}
where $\mu(\sigma, \pi)$ is the M\"obius function on the interval $[\sigma, \pi] = \{\tau \in \mathcal{P}([n]): \sigma \leq \tau \leq \pi\}$ (see \cite{Rota2}).\\
\end{prop}

The measure $St_{\hat{0}}^{(\Phi_1,\dots,\Phi_n)}$ is called the \textit{stochastic measure} \index{Random measure!Stochastic measure}of order $n$: it is concentrated on the so-called completely non-diagonal subset $\mathcal{A}_{\hat{0}}$ of $\mathcal{A}$, that is, on the subset of $\mathcal{A}$ whose elements are the $n$-tuples with coordinates all distinct among themselves. In this direction, rectangles with kernel $\hat{0}$ are called triangles, since they have only trivial diagonal subsets. Moreover, note that  the product measure $\Phi_1 \otimes \cdots \otimes \Phi_n$ can be recovered  as $St_{\geq \hat{0}}^{(\Phi_1,\dots,\Phi_n)}$. More generally, when $\Phi_j=\Phi$ for every $j=1,\dots,n$, $St_{\geq \pi}^{[n]}:= St_{\geq \pi}^{(\Phi,\dots, \Phi)}$ is always a product measure, in the sense specified in the following proposition \cite{Rota1}.
\begin{prop}
\label{prop1.2}
The measure $St_{\geq \pi}^{[n]}$ is the product of the completely non-diagonal measures related to the blocks of $\pi \in \mathcal{P}([n])$:
\begin{equation}
St_{\geq \pi}^{[n]} = \bigotimes_{b \in \pi}St_{\hat{1}}^{b},\\
\end{equation}
where $St_{\hat{1}}^{b}$ is a short for  $St_{\hat{1}}^{(\Phi^{\otimes |b|})} $.\\
\end{prop}

\begin{rmk}
Thanks to Proposition \ref{prop1.2} and identities $(ii)$-$(iii)$ of Proposition \ref{prop1.1}, in dealing with random measures, it is sufficient to focus on the measures $St_{\hat{0}}^{[n]}$ and $St_{\hat{1}}^{[n]}$. 
\end{rmk}

\begin{defn}
If $\Phi_1,\dots,\Phi_n$ are jointly good CR-measure, the $n$-th \textbf{diagonal measure} \index{Random measure!Diagonal measure} of $A \in \mathcal{Z}$ is defined by $ \Delta_{n}(A) := St_{\hat{1}}^{(\Phi_1,\dots,\Phi_n)}(A^{\otimes n}).$
\end{defn}

It is easy to check that $\Delta_{n}(A \cup B) = \Delta_{n}(A) + \Delta_{n}(B)$ whenever $A$ and $B$ are disjoint, and that diagonal measures satisfy the following intersection property \cite{Rota1}:
\begin{equation}
\label{St_Diag}
St_{\hat{1}}^{(\Phi_1,\dots,\Phi_n)}(A_{1}\times \cdots \times A_{n}) = St_{\hat{1}}^{(\Phi_1,\dots,\Phi_n)}\bigg(\big(\bigcap_{i \in [n]}A_{i}\big)^{\otimes n}\bigg) = \Delta_{n}\bigg(\bigcap_{i \in [n]}A_{i}\bigg).
\end{equation}

\section{Cumulants and diagonal measures}\label{CumDiag}
Jointly multiplicative random measures are defined accordingly  with the definition of multiplicative measure given in \cite{Rota1,PeccatiTaqqu}.
\begin{defn}
Let $\Phi_{1},\dots, \Phi_{n}$  be (jointly good) random measures over the same Polish space. $\Phi_{1}, \dots, \Phi_{n}$ are \textbf{jointly multiplicative} if, for every partition $\pi \in \mathcal{P}([n])$, the following factorization over the blocks of $\pi$ holds:
\begin{equation}
\mathbb{E}[St_{\pi}^{(\Phi_1,\dots, \Phi_n)}] = \bigotimes_{b \in \pi}\mathbb{E}[St_{\hat{1}}^{(\Phi_{j}:j \in b)}].
\end{equation}
\end{defn}
It can be easily proved, just rearranging the corresponding proof in \cite[Proposition 8]{Rota1}, that jointly multiplicative random measures can be characterized in the following way (it is a consequence of the fact that the product of a non-atomic measure does not charge diagonals).

\begin{prop}
Let $\Phi_{1},\dots, \Phi_{n}$ be (jointly good) CR-measures on a fixed Polish space. Then, the following statements are equivalent:
\begin{itemize}
\item[(i)]$\Phi_{1},\dots, \Phi_{n}$  are jointly multiplicative;
\item[(ii)] for every $b \subseteq [n]$, the deterministic measure $\mathbb{E}[St_{\hat{1}}^{(\Phi_j:j \in b)}(\cdot)]$ is non-atomic.
\end{itemize} 
\end{prop}
  
\begin{exm}
The main examples of multiplicative good CR-measures are listed below (see \cite{Rota1,PeccatiTaqqu}):
\begin{itemize}
\item[-] the \textit{Gaussian measure}, such that $\Phi(A) \sim \mathcal{N}(0,\nu(A))$ for every $A$ with $\nu(A) < \infty$, has diagonal measures  given by $\Delta_1(A)=0$, $\Delta_2(A) = \nu(A)$, $\Delta_n(A) = 0$, for all $n\geq 3$;
\item[-] the \textit{Poisson measure}, such that $\Phi(A)$ has a Poisson distribution of rate $\nu(A)$ for every $A$ with $\nu(A) < \infty$, has diagonal measures given by $\Delta_n(A) = \Phi(A)$, for all $n\geq 2$, while $\Delta_1(A) = \Phi(A) - \nu(A)$;
\item[-] the \textit{Compound Poisson measure} given by $\Phi(A) = X_1 + \cdots + X_{\Psi(A)}$, where $\Psi(\cdot)$ is a Poisson measure of control $\nu(A)$, has diagonal measure given by $\Delta_n(A) = X_{1}^n + \cdots + X_{\Psi(A)}^n$, for all $n\geq 1$;
\item[-] let $\Phi$ denote the random measure spanned by a c\'{a}dl\'{a}g\footnote{Within the theory of stochastic processes, c\'{a}dl\'{a}g is a short for \textit{continue à droite, limite à gauche}, indicating that the paths $X_t$ are right continuous and admit a left-limit.} L\'{e}vy process $\bs{X}=\{X_t\}_{t\geq 0}$, with control given by the Lebesgue measure on the positive half-line of the real numbers, namely $\Phi([s,t])=X_t-X_s$ for every $s<t$. Then, $\Delta_n([0,t]) = X_{t}^{(n)}$, where $\bs{X}^{(n)}=\{X_t^{(n)}\}_{t\geq 0}$ is the $n$-th process of variations of $\bs{X}$.
\end{itemize}
\end{exm}

The representation of cumulants as deterministic real measures is established via the next statement.
\begin{thm}
\label{TEO}
For $n\in \mathbb{N}$, let $\Phi_{1},\dots, \Phi_{n}$ be jointly good and multiplicative completely random measures. Then, for every $A_1,\dots, A_n \in \mathcal{Z}$ of finite measure,
\begin{equation}
\label{MainProof2}
\chi(\Phi_{1}(A_1), \dots, \Phi_{n}(A_n)) = \mathbb{E}[St_{\hat{1}}^{(\Phi_1,\dots, \Phi_{n})}\big(A_1 \times \cdots \times A_n\big)],
\end{equation}
where $\chi(\Phi_{1}(A_1), \dots, \Phi_{n}(A_n))$ denotes the multidimensional cumulant of the random vector $(\Phi_{1}(A_1), \dots, \Phi_{n}(A_n))$ (see \eqref{MomCum2}).
\end{thm}

\begin{proof}
By virtue of Proposition \ref{prop1.1},
\begin{equation}
\Phi_1 \otimes \cdots \otimes \Phi_n \big(A_1 \times \cdots \times A_n\big) = \sum_{ \pi \in \mathcal{P}([n])} \mathrm{St}_{\pi}^{(\Phi_1,\dots,\Phi_n)}\big(A_1 \times \cdots \times A_n\big).
\end{equation}
Since the random measures are jointly multiplicative, taking the expectation on both sides of the above equation yields:
\begin{equation}
\mathbb{E}[\Phi_1 \otimes \cdots \otimes \Phi_n \big(A_1 \times \cdots \times A_n\big)] = \sum_{\pi \in \mathcal{P}([n])}\prod_{b \in \pi} \mathbb{E}[\mathrm{St}_{\hat{1}}^{(\Phi_j:j \in b)}\big(\bigotimes_{j \in b}A_j\big)],
\end{equation}
from which trivially follows that:
\begin{equation}
\chi(\Phi_j(A_j):j \in b) = \mathbb{E}[\mathrm{St}_{\hat{1}}^{(\Phi_j :j \in b)}\big(\bigotimes_{j \in b}A_j\big)].
\end{equation}
In particular, when $\pi = \hat{1}$,  the desired relation follows.
\end{proof}

Applying \eqref{MainProof2} for $\Phi = \Phi_j$ for all $j=1,\dots,n$, and the diagonal measures $\Delta_{n}$ associated with its product $\Phi^{ \otimes n}$,  the following statements hold.

\begin{thm}
\label{MioTeo}
Let $\Phi$ be a multiplicative good CR-measure on the non-atomic Polish space $(Z, \mathcal{Z}, \nu)$. For every measurable set $A \in \mathcal{Z}$, with $\nu(A) < \infty$:
\begin{equation}
\label{miaformula}
\chi_{n}(\Phi(A)) = \mathbb{E}[ \Delta_{n}(A) ].
\end{equation}
\end{thm}

The following corollary points out the relationship between multivariate and univariate cumulants for any random measure, obtained straightforwardly from the intersection property of diagonal measures.
\begin{cor}\label{MultiCumVsUnivbis}
Let $\Phi$ be a multiplicative good CR-measure on the Polish space $(Z,\mathcal{Z},\nu)$. Then, for every $A_1,\dots, A_n \in \mathcal{Z}$, $\nu(A_j) < \infty$,  
\begin{equation}
\label{MultiCumVsUniv2}
\chi_{n}(\Phi(\bigcap_{j \in [n]} A_{j})) = \chi(\Phi(A_{1}),\dots, \Phi(A_{n})).
\end{equation}
\end{cor}

\begin{exm}
Let $\Phi$ denote the random measure spanning a L\'{e}vy process $\bs{X}=\{X_t\}_{t \geq 0}$, with $X_0 = 0$ a.s., namely $\Phi([s,t]) = X_t - X_s$ if $s < t$. Then, for every $n\in \mathbb{N}$ and every choice of real numbers $0 \leq t_1 < t_2 < \cdots < t_n$, identity \eqref{MultiCumVsUniv2} gives:
$$ \chi(X_{t_1},\dots,X_{t_n}) = \chi_n(X_{t_1}).$$
\end{exm}

Theorem \ref{MioTeo} entails, as a consequence, that the additivity of cumulants is inherited by the additivity of measures on disjoint measurable sets. Likewise, Corollary \ref{MultiCumVsUnivbis} provides the vanishing of cumulants of independent entries ($\Phi(A_i), \Phi(A_j)$, if $A_i \cap A_j = \emptyset$) as a consequence of  $\Phi(\emptyset) = 0$ a.s.. 

\subsection{An application to the process of variation of a L\'{e}vy process}

Let $\mathbf{X}=\{X_t\}_{t \geq 0}$ denote a c\'{a}dl\'{a}g L\'{e}vy process on $\mathbb{R}_+$, admitting moments of all orders (see \cite{Applebaum, Sato} for any unspecified definition). Recall that for every integer $n\geq 1$, the $n$-th process of variations $\mathbf{X}^{(n)}= \{X_t^{(n)}\}_{t \geq 0}$ associated with $\mathbf{X}$ is the L\'{e}vy process defined by:
\begin{itemize}
\item[(i)] $X_{t}^{(1)} = X_{t}$;
\item[(ii)] $X_{t}^{(2)} = \sigma^{2}t + \sum\limits_{0 < s \leq t} (\Delta X_s)^{2}$;
\item[(iii)] for every integer $n \geq 3$, $X_t^{(n)} = \sum\limits_{0 < s \leq t} (\Delta X_s)^{n}$,
\end{itemize}
where $\sigma^{2}$ denotes the variance of the Gaussian component of $\mathbf{X}$ (as in the Khintchine formula for its characteristic function), and $\Delta X_s$ is the jump in $s$, namely $\Delta X_s = X_s - X_{s^{-}}$, with $X_{s^{-}} = \lim\limits_{t \rightarrow s^{-}}X_t$. Cumulants of the variations process are related to the process $\bs{X}$ via:
\begin{equation}
\label{Cum}
\chi(X_{t}^{(c_{1})},\dots, X_{t}^{(c_{n})}) = \chi_{c_{1}+\cdots + c_{n}}(X_{t}),
\end{equation}
where $c_{j} \in \mathbb{N}$, and with a slight abuse of notation,  $\chi$ denotes both a multidimensional and a unidimensional cumulant.\\

Consider the $n$-dimensional variations process $(X^{(1)}, \dots, X^{(n)})$ associated  $\mathbf{X} = \{X_{t}\}_{t \geq 0}$. For every $t\geq 0$, the characteristic function of the random vector $(X_{t}^{(1)}, \dots, X_{t}^{(n)})$ is given by:
\begin{equation}
\varphi_{t}(z_1,\dots,z_n) = \exp\left\lbrace-\frac{1}{2}t z_{1}^{2}\sigma^{2} + \imath t z_{2}\sigma^{2} + t\int_{\mathbb{R}}(e^{\imath \sum_{j=1}^{n}z_{j}x^{j}} - 1- \imath z_{1}x ) \mu(dx)\right\rbrace,
\end{equation}
where $\mu$ denotes the L\'{e}vy measure of $\mathbf{X}$ and $\sigma^{2}$ the variance of its Gaussian component (see, for instance, \cite{Sole}).
Similarly, for every $t \geq 0$ and every choice of non-negative integers $c_{j}$'s, the characteristic function of the $n$-dimensional variations process $(X_t^{(c_{1})}, \dots, X_t^{(c_{n})})$ is given by:
\begin{equation}\label{CumFunVarProc}
\varphi_{t}(z_1,\dots,z_n) = \exp\left\lbrace-\frac{1}{2} t\sigma^2 \sum_{\substack{h,j=1,\dots,n \\ c_h=c_j =1}} z_{h}z_j + \imath t \sigma^{2}\sum_{\substack{h=1,\dots,n\\ c_h=2}}z_h + t\int_{\mathbb{R}}( e^{\imath \sum\limits_{c_j \geq 2}z_{j}x^{c_{j}}} - 1- \imath z_{1}x) \mu(dx)\right\rbrace.
\end{equation}
Thanks to \eqref{CumFunVarProc}, it follows that, if $c_1 +\cdots + c_n \geq 3$:
\begin{equation}
\chi(X_t^{(c_{1})}, \dots, X_t^{(c_{n})}) = t \int_{\mathbb{R}}x^{c_{1}+\dots + c_{n}}\mu(dx) = t L_{c_{1}+\dots + c_{n}},
\end{equation}
while, if $c_1 + \cdots + c_n = 2$ (in the case $n=2$), then one has:
\begin{equation}
\chi(X_t^{(c_{1})}, \dots, X_t^{(c_{n})}) = t\sigma^2 + t L_2,
\end{equation}
where $L_{m}$ denotes the $m$-th L\'{e}vy moment of $\mathbf{X}$ (that is, the $m$-th moment of the L\'{e}vy measure $\mu$). From these identities, (\ref{Cum}) follows trivially remarking that, if $c_1 +\cdots + c_n \geq 3$, then:
\begin{equation}
\chi_{c_{1}+\dots + c_{n}}(X_{t}) = t L_{c_{1}+\dots + c_{n}},
\end{equation}
while if $c_1 + \cdots + c_n = 2$ (in the case $n=2$):
\begin{equation}
\chi_{2}(X_{t}) = t\sigma^2 + t L_2,
\end{equation}

In general, \eqref{Cum} is proved by differentiating the respective cumulant generating functions (as for the computation of any cumulant), and by checking that the two quantities are equal. One of the advantages of the random measure approach is that \eqref{Cum} follows simply by taking an expectation. For any integer $c \geq 1$, consider the random measure spanning the variation process of the L\'{e}vy process $\mathbf{X}$, namely $\Phi_{c}([s,t]) = X_{t}^{(c)}$ - $X_{s}^{(c)}$, as introduced in \cite{Sole2}. In particular, $\Phi_{c}([0,t]) = X_{t}^{(c)}$ if $X_{0} = 0$ a.s.\,. For every choice of non-negative integers $c_{1},\dots, c_{n}$, the authors in \cite{Sole2} showed that the diagonal measure associated with the product random measure $\Phi_{c_{1}}\otimes \cdots \otimes \Phi_{c_{n}}$ corresponds to the variation of order $c_{1} + \cdots + c_{n}$. More precisely, if $St_{\hat{1}}^{(c_{1},\dots, c_{n})}$ stands for $St_{\hat{1}}^{(\Phi_{c_1},\dots,\Phi_{c_n})}$, and  following the notation introduced in \cite{Sole2}, one has:
\begin{equation}
\label{MultiDiagMeas}
St_{\hat{1}}^{(c_{1},\dots, c_{n})}([0,t]^{\otimes n}) = \Phi_{c_{1}}\otimes \cdots \otimes \Phi_{c_{n}}\big(([0,t]^{\otimes n})_{\hat{1}}\big) = \Phi_{c_{1}+\cdots + c_{n}}([0,t]) = X_{t}^{(c_{1}+\cdots + c_{n})},
\end{equation}
and therefore, if $m=c_1 + \cdots +c_n$,
\begin{equation}
St_{\hat{1}}^{(c_{1},\dots, c_{n})}([0,t]^{\otimes n}) = St_{\hat{1}}^{(\Phi_1,\dots,\Phi_1)}([0,t]^{\otimes m}),
\end{equation}
namely, the diagonal measure associated with $\Phi_1 = \Phi$. 
\begin{rmk}
For the random measures $\Phi_{c_1},\dots, \Phi_{c_n}$ spanning the processes of variations of orders $c_1,\dots,c_n$, Proposition \ref{prop1.2} has been generalized to yield (see \cite[Lemma 4.4]{Sole}): 
$$ \Phi_{c_1}\otimes \cdots \otimes \Phi_{c_n}\big(  (A^{\otimes n})_{\geq \sigma} )    \big) = \prod_{j=1}^m \Phi_{\sum\limits_{i \in B_j}c_j}(A),$$
for $\sigma \in \mathcal{P}([n])$, where $B_1,\dots,B_m$ denote the blocks of $\sigma$.\\
\end{rmk}

Most importantly, the random measures spanning the processes of variation of a Lévy process are jointly-multiplicative.
\begin{prop}
The random measures $\Phi_{c_{1}}, \dots, \Phi_{c_{n}}$ generating the variations processes of orders $c_{1},\dots, c_{n}$ of a (c\'{a}dl\'{a}g) L\'{e}vy process $\mathbf{X}$, with moments of all orders, are jointly multiplicative.
\end{prop}
\begin{proof}
From \eqref{MultiDiagMeas}, it follows that for every $b \subseteq [n]$,
$$ \mathbb{E}[St_{\hat{1}}^{(c_j:j \in b)}\big([0,t]^{\otimes |b|}\big)] = 	\mathbb{E}[X_t^{(a_b)}] = L_{a_b} \, t \, ,$$
where $a_b:= \sum\limits_{j \in b} c_j$. 
\end{proof}

For the sake of convenience, Theorem \ref{TEO} will be reformulated explicitly for random measures spanning the processes of variation of a L\'{e}vy process.
\begin{prop}
Let $\mathbf{X}$ be a (c\'{a}dl\'{a}g) L\'{e}vy process, with finite moments of all order. For every $n\in \mathbb{N}$, and every choice of non-negative integers $c_1,\dots,c_n$:
\begin{equation}
\label{rel1}
\chi(X_{t}^{(c_{1})},\dots,X_{t}^{(c_{n})}) = \mathbb{E}[St_{\hat{1}}^{(c_{1},\dots, c_{n})}\big([0,t]^{\otimes n}\big)].
\end{equation}
\end{prop}

As a consequence, (\ref{Cum}) can be proved in few lines. 
\begin{proof}
As already recalled, for a random measure spanning a L\'{e}vy process, the diagonal measures correspond to the processes of variations: $\Delta_{n}([0,t]) = X_{t}^{(n)}$. Then, from (\ref{miaformula}) and by virtue of Theorem \ref{TEO},  the identity (\ref{Cum}) follows simply by taking the expectation in (\ref{MultiDiagMeas}).
\end{proof}

\section{Diagonal measures and $\kappa$-statistics}\label{kstat}
Throughout this section, assume that $\Phi$ is a positive good CR-measure on the fixed non-atomic Polish space $(Z,\mathcal{Z},\nu)$, that is $\Phi(A) \geq 0$ a.s. for every measurable set $A$ in $\mathcal{Z}$, with $\nu(A)$ finite. 

\begin{exm}
This is the case for the random measure on $\mathbb{R}_+$ spanning a L\'{e}vy process that is a \textit{subordinator} (namely, a L\'{e}vy processes that is a.s. increasing in time), as for the Poisson and the Gamma processes (see \cite{Applebaum}). \\
\end{exm}
Few statistical definitions, in the setting of simple random sampling, will be needed.

\begin{defn}$ $
\begin{itemize}
\item[-] An estimator $V$ of a population characteristic $\theta$ is called \textbf{unbiased} if $\mathbb{E}[V] = \theta$. 
\item[-] Given a finite population $\mathbf{x}=\{x_1,\dots,x_N\}$ of size $N$, consider a sequence of statistics $T=\{T_1,\dots,T_N\}$, with $T_n$ function of $n$ variables for all $n\geq 1$ (usually, $\theta:= T_N(\mathbf{x})$ is a parameter to be estimate). $T$ is said to be \textbf{inherited on the average} if, for every $n \leq N$, the average over all possible samples  $\mathbf{y}=(y_1,\dots,y_n)$ of size $n$, drawn from the population $\mathbf{x}$, equals $T_N(x_1,\dots,x_N)$, in symbol:
$$ \mathbb{E}[T_n(\mathbf{y})|\mathbf{x}] = T_N(\mathbf{x}) .$$
\item[-] For an infinite population, the inheritance on the average is  satisfied in the limit, as:
$$ \lim_{n\rightarrow \infty} \mathbb{E}[T_n(\bs{y})] = \theta,$$
where $\mathbf{y}=(y_1,\dots,y_n)$ denotes a sample of size $n$ drawn from the population.
\end{itemize}
\end{defn}

Remark that unbiasedness and inheritance on the average are two structurally different concepts: indeed, unbiasedness refers to a single estimator, while inheritance is concerned with sequences of statistics. Sometimes, statistics being inherited on the average for finite populations are called \textit{natural statistics} (see \cite{McCullaghSenatoDiNardo} for a new approach to natural statistics for spectral samples, via symbolic methods).

\begin{defn}
Given any statistical distribution, or equivalently any random variable $X$, the $\kappa$-\textbf{statistics} \index{$\kappa$-statistics} $c_{n}$ is the unique symmetric unbiased estimator of its cumulant $\chi_{n}(X)$, that is, $\mathbb{E}[c_{n}] = \chi_{n}(X)$. A \textbf{polykay} \index{Polykay}$c_{r,\dots ,s}$ is an unbiased estimator for a product of cumulants: $ \mathbb{E}[c_{r,\dots, s}]= \chi_{r}(X)\cdots \chi_{s}(X)$, for $r,\dots,s \in \mathbb{N}$.
\end{defn}

Moreover, sequences of $\kappa$-statistics and polykays satisfy the inheritance on the average property.\\

By virtue of Theorem \ref{MioTeo}, the $n$-th diagonal measure $\Delta_n(A)$ is an unbiased estimator for the $n$-th cumulant of $\Phi(A)$ whenever $\Phi$ is a good CR-measure. Similarly, if $\Phi$ is multiplicative,  the sequence $St_{\pi}^{[n]}(A^{\otimes n})$ is a sequence of polykays for the parent distribution $\Phi(A)$.

More generally, the decomposition (\ref{nonatomicProperty}) yields an explicit description of   the $\kappa$-statistics for $\Phi(A)$, in the sense of the following proposition.

\begin{prop}\label{Kstat}
Consider the population described by $\Phi(A)$, and a sample of size $N$, whose elements are described by the i.i.d. random variables $ \Phi(A_{1N}),\dots,\Phi(A_{NN})$. For every integer $n \geq 1$, the sequence with general term  $\sum\limits_{i=1}^{N}\Phi(A_{iN})^{n}$ satisfies the inheritance on the average property as to estimation of $\chi_n(\Phi(A))$, namely:
\begin{equation}
\lim_{N \rightarrow \infty} \mathbb{E}\bigg[ \sum\limits_{i=1}^{N}\Phi(A_{iN})^{n} \bigg] = \chi_{n}(\Phi(A)).
\end{equation}
\end{prop}

\begin{proof}
Last statement follows straightforwardly starting from the $\mathrm{L}^{2}$-limit relation for diagonal measures provided in \cite[Proposition 12]{Rota1}: for a fixed integer $n\geq 1$, and a multiplicative good CR-measure $\Phi$, if for every $m < n$ there exist positive constants $c_{m}$ and $d_{m}$ such that:
\begin{enumerate}
\item $\mathbb{E}[ (\Delta_{m} - \langle \Delta_{m}\rangle)^{2}] \leq c_{m}\nu$,
\item $\vert \mathbb{E}[ \Delta_{m}]\vert \leq d_{m}\nu$;
\end{enumerate}
(where $\nu$ denotes the non-atomic control of $\Phi$ and $\Delta_n$ the $n$-th diagonal measure associated with $\Phi^{\otimes n}$), then it holds true that:
\begin{equation}
\label{limitRel}
\Delta_{n}(A) = \lim_{N \rightarrow \infty}\sum_{i=1}^{N}\Phi(A_{iN})^{n} \,,
\end{equation}
where the limit is to be intended in $\mathrm{L}^{2}(\Omega)$.

Since, for every L\'{e}vy process, the non-atomic control is given by the Lebesgue measure times its second L\'{e}vy moment, from the identity:
$$\mathbb{E}[X_{t}^{(n)}X_{s}^{(n)}] = L_{2n}min\{s,t\} + L_{n}^{2}t^{2}$$
(see \cite{Sole2}), it is easy to check that the diagonal measures associated with $\mathbf{X}$ satisfy these requirements. Indeed,  for $c_{m}:= \frac{L_{2m}}{L_2}$, $d_{m} = \frac{\vert L_{m}\vert}{L_2}$, one has:
\begin{enumerate}
\item $\mathbb{E}[ (X_{t}^{(m)} - L_{m}t)^{2} ] = \mathbb{E}[ (X_{t}^{(m)})^{2}] - L_{m}^{2}t^{2} = L_{2m}t = c_m L_{2}t$;  
\item $\vert \mathbb{E}[ X_{t}^{(m)}] \vert = \vert t\,L_{m}\vert = \vert L_{m}\vert t =d_m \vert L_{2}t$,
\end{enumerate}
where $L_{n}$'s are the L\'{e}vy moments of $\bs{X}$ (note that, by virtue of Wolfe's Theorem, the L\'{e}vy process $\bs{X}$ has finite moments of every order if and only if its L\'{e}vy measure has, see for instance \cite[Lemma 5.3.4]{PeccatiTaqqu}). Since $\mathrm{L}^{2}$-convergence implies the convergence in $\mathrm{L}^{1}$-norm for finite measure spaces, the desired conclusion follows by applying Theorem \ref{MioTeo}:
\begin{equation}
\label{limitDiagonal2}
 \lim_{N \rightarrow \infty}||\sum_{i=1}^{N}\Phi(A_{iN})^{n} ||_{L^{1}} = || \Delta_{n}(A)||_{L^{1}} = \mathbb{E}[\Delta_{n}(A)].\qedhere
\end{equation}
\end{proof}

\paragraph{Free $\kappa$-statistics}

In \cite{Ans1,Ans2}, the author extended the combinatorial approach to stochastic integration to the free probability setting, namely for integration with respect to processes with freely independent increments.

\begin{defn}
A stationary stochastic process $\mathbf{X}$ with freely independent increments on a $W^{\star}$-probability space $(\mathcal{A},\varphi)$ is an additive function that maps every real interval $I=[a,b]$ to a random variable $X(I) \in \mathcal{A}$ such that:
\begin{itemize}
\item[-] $X(I)$ is centered, and $\varphi(X(I)^{2}) = m(I)$, with $m(\cdot)$ denoting the Lebesgue measure on $\mathbb{R}$;
\item[-] $X(A)$ and $X(B)$ are freely independent whenever $A \cap B = \emptyset$;
\item[-] if $0 \leq t_1 < t_2 < \cdots < t_n$,  the increments $X(t_1), X(t_2) - X(t_1), \cdots, X(t_n) - X(t_{n-1})$ are freely independent (where $X(t)$ is a short for $X([0,t])$);
\item[-] (stationarity) if $ s < t$, the distribution of $X(t) - X(s)$ equals that of $X(t-s)$.\\
\end{itemize} 
\end{defn}

Let $\mathbf{X}$ be a fixed stationary stochastic process with freely independent increments. The non-atomicity of the Lebesgue measure guarantees that every measurable set $A$, with $m(A) < \infty$,  can be decomposed as the union of $N$ half-open finite intervals of the real line, for every integer $N$: let $X_{1,N}, X_{2,N},\dots, X_{N,N}$ be the freely independent, identically distributed, and adding up to $X(A)$, random variables corresponding to such intervals, namely $X(A) = X_{1,N} + \cdots X_{N,N}$. \\
The \textit{diagonal measure} of order $k$ of $A$ is defined to be the limit (in the operator norm) of the $k$-th power sum polynomial:
\begin{equation}
\label{freeDiag}
\Delta_{k}(A) = \lim_{N \rightarrow \infty} \sum_{j=1}^{N}X_{j,N}^{k}.
\end{equation}
Moreover, in \cite{Ans1}, the author defined $\bs{X}$ to be a \textit{free multiplicative} random measure on a $W^{\star}$-probability space $(\mathcal{A},\varphi)$, if $\bs{X}$ fulfills the requirement:
\begin{equation}
\label{FreeMultiplicative}
\varphi(St_\pi) = \prod_{B \in \pi} \varphi(\Delta_{|B|}),
\end{equation}
where, with the same notation as in \cite{Ans1},
$$St_\pi(A^{\otimes k}) = \lim_{N\rightarrow \infty}\sum_{\substack{(i_1,\dots,i_k) \in [N]^{k}\\ \mathrm{Ker}(i_1,\dots,i_k)=\pi}}X_{i_1,N}\cdots X_{i_k,N}$$
for $\pi \in \mathcal{P}([k])$. It is important to stress that the process $\bs{X}$ is multiplicative in the above sense only with respect to the lattice of non-crossing partitions, that is, for such a process, (\ref{FreeMultiplicative}) holds if and only if $\pi$ is non-crossing (see \cite[Corollary 2]{Ans1}). \\

Since for any element $a \in (\mathcal{A},\varphi)$, the Cauchy-Schwarz inequality implies $|\varphi(a)| \leq \|a \|$, where $\|a \|=\varphi(aa^{\star})^{\frac{1}{2}}$,  for every non-negative integers $N$ and $r$, setting $a_N = \Delta_r(A) - \sum\limits_{j=1}^{N}X_{j,N}^{r}$, the inequality $|\varphi(a_N)| \leq \|a_N \|$ implies, in its limit, that:
$$\varphi(\Delta_r(A)) = \lim_{N\rightarrow \infty}\varphi \bigg(\sum_{j=1}^{N}X_{j,N}^r\bigg) ,$$
yielding in turn, together with $ \varphi(\Delta_r(A)) = \kappa_r(X(A))$ (see \cite[Theorem 2]{Ans1}), that the sequence $\{\{\sum\limits_{j=1}^{N}X_{j,N}^{r}\}_{N\geq 1}\}_{r\geq 1}$ can be said to be a sequence of \textit{ free $\kappa$-statistics} for $X(A)$. Similarly, if $\bs{X}$  is multiplicative, and $\pi \in \mathcal{NC}([k])$, the sequence 
$$ \sum_{\substack{(i_1,\dots,i_k) \in [N]^{k}\\ \mathrm{Ker}(i_1,\dots,i_k)=\pi}}X_{i_1,N}\cdots X_{i_k,N}$$
can be referred to as  \textit{free polykays} since it verifies:
$$\varphi(St_{\pi}(A)) =  \prod\limits_{B \in \pi}\kappa_{|B|}(X(A)).$$ 
In this sense, the theory of partition-depending stochastic measures supplies the free probability setting with the concept of unbiased estimators.

\backmatter
\appendix


\newpage

\printindex

\small
\bibliographystyle{plain}
\bibliography{biblio}

\end{document}